\newcommand{\quash}[1]{}  
\newcommand*\leftdash{\rotatebox[origin=c]{-45}{$\dabar@\dabar@\dabar@$}}
\newcommand*\rightdash{\rotatebox[origin=c]{45}{$\dabar@\dabar@\dabar@$}}
\newcommand{\quot}[3]{{#1}\backslash{#2}/{#3}}
\newcommand{\wqw}[3]{{#1}\leftdash{#2}\rightdash{#3}}
\newcommand{\qw}[2]{{#1}\rightdash{#2}}
\newcommand{\leftexp}[2]{{\vphantom{#2}}^{#1}{#2}}
\newcommand{\geom}[1]{{#1}\otimes_k\bar{k}}
\newcommand{\conv}[1]{\stackrel{#1}{\ast}}
\newcommand{\twtimes}[1]{\stackrel{#1}{\times}}
\newcommand{\isom}{\xrightarrow{\sim}}
\newcommand{\Ltimes}{\stackrel{\bL}{\otimes}}
\DeclareMathOperator{\SL}{SL}
\DeclareMathOperator{\Hom}{Hom}
\DeclareMathOperator{\End}{End}
\DeclareMathOperator{\Ext}{Ext}
\DeclareMathOperator{\ext}{ext}
\DeclareMathOperator{\Aut}{Aut}
\DeclareMathOperator{\Sym}{Sym}
\DeclareMathOperator{\Spec}{Spec}
\DeclareMathOperator{\Rep}{Rep}
\DeclareMathOperator{\Vect}{Vect}
\DeclareMathOperator{\Gal}{Gal}
\DeclareMathOperator{\id}{id}
\DeclareMathOperator{\Pic}{Pic}
\DeclareMathOperator{\Lie}{Lie}
\DeclareMathOperator{\Gr}{Gr}
\DeclareMathOperator{\RHom}{\bR\Hom}
\DeclareMathOperator{\Forg}{Forg}
\DeclareMathOperator{\Frac}{Frac}
\DeclareMathOperator{\opp}{opp}
\DeclareMathOperator{\coker}{coker}
\def\AA{\mathbb{A}}
\newcommand\BB{\mathbb{B}}
\newcommand\CC{\mathbb{C}}
\newcommand\FF{\mathbb{F}}
\newcommand\GG{\mathbb{G}}
\newcommand\HH{\mathbb{H}}
\newcommand\MM{\mathbb{M}}
\newcommand\PP{\mathbb{P}}
\newcommand\QQ{\mathbb{Q}}
\newcommand\VV{\mathbb{V}}
\newcommand\ZZ{\mathbb{Z}}
\newcommand\bR{\mathbf{R}}
\newcommand\bL{\mathbf{L}}
\newcommand\frg{\mathfrak{g}}
\newcommand\frh{\mathfrak{h}}
\newcommand\frb{\mathfrak{b}}
\newcommand\frsl{\mathfrak{sl}}
\newcommand\calA{\mathcal{A}}
\newcommand\calB{\mathcal{B}}
\newcommand\calC{\mathcal{C}}
\newcommand\calD{\mathcal{D}}
\newcommand\calF{\mathcal{F}}
\newcommand\calG{\mathcal{G}}
\newcommand\calH{\mathcal{H}}
\newcommand\calK{\mathcal{K}}
\newcommand\calL{\mathcal{L}}
\newcommand\calM{\mathcal{M}}
\newcommand\calO{\mathcal{O}}
\newcommand\calP{\mathcal{P}}
\newcommand\calT{\mathcal{T}}
\newcommand\tilC{\widetilde{C}}
\newcommand\tilK{\widetilde{K}}
\newcommand\tilf{\widetilde{f}}
\newcommand\tilY{\widetilde{Y}}
\newcommand\tili{\widetilde\imath}
\newcommand\tilj{\widetilde\jmath}
\newcommand\tilm{\widetilde{m}}
\newcommand\tilalpha{\widetilde{\alpha}}
\newcommand\tilpi{\widetilde{\pi}}
\newcommand\tilrho{\widetilde{\rho}}
\newcommand\tcF{\widetilde{\calF}}
\newcommand\tcK{\widetilde{\calK}}
\newcommand\tcT{\widetilde{\calT}}
\newcommand\tcP{\widetilde{\calP}}
\newcommand\tcL{\widetilde{\calL}}
\newcommand\tDel{\widetilde{\Delta}}
\newcommand\tnab{\widetilde{\nabla}}
\newcommand\tdel{\widetilde{\delta}}
\newcommand\hatH{\widehat{H}}
\newcommand\hatD{\widehat{D}}
\newcommand\hatP{\widehat{P}}
\newcommand\hatS{\widehat{S}}
\newcommand\hatI{\widehat{I}}
\newcommand\hatm{\widehat{m}}
\newcommand\hata{\widehat{a}}
\newcommand\hatphi{\widehat{\phi}}
\newcommand\hatpsi{\widehat{\psi}}
\newcommand\hatalpha{\widehat{\alpha}}
\newcommand\hatbeta{\widehat{\beta}}
\newcommand\hattau{\widehat{\tau}}
\newcommand\hatPhi{\widehat{\Phi}}
\newcommand{\hatnu}{\widehat{\nu}}
\newcommand\barw{\overline{w}}
\newcommand\baru{\overline{u}}
\newcommand\barv{\overline{v}}
\newcommand\barf{\overline{f}}
\newcommand\barPhi{\overline{\Phi}}
\newcommand\unw{\underline{w}}
\newcommand\unu{\underline{u}}
\newcommand\unv{\underline{v}}
\newcommand\unE{\underline{E}}
\newcommand\unM{\underline{M}}
\newcommand\scD{\mathscr{D}}
\newcommand\scM{\mathscr{M}}
\newcommand\scP{\mathscr{P}}
\newcommand\scQ{\mathscr{Q}}
\newcommand\scE{\mathscr{E}}
\newcommand\scT{\mathscr{T}}
\newcommand{\scC}{\mathscr{C}}
\newcommand{\scV}{\mathscr{V}}
\newcommand\scTr{{\mathscr{T}^{\dagger}}}
\newcommand{\naC}{\leftexp{\natural}{\calC}}
\newcommand\hsM{\widehat{\mathscr{M}}}
\newcommand\hsP{\widehat{\mathscr{P}}}
\newcommand\epdag{\epsilon^{\dagger}}
\newcommand{\pidag}{\pi^{\dagger}}
\newcommand{\piddag}{\pi_{\dagger}}
\newcommand\Dleft{\leftexp{\dagger}{\scD}}
\newcommand\Dright{{\scD^\dagger}}
\newcommand\Grass{\mathcal{G}r}
\newcommand\Fl{\mathcal{F}\ell}
\newcommand\tilFl{\widetilde{\mathcal{F}\ell}}
\newcommand\hatFl{\widehat{\mathcal{F}\ell}}
\newcommand\PFl{\vphantom{\Fl}_{\Theta}{\Fl}}
\newcommand\LFl{\vphantom{\Fl}_{\varnothing}{\Fl}}
\newcommand\midd{{\rm mid}}
\newcommand\hatLG{\widehat{G_0((t))}}
\newcommand\gcen{\GG^{\textup{cen}}_m}
\newcommand\grot{\GG^{\textup{rot}}_m}
\newcommand\Ql{\overline{\QQ}_{\ell}}
\newcommand\IC{\mathcal{IC}}
\newcommand\Mod{\textup{Mod}}
\newcommand\Frob{\textup{Fr}}
\newcommand{\Funi}{\textup{Fr-unip}}
\newcommand\perf{\textup{perf}}
\newcommand\AS{\textup{AS}}
\newcommand\Av{\textup{Av}}
\newcommand\av{\textup{av}}
\newcommand{\bav}{\overline{\av}}
\newcommand\coW{W_\Theta\backslash W}
\newcommand{\pt}{\textup{pt}}
\newcommand{\triv}{\textup{triv}}
\newcommand{\pro}{{\rm pro}}
\newcommand{\mult}{{\rm mult}}
\newcommand{\adj}{\textup{adj}}
\newcommand\nil{\textup{nil}}
\newcommand{\Free}{\textup{Free}}
\newcommand{\free}{\textup{free}}
\newcommand{\acyc}{{\rm acyc}}
\newcommand{\pure}{{\rm pur}}
\newcommand{\fm}{free-monodromic }
\newcommand{\inv}{{\rm inv}}
\newcommand{\aug}{{\rm aug}}
\newcommand{\Tel}{\textup{Tel}}
\newcommand{\Gm}{\GG_m}
\newcommand{\Wa}{W_{\textup{aff}}}
\newcommand\pH{\leftexp{p}{H}}
\newcommand\ptau{\leftexp{p}{\tau}}
\newcommand\pD{\leftexp{p}{D}}
\newcommand\Tleq{\stackrel{\Theta}{\leq}}
\newcommand\lTh{\ell_\Theta}
\newcommand\xch{\mathbb{X}^*}
\newcommand\xcoch{\mathbb{X}_*}
\newcommand{\prolim}{\textup{\textquotedblleft}\varprojlim\textup{\textquotedblright}}
\newcommand\plim[1]{\textup{\textquotedblleft}\varprojlim_{#1}\textup{\textquotedblright}}
\newcommand\dG{{G^\vee}}
\newcommand\dH{{H^\vee}}
\newcommand\dB{{B^\vee}}
\newcommand\dU{{U^\vee}}
\newcommand\dS{\check{S}}
\newcommand\Sb{\check{S}^\bullet}
\newcommand\mon[1]{D'(\qw{#1}{A})}
\newcommand\hmon[1]{\hatD'(\qw{#1}{A})}
\newcommand\monm[1]{D'_m(\qw{#1}{A})}
\newcommand\hmonm[1]{\hatD'_m(\qw{#1}{A})}
\newcommand\per[1]{P'(\qw{#1}{A})}
\newcommand\hper[1]{\hatP'(\qw{#1}{A})}
\newcommand\perm[1]{P'_m(\qw{#1}{A})}
\newcommand\hperm[1]{\hatP'_m(\qw{#1}{A})}
\newcommand{\Oct}{{\bf Oct}}
\newcommand{\Tri}{{\bf Tri}}
\newcommand{\tTri}{\widetilde{\Tri}}
\newcommand{\Set}{{\bf Set}}
\newcommand{\preOct}{{\bf Oct}^{\rm pre}}
\newcommand{\Fun}{{\bf Fun}}
\newcommand{\upH}{\textup{H}}
\theoremstyle{plain}
\newtheorem{theorem}[subsubsection]{Theorem}
\newtheorem{lemma}[subsubsection]{Lemma}
\newtheorem{cor}[subsubsection]{Corollary}
\newtheorem{prop}[subsubsection]{Proposition}
\newtheorem*{claim}{Claim}
\newtheorem{alemma}[subsection]{Lemma}
\newtheorem*{mainthm}{Main Theorem}
\newtheorem*{introthm}{Theorem}
\newtheorem*{introprop}{Proposition}
\newtheorem*{var}{Variant}
\theoremstyle{definition}
\newtheorem{defn}[subsubsection]{Definition}
\newtheorem{exam}[subsubsection]{Example}
\newtheorem{remark}[subsubsection]{Remark}
\numberwithin{equation}{section}
\title{On Koszul duality for Kac-Moody groups}
\author{Roman Bezrukavnikov}
\address{Department of Mathematics, Massachusetts Institute of Technology, Cambridge,
MA 02139, USA}
\email{bezrukav@math.mit.edu}
\thanks{R.B. is partly supported by the NSF grant DMS-0854764.}
\author[Zhiwei Yun]{Zhiwei Yun \\ with appendices by Zhiwei Yun}
\thanks{Z.Y. was supported by the NSF grant DMS-0635607 and Zurich Financial Services as a member at the Institute for Advanced Study; currently Z.Y. is supported by the NSF grant DMS-0969470.} 
\address{School of Mathematics, Institute for Advanced Study, Princeton, NJ 08540, USA}
\curraddr{Department of Mathematics, Massachusetts Institute of Technology, Cambridge,
MA 02139, USA}
\email{zyun@math.mit.edu}
\subjclass[2000]{20G44, 14M15, 14F05}
\date{October 2008; Revised December 2009}
\begin{document}

\begin{abstract}
For any Kac-Moody group $G$ with Borel $B$, we give a monoidal equivalence between the derived category of $B$-equivariant mixed complexes on the flag variety $G/B$ and (a certain completion of) the derived category of $\dB$-monodromic mixed complexes on the enhanced flag variety $\dG/\dU$, here $\dG$ is the Langlands dual of $G$. We also prove variants of this equivalence, one of which is the equivalence between the derived category of $U$-equivariant mixed complexes on the partial flag variety $G/P$ and certain ``Whittaker model'' category of mixed complexes on $\dG/\dB$. In all these equivalences, intersection cohomology sheaves correspond to (free-monodromic) tilting sheaves. Our results generalize the Koszul duality patterns for reductive groups in \cite{BGS}.
\end{abstract}

\maketitle

\tableofcontents

\setcounter{section}{-1}
\setcounter{secnumdepth}{3}

\section{Introduction}

\subsection{History}

The formalism of Koszul duality in representation theory goes back to the work of Beilinson, Ginzburg, Schechtman \cite{BGSch} and Soergel \cite{Soe} from 1980's, and was developed later by these and other authors in \cite{BGS}, \cite{BGi} etc. The formalism uncovers some intriguing phenomena. On the one hand, it shows that some categories of representations (such as Bernstein-Gel'fand-Gel'fand category $\calO$) are ``controlled'' by Koszul quadratic algebras; this fact, closely related to Kazhdan-Lusztig conjectures, is proven using purity theorem about Frobenius (or Hodge) weights on Ext's between irreducible perverse sheaves. On the other hand, the duality (or rather equivalence) between derived categories of representations has some interesting geometric properties. In particular, it interchanges the Lefschetz $\frsl(2)$  (i.e. the $\frsl(2)$  containing multiplication by the first Chern class of an ample line bundle acting on cohomology of a smooth projective variety) with the Picard-Lefschetz $\frsl(2)$ (i.e. $\frsl(2)$ containing the logarithm of monodromy acting on cohomology of nearby cycles).\footnote{We mention in passing that this property is at least formally similar
to a key property of mirror symmetry; perhaps a better understanding of this similarity can lead to an insight into the nature of Koszul duality.} 

In this paper we extend the result of \cite{Soe} and \cite{BGS} to a much more general setting: we replace a semi-simple algebraic group considered in {\em loc. cit.} by an arbitrary Kac-Moody group. A comment is required on the precise relation between the two settings. First, \cite{Soe} works with a regular integral block in category $\calO$  of highest weight modules over the semi-simple Lie algebra. By Beilinson-Bernstein Localization Theorem this category is identified with a category of perverse sheaves on the flag variety. In this paper we work directly with the geometric category of sheaves and its generalizations. (A generalization of Localization Theorem to a general Kac-Moody group is not known, so one can not restate our result in terms of modules in this more general setting). The parabolic-singular variant of Koszul duality developed in \cite{BGS} involves singular category $\calO$. By \cite{MS} the latter is equivalent to the category of "generalized Whittaker" perverse sheaves on the flag variety; hence the appearance of Whittaker sheaves in the present paper.

Finally, we would like to point out that equivalences below generalize the variant of Koszul duality equivalence suggested in \cite{BGi} rather than the original equivalences of \cite{Soe} and \cite{BGS}. While the latter send irreducible objects to projective ones, the former sends irreducible objects to tilting ones. The advantage of the "tilting" version of the equivalence is that it turns out to be a monoidal functor (in the cases when the categories in question are monoidal); in the finite dimensional group case this verifies a conjecture in \cite[Conjecture 5.18]{BGi}. For a finite dimensional semi-simple group, the two functors differ by a long intertwining functor (Radon transform). In the Kac-Moody setting there is a more essential difference between the two formulations; in fact, the categories we consider do not have enough projectives, so the requirement for the functor to send irreducibles to projectives does not apply here. So we work out a generalization of the ``tilting'' version of the formalism, and show that the resulting equivalences are monoidal (when applicable). 
 
The price to pay for including monoidal categories into consideration is additional technical difficulties of foundational nature (appearing already in the finite dimensional semi-simple group case). As pointed out in \cite{BGi}, the dual to the Borel equivariant derived category of the flag variety is a completion of the category of unipotently monodromic sheaves on the base affine space to a category of pro-objects. The formal definition of such a completion and extension of the convolution monoidal structure to it requires additional work, done in the Appendix to the paper. See \cite{BGi} for a discussion of the relation of convolution with such pro-objects to projective functors on category $\calO$.

We should also mention that although in this article we work with mixed $\ell$-adic sheaves on varieties over a finite field, there should be a parallel story for mixed Hodge modules on the complex analogs of the relevant varieties.


\subsection{Main results}\label{ss:main}
Fix a finite field $k=\FF_q$. Let $G$ be a Kac-Moody group defined over $k$. For the purpose of the introduction, the reader is welcomed to take $G$ to be a split reductive group over $k$. Let $B=UH$ be a Borel subgroup of $G$ with unipotent radical $U$ and Cartan subgroup $H$. The ind-scheme $G/B$ is called the flag variety of $G$ and $G/U$ is called the enhanced flag variety of $G$. For other notations associated to $G$, we refer the readers to \S\ref{ss:KM} and the ``List of Symbols'' at the end of the paper.

Let $\dG$ be the Langlands dual Kac-Moody group of $G$. This is a Kac-Moody group with root system dual to that of $G$, with Borel subgroup $\dB=\dU\dH$. Let $W$ be the Weyl group of $G$ and $\dG$, which is a Coxeter group with simple reflections $\Sigma$ (in bijection with simple roots of $G$). Let $\Theta\subset\Sigma$ be such that the subgroup $W_\Theta$ generated by $\Theta$ is finite, hence determining a parabolic subgroup $P_\Theta$ of $G$. The main results of the paper consist of four equivalences of derived categories in the spirit of Koszul duality:

\begin{mainthm} There are equivalences of triangulated categories:
\begin{itemize}
\item Equivariant-monodromic duality (Theorem \ref{th:emduality}) which is a monoidal equivalence:
\begin{equation*}
\Phi: D^b_{m}(\quot{B}{G}{B})\isom\hatD^b_{m}(\wqw{\dB}{\dG}{\dB});
\end{equation*}
\item ``Self-duality'' (Theorem \ref{th:selfduality}):
\begin{equation*}
\Psi: D^b_{m}(\quot{\dB}{\dG}{\dU})\isom D^b_{m}(\quot{U}{G}{B});
\end{equation*}
\item Parabolic-Whittaker duality (Theorem \ref{th:pwduality}):
\begin{equation*}
\Phi_\Theta: D^b_{m}(\quot{P_\Theta}{G}{B})\isom\hatD^b_{m}((U^{\vee,\Theta}U^{\vee,-}_\Theta,\chi)\backslash\qw{\dG}{\dB});
\end{equation*}
\item ``Paradromic-Whittavariant'' duality (Theorem \ref{th:pwselfduality}):
\begin{equation*}
\Psi_\Theta: D^b_{m}(\quot{P^\vee_\Theta}{\dG}{\dU})\isom D^b_{m}(\quot{(U^\Theta U^-_\Theta,\chi)}{G}{B});
\end{equation*}
\end{itemize}
\end{mainthm}

We need to explain some notations. For a scheme $X$ over $k$ with a smooth group scheme $A$ over $k$ acting from the left, we denote by $D^{b}_{m,A}(X)$ or $D^b_m(A\backslash X)$ the derived category of $A$-equivariant mixed $\Ql$-complexes on $X$ (using either an $\ell$-adic analog of \cite{BL}, or the formalism of \cite{LO} if we view $A\backslash X$ as a stack). Therefore, $D^b_m(\quot{B}{G}{B})$ is understood as the derived category of left-$B$-equivariant mixed complexes on the flag variety $G/B$, etc.


The category $\hatD^b_{m}(\wqw{\dB}{\dG}{\dB})$ is a {\em completion} of the category $D^b_m(\wqw{\dB}{\dG}{\dB})$, the latter being the derived category of left $\dU$-equivariant mixed complexes on the enhanced flag variety $\dG/\dU$, which,  along the $\dH$-orbits (under the action given by either left or right multiplication), have unipotent monodromy. The completion procedure adds objects with free unipotent monodromy (called {\em \fm} sheaves) to the monodromic category. For details about the completion procedure, see the discussion in Appendix \ref{a:compmono}.

In the target of the last equivalence $\Psi_{\Theta}$, $U^{\Theta}$ is the unipotent radical of $P_\Theta$, and $U^{-}_\Theta$ is the unipotent radical of a Borel subgroup of $L_\Theta$ (the Levi subgroup of $P_\Theta$), which is opposite to the standard Borel. The left quotient by $(U^{\Theta}U^{-}_\Theta,\chi)$ means taking mixed complexes which are left equivariant under $U^{\Theta}U^{-}_\Theta$ against a generic character $\chi:U^{-}_\Theta\to\GG_a$. Such a construction is called the {\em geometric Whittaker model} (cf. \cite{BBM2}). The meaning of $(U^{\vee,\Theta}U^{\vee,-}_{\Theta},\chi)$ in the target of $\Phi_{\Theta}$ is similar, with $G$ replaced by $\dG$.

The equivalences in the Main Theorem enjoy the following properties:
\begin{itemize}
\item They respect the relevant monoidal structures. For example, both sides of the equivariant-monodromic duality carry monoidal structures given by convolution of sheaves, and $\Phi$ is a monoidal functor. Similarly, both sides of the parabolic-Whittaker duality are module categories under the respective monoidal categories in the equivariant-monodromic duality (given by convolution on the right), and $\Phi_{\Theta}$ respects these module category structures.
\item They send standard (resp. costandard) sheaves to standard (resp. costandard) sheaves. The spaces in question have Schubert stratifications indexed by (cosets of) the Weyl group. The standard and costandard sheaves are $!$ and $*$-extensions of constant sheaves (or \fm sheaves) on the strata.
\item They send intersection cohomology (IC-)sheaves to indecomposable (\fm) tilting sheaves (Def.\ref{def:fmt}). For example, under the equivalence $\Phi$, the intersection cohomology sheaf $\IC_{w}$ ($w\in W$, the Weyl group of $G$) of the closure of the Schubert stratum $BwB/B\subset G/B$ is sent to the \fm tilting sheaf $\tcT_{w}$ supported on the closure of $\dB w\dB/\dU\subset \dG/\dU$. In the case of $\Psi$ and $\Psi_\Theta$, they also send indecomposable tilting sheaves to IC-sheaves. More generally, all these equivalences send very pure complexes of weight 0 (Def.\ref{def:vpure}) to (\fm) tilting sheaves.
\item They are exact functors between triangulated categories, but {\em not} t-exact with respect to the perverse t-structures. Under all these equivalences, the Tate twist $(1)$ becomes the functor $[-2](-1)$.
\end{itemize}

\subsection{A baby case}
We look at the simplest case $G=\Gm$. Let $\Ql[T]=\upH^*_{\Gm}(\pt)$ be the $\Gm$-equivariant cohomology ring of a point, where $T$ is a generator in degree 2 with (geometric) Frobenius acting by $q$. The analog of \cite[Main Theorem 12.7.2(i)]{BL} in the mixed $\ell$-adic setting gives an equivalence
\begin{equation*}
D^b_m(\quot{\Gm}{\Gm}{\Gm})=D^b_{m,\Gm}(\pt)\cong D^{fg}(\Ql[T],\Frob),
\end{equation*}
the RHS being the derived category of finitely generated differential graded $\Ql[T]$-modules $L=[\cdots L^{-1}\to L^0\to\cdots]$ with a Frobenius action on each $L^i$, compatible with the Frobenius action on $\Ql[T]$, and with integer weights (see \S\ref{not:alg} for the definition of weights).

The Langlands dual group $\dG$ is again $\Gm$. We consider the non-mixed situation first (i.e., passing to $\overline{k}$, the non-mixed derived categories will be denoted by $D^b_c$ instead of $D^b_m$). A local system  on $\Gm$ with unipotent monodromy is given by a representation of the pro-$\ell$ quotient of $\pi_1(\Gm\otimes_{k}\bar{k})$. Taking the logarithm of the unipotent monodromy, such a sheaf corresponds to a finite dimensional $\Ql[[t]]$-module on which $t$ acts nilpotently. Denote the category of such $\Ql[[t]]$-modules by $\Mod^{\nil}(\Ql[[t]])$, then
\begin{equation*}
D^b_c(\wqw{\Gm}{\Gm}{\Gm})\cong D^b(\Mod^{\nil}(\Ql[[t]])).
\end{equation*}
The completion procedure will give
\begin{equation*}
\hatD^b_c(\wqw{\Gm}{\Gm}{\Gm})\cong D^b(\Ql[[t]]),
\end{equation*}
the RHS being the bounded derived category of all finitely generated $\Ql[[t]]$-modules. The object $\tcL$ in the completed category that corresponds to $\Ql[[t]]\in D^b(\Mod(\Ql[[t]]))$ is a \fm sheaf. The mixed version reads:
\begin{equation*}
\hatD^b_m(\wqw{\Gm}{\Gm}{\Gm})\cong D^{b}(\Ql[[t]],\Frob).
\end{equation*}
Here the RHS is the bounded derived category of finitely generated $\Ql[[t]]$-modules with a compatible Frobenius action (Frobenius acts on $t$ by $q^{-1}$). One can even replace $\Ql[[t]]$ by $\Ql[t]$ to get an equivalent derived category on the RHS (see Remark \ref{r:Ff}).

The equivariant-monodromic equivalence $\Phi$ for $G=\Gm$ and $\dG=\Gm$ is given by the following {\em regrading functor}
\begin{equation*}
\phi:D^{fg}(\Ql[T],\Frob)\isom D^{b}(\Ql[t],\Frob).
\end{equation*}
For a differential graded $\Ql[T]$-module $L=\oplus L^i$ with each $L^i$ a Frobenius module, write each $L^i=\oplus_j L^i_j$ according to the weights of the Frobenius action. Then $\phi(L)$ is a complex with $i$-th degree $\phi(L)^i=\oplus_j(L^{i-j}_{-j})^{\Finv}$. Here $(-)^{\Finv}$ denotes the same vector space with the inverse Frobenius action. Each term $\phi(L)^i$ then carries a $\Ql[t]$-module structure, with  $t$-action induced from that of $T$ on $L$.

\subsection{Other results}
Along the way of proving the Main Theorem, we also show
\begin{var}
\begin{enumerate}
\item The various categories involving $G$ in the Main Theorem can be combinatorially reconstructed from the pair $(V_H,W)$ alone ($V_H$ is the $\Ql$-Tate module of the maximal torus $H$ in $G$).
\item If $\Lie G$ is symmetrizable (e.g., $G$ is a reductive group or an affine Kac-Moody group), then we can replace all the $\dG$'s by $G$ in the various equivalences in the Main Theorem.
\end{enumerate}
\end{var}

In fact, for $\Lie G$ symmetrizable one can choose a $W$-equivariant isomorphism $V_H\isom V_\dH$. Hence by (1) above, the various categories for $\dG$ in the Main Theorem can be combinatorially identified with the corresponding categories for $G$.

Recall that the categories $D^b_{m}(\quot{B}{G}{B})$ and $\hatD^b_{m}(\wqw{B}{G}{B})$ carry convolution products, which we denote by $\conv{B}$ and $\conv{U}$. In proving the Main Theorem, we also get some results on IC and \fm tilting sheaves regarding the Frobenius semisimplicity of their convolutions:
\begin{introprop}[see Proposition \ref{p:icdecomp} and Corollary \ref{c:tdecomp}]
\begin{enumerate}
\item []
\item For $w_1,w_2\in W$, the convolution $\IC_{w_1}\conv{B}\IC_{w_2}$, as a {\em mixed} complex, is a direct sum of $\IC_w[n](n/2)$ for $n\equiv\ell(w_1)+\ell(w_2)-\ell(w)(\textup{mod }2)$;
\item For $w_1,w_2\in W$, the convolution $\tcT_{w_1}\conv{U}\tcT_{w_2}$, as a {\em mixed} complex, is a direct sum of $\tcT_w(n/2)$ for $n\equiv\ell(w_1)+\ell(w_2)-\ell(w)(\textup{mod }2)$.
\end{enumerate}
\end{introprop}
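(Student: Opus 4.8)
The plan is to prove part~(1) by a weight argument and to deduce part~(2) from it through the monoidal equivalence $\Phi$ of Theorem~\ref{th:emduality}. For part~(1), the decomposition of $\IC_{w_1}\conv{B}\IC_{w_2}$ into shifts of IC sheaves after base change to $\bar k$ is the decomposition theorem; the real content is the mixed refinement, namely that the Frobenius acts semisimply and that each indecomposable summand appears in the twist-normalized form $\IC_w[n](n/2)$. The starting point is that $\IC_{w_1}\conv{B}\IC_{w_2}$ is pure of weight~$0$: writing the convolution as $m_*(\IC_{w_1}\widetilde{\boxtimes}\IC_{w_2})$ for the convolution morphism $m$, which is proper once restricted to the twisted product of the Schubert varieties $\overline{Bw_1B/B}$ and $\overline{Bw_2B/B}$ where the sheaves are supported, and noting that $\IC_{w_1}\widetilde{\boxtimes}\IC_{w_2}$ is pure of weight~$0$ --- it is the IC sheaf of that twisted product up to a shift and twist --- purity is preserved by proper pushforward (Weil~II). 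The convolution is moreover Verdier self-dual and constructible along the Schubert stratification.

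The next ingredient is the parity/Tate-type property of IC sheaves on the Bruhat-stratified flag ind-scheme: for every Schubert stratum $C_v$ the stalks $\calH^k(i_v^*\IC_w)$ and costalks $\calH^k(i_v^!\IC_w)$ are direct sums of Tate twists and vanish unless $k\equiv\ell(w)\pmod2$, which follows from the affine paving of Schubert varieties by Bruhat cells together with Kazhdan--Lusztig theory (recorded earlier in the paper). Using a Bott--Samelson resolution of the twisted product above, whose fibres over $G/B$ are paved by affine spaces, one sees that the stalks and costalks of $\IC_{w_1}\conv{B}\IC_{w_2}$ are again Tate, concentrated in degrees $k\equiv\ell(w_1)+\ell(w_2)\pmod2$. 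It then remains to prove the structural statement that a complex on a Schubert-stratified space which is pure of weight~$0$ and pointwise of Tate type is a direct sum of objects $\IC_w[n](n/2)$: I would argue by descending induction on the support, noting that on an open stratum $C_v$ of the support the complex restricts to a sum of constant sheaves $\Ql_{C_v}[a](b)$, that purity of weight~$0$ forces $b=a/2$ and the parity above forces $a\equiv\ell(w_1)+\ell(w_2)\pmod2$, and that the corresponding summands $\IC_v[n](n/2)$ with $n:=a-\ell(v)$ split off because the gluing obstructions against complexes supported on the boundary of $\overline{C_v}$ lie in degrees forced to vanish by the parity of IC stalks and costalks; recursing on the complement finishes part~(1), with $n\equiv\ell(w_1)+\ell(w_2)-\ell(v)\pmod2$ as claimed. (Alternatively one can induct on $\ell(w_2)$, reducing the semisimplicity to the explicit $\PP^1$-bundle computation $\IC_v\conv{B}\IC_s=\IC_{vs}$ if $vs>v$ and $\IC_v[1](1/2)\oplus\IC_v[-1](-1/2)$ if $vs<v$, together with Krull--Schmidt cancellation of the lower terms produced by $\IC_{w_2'}\conv{B}\IC_s$.)

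Part~(2) is then a formal consequence. By monoidality $\Phi(\IC_{w_1}\conv{B}\IC_{w_2})\cong\tcT_{w_1}\conv{U}\tcT_{w_2}$ (using $\Phi(\IC_w)\cong\tcT_w$), and since $\Phi$ commutes with shift and carries $(1)$ to $[-2](-1)$ we get $\Phi(\IC_w[n](n/2))\cong\tcT_w(-n/2)$. Feeding the decomposition of part~(1) through $\Phi$ and replacing $n$ by $-n$ (which preserves the parity class) yields $\tcT_{w_1}\conv{U}\tcT_{w_2}\cong\bigoplus\tcT_w(n/2)^{\oplus m}$ with $n\equiv\ell(w_1)+\ell(w_2)-\ell(w)\pmod2$. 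Note that arguing purity directly on the monodromic side is not available, since free-monodromic tilting sheaves are not pure; it is precisely the passage through $\Phi$ that keeps the Tate twists under control.

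The step I expect to be the main obstacle is the combination of, on one hand, establishing the pointwise purity and parity of IC sheaves in arbitrary Kac--Moody generality --- where there is no Localization Theorem and everything must be read off the Bruhat paving --- and, on the other hand, the structural lemma that converts this parity together with weight-$0$ purity into a Frobenius-semisimple decomposition with Tate twists locked to shifts. In effect one has to make the decomposition theorem ``mixed'' in a twist-tracking way, and it is the parity vanishing that powers this.
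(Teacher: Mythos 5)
Your part~(2) is exactly the paper's argument (Cor.\ref{c:tdecomp} is deduced from Prop.\ref{p:icdecomp} by feeding through the monoidal equivalence $\Phi$ of Theorem \ref{th:emduality}, using $\Phi(\IC_w)\cong\tcT_w$ and the twist rule in Theorem \ref{th:emduality}\eqref{em:twist}); this part is fine.

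For part~(1) you take a genuinely different route from the paper, and it has a gap at the crucial point. The paper does \emph{not} argue purity-plus-parity-of-stalks directly. Instead it uses the monoidal, fully faithful (on very pure complexes) global sections functor $\HH$: by Prop.\ref{p:hff} and Prop.\ref{p:hcomp}, $\End_{\scE}(\IC_{w_1}\conv{B}\IC_{w_2})$ embeds as a direct summand of $\End_{\dS\otimes\dS}(\HH_{w_1}\otimes_{\dS}\HH_{w_2})$; the latter is manifestly Frobenius-semisimple once Lemma \ref{l:Hss} identifies each $\HH_w$ as a sum of $\Ql[n](n/2)$'s with the correct parity. Frobenius semisimplicity of the endomorphism algebra then forces, via Cor.\ref{c:Cdecomp}, a decomposition of the convolution into twists of IC-sheaves, and the twists are read off by applying $\HH$ again. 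In other words, the semisimplicity is imported from the algebraic (Soergel-bimodule) side; the paper itself remarks that ``the new content of this proposition is the semisimplicity of the $\Frob$-action.''

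Your main argument assumes as input that the stalks and costalks of $\IC_w$ over every Schubert cell are of Tate type (Frobenius acting semisimply by powers of $q^{1/2}$) with the expected parity, and you claim this is ``recorded earlier in the paper.'' It is not: Example \ref{ex:vpure} records only that $\IC_w$ is $*$-pure of weight~$0$, and the parity statement of Lemma \ref{l:Hss} is about $\HH_w=\HH(\IC_w)$, a global invariant, not about the stalks; moreover its proof already appeals to Prop.\ref{p:hff} and Cor.\ref{c:Cdecomp}, i.e.\ to the very semisimplicity machinery you would be trying to bypass. More seriously, obtaining Tate (Frobenius-semisimple) stalks over $\FF_q$ from the Bott--Samelson resolution is essentially equivalent to the statement being proved: the convolution is a direct summand of a Bott--Samelson pushforward, but extracting that summand \emph{with its Frobenius structure} requires the idempotent to be Frobenius-equivariant, i.e.\ requires Frobenius semisimplicity of the endomorphism ring of the pushforward --- the circularity persists. (Your structural lemma ``pure weight~$0$ $+$ Tate stalks $\Rightarrow$ split sum of $\IC_v[n](n/2)$'' is fine once Tate-ness of stalks is granted; the issue is granting it.)

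Your parenthetical alternative also does not close this gap because its base case is false: for $vs>v$ one does \emph{not} have $\IC_v\conv{B}\IC_s=\IC_{vs}$ in general. In the Hecke algebra, $C'_vC'_s=C'_{vs}+\sum_{u<v,\,us<u}\mu(u,v)C'_u$, so lower terms appear whenever some $\mu(u,v)\neq0$ (already in $\SL(3)$: $\IC_{s_1s_2}\conv{B}\IC_{s_1}$ contains an $\IC_{s_1}$ summand). The correct base-case input used in the paper is the $\SL(2)$ computation on \emph{standard} and \emph{costandard} sheaves (Lemma \ref{l:convSL2}, used in Prop.\ref{p:vpure}), not on $\IC$'s; the passage to $\IC$'s and the accompanying Frobenius semisimplicity is precisely what the functor $\HH$ is there for.
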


\subsection{The case of loop groups}

Among all Kac-Moody groups the affine ones are of particular interest in representation theory. These are modifications of the loop groups of reductive groups. Below we spell out our Main Theorem in the case of loop groups. We should mention that the results listed below are not literally special cases of the Main Theorem; nevertheless, only minor modifications are needed to prove them from the argument given in the main body of the paper.

Let $G$ be the affine Kac-Moody group associated to the loop group of a split simply-connected almost simple group $G_0$ over $k$. In other words, $G=\hatLG\rtimes\grot$ where $\hatLG$ is a nontrivial central extension of $G_0((t))$ by a one-dimensional torus $\gcen$ and the one-dimensional torus $\grot$ acts on $G_0((t))$ by ``rotating the loops''. Fix a split maximal torus $H_0$ in $G_0$ and a Borel subgroup $B_0\subset G_0$ containing $H_0$. We get an Iwahori subgroup $I\subset G_0((t))$ as the preimage of $B_0$ under the evaluation map $G_0[[t]]\to G_0$. We put a hat on top of $I$ or $H_0$ to denote their preimage in $\hatLG$.  The unipotent radical $I^u$ of $I$ admits a canonical lifting into $\hatLG$. The affine Cartan subgroup of $G$ is $H=\gcen\times H_0\times\grot$ and $B=HI^u$ is the Borel subgroup of $G$ with unipotent radical $U=I^u$.

The ind-scheme $\Fl=G_0((t))/I=G/B$ is the affine flag variety of $G_0$; the ind-scheme $\hatFl=\hatLG/I^u$ is the enhanced affine flag variety of $G_0$, which is a right $\hatH_0$-torsor over $\Fl$. Note that this is different from the enhanced flag variety $\tilFl=G/I^u$ (which is a right $H$-torsor over $\Fl$).

The group $I\rtimes\grot$ acts on the $G_0((t))/I$ (where $I$ acts by left translation and $\grot$ acts by rotating the loop). Let $\scE_0$ be the derived category of $I\rtimes\grot$-equivariant mixed complexes on $\Fl$. On the other hand, let $\scM_0$ be the derived category of left $I^u$-equivariant and right $\hatH_0$-monodromic complexes (with unipotent monodromy) on $\hatFl$. We can also define a completion $\hsM_0$ of $\scM_0$ by adding objects with free unipotent monodromy (see Appendix \ref{ss:compmono}). There are monoidal structures on $\scE_0$ and $\hsM_0$ defined by convolutions (similar as the convolutions in \S\ref{ss:Econv} and \S\ref{ss:Mconv}).

The various duality theorems for loop groups take the form:
\begin{introthm}
\begin{itemize}
\item []
\item Equivariant-monodromic duality (quantized version):
\begin{equation*}
\Phi:\scE_0=D^b_{m,\grot}(\quot{I}{G_0((t))}{I})\isom \hatD^b_{m}(\wqw{\hatI}{\hatLG}{\hatI})=\hsM_0.
\end{equation*}
This is a monoidal equivalence. The ``quantization parameter'' is given by a generator of $\upH^2_{\grot}(\pt)$ on the LHS, and is given by the logarithmic monodromy along $\gcen$-orbits on the RHS.
\item Equivariant-monodromic duality (non-quantized version):
\begin{equation*}
D^b_{m}(\quot{I}{G_0((t))}{I})\isom\hatD^b_{m}(\wqw{I}{G_0((t))}{I}).
\end{equation*}
This is obtained from the above quantized version by specializing the ``quantization parameters'' to zero.
\item Self-duality:
\begin{equation*}
\Psi:D^b_{m}(\quot{I^u}{G_0((t))}{I})\isom D^b_{m}(\quot{I}{G_0((t))}{I^u}).
\end{equation*}
which exchanges IC-sheaves and tilting sheaves. Moreover, the functor $\inv\circ\Psi$ is involutive (where $\inv:D^b_{m}(\quot{I^u}{G_0((t))}{I})\isom D^b_{m}(\quot{I}{G_0((t))}{I^u})$ is induced by the inversion map of $G_0((t))$).
\end{itemize}
\end{introthm}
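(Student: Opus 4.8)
The plan is to deduce all three equivalences from the arguments proving the Main Theorem, applied to the affine Kac--Moody group $G=\hatLG\rtimes\grot$ with Borel $B=HI^u$, where $H=\gcen\times H_0\times\grot$ and $U=I^u$. With these choices $G/B=\Fl$ and $D^b_m(\quot{B}{G}{B})=D^b_{m,\grot}(\quot{I}{G_0((t))}{I})=\scE_0$. The one real deviation from the Main Theorem is that on the dual side one uses not the full enhanced flag variety $\tilFl=G/U$ (an $H$-torsor over $\Fl$) but the partial enhancement $\hatFl=\hatLG/I^u$ (only an $\hatH_0$-torsor): on the $\scE_0$-side the central $\gcen$ has been quotiented out and $\grot$ is kept as equivariance, whereas on the dual side the central $\gcen$ is kept --- its logarithmic monodromy being the quantization parameter --- and the rotation torus is dropped. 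This asymmetry, which reflects the fact that Langlands duality for affine Kac--Moody groups exchanges the central-extension and loop-rotation data, is exactly why these statements are not literal special cases of the Main Theorem. Because the affine Kac--Moody Lie algebra is symmetrizable, Variant~(2) lets one express the dual side in terms of $\hatLG$ itself, yielding $\hsM_0=\hatD^b_m(\wqw{\hatI}{\hatLG}{\hatI})$.

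\textbf{The quantized equivariant--monodromic equivalence.} I would carry out the proof of Theorem~\ref{th:emduality} with these substitutions, in the following order. (i) Stratify $\Fl$ and $\hatFl$ by Iwahori orbits, indexed by the affine Weyl group $\Wa$, recording on each stratum the relevant (pro-)local systems and Frobenius weights. (ii) Construct the standard and costandard objects and, via recollement and highest-weight arguments, the indecomposable tilting and free-monodromic tilting objects on both sides (these replace projectives, which the categories lack). (iii) Produce the combinatorial model of both categories depending only on $(V_H,W)$ that underlies Variant~(1), and check it carries the two convolution products on $\scE_0$ and $\hsM_0$. (iv) Define $\Phi$ on generators by sending standards to standards, costandards to costandards, and $\IC$-sheaves to indecomposable free-monodromic tilting sheaves; on the combinatorial model $\Phi$ is the identity up to the regrading $(1)\mapsto[-2](-1)$ --- exactly the ``regrading functor'' of the $G=\GG_m$ baby case --- and one checks this extends to a triangulated equivalence. (v) Upgrade $\Phi$ to a monoidal functor: the key input is the Frobenius-semisimplicity of $\IC_{w_1}\ast\IC_{w_2}$ and the parallel statement for convolutions of free-monodromic tilting sheaves (Proposition~\ref{p:icdecomp}, Corollary~\ref{c:tdecomp}), which guarantees that the combinatorial monoidal structure matches convolution on both sides. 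The two quantization parameters are matched by identifying a generator of $H^2_{\grot}(\pt)$ with the logarithm of the $\gcen$-monodromy.

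\textbf{Non-quantized version and self-duality.} The non-quantized equivalence is the specialization of the quantized one at parameter $0$: forget the $\grot$-equivariance on the $\scE_0$-side and restrict to unipotent central monodromy on the other side; since the combinatorial model is free over the quantization parameter, this specialization is a derived base change carrying the equivalence along. For the self-duality $\Psi$ I would follow Theorem~\ref{th:selfduality}: here one side has equivariance and the other monodromicity in the ``same'' coordinate, so $\Psi$ is obtained either by composing $\Phi$ with a long intertwining (Radon) functor or directly from the combinatorial model using its left--right symmetry, with the exchange of $\IC$-sheaves and tilting sheaves built in. The involutivity of $\inv\circ\Psi$ then follows from the inversion anti-automorphism $g\mapsto g^{-1}$ of $G_0((t))$: it interchanges the two one-sided quotients and acts on the combinatorial model through the symmetry $w\mapsto w^{-1}$ used in step~(iii), so the claim reduces to comparing the two endofunctors on standard, costandard and tilting generators, where the square is seen to be canonically the identity.

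\textbf{Main obstacle.} The principal difficulty is not the reduction but the foundations it rests on: constructing the completed monodromic category $\hsM_0$ as a category of pro-objects and extending the convolution monoidal structure, with all its constraints, to it --- the work done in the Appendix (\S\ref{ss:compmono}). Specific to loop groups, one must also handle the infinite-dimensional ind-schemes $\Fl$ and $\hatFl$ by reducing to their finite-dimensional Schubert subvarieties and verifying that tilting objects, convolutions, the regrading and the weight bookkeeping for the central extension are all compatible with the relevant limits. Keeping straight, throughout, which of $\gcen$ and $\grot$ is equivariant and which monodromic on each side, and how Tate twists transport under $(1)\mapsto[-2](-1)$, is the part most likely to require genuine work rather than ``minor modifications.''
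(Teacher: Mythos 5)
Your proposal follows essentially the same route the paper intends for this theorem (which it only sketches in the introduction, asserting that ``only minor modifications'' of the main body are needed): reduce to the proof of Theorems \ref{th:emduality} and \ref{th:selfduality} via the DG/combinatorial models of Appendix \ref{a:dgmodel}, with the $\gcen$/$\grot$ bookkeeping you describe (this is exactly the role of the tori $H'$ and $H''$ in Steps I--II of \S0.6, identified via the Killing form), the reduction to reduced words and the $\SL(2)$ computations of Appendix \ref{a:SL2}, and the foundational work on $\hsM_0$ from Appendix \ref{a:compmono}. Your reading of the asymmetry between the two sides and your identification of the main obstacle are both accurate.

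One point needs correcting, however. In your step (v) you cite Proposition \ref{p:icdecomp} and Corollary \ref{c:tdecomp} as the ``key input'' for the monoidal structure of $\Phi$. Corollary \ref{c:tdecomp} is deduced \emph{from} the equivalence $\Phi$ together with Proposition \ref{p:icdecomp}, so using it as an input would be circular; and even Proposition \ref{p:icdecomp} is a by-product rather than an ingredient of the monoidal structure. What actually makes the construction work --- both the identification of the two combinatorial models and the matching of the convolutions --- is the pair of fully faithful monoidal functors $\HH$ and $\VV$: full faithfulness on very pure complexes (Prop.\ \ref{p:hff}) and on free-monodromic tilting sheaves (Prop.\ \ref{p:Vff}), together with their monoidal structures (Prop.\ \ref{p:hcomp}, Prop.\ \ref{p:vcomp}) and the stability of very pure complexes and free-monodromic tiltings under convolution (Prop.\ \ref{p:vpure}, Prop.\ \ref{p:tiltconv}). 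These are also what legitimize your step (iv): one cannot in general extend a functor from generators of a triangulated category, and it is precisely the equivalences $\scE_0\cong D_{\perf}(E_0,\Frob)$ and $\hsM_0\cong D_{\perf}(M_0,\Frob)$, with $E_0$ and $M_0$ computed as endomorphism algebras of the explicit $\Sym(V_{H'}^\vee)$-bimodules $\HH(\IC_{\unw})$ and $\VV(\tcT_{\unw})$, that turn ``identity up to regrading on the combinatorial model'' into an actual triangulated equivalence.
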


For parabolic-Whittaker duality, we need to fix a {\it parahoric}\footnote{Parabolic subgroups of a loop group are usually called parahoric subgroups.} subgroup of $G_0((t))$. Here, to simplify notations, we only spell out the case when this parahoric subgroup is $G_0[[t]]$. Let $\Grass_{G_0}$ be the affine Grassmannian $G_0[[t]]\backslash G_0((t))$. Let $D^b_{m,\grot}(\Grass_{G_0}/I)$ be the derived category of mixed complexes on $\Grass_{G_0}$ equivariant under the right $I$-action and the loop rotation. Let $V$ be the preimage of $U^-_0$ (unipotent radical of the Borel $B^-_0$ opposite to $B_0$) under the evaluation map $G_0[[t]]\to G_0$. Let $\chi:V\to U^-_0\to\GG_a$ be a generic additive character. We can consider the category $D^b_{m}((V,\chi)\backslash\qw{\hatFl}{\hatH_0})$ of $(V,\chi)$-equivariant complexes on $\hatFl$ which are monodromic under the right $\hatH_0$-action with unipotent monodromy.
{\it
\begin{itemize}
\item []
\item Parabolic-Whittaker duality (for the affine Grassmannian):
\begin{equation*}
\Phi_{\Grass}: D^b_{m,\grot}(\Grass_{G_0}/I)\isom \hatD^b_{m}((V,\chi)\backslash\qw{\hatFl}{\hatH_0}).
\end{equation*}
\item Paradromic-Whittavariant duality (for the affine Grassmannian):
\begin{equation*}
\Psi_{\Grass}:D^b_{m}(\Grass_{G_0}/I^u)\isom D^b_{m}((V,\chi)\backslash\Fl).
\end{equation*}
\end{itemize}
}

\subsection{Main steps of the proof} To motivate the main idea of the proof of the equivariant-monodromic duality (Theorem \ref{th:emduality}), we briefly indicate the main steps of the proof of the quantized equivariant-monodromic duality for loop groups.

\noindent\textbf{Step I} (\S\ref{s:eq}). Taking global sections (or equivariant cohomology) of an object $\calF\in\scE_0$ gives a module over the equivariant cohomology ring $\upH^*_{\grot}(\quot{I}{G((t))}{I})$. This equivariant cohomology ring has been studied by Kostant-Kumar \cite{KK}. We can identify $\upH^2_{\grot}(\quot{I}{G((t))}{I})$ with $V^{\vee}_{H'}$, the dual of the $\Ql$-Tate module of the following torus (see \cite[\S3.7]{Yun1})
\begin{equation*}
H'=\ker(H\times H\xrightarrow{p_1/p_2}\grot)/\Delta(\gcen).
\end{equation*}
Here $p_1$ and $p_2$ are the canonical projections $H\to\grot$ applied to the first and second copy of $H$, and $\Delta$ means the diagonal embedding. Therefore we get a global section functor
\begin{equation*}
\HH:\scE_0\to D^b(\Sym(V_{H'}^\vee),\Frob)
\end{equation*}
(Here the grading on $\HH\calF=\upH^{*}_{I}(\Fl,\calF)$ is modified: it is given by a mixture of cohomological grading and Frobenius weights) In \S\ref{ss:Econv}, we show that $\HH$ has a natural monoidal structure (Proposition \ref{p:hcomp}). In \S\ref{ss:proofH}, we prove that $\HH$ is fully faithful on very pure complexes, using essentially the argument of Ginzburg \cite{Ginz}.

\noindent\textbf{Step II} (\S\ref{s:mon}). Each object of $\hsM_0$ carries unipotent monodromy coming from the action of $H\times H$ on $\quot{I^u}{G}{I^u}$ by $(h_1,h_2)\cdot x=h_1xh_2$. More precisely, let $H''$ be the torus
\begin{equation*}
\ker(H\times H\xrightarrow{p_1p_2}\grot)/\Delta^-(\gcen).
\end{equation*}
where $\Delta^-$ is the anti-diagonal embedding. Then $\Sym(V_{H''})$ acts as logarithmic monodromy operators on each object of $\hsM_0$. In \S\ref{ss:vv}, we define an exact functor:
\begin{equation*}
\VV:\hsM_0\to D^b(\Sym(V_{H''}),\Frob).
\end{equation*}
The functor $\VV$ can be thought of as an averaging functor. In \S\ref{ss:av} we define the usual averaging functors relating $\hsM_{0}$ and its Whittaker versions. However, extending this definition to $\VV$ involves averaging along infinite dimensional orbits. This technical complication is worked out in \S\ref{ss:vv}. In \S\ref{ss:tcP}, we show that $\VV$ has a natural monoidal structure (Proposition \ref{p:vcomp}). In \S\ref{ss:proofV}, we prove that $\VV$ is fully faithful on \fm tilting sheaves, generalizing \cite[Proposition  in \S2.1]{BBM}.

There are other technical complications in dealing with the completed category $\hsM_{0}$, e.g., the construction of the convolution structure on $\hsM_{0}$ in \S\ref{ss:Mconv}. 

\noindent\textbf{Step III} (\S\ref{ss:em}). Let $\{\IC_w|w\in\Wa\}$ be the IC-sheaves in $\scE_0$ and $\{\tcT_w|w\in \Wa\}$ be the indecomposable free-monodromic tilting sheaves in $\hsM_0$ (both indexed by the affine Weyl group $\Wa$). We define two algebras
\begin{eqnarray*}
E_0:=\bigoplus_{u,v\in \Wa}\Ext^\bullet_{\scE_0}(\IC_u,\IC_v);\\
M_0:=\bigoplus_{u,v\in \Wa}\Hom_{\hsM_0}(\tcT_u,\tcT_v)^f.
\end{eqnarray*}
where $(-)^f$ means taking the Frobenius locally finite part (here the Hom and Ext spaces are taken in the non-mixed categories, hence carrying Frobenius actions). Applying the general result in Appendix \ref{a:dgmodel}, we get equivalences
\begin{equation*}
\scE_0\cong D_{\perf}(E_0,\Frob);\hspace{1cm}\hsM_0\cong D_{\perf}(M_0,\Frob).
\end{equation*}
In other words, $E_0$ and $M_0$ serve as differential graded models for the triangulated categories $\scE_0$ and $\hsM_0$. By the discussion in the previous two steps, we can compute $E_0$ by the endomorphism algebra of $\oplus_w\HH(\IC_w)$, and compute $M_0$ by the endomorphism algebra of $\oplus_w\VV(\tcT_w)$. Therefore to prove the equivalence, we first need to identify $V_{H'}^\vee$ with $V_{H''}$ (up to an inversion of the Frobenius action) using the Killing form, and then identify $\HH(\IC_w)$ with $\VV(\tcT_w)$, which can be done in an explicit way. In fact, our strategy will be slightly different: instead of using $\IC_{w}$ and $\tcT_{w}$ to produce the algebras $E_{0}$ and $M_{0}$, we use the iterated convolutions $\IC_{s_{1}}\conv{I}\cdots\conv{I}\IC_{s_{m}}$ and $\tcT_{s_1}\conv{I^{u}}\cdots\conv{I^{u}}\tcT_{s_m}$ for reduced words $s_{1}\cdots s_{m}$. This strategy only requires explicit knowledge of the case $\SL(2)$ (which is done in Appendix \ref{a:SL2}).

The above discussion also shows why $\scE_0$ and $\hsM_0$ only depends on the combinatorial data $(V_H,W)$: the algebras $E_0$ (or $M_0$) can be identified with the endomorphism algebra of certain explicit $\Sym(V^\vee_{H'})$-modules. These are the so-called {\em Soergel bimodules} in the case $G$ is a reductive group.

\subsection{Organization of the paper} Above we reviewed the contents of \S\ref{s:eq} through \S\ref{ss:em}. The rest of \S\ref{s:dual} is devoted to the proof of the other three dualities mentioned in the Main Theorem. The self-duality is derived from the equivariant-monodromic duality by killing part of the equivariance/monodromy. The parabolic-Whittaker duality is derived from the equivariant-monodromic duality by a Barr-Beck type argument.

This paper has three appendices, written by Z.Yun. Appendix \ref{a:compmono} constructs the completions of the various monodromic categories by adding objects with free unipotent monodromy. To this end, we need to set up the framework for working with pro-objects in triangulated categories. Appendix \ref{a:dgmodel} constructs the differential graded models for the equivariant categories and completed monodromic categories. We treat these two cases in a uniform way. Appendix \ref{a:SL2} collects some simple results in the case of $G=\SL(2)$ which are proved by direct calculations.

{\bf Acknowledgement}
The work started during the special year of IAS on ``New Connections of Representation Theory to Algebraic Geometry and Physics''. The authors would like to thank IAS for its hospitality. Z.Y. would also like to thank the hospitality of the Kalvi Institute of Theoretic Physics where part of the paper was written.

R.B. would like to thank A.Beilinson and V.Ginzburg for comments on the history of the subject, and I.Mirkovi\'c for generously sharing his ideas which contributed to this work. Z.Y. would like to thank P.Deligne for helping him through the difficulties of defining completions of monodromic categories. The authors thank G.Williamson and the referee for helping improve the presentation of the paper.


\section{Notation and conventions}

\subsection{Notation concerning categories}
Given an adjoint pair of functors $(L,R)$ (i.e., $L$ is the left adjoint of $R$), we usually write the arrow representing $L$ above the arrow representing $R$. For example, the diagram
\begin{equation*}
\xymatrix{\scD_1\ar@<.7ex>[r]^{L} & \scD_2\ar@<.7ex>[l]^{R}}
\end{equation*}
means that $L$ is the left adjoint of $R$.

We adopt the following notation: let $\calF_i$ be objects in a triangulated category $\scD$, then $\langle\calF_1,\calF_2\cdots\rangle$ (denoted by $\langle\calF_1\conv{}\calF_2\conv{}\cdots\rangle$ in \cite{BBD}) means the class of objects in $\scD$ which are successive extensions of $\calF_i$.

\subsection{Notation concerning algebra}\label{not:alg}
Let $k=\FF_q$ be a finite field. Let $\Frob$ be the geometric Frobenius element in $\Gal(\overline{k}/k)$. Let $\ell$ be a prime different from char($k$). Fix an isomorphism $\Ql\cong\CC$ so that we have an archimedean norm $|-|$ on $\Ql$. Fix a square root of $q$ in $\Ql$ so that the half Tate-twist $(1/2)$ makes sense.

A $\Frob$-module is a $\Ql$-vector space equipped with an automorphism $\Frob_{M}:M\to M$. A $\Frob$-module is called {\em locally finite} if it is a union of finite-dimensional $\Frob$-submodules. We will use
\begin{equation}\label{Frobfin}
(-)^f: \{\Frob\textup{-modules}\}\to\{\textup{locally finite }\Frob\textup{-modules}\}
\end{equation}
to denote the functor which sends a $\Frob$-module $M$ to the union of its finite-dimensional $\Frob$-submodules. 

A locally finite $\Frob$-module is called {\em continuous} if the eigenvalues of $\Frob_{M}$ on $M$ are $\ell$-adic units. If $M$ is finite-dimensional, this is equivalent to saying that the assignment $\Frob\mapsto\Frob_{M}$ extends to a continuous homomorphism $\Gal(\overline{k}/k)\to\Aut_{\Ql}(M)$ (the target being under the $\ell$-adic topology). A general $\Frob$-module $M$ is called continuous if $M^{f}$ is.

For a locally finite $\Frob$-module $M$, the {\em weights} of $M$ are the real numbers $2\log(|\lambda|)/\log(q)$ where $\lambda$ runs over the eigenvalues of $\Frob_{M}$ on $M$. The weights of a general $\Frob$-module $M$ are those of $M^{f}$.

For a $\Frob$-module $M$, we use $M^{\Funi}$ to denote the $\Frob$-submodule of $M^{f}$ on which $\Frob$ acts unipotently. 

For a $\Frob$-module $M$, we use $M^\Finv$ to denote the same vector space $M$, but the action of $\Frob$ is the inverse of the original one.

For a $\Ql$-algebra $E$, we denote by $\Mod(E)$ the abelian category of finitely generated $E$-modules. If $E$ carries a continuous $\Frob$-module structure which is compatible with its algebra structure, let $\Mod(E,\Frob)$ denote the abelian category consisting of $E$-modules $M$ together with a compatible $\Frob$-action, which can be written as a quotient of $E\otimes V$ where $V$ is a finite-dimensional continuous $\Frob$-module with integer weights. We have the bounded derived categories $D^{b}(E)$ (resp. $D^{b}(E,\Frob)$) of $\Mod(E)$ (resp. $\Mod(E,\Frob)$).

{\em Unless otherwise claimed, all $\Frob$-modules in the sequel are understood to be continuous with integer weights}.

\subsection{Notation concerning geometry.}\label{not:geom} All stacks in this paper on which we talk about $\Ql$-sheaves will be the quotient stack $X=[G\backslash Y]$ where $Y$ is a scheme of finite type over $k$ and $G$ a smooth group scheme over $k$ acting on $Y$. We will encounter ind-schemes such as the flag variety $\Fl$ for a Kac-Moody group; however, when talking about sheaves on them, we actually mean sheaves on their finite-type subschemes $Y\subset\Fl$ (with the only exception of the so-called *-complexes, see \S\ref{sss:starsheaf}).

For a global quotient stack $X=[G\backslash Y]$ over $\overline{\FF}_{q}$, we will need the notion of the bounded derived category $D^{b}_{c}(X)$ of constructible $\Ql$-complexes on $X$. Following \cite{BL}, we may define this as the derived category of Cartesian and constructible $\Ql$-complexes on the simplicial scheme 
\begin{equation}\label{simplicial}
\xymatrix{\cdots G\times G\times Y\ar@<.7ex>[r]\ar[r]\ar@<-.7ex>[r] & G\times Y\ar@<.5ex>[r]\ar@<-.5ex>[r]& Y}
\end{equation}
In a series of papers \cite{Olsson},\cite{LO}, Laszlo and Olsson show that the usual sheaf-theoretic operations work also for such stacks.

When $X=[G\backslash Y]$ is a global quotient stack over $k=\FF_{q}$, we also need the notion of mixed $\Ql$-complexes on $X$. We first recall the definition of the mixed derived category $D^{b}_{m}(Y)$ for a scheme $Y$ over $k$. This is the bounded derived category of $\Ql$-complexes on $Y$ whose cohomology sheaves are mixed with integer punctual weights (cf. \cite[\S 5.1.5]{BBD}). Now for a stack $X=[G\backslash Y]$, we define $D^{b}_{m}(X)$ to be the derived category of Cartesian $\Ql$-complexes on the simplicial scheme \eqref{simplicial} (based changed to $\bar{k}$), whose value on $Y$ (and hence on each $G^{n}\times Y$) belongs to $D^{b}_{m}(Y)$.

In particular, $D^b_{m}(\textup{pt})\cong D^b(\Frob)$. When we talk about a ``twist'' of an object $\calF\in D^b_m(X)$, we mean $\calF\otimes M$ for some one dimensional $\Frob$-module (continuous with integer weights). The notation $\calF(?)$ means any such twist.

Let $\omega: D^b_{m}(X)\to D^b_{c}(\geom{X})$ be the pull-back along $\geom{X}\to X$. For a subcategory $\scD\subset D^b_{m}(X)$, we use $\omega\scD$ to denote its essential image in $D^b_{c}(\geom{X})$ under the functor $\omega$. We use the notation $\langle n\rangle$ to mean any combination of shifts and twists which increases the weight by $n$ (note that $[1]$ increases the weight by 1).

We think of $D^b_{m}(X)$ as enriched over $D^b(\Frob)$: for any two objects $\calF,\calF'$, we have $\Frob$-modules:
\begin{eqnarray*}
\RHom_X(\calF,\calF')=\RHom_{\geom{X}}(\omega\calF,\omega\calF')\in D^b(\Frob),\\
\Ext^i_X(\calF,\calF')=\Ext^i_{D^b_{c}(\geom{X})}(\omega\calF,\omega\calF')\in\Mod(\Frob),
\end{eqnarray*}
which are the $\RHom$-complex and Ext-groups in $D^b_{c}(\geom{X})$, rather than in $D^b_m(X)$. The actual $\RHom$-complex in $D^b_{m}(X)$ is
\begin{equation*}
\bR\hom_X(\calF,\calF')=\bR\Gamma(\ZZ\Frob,\RHom_X(\calF,\calF')).
\end{equation*}
where $\bR\Gamma(\ZZ\Frob,-)$ means the derived functor of taking $\Frob$-invariants on $D^b(\Frob)$. The actual Ext-groups (the cohomology groups of $\bR\hom_X(\calF,\calF')$) in $D^b_m(X)$ are denoted by $\ext^i_X(\calF,\calF')$, and they fit into short exact sequences (see \cite[Eq. 5.1.2.5]{BBD})
\begin{equation}\label{intromix}
0\to\Ext^{i-1}_X(\calF,\calF')_{\Frob}\to\ext^i_X(\calF,\calF')\to\Ext^i_X(\calF,\calF')^{\Frob}\to0.
\end{equation}
In summary, the ``Hom'' and ``Ext'' groups are Frobenius modules, while ``hom'' and ``ext'' groups are plain vector spaces.

We use $\Ext^\bullet$ to mean the sum of all $\Ext^i$.

The notation $\upH^*(X)$ or $\upH^*_c(X)$ is understood to be the \'etale cohomology (with compact support) of $\geom{X}$ with constant coefficients $\Ql$.

If $Y$ is a scheme over $k$, the triangulated category $D^b_m(Y)$ carries the perverse t-structure with middle perversity $(\pD^{\leq0}_m(Y),\pD^{\geq0}_m(Y))$ (cf. \cite[\S 2.2]{BBD}). The heart of this t-structure is denoted $P_m(Y)$, the mixed perverse sheaves. For a subcategory $\scD\subset D^b_m(Y)$, we usually omit the left exponent $^p$ and write $\scD^{\leq0}=\scD\cap\pD^{\leq0}_m(Y)$, etc.

For a torus $A$ over $k$, let $T_\ell(A)$ be its $\ell$-adic Tate module and $V_A=T_\ell(A)\otimes_{\ZZ_{\ell}}\Ql\cong H_1(A,\Ql)$. This is a $\Frob$-module of weight -2.

\subsection{Sheaves on ind-schemes}\label{sss:starsheaf} Let $X=\bigcup_{\alpha\in I}X_\alpha$ be an ind-scheme with prescribed closed subschemes $X_{\leq\alpha}$ indexed by a partially ordered set $I$. For $\alpha\leq\beta\in I$, let $i_{\alpha,\beta}:X_{\leq\alpha}\hookrightarrow X_{\leq\beta}$ be the closed embedding.

The categories $\{D^b_{m}(X_{\leq\alpha})\}_{\alpha\in I}$ together with the functors $i_{\alpha,\beta,*}$ form an inductive system of triangulated categories. Let
\begin{equation*}
D^b_{m}(X)=2-\varinjlim_{\alpha\in I}D^b_{m}(X_{\leq\alpha}).
\end{equation*}
be the inductive 2-limit of $D^b_{m}(X_{\leq\alpha})$.

On the other hand, the categories $\{D^b_{m}(X_{\leq\alpha})\}_{\alpha\in I}$ together with the pullback functors $i^*_{\alpha,\beta}$ form a projective system of triangulated categories. Let
\begin{equation*}
\underleftarrow{D}^b_{m}(X)=2-\varprojlim_{\alpha\in I}D^b_{m}(X_{\leq\alpha})
\end{equation*}
be the projective 2-limit of $D^b_{m}(X_{\leq\alpha})$. Objects of $\underleftarrow{D}^b_{m}(X)$ are called {\em *-complexes}, and are usually denoted by $\calF=(\calF_{\leq\alpha})$ with $\calF_{\leq\alpha}\in D^b_{m}(X_{\leq\alpha})$.

There is an obvious fully faithful embedding
\begin{equation*}
D^b_{m}(X)\hookrightarrow\underleftarrow{D}^b_{m}(X).
\end{equation*}

A morphism of ind-schemes $f:X=\bigcup_{\alpha\in I} X_\alpha\to Y=\bigcup_{\beta\in J}Y_\beta$ is said to be {\em bounded} if for every $\beta\in J$, the preimage $f^{-1}(Y_\beta)$ is contained in $X_\alpha$ for some $\alpha\in I$, and the restriction of $f$ to $Y_\beta$ is of finite type. For a bounded morphism $f$, we can define the functor
\begin{equation*}
f_!:\underleftarrow{D}^b_{m}(X)\to\underleftarrow{D}^b_{m}(Y).
\end{equation*}
In fact, for $\calF=(\calF_\alpha)\in D^b_{m}(X)$, let $(f_!\calF)_{\beta}=(f|_{f^{-1}(Y_\beta)})_!(j^*\calF_{\alpha})$ (where $j:f^{-1}(Y_\beta)\hookrightarrow X_\alpha$ is the inclusion). The fact that this family of objects is compatible with the pullback functors $i_{\beta,\beta'}$ for $\beta\leq\beta'\in J$ follows from the proper base change theorem. Therefore the functor $f_!$ sends $\underleftarrow{D}^b_{m}(X)$ to $\underleftarrow{D}^b_{m}(Y)$.

For a morphism of ind-schemes $f:X=\bigcup_{\alpha\in I} X_{\leq\alpha}\to Y=\bigcup_{\beta\in J}Y_{\leq\beta}$, the functor
\begin{equation*}
f^*:\underleftarrow{D}^b_{m}(Y)\to\underleftarrow{D}^b_{m}(X).
\end{equation*}
is always defined. In fact, for a complex $\calF=(\calF_{\leq\beta})\in\underleftarrow{D}^b_{m}(X)$, we let $(f^*\calF)_{\leq\alpha}=j^*(f|_{f^{-1}(Y_{\leq\beta})})^*(\calF_{\leq\beta})$ where $j:X_{\leq\alpha}\to f^{-1}(Y_{\leq\beta})$ is the inclusion. If, in addition, $f$ is bounded, then $f^*$ sends $D^b_{m}(Y)$ to $D^b_{m}(X)$.

We also need a variant of the notion of $*$-complexes in the case of completed monodromic categories (cf. Appendix \ref{ss:compmono}). In the case where $X=\bigcup_{\alpha\in I}X_{\alpha}$ is an $A$-torsor over an ind-scheme $Y=\bigcup_{\alpha\in I}Y_{\alpha}$ (with the induced ind-scheme structure: $X_{\alpha}$ is the preimage of $Y_\alpha$), where $A$ is a torus, we similarly define
\begin{equation*}
\underleftarrow{\hatD}^b_m(\qw{X}{A})=2-\varprojlim_{\alpha\in I}\hatD^b_m(\qw{X_{\leq\alpha}}{A})
\end{equation*}
with the transition functors given by $\tili^*_{\alpha,\beta}$.


\section{Kac-Moody groups and their flag varieties}

\subsection{Kac-Moody groups}\label{ss:KM}

We briefly review the notations concerning Kac-Moody groups that we will use in this paper, following \cite{Mat}. Let $A$ be a generalized Cartan matrix of either finite or affine type, together with a realization over $\QQ$. Let $\frg=\frg(A)$ be the Kac-Moody algebra associated to $A$, which is a Lie algebra over $\QQ$. It has a root decomposition:
\begin{equation}\label{eq:kmlie}
\frg=\frh\oplus\bigoplus_{\alpha\in R}\frg_\alpha
\end{equation}
where $\frh$ is the Cartan subalgebra and $R\subset\frh^*$ is the set of roots. By construction, we have a set of simple roots $\Sigma\subset R$, hence also the positive roots $R^+\subset R$. Let $W$ be the Weyl group associated to $\frh$. This is a Coxeter group with simple reflections in bijection with the set of simple roots $\Sigma$. Let $\ell:W\to\ZZ_{\geq 0}$ be the length function of $W$ in terms of the simple reflections $\Sigma$.

The universal enveloping algebras $U(\frg),U(\frh)$ as well as the integrable highest weight representations $L(\lambda)$ of $\frg$ admit $\ZZ$-forms. Let $k$ be any field. Using these $\ZZ$-forms, one can construct a Kac-Moody group $G$ over $k$. This is a group ind-scheme over $k$. A construction of this group ind-scheme is given in \cite[\S II]{Mat}. We also have the Borel subgroup $B\subset G$ (an affine group scheme), its pro-unipotent radical $U$, and the Cartan subgroup $H$ (a finite dimensional split torus over $k$), such that $B=UH$. The Lie algebras of $H$ and $U$ are $k$-forms of $\frh$ and $\oplus_{\alpha\in R^+}\frg_\alpha$.

\subsection{Flag varieties and Schubert varieties}\label{ss:Flag}
The flag variety $\Fl=\Fl_G$ associated to $G$ is the ind-scheme $G/B$ over $k$. For finite type $G$, $\Fl$ is the usual flag variety parametrizing Borel subgroups of $G$. In general, the ind-scheme structure on $\Fl$ is defined by a family of closed projective subschemes $\Fl_{\leq w}$ called {\em Schubert varieties} (denoted by $S_w$ in \cite{Mat}). Here $\Fl_{\leq w}$ is the closure of the $B$-orbit (also the $U$-orbit) $\Fl_w\subset\Fl$ under left translation. The orbit $\Fl_w$ is isomorphic to an affine space $\AA^{\ell(w)}$. We have $\Fl_{\leq w_1}\subset\Fl_{\leq w_2}$ if and only if $w_1\leq w_2$ in the Bruhat order of $W$. Let $\Fl_{<w}=\Fl_{\leq w}-\Fl_w$. Let $i_w,i_{\leq w}$ and $i_{<w}$ be the embeddings of $\Fl_w,\Fl_{\leq w}$ and $\Fl_{<w}$ into $\Fl$.

For each subset $\Theta$ of $\Sigma$, let $W_\Theta\subset W$ be the subgroup generated by $\Theta$. We say $\Theta$ is of {\em finite type} if $W_\Theta$ is finite. Associated to such a  $\Theta$ of finite type we have a standard parabolic subgroup $P_\Theta$ containing $B$ with Levi decomposition $P_\Theta=U^\Theta L_\Theta$ (where $L_\Theta$ contains $H$). Let $U_\Theta=U\cap L_\Theta$. Let $U^-_\Theta\subset L_\Theta$ be the radical of the Borel of $L_\Theta$ which is opposite to $B\cap L_\Theta$; i.e., $U^-_\Theta$ is the group generated by $U^-_s$ for $s\in\Theta$. We can identify $W_\Theta$ with the Weyl group of $L_\Theta$. Let $w_\Theta\in W_\Theta$ be the element with maximal length, whose length we denote by $\lTh$. Let $[\coW]\subset W$ (resp. $\{\coW\}$) be the minimal (resp. maximal) length representatives of cosets in $\coW$. We also have a length function $\ell:\coW\cong[\coW]\xrightarrow{\ell}\ZZ_{\geq 0}$ and a partial order on $\coW$ inherited from the Bruhat order on $W$.

Let $\PFl=P_\Theta\backslash G$ be the partial flag variety associated to the parabolic subgroup $P_\Theta$. Let $\pi^\Theta:\LFl=B\backslash G\to\PFl$ be the natural projection. The orbits of the right $B$ (or $U$) action on $\PFl$ are indexed by $\coW$. For each $\barw\in\coW$, the orbit $\PFl_{\barw}=P_\Theta\backslash P_\Theta wB$ is isomorphic to $\AA^{\ell(\barw)}$. As in the case of $\Fl$, the notations $\PFl_{\leq\barw},\PFl_{<\barw},i_{\barw},i_{\leq\barw}$ have the obvious meanings.

Fix $\Theta\subseteq\Sigma$. The $U^\Theta U^-_{\Theta}$-orbits on $\Fl$ are still indexed by the Weyl group $W$. The closure relation of $U^\Theta U^-_{\Theta}$-orbits define another partial ordering $\Tleq$ on $W$: we have $w\Tleq w'\Leftrightarrow w_\Theta w\leq w_\Theta w'$. For each $w\in W$, let $\Fl^\Theta_w=U^\Theta U^-_{\Theta} wB/B$ and $\Fl^\Theta_{\leq w}$ be its closure in $\Fl$.

Let $\tilFl:=G/U$ be the enhanced affine flag variety of $G$. The natural projection $\pi:\tilFl\to\Fl$ is a right-$H$-torsor. The ind-scheme $\tilFl$ is stratified by $B$-orbits which are also indexed by $W$. Let $\tilFl_w,\tilFl_{\leq w},\tilFl_{<w},\tilFl^\Theta_w,\tilFl^\Theta_{\leq w}$ and $\tilFl^{\leq w}$ be the preimages of their counterparts in $\Fl$ under $\pi$. Let $\tili_w$ (resp. $\tili_{\leq w}$) be the inclusion of $\tilFl_{w}$ (resp. $\tilFl_{\leq w}$) into $\tilFl$.

The following fact is well-known:
\begin{lemma}\label{l:fact}
Fix $\Theta\subset\Sigma$. For each $\barw\in\coW$, there exists a normal subgroup $J_{\barw}$ of $U$ of finite codimension such that the right translation action of $J_{\barw}$ on $\PFl_{\leq w}$ is trivial.
\end{lemma}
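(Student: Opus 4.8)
The plan is to reduce the statement to a finite-dimensionality fact about the action of the pro-unipotent group $U$ on a fixed Schubert variety. Recall that $U$ is a pro-unipotent affine group scheme, so it is the inverse limit $U=\varprojlim U/U_i$ of its finite-dimensional unipotent quotients, where the $U_i$ range over an exhausting system of normal subgroups of finite codimension. Concretely, one may take the $U_i$ to be the congruence-type subgroups cut out by the grading of $\Lie U=\oplus_{\alpha\in R^+}\frg_\alpha$ by height of roots: let $U_i$ be the (normal, since defined by a Lie-algebra ideal stable under the adjoint action of $B$) subgroup whose Lie algebra is $\oplus_{\operatorname{ht}(\alpha)>i}\frg_\alpha$. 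Then $U/U_i$ is a finite-dimensional unipotent group, and $\varprojlim U/U_i=U$.

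First I would fix $\barw\in\coW$ and pass to the partial flag variety $\PFl=P_\Theta\backslash G$. Choose $w\in[\coW]$ a minimal-length representative and recall that $\PFl_{\leq w}$ is the (projective, finite-type) closure of the orbit $\PFl_{\barw}=P_\Theta\backslash P_\Theta wB$. The key geometric input is that $\PFl_{\leq w}$ is a finite-type scheme on which $U$ acts (through right translation on $G$, descending to the left $U$-action on $P_\Theta\backslash G$ after passing to the relevant $B$-stable piece; more precisely $U$ acts on $\PFl$ via $u\cdot(P_\Theta g)=P_\Theta gu^{-1}$, and this action preserves each $\PFl_{\leq w}$ because $B$-orbits, hence their closures, are $U$-stable). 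The action morphism $U\times\PFl_{\leq w}\to\PFl_{\leq w}$ is a morphism of ind-schemes; since $\PFl_{\leq w}$ is of finite type and the action factors through an algebraic action in each ``layer'', one shows it factors through $U/U_{\barw}\times\PFl_{\leq w}\to\PFl_{\leq w}$ for some large enough index, i.e.\ $U_{\barw}$ acts trivially. Taking $J_{\barw}:=U_{\barw}$ then completes the argument.

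The step I expect to be the main obstacle is the factorization claim: showing that the $U$-action on the fixed finite-type scheme $\PFl_{\leq w}$ actually factors through a finite-dimensional quotient $U/U_{\barw}$. The heuristic is clear --- $\PFl_{\leq w}$ involves only ``finitely many root directions'', namely those $\alpha$ with $s_\alpha\leq w$ in the appropriate sense, and the deeper congruence subgroups $U_i$ act trivially because their Lie algebras lie in root spaces that do not show up in the Bott--Samelson / big-cell coordinates on $\PFl_{\leq w}$ --- but making this rigorous requires a concrete model. I would use the covering of $\PFl_{\leq w}$ by the translates $\{P_\Theta\backslash P_\Theta x B : x\leq w\}$ of the big Bruhat cells: on each such cell $P_\Theta\backslash P_\Theta xB\cong x(U\cap x^{-1}U^-_{P_\Theta}x)$, which is a finite-dimensional affine space, and the right $U$-action in these coordinates is polynomial and manifestly trivial on all but finitely many of the root coordinates of $U$; intersecting the finitely many ``relevant'' sets of roots over the finitely many cells $x\leq w$ gives a cofinite set of roots on which $U$ acts trivially, hence the desired $U_{\barw}$. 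The normality of $U_{\barw}$ in $U$ is automatic from the root-ideal description. Alternatively, and perhaps more cleanly, one invokes the general principle (e.g.\ from \cite{Mat}) that the $G$-action on each Schubert variety $\Fl_{\leq w}$ factors through a finite-dimensional quotient group $G_{\leq w}$ of the relevant parabolic-type subgroup; then $J_{\barw}$ is simply the kernel of $U\to$ (image of $U$ in $G_{\leq w}$), which is of finite codimension in $U$ and normal.
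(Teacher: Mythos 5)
The paper offers no proof of this lemma --- it is recorded as well-known --- so there is nothing internal to compare against; your argument is correct and is the natural one. Two points of exposition are worth tightening. First, the pieces $P_\Theta\backslash P_\Theta xB$, $x\le w$, form the Bruhat stratification of $\PFl_{\leq w}$ by locally closed subschemes, not an open cover; this does no harm, since the equalizer of the $J_{\barw}$-action morphism and the projection is closed in $J_{\barw}\times\PFl_{\leq w}$, and once it contains every stratum it is everything by reducedness of the Schubert variety. Second, the step that should be made explicit is that the congruence subgroups $U_i$ are normal in $B$ (not merely in $U$), because the height filtration of $\Lie U$ is stable under the adjoint action of $B$: for $v\in[\coW]$ with $\barv\le\barw$ the stabilizer $U\cap v^{-1}P_\Theta v$ of the base point $P_\Theta v$ contains $U_i$ once $i$ exceeds the heights of the finitely many positive roots inverted by $v$, and then normality of $U_i$ in $B$ propagates this fixed-point statement from the base point to the whole $B$-orbit $\PFl_{\barv}$. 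Choosing $i$ to work simultaneously for all the finitely many $v\le w$ and setting $J_{\barw}=U_i$ completes the argument; your alternative route through \cite{Mat} is equally valid.
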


\subsection{The big cell}
In \cite[Remarks following Lemma 8]{Mat}, the big open cell $C=C(G/B)\subset\Fl$ is defined as follows. Recall from \cite[\S I]{Mat} that $\Fl_{\leq w}$ admits a projective embedding $\Fl_{\leq w}\hookrightarrow\PP(E_w(\lambda))$, where $E_w(\lambda)=U(\frb)L(\lambda)_{w\lambda}$, and $L(\lambda)_{w\lambda}$ is the $w\lambda$-weight line in the highest weight representation $L(\lambda)$ (the highest weight $\lambda$ is regular dominant). Let $L(\lambda)^*$ be the contragredient of $L(\lambda)$ with lowest weight vector $\sigma_{-\lambda}$ of weight $-\lambda$. Then $\sigma_{-\lambda}=0$ defines a hyperplane in $\PP(E_w(\lambda))$ and we let $C_{\leq w}:=C\cap\Fl_{\leq w}$ be the complement of this hyperplane in $\Fl_{\leq w}\hookrightarrow\PP(E_w(\lambda))$.

For any simple reflection $s\in\Sigma$ corresponding to the simple root $\alpha_s$, pick a nonzero vector $e_s\in\frg_{\alpha_s}$. Consider the vector $e_s\sigma_{-\lambda}\in L(\lambda)^*$, which has weight $-\lambda+\alpha_s$. The rational function $e_s\sigma_{-\lambda}/\sigma_{-\lambda}$ on $\PP(E_w(\lambda))$, pulled back to $\Fl_{\leq w}$, gives a rational function $\rho_{s,\lambda}$ on $\Fl_{\leq w}$. It is easy to check

\begin{lemma}\label{l:maptoUs}
The rational function $\rho_{s,\lambda}$ is independent of the regular dominant weight $\lambda$ and compatible with the embeddings $\Fl_{\leq w}\hookrightarrow\Fl_{\leq w'}$. Therefore it defines a rational function $\rho_s$ on $\Fl$ which is regular on $C$.
\end{lemma}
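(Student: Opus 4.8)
The statement to prove is Lemma \ref{l:maptoUs}: the rational function $\rho_{s,\lambda}$ on $\Fl_{\leq w}$ is independent of the regular dominant weight $\lambda$, is compatible with the closed embeddings $\Fl_{\leq w}\hookrightarrow\Fl_{\leq w'}$, and hence descends to a rational function $\rho_s$ on $\Fl$ which is regular on the big cell $C$. The plan is to check these three assertions one at a time, reducing each to a statement about the highest weight representations $L(\lambda)$ and their contragredients.

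First I would address independence of $\lambda$. The function $\rho_{s,\lambda}$ is defined as the pullback to $\Fl_{\leq w}$ of the ratio $e_s\sigma_{-\lambda}/\sigma_{-\lambda}$, where $\sigma_{-\lambda}\in L(\lambda)^*$ is the lowest weight vector. Given two regular dominant weights $\lambda,\mu$, one has a canonical $\frg$-equivariant inclusion $L(\lambda+\mu)^*\hookrightarrow L(\lambda)^*\otimes L(\mu)^*$ (dually, $L(\lambda+\mu)$ is the Cartan component of $L(\lambda)\otimes L(\mu)$), under which $\sigma_{-(\lambda+\mu)}\mapsto \sigma_{-\lambda}\otimes\sigma_{-\mu}$ up to a nonzero scalar, since the lowest weight line is one-dimensional. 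Applying $e_s$ and using that $e_s$ acts by the Leibniz rule, $e_s(\sigma_{-\lambda}\otimes\sigma_{-\mu}) = (e_s\sigma_{-\lambda})\otimes\sigma_{-\mu} + \sigma_{-\lambda}\otimes(e_s\sigma_{-\mu})$. Dividing by $\sigma_{-(\lambda+\mu)}$ and restricting to the big cell (where the function $\sigma_{-\lambda}$ is nonvanishing), one gets $\rho_{s,\lambda+\mu} = \rho_{s,\lambda} + \rho_{s,\mu}$ as rational functions. Combining this additivity with the obvious scaling $\rho_{s,n\lambda}=n\,\rho_{s,\lambda}$ for $n\geq 1$ (same argument with $\mu=\lambda$ iterated) forces $\rho_{s,\lambda}$ to be independent of $\lambda$: writing $\lambda'=n\lambda$ in two ways for regular dominant $\lambda,\lambda'$ sharing a common multiple, or more simply noting that $n\rho_{s,\lambda}=\rho_{s,n\lambda}=\rho_{s,\lambda+(n-1)\lambda}$ and using additivity with a comparison weight, we conclude all $\rho_{s,\lambda}$ agree. (One should double-check the normalization of the identification of $\sigma_{-(\lambda+\mu)}$ with $\sigma_{-\lambda}\otimes\sigma_{-\mu}$ is irrelevant since it cancels in the ratio.)

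Next, compatibility with the embeddings $i_{w,w'}\colon\Fl_{\leq w}\hookrightarrow\Fl_{\leq w'}$ for $w\leq w'$. Here I would recall from \cite[\S I]{Mat} that the projective embeddings $\Fl_{\leq w}\hookrightarrow\PP(E_w(\lambda))$ and $\Fl_{\leq w'}\hookrightarrow\PP(E_{w'}(\lambda))$ are compatible: $E_w(\lambda)=U(\frb)L(\lambda)_{w\lambda}\subseteq E_{w'}(\lambda)$ when $w\leq w'$, and the embedding of the smaller Schubert variety is the restriction of the larger one. Both the numerator $e_s\sigma_{-\lambda}$ and the denominator $\sigma_{-\lambda}$ are linear functionals on the ambient $L(\lambda)^*$ — or rather, they define linear forms on $E_{w'}(\lambda)$ by restriction — and the rational function $e_s\sigma_{-\lambda}/\sigma_{-\lambda}$ on $\PP(E_{w'}(\lambda))$ restricts to the corresponding one on $\PP(E_w(\lambda))$. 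Pulling back along the compatible closed embeddings, $i_{w,w'}^*\rho_{s,\lambda}^{(w')}=\rho_{s,\lambda}^{(w)}$. Hence the $\rho_{s,\lambda}^{(w)}$ glue to a single rational function $\rho_s$ on the ind-scheme $\Fl=\bigcup_w\Fl_{\leq w}$.

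Finally, regularity on $C$. By construction $C_{\leq w}=C\cap\Fl_{\leq w}$ is the complement in $\Fl_{\leq w}$ of the hyperplane $\{\sigma_{-\lambda}=0\}$, i.e. precisely the locus where the denominator $\sigma_{-\lambda}$ does not vanish. On this open set the ratio $e_s\sigma_{-\lambda}/\sigma_{-\lambda}$ is a regular function, so $\rho_s$ restricted to $C_{\leq w}$ is regular; letting $w$ vary gives that $\rho_s$ is regular on all of $C$. I expect the main obstacle to be the first step: pinning down precisely how the lowest weight vectors and the action of $e_s$ behave under the tensor-product/Cartan-component identification $L(\lambda+\mu)\hookrightarrow L(\lambda)\otimes L(\mu)$ in the Kac-Moody (possibly infinite-dimensional, affine-type) setting, where one must be a little careful that the relevant weight spaces are finite-dimensional and that the $\ZZ$-form constructions of \cite{Mat} are respected — but this is exactly the kind of manipulation the cited construction is built to support, so it should go through without surprises.
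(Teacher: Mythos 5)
Your steps (2) and (3) are fine: compatibility with the embeddings follows from $E_w(\lambda)\subseteq E_{w'}(\lambda)$ and the compatibility of the projective embeddings, and regularity on $C$ is immediate since $C_{\leq w}$ is by definition the non-vanishing locus of the denominator $\sigma_{-\lambda}$. (The paper itself offers no argument beyond ``it is easy to check'', so there is nothing to compare against.) The problem is step (1). The additivity relation you derive, $\rho_{s,\lambda+\mu}=\rho_{s,\lambda}+\rho_{s,\mu}$, is correct, but it is flatly incompatible with the conclusion you draw from it: if all the $\rho_{s,\lambda}$ were equal to a common $\rho_s$, additivity would force $\rho_s=2\rho_s$, i.e.\ $\rho_s=0$, contradicting Lemma \ref{l:fiberA1}. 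What additivity together with your homogeneity actually proves is that $\lambda\mapsto\rho_{s,\lambda}$ is \emph{linear} in $\lambda$, not constant.

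And it is genuinely linear. Evaluate at a point $\iota(c)=[gv_0]$ of $C_{\leq w}$ with $g$ a product of negative simple root group elements as in the proof of Lemma \ref{l:fiberA1}: then $\sigma_{-\lambda}(gv_0)=\sigma_{-\lambda}(v_0)$, and writing $gv_0=v_0+c_s(g)\,f_sv_0+(\text{other weights})$ with $f_s\in\frg_{-\alpha_s}$, one gets $(e_s\sigma_{-\lambda})(gv_0)=-\sigma_{-\lambda}(e_sgv_0)=-c_s(g)\,\langle\lambda,\alpha_s^\vee\rangle\,\sigma_{-\lambda}(v_0)$, since $e_sf_sv_0=\langle\lambda,\alpha_s^\vee\rangle v_0$ and $\lambda-\alpha_s$ is reached from $\lambda$ only via $f_s$. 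The coordinate $c_s(g)$ depends only on $g$, so $\rho_{s,\lambda}=-\langle\lambda,\alpha_s^\vee\rangle\,c_s$. Hence $\rho_{s,\lambda}$ is independent of $\lambda$ only up to the nonzero scalar $\langle\lambda,\alpha_s^\vee\rangle$ (nonzero because $\lambda$ is regular dominant); to get the lemma as literally stated one must normalize, e.g.\ divide by $\langle\lambda,\alpha_s^\vee\rangle$. This scalar is harmless for everything the paper uses $\rho_s$ for (the isomorphism $C_x\isom\AA^1$ and the genericity of $\chi$), but your tensor-product manipulation cannot yield constancy, and the direct stalk computation above is the argument to make.
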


Let $\tilC\subset\tilFl$ be the preimage of $C$.
\begin{lemma}\label{l:Ctriv}
The $H$-torsor $\pi^C:\tilC\to C$ is trivializable.
\end{lemma}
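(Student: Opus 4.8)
The plan is to produce an explicit trivializing section of the $H$-torsor $\pi^C:\tilC\to C$, using the rational functions $\rho_s$ from Lemma \ref{l:maptoUs}. The key point is that the big cell $C$ is, as in the finite-type case, the orbit of a single point — namely the image of $U^-$ (the pro-unipotent radical of the opposite Borel) under the map $u\mapsto uB/B$ — and that on this orbit the ``coordinate'' functions $\rho_s$ (one for each simple reflection $s\in\Sigma$) are regular and jointly identify $C$ with an infinite-dimensional affine space, or at least with a space on which $H$ cannot act nontrivially. First I would recall from \cite{Mat} that $C$ is contained in (in fact equals) the $U^-$-orbit of the base point; restricting to each Schubert cell $C_{\leq w}\subset\Fl_{\leq w}$, this is a finite-type affine variety and $\tilC_{\leq w}\to C_{\leq w}$ is an $H$-torsor of finite type, so it suffices to trivialize each $\tilC_{\leq w}\to C_{\leq w}$ compatibly with the closed embeddings $C_{\leq w}\hookrightarrow C_{\leq w'}$.

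The mechanism for the trivialization is this: on the preimage $\tilC\subset\tilFl=G/U$, the lowest weight vector $\sigma_{-\lambda}$ of $L(\lambda)^*$ gives, for each regular dominant $\lambda$, a function on $\tilC$ (not just a rational function up to scalar, since we have killed the $H$-ambiguity by passing to $G/U$ — more precisely, evaluation against $\sigma_{-\lambda}$ of the canonical vector in $E_w(\lambda)$ coming from the $U$-reduction is a genuine function), and this function is nowhere vanishing on $\tilC$ by the very definition of $C$ as the locus where $\sigma_{-\lambda}\neq 0$. As $\lambda$ ranges over regular dominant weights these functions assemble into a map $\tilC\to H$ (using that the characters of $H$ are generated by such $\lambda$'s in the relevant sense), which is $H$-equivariant for the right $H$-action on $\tilC$ and the translation action on $H$. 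Concretely: choose a basis of simple coweights, realize the corresponding fundamental-type weights $\lambda_i$, and the tuple $(\sigma_{-\lambda_i}$-values$)$ gives coordinates on $H$; $H$-equivariance is immediate from how $H$ acts on the weight lines. Such an equivariant map $\tilC\to H$ is exactly the same datum as a trivialization $\tilC\cong C\times H$.

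The main obstacle is bookkeeping in the Kac-Moody (infinite-dimensional) setting: one must check that the functions produced on the finite-type pieces $\tilC_{\leq w}$ are genuinely regular (not just rational), are compatible under the embeddings $\Fl_{\leq w}\hookrightarrow\Fl_{\leq w'}$ — this compatibility is the analogue of Lemma \ref{l:maptoUs} and follows the same way, from the compatibility of the projective embeddings $E_w(\lambda)\hookrightarrow E_{w'}(\lambda)$ with the highest/lowest weight vectors — and that the resulting map to $H$ is an isomorphism onto the fiber, i.e. surjective with the expected kernel. The surjectivity/bijectivity onto $H$ is seen by noting that the right $H$-action on a single fiber of $\tilC\to C$ is simply transitive and the map restricted to one fiber is $H$-equivariant with target $H$ acted on by translation, hence an isomorphism. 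One further point to verify is that only finitely many coordinates $\lambda_i$ are needed to pin down a point of $H$ (true since $H$ is a finite-dimensional split torus, so $\xch(H)$ is a finitely generated abelian group, and one picks finitely many regular dominant $\lambda$'s generating a finite-index subgroup, then adjusts); this is where the hypothesis that $H$ is finite-dimensional is used, and it is essential. Everything else is routine.
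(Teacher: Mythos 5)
Your proposal is correct but takes a genuinely different (and more self-contained) route than the paper. The paper's proof is a one-line citation of Mathieu's remark that the homomorphism $P\to\Pic\,C(G/B)$ is zero, where $P$ denotes the character group of $H$; triviality of the $H$-torsor is equivalent to the vanishing of this homomorphism, so the lemma follows immediately. You instead reprove the statement that Mathieu is implicitly invoking: the function $\tilC\ni gU\mapsto\langle\sigma_{-\lambda},gv_0\rangle$ is a regular, nowhere-vanishing, $H$-eigenfunction of eigenvalue $\lambda$, i.e.\ an explicit trivializing section of $\calL_\lambda$ for each regular dominant $\lambda$; and since any $\mu\in\xch(H)$ can be written $\mu=(\mu+N\lambda_0)-N\lambda_0$ with both terms regular dominant for $N\gg 0$, the regular dominant weights generate $\xch(H)$ as a group, which kills the whole image of $\xch(H)\to\Pic(C)$. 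This is a nice explicit argument, and it correctly identifies that the subtle points are (a) that the pairing with $\sigma_{-\lambda}$ really is a well-defined regular function on the ind-scheme $\tilC$ (compatibility across $\Fl_{\leq w}\hookrightarrow\Fl_{\leq w'}$), and (b) that one must ``adjust'' since the naively chosen regular dominant $\lambda_i$ need only span a finite-index sublattice. Two small things to tighten: the phrase about ``a space on which $H$ cannot act nontrivially'' is not load-bearing and should be dropped, and ``fundamental-type weights corresponding to a basis of simple coweights'' does not quite parse in the Kac-Moody setting where $\dim H$ can exceed $|\Sigma|$ --- your fallback via finite-index sublattices and differences of regular dominant weights is the argument that actually works.
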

\begin{proof}
This follows from \cite[Remark before Lemma 9]{Mat}: ``Le morphism $P\to\Pic C(G/B)$ est nul''.
\end{proof}

\begin{lemma}\label{l:fiberA1}
Let $\pi^C_s:C\hookrightarrow G/B\to G/P_s$ be the projection to the minimal partial flag variety corresponding to a simple reflection $s\in\Sigma$. For any geometric point $x\in G/P_s$, let $C_x:=\pi_s^{C,-1}(x)\subset C$ be the fiber. Then the function $\rho_{s'}$ is constant on $C_x$ if $s'\neq s$ and the function $\rho_s$ gives an isomorphism $\rho_s:C_x\isom\AA^1$.
\end{lemma}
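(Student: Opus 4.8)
\emph{Proof plan.} The plan is to compute the restrictions $\rho_{s'}|_{C_x}$ explicitly by rewriting them as ratios of matrix coefficients in $L(\lambda)$, exploiting that points of the big cell $C$ have representatives in the opposite unipotent group.

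Fix a regular dominant weight $\lambda$ and use $\Fl\hookrightarrow\PP(L(\lambda))$, $gB/B\mapsto[gv_\lambda]$ (on $\Fl_{\leq w'}$ this is the embedding into $\PP(E_{w'}(\lambda))$, $v_\lambda\in L(\lambda)$ the highest weight vector). If $C_x=\varnothing$ — which happens exactly when $x$ lies outside the open image $\pi^C_s(C)$ — there is nothing to prove, so fix a geometric point $c_0\in C_x$. Since $C$ is the big open cell, $C=U^-B/B$ with $U^-$ the pro-unipotent radical of the Borel opposite to $B$ (\cite{Mat}), so I may write $c_0=u^-B/B$ with $u^-\in U^-$, and then $x=u^-P_s/P_s$. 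The fibre $\pi_s^{-1}(x)=u^-P_s/B$ is swept out by the left $L_s$-translates of $[u^-v_\lambda]$ (the unipotent radical $U^s$ of $P_s$ annihilates the $\frsl_2$-string $V\subset L(\lambda)$ through $v_\lambda$, as that string sits at the top of $L(\lambda)$), hence $\pi_s^{-1}(x)$ is the rational normal curve $\{[u^-\ell v_\lambda]:\ell\in L_s\}\subset\PP(u^-V)$. As pairing $u^-\ell v_\lambda$ against $\sigma_{-\lambda}$ only detects the weight-$\lambda$ coefficient of $\ell v_\lambda$, the subset $C_x$ is exactly this curve with one point removed; writing the $(-\alpha_s)$-root subgroup of $P_s$ as $U^-_s=\{x_{-s}(t)\}_{t\in\GG_a}$, the map $t\mapsto u^-x_{-s}(t)B/B$ is an isomorphism $\GG_a\isom C_x$.

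Now to the computation. On $C_x$, $\rho_{s'}=e_{s'}\sigma_{-\lambda}/\sigma_{-\lambda}$ is the ratio of the polynomials $t\mapsto\langle e_{s'}\sigma_{-\lambda},u^-x_{-s}(t)v_\lambda\rangle$ and $t\mapsto\langle\sigma_{-\lambda},u^-x_{-s}(t)v_\lambda\rangle$. The decisive point is that $u^-$ lowers weights: in $x_{-s}(t)v_\lambda=v_\lambda+tf_sv_\lambda+(\text{weights}\leq\lambda-2\alpha_s)$ (for a suitable $f_s\in\frg_{-\alpha_s}$), applying $u^-$ keeps the weight-$\lambda$ part equal to $v_\lambda$, changes the weight-$(\lambda-\alpha_s)$ part only by a $t$-independent constant, and contributes nothing in weight $\lambda-\alpha_{s'}$ from the $t$-dependent terms when $s'\neq s$ (because $\alpha_{s'}\not\leq\alpha_s$ for distinct simple roots, hence $\lambda-\alpha_{s'}\not\leq\lambda-j\alpha_s$ for $j\geq1$). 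Since $\sigma_{-\lambda}$ is supported in weight $-\lambda$ and $e_{s'}\sigma_{-\lambda}$ in weight $-\lambda+\alpha_{s'}$, it follows that: the denominator is the nonzero constant $\langle\sigma_{-\lambda},v_\lambda\rangle$; for $s'\neq s$ the numerator is constant in $t$, so $\rho_{s'}|_{C_x}$ is constant; and for $s'=s$ the numerator is affine-linear in $t$ with leading coefficient the nonzero scalar $\langle e_s\sigma_{-\lambda},f_sv_\lambda\rangle$, so under the identification $C_x\cong\GG_a$ above $\rho_s|_{C_x}$ is an affine-linear coordinate, i.e. an isomorphism onto $\AA^1$.

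I do not expect any genuine obstacle: the whole content is the weight bookkeeping just sketched, which is clean precisely because one works with the representative $u^-\in U^-$. The inputs invoked are Lemma \ref{l:maptoUs} (so that $\rho_{s'}$ and the expression $e_{s'}\sigma_{-\lambda}/\sigma_{-\lambda}$ are legitimate on $\Fl$ and independent of $\lambda$), the identification $C=U^-B/B$ of the big cell, and the elementary structure of the minimal parabolic $P_s$ (the projection $\Fl\to G/P_s$ is a $\PP^1$-bundle with fibre $P_s/B$ over the base point, and the $\frsl_2$-string through $v_\lambda$ is fixed by the unipotent radical of $P_s$). The only thing to keep in mind is that $\pi^C_s$ need not be surjective, so the isomorphism assertion is to be understood for $x\in\pi^C_s(C)$.
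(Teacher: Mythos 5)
Your proof is correct and is essentially the same weight-bookkeeping argument as the paper's: both express a point of $C$ as $g\cdot[\text{highest weight line}]$ for a weight-lowering element $g$, and then read off that the numerator $e_{s'}\sigma_{-\lambda}$ picks up no $t$-dependence for $s'\neq s$ (since $\lambda-\alpha_{s'}\not\leq\lambda-j\alpha_s$ for $j\geq1$) and exactly linear $t$-dependence for $s'=s$. The only divergence is how the lowering representative is produced: the paper avoids invoking the ind-group $U^-$ and instead expresses $g$ as a product of elements of root subgroups $U_{-\beta}$ extracted from the Bott-Samelson resolution of $\Fl_{\leq w}$, which keeps everything within what \cite{Mat} supplies directly; your route via $C=U^-B/B$ is cleaner once one knows (or grants) that identification, but leans on the Birkhoff-type description of the big cell, which is not among the facts the paper quotes from \cite{Mat}. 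Your observation that $\pi^C_s$ need not be surjective — so that the isomorphism assertion is only meaningful for $x$ in its (open) image — is correct and is in fact a small refinement the paper's statement elides; the paper's own proof implicitly restricts to $x=\pi^C_s(c)$, and the one application (Lemma \ref{l:projsL}) goes through for empty fibers trivially, so nothing breaks downstream.
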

\begin{proof}
Fix a regular dominant weight $\lambda$, and the embedding $\iota:\Fl_{\leq w}\hookrightarrow\PP(E_w(\lambda))$. Choose bases $v_0$ and $v_{s'}$ of the one dimensional weight spaces $L(\lambda)_\lambda$ and $L(\lambda)_{\lambda-\alpha_{s'}}$ ($s'$ is any simple root). Let $c\in C_{\leq w}$ be any geometric point. Then $\iota(c)$ is a line in $E_w(\lambda)$ which contains a vector $v_0+$(lower weight vectors). We may write $\iota(c)=[gv_0]$ ($[v]$ stands for the line containing $v$) for some $g\in G$ equal to a product of elements in $U_{-\beta}$ for negative simple roots $-\beta$ (this follows by looking at the Bott-Samelson resolution of $\Fl_{\leq w}$). For $g$ of this form, we have
\begin{equation}\label{coef}
gv_0=v_0+\textup{(lower weight terms)};\hspace{1cm} gv_s=v_s+\textup{(lower weight terms)}.
\end{equation}
Let $x=\pi^C_s(c)$. The fiber $\pi_s^{-1}(x)\cong\PP^1$, under the embedding $\iota$, can be identified with the pencil of lines $[g(t_0v_0+t_sv_s)]$ for $[t_0,t_s]\in\PP^1$. The fiber $C_x\subset\PP^1$ is the set of lines $[g(v_0+tv_s)]$ for $t\in\AA^1$. By \eqref{coef}, for $s'\neq s$, the coefficient of $v_{s'}$ in $gv_s$ is zero; hence the coefficient of $v_{s'}$ in $g(v_0+tv_s)$ is independent of $t$. This implies that $\rho_{s'}|C_x$ is constant. On the other hand, the coefficient of $v_s$ in $g(v_0+tv_s)$ is a non-constant linear function in $t$, which implies that $\rho_s|C_x$ induces an isomorphism $\rho_s:C_x\isom\AA^1$.
\end{proof}

It is also easy to see:
\begin{lemma}\label{l:contract}
Let $\mu:\GG_m\subset H$ be given by any anti-dominant regular coweight. Then for any $w\in W$, $C_{\leq w}$ contracts to the base point $B/B\in\Fl$ under the left action of $\mu(\GG_m)$. More generally, for any $v,w\in W$, $vC\cap\Fl_{\leq w}$ contracts to the point $vB/B\in\Fl$ under the action of $(v\mu)(\GG_m)$.
\end{lemma}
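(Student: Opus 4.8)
The plan is to realize the contraction explicitly inside the projective embedding $\iota:\Fl_{\leq w}\hookrightarrow\PP(E_w(\lambda))$ (for a fixed regular dominant weight $\lambda$) used to define $C_{\leq w}$, following \cite{Mat}. First I would record the description of $C_{\leq w}$ that falls out of its definition: since $\sigma_{-\lambda}\in L(\lambda)^*$ is a lowest weight vector of weight $-\lambda$, regarded as a linear functional on $L(\lambda)$ it is supported on the highest weight line $L(\lambda)_\lambda=\Ql v_0$; hence a geometric point $c\in\Fl_{\leq w}$ lies in $C_{\leq w}$ precisely when the $v_0$-component of a representative of $\iota(c)\in\PP(E_w(\lambda))$ is nonzero, so that $\iota(c)=[v_0+\xi(c)]$ for a unique $\xi(c)\in\bigoplus_{\nu<\lambda}E_w(\lambda)_\nu$. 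I would also note three elementary facts used below: the base point $B/B$ maps to $[v_0]$ under $\iota$ (since $\Fl_{\leq e}=\{B/B\}$ and $E_e(\lambda)=\Ql v_0$), so in particular $v_0\in E_w(\lambda)$ and $B/B\in C_{\leq w}$; the subtorus $\mu(\Gm)\subset H\subset B$ preserves $\Fl_{\leq w}$ and, acting on $L(\lambda)$ through a character on the line $\Ql\sigma_{-\lambda}$, preserves the hyperplane $\{\sigma_{-\lambda}=0\}$, hence preserves $C_{\leq w}$; and every weight $\nu$ of $E_w(\lambda)$ satisfies $\nu\leq\lambda$.

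The core of the argument is a weight estimate. Since $\mu(t)$ acts on the weight space $E_w(\lambda)_\nu$ by $t^{\langle\nu,\mu\rangle}$, for $c\in C_{\leq w}$ we have
\[
\mu(t)\bigl(v_0+\xi(c)\bigr)=t^{\langle\lambda,\mu\rangle}\Bigl(v_0+\sum_{\nu<\lambda}t^{\langle\nu,\mu\rangle-\langle\lambda,\mu\rangle}\,\xi(c)_\nu\Bigr).
\]
Writing $\lambda-\nu=\sum_s c_s\alpha_s$ with $c_s\in\ZZ_{\geq0}$ not all zero, anti-dominant regularity of $\mu$ gives $\langle\alpha_s,\mu\rangle<0$ for every simple $s$, so $\langle\nu,\mu\rangle-\langle\lambda,\mu\rangle=-\sum_s c_s\langle\alpha_s,\mu\rangle>0$ for each $\nu<\lambda$ occurring. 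Therefore $t\mapsto[v_0+\sum_{\nu<\lambda}t^{\langle\nu,\mu\rangle-\langle\lambda,\mu\rangle}\xi(c)_\nu]$ is a morphism $\AA^1\to\PP(E_w(\lambda))$ (its $v_0$-coordinate is identically $1$, so it factors through the affine chart $\{\sigma_{-\lambda}\neq0\}$), equal to $\mu(t)\cdot\iota(c)$ for $t\neq0$ and to $[v_0]=B/B$ at $t=0$. Since $\Fl_{\leq w}$ is closed in $\PP(E_w(\lambda))$ the morphism factors through $\Fl_{\leq w}$, and since its image meets $\Fl_{\leq w}$ only inside $C_{\leq w}$ (for $t\neq0$ because $\mu(\Gm)$ preserves $C_{\leq w}$, at $t=0$ because $B/B\in C_{\leq w}$) it factors through the open subscheme $C_{\leq w}$. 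In coordinates on $C_{\leq w}\subset\{\sigma_{-\lambda}\neq0\}\cong\bigoplus_{\nu<\lambda}E_w(\lambda)_\nu$ this is the manifestly polynomial map $(t,\xi)\mapsto\sum_\nu t^{\langle\nu,\mu\rangle-\langle\lambda,\mu\rangle}\xi_\nu$, so letting $c$ vary gives a morphism $\AA^1\times C_{\leq w}\to C_{\leq w}$ extending the $\mu(\Gm)$-action and contracting $C_{\leq w}$ onto $B/B$, as claimed.

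The general statement follows by conjugation. Fix a lift $\dot v\in G(k)$ of $v$ normalizing $H$; then $v\mu$ is again a cocharacter of $H$ with $(v\mu)(t)=\dot v\,\mu(t)\,\dot v^{-1}$, so $(v\mu)(\Gm)\subset H\subset B$ preserves $\Fl_{\leq w}$. Given $x\in vC\cap\Fl_{\leq w}$, write $x=\dot v\cdot c$ with $c\in C$ and choose $w'$ with $c\in\Fl_{\leq w'}$, so $c\in C_{\leq w'}$. The first part supplies a morphism $\AA^1\to C_{\leq w'}\subset\Fl$, $t\mapsto\mu(t)\cdot c$, with value $B/B$ at $t=0$; composing with the automorphism of $\Fl$ given by left translation by $\dot v$ and using $(v\mu)(t)\,x=\dot v\,\mu(t)\,\dot v^{-1}\dot v\,c=\dot v\,\mu(t)\,c$, we obtain a morphism $\AA^1\to\Fl$, $t\mapsto(v\mu)(t)\cdot x$, with value $\dot v\cdot(B/B)=vB/B$ at $t=0$. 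Since $(v\mu)(\Gm)$ preserves the closed sub-ind-scheme $\Fl_{\leq w}$ and $x\in\Fl_{\leq w}$, this morphism factors through $\Fl_{\leq w}$, which is exactly the asserted contraction of $vC\cap\Fl_{\leq w}$ onto $vB/B$ under $(v\mu)(\Gm)$.

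I do not expect a genuine obstacle here; the statement is ``easy to see'' and the content is the one-line weight inequality above. The only point demanding a little care is verifying that the $t=0$ limit of the $\mu(\Gm)$-action is not merely a set-theoretic limit in $\PP(E_w(\lambda))$ but a morphism landing in the locally closed subscheme $C_{\leq w}$; this is dispatched by working in the affine chart $\{\sigma_{-\lambda}\neq0\}$, where $C_{\leq w}$ is closed and the extended action is given by the polynomial formula above.
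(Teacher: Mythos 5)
The paper records this lemma without proof ("It is also easy to see"), but your argument via the projective embedding $\iota:\Fl_{\leq w}\hookrightarrow\PP(E_w(\lambda))$ and the elementary weight inequality is exactly the computation the big-cell setup of \S 2.3 is designed to enable, and the first two paragraphs are correct and complete for $C_{\leq w}$: identifying $C_{\leq w}$ with the affine chart $\{\sigma_{-\lambda}\neq 0\}$, the inequality $\langle\nu,\mu\rangle-\langle\lambda,\mu\rangle>0$ for weights $\nu<\lambda$ of $E_w(\lambda)$ (using anti-dominant regularity of $\mu$) produces a polynomial extension of the action to $\AA^1\times C_{\leq w}$ with $\{0\}\times C_{\leq w}$ mapping to $B/B$.

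One small wrinkle in the conjugation step: as written you pick a separate $w'$ for each $x\in vC\cap\Fl_{\leq w}$, which only establishes the contraction pointwise, whereas what is used later (via Springer's \cite[Cor.1]{Spr} in the proof of Prop.~\ref{p:vpure}) is a scheme-theoretic contraction, i.e.\ a morphism $\AA^1\times(vC\cap\Fl_{\leq w})\to vC\cap\Fl_{\leq w}$ extending the action. The fix is immediate: $\dot v^{-1}\Fl_{\leq w}$ is a finite-dimensional projective subscheme of $\Fl$, hence lies in a single Schubert variety $\Fl_{\leq w'}$, so $\dot v^{-1}(vC\cap\Fl_{\leq w})\subset C_{\leq w'}$; conjugating the extension $\AA^1\times C_{\leq w'}\to C_{\leq w'}$ from the first part by $\dot v$ and restricting to $vC\cap\Fl_{\leq w}$ then gives the required uniform contraction (the image lies in $vC$ by the chart computation, and in $\Fl_{\leq w}$ by $(v\mu)(\Gm)$-invariance of $\Fl_{\leq w}$ together with closedness).
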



\section{Equivariant categories}\label{s:eq}

In this section, we define and study the category of $B$-equivariant complexes on the flag variety $\Fl=G/B$ of the Kac-Moody group $G$, as well as its parabolic version. We will study functors between these categories and the convolution product on the equivariant category. Of particular importance is the global section functor $\HH$. We will also give emphasis on the behavior of very pure complexes (such as IC-sheaves) under these operations.

\subsection{The equivariant category and its parabolic version}\label{ss:eqpar}

For each $\Theta\subseteq\Sigma$, consider the right $B$-action on $\PFl_G$. For each $\barw\in\coW$, choose $J_{\barw}\lhd U$ as in Lemma \ref{l:fact}(1), and we define $\scE_{\Theta,\leq w}$ to be the derived category of (right) $B/J_{\barw}$-equivariant mixed complexes on $\PFl_{G,\leq w}$. It is easy to see that this category is canonically independent of the choice of $J_{\barw}$. These form an inductive system under the fully faithful functors $i_{\barw,*}:\scE_{\Theta,\leq \barw}\to\scE_{\Theta,\leq \barw'}$ induced by the closed embeddings $i_{\barw}:\PFl_{G,\leq \barw}\hookrightarrow\PFl_{\leq \barw'}$ for $\barw\leq \barw'\in\coW$. Let $\scE_\Theta$ be the inductive 2-limit of $\scE_{\Theta,\leq \barw}$.

Recall that $V_H$ is the $\Ql$-Tate module of $H$. Then the graded algebra $\Sb:=\Sym(V_H^\vee[-2])$ is the $H$-equivariant cohomology ring of a point.

For $\Theta=\varnothing$, we also write $\scE$ for $\scE_\varnothing=D^b_m(\quot{B}{G}{B})$. Consider the action of $H\times H$ on the stack $\quot{U}{G}{U}$ given by $(h_1,h_2)\cdot x=h_1xh_2^{-1}$. We may view $\scE$ as the derived category of $H\times H$-equivariant complexes on $\quot{U}{G}{U}$, hence $\scE$ has the structure of an $\Sb\otimes \Sb$-linear category: $\Sb\otimes \Sb$ acts on $\Ext^\bullet_{\scE}(\calF,\calF)$ for all $\calF\in\scE$ functorially. Each $\scE_\Theta$ is naturally an $\Sb$-linear category for the right copy of $\Sb$.

For each $\barw\in\coW$, the standard, costandard and IC-complexes indexed by $\barw$ are
\begin{eqnarray*}
\Delta_{\barw}=i_{\barw,!}\Ql[\ell(\barw)](\ell(\barw)/2);\\
\nabla_{\barw}=i_{\barw,*}\Ql[\ell(\barw)](\ell(\barw)/2);\\
\IC_{\barw}=i_{\barw,!*}\Ql[\ell(\barw)](\ell(\barw)/2).
\end{eqnarray*}

The projection $\pi^{\Theta}:\LFl_G\to\PFl_G$ gives adjunctions
\begin{equation}\label{eq:paradj}
\xymatrix{\scE\ar[r] &
\scE_\Theta\ar@<1ex>[l]^{\pi^{\Theta,!}}\ar@<-2ex>[l]_{\pi^{\Theta,*}}^{\pi^\Theta_*}}.
\end{equation}

Consider the $H\times H$-equivariant global sections functor
\begin{equation*}
\bR\Gamma_{H\times H}(\quot{U}{G}{U},-):\scE\to D^b_m(\BB(H\times H))
\end{equation*}
By Corollary \ref{c:ba}, we have an equivalence $D^b_m(\BB(H\times H))\cong D_{\perf}(\dS\otimes\dS,\Frob)$. Here $\dS=\Sym(V^\vee_H)$ is viewed as a non-graded algebra. We can thus consider the $H\times H$-equivariant global section functor as a functor:
\begin{equation*}
\HH: \scE\to D_{\perf}(\dS\otimes\dS,\Frob)
\end{equation*}

For $w\in W$, let $\Gamma(w)=\{(w\cdot v,v)|v\in V_H\}\subset V_H\times V_H$ be the graph of the $w$-action on $V_H$. We view $V_H\times V_H$ as the spectrum of $\dS\otimes \dS$ and denote by $\calO(\Gamma(w))$ the coordinate ring of the closed subscheme $\Gamma(w)\subset V_H\times V_H$, which carries a grading and a $\Frob$-action.

\begin{lemma}\label{l:hst}
For each $w\in W$, we have isomorphisms of graded $(\dS\otimes\dS,\Frob)$-modules
\begin{eqnarray*}
\HH(\Delta_w)&\cong&\calO_{\Gamma(w)}[-\ell(w)](-\ell(w)/2),\\
\HH(\nabla_w)&\cong&\calO_{\Gamma(w)}[\ell(w)](\ell(w)/2).
\end{eqnarray*}
\end{lemma}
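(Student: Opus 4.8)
The plan is to reduce the statement to a computation on a single Schubert cell, then push it up along closed embeddings and use the torsor structure $\pi:\tilFl\to\Fl$ together with the fact that the big cell and its $w$-translates are cohomologically trivial.

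\textbf{Reduction to the open stratum.} First I would observe that $\Delta_w=i_{w,!}\Ql[\ell(w)](\ell(w)/2)$ is supported on $\Fl_{\leq w}$, so its $H\times H$-equivariant cohomology can be computed on $\Fl_{\leq w}$; moreover, since $i_{w,!}$ is an extension by zero from the open stratum $\Fl_w\subset\Fl_{\leq w}$, and $\bR\Gamma_c$ commutes with $i_{w,!}$, we get $\HH(\Delta_w)\cong\bR\Gamma_{c,H\times H}(\tilFl_w,\Ql)[\ell(w)](\ell(w)/2)$ (working on the enhanced flag variety, i.e. descending the $H\times H$-equivariance appropriately). The stratum $\Fl_w\cong\AA^{\ell(w)}$, and the point is to identify $\tilFl_w$ and the $H\times H$-action on it. The left $H$-action is through the Cartan, and the right $H$-action (coming from the $H$-torsor $\pi$) combine so that $\bR\Gamma_{H\times H}$ of the preimage of a point is $\dS\otimes\dS$ modulo the relations identifying the two copies via $w$ — which is exactly $\calO_{\Gamma(w)}$. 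Concretely: $BwB/U$ is an $H$-bundle over $BwB/B\cong\AA^{\ell(w)}$, and as an $(H\times H)$-space it is $(H\times\AA^{\ell(w)})$ where $(h_1,h_2)$ acts on the $H$-factor by $h\mapsto h_1 h \,{}^w\!h_2^{-1}$ (up to the relation defining $\Gamma(w)$ on Tate modules) and acts on $\AA^{\ell(w)}$ contractibly (using Lemma \ref{l:contract}). Hence $\bR\Gamma_{H\times H}$ of it is $\bR\Gamma_{\Delta_w(H)}(\pt)=\calO_{\Gamma(w)}$, with no cohomological shift contributed by the affine space (for the $*$-version) or with the expected shift/twist $[-2\ell(w)](-\ell(w))$ contributed by $\bR\Gamma_c$ of $\AA^{\ell(w)}$ (for the $!$-version).

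\textbf{Assembling the two cases.} Taking this into account, for $\Delta_w=i_{w,!}(\cdots)$ we get $\HH(\Delta_w)\cong\calO_{\Gamma(w)}[-2\ell(w)](-\ell(w))[\ell(w)](\ell(w)/2)=\calO_{\Gamma(w)}[-\ell(w)](-\ell(w)/2)$, as claimed. For $\nabla_w=i_{w,*}(\cdots)$ I would instead use that $\bR\Gamma_{H\times H}\circ i_{w,*}=\bR\Gamma_{H\times H}(\tilFl_w,-)$ applied to the constant sheaf, which over $\AA^{\ell(w)}$ contributes no shift, giving $\HH(\nabla_w)\cong\calO_{\Gamma(w)}[\ell(w)](\ell(w)/2)$. (Alternatively, one can deduce the $\nabla$ case from the $\Delta$ case by Verdier duality: $\mathbb{D}\Delta_w\cong\nabla_w$, and $\HH$ intertwines $\mathbb{D}$ with the appropriate duality on $D_{\perf}(\dS\otimes\dS,\Frob)$, namely $\RHom_{\dS\otimes\dS}(-,\dS\otimes\dS)$ up to shift/twist; since $\calO_{\Gamma(w)}$ is Cohen--Macaulay of the right codimension in $V_H\times V_H$, its dual is again $\calO_{\Gamma(w)}$ with a shift by $2\dim V_H$ balanced against the codimension, and the bookkeeping produces exactly the stated formula.) I would present the direct computation as the main line and mention duality as a cross-check.

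\textbf{Main obstacle.} The technical heart, and the step I expect to require the most care, is the precise identification of $\tilFl_w=BwB/U$ as an $(H\times H)$-equivariant space and the resulting identification of $\bR\Gamma_{H\times H}$ with $\calO_{\Gamma(w)}$ including the correct $\Frob$-action and grading. One must be careful that the right $H$-action on $\tilFl$ and the left $H$-action interact through the element $w$ — writing $w=\dot w$ for a lift, conjugation by $\dot w$ twists the right $H$-action into $h\mapsto{}^{w^{-1}}h$ — so that the diagonal stabilizer of a point in the $H$-factor is precisely $\{(v,w^{-1}v)\}$, i.e. $\Gamma(w)$ after passing to Tate modules (matching the convention $\Gamma(w)=\{(w\cdot v,v)\}$ up to relabeling). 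The contractibility of the $\AA^{\ell(w)}$-factor under a suitable one-parameter subgroup (Lemma \ref{l:contract}) is what kills the higher cohomology and makes the answer a free module of rank one over $\calO_{\Gamma(w)}$; checking that the contraction is $(H\times H)$-equivariantly compatible (or at least that the cohomology is concentrated as claimed via the localization/contraction principle for equivariant cohomology) is the delicate point. Everything else — commuting $\bR\Gamma$ with $i_{w,!},i_{w,*}$, tracking shifts and Tate twists from $\bR\Gamma_c(\AA^{\ell(w)})$ — is routine bookkeeping.
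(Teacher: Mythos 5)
Your proof is correct and takes essentially the same approach as the paper's: both reduce to the single $H\times H$-orbit over $wB/B$ (the paper by restricting via equivariant formality of the affine cell, you by exhibiting the contraction of the $\AA^{\ell(w)}$-factor), both identify the stabilizer subtorus as the one corresponding to $\Gamma(w)$ on Tate modules, and both obtain the $\Delta_w$ case from the $\nabla_w$ case by the extra shift/twist $[-2\ell(w)](-\ell(w))$ coming from $\bR\Gamma_c(\AA^{\ell(w)})$ (which the paper phrases as $i^!\Ql$). The only slight imprecision is that the $(H\times H)$-action on $U_w\times H$ is not literally a product of the two actions you wrote (the left $H$ acts on the $U_w\cong\AA^{\ell(w)}$ factor by conjugation, nontrivially), but since the action on the two factors is independent and the $\AA^{\ell(w)}$-factor contracts $H$-equivariantly, this does not affect the computation.
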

\begin{proof}
Consider the left $H$-equivariant embedding $\iota:HwH/H=wB/B\hookrightarrow\Fl_w$. The restriction map on cohomology $\iota^*:\upH^*(\Fl_w)\to \upH^*(wB/B)$ is an isomorphism because $\Fl_w$ is isomorphic to an affine space. Since both $wB/B$ and $\Fl_w$ are equivariantly formal with respect to the left $H$-action, the restriction map is also an isomorphism on equivariant cohomology, i.e.,
\begin{equation*}
\HH(\nabla_w)[-\ell(w)](-\ell(w)/2)\cong \upH^*(H\backslash\Fl_{w})\cong \upH^*(\quot{H}{HwH}{H}).
\end{equation*}
Here the stabilizer of the $H\times H$-action on $HwH$ (recall that the action is given by $(h_1,h_2)\cdot x\mapsto h_1xh_2^{-1}$) is the subtorus $\{(whw^{-1},h)|h\in H\}\subset H\times H$. Therefore $\upH^*(\quot{H}{HwH}{H})$ is isomorphic to $\calO_{\Gamma(w)}$. The second identity follows.

The proof of the first identity is similar, except we use the natural isomorphisms
\begin{eqnarray*}
&&\HH(\Delta_w)[-\ell(w)](-\ell(w)/2)\cong \upH_c^*(\quot{B}{BwB}{B})\\ &\cong& \upH^*(\quot{H}{HwH}{H},i^!\Ql)\cong \upH^*(\quot{H}{HwH}{H})[-2\ell(w)](-\ell(w)).
\end{eqnarray*}
\end{proof}

Recall from \cite[Definition 1.2.2(i)]{Del} that a local system $\calL$ on a scheme $X$ over $k$ is {\em pointwise pure} of weight $n$ (with respect to the chosen isomorphism $\Ql\isom\CC$), if for any closed point $x\in X$ with residue field $k(x)$, all the eigenvalues of the geometric Frobenius $\Frob_x$ acting on the stalk $\calF_{\bar{x}}$ has norm $\#k(x)^{n}$ under the chosen isomorphism $\Ql\isom\CC$. If $\calL$ is pointwise pure of weight $n$, then we say $\calL[m]$ is pure of weight $m+n$.

\begin{defn}[compare {\cite[\S5.2]{BB}}]\label{def:vpure}
Let $X=\bigsqcup X_\alpha$ be a stratified scheme and $\calF\in D^b_{m}(X)$ is constructible with respect to the stratification. Let $i_\alpha:X_\alpha\hookrightarrow X$ be the embeddings. Then $\calF$ is said to be {\em $*$-pure} (resp. {\em $!$-pure}) of weight $n$ if for each $\alpha$ and $m\in\ZZ$, the local system $\calH^{m}i_{\alpha}^*\calF$ (resp. $\calH^{m}i_{\alpha}^!\calF$) is pointwise pure of weight $n+m$. It is said to be {\em very pure} of weight $n$ if it is both $*$-pure and $!$-pure of weight $n$.
\end{defn}

We use $\scV\subset\scE$ (resp. $\scV_\Theta\subset\scE_\Theta$) to denote the full subcategory of very pure complexes of weight 0. The notion of very purity is stronger than the notion of purity of complexes (cf. \cite[\S5.1]{BBD}). However, in the situation of flag varieties, they are equivalent.

\begin{lemma}\label{l:samepure}
Suppose $\calF\in\scE_{\Theta}$ is pure of weight 0 in the sense of \cite[\S5.1]{BBD}, then it is very pure of weight 0.
\end{lemma}
\begin{proof}
We only need to consider the case $\calF\in\scE$, the parabolic case $\calF\in\scE_{\Theta}$ can be deduced from the case of $\pi^{\Theta,*}\calF\in\scE$.

Assume $\calF\in\scE_{\leq w}$. By Lemma \ref{l:contract}, for $v\in W$, the open subset $vC\cap\Fl_{\leq w}$ contracts to the point $v$ under a one-parameter subgroup of $H$. We denote the inclusion of $v$ into $\Fl_{G}$ still by $v$. Therefore, by \cite[Corollary 1]{Spr}, we have
\begin{equation*}
v^{*}\calF=\upH^*(vC\cap\Fl_{\leq w},\calF)
\end{equation*}
which has weight $\geq0$ as a complex because the open restriction $\calF|_{vC\cap\Fl_w}$ is pure of weight 0 and $\upH^{*}(-)$ does not decrease weight. On the other hand, since $\calF$ is pure of weight 0, $v^{*}\calF$ has weights $\leq0$. Therefore $v^{*}\calF$ has weight 0, hence so is $i^{*}_{v}\calF$ for any $v$, i.e., $\calF$ is $*$-pure of weight 0. A dual argument shows that $\calF$ is also $!$-pure of weight 0. Hence $\calF$ is very pure of weight 0.
\end{proof}

\begin{exam}\label{ex:vpure}
The IC-complex $\IC_{\barw}$ is pure of weight zero in the sense of \cite{BBD}, hence very pure by the above lemma. One can alternatively show this by the argument of Lemma \ref{l:Hss} below (essentially using Bott-Samelson resolution). By Example \ref{ex:icc}, the subcategory $\scV\subset\scE$ (resp. $\scV_\Theta\subset\scE_\Theta$) satisfies all the assumptions in Appendix \ref{a:dgmodel}.
\end{exam}

Here are some easy consequences of purity.
\begin{lemma}\label{l:purity}
\begin{enumerate}
\item []
\item If $\calF\in\scE$ is either $*$-pure or $!$-pure of weight 0, $\HH^i(\calF)$ is a $\Frob$-module of weight $i$ and $\HH(\calF)$ is free over each of the left and the right copy of $\dS$ (note that we are not claiming the freeness as $(\dS,\Frob)$-modules, but only the freeness as $\dS$-modules).
\item If $\calF_1\in\scE$ is $*$-pure of weight 0 and $\calF_2\in\scE$ is $!$-pure of weight 0, then $\Ext^i_{\scE}(\calF_1,\calF_2)$ is a $\Frob$-module of weight $i$ and $\Ext^\bullet_{\scE}(\calF_1,\calF_2)$ is free over each of the left and the right copy of the graded algebra $\Sb$.
\end{enumerate}
\end{lemma}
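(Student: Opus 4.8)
The plan is to reduce everything to the computation of $\HH$ on standard and costandard objects done in Lemma \ref{l:hst}, together with the weight estimates coming from purity.

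First I would prove part (1). Fix $\calF\in\scE$ which is $*$-pure of weight $0$; the $!$-pure case is identical after applying Verdier duality. Using the stratification $\Fl_{\leq\barw}=\bigsqcup_{\baru\leq\barw}\Fl_{\baru}$, filter $\calF$ (or rather each truncation $\calF_{\leq\barw}$) by the standard recollement triangles so that the associated graded pieces are of the form $i_{\baru,*}(\calL_{\baru})$ for local systems $\calL_{\baru}$ on the affine cell $\Fl_{\baru}$ that are pure of weight $0$ (this is exactly what $*$-purity of weight $0$ gives, after shifting; the local systems on an affine space are direct sums of shifted Tate twists of the constant sheaf since $\pi_1(\AA^n)$ is trivial after passing to $\bar k$, so each $i_{\baru,*}\calL_{\baru}$ is a direct sum of twists of $\nabla_{\baru}$). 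Applying the exact functor $\HH$ to this filtration, and using Lemma \ref{l:hst} which says $\HH(\nabla_{\baru})\cong\calO_{\Gamma(\baru)}[\ell(\baru)](\ell(\baru)/2)$, I get that $\HH(\calF)$ has a filtration whose graded pieces are shifts/twists of $\calO_{\Gamma(\baru)}$. The key point is that each $\calO_{\Gamma(\baru)}$, regarded as a module over the left copy $\dS$ (via $\dS\to\dS\otimes\dS\to\calO_{\Gamma(\baru)}$), is free: indeed $\Gamma(\baru)\subset V_H\times V_H$ is the graph of $\baru$ acting on $V_H$, so projection to the left factor is an isomorphism $\Gamma(\baru)\isom V_H$, making $\calO_{\Gamma(\baru)}$ a free rank-one module over the left $\dS$; the same holds for the right copy. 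Since each graded piece of the filtration is $\dS$-free and lives in a single cohomological degree matched to its Frobenius weight (this is where purity of weight $0$ and the shift $[\ell(\baru)](\ell(\baru)/2)$ combine: $\HH^i$ of a weight-$0$ pure complex is pure of weight $i$), the extension problem over $\dS$ splits degree by degree, so $\HH(\calF)$ is $\dS$-free on either side, and $\HH^i(\calF)$ is pure of weight $i$. I should be a little careful that the filtration has finitely many steps and that the relevant $\Ext^1_{\dS}$ vanishes between the graded pieces — this follows because a graded free module over the nonnegatively graded polynomial ring $\dS$ is projective and the pieces are concentrated in the single degree dictated by their weight.

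Part (2) follows from part (1) by the standard trick. Given $\calF_1$ $*$-pure of weight $0$ and $\calF_2$ $!$-pure of weight $0$, I would write $\RHom_{\scE}(\calF_1,\calF_2)$ and compare it with equivariant cohomology. The cleanest route: $\Ext^\bullet_{\scE}(\calF_1,\calF_2) \cong \HH^\bullet(\mathbb{D}\calF_1 \otimes^! \calF_2)$ or, more elementarily, use that $\calF_1$ admits a filtration with graded pieces twists of $\Delta_{\baru}$ (dually $\calF_2$ admits one with graded pieces twists of $\nabla_{\barv}$), reducing to computing $\Ext^\bullet_{\scE}(\Delta_{\baru},\nabla_{\barv})$. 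This last group vanishes unless $\baru=\barv$ and in that case equals $\dS$ (placed in the appropriate degree/weight), which is a free $\dS$-module concentrated so that $\Ext^i$ has weight $i$; the spectral sequence of the filtration then degenerates for weight reasons and the freeness over the left or right $\Sb$ is inherited. Alternatively, and perhaps more in the spirit of the functor $\HH$: apply $\HH$ to get $\Ext^\bullet_{\scE}(\calF_1,\calF_2)$ computed as $\RHom$ of $\dS\otimes\dS$-modules between $\HH(\calF_1)$ and $\HH(\calF_2)$ — but this requires $\HH$ to be fully faithful on such complexes, which is only asserted later (on very pure complexes, in \S\ref{ss:proofH}), so I would avoid circularity and stick with the filtration argument.

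The main obstacle I anticipate is the bookkeeping in part (1): one must argue that $*$-purity of weight $0$ really does force the graded pieces of the weight filtration to be direct sums of Tate twists of $\nabla_{\baru}$ (using triviality of $\pi_1$ of affine space over $\bar k$ and the weight-monodromy constraint), and then that the resulting filtration of $\HH(\calF)$ by $\dS$-free pieces splits — i.e. that all higher extensions between the graded pieces vanish. The vanishing is forced by the weight grading: $\HH^i$ of the $\baru$-piece is pure of weight $i$, so a potential nonzero connecting map would have to respect weights, and a degree count (each $\nabla_{\baru}$ contributes $\HH$ concentrated in weights $\geq \ell(\baru)$ shifted appropriately, all consistent) rules out obstructions. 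Once this is set up carefully the rest is formal. I would present part (1) in detail and deduce part (2) quickly from the $\Delta/\nabla$ filtration plus the computation $\Ext^\bullet_{\scE}(\Delta_{\baru},\nabla_{\barv}) = \delta_{\baru,\barv}\,\Sb$.
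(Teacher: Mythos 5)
Your argument is essentially the paper's own: an induction over strata (equivalently, the stratification filtration), using that purity on each stratum forces $\HH^i$ (resp.\ $\Ext^i$) to be of weight exactly $i$, so that the connecting maps in the recollement long exact sequences vanish for weight reasons; the sequences then break into short exact sequences whose outer terms are free over either copy of $\dS$, and an extension of free modules by free modules is free. The paper writes out part (2) this way and declares (1) ``similar''; you do the reverse and deduce (2) from the $\Delta$/$\nabla$ filtrations and $\Ext^\bullet(\Delta_u,\nabla_v)=\delta_{uv}\Sb$, which is the same computation packaged as a degenerating spectral sequence rather than a stepwise induction.

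Two points in your write-up need repair, though neither is fatal. First, the geometric constancy of $H^mi_w^*\calF$ on a cell does \emph{not} follow from triviality of $\pi_1(\AA^n_{\bar{k}})$: in characteristic $p$ this fundamental group is enormous (Artin--Schreier covers). What you actually need is that objects of $\scE$ are $B$-equivariant and each stratum is a single $B$-orbit with connected stabilizer, which identifies $\scE_w$ with $D_{\perf}(\dS,\Frob)$ (Cor.\,\ref{c:ba}); also, purity of weight $0$ only yields a successive extension of Tate twists of the constant sheaf, not a direct sum, but successive extensions suffice for both the freeness and the weight count. Second, for a $*$-pure $\calF$ the graded pieces of the stratification filtration are the $!$-extensions $i_{w,!}i_w^*\calF$, i.e.\ built from $\Delta_w$ rather than $\nabla_w$; this is harmless, since Lemma \ref{l:hst} gives $\dS$-freeness and $\HH^i$ of weight $i$ in both cases, but the bookkeeping should be stated correctly.
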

\begin{proof}
Note that (1) is a special case of (2) when $\calF_{1}$ is the constant sheaf. Therefore we only give the proof of (2). We use induction on the support of $\calF_1,\calF_2$. Suppose the statement is true for $\calF_i\in\scE_{<w}$. Now consider $\calF_i\in\scE_{\leq w}$, then we have a long exact sequence
\begin{equation*}
\cdots\to\Ext^i(i^*_{<w}\calF_1,i^!_{<w}\calF_2)\to \Ext^i(\calF_1,\calF_2)\to\Ext^i(i^*_w\calF_1,i^*_w\calF_2)\to\cdots
\end{equation*}
By assumption $i^*_w\calF_1$ and $i^!_w\calF_2$ are pure of weight 0, hence $\Ext^i(i^*_w\calF_1,i^*_w\calF_2)$ has weight $i$. Also $i^*_{<w}\calF_1$ (resp. $i^!_{<w}\calF_2$) is $*$-pure (resp. $!$-pure) of weight 0, by induction hypothesis we know that $\Ext^i(i^*_{<w}\calF_1,i^!_{<w}\calF_2)$ has weight $i$. By weight reasons, the above long exact sequence splits into short exact sequences:
\begin{equation*}
0\to\Ext^\bullet(i^*_{<w}\calF_1,i^!_{<w}\calF_2)\to \Ext^\bullet(\calF_1,\calF_2)\to\Ext^\bullet(i^*_w\calF_1,i^*_w\calF_2)\to0.
\end{equation*}
The two ends of the short sequences are free over each copy of $\Sb$, hence so is the middle one.
\end{proof}

An important property of the global sections functor $\HH$ is the following, whose proof (essentially borrowed from the argument of Ginzburg in \cite{Ginz}) will be postponed to section \ref{ss:proofH}.
\begin{prop}\label{p:hff}
Suppose $\calF_1,\calF_2\in\scV$, then the natural map
\begin{equation}\label{eq:Hff}
\Ext^\bullet_{\scE}(\calF_1,\calF_2)\to\Hom_{\dS\otimes\dS}(\HH(\calF_1),\HH(\calF_2))
\end{equation}
is an isomorphism of $\Frob$-modules.
\end{prop}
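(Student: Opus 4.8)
The plan is to prove Proposition \ref{p:hff} by a d\'evissage on the supports of $\calF_1$ and $\calF_2$, reducing the general very pure case to the case of standard and costandard objects, and then to handle the standard/costandard case by direct computation using Lemma \ref{l:hst}. First I would observe that the map \eqref{eq:Hff} is natural in both variables and is compatible with distinguished triangles, so that if it is an isomorphism for two of the three terms of a triangle (in an appropriate variable) then it is an isomorphism for the third. The key structural input is Lemma \ref{l:purity}(2): for $\calF_1$ $*$-pure of weight $0$ and $\calF_2$ $!$-pure of weight $0$, the graded $\Frob$-module $\Ext^\bullet_{\scE}(\calF_1,\calF_2)$ is pure (each $\Ext^i$ has weight $i$) and free over $\dS$; and by Lemma \ref{l:purity}(1) the same holds for $\HH(\calF_1)$ and $\HH(\calF_2)$, so that $\Hom_{\dS\otimes\dS}(\HH(\calF_1),\HH(\calF_2))$ is likewise pure and well-behaved. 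Purity is what forces the long exact sequences coming from the stratification to split into short exact sequences compatibly on both sides of \eqref{eq:Hff}, which is exactly what makes the d\'evissage go through.

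Concretely, I would argue by induction on the number of strata in the support of $\calF_1$ (and symmetrically for $\calF_2$). Using the recollement triangles $i_{<w,*}i_{<w}^*\calF_1\to\calF_1\to i_{w,*}i_w^*\calF_1$ on the source side and $i_{w,*}i_w^!\calF_2\to\calF_2\to i_{<w,*}i_{<w}^!\calF_2$ on the target side, together with the splitting of the resulting $\Ext$ and $\Hom$ sequences guaranteed by purity, the problem reduces to the case where $\calF_1$ is supported on a single stratum and is $*$-pure of weight $0$, and $\calF_2$ likewise is supported on a single stratum and is $!$-pure of weight $0$. A $*$-pure-of-weight-$0$ complex on the affine-space stratum $\Fl_w$ which is $B$-equivariant is (up to the twist conventions) a direct sum of shifted copies of $\Delta_w$, and dually a $!$-pure one is a direct sum of shifts of $\nabla_w$; so it suffices to prove that \eqref{eq:Hff} is an isomorphism when $\calF_1=\Delta_{w_1}$ and $\calF_2=\nabla_{w_2}$. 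Here I would compute both sides explicitly: the left side $\Ext^\bullet_{\scE}(\Delta_{w_1},\nabla_{w_2})$ vanishes unless $w_1=w_2$, in which case a base-change / adjunction computation on the single orbit identifies it with $H^*(\BB H)=\dS$ (with the correct shift and twist); the right side $\Hom_{\dS\otimes\dS}(\HH(\Delta_{w_1}),\HH(\nabla_{w_2}))=\Hom_{\dS\otimes\dS}(\calO_{\Gamma(w_1)},\calO_{\Gamma(w_2)})$ up to shift, by Lemma \ref{l:hst}, and since $\Gamma(w_1),\Gamma(w_2)$ are the graphs of the $W$-action on $V_H$, this $\Hom$ is zero unless $w_1=w_2$ and is $\calO_{\Gamma(w_1)}\cong\dS$ otherwise. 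One then checks the natural map matches these identifications, which is a bookkeeping matter once the degrees and twists are lined up.

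The main obstacle, and the step that genuinely requires the deep input deferred to \S\ref{ss:proofH}, is establishing that \eqref{eq:Hff} is an isomorphism \emph{for all} very pure $\calF_1,\calF_2$ and not merely that both sides are abstractly isomorphic $\dS$-modules: the d\'evissage above shows injectivity/surjectivity can be propagated through triangles, but one must know the statement on the ``building blocks'' in a way that is functorial enough to survive the induction, and for that it is cleanest to invoke Ginzburg's argument --- realizing $\HH$ via equivariant cohomology of the total space, using that $\scV$ is generated under the six operations (pushforward along Bott--Samelson-type resolutions) by objects whose $\HH$ is computed by the Leray--Hirsch / equivariant formality principle, and that $\Ext$ in $\scE$ is computed by the same geometry. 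In other words, the real content is that the categories $\scV$ and their images under $\HH$ are both controlled by the same combinatorial data; the purity-splitting d\'evissage is the formal skeleton, Lemma \ref{l:hst} supplies the base case, and the Ginzburg-style argument of \S\ref{ss:proofH} supplies the full faithfulness on the generating objects from which the general case follows. I expect lining up the normalizations (the shifts $[\ell(w)]$ and twists $(\ell(w)/2)$, and the fact that under $\Phi$ the twist $(1)$ becomes $[-2](-1)$) to be the most error-prone bookkeeping, but not a conceptual difficulty.
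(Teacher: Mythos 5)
Your plan has the right ingredients in view (purity, stratified d\'evissage, the explicit computation of $\HH$ on standard/costandard objects), and your base-case computation is correct, but the d\'evissage you describe does not go through as stated, and the place it fails is precisely where all the real work of the proof lives.

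The gap is the claim that ``purity forces the long exact sequences to split into short exact sequences compatibly on both sides.'' On the $\Ext^\bullet_{\scE}$ side this is true by Lemma \ref{l:purity}(2). On the $\HH$ side, weights do give short exact sequences \eqref{eq:hc}--\eqref{eq:h}, but these are short exact sequences of $\dS\otimes\dS$-modules that are free only over \emph{one} copy of $\dS$; they do not split over $\dS\otimes\dS$, and $\Hom_{\dS\otimes\dS}(\HH(\calF_1),-)$ applied to them is not exact. So the target of \eqref{eq:Hff} is not a cohomological functor in either variable, the five-lemma is not available, and ``propagating injectivity/surjectivity through triangles'' is exactly what you cannot do naively. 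You correctly sense that something deep is needed and defer it to ``Ginzburg's argument,'' but what you describe (Bott--Samelson generation, Leray--Hirsch) is not what the paper does and does not by itself resolve the exactness problem.

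What the paper actually does is a two-step argument that is qualitatively different from your reduction to $\Delta_{w_1}$ and $\nabla_{w_2}$. First (Lemma \ref{l:hff}) it proves full faithfulness onto $\Hom_{A_{\leq w}}$, where $A_{\leq w}=H^*(B\backslash\Fl_{\leq w})$ is the full equivariant cohomology ring of the Schubert variety --- a larger ring than $\dS\otimes\dS$. The induction on strata is done over $A_{\leq w}$, not over $\dS\otimes\dS$, precisely because $A_{\leq w}$ contains the fundamental class $[U]$ of the open stratum; the crucial commutative diagram \eqref{eq:u} shows that multiplication by $[U]$ factors the restriction-to-open map through the forget-supports map, and this extra relation is what forces the $A_{\leq w}$-linear $\Hom$-sequence to be ``exact except at the bottom end,'' which is enough for the diagram chase. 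Second, the paper passes from $\Hom_{A_{\leq w}}$ to $\Hom_{\dS\otimes\dS}$ by a separate commutative-algebra step: Lemma \ref{l:eqloc} (equivariant localization: $A_{\leq w}\to\prod_v\calO(\Gamma(v))$ becomes an isomorphism after inverting over one copy of $\dS$) combined with Lemma \ref{l:samehom} (if $B\to C$ is generically surjective and $M_2$ is torsion-free, then $\Hom_C=\Hom_B$), using that $\HH(\calF_2)$ is free, hence torsion-free, over $\dS$ by Lemma \ref{l:purity}(1). Your proposal omits this second step entirely, and the first step is not the naive five-lemma d\'evissage you sketch. Neither step reduces to the case $(\Delta_{w_1},\nabla_{w_2})$.
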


\begin{lemma}\label{l:pist}
For any $w\in W$, write $w=uv$ with $u\in W_\Theta$ and $v\in[\coW]$. We have
\begin{eqnarray*}
\pi^\Theta_*\Delta_w&\cong&\Delta_{\barw}[-\ell(u)](-\ell(u)/2);\\
\pi^\Theta_*\nabla_w&\cong&\nabla_{\barw}[\ell(u)](\ell(u)/2)
\end{eqnarray*}
\end{lemma}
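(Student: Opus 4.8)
The plan is to reduce the computation of $\pi^\Theta_*\Delta_w$ and $\pi^\Theta_*\nabla_w$ to a fibration statement over the partial flag variety, using the factorization $w=uv$ with $u\in W_\Theta$, $v\in[\coW]$, and $\ell(w)=\ell(u)+\ell(v)$. The key geometric input is that the projection $\pi^\Theta:\LFl_G\to\PFl_G$, when restricted to the relevant Schubert cells, realizes $\Fl_w$ as an affine-space bundle over $\PFl_{\barw}$. More precisely, first I would observe that $\pi^\Theta$ maps the $B$-orbit $\Fl_w=BwB/B$ onto the $B$-orbit $\PFl_{\barw}=P_\Theta\backslash P_\Theta v B$ (note $P_\Theta w B=P_\Theta v B$ since $u\in W_\Theta\subset W_{P_\Theta}$), and that the fibers of $\pi^\Theta|_{\Fl_w}$ are isomorphic to $\AA^{\ell(u)}$ — indeed the fiber over the base point is $P_\Theta\cap (\cdots)/B \cong L_\Theta/(B\cap L_\Theta)$-cell of dimension $\ell(u)$, namely the Schubert cell $(B\cap L_\Theta) u (B\cap L_\Theta)/(B\cap L_\Theta)$ inside the finite-type flag variety $L_\Theta/(B\cap L_\Theta)$. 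So $\pi^\Theta|_{\Fl_w}:\Fl_w\to\PFl_{\barw}$ is a Zariski-locally trivial $\AA^{\ell(u)}$-bundle (it is $B$-equivariant and both base and total space are $B$-orbits).

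Next I would push forward. Since $\pi^\Theta$ is proper (it is a partial-flag bundle fibration of finite-type schemes on each Schubert variety), we have $\pi^\Theta_* = \pi^\Theta_!$, and proper base change lets me compute $i_{\barw}^*\pi^\Theta_*\Delta_w$ via the Cartesian square involving $\Fl_w\to\PFl_{\barw}$. For the costandard sheaf: $\nabla_w = i_{w,*}\Ql[\ell(w)](\ell(w)/2)$, and I would check that $\pi^\Theta_*\nabla_w$ is supported on $\PFl_{\leq\barw}$ with $i_{\barv}^!$ (or $i_{\barv}^*$) restriction computed by the affine bundle: pushing the constant sheaf along an $\AA^{\ell(u)}$-bundle $f$ gives $f_*\Ql \cong \Ql\oplus\Ql[-2](-1)\oplus\cdots$? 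No — one must be careful: $f_*\Ql[\ell(u)]$ for $f:\AA^{\ell(u)}\to \pt$ (as a $*$-pushforward of the $*$-extension from the open cell, equivalently $f_*$ of constant sheaf on a bundle whose fibers are the open cell, not all of projective space) equals $\Ql$ in a single degree with a shift. Concretely, $i_w$ being an affine open embedding into $\Fl_{\leq v}$ over $\PFl_{\barw}$, the composite $\Fl_w \xrightarrow{i_w}\Fl \xrightarrow{\pi^\Theta}\PFl$ has image $\PFl_{\barw}$, and $\pi^\Theta_* i_{w,*}\Ql = (\pi^\Theta|_{\overline{\Fl_w}})_*(i_w)_*\Ql$. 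The cleanest route: factor $\pi^\Theta|_{\Fl_{\leq w}}$ through the relevant Bott–Samelson-type tower, but more economically, just use that the fiber of $\pi^\Theta$ restricted to $\tilFl_{\leq w}$ lands in $L_\Theta/(B\cap L_\Theta)$ where the finite-type parabolic-pushforward formula for $\SL(2)$-type minimal parabolics is classical: for a minimal parabolic $P_s$, $\pi^s_*\nabla^{\Fl}_{ws} \cong \nabla^{\PFl}$ when $\ell(ws)>\ell(w)$ with an appropriate shift/twist $[1](1/2)$, and similarly $\pi^s_*\Delta_{ws}\cong\Delta^{\PFl}[-1](-1/2)$. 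Iterating a reduced expression for $u$ (of length $\ell(u)$) inside $W_\Theta$, each minimal-parabolic step contributes a shift-twist of $[\pm 1](\pm 1/2)$, accumulating to $[\mp\ell(u)](\mp\ell(u)/2)$, which gives exactly the claimed formulas. I would also note that $\pi^\Theta_* i_{w,*} = i_{\barw,*}(\pi^\Theta|_{\Fl_w})_*$ by properness, reducing to the single-fiber computation $(\pi^\Theta|_{\Fl_w})_*\Ql[\ell(w)](\ell(w)/2)$ on the $\AA^{\ell(u)}$-bundle over $\PFl_{\barw}\cong\AA^{\ell(\barw)}$.

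An alternative, perhaps cleaner, proof avoiding the iteration: I would directly compute $(\pi^\Theta|_{\Fl_w})_*\Ql$ for the affine bundle $\Fl_w\to\PFl_{\barw}$ with fiber $\AA^{\ell(u)}$. Since this bundle is Zariski-locally trivial (it is a successive extension of line bundles, being $B$-equivariant on affine-space cells), and the $*$-pushforward along $\AA^n\to\pt$ of $\Ql$ is $\Ql$ placed in cohomological degree $0$... wait, $R\Gamma(\AA^n,\Ql)=\Ql$, while $R\Gamma_c(\AA^n,\Ql)=\Ql[-2n](-n)$. Here $\nabla_w=i_{w,*}\Ql[\ell(w)](\ell(w)/2)$ so I want $*$-pushforward; $(\pi^\Theta)_*(i_w)_*\Ql[\ell(w)](\ell(w)/2)$: on the fiber $\AA^{\ell(u)}$ the constant sheaf pushes to $\Ql$, so we get $\Ql$ on $\PFl_{\barw}$ in degree $0$, giving $i_{\barw,*}\Ql[\ell(w)](\ell(w)/2) = \nabla_{\barw}[\ell(w)-\ell(\barw)](\ldots)$; since $\ell(w)=\ell(u)+\ell(v)$ and $\ell(\barw)=\ell(v)$, this is $\nabla_{\barw}[\ell(u)](\ell(u)/2)$ after matching twists — exactly the claim. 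Dually for $\Delta_w$ use $i_{w,!}$ and $R\Gamma_c$ of the affine fibers, producing the shift $[-\ell(u)](-\ell(u)/2)$; alternatively invoke that $\pi^\Theta$ is proper so Verdier duality exchanges the two formulas, and deduce the $\Delta$-statement from the $\nabla$-statement by applying $\mathbb{D}$ (using $\pi^\Theta_*\mathbb{D}=\mathbb{D}\pi^\Theta_!=\mathbb{D}\pi^\Theta_*$).

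The main obstacle I anticipate is making the claim ``$\pi^\Theta|_{\Fl_w}:\Fl_w\to\PFl_{\barw}$ is a trivial (or Zariski-locally trivial) $\AA^{\ell(u)}$-bundle'' fully rigorous in the Kac–Moody setting, where one must descend to finite-type Schubert varieties (using Lemma \ref{l:fact}), and to correctly identify the fiber with the finite-type Schubert cell in $L_\Theta/(B\cap L_\Theta)$ of the element $u\in W_\Theta$ of length $\ell(u)$. Everything else — the cohomology of affine space with/without compact support, matching shifts and Tate twists, the compatibility $\ell(w)=\ell(u)+\ell(v)$ — is routine bookkeeping. I would therefore spend most of the write-up carefully setting up the square
\begin{equation*}
\xymatrix{\Fl_w\ar[r]^{i_w}\ar[d] & \Fl\ar[d]^{\pi^\Theta}\\ \PFl_{\barw}\ar[r]^{i_{\barw}} & \PFl}
\end{equation*}
verifying it is Cartesian with the left arrow an $\AA^{\ell(u)}$-bundle, and then the proper base change plus $R\Gamma_{(c)}(\AA^{\ell(u)})$ computation finishes it.
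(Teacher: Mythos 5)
Your proposal is correct and the ``alternative, perhaps cleaner'' argument you give at the end is precisely the paper's one-line proof: the restriction $\LFl_{G,w}\to\PFl_{G,\barw}$ is a trivial $\AA^{\ell(u)}$-fibration (using the length additivity $\ell(w)=\ell(u)+\ell(v)$ and $\ell(\barw)=\ell(v)$), after which $R\Gamma(\AA^{\ell(u)},\Ql)=\Ql$ and $R\Gamma_c(\AA^{\ell(u)},\Ql)=\Ql[-2\ell(u)](-\ell(u))$ yield the two formulas via the factorizations $\pi^\Theta_!i_{w,!}=i_{\barw,!}\pi'_!$ and $\pi^\Theta_*i_{w,*}=i_{\barw,*}\pi'_*$. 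Two small remarks: the second factorization is sheer functoriality of $*$-pushforward and needs no properness (properness of $\pi^\Theta$ only enters to identify $\pi^\Theta_*$ with $\pi^\Theta_!$ for the $\Delta$-formula, or to run the Verdier-duality reduction you also mention), and the preliminary iteration through minimal parabolics, while valid, is an unnecessary detour compared with the direct bundle computation you ultimately settle on.
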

\begin{proof}
We only need to observe that the projection $\LFl_{G,\leq w}\to\PFl_{G,\barw}$ is a trivial fibration with fibers isomorphic to $\AA^{\ell(u)}$.
\end{proof}

\begin{cor}
The functor $\pi^\Theta_*$ sends very pure (resp. $*$-pure, $!$-pure) complexes of weight 0 to very pure (resp. $*$-pure, $!$-pure) complexes of weight 0.
\end{cor}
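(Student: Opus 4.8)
The plan is to reduce the statement to two standard facts. \textbf{Fact (i):} the projection $\pi^\Theta\colon\LFl_G\to\PFl_G$ is proper, and each of its fibres is isomorphic to the finite-dimensional flag variety $(B\cap L_\Theta)\backslash L_\Theta$ of the Levi $L_\Theta$ (indeed $B\backslash P_\Theta\cong(B\cap L_\Theta)\backslash L_\Theta$, since $U^\Theta\subset B$). \textbf{Fact (ii):} if $X$ is a proper $k$-variety stratified by affine spaces and $\calG\in D^b_m(X)$ is constructible and $*$-pure of weight $0$ for that stratification, then $H^*(X,\calG)$ is pure with $H^m(X,\calG)$ of weight $m$. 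Fact (ii) is classical (compare \cite[Lemma 4.4.2]{BGS} and the argument of Lemma \ref{l:Hss}): one inducts on the number of strata using the triangle $j_!j^*\calG\to\calG\to i_*i^*\calG$ for an open stratum $j\colon\AA^d\hookrightarrow X$ with closed complement $i$; on the simply connected $\AA^d$ the complex $j^*\calG$ splits as $\bigoplus_m\mathcal{H}^m(j^*\calG)[-m]$ with each $\mathcal{H}^m(j^*\calG)$ a constant pure local system of weight $m$, so $H^*_c(\AA^d,j^*\calG)$ is concentrated in top degree and is pure of weight $=$ degree once the Tate twist in $H^*_c(\AA^d,\Ql)$ is accounted for; purity then forces the long exact sequence relating $H^*_c(\AA^d,j^*\calG)$, $H^*(X,\calG)$ and $H^*(X\setminus\AA^d,i^*\calG)$ to split into short exact ones, and one concludes by induction.

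First I would dispose of the $!$-pure case by duality. Since $\pi^\Theta$ is proper we have $\pi^\Theta_*=\pi^\Theta_!$, so $\pi^\Theta_*$ commutes with Verdier duality $\DD$. Verdier duality interchanges $*$-purity and $!$-purity and preserves weight $0$ (the Tate twist in $\DD$ exactly cancels the cohomological shift on each affine stratum), so $\calF$ is $!$-pure of weight $0$ iff $\DD\calF$ is $*$-pure of weight $0$; granting that $\pi^\Theta_*$ preserves $*$-purity of weight $0$, we get that $\pi^\Theta_*\calF=\DD\,\pi^\Theta_*\,\DD\calF$ is $!$-pure of weight $0$. As ``very pure'' means ``$*$-pure and $!$-pure'', it therefore remains only to show that $\pi^\Theta_*$ preserves $*$-purity of weight $0$.

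So let $\calF\in\scE$ be $*$-pure of weight $0$ and fix $\barw\in\coW$ with minimal-length representative $v\in[\coW]$. I would compute $i_{\barw}^*\pi^\Theta_*\calF$ by proper base change: it equals $q_!(\calF|_Z)$, where $Z=\pi^{\Theta,-1}(\PFl_{G,\barw})$ and $q=\pi^\Theta|_Z\colon Z\to\PFl_{G,\barw}$. By Fact (i), $q$ is a proper, right-$B$-equivariant fibre bundle with fibre $(B\cap L_\Theta)\backslash L_\Theta$ over the single right-$B$-orbit $\PFl_{G,\barw}\cong\AA^{\ell(\barw)}$; since $Z=\bigsqcup_{u\in W_\Theta}\Fl_{uv}$ as a set, the restriction of $\calF$ to a fibre of $q$ is a complex $\calG$ on $(B\cap L_\Theta)\backslash L_\Theta$ which is constructible for the Schubert stratification by the cells indexed by $W_\Theta$ (each an affine space) and is $*$-pure of weight $0$ for it (restricting the weight-$0$ complex $i_{uv}^*\calF$ to a locally closed subvariety keeps it a complex of weight $0$). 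By equivariance, $q_!(\calF|_Z)$ has constant cohomology sheaves on $\PFl_{G,\barw}$, with value $H^*_c((B\cap L_\Theta)\backslash L_\Theta,\calG)=H^*((B\cap L_\Theta)\backslash L_\Theta,\calG)$; by Fact (ii) this is pure with $H^m$ of weight $m$. Hence $i_{\barw}^*\pi^\Theta_*\calF$ is a complex of weight $0$ for every $\barw$, i.e.\ $\pi^\Theta_*\calF$ is $*$-pure of weight $0$, which completes the proof.

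I expect the only real work, beyond invoking Facts (i) and (ii), to be bookkeeping of a routine nature: establishing properness of $\pi^\Theta$ on each finite-type piece $\LFl_{G,\leq w}$, identifying $Z$ as a homogeneous $(B\cap L_\Theta)\backslash L_\Theta$-bundle so that the equivariance and base-change arguments go through over the ind-scheme quotients of \S\ref{sss:starsheaf}, and checking the commutation of $\pi^\Theta_*$ with Verdier duality in that setting. The one genuinely substantive ingredient is Fact (ii), which is standard.
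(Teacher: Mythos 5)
Your proof is correct, but it takes a genuinely different route from the paper's. The paper's proof is two sentences: it uses the fact that the class of $*$-pure (resp.\ $!$-pure) complexes of weight $0$ is exactly the extension-closed class $\langle\Delta_w\langle0\rangle\mid w\in W\rangle$ (resp.\ $\langle\nabla_w\langle0\rangle\mid w\in W\rangle$), and then invokes Lemma~\ref{l:pist}, whose content is that $\pi^\Theta_*\Delta_w\cong\Delta_{\barw}[-\ell(u)](-\ell(u)/2)$ and $\pi^\Theta_*\nabla_w\cong\nabla_{\barw}[\ell(u)](\ell(u)/2)$ (a weight-$0$ shift/twist), this in turn coming from the observation that over a single orbit the projection is a trivial $\AA^{\ell(u)}$-fibration. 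Since $\pi^\Theta_*$ is exact and hence preserves extension closures, it preserves these three classes; no properness, duality, or fibrewise cohomology computation is needed. Your argument is the pointwise one: reduce $!$-purity to $*$-purity via properness of $\pi^\Theta$ and Verdier duality, then compute $i_{\barw}^*\pi^\Theta_*\calF$ by proper base change and reduce to the purity of $H^*$ of a $*$-pure weight-$0$ complex on the finite-dimensional flag variety $(B\cap L_\Theta)\backslash L_\Theta$, established by the standard induction over affine cells. Both are valid; the paper's is more economical because it slots directly into the $\Delta/\nabla$-framework already in place, whereas yours needs your Fact~(ii) and the identification of the fibre stratification. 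On that last point, one step worth spelling out: that the induced stratification $Z_x\cap\Fl_{uv}$ ($u\in W_\Theta$) of a fibre consists of affine spaces. This follows because $\Fl_{uv}\to\PFl_{G,\barw}$ is a trivial $\AA^{\ell(u)}$-bundle over the single orbit $\PFl_{G,\barw}$ (precisely the observation in the proof of Lemma~\ref{l:pist}), so $Z_x\cap\Fl_{uv}\cong\AA^{\ell(u)}$. With that noted, the argument closes up.
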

\begin{proof}
By Lemma \ref{l:pist}, $\pi^\Theta_*$ sends $\langle\Delta_{w}\langle0\rangle|w\in W\rangle$ to
$\langle\Delta_{\barw}\langle0\rangle|\barw\in\coW\rangle$ and sends $\langle\nabla_w\langle0\rangle|w\in W\rangle$ to $\langle\nabla_{\barw}\langle0\rangle|\barw\in\coW\rangle$. But these two classes consist precisely of $*$-pure and $!$-pure complexes of weight 0. Moreover very pure complexes are precisely those objects in the intersection of the two classes.
\end{proof}


\subsection{Convolution}\label{ss:Econv}
Consider the convolution diagram
\begin{equation}\label{d:convI}
\xymatrix{ & G\twtimes{B}\Fl\ar[dl]_{p_1}\ar[dr]^{p_2}\ar[rr]^{m} & & \Fl\\
\Fl & & B\backslash\Fl}
\end{equation}
where $p_1,p_2$ are projections to the left and right factors and $m$ is induced by the multiplication map of $G$. The convolution diagram induces a convolution product
\begin{eqnarray*}
\conv{B}:\scE\times\scE&\to& \scE\\
(\calF_1,\calF_2)&\mapsto&m_!(\calF_1\stackrel{B}{\boxtimes}\calF_2).
\end{eqnarray*}
Note that $\calF_1\boxtimes\calF_2$ is an $B$-equivariant complex on $G\times\Fl$ with respect to the action $i\cdot(g,x)=(gi^{-1},ix)$ and hence descends to a complex $\calF_1\stackrel{B}{\boxtimes}\calF_2$ on $G\twtimes{B}\Fl$. There is an obvious associativity constraint which makes $\conv{B}$ into a monoidal structure on $\scE$. More generally, the convolution gives a right action of the monoidal category $\scE$ on $\scE_\Theta$ given by the same formula.

\begin{prop}\label{p:hcomp}
The functor $\HH$ has a natural monoidal structure which intertwines the convolution $\conv{B}$ on $\scE$ and the tensor product $(N_1,N_2)\mapsto N_1\Ltimes_{\dS}N_2$ (with respect to the right $\dS$-action on $N_1$ and left $\dS$-action on $N_2$) on $D_{\perf}(\dS\otimes\dS,\Frob)$.
\end{prop}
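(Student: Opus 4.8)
The plan is to realize $\HH$ as a relative pushforward and to deduce the monoidal structure from the projection formula and proper base change, much as in the equivariant Künneth computations of \cite{Ginz}. Write $\HH$ as $\bR\pi_{*}$ for the structure map $\pi\colon\quot{B}{G}{B}=[(B\times B)\backslash G]\to\BB(B\times B)$, using that $H^{*}(\BB B)\cong H^{*}(\BB H)=\dS$ (as $U$ is pro-unipotent) so that $\bR\pi_{*}\calF$ carries the $\dS\otimes\dS$-module structure coming from $H^{*}_{B\times B}$; by Cor.\ref{c:ba} this is the functor $\HH\colon\scE\to D_{\perf}(\dS\otimes\dS,\Frob)$ (the ``mixed'' regrading built into $\HH$ is a formal operation compatible with everything below). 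Dually, the target operation $N_{1}\Ltimes_{\dS}N_{2}$ is of the same shape: it is the pushforward along $\BB(B\times B\times B)\to\BB(B\times B)$ forgetting the middle factor, applied to the exterior tensor product of $N_{1}$ (viewed on the first two factors) and $N_{2}$ (on the last two), and its unit is $\dS$ regarded as a $(\dS\otimes\dS)$-module via multiplication.

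The geometric input is the analysis of the convolution stack $Z:=[B\backslash(G\twtimes{B}G)/B]$, with its left- and right-factor maps $r_{1},r_{2}\colon Z\to\quot{B}{G}{B}$ and multiplication $\mu\colon Z\to\quot{B}{G}{B}$, for which $\calF_{1}\stackrel{B}{\boxtimes}\calF_{2}=r_{1}^{*}\calF_{1}\otimes r_{2}^{*}\calF_{2}$ and $\calF_{1}\conv{B}\calF_{2}=\mu_{!}(r_{1}^{*}\calF_{1}\otimes r_{2}^{*}\calF_{2})$. Unwinding the definition of the twisted product, $(r_{1},r_{2})$ identifies $Z$ with the fibered product of two copies of $\quot{B}{G}{B}$ over $\BB B$, the first mapping by ``remember the right $B$'' and the second by ``remember the left $B$'', and compatibly $\pi\circ\mu$ factors through $\BB(B\times B\times B)=\BB(B\times B)\times_{\BB B}\BB(B\times B)$ and the forget-the-middle map, in such a way that it computes the target operation on $\bR\pi_{*}\calF_{1}$ and $\bR\pi_{*}\calF_{2}$. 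Finally, restricted to Schubert pieces the maps $\mu$, $r_{1}$, $r_{2}$ and the leg $\quot{B}{G}{B}\to\BB B$ are ind-proper (projectivity of Bott--Samelson type maps), so the relevant $\bR(-)_{!}$ coincide with $\bR(-)_{*}$ and proper base change is available.

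With this in hand, the monoidal isomorphism is
\[
\HH(\calF_{1}\conv{B}\calF_{2})=\bR(\pi\mu)_{*}(r_{1}^{*}\calF_{1}\otimes r_{2}^{*}\calF_{2})\ \cong\ \HH(\calF_{1})\Ltimes_{\dS}\HH(\calF_{2}),
\]
where the isomorphism on the right is obtained by proper base change along the Cartesian square of the previous paragraph together with two applications of the projection formula, exactly as in the classical equivariant Künneth formula. All functors involved ($\otimes$, pullback, proper pushforward, projection formula, base change) exist and are $\Frob$-equivariant in the mixed $\ell$-adic formalism, so this is an isomorphism of $(\dS\otimes\dS,\Frob)$-modules. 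It then remains to upgrade it to a genuine monoidal structure by verifying the associativity and unit constraints: associativity reduces to identifying both composites around the pentagon, evaluated on $(\calF_{1},\calF_{2},\calF_{3})$, with $\bR(\mathrm{proj})_{*}(r_{1}^{*}\calF_{1}\otimes r_{2}^{*}\calF_{2}\otimes r_{3}^{*}\calF_{3})$ on the threefold convolution stack, and the unit axiom follows from $\HH(\Delta_{e})\cong\calO_{\Gamma(e)}\cong\dS$ (Lemma \ref{l:hst}, with $\Gamma(e)$ the diagonal), which is exactly the unit of $\Ltimes_{\dS}$.

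The main obstacle is making the geometric input and the Künneth step rigorous in the mixed $\ell$-adic setting on ind-schemes and quotient stacks: the properness and finiteness statements on Schubert pieces and the validity of proper base change there (in the formalism of \cite{LO}), and---most delicately---checking that the resulting isomorphism is compatible with the \emph{full} $\dS\otimes\dS$-module structures (not merely the ``middle'' copy of $\dS$ being contracted) and respects Frobenius weights, including that $\HH$ genuinely lands in $D_{\perf}(\dS\otimes\dS,\Frob)$ and that $\Ltimes_{\dS}$ preserves perfectness, where one uses regularity of $\dS=\Sym(V_{H}^{\vee})$. The coherence diagrams of the last step are routine but lengthy.
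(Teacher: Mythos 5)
Your proposal is correct in its broad strokes and lands on a valid argument, but it follows a genuinely different geometric route than the paper, and one step in your justification is misattributed.

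The paper's proof does not realize $\HH$ as a pushforward along $\quot{B}{G}{B}\to\BB(B\times B)$ directly; instead it works with the enhanced flag variety $\tilFl=G/U$ and factorizes the quotient $\tilFl\times\Fl\to\Fl\times H\backslash\Fl$ through the intermediate $\tilFl\twtimes{H}\Fl$ (quotient by the diagonal $\Delta(H)$). It then applies Cor.\ref{c:eqkillS} -- a standalone "killing equivariance" lemma asserting $\bR\Gamma([A\backslash X],\calF)\Ltimes_{\dS_A}\Ql\cong\bR\Gamma(X,\pi^*\calF)$ -- to that one torsor, and closes by noting that $G\twtimes{H}\Fl\to\tilFl\twtimes{H}\Fl$ and $G\twtimes{H}\Fl\to G\twtimes{B}\Fl$ are pro-affine fibrations. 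Your proof instead identifies $Z\cong\quot{B}{G}{B}\times_{\BB B}\quot{B}{G}{B}$ and $\BB(B^3)\cong\BB(B^2)\times_{\BB B}\BB(B^2)$ and runs the equivariant Künneth formula through the ambient Cartesian square over $\BB(B^2)\times\BB(B^2)$. Both use the same underlying fact; your version avoids introducing $\tilFl$, while the paper's reduces everything to the pre-packaged Cor.\ref{c:eqkillS} and keeps pro-affine fibrations (rather than gerbe pushforwards along $\BB(B^{3})\to\BB(B^{2})$) at the foreground.

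The one real imprecision: you write that because $\mu$, $r_1$, $r_2$ and the legs $\quot{B}{G}{B}\to\BB B$ are ind-proper, ``proper base change is available.'' But the Cartesian square you actually need, namely
\begin{equation*}
\begin{array}{ccc}
Z & \longrightarrow & \quot{B}{G}{B}\times\quot{B}{G}{B}\\
\downarrow q & & \downarrow \pi\times\pi\\
\BB(B^{3}) & \xrightarrow{\ j\ } & \BB(B^{2})\times\BB(B^{2}),
\end{array}
\end{equation*}
does \emph{not} have a proper vertical: $\pi\times\pi$ has fiber $G\times G$ (ind-affine, not proper). What saves you is that the ``diagonal'' $j$ is \emph{smooth} -- over $\bar{k}$ it is a $B$-torsor, since $\Delta_{\BB B}$ has $\BB B\times_{(\BB B)^{2}}\BB B$ an $\mathrm{Aut}$-torsor -- so \emph{smooth} base change applies, and that is precisely the content of the paper's Cor.\ref{c:eqkillS} (whose proof explicitly invokes smooth base change, not proper base change). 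Separately, your computation identifying $\bR(p_{13})_*$ applied to $p_{12}^*N_1\otimes p_{23}^*N_2$ with $N_1\Ltimes_{\dS}N_2$ on the ``outer'' $\dS\otimes\dS$ is correct but deserves to be spelled out (it rests on $p_{13}^{*}$ being free extension of scalars along $\dS^{\otimes 2}\hookrightarrow\dS^{\otimes 3}$, so that $\bR(p_{13})_*$ is restriction). The boundedness/perfectness concerns you flag at the end are real, but as in the paper they come for free once the isomorphism with $\HH(\calF_1\conv{B}\calF_2)$ is established, since $\HH$ lands in $D_{\perf}$ by construction.
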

\begin{proof}
Consider the group $H\times H$ acting on $\tilFl\times\Fl$ by $(h_1,h_2)\cdot(x,y)=(xh_1^{-1},h_2y)$. The quotient map by $H\times H$ can be factorized into two steps:
\begin{equation*}
\tilFl\times\Fl\xrightarrow{p_0}\tilFl\twtimes{H}\Fl\xrightarrow{(p_1,p_2)}\Fl\times H\backslash\Fl.
\end{equation*}
where $p_0$ is the quotient by the diagonal copy of $\Delta(H)\subset H\times H$.

Applying Corollary \ref{c:eqkillS} (the isomorphism \eqref{killeqcoho}) to the $H\times H/\Delta(H)$-torsor $\tilFl\twtimes{H}\Fl\xrightarrow{(p_1,p_2)}\Fl\times H\backslash\Fl$, we get a functorial quasi-isomorphism
\begin{equation}\label{eq:convtensor}
(\HH(\calF_1)\Ltimes\HH(\calF_2))\Ltimes_{\dS\otimes \dS}\dS\cong\HH(\tilFl\twtimes{H}\Fl,\calF_1\stackrel{H}{\boxtimes}\calF_2).
\end{equation}
In the tensor product on LHS, the $\dS\otimes\dS$-module structure on $\HH(\calF_1)\Ltimes\HH(\calF_2)$ comes from the right $\dS$-action on $\HH(\calF_1)$ and the left $\dS$-action on $\HH(\calF_2)$; the $\dS\otimes\dS$-module structure on $\dS$ comes from left and right multiplication. Now (\ref{eq:convtensor}) is obviously the same as
\begin{equation}\label{eq:con1}
\HH(\calF_1)\Ltimes_{\dS}\HH(\calF_2)\cong\HH(\tilFl\twtimes{H}\Fl,\calF_1\stackrel{H}{\boxtimes}\calF_2).
\end{equation}
Since both projections $G\twtimes{H}\Fl\to\tilFl\twtimes{H}\Fl$ and $G\twtimes{H}\Fl\to G\twtimes{B}\Fl$ are fibrations with fibers isomorphic to pro-affine spaces, we have
\begin{equation}\label{eq:con2}
\HH(\tilFl\twtimes{H}\Fl,\calF_1\stackrel{H}{\boxtimes}\calF_2)\cong\HH(G\twtimes{B}\Fl,\calF_1\stackrel{B}{\boxtimes}\calF_2)=\HH(\calF_1\conv{B}\calF_2).
\end{equation}
Combining (\ref{eq:con1}) and (\ref{eq:con2}), we get a functorial quasi-isomorphism
\begin{equation*}
\HH(\calF_1)\Ltimes_{\dS}\HH(\calF_2)\cong\HH(\calF_1\conv{B}\calF_2).
\end{equation*}
\end{proof}

\begin{lemma}\label{l:convIst}
Suppose $w_1,w_2\in W$ and $\ell(w_1w_2)=\ell(w_1)+\ell(w_2)$, then
\begin{equation}\label{w1w2}
\Delta_{w_1}\conv{B}\Delta_{w_2}\cong\Delta_{w_1w_2};\hspace{1cm} \nabla_{w_1}\conv{B}\nabla_{w_2}\cong\nabla_{w_1w_2}.
\end{equation}
Moreover, $\delta:=\Delta_{e}=\nabla_{e}\in\scE$ ($e$ is the identity in $W$) is the unit object under the convolution $\conv{B}$.
\end{lemma}
\begin{proof}
The morphism $m$ in the convolution diagram \eqref{d:convI} restricts to the following $B$-equivariant {\em isomorphism} 
\begin{equation}\label{mw1w2}
Bw_{1}B\twtimes{B}\Fl_{w_{2}}\xrightarrow{m_{w_{1},w_{2}}}\Fl_{w_{1}w_{2}}.
\end{equation}
The isomorphisms in \eqref{w1w2} follows easily from the isomorphism \eqref{mw1w2}. The second statement about $\delta$ is obvious.
\end{proof}

\begin{prop}\label{p:vpure}
If $\calF_1,\calF_2\in\scV$, then so is $\calF_1\conv{B}\calF_2$.
\end{prop}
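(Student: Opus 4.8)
The plan is to reduce the general case to the case of standard and costandard sheaves by a d\'evissage, using the characterization of very pure complexes established in the corollary following Lemma \ref{l:pist}: the class of $*$-pure complexes of weight $0$ is exactly $\langle\Delta_w\langle 0\rangle\mid w\in W\rangle$, the class of $!$-pure complexes of weight $0$ is exactly $\langle\nabla_w\langle 0\rangle\mid w\in W\rangle$, and $\scV$ is their intersection. So I would first show separately that $\conv{B}$ sends $*$-pure$\,\times\,$$*$-pure to $*$-pure and $!$-pure$\,\times\,$$!$-pure to $!$-pure; combining the two then gives the claim for $\calF_1,\calF_2\in\scV$.

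For the $*$-pure statement, the key point is that $\conv{B}$ is exact and commutes with extensions in each variable (since $m_!$ and the external product are exact and the convolution of a triangle is a triangle). Hence, writing $\calF_1$ as a successive extension of shifted-twisted $\Delta_{u}$'s and $\calF_2$ as a successive extension of shifted-twisted $\Delta_{v}$'s, it suffices to treat $\Delta_u\conv{B}\Delta_v$. Here I would invoke the standard fact --- closely related to Lemma \ref{l:convIst} --- that for any $u,v$ one has $\Delta_u\conv{B}\Delta_v\in\langle\Delta_w\langle 0\rangle\mid w\in W\rangle$; concretely, pick a reduced word for $u$ and multiply by the $\Delta$'s of simple reflections one at a time, and at each step either the length adds (Lemma \ref{l:convIst}, giving another $\Delta$) or it drops, in which case $\Delta_s\conv{B}\Delta_s$ sits in a triangle with $\delta$ and $\Delta_s\conv{B}\nabla_s\cong\delta$, so it lies in $\langle\delta,\Delta_s\langle 0\rangle\rangle$ --- this is the only input requiring the $\SL(2)$-computation, and it is exactly of the shape recorded in Lemma \ref{l:convIst} plus the unit property. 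Dualizing (Verdier duality exchanges $\Delta_w\leftrightarrow\nabla_w$ and commutes with $\conv{B}$ up to the obvious swap) gives the $!$-pure statement, that $\nabla_u\conv{B}\nabla_v\in\langle\nabla_w\langle 0\rangle\mid w\in W\rangle$, for free.

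Putting the two halves together: if $\calF_1,\calF_2\in\scV$ then $\calF_1\conv{B}\calF_2$ lies in $\langle\Delta_w\langle 0\rangle\rangle$ (being a convolution of objects each of which lies in $\langle\Delta\langle 0\rangle\rangle$), so it is $*$-pure of weight $0$; and it lies in $\langle\nabla_w\langle 0\rangle\rangle$, so it is $!$-pure of weight $0$; hence it is very pure of weight $0$, i.e.\ $\calF_1\conv{B}\calF_2\in\scV$. The main obstacle is purely bookkeeping: one must make sure that the d\'evissage is uniform in the sense that convolving a successive extension really does land in the class generated by the pairwise convolutions --- this follows formally from exactness of $\conv{B}$ in each variable and the fact that $\langle-\rangle$-classes are closed under extensions --- and that the shifts and twists bundled into $\langle 0\rangle$ are tracked correctly through Lemma \ref{l:convIst} and the $\SL(2)$ triangle, which they are since all those isomorphisms are weight-preserving.
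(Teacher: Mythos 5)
Your d\'evissage has a genuine gap, and it is exactly the gap that the paper's own proof is designed to fill. You claim that $\Delta_s\conv{B}\Delta_s\in\langle\delta,\Delta_s\langle 0\rangle\rangle$ and hence that the class $\langle\Delta_w\langle 0\rangle\rangle$ is closed under convolution. This is false: the $\SL(2)$ calculation in Lemma~\ref{l:convSL2} gives
\begin{equation*}
\Delta\conv{B}\Delta\in\langle\Delta(1/2),\,\Delta[-1](-1/2),\,\delta\rangle,
\end{equation*}
and $\Delta(1/2)=\Delta\langle -1\rangle$ strictly \emph{lowers} the weight. (Dually, $\nabla\conv{B}\nabla$ produces a $\nabla\langle +1\rangle$ term.) So all the correct d\'evissage can deliver is
\begin{equation*}
\calF_1\conv{B}\calF_2\in\langle\Delta_w\langle\leq 0\rangle\rangle\cap\langle\nabla_w\langle\geq 0\rangle\rangle,
\end{equation*}
i.e.\ $*$-pure of weight $\leq 0$ and $!$-pure of weight $\geq 0$, which is \emph{not} the same as very pure of weight $0$.

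What your proposal is missing is the second half of the argument: one must show that membership in $\langle\Delta_w\langle\leq 0\rangle\rangle\cap\langle\nabla_w\langle\geq 0\rangle\rangle$ already \emph{forces} very purity of weight~$0$. That step is nontrivial, because $\nabla_w$ itself is not $*$-pure of weight $0$ (its $*$-restrictions to smaller cells can have positive weights), so there is no tautological reason why $i_v^*\calF$ should have weight $\geq 0$. The paper's proof handles this by invoking Lemma~\ref{l:contract} (the translate $vC\cap\Fl_{\leq w}$ of the big cell contracts to $vB/B$) together with Springer's contraction purity result \cite[Cor.~1]{Spr} to compute $i_v^*\nabla_w=H^*(vC\cap\Fl_w)[\ell(w)](\ell(w)/2)$, which has weight $\geq 0$ because $vC\cap\Fl_w$ is smooth. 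This gives the missing lower bound $i_v^*\calF\geq 0$, which combined with the easy upper bound from the $\Delta$-side yields $*$-purity of weight $0$, and dually $!$-purity. Your proposal stops before this step and therefore does not prove the proposition.
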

\begin{proof}
By definition,
\begin{equation*}
\scV=\langle\Delta_w\langle0\rangle|w\in W\rangle\cap\langle\nabla_w\langle0\rangle|w\in W\rangle
\end{equation*}
We observe that
\begin{eqnarray}\label{eq:cdel}
\Delta_{w_1}\conv{B}\Delta_{w_2}&\in&\langle\Delta_w\langle\leq0\rangle|w\in W\rangle\\
\label{eq:cnab}
\nabla_{w_1}\conv{B}\nabla_{w_2}&\in&\langle\nabla_w\langle\geq0\rangle|w\in W\rangle.
\end{eqnarray}
In fact, to prove (\ref{eq:cdel}), we can write each $\Delta_w=\Delta_{s_1}\conv{B}\cdots\conv{B}\Delta_{s_m}$ by Lemma \ref{l:convIst} (for a reduced word expression $w=s_1\cdots s_m$) and we reduce to the computation of $\Delta_{s}\conv{B}\Delta_{s'}$ for two simple reflections $s,s'$. If $s\neq s'$, then $\Delta_s\conv{B}\Delta_{s'}\cong\Delta_{ss'}$ by Lemma \ref{l:convIst}. For $s=s'$, this follows by Lemma \ref{l:convSL2}. The proof of (\ref{eq:cnab}) is similar.

Therefore, for $\calF_1,\calF_2\in\scV$, we have
\begin{equation*}
\calF:=\calF_1\conv{B}\calF_2\in\langle\Delta_w\langle\leq0\rangle|w\in W\rangle\cap\langle\nabla_w\langle\geq0\rangle|w\in W\rangle.
\end{equation*}
But this is the same as saying that $\calF$ is pure of weight 0 in the sense of \cite{BBD}. By Lemma \ref{l:samepure}, $\calF$ is very pure of weight 0.
\end{proof}

\begin{lemma}\label{l:Hss}
For each $w\in W$, $\HH_w:=\HH(\IC_w)$ is a direct sum of $\Frob$-modules $\Ql[n](n/2)$ for $n\equiv\ell(w)(\textup{mod }2)$.
\end{lemma}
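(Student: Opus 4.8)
The plan is to prove the statement by induction on $\ell(w)$ using the Bott--Samelson resolution, in the spirit of the purity argument behind \cite[Lemma 4.4.2]{BGS}. First I would recall that by Example \ref{ex:vpure}, $\IC_w$ is very pure of weight $0$, so by Lemma \ref{l:purity}(1) each $\HH^i(\IC_w)$ is a $\Frob$-module of weight exactly $i$; the remaining point is to show that the only weights that occur are $i\equiv\ell(w)\pmod 2$ and that each graded piece is a sum of Tate twists $\Ql(n/2)$, i.e.\ that the eigenvalues of $\Frob$ are genuine powers of $q^{1/2}$ and in fact powers of $q$ after the appropriate shift. Equivalently, writing $\HH(\IC_w)$ as a graded $\dS$-module, one wants its generators to sit in even degrees (relative to the parity of $\ell(w)$) and to be honest Tate twists.

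The key steps, in order, are as follows. (i) Choose a reduced word $w=s_1\cdots s_m$ ($m=\ell(w)$) and let $p\colon \mathrm{BS}(s_1,\dots,s_m)\to\Fl_{\leq w}$ be the corresponding Bott--Samelson resolution; it is a proper, $B$-equivariant map from a smooth projective Bott--Samelson variety which is an iterated $\PP^1$-bundle. (ii) Then $p_*\Ql[m](m/2)$ is a pure complex of weight $0$, and since $\mathrm{BS}$ is an iterated $\PP^1$-bundle, $H^*_B(\mathrm{BS})$ (equivalently, $\HH$ of the pushforward, after accounting for the left/right $H$-actions) is a free $\dS$-module whose generators are concentrated in degrees of parity $m\pmod 2$, with all $\Frob$-eigenvalues honest powers of $q$ — this is a direct computation using the projective bundle formula iteratively, exactly as in the reductive case. (iii) By the Decomposition Theorem, $p_*\Ql[m](m/2)\cong \IC_w\oplus\bigoplus_j \IC_{w_j}[n_j](n_j/2)$ for various $w_j<w$ and $n_j$, and strictly speaking $n_j$ is forced to satisfy $n_j\equiv m-\ell(w_j)\pmod 2$ by parity of the supports. (iv) Apply $\HH$ to this decomposition: the left side has the parity and Tate-type properties from (ii), and by the inductive hypothesis each $\HH(\IC_{w_j})$ is a sum of $\Ql[n](n/2)$ with $n\equiv\ell(w_j)\pmod 2$, so $\HH(\IC_{w_j})[n_j](n_j/2)$ contributes terms with $n\equiv\ell(w_j)+n_j\equiv m\pmod 2$. (v) Since $\HH$ is additive and the Grothendieck group of graded Tate-type $\Frob$-modules is free, one can cancel the contributions of the $\IC_{w_j}$ summands from both sides and conclude that $\HH(\IC_w)=\HH_w$ is itself a direct sum of $\Ql[n](n/2)$ with $n\equiv m=\ell(w)\pmod 2$.

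I expect the main obstacle to be step (ii): verifying cleanly that the (bi-)equivariant cohomology of the Bott--Samelson variety is of pure Tate type with the stated parity, keeping careful track of the two copies of $\dS$ (the left $H$-action giving the module structure, the right $H$-action being "killed" in the sense of the global-sections normalization used to define $\HH$) and of the cohomological shifts $[m](m/2)$. The combinatorics of the iterated $\PP^1$-bundle make this routine but bookkeeping-heavy; once it is in place, the Decomposition Theorem plus the induction formally gives the result. A minor additional subtlety is to ensure the cancellation in step (v) is legitimate — this is fine because very pure complexes of weight $0$ have $\HH$ free over $\dS$ (Lemma \ref{l:purity}(1)), so all the modules in sight are free graded $\dS$-modules and comparing graded ranks in each $\Frob$-weight is unambiguous.
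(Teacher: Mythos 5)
Your Bott--Samelson approach is in the same spirit as the paper's (the convolution $\IC_{s_1}\conv{B}\cdots\conv{B}\IC_{s_m}$ that the paper uses is the pushforward along Bott--Samelson, and your projective-bundle computation of $\HH$ of this object matches the paper's use of the monoidal structure $\HH(\IC_{\unw})\cong\HH_{s_1}\otimes_{\dS}\cdots\otimes_{\dS}\HH_{s_m}$). But there is a genuine gap in steps (iii)--(v). The Decomposition Theorem in step (iii) gives the splitting $p_*\Ql[m](m/2)\cong\IC_w\oplus\bigoplus_j\IC_{w_j}[n_j](n_j/2)$ only after base change to $\bar k$; it does \emph{not} automatically hold as an isomorphism of \emph{mixed} complexes, because that would require Frobenius to act semisimply on the relevant $\Ext$-spaces, which is far from free. (This is precisely what the paper flags as the ``new content'' of Prop.~\ref{p:icdecomp}.) Consequently the cancellation in step (v) is illegitimate: with only the geometric decomposition in hand, comparing graded ranks of free $\dS$-modules identifies the class of $\HH_w$ in the Grothendieck group, i.e.\ the characteristic polynomial of $\Frob$ on each $\HH_w^i$, but says nothing about Jordan blocks. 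The lemma asserts $\HH_w$ is a \emph{direct sum} of $\Ql[n](n/2)$'s, i.e.\ $\Frob$ acts semisimply with eigenvalues honest powers of $q^{1/2}$, and a $K_0$-identity cannot deliver semisimplicity.

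The paper closes exactly this gap by a different mechanism: it first computes $\HH(\IC_{\unw})$ directly (as a tensor product of the $\SL(2)$ base case), observes this is manifestly Tate type, hence $\End_{\dS\otimes\dS}(\HH(\IC_{\unw}))$ is $\Frob$-semisimple; then Prop.~\ref{p:hff} (full faithfulness of $\HH$ on very pure complexes) embeds $\End_{\scE}(\IC_{\unw})$ as a direct summand of that semisimple algebra, so it too is $\Frob$-semisimple; and then Cor.~\ref{c:Cdecomp} upgrades the geometric decomposition to a Frobenius-equivariant one. Only at that point is $\HH_w$ a direct summand (in the category of graded $\Frob$-modules) of a sum of $\Ql[n](n/2)$'s, whence the claim. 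Your argument would work if you replaced steps (iii)--(v) by this sequence: Tate-type of $\HH(p_*\Ql[m](m/2))$ (your step (ii)), then Prop.~\ref{p:hff} and Cor.~\ref{c:Cdecomp} to get the mixed decomposition, then the direct-summand observation. Once that is done, the induction on $\ell(w)$ you set up becomes unnecessary; the paper's single-pass argument is cleaner.
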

\begin{proof}
For $w=s$ a simple reflection, by Lemma \ref{l:Hsimple}, we have $\HH(\IC_s)\cong\calO(\Gamma(e)\cup\Gamma(s))[1](1/2)$, for which this statement is true. In general, write $w$ as a reduced word $w=s_1\cdots s_m$ where $s_j$ are simple reflections. Let $\IC_{\unw}:=\IC_{s_1}\conv{B}\cdots\conv{B}\IC_{s_m}$, which is a very pure of weight 0 by Proposition \ref{p:vpure}.

By Proposition \ref{p:hff} and Proposition \ref{p:hcomp}, $\End(\IC_{\unw})$ is a direct summand of
\begin{equation*}
\End_{\dS\otimes\dS}(\HH(\IC_{\unw}))=\End_{\dS\otimes\dS}(\HH_{s_1}\otimes_{\dS}\cdots\otimes_{\dS}\HH_{s_m}),
\end{equation*}
which is in particular $\Frob$-semisimple. By Corollary \ref{c:Cdecomp}, $\IC_{\unw}$ decomposes as a sum of shifted and twisted IC-sheaves, among which $\IC_w$ necessarily appears with multiplicity one. Then, as a $\Frob$-module, $\HH_w$ is a direct summand of $\HH(\IC_{\unw})=\HH_{s_1}\otimes_{\dS}\cdots\otimes_{\dS}\HH_{s_m}$, which is a direct sum of $\Ql[n](n/2)$ for $n\equiv\ell(w)(\textup{mod }2)$.
\end{proof}

\begin{prop}\label{p:icdecomp} For $w_1,w_2\in W$, the convolution $\IC_{w_1}\conv{B}\IC_{w_2}$, as a {\em mixed} complex, is a direct sum of $\IC_w[n](n/2)$ for $n\equiv\ell(w_1)+\ell(w_2)-\ell(w)(\textup{mod }2)$. In particular, if $\ell(w_1w_2)=\ell(w_1)+\ell(w_2)$, then $\IC_{w_1w_2}$ is a direct summand of $\IC_{w_1}\conv{B}\IC_{w_2}$ with multiplicity one.
\end{prop}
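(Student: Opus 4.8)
The plan is to reduce the statement to the special case of a single pair of simple reflections, following the same strategy already used in the proof of Lemma \ref{l:Hss}. First I would pick reduced word expressions $w_1 = s_1\cdots s_p$ and $w_2 = s_{p+1}\cdots s_{p+q}$, so that by Lemma \ref{l:convIst} we have $\IC_{w_1} \in \langle \IC_{s_1}\conv{B}\cdots\conv{B}\IC_{s_p}\rangle$ up to shifts and twists (more precisely, $\IC_{w_1}$ is a direct summand of $\IC_{\underline{w_1}} := \IC_{s_1}\conv{B}\cdots\conv{B}\IC_{s_p}$ with multiplicity one, by the argument in Lemma \ref{l:Hss} invoking Cor.\ref{c:Cdecomp}), and similarly for $\IC_{w_2}$. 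Hence $\IC_{w_1}\conv{B}\IC_{w_2}$ is a direct summand of the full iterated convolution $\IC_{\underline{w_1}}\conv{B}\IC_{\underline{w_2}} = \IC_{s_1}\conv{B}\cdots\conv{B}\IC_{s_{p+q}}$.

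Next I would invoke Prop.\ref{p:vpure}: since each $\IC_{s_i} \in \scV$, the iterated convolution is very pure of weight $0$. By Cor.\ref{c:Cdecomp} (the decomposition statement referenced in Lemma \ref{l:Hss}), a very pure complex of weight $0$ in $\scE$ decomposes, as a mixed complex, into a direct sum of shifted-and-twisted IC-sheaves $\IC_w\langle n\rangle$; and the weight-$0$ condition together with self-duality of $\IC_w$ forces each such summand to be of the form $\IC_w[n](n/2)$ — i.e., a genuine Tate twist composed with an even shift is not what appears, rather the shift $[n]$ is balanced against the twist $(n/2)$ to keep weight $0$. So $\IC_{\underline{w_1}}\conv{B}\IC_{\underline{w_2}}$, and hence its summand $\IC_{w_1}\conv{B}\IC_{w_2}$, is a direct sum of terms $\IC_w[n](n/2)$.

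It then remains to pin down the parity of $n$. For this I would track the cohomological amplitude: by Lemma \ref{l:Hss}, $\HH(\IC_w)$ is concentrated in degrees $\equiv \ell(w) \pmod 2$, and by Prop.\ref{p:hcomp} the functor $\HH$ intertwines $\conv{B}$ with $\Ltimes_\dS$, so $\HH(\IC_{w_1}\conv{B}\IC_{w_2}) \cong \HH(\IC_{w_1})\Ltimes_\dS \HH(\IC_{w_2})$ lives in degrees $\equiv \ell(w_1)+\ell(w_2) \pmod 2$ (using that $\HH(\IC_{w_i})$ is free over $\dS$ by Lemma \ref{l:purity}(1), so there is no Tor-shift and the Künneth-type degree count is exact). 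Comparing with the summand $\IC_w[n](n/2)$, whose image $\HH(\IC_w)[n](n/2)$ sits in degrees $\equiv \ell(w)+n \pmod 2$, forces $n \equiv \ell(w_1)+\ell(w_2)-\ell(w)\pmod 2$. The final clause — that $\IC_{w_1w_2}$ appears exactly once when $\ell(w_1w_2)=\ell(w_1)+\ell(w_2)$ — follows because in that case $\Delta_{w_1}\conv{B}\Delta_{w_2}\cong\Delta_{w_1w_2}$ and $\nabla_{w_1}\conv{B}\nabla_{w_2}\cong\nabla_{w_1w_2}$ by Lemma \ref{l:convIst}, so the leading term (the open stratum $\Fl_{w_1w_2}$) of $\IC_{w_1}\conv{B}\IC_{w_2}$ is exactly $\Ql[\ell(w_1w_2)](\ell(w_1w_2)/2)$ on that stratum, matching $\IC_{w_1w_2}$ with multiplicity one and no shift.

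The main obstacle I anticipate is the parity bookkeeping in the third paragraph: one must be careful that the derived tensor product $\HH(\IC_{w_1})\Ltimes_\dS\HH(\IC_{w_2})$ introduces no hidden degree shifts, which is exactly why the freeness over $\dS$ from Lemma \ref{l:purity}(1) is essential — it guarantees the derived tensor product is underived and the degrees simply add. A secondary subtlety is ensuring that the decomposition into $\IC_w[n](n/2)$ (rather than more general twists) genuinely follows from very purity plus self-duality of the IC-sheaves; this is the content of Cor.\ref{c:Cdecomp} as used in Lemma \ref{l:Hss}, so I would cite it in the same way rather than re-prove it.
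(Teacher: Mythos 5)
Your strategy is essentially the paper's: very purity of the convolution (Prop.\ref{p:vpure}), decomposition via Cor.\ref{c:Cdecomp}, and the functor $\HH$ together with Lemma \ref{l:Hss} and Prop.\ref{p:hcomp} to control the shifts, twists and multiplicities. The detour through iterated convolutions of simple reflections is harmless but unnecessary: the paper applies Prop.\ref{p:hff} and Prop.\ref{p:hcomp} directly to see that $\End_{\scE}(\IC_{w_1}\conv{B}\IC_{w_2})$ is a summand of $\End_{\dS\otimes\dS}(\HH_{w_1}\otimes_{\dS}\HH_{w_2})$, which is already $\Frob$-semisimple by Lemma \ref{l:Hss}. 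Your multiplicity-one argument by restricting to the open stratum is equivalent to the paper's appeal to birationality of the convolution map.

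There is one step where your argument as written does not deliver the full statement. You assert that very purity of weight $0$ together with self-duality of the $\IC_w$ forces each summand to be of the form $\IC_w[n](n/2)$. Very purity only constrains the \emph{weights}, i.e.\ the absolute values of the Frobenius eigenvalues on the multiplicity complexes $M_w$, and Cor.\ref{c:Cdecomp} (whose hypothesis --- $\Frob$-semisimplicity of the endomorphism algebra --- you should verify explicitly, as is done in Lemma \ref{l:Hss}) only gives that the $M_w$ are complexes of semisimple $\Frob$-modules. Neither rules out Frobenius acting by, say, $-q^{-n/2}$ rather than $q^{-n/2}$ on a rank-one constituent of $M_w$; and pinning down the exact eigenvalue is precisely the new content of the proposition, as the remark following it emphasizes. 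Your third paragraph then uses $\HH$ only to track cohomological degrees mod $2$, which fixes the parity of $n$ but still not the eigenvalue. The missing step, which is exactly how the paper closes this, is to apply $\HH$ to the decomposition $\calF\cong\bigoplus_w\IC_w\otimes M_w$ and compare with $\HH(\calF)\cong\HH_{w_1}\otimes_{\dS}\HH_{w_2}$: by Lemma \ref{l:Hss} the latter is a direct sum of literal $\Ql[n](n/2)$'s, and since each $\HH_w$ is nonzero and itself such a sum, each $M_w$ must be one as well. This single comparison yields the exact Tate twists and the parity simultaneously, so the repair costs nothing beyond the tools you already invoke.
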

\begin{proof}
Let $\calF=\IC_{w_1}\conv{B}\IC_{w_2}$. By Proposition \ref{p:hff} and Proposition \ref{p:hcomp}, $\End_{\scE}(\calF)$ is a direct summand of $\End_{\dS\otimes\dS}(\HH(\calF))=\End_{\dS\otimes\dS}(\HH_{w_1}\otimes_{\dS}\HH_{w_2})$, which is $\Frob$-semisimple. By Corollary \ref{c:Cdecomp}, $\calF$ then decomposes as a direct sum of $\IC_w\otimes M_w$ for some complexes $M_w$ of semisimple $\Frob$-modules. Apply $\HH$, we see $\HH(\calF)=\HH_{w_1}\otimes_{\dS}\HH_{w_2}$ is a direct sum of $\HH_w\otimes M_w$. Apply Lemma \ref{l:Hss} again, we conclude that $M_w$ is a direct sum of $\Ql[n](n/2)$ for $n\equiv\ell(w_1)+\ell(w_2)-\ell(w)(\textup{mod }2)$.

If $\ell(w_1w_2)=\ell(w_1)+\ell(w_2)$, then the multiplication $\overline{Bw_1B}\twtimes{B}\overline{Bw_2B}/B\to\overline{Bw_1w_2B/B}$ is birational. Therefore $\omega\IC_{w_1w_2}$ is a direct summand of $\omega\calF$ with multiplicity one. By the above discussion, $\IC_{w_1w_2}$ is also a direct summand of $\calF$ with multiplicity one.
\end{proof}

\begin{remark}
The new content of this proposition is the semisimplicity of the $\Frob$-action on $\IC_{w_1}\conv{B}\IC_{w_2}$, which does not seem to be known before.
\end{remark}

For $\Theta\subset\Sigma$ of finite type, let $\calC_\Theta$ be the constant sheaf on $\LFl_{\leq w_\Theta}$. Then $\calC_\Theta\cong\IC_{w_\Theta}[-\lTh](-\lTh/2)$.

\begin{lemma}\label{l:convic}
We have a natural isomorphism of functors
\begin{equation*}
\pi^{\Theta,*}\pi^\Theta_*(-)\cong\calC_\Theta\conv{B}(-).
\end{equation*}
\end{lemma}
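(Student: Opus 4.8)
The plan is to exhibit both endofunctors $\pi^{\Theta,*}\pi^\Theta_*$ and $\calC_\Theta\conv{B}(-)$ of $\scE$ as pull--push along one and the same correspondence, and then to match the two correspondences.

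First I would record that $\pi^\Theta$ is proper, its fibres being isomorphic to $P_\Theta/B\cong\Fl_{\leq w_\Theta}$, a finite-dimensional projective variety; hence $\pi^\Theta_*\cong\pi^\Theta_!$. Set $Z:=(\quot{B}{G}{B})\times_{(\quot{P_\Theta}{G}{B})}(\quot{B}{G}{B})$, with its two projections $q_1,q_2:Z\to\quot{B}{G}{B}$, so that the square whose two remaining legs are both $\pi^\Theta$ is Cartesian. Proper base change then gives a functorial isomorphism
\begin{equation*}
\pi^{\Theta,*}\pi^\Theta_*\;\cong\;q_{1,!}\,q_2^{*}.
\end{equation*}

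Next I would identify the span $(\quot{B}{G}{B})\xleftarrow{q_1}Z\xrightarrow{q_2}(\quot{B}{G}{B})$ with the convolution span of $\calC_\Theta$. Writing $\quot{B}{G}{B}=[B\backslash\Fl]$ and $\quot{P_\Theta}{G}{B}=[P_\Theta\backslash\Fl]$ with $\Fl=G/B$ regarded as a $P_\Theta$-variety, the fibre product $Z=[B\backslash\Fl]\times_{[P_\Theta\backslash\Fl]}[B\backslash\Fl]$ is canonically $B\backslash(P_\Theta\twtimes{B}\Fl)$, under which $q_1$ becomes the action map $[p,x]\mapsto p\cdot x$ and $q_2$ the projection $[p,x]\mapsto x$ into $B\backslash\Fl$. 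Since $\Theta$ is of finite type, $\overline{Bw_\Theta B}=P_\Theta$, so $P_\Theta\twtimes{B}\Fl\subset G\twtimes{B}\Fl$ is precisely the preimage under $p_1$ of the support $\Fl_{\leq w_\Theta}=P_\Theta/B$ of $\calC_\Theta$; on this locus the maps $q_1,q_2$ are the maps $m$ and $p_2$ of the convolution diagram \eqref{d:convI}. As $\calC_\Theta$ is the unshifted, untwisted constant sheaf on $\Fl_{\leq w_\Theta}$, the twisted external product $\calC_\Theta\stackrel{B}{\boxtimes}(-)$ restricts on $P_\Theta\twtimes{B}\Fl$ to $p_2^{*}(-)$, whence
\begin{equation*}
\calC_\Theta\conv{B}(-)\;=\;m_!\,p_2^{*}(-)\;\cong\;q_{1,!}\,q_2^{*}(-)\;\cong\;\pi^{\Theta,*}\pi^\Theta_*(-).
\end{equation*}

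The step that carries the content is the identification of $Z$ with $B\backslash(P_\Theta\twtimes{B}\Fl)$ together with the matching of $(q_1,q_2)$ with $(m,p_2)$; everything else is formal. I would verify it either via the functor-of-points description of the $2$-fibre product of these quotient stacks, or level by level over the Schubert varieties using the Bruhat decomposition $P_\Theta=\bigsqcup_{w\in W_\Theta}BwB$; in either case all the maps in sight are bounded and restrict to finite-type morphisms, so no ind-scheme subtlety intervenes, and the (trivial, since $\Fl_{\leq w_\Theta}$ is smooth) shift and twist require no correction term. A formally equivalent alternative is to observe, via the projection formula, that $\pi^\Theta_*$ and $\pi^{\Theta,*}$ commute with convolution on the right, so that $\pi^{\Theta,*}\pi^\Theta_*$ is a right $\scE$-module endofunctor and therefore equals $(\pi^{\Theta,*}\pi^\Theta_*\delta)\conv{B}(-)$, and then to compute $\pi^{\Theta,*}\pi^\Theta_*\delta\cong\calC_\Theta$ directly using that $\pi^{\Theta,-1}$ of the base point of $\quot{P_\Theta}{G}{B}$ is $\Fl_{\leq w_\Theta}$.
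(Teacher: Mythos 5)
Your argument is correct and is essentially the paper's: both form the Cartesian square with $\pi^\Theta$ on two sides, identify the fibre product with $B\backslash(P_\Theta\twtimes{B}\Fl)$, and conclude by proper base change plus the observation that the twisted external product with $\calC_\Theta$ on this locus is just $p_2^*$. Your account is somewhat more explicit than the paper's one-line computation (the paper simply writes down the Cartesian square and applies base change), and the alternative you sketch at the end via the right-$\scE$-module structure is a well-known reformulation of the same idea, but there is no substantive difference in the approach.
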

\begin{proof}
We have a Cartesian diagram
\begin{equation*}
\xymatrix{N_\Theta\backslash L_\Theta\twtimes{N_\Theta}P^u_\Theta\backslash G\ar[r]^(.6){m}\ar[d]^{p_2} & B\backslash G\ar[d]^{\pi^\Theta}\\
B\backslash G\ar[r]^{\pi^\Theta} & P_\Theta\backslash G}
\end{equation*}
where $m,p_2$ are as in the convolution diagram (\ref{d:convI}). By proper base change we get for any $\calF\in\scE$,
\begin{equation*}
\pi^{\Theta,*}\pi^\Theta_*\calF\cong m_*p_2^*\calF\cong m_*(\calC_\Theta\stackrel{N_\Theta}{\boxtimes}\calF)\cong\calC_\Theta\conv{B}\calF.
\end{equation*}
\end{proof}

\begin{remark}\label{rm:Ccoalg}
By the adjunction \eqref{eq:paradj}, the functor $\pi^{\Theta,*}\pi^\Theta_*$ has a comonad structure. By Lemma \ref{l:convic}, the object $\calC_\Theta$ is hence a coalgebra object in the monoidal category $\scE$. In other words, there are comultiplication map $\mu:\calC_\Theta\to\calC_\Theta\conv{B}\calC_\Theta$ and counit map $\epsilon:\calC_\Theta\to\delta$ satisfying obvious associativity and compatibility conditions.
\end{remark}

\begin{lemma}\label{l:pidecomp}
For $w\in W$, the complex $\pi^\Theta_*\IC_w$ is a direct sum of $\IC_{\barv}[n](n/2)$ for $n\equiv\ell(w)-\ell(\barv)(\textup{mod }2)$. In particular, for $w\in[\coW]$, $\IC_{\barw}$ is a direct summand of $\pi^\Theta_*\IC_w$ with multiplicity one.
\end{lemma}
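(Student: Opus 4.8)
\noindent\emph{Strategy.} The plan is to transport the question to the convolution $\IC_{w_\Theta}\conv{B}\IC_w$, whose structure is controlled by Prop.~\ref{p:icdecomp}, via the comonad identity of Lemma~\ref{l:convic}. Since $\calC_\Theta\cong\IC_{w_\Theta}[-\lTh](-\lTh/2)$, that lemma gives
\begin{equation*}
\pi^{\Theta,*}\pi^\Theta_*\IC_w\;\cong\;\calC_\Theta\conv{B}\IC_w\;\cong\;(\IC_{w_\Theta}\conv{B}\IC_w)[-\lTh](-\lTh/2),
\end{equation*}
so by Prop.~\ref{p:icdecomp} (and $\ell(w_\Theta)=\lTh$) the complex $\pi^{\Theta,*}\pi^\Theta_*\IC_w$ is a direct sum of $\IC_v[n](n/2)$ with $n\equiv\ell(w)-\ell(v)\pmod2$.

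First I would show that $\pi^\Theta_*\IC_w$ is itself a direct sum of shifted twisted IC-sheaves. By the Corollary following Lemma~\ref{l:pist} we have $\pi^\Theta_*\IC_w\in\scV_\Theta$, so by Example~\ref{ex:vpure} and the analogue of Cor.~\ref{c:Cdecomp} for $\scE_\Theta$ it suffices to check that the $\Frob$-module $\End_{\scE_\Theta}(\pi^\Theta_*\IC_w)$ is semisimple. By the adjunction $(\pi^{\Theta,*},\pi^\Theta_*)$ of \eqref{eq:paradj} this $\Frob$-module is $\Hom_{\scE}(\pi^{\Theta,*}\pi^\Theta_*\IC_w,\IC_w)$, which by the first paragraph is a finite direct sum of $\Frob$-modules of the form $\Ext^\bullet_{\scE}(\IC_v,\IC_w)\otimes M$ with $M$ semisimple; each $\Ext^\bullet_{\scE}(\IC_v,\IC_w)$ is a semisimple $\Frob$-module by Prop.~\ref{p:hff}, Prop.~\ref{p:hcomp} and Lemma~\ref{l:Hss} (exactly as in the proof of Prop.~\ref{p:icdecomp}), and semisimplicity of $\Frob$-modules is preserved by tensor products and subquotients. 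Hence $\pi^\Theta_*\IC_w\cong\bigoplus_{\barv\in\coW}\IC_{\barv}\otimes N_{\barv}$ with each $N_{\barv}$ a semisimple $\Frob$-module.

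To identify the $N_{\barv}$ I would use that $\pi^\Theta$ is smooth and proper with connected fibres of dimension $\lTh$, that $\pi^{\Theta,-1}(\PFl_{\leq\barv})=\LFl_{\leq w'}$ where $w'$ is the longest representative of $\barv$ (so that $\ell(w')=\lTh+\ell(\barv)$), and that a smooth pullback of an IC-sheaf is an IC-sheaf; together these give $\pi^{\Theta,*}\IC_{\barv}\cong\IC_{w'}[-\lTh](-\lTh/2)$. Applying $\pi^{\Theta,*}$ to the decomposition of $\pi^\Theta_*\IC_w$ and comparing it, term by term (uniqueness of the isotypic decomposition of a semisimple mixed complex), with the decomposition of $(\IC_{w_\Theta}\conv{B}\IC_w)[-\lTh](-\lTh/2)$ furnished by Prop.~\ref{p:icdecomp}, one obtains $N_{\barv}\cong M_{w'}$, where $M_{w'}$ is the multiplicity $\Frob$-module of $\IC_{w'}$ in $\IC_{w_\Theta}\conv{B}\IC_w$; by Prop.~\ref{p:icdecomp} this is a sum of $\Ql[n](n/2)$ with $n\equiv\ell(w_\Theta)+\ell(w)-\ell(w')=\ell(w)-\ell(\barv)\pmod2$, which is the first assertion. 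For the last assertion, when $w\in[\coW]$ the support of $\pi^\Theta_*\IC_w$ is $\PFl_{\leq\barw}$ and $\pi^\Theta$ restricts to an isomorphism $\LFl_w\isom\PFl_{\barw}$ (Lemma~\ref{l:pist}), so proper base change gives $i_{\barw}^*\pi^\Theta_*\IC_w\cong\Ql[\ell(\barw)](\ell(\barw)/2)=i_{\barw}^*\IC_{\barw}$; since $i_{\barw}^*\IC_{\barv}=0$ for $\barv<\barw$, this forces $N_{\barw}\cong\Ql$, i.e. $\IC_{\barw}$ occurs with multiplicity one.

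The step I expect to be the main obstacle is the identification $\pi^{\Theta,*}\IC_{\barv}\cong\IC_{w'}[-\lTh](-\lTh/2)$ and the accompanying bookkeeping: one must verify that $\pi^{\Theta,-1}(\PFl_{\leq\barv})$ is the irreducible Schubert variety $\LFl_{\leq w'}$, keep track of which $\IC_v$ survive in $\IC_{w_\Theta}\conv{B}\IC_w$ once $P_\Theta$-equivariance is imposed (only the longest representatives of cosets), and carry the shift $[-\lTh]$ and twist $(-\lTh/2)$ consistently through every comparison. By contrast, the semisimplicity input in the middle step is the only place where the genuinely new content of Prop.~\ref{p:icdecomp} is used; everything else is standard decomposition-theorem and base-change manipulation.
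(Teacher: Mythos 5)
Your proof is correct and follows essentially the same route as the paper's: both use the adjunction and the identification $\pi^{\Theta,*}\pi^\Theta_*\cong\calC_\Theta\conv{B}(-)$ to reduce Frobenius-semisimplicity of $\End_{\scE_\Theta}(\pi^\Theta_*\IC_w)$ to a statement about $\HH$-modules, then invoke Cor.~\ref{c:Cdecomp} to decompose $\pi^\Theta_*\IC_w$, and finish by applying $\pi^{\Theta,*}$ to identify the multiplicity modules and by birationality of $\Fl_{\leq w}\to\PFl_{\leq\barw}$ for the last assertion. The only organizational difference is that you cite Prop.~\ref{p:icdecomp} as a black box for both the semisimplicity input and the shape of the decomposition of $\calC_\Theta\conv{B}\IC_w$, whereas the paper re-runs the $\HH$-argument (Prop.~\ref{p:hff} plus Lemma~\ref{l:Hss}) directly at both points; your shortcut is legitimate since Prop.~\ref{p:icdecomp} is proved from exactly those ingredients.
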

\begin{proof}
By the Decomposition Theorem, $\omega\pi^\Theta_*\IC_w$ is a direct sum of $\omega\IC_{\barv}[n]$. By adjunction and Lemma \ref{l:convic},
\begin{eqnarray*}
\End_{\scE_\Theta}(\pi^\Theta_*\IC_w)&\cong&\Hom_{\scE_\Theta}(\pi^{\Theta,*}\pi^\Theta_*\IC_w,\IC_w)\\
&\cong&\Hom_{\scE}(\calC_\Theta\conv{B}\IC_w,\IC_w)
\end{eqnarray*}
But the latter is a direct summand of $\Hom_{\dS\otimes\dS}(\HH_{w_\Theta}[-\lTh](-\lTh/2)\otimes_{\dS}\HH_w,\HH_w)$ by Proposition \ref{p:hff}. By Lemma \ref{l:Hss}, $\Frob$-acts semisimply on $\Hom_{\dS\otimes\dS}(\HH_{w_\Theta}[-\lTh](-\lTh/2)\otimes_{\dS}\HH_w,\HH_w)$, hence on $\End_{\scE_\Theta}(\pi^\Theta_*\IC_w)$. By Corollary \ref{c:Cdecomp}, $\pi^\Theta_*\IC_w$ is a direct sum of $\IC_{\barv}\otimes M_{\barv}$ for some complexes $M_{\barv}$ of semisimple $\Frob$-modules. Then, $\pi^{\Theta,*}\pi^\Theta_*\IC_w=\calC_\Theta\conv{B}\IC_w$ is a direct sum of $\pi^{\Theta,*}\IC_{\barv}\otimes M_{\barv}\cong\IC_{v}[-\ell_\Theta](-\ell_\Theta/2)\otimes M_{\barv}$ where $v\in\{\coW\}$ lifting $\barv$. Applying $\HH$ to this decomposition and using Lemma \ref{l:Hss} again, we conclude that each $M_{\barv}$ is a direct sum of $\Ql[n](n/2)$ for $n\equiv\ell(w)-\ell(\barv)(\textup{mod }2)$ .

If $w\in[\coW]$, then $\Fl_{\leq w}\to\PFl_{\leq\barw}$ is birational, therefore $\omega\IC_{\barw}$ is a direct summand of $\omega\pi^\Theta_*\IC_w$ of multiplicity one. By the above discussion, $\IC_{\barw}$ is a direct summand of $\pi^\Theta_*\IC_w$ with multiplicity one.
\end{proof}


\subsection{Proof of Proposition \ref{p:hff}}\label{ss:proofH}
This section is devoted to the proof of Proposition \ref{p:hff}. Let $A_{\leq w}$ be the equivariant cohomology ring $\upH^*(B\backslash\Fl_{\leq w})$. We first show

\begin{lemma}\label{l:hff} Let $\calF_1,\calF_2\in\scV_{\leq w}$, then there is an isomorphism of $\Frob$-modules
\begin{equation}\label{eq:hff}
\Ext^\bullet_{\scE}(\calF_1,\calF_2)\isom\Hom_{A_{\leq w}}(\HH(\calF_1),\HH(\calF_2))
\end{equation}
\end{lemma}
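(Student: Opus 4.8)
The plan is to reduce, by a standard dévissage, to the case where $\calF_1$ is a standard sheaf and $\calF_2$ a costandard sheaf, and then to compute both sides by hand. Since by definition $\scV_{\leq w}=\langle\Delta_v\langle0\rangle\,|\,v\leq w\rangle\cap\langle\nabla_v\langle0\rangle\,|\,v\leq w\rangle$, it suffices to prove \eqref{eq:hff} for every $\calF_1\in\langle\Delta_v\langle0\rangle\rangle$ and $\calF_2\in\langle\nabla_v\langle0\rangle\rangle$; note that every object of the first class is $*$-pure of weight $0$ and every object of the second is $!$-pure of weight $0$, so Lemma \ref{l:purity}(2) applies to any such pair. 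I would then run a double induction: the outer one on the length of a chosen $\nabla$-filtration of $\calF_2$, the inner one on the length of a chosen $\Delta$-filtration of $\calF_1$, both bottoming out at the single pair $(\Delta_u\langle n\rangle,\nabla_v\langle m\rangle)$.

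For the base case (after untwisting take $n=m=0$): on the left, the adjunctions and the vanishing $i_u^!\nabla_v=0$ for $u\neq v$ (together with $i_u^!\nabla_u=i_u^!i_{u,*}\Ql[\ell_u](\ell_u/2)=\Ql[\ell_u](\ell_u/2)$) give $\Ext^\bullet_{\scE}(\Delta_u,\nabla_v)=0$ for $u\neq v$ and $\Ext^\bullet_{\scE}(\Delta_u,\nabla_u)\cong H^*(B\backslash\Fl_u)\cong\Sb$, free of rank one. On the right, Lemma \ref{l:hst} identifies $\HH(\Delta_u)$ and $\HH(\nabla_v)$ with $\calO_{\Gamma(u)}$ and $\calO_{\Gamma(v)}$ (up to shift and twist) as $\dS\otimes\dS$-modules; since each is pulled back along the projection to a single $B$-orbit, it is a cyclic $A_{\leq w}$-module, say $\HH(\nabla_v)=A_{\leq w}/K_v$ with $K_v=\ker(A_{\leq w}\twoheadrightarrow\calO_{\Gamma(v)})$. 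A graded $A_{\leq w}$-linear map $\calO_{\Gamma(u)}\to\calO_{\Gamma(v)}$ is the same as an element of $\calO_{\Gamma(v)}$ annihilated by $K_u$; as $\calO_{\Gamma(v)}$ is an integral domain and $K_u\not\subseteq K_v$ for $u\neq v$ (the graphs $\Gamma(u),\Gamma(v)$ being distinct irreducible components of $\Spec A_{\leq w}$ of the same dimension), this $\Hom$ is $0$ for $u\neq v$ and $\calO_{\Gamma(u)}$ for $u=v$. Checking that the natural map \eqref{eq:hff} realizes the resulting isomorphism, and matching gradings and Frobenius structures, is then bookkeeping with Lemma \ref{l:hst}.

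For the dévissage, fix $\calF_2$ and a distinguished triangle $\calF_1'\to\calF_1\to\calF_1''\xrightarrow{+1}$ coming from one step of a $\Delta$-filtration, so that $\calF_1',\calF_1''\in\langle\Delta_v\langle0\rangle\rangle$ with $\calF_1'$ a single $\Delta_u\langle0\rangle$. Applying $\RHom_{\scE}(-,\calF_2)$ yields a long exact sequence; by Lemma \ref{l:purity}(2) each group $\Ext^i_{\scE}(?,\calF_2)$ occurring in it is pure of weight $i$, so every connecting homomorphism — which raises cohomological degree, hence weight, by one — vanishes, and the long exact sequence breaks into a short exact sequence of $\Ext^\bullet$'s. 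On the other side, $\HH$ is exact, and it carries $\langle\nabla_v\langle0\rangle\rangle$ (and dually $\langle\Delta_v\langle0\rangle\rangle$) into honest graded $A_{\leq w}$-modules: this is immediate for $\HH(\nabla_v)=\calO_{\Gamma(v)}$ and is inherited under extensions, because a distinguished triangle in $D(A_{\leq w})$ whose two outer vertices are modules has a module as its third vertex. Hence $\HH$ sends our triangle to a short exact sequence of $A_{\leq w}$-modules, and $\Hom_{A_{\leq w}}(-,\HH\calF_2)$ is left exact on it. Comparing the split short exact sequence on the $\scE$-side with this left-exact sequence through the natural, boundary-compatible map \eqref{eq:hff}, a short diagram chase shows \eqref{eq:hff} is an isomorphism for $\calF_1$ once it is for $\calF_1'$ and $\calF_1''$ — surjectivity of the $\scE$-side sequence supplies exactly the lift needed to deduce surjectivity on the $A_{\leq w}$-side as well. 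The same argument in the variable $\calF_2$ reduces $\calF_2$ to the $\nabla_v\langle0\rangle$'s, and the two inductions meet at the base case.

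I expect the real content — and the place where errors would hide — to be the base case: pinning down the $A_{\leq w}$-module (not merely $\dS\otimes\dS$-module) structure of $\HH(\Delta_u)$ and $\HH(\nabla_v)$ from Lemma \ref{l:hst}, so that "$\Hom_{A_{\leq w}}$" — which here means honest graded module homomorphisms, \emph{not} a derived $\Hom$ (over the singular ring $A_{\leq w}$ the higher $\Ext$'s are genuinely nonzero and do not match $\Ext^\bullet_\scE$) — has the correct, small size. The dévissage is then formal given Lemma \ref{l:purity}; its one delicate point is that the sequence on the $A_{\leq w}$-side is a priori only left exact, which is precisely why the purity-forced splitting on the $\scE$-side is indispensable.
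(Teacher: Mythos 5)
Your dévissage has a fatal flaw: the base case is false. The isomorphism \eqref{eq:hff} does \emph{not} hold for the pair $(\calF_1,\calF_2)=(\Delta_u,\nabla_u)$ when $\ell(u)>0$, and this is precisely why the lemma is stated only for \emph{very pure} complexes rather than for a $*$-pure $\calF_1$ against a $!$-pure $\calF_2$. Concretely, $\Ext^\bullet_{\scE}(\Delta_u,\nabla_u)\cong H^*(\quot{B}{BuB}{B})\cong\Sb$ is free of rank one over $\Sb$ with generator in degree $0$, whereas by Lemma \ref{l:hst} the target $\Hom_{A_{\leq w}}\bigl(\calO_{\Gamma(u)}[-\ell(u)](-\ell(u)/2),\,\calO_{\Gamma(u)}[\ell(u)](\ell(u)/2)\bigr)$ is free of rank one with generator in degree $-2\ell(u)$ (the map sending generator to generator). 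The natural map \eqref{eq:hff} sends the canonical morphism $\Delta_u\to\nabla_u$ to the ``forget supports'' map $H^*_c(\quot{B}{BuB}{B})\to H^*(\quot{B}{BuB}{B})$, which is multiplication by the equivariant Euler class of the orbit $\Fl_u$ (a nonzero element of $\Sym^{\ell(u)}V_H^\vee$); so \eqref{eq:hff} is injective but not surjective. Already for $G=\SL(2)$, $u=s$: $\Ext^{-2}_{\scE}(\Delta_s,\nabla_s)=0$ while $\Hom^{-2}_{A_{\leq s}}(\HH(\Delta_s),\HH(\nabla_s))=(\calO_{\Gamma(s)})_0=\Ql$. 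Your computation of both sides as ``$\calO_{\Gamma(u)}$'' is correct only after forgetting the grading; the ``bookkeeping with gradings'' you deferred is exactly where the two sides diverge, by a shift of $2\ell(u)$. Since any $\Delta$-flag of a very pure complex has subquotients $\Delta_u\langle n\rangle$ that are not very pure, the reduction to standard/costandard pairs inevitably exits the class of objects for which the statement holds, so the strategy cannot be repaired by restricting the hypotheses.

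The paper's proof (following Ginzburg) instead inducts on Schubert strata: it compares the recollement long exact sequence for $\Ext^\bullet_{\scE}(i^*_{\leq v}\calF_1,i^!_{\leq v}\calF_2)$ with the corresponding three-term sequence of $\Hom_{A}$'s over $A_{<v}$, $A_{\leq v}$ and $H^*(B\backslash\Fl_v)$. The delicate point there is exactness of the $\Hom_A$-column at the top and middle, and the proof of this uses the factorization of the class $[U]\in A_{\leq v}$ through $\HH(j_*j^*\calF)$ together with the isomorphism $u$ in diagram \eqref{eq:u} --- that is, it exploits as a constraint on $A_{\leq v}$-linear maps the very Euler-class phenomenon that breaks your base case. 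If you want to keep a dévissage-flavoured argument, you must keep track of the ring element $[U]$ (not just the $\dS\otimes\dS$-action), which is what the hypothesis ``$\Hom$ over $A_{\leq w}$ rather than over $\dS\otimes\dS$'' is there for.
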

\begin{proof}
The proof is essentially borrowed from \cite{Ginz}. Since Ginzburg's proof in {\em loc.cit.} was carried out for varieties over $\CC$ using mixed Hodge modules, and we are working with mixed complexes on varieties over $\FF_q$, we decide to include a self-contained proof here. We use induction on the set $\{v\in W|v\leq w\}$ (for this we extend the partial ordering to a total ordering) to show that
\begin{equation*}
\Ext^\bullet_{\scE}(i^*_{\leq v}\calF_1,i^!_{\leq v}\calF_2)\isom\Hom_{A_{\leq v}}(\HH(i_{\leq v,*}i^*_{\leq v}\calF_1),\HH(i_{\leq v,*}i^!_{\leq v}\calF_2)).
\end{equation*}

For $v=e$ this follows from the equivalence $\scE_{\leq e}\cong D^b_{m}(\BB H)\cong D_{\perf}(\dS,\Frob)$ established in Corollary \ref{c:ba}. Suppose this is proved for all elements $v'<v$. Let
\begin{equation*}
Z:=B\backslash\Fl_{<v}\stackrel{i}{\hookrightarrow}X:=B\backslash\Fl_{\leq v}\stackrel{j}{\hookleftarrow}U:=B\backslash\Fl_{v}
\end{equation*}
be the inclusions. Now let
\begin{equation}
\calK_{1}=i^{*}_{\leq v}\calF_{1}; \hspace{.5cm}\calK_{2}=i^{!}_{\leq v}\calF_{2}.
\end{equation}
Note that $\calK_{1}$ is now only $*$-pure and $\calK_{2}$ is only $!$-pure.

\begin{lemma}\label{l:noconnecting}
For $\calK=\calK_1$ or $\calK_2$, we have exact sequences
\begin{eqnarray}\label{eq:hc}
0\to\HH(j_!j^*\calK)\to\HH(\calK)\to\HH(i_*i^*\calK)\to0;\\
\label{eq:h}
0\to\HH(i_*i^!\calK)\to\HH(\calK)\to\HH(j_*j^*\calK)\to0.
\end{eqnarray}
\end{lemma}
\begin{proof}
The exactness of \eqref{eq:hc} for $\calK_{1}$ and the exactness of \eqref{eq:h} for $\calK_{2}$ easily follows from the same argument as Lemma \ref{l:purity}. We prove the exactness of \eqref{eq:h} for $\calK_{1}$, and the exactness of \eqref{eq:hc} for $\calK_{2}$ follows by duality.

Now let $\calK=\calK_{1}=i^{*}_{\leq v}\calF_{1}$. It suffices to show that the restriction map $j^{*}:\upH^{k}(X,\calK)\to \upH^{k}(U,\calK)$ is surjective for all $k$. Let $v_{H}$ denotes the inclusion of the stack $H\backslash\{v\}$ into $H\backslash\Fl_{G}$. We can factor the restriction of $\upH^{k}(B\backslash\Fl_{G},\calF_{1})\cong\upH^{k}(H\backslash\Fl_{G},\calF_{1})$ to its stalk cohomology $\upH^{k}v^{*}_{H}\calF_{1}$ in two ways:
\begin{equation}
\xymatrix{\upH^{k}(B\backslash\Fl_{G},\calF_{1})\ar[r]\ar@{=}[d] & \upH^{k}(X,\calK)\ar[r]^{j^{*}} & \upH^{k}(U,\calK)\ar[r]^{v^{*}_{H}} & \upH^{k}v^{*}_{H}\calK\ar@{=}[d]\\
\upH^{k}(H\backslash\Fl_{G},\calF_{1})\ar[rr]^{\alpha^{*}} & & \upH^{k}(H\backslash vC,\calF_{1})\ar[r]^{v^{*}} & \upH^{k}v^{*}_{H}\calF_{1}}
\end{equation}
Here $\alpha:H\backslash vC\subset H\backslash \Fl_{G}$ is an open substack and $vC$ contracts to the point $v$ under the left action of some $\GG_{m}\subset H$ as in Lemma \ref{l:contract}. The two maps labelled by $v^{*}_{H}$ are isomorphisms by \cite[Corollary 1]{Spr} (because of the contracting $\GG_{m}$-action). Therefore in order to show that $j^{*}$ is surjective, it suffices to show that $\alpha^{*}$ is. Let $\beta:H\backslash(\Fl_{G}-vC)\hookrightarrow H\backslash\Fl_{G}$ be the closed embedding. Consider the long exact sequence associated with the triangle $\beta_{*}\beta^{!}\calF_{1}\to\calF_{1}\to\alpha_{*}\alpha^{*}\calF_{1}$, we get
\begin{equation}\label{vC}
\cdots\to \upH^{k}(H\backslash\Fl_{G},\calF_{1})\to \upH^{k}(H\backslash vC,\calF_{1})\to \upH^{k+1}(H\backslash (\Fl_{G}-vC),\beta^{!}\calF_{1})\to\cdots
\end{equation}
Since $\calF_{1}$ is pure of weight $0$, $\beta^{!}\calF_{1}$ is of weight $\geq0$ and hence $\upH^{k+1}(H\backslash(\Fl_{G}-vC),\beta^{!}\calF_{1})$ has weight $\geq k+1$ because $\upH^{*}(-)$ does not decrease weights. One the other hand, since $vC$ contracts to $v$, $\upH^{k}(H\backslash vC,\calF_{1})=v^{*}_{H}\calF_{1}$ has weight 0, therefore the connecting homomorphism in \eqref{vC} is zero. This shows $\alpha^{*}$ is surjective, hence $j^{*}$ is also surjective.
\end{proof}

We continue the proof of Lemma \ref{l:hff}. We have the following commutative diagram from the functoriality of $\HH$
\begin{equation}\label{threebytwo}
\xymatrix{\Ext^\bullet_Z(i^*\calK_1,i^!\calK_2)\ar[d]\ar[r]^(.4){a} & \Hom_{A_{<v}}(\HH(i_*i^*\calK_1),\HH(i_*i^!\calK_2))\ar[d]\\ \Ext^\bullet_X(\calK_1,\calK_2)\ar[d]\ar[r]^(.4){b} & \Hom_{A_{\leq v}}(\HH(\calK_1),\HH(\calK_2))\ar[d]\\
\Ext^\bullet_U(j^*\calK_1,j^*\calK_2)\ar[r]^(.4){c} & \Hom_{\upH^*(U)}(\HH(j_*j^*\calK_1),\HH(j_*j^*\calK_2))}
\end{equation}
Now $a$ is an isomorphism by inductive hypothesis; $c$ is an isomorphism because $\scE_v=D^b_{m}(U)\cong D_{\perf}(\dS,\Frob)$ by Corollary \ref{c:ba}. The left side sequence is exact by Lemma \ref{l:purity}(2) (for weight reasons). The right side sequence is exact on the top by the exact sequences (\ref{eq:hc}) and (\ref{eq:h}). We claim that the right side sequence is also exact in the middle. Admitting this fact, then $b$ is also an isomorphism and the induction is complete.

We have an exact sequence
\begin{equation*}
0\to \upH^*_c(U)\to A_{\leq v}\to A_{<v}\to 0.
\end{equation*}
Now $\upH^*_c(U)$ is a free $\dS$-module of rank one. Choose a generator $[U]\in\upH^{2\ell(v)}_{c}(U)$(corresponding to a lifting of a fundamental class into equivariant cohomology). By Lemma \ref{l:noconnecting}, we see that the action of $[U]$ on $\HH(\calK)$ ($\calK=\calK_{1}$ or $\calK_{2}$) factors as:
\begin{equation}\label{eq:u}
\xymatrix{\HH(\calK)\ar@{->>}[r] & \HH(j_*j^*\calK)\ar[d]^{u}_{\wr}\\
& \HH(j_!j^*\calK)[2\ell(v)](\ell(v))\ar@{^{(}->}[r] & \HH(\calK)[2\ell(v)](\ell(v))}
\end{equation}
where $u$ is an isomorphism. Now we show that the right side sequence in \eqref{threebytwo} is exact in the middle. If $\phi:\HH(\calK_1)\to\HH(\calK_2)$ is an $A_{\leq v}$-linear homomorphism which induces the zero map $\HH(j_*j^*\calK_1)\to\HH(j_*j^*\calK_2)$, then the image of $\phi$ lies in $\HH(i_*i^!\calK_2)$. Moreover, $[U]\circ\phi=0$ because $[U]$ factors through $\HH(j_*j^*\calK_1)$. Therefore $\phi\circ[U]=0$ (because $[U]\in A_{\leq v}$ commutes with $\phi$), hence $\phi$ is zero on the image of $[U]$, which is $\HH(j_!j^*\calK_1)$. Therefore $\phi$ comes from an $A_{<v}$-linear homomorphism $\HH(i_*i^*\calK_1)\to\HH(i_*i^!\calK_2)$. This completes the proof of the claim.
\end{proof}

Now we show that Lemma \ref{l:hff} implies Proposition \ref{p:hff}. We use the following simple observation
\begin{lemma}\label{l:samehom}
Let $S$ be a ring and $B\to C$ be a homomorphism of $S$-algebras that induces a surjection after base change to the ring of total fractions 	$\Frac(S)$. Let $M_1,M_2$ be two $C$-modules with $M_2$ torsion-free over $S$. Then the natural homomorphism
\begin{equation*}
\Hom_C(M_1,M_2)\to \Hom_B(M_1,M_2)
\end{equation*}
is an isomorphism.
\end{lemma}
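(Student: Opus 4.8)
The map in the statement is just restriction of scalars along $B\to C$, so the plan is to treat injectivity and surjectivity separately. Injectivity is formal: a $C$-linear map $M_1\to M_2$ and its image in $\Hom_B(M_1,M_2)$ have the same underlying map of abelian groups, so if the latter vanishes so does the former. All the content is in surjectivity, which amounts to the assertion that every $B$-linear map $g\colon M_1\to M_2$ is automatically $C$-linear.

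The key auxiliary fact I would isolate is a clearing-of-denominators statement: for every $c\in C$ there exist $\beta\in B$ and a nonzero $\sigma\in S$ with $\sigma c=\bar\beta$ in $C$, where $\bar\beta$ is the image of $\beta$. To prove it, base-change the surjection $B\to C$ along $S\to\Frac(S)$: the element $c\otimes 1$ of $C\otimes_S\Frac(S)$ is the image of some $\eta\in B\otimes_S\Frac(S)$, and writing $\eta$ over a common denominator as $(1/s)\otimes b$ (with $b\in B$, $0\neq s\in S$) gives $(1/s)\otimes(\bar b-sc)=0$ in $C\otimes_S\Frac(S)$. Since localization at $S\setminus\{0\}$ is exact, there is a nonzero $s'\in S$ with $s'(\bar b-sc)=0$ in $C$; then $\beta:=s'b\in B$ and $\sigma:=s's\neq 0$ satisfy $\bar\beta=\sigma c$.

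Granting this, fix $c\in C$ and $m\in M_1$ and choose $\beta,\sigma$ as above. Using the compatibility of the $S$-, $B$-, and $C$-module structures on $M_1$ and $M_2$ we have $\beta\cdot m=\sigma c\,m$ in $M_1$ and $\beta\cdot g(m)=\sigma c\,g(m)$ in $M_2$, so $B$-linearity of $g$ (hence $S$-linearity of $g$) yields $\sigma\,g(cm)=g(\sigma c\,m)=g(\beta\cdot m)=\beta\cdot g(m)=\sigma c\,g(m)$, i.e.\ $\sigma\bigl(g(cm)-c\,g(m)\bigr)=0$ in $M_2$. As $\sigma\neq 0$ and $M_2$ is torsion-free over $S$, we conclude $g(cm)=c\,g(m)$; thus $g$ is $C$-linear, proving surjectivity. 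I do not expect a genuine obstacle: the only thing to handle with care is the clearing-of-denominators step, where one must not conflate the two independently produced denominators ($s$ from the common-denominator expression for $\eta$, and $s'$ from the exactness of localization applied to $\bar b-sc$), and one must keep the $S$-, $B$-, $C$-actions consistent so that every manipulation in the last display is legitimate. (The hypothesis that $S$ is a domain, implicit in writing $\Frac(S)$, is used precisely here and in interpreting torsion-freeness over $S$.)
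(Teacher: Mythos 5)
Your proof is correct, and the paper actually states Lemma \ref{l:samehom} as a ``simple observation'' and supplies no proof, so there is no argument of the paper's to compare against. Your decomposition into trivial injectivity plus a clearing-of-denominators argument for surjectivity is the natural one, and the details (producing $\sigma\in S\setminus\{0\}$ and $\beta\in B$ with $\bar\beta=\sigma c$ via exactness of localization, then using $S$-torsion-freeness of $M_2$ to cancel $\sigma$) are handled carefully and correctly.
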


We want to apply this Lemma to the situation $B=\dS\otimes\dS, C=A_{\leq w}$ and $S$ the right copy of $\dS$ in $\dS\otimes\dS$. For this we need
\begin{lemma}\label{l:eqloc}
The homomorphism of $(\dS\otimes\dS,\Frob)$-modules given by restrictions
\begin{equation*}
A_{\leq w}\to\prod_{v\leq w}\upH^*(B\backslash\Fl_{w})
\end{equation*}
is an isomorphism after tensoring by $\Frac(\dS)$ over the right $\dS$-module structures.
\end{lemma}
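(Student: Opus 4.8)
The statement is a form of the localization theorem in equivariant cohomology, and the plan is to prove it by d\'evissage along the Schubert stratification of $\Fl_{\le w}$. Fix a total order $v_1=e,\dots,v_N=w$ on $W_{\le w}:=\{v\in W\mid v\le w\}$ refining the Bruhat order, so that $Z_k:=\bigcup_{i\le k}\Fl_{v_i}$ is a closed subscheme of $\Fl_{\le w}$ whose open complement of $Z_{k-1}$ is the single cell $\Fl_{v_k}$. Put $A_k:=H^*(B\backslash Z_k)$ (so $A_0=0$ and $A_N=A_{\le w}$) and let $r_k\colon A_k\to\prod_{i\le k}H^*(B\backslash\Fl_{v_i})$ be the assembled restriction map; the case $k=N$ of the statement ``$r_k$ becomes an isomorphism after $-\otimes_{\dS}\Frac(\dS)$ over the right copy of $\dS$'' is the assertion, and I would prove it by induction on $k$, starting from the trivial case $k=0$.

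The inductive step rests on two observations. First, applying $H^*(B\backslash Z_k,-)$ to the exact triangle $j_{k,!}j_k^*\Ql\to\Ql\to i_{k,*}i_k^*\Ql$ attached to the open--closed decomposition $j_k\colon\Fl_{v_k}\hookrightarrow Z_k\hookleftarrow Z_{k-1}\colon i_k$, and using Lemma~\ref{l:hst} to identify the outer terms (up to shift and twist) with $\HH(\Delta_{v_k})\cong\calO_{\Gamma(v_k)}$ resp.\ with $A_{k-1}$, one obtains an honest short exact sequence of $\dS\otimes\dS$-modules
\begin{equation*}
0\to\HH(\Delta_{v_k})\to A_k\to A_{k-1}\to 0
\end{equation*}
(no connecting term survives, since by Lemma~\ref{l:hst} all three are modules concentrated in a single degree). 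This fits into a map of short exact sequences with the tautological sequence $0\to H^*(B\backslash\Fl_{v_k})\to\prod_{i\le k}H^*(B\backslash\Fl_{v_i})\to\prod_{i<k}H^*(B\backslash\Fl_{v_i})\to 0$, the middle vertical map being $r_k$, the right one $r_{k-1}$, and the left one the composite $c_k\colon\HH(\Delta_{v_k})\to A_k\xrightarrow{\mathrm{restr}}H^*(B\backslash\Fl_{v_k})=\HH(\nabla_{v_k})$ induced by the canonical morphism $\Delta_{v_k}\to\nabla_{v_k}$. Second --- and this is the crux --- I claim that $c_k$ becomes an isomorphism after $-\otimes_{\dS}\Frac(\dS)$ over the right $\dS$. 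Indeed, by Lemma~\ref{l:hst} both $\HH(\Delta_{v_k})$ and $\HH(\nabla_{v_k})$ are identified with $\calO_{\Gamma(v_k)}$, which is free of rank one over the right $\dS$, so $c_k$ is multiplication by a single element $e_{v_k}\in\calO_{\Gamma(v_k)}$; since $\Delta_{v_k}\to\nabla_{v_k}$ is the identity over the open cell $\Fl_{v_k}$, tracing through the monoidal structure on $\HH$ (Prop.~\ref{p:hcomp}) and a reduced decomposition $\Delta_{v_k}=\Delta_{s_1}\conv{B}\cdots\conv{B}\Delta_{s_m}$ shows that $e_{v_k}$ is a Tate twist of the product $\prod_{\alpha\in R^+\cap v_kR^-}\alpha$ of the roots inverted by $v_k$. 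Each such root is a nonzero vector of $V_H^\vee$ and $\calO_{\Gamma(v_k)}\cong\dS=\Sym(V_H^\vee)$ is a domain, so $e_{v_k}\neq0$; moreover the two $\dS$-actions on $\calO_{\Gamma(v_k)}$ differ only by the action of $v_k$ on $V_H$, which permutes roots up to sign, so $e_{v_k}$ is still nonzero --- hence invertible --- in $\Frac(\dS)$ for the right $\dS$-structure.

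Granting these two points the conclusion is immediate: after the exact functor $-\otimes_{\dS}\Frac(\dS)$ we get a map of short exact sequences whose outer vertical maps ($c_k$ by the claim, $r_{k-1}$ by the inductive hypothesis) are isomorphisms, so by the five lemma $r_k\otimes_{\dS}\Frac(\dS)$ is an isomorphism. This completes the induction and hence the proof. The only genuinely nontrivial ingredient is the identification of $c_k$ with multiplication by the product of the roots inverted by $v_k$ and the attendant bookkeeping --- checking that this product remains nonzero in $\dS$ after being transported to the right $\dS$-structure via $v_k$; everything else is formal d\'evissage.
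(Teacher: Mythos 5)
Your proof is correct in its essentials and follows the same overall d\'evissage-and-five-lemma framework as the paper, but it settles the key step --- invertibility after localization of the comparison map from $\Delta$-cohomology to $\nabla$-cohomology --- by a genuinely different argument. The paper never touches the Euler class: it factors the ``forget supports'' map $a\colon H^*_c(B\backslash\Fl_w)\to H^*(B\backslash\Fl_w)$ through $H^*(B\backslash\Fl_{\le w})$, observes via Lemma~\ref{l:hst} that the cones of the two halves are (as $\dS\otimes\dS$-modules) supported on $\bigcup_{v<w}\Gamma(v)$, and then notes that localizing at the generic point of $\Gamma(w)$ --- which is exactly $-\otimes_{\dS}\Frac(\dS)$ over the right copy --- kills those cones because $\Gamma(v)\cap\Gamma(w)$ is proper in $\Gamma(w)$. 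Your route instead identifies $c_k$ with multiplication by a concrete element (the product of roots inverted by $v_k$) and checks directly that it is nonzero. Both are valid; the paper's is softer and requires no Euler-class bookkeeping, while yours gives more explicit information and is essentially the Kostant--Kumar/GKM-style localization argument. A secondary cosmetic difference is that you refine the induction to add one cell at a time, whereas the paper adds one full Schubert variety at a time; this has no mathematical consequence.

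Two places in your write-up deserve tightening. First, your justification for the short exact sequence $0\to\HH(\Delta_{v_k})\to A_k\to A_{k-1}\to0$ --- ``all three are modules concentrated in a single degree'' --- is not the right reason: each of these is a genuinely graded module in infinitely many (even) degrees. What actually kills the connecting maps is the standard parity argument (all cells are affine spaces of even real dimension, so all equivariant cohomologies concerned live in even cohomological degree, and the connecting maps shift degree by one). Second, the claim that $c_k$ is multiplication by $\prod_{\alpha\in R^+\cap v_kR^-}\alpha$, deduced by ``tracing through the monoidal structure'' of Prop.~\ref{p:hcomp} along a reduced word, is true but needs the additional observation that, under the identifications $\HH(\Delta_v)\cong\bigotimes_{\dS}\HH(\Delta_{s_i})$ and likewise for $\nabla$, the canonical morphism $\Delta_v\to\nabla_v$ does correspond to the convolution of the canonical morphisms $\Delta_{s_i}\to\nabla_{s_i}$ (which one can see because $\Hom(\Delta_v,\nabla_v)$ is one-dimensional and both maps restrict to the identity on the open cell). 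With those two points filled in, the argument is complete.
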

\begin{proof}
We do induction on $w$. We have a commutative diagram
\begin{equation*}
\xymatrix{\upH^*_c(B\backslash\Fl_{w})\ar[r]\ar[d]^{a} & A_{\leq w}\ar[r]\ar[d]^{b} & A_{<w}\ar[d]^{c}\\
\upH^*(B\backslash\Fl_{w})\ar[r] & \prod_{v\leq w}\HH(\nabla_v)\ar[r] & \prod_{v<w}\HH(\nabla_v)}
\end{equation*}
where $a$ is the ``forgetting the support'' map. To show that $b\otimes_{\dS}\Frac(\dS)$ is an isomorphism, it suffices to show $a\otimes_{\dS}\Frac(\dS)$ and $c\otimes_{\dS}\Frac(\dS)$ are. For $c$ we can use inductive hypothesis. The map $a$ factors as
\begin{equation*}
\upH^*_c(B\backslash\Fl_{w})\xrightarrow{a_1}\upH^*(B\backslash\Fl_{\leq w})\xrightarrow{a_2} \upH^*(B\backslash\Fl_{w}).
\end{equation*}
where $a_1$ is ``forgetting the support'' and $a_2$ is the restriction map. The cones of $a_1$ and $a_2$ are successive extensions of shifts and twists of $\HH(\Delta_v)$ and $\HH(\nabla_v)$ for $v<w$. As $\dS\otimes\dS$-modules, the supports of the cones of $a_1$ and $a_2$ are contained in the union of $\Gamma(v)$ for $v<w$ by Lemma \ref{l:hst}. Since the source and target of $a$ are supported on $\Gamma(w)$, $a\otimes_{\dS}\Frac(\dS)$ is the same thing as the localization of $a$ at the generic point of $\Gamma(w)$, where the cones of $a_1$ and $a_2$ become zero (because $\Gamma(w)\cap\Gamma(v)$ is a proper subscheme of $\Gamma(w)$). Therefore $a\otimes_{\dS}\Frac(\dS)$ is an isomorphism. The proof is complete.
\end{proof}

Consider the composition
\begin{equation*}
\dS\otimes\dS\to A_{\leq w}\to\prod_{v\leq w}\upH^*(\quot{H}{HvH}{H})\cong\prod_{v\leq w}\calO(\Gamma(v)).
\end{equation*}
After tensoring these maps by $\Frac(\dS)$ over the right copy of $\dS$, we get
\begin{equation*}
\dS\otimes\Frac(\dS)\to A_{\leq w}\otimes_{\dS}\Frac(\dS)\isom\prod_{v\leq w}\calO(\Gamma(v))\otimes_{\dS}\Frac(\dS)
\end{equation*}
which is obviously surjective (on the level of spectra, this corresponds to the closed embedding of the generic points of the graphs $\Gamma(v)$ into $V_H\otimes_k\Frac(\dS)$). Also notice that $\HH(\calF_2)$ is free (hence torsion-free) over either copy of $\dS$ by Lemma \ref{l:purity}(1). Therefore we can apply Lemma \ref{l:samehom} to conclude
\begin{eqnarray*}
\Ext^\bullet_{\scE}(\calF_1,\calF_2)&\cong&\Hom_{A_{\leq w}}(\HH(\calF_1),\HH(\calF_2))\\
&\cong&\Hom_{\dS\otimes\dS}(\HH(\calF_1),\HH(\calF_2))
\end{eqnarray*}
as graded $\Frob$-modules.


\section{Monodromic categories}\label{s:mon}
In this section, we define and study the category of $U$-equivariant and $H$-monodromic complexes on the enhanced flag variety $\tilFl=G/U$ of the Kac-Moody group $G$, as well as its Whittaker version. We will study averaging functors relating these categories and the convolution product on the monodromic category. We will give emphasis to the behavior of (free-monodromic) tilting objects under these operations. We have tried to arrange the materials in parallel with that of \S\ref{s:eq}, with the exception of the functor $\VV$ (the counterpart of $\HH$), whose definition requires extra work when $\tilFl$ is infinite dimensional.

This section relies on the foundational material on the completed monodromic categories in Appendix \ref{a:compmono}. We suggest reading \S\ref{as:mono} before getting into this section, leaving however the rest of Appendix \ref{a:compmono} as references. 

\subsection{The monodromic category}\label{ss:monocat}

Recall $\Fl=G/B$ is the flag ind-variety for $G$ and $\tilFl=G/U$ is the enhanced flag ind-variety.
Consider the right $H$-torsor $\pi:\tilFl\to\Fl$. Let $\scD_{\leq w}=D^b_{m}(U\backslash\Fl_{\leq w})$ be the derived category of $U$-equivariant mixed complexes on $\Fl_{\leq w}$. It is easy to check that, as a full subcategory of $D^b_{m}(\Fl_{\leq w})$, $\scD_{\leq w}$ satisfies the assumptions in Appendix \ref{ss:stra}, so that we can define the monodromic categories
\begin{equation*}
\scM_{\leq w}:=D^b_{m}(U\backslash\qw{\tilFl_{\leq w}}{H}).
\end{equation*}
and its completion $\hsM_{\leq w}$ following the construction in Appendix \ref{ss:compmono} and \ref{ss:stra}. Let $\scM$ (resp. $\hsM$, $\scD$) be the inductive 2-limit of $\scM_{\leq w}$ (resp. $\hsM_{\leq w}$, $\scD_{\leq w}$).

The triangulated category $\scM$ carries the perverse t-structure with heart $\scP$. By Remark \ref{l:prot}, this t-structure extends to $\hsM$, and we denote its heart by $\hsP$. Recall from \S \ref{as:mono} that $\pidag=\pi^![-r]:\scD\to\hsM$ is t-exact and its left adjoint $\piddag=\pi_![r]:\hsM\to\scD$ is right t-exact.

Let $\hsP_{\leq w}=\hsP\cap\hsM_{\leq w}$. The irreducible objects in $\hsP$ are twists of $\pidag\IC_w$. When there is no confusion, we will also write $\IC_w$ for $\pidag\IC_w$. The basic \fm perverse local system $\tcL_w$ on $\tilFl_{w}$ (see Def.\ref{def:fm}) is normalized so that $\piddag\tcL_w=\Ql[\ell(w)](\ell(w)/2)$ on $\Fl_{w}$. For $w=e$, we also write $\tdel$ for $\tcL_e$, a \fm perverse local system on $H=\tilFl_e$. In comparison with the \fm local system $\tcL$ on $A=H$ in Example \ref{ex:fm}, we have $\tdel=\tcL[r](r)$. The \fm standard and costandard sheaves are denoted by $\tDel_w$ and $\tnab_w$.

The group $H\times H$ acts on the stack $\quot{U}{G}{U}$ via $(h_1,h_2)\cdot x=h_1xh_2$. Note that this action differs from the action defined in \S\ref{ss:eqpar} by an inversion of the right copy of $H$. We will see later (in the proofs of Lemma \ref{l:vst} and Proposition \ref{p:vcomp}) that this modification makes the equivariant and monodromic categories match perfectly. It is easy to see that the full subcategory $\scM\subset D^b_{c}(\quot{U}{G}{U})$ consists exactly of $H\times H$-monodromic objects (because the generating objects $\IC_w$ are). Let $S=\Sym(V_H)$ and $\hatS=\varprojlim S/(V_H^n)$. The left and right actions of $H$ give logarithmic monodromy operators by the algebra $S\otimes S$ (see discussions in Appendix \ref{as:mono}), so that $\omega\hsM$ is naturally an $\hatS\otimes\hatS$-linear category.

\begin{remark} We have defined $\hsM$ as the completion with respect to the monodromy of the right copy of $H$. We could have defined another completion of $\scM$ using the left copy of $H$. It turns out that these two completions are canonically equivalent. Therefore we sometimes prefer to use the more symmetric notation $\hatD^b_{m}(\wqw{B}{G}{B})$ to denote $\hsM$.
\end{remark}

\subsection{The Whittaker category}\label{ss:Whit}
Let $\Theta\subset\Sigma$ be a subset of finite type. For each simple reflection $s\in W$, recall that $U^-_s$ denotes the 1-dimensional unipotent subgroup of $G$ whose Lie algebra is the root space corresponding to $-\alpha_s$. Then we have a canonical isomorphism:
\begin{equation}\label{eq:Nab}
\prod_{s\in\Theta}U^-_s\isom U^-_\Theta/[U^-_\Theta,U^-_\Theta].
\end{equation}
Fix an isomorphism $U^-_s\isom\GG_a$ for each $s\in\Sigma$. Let
\begin{equation*}
\chi:\prod_{s\in\Theta}U^-_s\isom\prod_{s\in\Theta}\GG_a\xrightarrow{+}\GG_a
\end{equation*}
be the sum of the isomorphisms $U^-_s\isom\GG_a$ . We can view the map $\chi$ as an additive character of $U^-_\Theta$, or even of the pro-unipotent group $U^\Theta U^-_\Theta$.

Fix a non-trivial additive character $\psi:k\to\Ql^{\times}$. This determines an Artin-Schreier local system $\AS_\psi$ on $\GG_a$ and hence the local system $\chi^*\AS_\psi$ on $U^-_\Theta$ or $U^\Theta U^-_\Theta$. We want to define the category $\scD_\Theta$ of complexes on $\Fl$ which are $(U^\Theta U^-_\Theta,\chi)$-equivariant, i.e., equivariant under $U^\Theta U^-_{\Theta}$ against the character sheaf $\chi^*\AS_\psi$.

We first recall some definitions in the finite-dimensional setting. Suppose $V$ is a group scheme with a one-dimensional local system $\calA$ on it which is a character sheaf. This means there is an isomorphism $m^*\calA\cong\calA\boxtimes\calA$ on $V\times V$ with which is compatible with the identity section and the associativity of $V$ in the obvious sense. Let $X$ be a scheme with a $V$-action $a:V\times X\to X$. A perverse $\calF$ on $X$ is said to be $(V,\calA)$-equivariant if it is equipped with an isomorphism $a^*\calF\cong\calA\boxtimes\calF$ with obvious compatibility conditions. When $V$ is connected, the category of $(V,\calA)$-equivariant perverse sheaves is a full subcategory of perverse sheaves on $X$.

In our situation, each orbit $\Fl^\Theta_{w}$ of $U^\Theta U^-_\Theta$ is finite dimensional whose closure $\Fl^\Theta_{\leq w}$ is a projective variety contained in some Schubert variety of $\Fl$. The closure relation among the orbits defines a partial order $\Tleq$ on $W$: $w_{1}\Tleq w_{2}$ if $\Fl^{\Theta}_{w_{1}}$ is in the closure of $\Fl^{\Theta}_{w_{2}}$. By Lemma \ref{l:fact}, we can choose $J_w\lhd U^{\Theta}$ of finite codimension which acts trivially on $\Fl^\Theta_{\leq w}$. We can define $\scQ_{\Theta,\Tleq w}$ to be the category of $(J_w\backslash U^\Theta\cdot U^-_\Theta,\chi)$-equivariant perverse sheaves on $\Fl^\Theta_{\leq w}$. This notion is obviously independent of $J_w$. Let $\scQ_{\Theta}$ be the inductive 2-limit of $\{\scQ_{\Theta,\Tleq w}\}$. Let $\scD_{\Theta}$ be the triangulated subcategory of $D^b_{m}(\Fl)$ generated by $\scQ_{\Theta}$.

Recall $[\coW]$ is set of minimal length representatives in the left $W_{\Theta}$-cosets of $W$.

\begin{lemma}\label{l:mw} The subquotient categories $\scD_{\Theta,w}=\scD_{\Theta,\Tleq w}/\scD_{\Theta,\stackrel{\Theta}{<}w}$ admit t-exact equivalences
\begin{equation*}
\scD_{\Theta,w}\isom\begin{cases}D^b(\Frob) & w\in[\coW]\\0 & w\notin[\coW]\end{cases}
\end{equation*}
\end{lemma}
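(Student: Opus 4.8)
The plan is to identify $\scD_{\Theta,w}$ with the $(U^\Theta U^-_\Theta,\chi)$-equivariant derived category of the single orbit $\Fl^\Theta_w$, and then to decide the dichotomy by asking whether $\chi$ restricts nontrivially to the stabilizer of a point. For the first step: the closed subscheme $\Fl^\Theta_{\Tleq w}$ is stratified by the orbits $\Fl^\Theta_v$, $v\Tleq w$, with open stratum $\Fl^\Theta_w$, and by Lemma \ref{l:fact} we may choose a normal subgroup $J_w\lhd U^\Theta$ of finite codimension acting trivially on $\Fl^\Theta_{\Tleq w}$, so that $\scD_{\Theta,\Tleq w}$ sits inside the constructible $(U^\Theta U^-_\Theta/J_w,\chi)$-equivariant derived category of the finite-type scheme $\Fl^\Theta_{\Tleq w}$ (the complexes constructible along the orbit stratification). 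The usual recollement for the open--closed decomposition $\Fl^\Theta_w\hookrightarrow\Fl^\Theta_{\Tleq w}\hookleftarrow\Fl^\Theta_{<w}$ (\cite{BBD}), transported to the equivariant setting, then identifies the Verdier quotient $\scD_{\Theta,w}=\scD_{\Theta,\Tleq w}/\scD_{\Theta,\stackrel{\Theta}{<}w}$, with its quotient $t$-structure, with the full $(U^\Theta U^-_\Theta,\chi)$-equivariant derived category of $\Fl^\Theta_w=U^\Theta U^-_\Theta\cdot wB/B$.

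Next I would descend to the stabilizer. Let $P_w=\mathrm{Stab}_{U^\Theta U^-_\Theta}(wB/B)=(U^\Theta U^-_\Theta)\cap wBw^{-1}$, so $\Fl^\Theta_w\isom(U^\Theta U^-_\Theta)/P_w$. By the structure theory of Kac--Moody groups \cite{Mat}, $P_w$ is the product of the root subgroups $U_\gamma$ over $\gamma$ in the closed set of roots $\big((R^+\setminus R^+_\Theta)\sqcup(-R^+_\Theta)\big)\cap w(R^+)$; in particular $P_w$ is pro-unipotent and connected. Equivariant descent then identifies the equivariant derived category of $\Fl^\Theta_w$ with the $(P_w,\chi|_{P_w})$-equivariant derived category of a point.

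It remains to read off $\chi|_{P_w}$ and conclude. By construction $\chi$ annihilates every root subgroup of $U^\Theta U^-_\Theta$ except the $U_{-\alpha_s}$, $s\in\Theta$, which it maps isomorphically onto $\GG_a$; hence $\chi|_{P_w}$ is trivial iff $-\alpha_s\notin w(R^+)$ for every $s\in\Theta$, i.e. $w^{-1}\alpha_s>0$, i.e. $\ell(sw)>\ell(w)$ for all $s\in\Theta$ --- precisely the condition $w\in[\coW]$. If $w\notin[\coW]$, then $\chi|_{P_w}$ is a nontrivial additive character of the connected unipotent group $P_w$, and since $H^*_c(\GG_a,\AS_\psi)=0$ a point admits no nonzero $(P_w,\chi|_{P_w})$-equivariant complex, so $\scD_{\Theta,w}=0$. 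If $w\in[\coW]$, then $\chi|_{P_w}$ is trivial, so the $(P_w,\chi|_{P_w})$-equivariant derived category of a point equals the $P_w$-equivariant one; as $P_w$ is pro-unipotent and connected, $\BB P_w$ has the cohomology of a point, whence $D^b_{P_w}(\mathrm{pt})\cong D^b(\Frob)$. For $t$-exactness, note that $\chi$, being a homomorphism killing $P_w$, descends to $\bar\chi:\Fl^\Theta_w\to\GG_a$; then $\bar\chi^*\AS_\psi[\dim\Fl^\Theta_w]$ is the (up to Tate twist unique) irreducible object of $\scD_{\Theta,w}$, every object of $\scD_{\Theta,w}$ is an iterated extension of its twists, and the quotient $t$-structure is the one in whose heart it lies --- matching $\Ql\in D^b(\Frob)$.

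The essential content lies in the first step: making the recollement and the very notion of $(U^\Theta U^-_\Theta,\chi)$-equivariance behave correctly over the ind-scheme $\Fl$ with $U^\Theta$ pro-unipotent, which is exactly what Lemma \ref{l:fact} is designed to handle, together with the description of $P_w$ as a product of root subgroups, standard from \cite{Mat}. The remaining root-combinatorial identity and the $t$-exactness bookkeeping are routine; the one point that deserves a word of care is the cleanness used implicitly above, i.e. that $\bar\chi^*\AS_\psi$, $!$- or $*$-extended along $\Fl^\Theta_w\hookrightarrow\Fl^\Theta_{\Tleq w}$, remains perverse up to shift, which follows from the vanishing of the relevant Artin--Schreier cohomology, as in \cite{BBM2}.
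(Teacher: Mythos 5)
Your proof is correct and arrives at the lemma via essentially the same root-theoretic criterion (the stabilizer of $wB/B$ contains some $U^-_s$, $s\in\Theta$, exactly when $\ell(sw)<\ell(w)$, i.e.\ $w\notin[\coW]$), but the packaging differs slightly from the paper's. Where you pass through the stabilizer $P_w$ and do equivariant descent to $D^b_{(P_w,\chi|_{P_w})}(\pt)$, the paper instead trivializes $\Fl^\Theta_w$ explicitly as $U^-_\Theta\times\AA^{\ell(w)}$, observes that every $(U^-_\Theta,\chi)$-equivariant perverse sheaf on it is $\chi^*\AS_\psi\boxtimes(-)[\ell_\Theta]$, and then forces the second factor to be constant by $U^\Theta$-equivariance; in the non-minimal case it argues directly that the stalk at $wB/B$ vanishes (the $(U^-_s,\chi)$-equivariance forces a local system on $U^-_s$ isomorphic to $\AS_\psi$ to be constant, hence zero) rather than appealing to the twisted equivariant category of a point. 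Your route is a touch more abstract, the paper's a touch more hands-on; they have the same content. One point you pass over a bit quickly: $\scD_{\Theta}$ is by definition the triangulated subcategory of $D^b_m(\Fl)$ generated by the equivariant perverse sheaves, not the equivariant derived category itself, so when you identify the quotient $\scD_{\Theta,w}$ with the full $(U^\Theta U^-_\Theta,\chi)$-equivariant derived category of the orbit you are implicitly using that for (pro-)unipotent groups the two coincide (vanishing of $H^{>0}(\BB P_w)$ makes pullback fully faithful, and the basic equivariant local system both lies in the image of the quotient and generates the equivariant category). This is true and consistent with your later use of $H^*(\BB P_w)=\Ql$, but it deserves to be said; likewise the claim that $P_w$ is a product of root subgroups (used to conclude it is connected and to read off $\chi|_{P_w}$) should be cited to \cite{Mat}. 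Neither of these is a gap in substance. Finally, your closing remark about cleanness of the $!$/$*$-extensions is not actually needed for this lemma -- it is the content of Cor.~\ref{c:clean}, which is a separate consequence -- so it could be dropped.
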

\begin{proof}
Let $\calF\in\scD_{\Theta,w}$. For $w\notin[\coW]$, we can find some simple reflection $s\in\Theta$ such that $\ell(sw)<\ell(w)$. Then the stabilizer of the point $wB/B$ under $U^-_\Theta$ contains $U^-_s$, on which $\chi$ is nontrivial. Therefore the stalk cohomology of $\calF$ at $wB/B$ is zero, hence $\calF$ has to be zero along $\Fl^\Theta_w$ since its cohomology sheaves are locally constant along $\Fl^\Theta_w$. This implies $\calF=0\in\scD_{\Theta,w}$.

If $w\in[\coW]$, then the action of $U^-_\Theta$ on $\Fl^\Theta_{w}$ is free with quotient isomorphic to an affine space $\AA^{\ell(w)}$. We may choose a section of the quotient map $\Fl^\Theta_w\to\AA^{\ell(w)}$ and identify $\Fl^\Theta_w$ with $U^-_\Theta\times\AA^{\ell(w)}$. Any $(U^-_\Theta,\chi)$-equivariant perverse sheaf $\calF$ on $\Fl^\Theta_w$ has the form $\chi^*\AS_\psi\boxtimes\calF[\ell_\Theta]$ for some perverse sheaf $\calF$ on $\AA^{\ell(w)}$, and vice versa. The equivariance under $U^\Theta$ forces $\calF$ to be constant. Therefore $\scD_{\Theta,w}$ is equivalent to the full triangulated subcategory of $D^b_{m}(U^-_\Theta\times\AA^{\ell(w)})$ generated by twists of the local system $\chi^*\AS_\psi\boxtimes\Ql$. Hence $\scD_{\Theta,w}\cong D^b(\Frob)$.
\end{proof}

The above lemma implies that $\scD_{\Theta}$ satisfies Assumption S in Appendix \ref{ss:stra}, therefore we can define the Whittaker-monodromic category $\scM_\Theta$ of $(U^\Theta U^-_\Theta,\chi)$-equivariant and right $H$-monodromic complexes on $\tilFl$. Note that $\scM_{\Theta}$ in fact depends on the character $\chi$ (which in turn depends on the choice of the isomorphism $U^{-}_{s}\isom\GG_{a}$). However, to alleviate notation, we omit $\chi$ systematically. 

We can also define the completion $\hsM_\Theta$ of $\scM_{\Theta}$. According to Lemma \ref{l:mw}, we can index the subquotient categories of $\scD_\Theta,\scM_\Theta$ by elements or subsets of $\coW$, for example $\scM_{\Theta,\leq\barw}$ for $\barw\in\coW$. The categories $\scM_\Theta$ and $\hsM_\Theta$ carry the perverse t-structure with hearts $\scP_\Theta$ and $\hsP_\Theta$ (see Lemma \ref{l:prot}).

For each $\barw\in\coW$, we have a $(U^\Theta U^-_\Theta,\chi)$-equivariant perverse sheaf $\calL_{\barw,\chi}$ of rank one and weight 0 in $\scQ_{\Theta,w}$ ($w\in[\coW]$ representing $\barw$). This is the sheaf $\chi^*\AS_\psi\boxtimes\Ql[\ell(\barw)+\ell_\Theta](\frac{\ell(\barw)+\ell_\Theta}{2})$that appear in the proof of Lemma \ref{l:mw}. We also have the basic \fm $(U^\Theta U^-_\Theta,\chi)$-equivariant perverse local system $\tcL_{\barw,\chi}\in\hsP_{\Theta,w}$ (cf. Def.\ref{def:fm}), which we normalize so that $\piddag\tcL_{\barw,\chi}\cong\calL_{\barw,\chi}$. We also have the standard and costandard sheaves $\Delta_{\barw,\chi}=i_{\barw,!}\calL_{\barw,\chi}$ and $\tnab_{\barw,\chi}=i_{\barw,*}\calL_{\barw,\chi}$ in $\scQ_\Theta$. We have standard and costandard \fm sheaves $\tDel_{\barw,\chi}$ and $\tnab_{\barw,\chi}$ in $\hsP_{\Theta}$.

Since $\overline{e}$ is the minimal element in $\coW$, we immediately conclude with the cleanness of the local system $\tcL_{e,\chi}$:
\begin{cor}\label{c:clean}
The natural maps
\begin{equation*}
\Delta_{\overline{e},\chi}\to\nabla_{\overline{e},\chi};\hspace{1cm}\tDel_{\overline{e},\chi}\to\tnab_{\overline{e},\chi}
\end{equation*}
are isomorphisms. We denote these objects by $\delta^\Theta_\chi\in\scQ_\Theta$ and $\tdel^\Theta_\chi\in\hsP_\Theta$ respectively.
\end{cor}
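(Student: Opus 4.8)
The plan is to exploit that $\overline{e}$ is minimal in $\coW$ so that the stratum indexed by $\overline{e}$ is, from the point of view of the Whittaker categories, a closed stratum; then the $!$- and $*$-extensions of the clean local system on it automatically agree. Concretely, I would first pin down the geometric closure of the stratum. Since the closure order attached to the $U^\Theta U^-_\Theta$-orbits is $w\Tleq w'\Leftrightarrow w_\Theta w\le w_\Theta w'$, the set of $w$ with $w\Tleq e$ is $\{w\in W: w_\Theta w\le w_\Theta\}=W_\Theta$, using the standard fact that $W_\Theta=\{v\in W: v\le w_\Theta\}$ (valid because $\Theta$ is of finite type, so $w_\Theta$ exists). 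Hence $\Fl^\Theta_{\leq e}=\bigcup_{w\in W_\Theta}\Fl^\Theta_w$, and among these strata only $w=e$ lies in $[\coW]$; all the others satisfy $w\in W_\Theta\setminus\{e\}$, hence $w\notin[\coW]$.

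Next I would consider the cone $\calG$ of the natural morphism $\Delta_{\overline{e},\chi}\to\nabla_{\overline{e},\chi}$ in $\scD_\Theta$ (and, in exact parallel, the cone of $\tDel_{\overline{e},\chi}\to\tnab_{\overline{e},\chi}$ in $\hsM_\Theta$), which is an object supported on $\Fl^\Theta_{\leq e}$, and show it vanishes by checking its $*$-restriction to every stratum of its support. On the open stratum $\Fl^\Theta_e$ both $\Delta_{\overline{e},\chi}$ and $\nabla_{\overline{e},\chi}$ restrict to $\calL_{\overline{e},\chi}$ (resp. to $\tcL_{\overline{e},\chi}$) and the natural map is the identity there, so the restriction of $\calG$ to $\Fl^\Theta_e$ is zero. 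On each remaining stratum $\Fl^\Theta_w$ with $e\ne w\in W_\Theta$, the stalk‑vanishing argument from the proof of Lemma \ref{l:mw} applies: the stabilizer of any point of $\Fl^\Theta_w$ contains $U^-_s$ for some $s\in\Theta$ with $\ell(sw)<\ell(w)$, and $\chi^*\AS_\psi$ restricts to a nontrivial local system on $U^-_s$, so any $(U^\Theta U^-_\Theta,\chi)$-equivariant complex has vanishing stalk cohomology there, hence vanishes on $\Fl^\Theta_w$ (its cohomology sheaves being locally constant along the orbit). Applying this cohomology‑sheaf by cohomology‑sheaf to the equivariant complex $\calG$ gives $i_w^*\calG=0$. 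A complex supported on $\Fl^\Theta_{\leq e}$ whose $*$-restriction to each stratum vanishes is itself zero, so $\calG=0$ and $\Delta_{\overline{e},\chi}\isom\nabla_{\overline{e},\chi}$; the common object is then $\delta^\Theta_\chi$ (resp. $\tdel^\Theta_\chi$).

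For the free‑monodromic statement the same dévissage works verbatim: objects of $\hsM_\Theta$, and in particular $\tDel_{\overline{e},\chi}$ and $\tnab_{\overline{e},\chi}$, remain $(U^\Theta U^-_\Theta,\chi)$-equivariant — only the right $H$-monodromy is pro‑completed (Appendix \ref{a:compmono}), which is transverse to the left equivariance used above — so their restrictions to the boundary strata $\Fl^\Theta_w$, $w\in W_\Theta\setminus\{e\}$, vanish for exactly the same reason, and the cone of $\tDel_{\overline{e},\chi}\to\tnab_{\overline{e},\chi}$ is again zero. I do not expect a genuine obstacle here; the corollary is essentially immediate from Lemma \ref{l:mw}. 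The only points deserving a line of care are (i) upgrading the statement of Lemma \ref{l:mw} about subquotient categories to the assertion that $i_w^*$ kills every $(U^\Theta U^-_\Theta,\chi)$-equivariant complex for $w\notin[\coW]$ — which is exactly what the proof of that lemma establishes — and (ii) observing that the equivariance against the nontrivial character sheaf $\chi^*\AS_\psi$ survives the completion procedure, which is built into the definition of $\hsM_\Theta$.
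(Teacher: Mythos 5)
Your proof is correct and follows the same route the paper intends: the paper's own ``proof'' is the single sentence preceding the corollary, appealing to the minimality of $\overline{e}$ in $\coW$ and to Lemma \ref{l:mw}, and your argument simply makes that reasoning explicit (identifying the boundary strata as $\Fl^\Theta_w$, $w\in W_\Theta\setminus\{e\}$, none of which lies in $[\coW]$, so the cone of $\Delta_{\overline{e},\chi}\to\nabla_{\overline{e},\chi}$ lives in a vanishing category). Your two points of care — upgrading Lemma \ref{l:mw} from a statement about subquotient categories to stalk vanishing, and observing that $(U^\Theta U^-_\Theta,\chi)$-equivariance persists through the completion — are exactly the right things to check and are handled correctly.
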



\subsection{Convolution}\label{ss:Mconv}
Consider the convolution diagram
\begin{equation}\label{d:convIu}
\xymatrix{ & G\twtimes{U}\tilFl\ar[dl]_{p_1}\ar[dr]^{p_2}\ar[rr]^{\tilm} & & \tilFl\\
\tilFl & & U\backslash\tilFl}
\end{equation}
where $p_1,p_2$ are projections to the left and right factors and $\tilm$ is induced by the multiplication map of $G$. The convolution diagram induces a convolution product on $\scM$:
\begin{eqnarray*}
\conv{U}:\scM\times\scM&\to&\scM\\
(\calF_1,\calF_2)&\mapsto&\tilm_{!}(\calF_1\stackrel{U}{\boxtimes}\calF_2)[r].
\end{eqnarray*}
Note that $\calF_1\boxtimes\calF_2$ is an $U$-equivariant complex on $G\times\tilFl$ with respect to the action $i\cdot(g,x)=(gi^{-1},ix)$ and hence descends to a complex $\calF_1\stackrel{U}{\boxtimes}\calF_2$ on $G\twtimes{U}\tilFl$. There is an obvious associativity constraint which makes $\conv{U}$ into a monoidal structure on $\scM$. More generally, the convolution gives a right action of the monoidal category $\scM$ on $\scM_\Theta$ given by the same formula.

\begin{lemma}
The monoidal structure $\conv{U}$ naturally extends to the completed category $\hsM$.
\end{lemma}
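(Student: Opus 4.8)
The plan is to extend $\conv{U}$ from $\scM$ to $\hsM$ in two stages, relying throughout on the description of $\hsM$ furnished by Appendix~\ref{a:compmono}: stratum by stratum, $\hsM_{\leq w}$ is the pro-completion of $\scM_{\leq w}$ along the monodromy of the \emph{right} copy of $H$, so that every object of $\hsM$ has honest unipotent left-$H$-monodromy (equivalently, its left logarithmic monodromy is nilpotent, killed by some $(V_H^N)$) while its right-$H$-monodromy is pro-unipotent, and $\scM\hookrightarrow\hsM$ is fully faithful. Note also that the convolution $\calF_1\conv{U}\calF_2$ of complexes supported on Schubert varieties $\tilFl_{\leq w_1}$, $\tilFl_{\leq w_2}$ is again supported on a single Schubert variety, and that $\tilm$ in \eqref{d:convIu} is proper on the corresponding closed subscheme of $G\twtimes{U}\tilFl$; so $\tilm_\dagger=\tilm_![r]$ there equals $\tilm_*[r]$ and the whole construction takes place inside finite-type pieces, where the completion machinery of Appendix~\ref{ss:compmono} applies.

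\emph{Stage 1: the left action of $\scM$.} For $\calG\in\scM$ the functor $\calG\conv{U}(-)=\tilm_\dagger(\calG\stackrel{U}{\boxtimes}(-)):\scM\to\scM$ is a triangulated functor, and the right-$H$-torsor structures on $G\twtimes{U}\tilFl$ and on $\tilFl$ are $\tilm$-compatible, so the right logarithmic monodromy of $\calG\conv{U}\calF$ is induced from that of $\calF$ alone. Since $\tilm$ is proper on the relevant closed pieces, $\tilm_\dagger$ commutes with the inverse limits defining the completion; hence $\calG\conv{U}(-)$ extends canonically to a triangulated functor $\hsM\to\hsM$ (and, by the same argument applied to \eqref{d:convIu} for the module action, to $\scM_\Theta\to\hsM_\Theta$), whose output again has honest left-monodromy inherited from $\calG$. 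These assemble into a left action of the (non-unital) monoidal category $\scM$ on $\hsM$ extending the old convolution.

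\emph{Stage 2: the left variable in $\hsM$, and the constraints.} Now fix $\tcF_2\in\hsM$ and pick $N$ with $(V_H^N)$ annihilating its left logarithmic monodromy. In the convolution diagram the right-$H$-monodromy of the left factor and the left-$H$-monodromy of the right factor combine along the anti-diagonal $h\mapsto(h,h^{-1})$ (which preserves $\tilm$) into the monodromy along the fibres of $\tilm$, and this is integrated out by $\tilm_\dagger$; consequently $\calF_1\conv{U}\tcF_2$ depends on $\calF_1\in\scM$ only through the quotient of $\calF_1$ by the $N$-th power of its right logarithmic monodromy ideal. By the universal property of the pro-completion $\scM\hookrightarrow\hsM$ (Appendix~\ref{ss:compmono}), $(-)\conv{U}\tcF_2$ therefore extends uniquely to $\hsM\to\hsM$, with $\tcF_1\conv{U}\tcF_2:=\calF_{1,M}\conv{U}\tcF_2$ for any level-$M\geq N$ truncation $\calF_{1,M}\in\scM$ of $\tcF_1$ (independence of $M$ following from $(V_H^m)L=0$ whenever $(V_H^N)L=0$ and $m\geq N$). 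Stages 1 and 2 agree on $\scM\times\scM$ and are both pro-continuous, so they patch to a bifunctor $\conv{U}:\hsM\times\hsM\to\hsM$ extending the original one; the associativity constraint of $\conv{U}$ on $\scM$ then propagates to $\hsM$ by the same truncation argument applied to triple convolutions (using the honest left-monodromies of the two right-hand factors to reduce to the honest case), and the unit is the \fm local system $\tdel=\tcL_e\in\hsP$ on $\tilFl_e$, with $\tdel\conv{U}\tcF\cong\tcF\cong\tcF\conv{U}\tdel$ checked again by reduction to honest factors. Note $\tdel\notin\scM$: exactly as in the baby case $G=\Gm$, where the monoidal unit of the convolution is $\Ql[[t]]$, the unit exists only after completion — which is why this lemma is needed at all.

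The main obstacle I expect is the assertion in Stage 1 that $\tilm_\dagger$ commutes with the pro-limits defining $\hsM$, so that the honest-left convolution genuinely lands in $\hsM$ rather than in some larger category of pro-objects. This rests on three inputs from Appendix~\ref{a:compmono}: the explicit stratum-by-stratum construction of the completion; the properness of $\tilm$ on closures of convolved Schubert cells, which makes $\tilm_\dagger$ well-behaved with respect to the relevant inverse limits and proper base change; and the $\tilm$-equivariance of the right-$H$-torsor structures, which guarantees that the completed (right-monodromic) direction is transported correctly while the left-monodromic direction stays honest and uniformly bounded. Once this point is secured, the remaining verifications — compatibility of the two extensions, associativity, unitality — are bookkeeping within the pro-object formalism of that appendix.
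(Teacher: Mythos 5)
The central error is in the premise of Stage 2: you assert that ``every object of $\hsM$ has honest unipotent left-$H$-monodromy (equivalently, its left logarithmic monodromy is nilpotent, killed by some $(V_H^N)$) while its right-$H$-monodromy is pro-unipotent.'' This is false. Although $\hsM$ is constructed as the pro-completion along the right $H$-torsor, the resulting category already contains all objects with pro-unipotent \emph{left} monodromy as well: the remark at the end of \S\ref{ss:monocat} states explicitly that the left-completion and right-completion of $\scM$ are canonically equivalent. Concretely, the basic free-monodromic object $\tdel=\tcL_e$ lives on $\tilFl_e=H$, where the left and right $H$-actions are interchanged by inversion, so its left logarithmic monodromy is as free (pro-unipotent) as its right one; more generally the left and right monodromies of $\tcL_w$ are related by the $w$-twist. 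Hence for most $\tcF_2\in\hsM$ there is no $N$ with $(V_H^N)$ annihilating the left monodromy, and the truncation argument you propose for extending the first variable collapses. You can also see the resulting ``well-definedness'' claim fail directly: taking $\tcF_2=\tdel$ (the monoidal unit, so $\calF_1\conv{U}\tdel\cong\calF_1$), the convolution recovers the whole pro-system $\prolim\calF_{1,n}$ and certainly does not factor through a fixed finite truncation of $\calF_1$.

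By contrast, the paper extends the variables in the opposite order and with a different reduction. It first fixes $\calF'\in\scM$ (not $\hsM$), and extends the \emph{first} variable by reducing to the generators $\calF'=\pidag\calF''$ with trivial right monodromy; the key identity $\phi(\calF_n,\pidag\calF'')=\pidag\bigl((\piddag\calF_n)\stackrel{B}{\boxtimes}\calF''\bigr)$ (equation \eqref{piconvU}) shows the dependence on $\calF_n$ is only through $\piddag\calF_n$, which is essentially constant by $\pi$-constancy, so the pro-limit stabilizes. Only then is the second variable completed, via Prop.\ref{compfun} applied to the commuting square with $\hatpsi$. Your Stage 1 (extending the second variable with $\calG\in\scM$ fixed on the left) is essentially this latter use of Prop.\ref{compfun} — though the real mechanism is that $\calG\conv{U}(-)$ commutes with $\piddag$, not properness of $\tilm$ — but your Stage 2 does not have an analogue of \eqref{piconvU}, and cannot, because the second variable is already in $\hsM$ and has lost the nilpotency you rely on. To repair the argument you would need to mimic the paper's order: first establish $\hsM\times\scM\to\hsM$ using the $\pidag\calF''$ reduction, then pass to $\hsM\times\hsM$ by the pro-completion formalism of Appendix~\ref{ss:compmono}, together with the pro-limit bookkeeping needed for the associativity constraint.
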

\begin{proof}
We can decompose $\conv{U}$ into two steps: the first step is
\begin{eqnarray*}
\phi(-,-):\scM\times\scM&\to& D^b_m(U\backslash\qw{G\twtimes{U}\tilFl}{H_{\midd}})\to D^b_m(U\backslash G\twtimes{B}\qw{\tilFl}{H}).\\
(\calF,\calF')&\mapsto&\calF\stackrel{U}{\boxtimes}\calF'\mapsto\pi_{\midd,!}(\calF\stackrel{U}{\boxtimes}\calF')[r].
\end{eqnarray*}
where $H_{\midd}$ means the torus acts on $G\twtimes{U}\tilFl$ by $(g_1,g_2)\cdot h=(g_1h,h^{-1}g_2)$, and $\pi_{\midd}$ denotes the quotient map by $H_{\midd}$.

Fix $\calF'\in\scM$. For any pro-object $\prolim\calF_n\in\hsM$, the pro-object $\prolim\phi(\calF_n,\calF')$ is in fact isomorphic to an object in $D^b_m(G\twtimes{B}\tilFl)$. In fact, since $\calF'$ is a successive extension of $\pi^{!}\calF''$ for $\calF''\in\scD$, it suffice to check with $\calF'=\pi^{!}\calF''$. In this case one easily sees
\begin{equation}\label{piconvU}
\phi(\calF_n,\pi^{!}\calF'')=\pi^{!}\left((\pi_{!}\calF_n)\stackrel{B}{\boxtimes}\calF''\right).
\end{equation}
Therefore $\prolim\phi(\calF_n,\calF')=\phi(\prolim(\pi_{!}\calF_n),\calF'')$ is essentially constant because $\prolim(\pi_{!}\calF_n)$ is essentially constant (i.e., belongs to $\scD$). This shows that $\phi$ extends to
\begin{equation*}
\hatphi:\hsM\times\scM\to D^b_m(U\backslash G\twtimes{B}\qw{\tilFl}{H}).
\end{equation*}

Similarly, we may define
\begin{equation*}
\hatpsi:\hsM\times\scD\to D^b_m(U\backslash G\twtimes{B}\Fl)
\end{equation*}
so that the following diagram commutes
\begin{equation*}
\xymatrix{\hsM\times\scM\ar[r]^{\hatphi}\ar[d]^{\id\times\pi_!} & D^b_m(U\backslash G\twtimes{B}\qw{\tilFl}{H})\ar[d]^{\Pi_!}\\
\hsM\times\scD\ar[r]^{\hatpsi} & D^b_m(U\backslash G\twtimes{B}\Fl)}
\end{equation*}
where $\Pi:U\backslash G\twtimes{B}\tilFl\to U\backslash G\twtimes{B}\Fl$ is the projection. Proposition \ref{compfun} then implies that $\hatphi$ further extends to
\begin{equation*}
\hatphi:\hsM\times\hsM\to\hatD^b_m(U\backslash G\twtimes{B}\qw{\tilFl}{H})
\end{equation*}

The second step is given by the multiplication $m:G\twtimes{B}\tilFl\to\tilFl$
\begin{equation*}
m_!:D^b_m(U\backslash G\twtimes{B}\qw{\tilFl}{H})\to\scM.
\end{equation*}
which, by Corollary \ref{c:functor}, extends to completed categories
\begin{equation*}
\hatm_!:\hatD^b_m(U\backslash G\twtimes{B}\qw{\tilFl}{H})\to\hsM.
\end{equation*}

Now for $\calF=\prolim\calF_m, \calF'=\prolim\calF'_n\in\hsM$, we define
\begin{eqnarray*}
\calF\conv{U}\calF'&=&\hatm_!\hatphi(\calF,\calF')=\plim{n}m_!(\hatphi(\calF,\calF_n'))\\
&=&\plim{n}\plim{m}m_!\phi(\calF_m,\calF_n)=\plim{n}\plim{m}\calF_m\conv{U}\calF_n'.
\end{eqnarray*}

We construct the associativity constraint for the extended $\conv{U}$. Let $\calF=\prolim\calF_j,\calF'=\prolim\calF'_m,\calF''=\prolim\calF''_n\in\hsM$. On one hand,
\begin{eqnarray*}
(\calF\conv{U}\calF')\conv{U}\calF''&=&\plim{n} m_!\hatphi(\calF\conv{U}\calF',\calF''_n)\\
&=&\plim{n} m_!\hatphi(\plim{m} m_!\hatphi(\calF,\calF'_m),\calF''_n)\\
&=&\plim{n}\plim{m} m_!\phi(m_!\hatphi(\calF,\calF'_m),\calF''_n)\\
&=&\plim{n}\plim{m}\plim{j} m_!\phi(m_!\phi(\calF_j,\calF'_m),\calF''_n)\\
&=&\plim{n}\plim{m}\plim{j}(\calF_j\conv{U}\calF'_m)\conv{U}\calF''_n.
\end{eqnarray*}
Here the order in which the $\prolim$ is taken is important. Similarly, one verifies
\begin{equation*}
\calF\conv{U}(\calF'\conv{U}\calF'')=\plim{n}\plim{m}\plim{j}\calF_j\conv{U}(\calF'_m\conv{U}\calF''_n).
\end{equation*}
Let $a(\calG,\calG',\calG''):(\calG\conv{U}\calG')\conv{U}\calG''\isom\calG\conv{U}(\calG'\conv{U}\calG'')$ be the associativity constraint in $(\scM,\conv{U})$, then we define the associativity constraint $\hata$ for $(\hsM,\conv{U})$ by
\begin{equation*}
\hata(\calF,\calF',\calF'')=\plim{n}\plim{m}\plim{j}a(\calF_j,\calF'_m,\calF''_n).
\end{equation*}
To check the pentagon relation for $\hata$, we only need to notice that the two ways of getting from $((\calF\conv{U}\calF')\conv{U}\calF'')\conv{U}\calF'''$ to $\calF\conv{U}(\calF'\conv{U}(\calF''\conv{U}\calF'''))$ is obtained by taking $\plim{n}\plim{m}\plim{j}\plim{i}$ of the two ways of getting from $((\calF_i\conv{U}\calF'_j)\conv{U}\calF''_m)\conv{U}\calF'''_n$ to $\calF_i\conv{U}(\calF'_j\conv{U}(\calF''_m\conv{U}\calF'''_n))$. This completes the proof.

\end{proof}

\begin{remark}\label{r:midS}
Recall the $H_{\midd}$-action on $G\twtimes{U}\tilFl$ defined in the proof of the above Lemma. The monodromy action of $V_{H_{\midd}}$ on $\calF_1\stackrel{U}{\boxtimes}\calF_2$ corresponds to the difference of the right $V_H$-action on $\calF_1$ and the left $V_H$-action on $\calF_2$. Since the multiplication map $\tilm$ factors as $\tilm=m\circ\pi_{\midd}$, the $V_\ell(H_{\midd})$ acts trivially on $\pi_{\midd}(\calF_1\stackrel{U}{\boxtimes}\calF_2)$, hence on $\calF_1\conv{U}\calF_2$. Therefore, the following two $\hatS$-actions on $\calF_1\conv{U}\calF_2$ are the same: one is the right $\hatS$-action on $\calF_1$; the other is the left $\hatS$-action on $\calF_2$. Here we are making use of the convention of the $H\times H$-action fixed in \S\ref{ss:monocat}.
\end{remark}

Similarly, the convolution action of $\scM$ on $\scM_\Theta$ extends to an action of the monoidal category $(\hsM,\conv{U})$ on $\hsM_\Theta$. Using similar convolution diagrams, we can define a right convolution $\conv{B}$ of $\scE$ on $\scD=D^b_{m}(U\backslash\Fl)$; we can also define a left convolution $\conv{U}$ of $\hsM$ on $\scD$.

\begin{lemma}\label{l:convst}
Suppose $w_1,w_2\in W$ and $\ell(w_1w_2)=\ell(w_1)+\ell(w_2)$, then
\begin{equation*}
\tDel_{w_1}\conv{U}\tDel_{w_2}\cong\tDel_{w_1w_2};\hspace{1cm}\tnab_{w_1}\conv{U}\tnab_{w_2}\cong\tnab_{w_1w_2}.
\end{equation*}
Moreover, the object $\tdel$ is the unit object in the monoidal category $\hsM$.
\end{lemma}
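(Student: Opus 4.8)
\emph{The plan.} This is the free-monodromic counterpart of Lemma \ref{l:convIst}, and I would prove it by transporting the same geometric argument into the completed category $\hsM$. The only geometric input needed is the classical fact that when $\ell(w_1w_2)=\ell(w_1)+\ell(w_2)$ the multiplication map induces an isomorphism of locally closed subvarieties $\tilFl_{w_1}\twtimes{U}\tilFl_{w_2}\isom\tilFl_{w_1w_2}$, and that on Schubert varieties $\tilm:\tilFl_{\leq w_1}\twtimes{U}\tilFl_{\leq w_2}\to\tilFl_{\leq w_1w_2}$ is proper --- both obtained from the corresponding statements on $\Fl$ by passing to the $H$-torsors. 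Since the extension of $\conv{U}$ to $\hsM$ constructed above is built so that, on objects supported on a single Schubert cell or its closure, it is computed by the naive formula $\calF_1\conv{U}\calF_2=\tilm_\dagger(\calF_1\stackrel{U}{\boxtimes}\calF_2)$ read in the completed setting, all the computations below will take place on finite-type pieces.

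\emph{Standards and costandards.} Writing $\tDel_{w_i}=\tili_{w_i!}\tcL_{w_i}$, I would first note that $\conv{U}$ is assembled from $!$-type operations (external product followed by $\tilm_\dagger=\tilm_![r]$), so $\tDel_{w_1}\conv{U}\tDel_{w_2}$ is the $!$-extension to $\tilFl$ of the pushforward of (the descent of) $\tcL_{w_1}\boxtimes\tcL_{w_2}$ along $\tilFl_{w_1}\twtimes{U}\tilFl_{w_2}\to\tilFl$; by the isomorphism above this map is just the locally closed embedding of the cell $\tilFl_{w_1w_2}$, so $\tDel_{w_1}\conv{U}\tDel_{w_2}$ is the $!$-extension of some free-monodromic local system on $\tilFl_{w_1w_2}$. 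The remaining task is to identify that local system, with its shift and twist, as $\tcL_{w_1w_2}$: the shift is a dimension count in which the $[r]$ in $\tilm_\dagger$ absorbs the extra $H$-direction of $\twtimes{U}$ over $\twtimes{B}$, and the monodromic structure would be pinned down by its $\piddag$-normalization together with Remark \ref{r:midS}, which identifies the residual monodromy of a convolution with the right monodromy of the first factor and thereby reduces the claim to the equivariant computation $\Delta_{w_1}\conv{B}\Delta_{w_2}\cong\Delta_{w_1w_2}$ of Lemma \ref{l:convIst}. The costandard case I would handle in parallel: $\tnab_{w_i}=\tili_{w_i*}\tcL_{w_i}$ is a $*$-extension, and properness of $\tilm$ on $\tilFl_{\leq w_1}\twtimes{U}\tilFl_{\leq w_2}$ gives $\tilm_\dagger=\tilm_*[r]$ there, so the same computation with $*$-pushforwards produces $\tnab_{w_1w_2}$; alternatively it follows from the standard case by monodromic Verdier duality.

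\emph{The unit.} Since $\tilFl_e\cong H$ is closed in $\tilFl$, we have $\tdel=\tcL_e=\tDel_e=\tnab_e$. Setting $w_1=e$ (resp. $w_2=e$) in the two isomorphisms just established yields functorial isomorphisms $\tdel\conv{U}\calF\isom\calF$ and $\calF\conv{U}\tdel\isom\calF$ for $\calF$ any free-monodromic standard or costandard, hence for every $\calF\in\hsM$ since these generate $\hsM$ as a triangulated category stable under the completion procedure; so $\tdel$ is the unit object. (One could instead argue directly: $\tdel\conv{U}\calF$ unwinds to $a_!(\tdel\boxtimes\calF)[r]$ for the left-action map $a:H\times\tilFl\to\tilFl$, which via the product decomposition of $H$ reduces to the rank-one torus computation of Appendix \ref{a:SL2}, where $\tdel$ corresponds to $\Ql[[t]]$ as a module over itself.)

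\emph{Where the difficulty lies.} The geometry is routine; the real work is the bookkeeping inside $\hsM$ --- checking that the pro-object construction of $\conv{U}$ really does compute $\tilm_\dagger(-\stackrel{U}{\boxtimes}-)$ on these generators, and pinning the monodromy normalization so that the universal free-monodromic local systems convolve to the correctly normalized $\tcL_{w_1w_2}$ and not to a Tate twist of it. Remark \ref{r:midS} and the $\piddag$-normalization of the $\tcL_w$ are precisely the tools for the latter.
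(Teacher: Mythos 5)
Your proposal is correct and follows essentially the same route as the paper: the paper's proof consists only of the unit statement, established by rewriting $\tdel\conv{U}\calF$ as $a_!(\tcL\boxtimes\calF)[2r](r)$ for the left action map $a:H\times\tilFl\to\tilFl$ and invoking Lemma \ref{l:actionF} (your parenthetical ``direct'' argument), with the standard/costandard identities dismissed as ``similar or easier'' --- which your first paragraph fleshes out correctly via the cell isomorphism and the $\piddag$-normalization. One caveat: your primary route to the unit --- specializing $w_1=e$ in the convolution identities and ``extending by generation'' --- does not by itself produce the required \emph{functorial} isomorphism $\tdel\conv{U}(-)\Rightarrow\id$ (unnatural isomorphisms on generators do not assemble into a unitor); the direct computation via the action map, i.e.\ Lemma \ref{l:actionF} (a general statement for any torus, not the $\SL(2)$ computation of Appendix \ref{a:SL2}), is what actually supplies the natural transformation, so that should be the main argument rather than the fallback.
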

\begin{proof}
Here we only give the proof of $\tdel\conv{U}\calF\cong\calF$, the rest is either similar (and parallel to Lemma \ref{l:convIst}). The relevant convolution diagram becomes simply the left action map $a:H\times\tilFl\to\tilFl$. Therefore
\begin{equation*}
\tdel\conv{U}\calF\cong a_!(\tdel\boxtimes\calF)[r]=a_!(\tcL\boxtimes\calF)[2r](r).
\end{equation*}
By Lemma \ref{l:actionF}, we have $a_!(\tcL\boxtimes\calF)[2r](r)\cong\calF$.
\end{proof}


\begin{prop}\label{p:tiltconv}
For \fm tilting sheaves $\tcT_1,\tcT_2\in\hsP$, the convolution $\tcT_1\conv{U}\tcT_2$ is also a \fm tilting sheaf.
\end{prop}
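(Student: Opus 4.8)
The plan is to follow the strategy of \cite[Prop. in \S2.1]{BBM}, reducing everything to convolution with the ``big'' \fm tilting sheaf $\tcT_s$ attached to a simple reflection $s$, for which the needed facts come from the $\SL(2)$-computations of Appendix \ref{a:SL2}. Recall (Def.\ref{def:fmt}) that an object of $\hsP$ is \fm tilting exactly when it admits a filtration with subquotients among the $\tDel_w$ (up to twist) and, simultaneously, one with subquotients among the $\tnab_w$. I will use: $\tdel$ is the monoidal unit, and $\tDel_{w_1}\conv{U}\tDel_{w_2}\cong\tDel_{w_1w_2}$, $\tnab_{w_1}\conv{U}\tnab_{w_2}\cong\tnab_{w_1w_2}$ when lengths add (Lemma \ref{l:convst}); the functors $-\conv{U}\tDel_w$ and $-\conv{U}\tnab_w$ are respectively right and left $t$-exact for the perverse $t$-structure on $\hsM$ (by writing $w$ as a reduced word and reducing to a simple reflection, where it is elementary); and the $\SL(2)$-input that $\tcT_s$ sits in short exact sequences in $\hsP$
\[ 0\to\tdel\langle?\rangle\to\tcT_s\to\tDel_s\to0,\qquad 0\to\tnab_s\to\tcT_s\to\tdel\langle?\rangle\to0, \]
together with the invertibility $\tDel_s\conv{U}\tnab_s\cong\tdel\cong\tnab_s\conv{U}\tDel_s$ (which, since $\ell(ss)=0$, is not an instance of Lemma \ref{l:convst}).

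First I would prove that for every $w\in W$ the object $\tDel_w\conv{U}\tcT_s$ has a standard filtration and $\tnab_w\conv{U}\tcT_s$ a costandard one. If $\ell(ws)>\ell(w)$, apply the right $t$-exact functor $\tDel_w\conv{U}(-)$ to the first sequence above: its outer terms become the perverse sheaves $\tDel_w\langle?\rangle$ and $\tDel_w\conv{U}\tDel_s\cong\tDel_{ws}$, so the long exact sequence of perverse cohomology forces $\tDel_w\conv{U}\tcT_s$ to be perverse and yields $0\to\tDel_w\langle?\rangle\to\tDel_w\conv{U}\tcT_s\to\tDel_{ws}\to0$. If $\ell(ws)<\ell(w)$, write $\tDel_w\cong\tDel_{ws}\conv{U}\tDel_s$, so $\tDel_w\conv{U}\tnab_s\cong\tDel_{ws}\conv{U}(\tDel_s\conv{U}\tnab_s)\cong\tDel_{ws}$, and applying $\tDel_w\conv{U}(-)$ to the second sequence gives $0\to\tDel_{ws}\to\tDel_w\conv{U}\tcT_s\to\tDel_w\langle?\rangle\to0$; the costandard case is dual. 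Next, since $\tcT_s$ carries both flags, $-\conv{U}\tcT_s$ is right and left $t$-exact, hence $t$-exact, hence restricts to an exact functor on hearts. Applying it to a standard filtration of an arbitrary \fm tilting sheaf $\tcT$ and refining each subquotient $\tDel_w\langle?\rangle\conv{U}\tcT_s$ by the previous step produces a standard filtration of $\tcT\conv{U}\tcT_s$, and likewise with costandard filtrations; so $\tcT\conv{U}\tcT_s$ is \fm tilting.

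I would then finish by induction. Since $\conv{U}$ commutes with finite direct sums, it is enough to treat $\tcT_2=\tcT_w$ indecomposable, and I induct on $\ell(w)$; for $w=e$, $\tcT_1\conv{U}\tdel\cong\tcT_1$. For $\ell(w)\ge1$ pick a simple reflection $s$ with $\ell(ws)<\ell(w)$ and put $u=ws$. Then $\tcT_u\conv{U}\tcT_s$ is \fm tilting, and by a standard argument it contains $\tcT_w$ as a direct summand ($\tDel_w$ appears with multiplicity one in its standard filtration --- a computation reducing to $\SL(2)$ --- and among indecomposable \fm tilting sheaves supported on $\tilFl_{\le w}$ only $\tcT_w$ has $\tDel_w$ in its standard filtration). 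Hence $\tcT_1\conv{U}\tcT_w$ is a direct summand of $\tcT_1\conv{U}(\tcT_u\conv{U}\tcT_s)\cong(\tcT_1\conv{U}\tcT_u)\conv{U}\tcT_s$, in which $\tcT_1\conv{U}\tcT_u$ is \fm tilting by the inductive hypothesis and its convolution with $\tcT_s$ is \fm tilting by the previous step; since a direct summand of a \fm tilting sheaf is \fm tilting, we are done.

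The hard part will not be this skeleton but its foundations: one must check that the perverse $t$-structure, the $t$-exactness of convolution with (co)standard sheaves, and the exactness manipulations above really make sense in the \emph{completed} category $\hsM$, where $\tDel_w$, $\tnab_w$, $\tcT_s$ are free-monodromic (``big'') pro-objects and not honest complexes; the genuinely computational input --- the two short exact sequences for $\tcT_s$ and the invertibility of $\tDel_s$ --- is exactly what Appendix \ref{a:SL2} is there to supply.
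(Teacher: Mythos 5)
Your argument is correct in outline, but it takes a genuinely different route from the paper's. The paper never establishes $t$-exactness of $-\conv{U}\tcT_s$: instead it reduces via $\piddag$ and Lemma \ref{l:pitilt} to showing that $\calT=\tcT_1\conv{U}\piddag\tcT_2$ is an ordinary tilting sheaf in $\scD$, observes by the reduced-word reduction to Lemma \ref{l:convSL2} that $\omega\calT\in\langle\Delta_w[\leq0]\rangle\cap\langle\nabla_w[\geq0]\rangle$, and then checks directly that membership in this intersection (with shifts allowed) already forces $i_w^*\calT$ and $i_w^!\calT$ to be perverse. That closing step is what lets the paper bypass your entire wall-crossing induction. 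Your route --- essentially the one in \cite[\S2.1]{BBM} --- buys a stronger intermediate statement (the exact functor $-\conv{U}\tcT_s$ preserving both flags), which is pleasant but not needed elsewhere, and it costs extra inputs: (i) the invertibility $\tDel_s\conv{U}\tnab_s\cong\tdel$, which Appendix \ref{a:SL2} does \emph{not} supply (Lemma \ref{l:convSL2} only treats $\Delta\conv{B}\Delta$ and $\nabla\conv{B}\nabla$), so you would have to prove it; (ii) the Krull--Schmidt decomposition of \fm tilting sheaves into the $\omega\tcT_w$, which you must take from Remark \ref{rm:unique}(2) rather than from Cor.\ref{c:t}(2) --- the latter is downstream of the duality theorem and hence of this very proposition --- and which only holds for underlying complexes up to Frobenius semisimplification (harmless, since being \fm tilting is a condition on $\omega\calT$, but your reduction to indecomposable $\tcT_2$ should be phrased at that level).

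One correction: in this paper's normalization the $t$-exactness directions are the opposite of what you state. Lemma \ref{l:convSL2} gives $\Delta\conv{B}\Delta\in\langle\Delta(1/2),\Delta[-1](-1/2),\delta\rangle\subset\pD^{[0,1]}$, so convolution with a (free-monodromic) standard object is \emph{left} $t$-exact and with a costandard object \emph{right} $t$-exact. This does not damage your argument --- in the first step the perversity of the middle term follows from extension-closedness of the heart once the outer terms are identified, and in the second step you only use the conjunction of the two exactness properties --- but it should be fixed before the argument is written up.
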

\begin{proof}
By Lemma \ref{l:pitilt}, it is enough to check that $\calT:=\piddag(\tcT_1\conv{U}\tcT_2)\cong\tcT_1\conv{U}\piddag\tcT_2$ is a tilting sheaf on $\Fl$. Observe that
\begin{eqnarray}\label{eq:convdel}
\tDel_{w_1}\conv{U}\Delta_{w_2}\cong\Delta_{w_1}\conv{B}\Delta_{w_2}\in\langle\Delta_w(?)[\leq0]|w\in W\rangle\subset\scD\\
\label{eq:convnab}
\tnab_{w_1}\conv{U}\nabla_{w_2}\cong\nabla_{w_1}\conv{B}\nabla_{w_2}\in\langle\nabla_w(?)[\geq0]|w\in W\rangle\subset\scD.
\end{eqnarray}
In fact, to prove (\ref{eq:convdel}), we can write each $\Delta_w=\Delta_{s_1}\conv{B}\cdots\conv{B}\Delta_{s_m}$ by Lemma \ref{l:convIst} (for a reduced word expression $w=s_1\cdots s_m$) and we reduce to the computation of $\Delta_{s}\conv{B}\Delta_{s'}$ for two simple reflections $s,s'$. If $s\neq s'$, then $\Delta_s\conv{B}\Delta_{s'}\cong\Delta_{ss'}$. For $s=s'$, this follows by Lemma \ref{l:convSL2}. The proof of (\ref{eq:convnab}) is similar.

Therefore, since $\piddag\tcT_2$ admits a $\Delta$-flag and a $\nabla$-flag, the convolution $\calT=\tcT_1\conv{U}\piddag\tcT_2$ satisfies
\begin{equation}\label{eq:estT}
\omega\calT\in\langle\Delta_w[\leq0]|w\in W\rangle\cap\langle\nabla_w[\geq0]|w\in W\rangle.
\end{equation}
We show that the above condition already implies that $\calT$ is a tilting sheaf. In fact, we know that $\nabla_w$ is perverse (since $i_w$ is affine), hence $\calT\in\langle\nabla_w[\geq0]|w\in W\rangle\subset\scD^{\leq0}$, i.e., $i_w^*\calT\in\scD_w^{\leq0}$. On the other
hand, $\calT\in\langle\Delta_w[\leq0]|w\in W\rangle$ implies that $i_w^*\calT\in\scD_w^{\geq0}$. Hence $i^*_w\calT$ is perverse. Similarly, we can argue that $i_w^!\calT$ is also perverse. Therefore $\calT$ is a tilting sheaf, and $\tcT_1\conv{U}\tcT_2$ is a \fm tilting sheaf.
\end{proof}


\subsection{Averaging functors}\label{ss:av} In this section, we fix a subset $\Theta\subset\Sigma$ of finite type.
\subsubsection{Averaging along $U_\Theta$} Consider the left action:
\begin{equation*}
a^+:U_\Theta\times\tilFl\to\tilFl.
\end{equation*}
For $?=!$ or $*$, define the functors
\begin{eqnarray*}
\av^\Theta_?:D^b_{m}(\tilFl)&\to& D^b_{m}(U_\Theta\backslash\tilFl)\\
\calF&\mapsto& a^+_?(\Ql[\lTh](\lTh/2)\boxtimes\calF).
\end{eqnarray*}
The functor $\av^\Theta_?$ obviously preserves right $H$-monodromic subcategories. Moreover, since $U^{\Theta}$ is normal in $U$ with quotient $U_\Theta$, the functor $\av^\Theta_?$ also preserves left $U^{\Theta}$-equivariant structures. Therefore, we get a functor
\begin{equation*}
\scM_\Theta\xrightarrow{\Forg} D^b_{m}(U^\Theta\backslash\qw{\tilFl}{H})\xrightarrow{\av^\Theta_?}\scM
\end{equation*}
which passes to the completions (cf. Proposition \ref{compfun})
\begin{equation*}
\Av^\Theta_?:\hsM_\Theta\to\hsM.
\end{equation*}

\subsubsection{Averaging along $(U^-_\Theta,\chi)$} Similarly, consider the action:
\begin{equation}\label{eq:a-}
a^-:U^-_\Theta\times\tilFl\to\tilFl.
\end{equation}
For $?=!$ or $*$, define the functors
\begin{equation*}
\av^\Theta_{\chi,?}(\calF):=a^-_?(\chi^*\AS_\psi[\lTh](\lTh/2)\boxtimes\calF).
\end{equation*}
As in the case of $\av^\Theta_{?}$, the functor $\av^\Theta_{\chi,?}$ preserves right $H$-monodromicity and left $U^\Theta$-equivariance. Therefore we get a functor
\begin{equation*}
\scM\xrightarrow{\Forg}D^b_{m}(U^\Theta\backslash\qw{\tilFl}{H})\xrightarrow{\av^\Theta_{\chi,?}}\scM_{\Theta}.
\end{equation*}
which passes to the completions
\begin{equation*}
\Av^\Theta_{\chi,?}:\hsM\to\hsM_\Theta.
\end{equation*}

Using the convolution, we give an alternative description for $\Av^\Theta_{\chi,?}$.
\begin{lemma}\label{l:samedef}
We have a natural isomorphism
\begin{equation}\label{eq:samedef}
\Av^\Theta_{\chi,?}(-)\cong\tdel^\Theta_\chi\conv{U}(-).
\end{equation}
In particular, there is a natural isomorphism of functors $\Av^\Theta_{\chi,!}\isom\Av^\Theta_{\chi,*}$. From now on, we denote these functors by $\Av^\Theta_\chi$.
\end{lemma}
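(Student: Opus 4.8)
The plan is to identify $\tdel^\Theta_\chi\conv{U}(-)$ with $\av^\Theta_{\chi,?}(-)$ by restricting the convolution diagram~\eqref{d:convIu} to the support of $\tdel^\Theta_\chi$ and carrying out the pushforward explicitly. Since both $\Av^\Theta_{\chi,?}$ and $\tdel^\Theta_\chi\conv{U}(-)$ arise from functors defined on the uncompleted category $\scM$ by the pro-extension of Appendix~\ref{a:compmono} — the first by the definition of $\Av^\Theta_{\chi,?}$, the second by the construction of the extended $\conv{U}$ in~\S\ref{ss:Mconv} — it suffices to produce the isomorphism functorially for $\calF$ in $\scM$ and then pass to $\hsM$. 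Note that in this situation $\tdel^\Theta_\chi\conv{U}\calF$ is again an honest (non-pro) object, since convolving with the free-monodromic unit $\tdel$ will ``integrate out'' its pro-part.

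First I would describe the support. As $\overline{e}$ is the minimal element of $\coW$, the sheaf $\tdel^\Theta_\chi$ is supported on the single orbit $\tilFl^\Theta_{e}$. Using that $U^-_\Theta\subset L_\Theta$ normalises $U^\Theta\lhd P_\Theta$ and that $U^\Theta\subset U$ acts trivially on $\tilFl_{e}=B/U$, the action map $(u,x)\mapsto ux$ gives an isomorphism $U^-_\Theta\times\tilFl_{e}\isom\tilFl^\Theta_{e}$, under which $\tdel^\Theta_\chi$ becomes $\chi^*\AS_\psi[\lTh](\lTh/2)\boxtimes\tdel$ (the normalisations match after applying $\piddag$ and comparing with $\calL_{\overline{e},\chi}$, using $\piddag\tdel=\Ql$). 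Consequently $\tdel^\Theta_\chi\stackrel{U}{\boxtimes}\calF$ is supported on $p_1^{-1}(\tilFl^\Theta_{e})\subset G\twtimes{U}\tilFl$; trivialising the $U$-action by the section sending $(u,h,x)\in U^-_\Theta\times H\times\tilFl$ to the class of $(uh,x)$ identifies this locus with $U^-_\Theta\times\tilFl_{e}\times\tilFl$, on which $\tilm$ factors as
\begin{equation*}
U^-_\Theta\times\tilFl_{e}\times\tilFl\xrightarrow{\ \id\times m_H\ }U^-_\Theta\times\tilFl\xrightarrow{\ a^-\ }\tilFl ,
\end{equation*}
with $m_H$ the left $H$-action (via $\tilFl_{e}\cong H$) and $a^-$ the averaging map of~\eqref{eq:a-}; the twisted product restricts there to $\chi^*\AS_\psi[\lTh](\lTh/2)\boxtimes\tdel\boxtimes\calF$.

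Then comes the pushforward. Pushing along $\id\times m_H$ and using that $\tdel$ is the monoidal unit of $\conv{U}$ (Lemma~\ref{l:convst}, via Lemma~\ref{l:actionF}, so $m_{H,!}(\tdel\boxtimes\calF)\cong\calF[-r]$) yields $\chi^*\AS_\psi[\lTh](\lTh/2)\boxtimes\calF[-r]$; then $a^-_!$ followed by the shift $[r]$ in $\tilm_\dagger=\tilm_![r]$ produces exactly $\av^\Theta_{\chi,!}(\calF)$, establishing $\Av^\Theta_{\chi,!}\cong\tdel^\Theta_\chi\conv{U}(-)$. For the $*$-version — whence also $\Av^\Theta_{\chi,!}\isom\Av^\Theta_{\chi,*}$ — I would rerun the computation with $*$-pushforwards; this is legitimate because $\tdel^\Theta_\chi$ is clean (Cor.~\ref{c:clean}), so $\tdel^\Theta_\chi\stackrel{U}{\boxtimes}\calF$ is at once a $!$- and a $*$-extension from $p_1^{-1}(\tilFl^\Theta_{e})$, while along the two non-proper directions of $\tilm$ the $!$- and $*$-pushforwards of the sheaves at hand agree: along $\tilFl_{e}\cong H$ because $\tdel$ is the bi-clean free-monodromic unit (a feature of the completion construction of Appendix~\ref{a:compmono}), and along $U^-_\Theta$ by the cleanness of averaging the $U$-equivariant category against the nondegenerate character $\chi$ — the mechanism underlying the clean extension of Lemma~\ref{l:mw}. (Alternatively, the $*$-statement can be deduced from the $!$-statement for the opposite character $\psi^{-1}$ by Verdier duality, since $\DD\tdel^\Theta_\chi$ is the clean Whittaker sheaf for $\psi^{-1}$ and $\DD$ intertwines $\conv{U}$ with its $*$-variant when one convolves with a clean object on the left.)

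The main obstacle will be the faithful identification of $\tilm$ on the support with the composite $a^-\circ(\id\times m_H)$ — choosing the section that trivialises the $U$-action and tracking how $\tdel^\Theta_\chi$, the residual torus factor, and the right $H$-monodromy of $\calF$ interact — together with the bookkeeping of all shifts and Tate twists against the normalisations of $\tdel$, $\tdel^\Theta_\chi$ and $\tilm_\dagger$. The second delicate point is the $!=*$ matching in the pushforward for the $*$-statement: one must make precise which cleanness inputs are in force (bi-cleanness of the free-monodromic unit, and cleanness of $\chi$-averaging on the $U$-equivariant category) and verify that it is the full $U$-equivariance of $\calF$, not just its $U^\Theta$-equivariance, that makes the latter work.
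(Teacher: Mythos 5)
Your handling of the $!$-case is essentially the paper's: you identify $\tdel^\Theta_\chi$ with the clean extension of $\chi^*\AS_\psi[\lTh](\lTh/2)\boxtimes\tdel$ from $U^-_\Theta\times H\cong\tilFl^\Theta_{e}$, factor $\tilm$ on the support through the $H$-action followed by $a^-$, and invoke Lemma \ref{l:actionF}. The gap is in the $*$-case, which is the substantive half of the statement, since $\Av^\Theta_{\chi,!}\cong\Av^\Theta_{\chi,*}$ is precisely what is to be proved. You justify ``rerunning the computation with $*$-pushforwards'' by asserting that along the $U^-_\Theta$-direction the $!$- and $*$-pushforwards of the relevant sheaves agree ``by the cleanness of averaging against $\chi$''. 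But that assertion \emph{is} the identity $a^-_!\cong a^-_*$ on these objects, i.e.\ the claim $\Av^\Theta_{\chi,!}\cong\Av^\Theta_{\chi,*}$ itself; Lemma \ref{l:mw} describes subquotient categories and does not supply it. The missing mechanism (from \cite{BBM}, and used in the paper) is to compactify $U^-_\Theta$ via the open immersion $j:U^-_\Theta\hookrightarrow L_\Theta U/B=L_\Theta/(L_\Theta\cap B)$ into the \emph{proper} flag variety of the Levi, so that $a^-$ factors as $m\circ(j\times\id)$ with $m:L_\Theta U\twtimes{B}\tilFl\to\tilFl$ proper. Then Cor.\ref{c:clean} gives $(j\times\id)_!\cong(j\times\id)_*$ on $\chi^*\AS_\psi[\lTh](\lTh/2)\boxtimes\calF$, properness gives $m_!=m_*$, and hence $a^-_!\cong a^-_*$. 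Without this compactification your $*$-computation has nothing to run on.

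A second, smaller problem: your claim that $!$- and $*$-pushforwards agree along the $H$-direction ``because $\tdel$ is the bi-clean free-monodromic unit'' is false. For a rank-$r$ torus one has $\bR\Gamma_c(H,\tcL)\cong\Ql[-2r](-r)$ (this is what Lemma \ref{l:actionF} uses), whereas $\bR\Gamma(H,\tcL)$ is concentrated in degree $r$, so the $!$- and $*$-pushforwards of $\tdel\boxtimes\calF$ along $a_H$ differ by a shift and twist. Fortunately this claim is not needed: $\conv{U}$ is defined with $\tilm_!$, and $\Av^\Theta_{\chi,*}$ involves no pushforward along $H$, so in both cases the $H$-direction is collapsed with a $!$ via Lemma \ref{l:actionF}. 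Your parenthetical Verdier-duality alternative also does not close the gap as stated: duality relates $*$-averaging for $\psi$ to $!$-averaging for $\psi^{-1}$, not to $!$-averaging for $\psi$ itself, and dualizing the pro-object $\tdel^\Theta_\chi$ requires separate care.
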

\begin{proof} The argument is essentially the same as \cite[Theorem 1.5(1),Theorem 2.2]{BBM}. We only need to exhibit such a natural isomorphism between the restriction of the functors to $\scM$. Let $j$ be the open immersion of the big Bruhat cell in the flag variety of $L_\Theta$:
\begin{equation*}
j:U^-_\Theta\hookrightarrow L_\Theta U/B=L_\Theta/L_\Theta\cap B,
\end{equation*}
and let
\begin{equation*}
\tilj:U^-_\Theta\times H\hookrightarrow L_\Theta U/U\cong L_\Theta/U_\Theta.
\end{equation*}
By Corollary \ref{c:clean}, we can view the \fm perverse local system $\tdel^\Theta_{\chi}$ as either $\tilj_!$ or $\tilj_*$ of the perverse local system $\chi^*\AS_\psi[\lTh](\lTh/2)\boxtimes\tdel$ on $U^-_\Theta\times H$.

Consider the diagram
\begin{equation*}
\xymatrix{U^-_\Theta\times H\times\tilFl\ar[d]^{\id\times a_H}\ar[r]^{\tilj\times\id}& L_\Theta U\twtimes{U}\tilFl\ar[d]^{q_H}\ar[dr]^{\tilm}\\
U^-_\Theta\times\tilFl\ar[r]^{j\times\id} & L_\Theta U\twtimes{B}\tilFl\ar[r]^{m} & \tilFl}
\end{equation*}
where $a_H:H\times\tilFl\to\tilFl$ is the left action map. For $?=!$ or $*$, we have
\begin{eqnarray*}
\notag\tdel_\chi\conv{U}\calF &=& \tilm_!(\tilj\times\id)_?(\chi^*\AS_\psi[\lTh](\lTh/2)\boxtimes\tdel\boxtimes\calF)[r]\\
&=&m_!(j\times\id)_?(\chi^*\AS_\psi[\lTh](\lTh/2)\boxtimes a_{H,!}(\tdel\boxtimes\calF))[r]
\end{eqnarray*}

By Lemma \ref{l:actionF}, we have $a_{H,!}(\tdel\boxtimes\calF)\cong\calF[-r]$ (note that $\tdel$ is normalized to be $\tcL[r](r)$ on $H$). Hence
\begin{eqnarray*}
\tdel_\chi\conv{U}\calF&\cong& m_!(j\times\id)_!(\chi^*\AS_\psi[\lTh](\lTh/2)\boxtimes\calF)\\
&=& a^-_!(\chi^*\AS_\psi[\lTh](\lTh/2)\boxtimes\calF)=\Av^\Theta_{\chi,!}(\calF)
\end{eqnarray*}
which proves the (\ref{eq:samedef}) for $?=!$.

To prove the case $?=*$, we note that by Corollary \ref{c:clean}, $j_!(\chi^*\AS_\psi)\cong j_*(\chi^*\AS_\psi)=\delta^\Theta_\chi$. Notice also that $m$ is proper (hence $m_!=m_*$), therefore
\begin{eqnarray*}
\tdel_\chi\conv{U}\calF&\cong& m_!(j\times\id)_!(\chi^*\AS_\psi[\lTh](\lTh/2)\boxtimes\calF)\\
&\cong& m_!(\delta^\Theta_\chi\stackrel{B}{\boxtimes}\calF)=m_*(\delta^\Theta_\chi\stackrel{B}{\boxtimes}\calF)\\
&\cong& m_*(j\times\id)_*(\chi^*\AS_\psi[\lTh](\lTh/2)\boxtimes\calF)\\
&=& a^-_*(\chi^*\AS_\psi[\lTh](\lTh/2)\boxtimes\calF)=\Av^\Theta_{\chi,*}(\calF).
\end{eqnarray*}
\end{proof}

\begin{cor}\label{c:avexact}
The functor $\Av^\Theta_\chi$ is t-exact with respect to the perverse t-structures on $\hsM$ and $\hsM_\Theta$.
\end{cor}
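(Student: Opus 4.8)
The plan is to deduce t-exactness by combining two one-sided exactness statements with the identification already available. By Lemma~\ref{l:samedef} there is a natural isomorphism $\Av^\Theta_{\chi,!}\cong\Av^\Theta_{\chi,*}$, so it suffices to show that $\Av^\Theta_{\chi,!}$ is right t-exact and $\Av^\Theta_{\chi,*}$ is left t-exact for the perverse t-structures on $\hsM$ and $\hsM_\Theta$: a single functor which is simultaneously right and left t-exact is t-exact, and that is the assertion.

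For the one-sided statements I would first verify that the action morphism $a^-\colon U^-_\Theta\times\tilFl\to\tilFl$ of \eqref{eq:a-} is affine. On a finite-type piece $U^-_\Theta\times\tilFl_{\leq w}$ (whose image lies in some $\tilFl_{\leq w'}$) one factors $a^-=\mathrm{pr}_2\circ\theta$ with $\theta(v,x)=(v,vx)$; the map $\theta$ identifies $U^-_\Theta\times\tilFl_{\leq w}$ with the closed locus $\{(v,y)\in U^-_\Theta\times\tilFl_{\leq w'}\mid v^{-1}y\in\tilFl_{\leq w}\}$, hence is a closed immersion, while $\mathrm{pr}_2$ is affine because $U^-_\Theta\cong\AA^{\lTh}$ is affine; so $a^-$, and likewise its descent to the relevant $U^\Theta$-quotient stacks (the left $U^\Theta$-equivariance is respected since $U^\Theta$ is normalised by $U^-_\Theta$ inside $P_\Theta$), is affine. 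Since $\chi^*\AS_\psi[\lTh](\lTh/2)$ is a perverse local system on $U^-_\Theta$, the external product $\chi^*\AS_\psi[\lTh](\lTh/2)\boxtimes\calF$ is perverse on $U^-_\Theta\times\tilFl$ whenever $\calF$ is, so Artin's affine vanishing theorem (\cite[Th.\ 4.1.1 and 4.1.2]{BBD}, in the stacky form of \cite{LO}) shows $a^-_!$ is right t-exact and $a^-_*$ is left t-exact. As the forgetful functor $\scM\to D^b_m(U^\Theta\backslash\qw{\tilFl}{H})$ is t-exact, it follows that $\av^\Theta_{\chi,!}\circ\Forg$ is right t-exact and $\av^\Theta_{\chi,*}\circ\Forg$ is left t-exact on $\scM$.

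It then remains to transport these properties from $\scM$ to the completed category $\hsM$. For this I would invoke Prop.~\ref{compfun}, which produces the extensions $\Av^\Theta_{\chi,?}$, together with the description of the perverse t-structure on the completions around Remark~\ref{l:prot}: because $\Av^\Theta_\chi$ commutes with the pro-limits defining objects of $\hsM$ and the t-structure on $\hsM$ is inherited level-wise from that of $\scM$, one-sided t-exactness survives the extension. Applying Lemma~\ref{l:samedef} once more then gives that $\Av^\Theta_\chi$ is t-exact. I expect the only step requiring genuine care to be precisely this last transfer across the completion — i.e.\ checking that the pro-object formalism of Appendix~\ref{a:compmono} is compatible with one-sided perverse truncation — the affineness input and the finite-dimensional verification being routine.
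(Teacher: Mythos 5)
Your proposal is correct and follows essentially the same route as the paper: the paper's proof likewise observes that the action map $a^-$ is affine, so $\Av^\Theta_{\chi,!}$ is right t-exact and $\Av^\Theta_{\chi,*}$ is left t-exact by \cite[Th\'eor\`eme 4.1.1 and Corollaire 4.1.2]{BBD}, and then concludes via the identification $\Av^\Theta_{\chi,!}\cong\Av^\Theta_{\chi,*}$ of Lemma \ref{l:samedef}. Your additional verifications (the affineness of $a^-$ via the closed immersion $\theta$, and the level-wise transfer of one-sided exactness across the completion using Lemma \ref{l:prot}) are sound elaborations of steps the paper treats as immediate.
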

\begin{proof}
Since the action map $a^-$ in (\ref{eq:a-}) are affine, we conclude that $\Av^\Theta_{\chi,!}$ is right exact and $\Av^\Theta_{\chi,*}$ is left exact, by \cite[Th\'{e}or\`{e}me 4.1.1 and Corollaire 4.1.2]{BBD}. By Lemma \ref{l:samedef}, $\Av^\Theta_\chi$ is exact.
\end{proof}

\begin{lemma}\label{l:adj}
We have adjunctions
\begin{equation}\label{d:whadj}
\xymatrix{\hsM\ar[r] & \hsM_\Theta\ar@<1ex>[l]^{\Av^\Theta_*}\ar@<-2ex>[l]_{\Av^\Theta_!}^{\Av^\Theta_\chi}}
\end{equation}
\end{lemma}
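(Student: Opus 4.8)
The plan is to factor each of the three functors through the common ambient category $\hsN:=\hatD^b_{m}(U^\Theta\backslash\qw{\tilFl}{H})$ of $U^\Theta$-equivariant, right-$H$-monodromic (completed) complexes on $\tilFl$, and then to splice together the two elementary averaging--forgetting adjunctions. Write $\Forg^{+}\colon\hsM\to\hsN$ for the functor forgetting the left $U_\Theta$-equivariance and $\Forg^{-}\colon\hsM_\Theta\to\hsN$ for the functor forgetting the left $(U^-_\Theta,\chi)$-equivariance; then by the very definitions of the averaging functors one has $\Av^\Theta_{?}=\av^\Theta_{?}\circ\Forg^{-}$ for $?=!,*$ and $\Av^\Theta_{\chi}=\av^\Theta_{\chi}\circ\Forg^{+}$.

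First I would record the two basic adjunctions. Averaging with $!$ (resp. with $*$) along the left action of the pro-unipotent group $U_\Theta$, resp. of $(U^-_\Theta,\chi)$, is left (resp. right) adjoint to restriction of equivariance, so in the non-completed setting one has adjoint triples
\[
\av^\Theta_{!}\dashv\Forg^{+}\dashv\av^\Theta_{*},\qquad
\av^\Theta_{\chi,!}\dashv\Forg^{-}\dashv\av^\Theta_{\chi,*}.
\]
(Any discrepancy between these normalizations and those of the literal adjoints comes from the shift-twist $[\lTh](\lTh/2)$ in the definitions, is the same on the two sides, and cancels in the composites below; alternatively one checks the displayed pairs are adjoint directly.) By Prop.\ref{compfun} and Cor.\ref{c:functor} all of $\Forg^{\pm}$, $\av^\Theta_{?}$, $\av^\Theta_{\chi,?}$ extend to the completed categories, and with the description of morphisms between pro-objects set up in Appendix \ref{a:compmono} an adjoint pair both of whose members so extend remains an adjoint pair after completion; hence the two displayed triples hold verbatim between $\hsN$ and $\hsM$, resp. between $\hsN$ and $\hsM_\Theta$.

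The key input is Lemma \ref{l:samedef}: it identifies $\av^\Theta_{\chi,!}\cong\av^\Theta_{\chi,*}=\av^\Theta_{\chi}$, so $\av^\Theta_{\chi}$ is simultaneously the left and the right adjoint of $\Forg^{-}$. Granting this, the two adjunctions of the statement are formal consequences of composing adjunctions. For $\calG\in\hsM_\Theta$ and $\calF\in\hsM$,
\[
\Hom_{\hsM}(\Av^\Theta_{!}\calG,\calF)\cong\Hom_{\hsN}(\Forg^{-}\calG,\Forg^{+}\calF)\cong\Hom_{\hsM_\Theta}(\calG,\Av^\Theta_{\chi}\calF),
\]
the first isomorphism because $\av^\Theta_{!}$ is left adjoint to $\Forg^{+}$ and the second because $\av^\Theta_{\chi}\cong\av^\Theta_{\chi,*}$ is right adjoint to $\Forg^{-}$; these isomorphisms are natural in both variables, so $\Av^\Theta_{!}\dashv\Av^\Theta_{\chi}$. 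Symmetrically,
\[
\Hom_{\hsM_\Theta}(\Av^\Theta_{\chi}\calF,\calG)\cong\Hom_{\hsN}(\Forg^{+}\calF,\Forg^{-}\calG)\cong\Hom_{\hsM}(\calF,\Av^\Theta_{*}\calG),
\]
now using that $\av^\Theta_{\chi}\cong\av^\Theta_{\chi,!}$ is left adjoint to $\Forg^{-}$ and that $\av^\Theta_{*}$ is right adjoint to $\Forg^{+}$, so $\Av^\Theta_{\chi}\dashv\Av^\Theta_{*}$. This gives the adjoint triple (\ref{d:whadj}).

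The only genuinely delicate point is the passage to the completed monodromic categories: one must know that $\Forg^{\pm}$ and the averaging functors are well-defined on the pro-completions and that the units and counits of the adjunctions survive, which is exactly what the formalism of Appendix \ref{a:compmono} (notably Prop.\ref{compfun} and Cor.\ref{c:functor}) is designed to supply. Everything else is the short composition-of-adjunctions computation above, the one nontrivial ingredient being the ambidexterity of $\av^\Theta_{\chi}$ furnished by Lemma \ref{l:samedef}.
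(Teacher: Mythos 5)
Your proof is correct and is essentially the paper's argument: the paper likewise factors all three functors through the intermediate category $D^b_m(U^\Theta\backslash\qw{\tilFl}{H})$ and splices the adjoint triples $\av^\Theta_!\dashv\Forg\dashv\av^\Theta_*$ and $\av^\Theta_{\chi,!}\dashv\Forg\dashv\av^\Theta_{\chi,*}$, invoking Prop.\ref{compfun} for the passage to pro-completions and (implicitly) Lemma \ref{l:samedef} for the ambidexterity of $\av^\Theta_\chi$. The only cosmetic difference is that you spell out the Hom-set computations and distinguish the two forgetful functors notationally, which the paper leaves to the reader.
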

\begin{proof}
By Proposition \ref{compfun}, it suffices to check the adjunctions for functors before completion. There we have the adjunctions
\begin{equation*}
\xymatrix{\scM\ar[r] & D^b_{m}(U^\Theta\backslash\qw{\tilFl}{H})\ar@<1ex>[l]^(.7){\av^\Theta_*}\ar@<-1ex>[l]_(.7){\av^\Theta_!}\ar@<1ex>[r]^(.7){\av^\Theta_{\chi,!}}\ar@<-1ex>[r]_(.7){\av^\Theta_{\chi,*}} & \scM_\Theta\ar[l]}
\end{equation*}
where the unlabeled functors are forgetful functors $\Forg$. The compositions give adjunction pairs $(\av^\Theta_!\circ\Forg,\av^\Theta_{\chi,*}\circ\Forg)$ and
$(\av^\Theta_{\chi,!}\circ\Forg,\av^\Theta_*\circ\Forg)$, i.e., $(\Av^\Theta_!,\Av^\Theta_\chi)$ and $(\Av^\Theta_\chi,\Av^\Theta_*)$.
\end{proof}

\begin{lemma}\label{l:killnonmin}
For $w\notin[\coW]$, we have $\Av^\Theta_\chi(\IC_w)=0$.
\end{lemma}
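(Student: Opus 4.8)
The plan is to exploit an equivariance of $\IC_w$ that is incompatible with the Whittaker character. Since $w\notin[\coW]$, there is a simple reflection $s\in\Theta$ with $\ell(sw)<\ell(w)$, hence $sw\leq w$, so the Schubert variety $\Fl_{\leq w}$ is stable under left multiplication by the minimal parabolic $P_s$ (because $P_s\cdot\overline{BwB/B}=\overline{BwB/B}\cup\overline{BswB/B}=\overline{BwB/B}$); consequently $\tilFl_{\leq w}=\pi^{-1}(\Fl_{\leq w})$ is left $P_s$-stable as well. I would first argue that $\IC_w$, being the intersection cohomology complex of the left-$P_s$-stable variety $\Fl_{\leq w}$, is $P_s$-equivariant: the two smooth maps $P_s\times\Fl_{\leq w}\to\Fl_{\leq w}$ (action and projection) pull $\IC_w$ back to the intersection cohomology complex of $P_s\times\Fl_{\leq w}$, hence give an isomorphism between the two pullbacks, and the cocycle identity is checked over a dense open smooth locus where all complexes in sight are (shifted) constant sheaves. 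Pulling back along the left-$P_s$-equivariant $H$-torsor $\pi\colon\tilFl\to\Fl$, the same holds for $\pidag\IC_w$; in particular $\pidag\IC_w$ is equivariant under left translation by the one-dimensional root subgroup $U^-_s$, with trivial character. Note $U^-_s\subset U^-_\Theta$ since $s\in\Theta$.

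Next I would compute $\Av^\Theta_\chi(\IC_w)$ using $\Av^\Theta_\chi=\Av^\Theta_{\chi,!}$ (Lemma \ref{l:samedef}), i.e.\ $\Av^\Theta_\chi(\IC_w)=a^-_!\bigl(\chi^*\AS_\psi[\lTh](\lTh/2)\boxtimes\IC_w\bigr)$ for the left-action map $a^-\colon U^-_\Theta\times\tilFl\to\tilFl$. Let $U^-_s$ act freely on $U^-_\Theta\times\tilFl$ by $u\cdot(v,x)=(vu^{-1},ux)$. Then $a^-$ is constant on the orbits, so $a^-=\bar a\circ q$ where $q\colon U^-_\Theta\times\tilFl\to(U^-_\Theta\times\tilFl)/U^-_s$ is the quotient, a Zariski-locally trivial $U^-_s\cong\GG_a$-torsor. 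Since $\chi$ is an additive character of $U^-_\Theta$, under the translation $v\mapsto vu^{-1}$ the local system $\chi^*\AS_\psi$ acquires the $U^-_s$-equivariance structure given by the character sheaf $(-\chi|_{U^-_s})^*\AS_\psi$; combined with the left $U^-_s$-equivariance of $\pidag\IC_w$, this makes $\chi^*\AS_\psi[\lTh](\lTh/2)\boxtimes\IC_w$ equivariant, for the action $u\cdot(v,x)=(vu^{-1},ux)$, against $(-\chi|_{U^-_s})^*\AS_\psi$. This character sheaf is nontrivial: by the definition of $\chi$ in \S\ref{ss:Whit} and since $s\in\Theta$, the restriction $\chi|_{U^-_s}$ is the fixed isomorphism $U^-_s\isom\GG_a$. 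Finally, $q_!$ annihilates any complex equivariant against a nontrivial additive character of $U^-_s\cong\GG_a$ (\'etale-locally over the base it is $\bR\Gamma_c(\GG_a,\AS_{\psi'})\otimes(-)=0$), so $\Av^\Theta_\chi(\IC_w)=\bar a_!q_!\bigl(\chi^*\AS_\psi[\lTh](\lTh/2)\boxtimes\IC_w\bigr)=0$.

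I do not expect a serious obstacle here; the whole content is the clash between the Whittaker character and an equivariance of $\IC_w$. The two points deserving care are the $P_s$-equivariance of $\IC_w$ (handled, as above, via the two smooth maps $P_s\times\Fl_{\leq w}\to\Fl_{\leq w}$ and uniqueness of the IC extension) and the verification that the twisted action $u\cdot(v,x)=(vu^{-1},ux)$ turns $\chi^*\AS_\psi\boxtimes\IC_w$ into something equivariant against a \emph{nontrivial} character, whose only input is that $\chi$ is a homomorphism with $\chi|_{U^-_s}\neq 0$. As an alternative I could avoid the equivariance discussion by writing $\IC_w$ as a direct summand of $\IC_s\conv{B}\IC_{sw}$ (Prop.\ref{p:icdecomp}, since $\ell(s\cdot sw)=\ell(w)$) and $\Av^\Theta_\chi$ as $\tdel^\Theta_\chi\conv{U}(-)$ (Lemma \ref{l:samedef}), reducing to $\tdel^s_\chi\conv{U}\IC_s=0$, i.e.\ the vanishing of the rank-one Whittaker average of the constant sheaf on $\PP^1$; but the direct averaging computation seems cleanest.
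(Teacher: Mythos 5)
Your proof is correct, and the key idea is the same one the paper uses: $w\notin[\coW]$ gives a simple $s\in\Theta$ with $\ell(sw)<\ell(w)$, whence $\IC_w$ is left $P_s$-equivariant (in particular left $U^-_s$-equivariant with trivial character), and this clashes with the nontriviality of $\chi|_{U^-_s}$. What differs is the packaging of the vanishing. The paper rewrites $\Av^\Theta_\chi(\IC_w)=\tdel^\Theta_\chi\conv{U}\IC_w$, factors the convolution through $\pi_s\colon L_\Theta U/U\to L_\Theta U/P_s$ using the $P_s$-equivariance of $\IC_w$, and reduces to showing $\pi_{s,!}(\tdel^\Theta_\chi)=0$, which it deduces from the vanishing of the whole category $D^b_m((U^-_\Theta,\chi)\backslash L_\Theta/(P_s\cap L_\Theta))$ via a stratum-by-stratum stabilizer check (as in Lemma \ref{l:mw}). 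You instead stay with $\Av^\Theta_\chi=\Av^\Theta_{\chi,!}=a^-_!(\chi^*\AS_\psi[\lTh](\lTh/2)\boxtimes(-))$ and factor the action map $a^-$ itself through the quotient $q$ by the free $U^-_s$-action $u\cdot(v,x)=(vu^{-1},ux)$; the input sheaf is $(U^-_s,(-\chi|_{U^-_s}))$-equivariant for this twisted action, and since $\chi|_{U^-_s}$ is nontrivial, $q_!$ already kills it fibrewise. Both arguments bottom out at $\bR\Gamma_c(\GG_a,\AS_{\psi'})=0$; yours avoids introducing the auxiliary partial flag variety and the categorical vanishing statement, at the cost of carrying out the equivariance bookkeeping on $U^-_\Theta\times\tilFl$ explicitly (your checks there — that $q$ is a locally trivial $\GG_a$-torsor and that \eqref{eq:Nab} guarantees $\chi|_{U^-_s}\neq0$ — are all sound). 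The paper's form has the minor advantage of isolating the vanishing as a statement about $\pi_{s,!}(\tdel^\Theta_\chi)$ on a small partial flag variety of $L_\Theta$, which is reused implicitly elsewhere; your version is slightly more self-contained. Your proposed alternative via $\IC_w$ being a summand of $\IC_s\conv{B}\IC_{sw}$ would also work, but note it ultimately collapses to the same $P_s$-factoring, since $\IC_s$ is pulled back from $G/P_s$.
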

\begin{proof}
If $w\notin[\coW]$, then there exists a simple reflection $s\in\Theta$ such that $\ell(w)=\ell(sw)+1$. Therefore $\IC_w$ is $P_s$-equivariant with respect to the left action of $P_s$ on $\tilFl$. Let $\pi_s: L_\Theta/U_\Theta=L_\Theta U/U\to L_\Theta U/P_s=L_\Theta/L_\Theta\cap P_s$ be
the natural projection. Then by lemma \ref{l:samedef},
\begin{equation*}
\Av^{\Theta}_\chi(\IC_w)=\tdel^\Theta_\chi\conv{U}\IC_w=\pi_{s,!}(\tdel^\Theta_\chi)\conv{P_s}\IC_w
\end{equation*}
where the convolution $\conv{P_s}: D^b_{m}(G/P_s)\times D^b_{m}(P_s\backslash\tilFl)\to D^b_{m}(\tilFl)$ is defined in a similar way as $\conv{B}$. Now $\pi_{s,!}(\tdel^\Theta_\chi)\in D^b_{m}((U^-_\Theta,\chi)\backslash L_\Theta/P_s\cap L_\Theta)$ and we claim this category is zero. Just as in the proof of Lemma \ref{l:mw}, it suffices to show that the stabilizer of any $v(P_s\cap L_\Theta)/(P_s\cap L_\Theta)$ ($v\in W_\Theta$) under $U^-_\Theta$ contains $U^-_t$ for some $t\in\Theta$ (on which $\chi$ is nontrivial). In fact, if $v\neq e$, then $\ell(tv)<\ell(v)$ for some $t\in\Theta$ and the stabilizer of $v(P_s\cap L_\Theta)/(P_s\cap L_\Theta)$ contains $N^-_t$; if $v=e$, the stabilizer of $(P_s\cap L_\Theta)/(P_s\cap L_\Theta)$ contains $U^-_s$. Therefore $\pi_{s,!}(\tdel^\Theta_\chi)=0$ and $\Av^{\Theta}_\chi(\IC_w)=0$. This completes the proof.
\end{proof}

Let $\scQ$ be the category of left $U$-equivariant mixed perverse sheaves on $\Fl$. Let $\scQ_{+}$ be the Serre subcategory of $\scQ$ generated by twists of $\IC_w, w>e$.

\begin{lemma}\label{l:socle} For each $w\in W$,
\begin{enumerate}
\item There is an injection $\delta(\ell(w)/2)\hookrightarrow\Delta_w$ in $\scQ$ whose cokernel is contained in $\scQ_{+}$, and $\omega\delta$ is the only semisimple sub-object of $\omega\Delta_w$;
\item Dually, there is a surjection $\nabla_w\twoheadrightarrow\delta(-\ell(w)/2)$ in $\scQ$ whose kernel is contained in $\scQ_{+}$, and $\omega\delta$ is the only semisimple quotient object of $\omega\nabla_w$.
\end{enumerate}
\end{lemma}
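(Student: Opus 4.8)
The plan is to prove (1); part (2) then follows by applying Verdier duality $\DD$, which interchanges $\Delta_w$ with $\nabla_w$, fixes each $\IC_v$, sends $\delta(n/2)$ to $\delta(-n/2)$, interchanges socle with head, commutes with $\omega$, and preserves $\scQ_{+}$ (as $\DD\IC_v=\IC_v$).

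To prove (1) I would first record the standard facts that $\Delta_w=i_{w,!}\Ql[\ell(w)](\ell(w)/2)$ is perverse (the embedding $i_w\colon\Fl_w\cong\AA^{\ell(w)}\hookrightarrow\Fl$ is affine), that it is indecomposable with simple head $\IC_w$ (a simple quotient $S$ restricts to $\Delta_w|_{\Fl_w}$ either isomorphically, forcing $S=\IC_w(?)$, or to $0$, in which case $\Hom(\Delta_w,S)=\Hom(i_{<w}^{*}\Delta_w,\,\cdot\,)=0$ since $i_{<w}^{*}\Delta_w=0$), and hence that $\Hom_{\scQ}(\IC_w(?),\Delta_w)=0$ and $\Hom_{\overline{k}}(\omega\IC_w,\omega\Delta_w)=0$ for $w\neq e$ (for $w=e$, $\Delta_e=\delta$ and the statement is trivial). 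Therefore every simple subobject of $\Delta_w$, resp. of $\omega\Delta_w$, is supported on $\Fl_{<w}$, i.e. is a twist of $\IC_v$, resp. $\omega\IC_v$, for some $v<w$.

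The main step is to show $\Hom_{\scQ}(\IC_v(?),\Delta_w)=0$ and $\Hom_{\overline{k}}(\omega\IC_v,\omega\Delta_w)=0$ for every $e\neq v<w$. By adjunction for the closed embedding $\Fl_{\leq v}\hookrightarrow\Fl_{\leq w}$ one has $\Hom(\IC_v(?),\Delta_w)=\Hom(\IC_v^{\leq v}(?),i_{\leq v}^{!}\Delta_w)$, where $\IC_v^{\leq v}$ is the (simple, full-support) IC-sheaf of $\Fl_{\leq v}$; it thus suffices to show that any such map vanishes after restriction to the open stratum $\Fl_v$, for then its image is supported on $\Fl_{<v}$ and must be zero. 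To compute $i_v^{!}\Delta_w$ I invoke Lemma \ref{l:contract}: the $(v\mu)(\GG_m)$-invariant open neighbourhood $vC\cap\Fl_{\leq w}$ of $vB/B\in\Fl_v$ contracts to $vB/B$, and $\Delta_w$ restricted to it is the extension by zero of $\Ql[\ell(w)](\ell(w)/2)$ from the open subset $\Fl_w\cap vC$. The Verdier dual of the contraction theorem \cite{Spr} then gives
\begin{equation*}
i_v^{!}\Delta_w\;\cong\;R\Gamma_c(\Fl_w\cap vC,\Ql)[\ell(w)](\ell(w)/2).
\end{equation*}
Since $\Fl_w\cap vC$ is the complement of a hypersurface in $\Fl_w\cong\AA^{\ell(w)}$, it is (when nonempty) an affine variety of dimension $\ell(w)$, so $H^i_c(\Fl_w\cap vC)=0$ for $i<\ell(w)$ by Artin vanishing. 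Hence $\calH^{-\ell(v)}(i_v^{!}\Delta_w)=H^{\ell(w)-\ell(v)}_c(\Fl_w\cap vC)(\ell(w)/2)=0$ for $v\neq e$, and this group is exactly what detects $\Hom(\Ql[\ell(v)](?),i_v^{!}\Delta_w)$; so $\Hom(\IC_v(?),\Delta_w)=0$, and the same argument over $\overline{k}$ gives $\Hom_{\overline{k}}(\omega\IC_v,\omega\Delta_w)=0$.

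Combining, the (nonzero) socle of $\Delta_w$, resp. of $\omega\Delta_w$, consists solely of twists of $\delta=\IC_e$, resp. of $\omega\delta$. Finally, $\Hom(\delta(n/2),\Delta_w)$ is the weight-$n$ part of $\calH^{0}(i_e^{!}\Delta_w)=H^{\ell(w)}_c(\Fl_w\cap C,\Ql)(\ell(w)/2)$, and one checks that $H^{\ell(w)}_c(\Fl_w\cap C,\Ql)$ is one-dimensional of weight $0$, equivalently that $[\Delta_w:\IC_e]=1$ (classical: in finite type this follows from the Kazhdan--Lusztig inversion formula and the smoothness of $\Fl=\overline{\Fl_{w_0}}$, and the Kac--Moody case is handled by the same contraction argument or by passing to a parabolic quotient). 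This yields the canonical injection $\delta(\ell(w)/2)\hookrightarrow\Delta_w$; and since $\delta$ occurs with total multiplicity one in $\Delta_w$, the cokernel has no $\delta(?)$ among its composition factors, i.e. lies in $\scQ_{+}$, while the statement about $\omega\Delta_w$ is the non-mixed shadow. I expect the only real obstacle to be this last point — the one-dimensionality of $H^{\ell(w)}_c(\Fl_w\cap C)$, i.e. that the skyscraper $\delta$ appears exactly once in the composition series of $\Delta_w$; the rest is the clean Artin-vanishing argument above.
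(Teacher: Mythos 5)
Your reduction (handle $v=w$ by the simple-head argument, then try to show $\Hom(\IC_v(?),\Delta_w)=0$ for $e\neq v<w$ by restricting to $\Fl_v$) is a genuinely different route from the paper's, which instead inducts on $\ell(w)$ via the $\PP^1$-fibration $\pi_s\colon\Fl\to G/P_s$ and the short exact sequence $0\to\Delta_{ws}(1/2)\to\Delta_w\to\pi_s^*\Delta_{\barw}[1](1/2)\to0$. Unfortunately, the key contraction step has a shift error that cannot be fixed by Artin vanishing.

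The contraction theorem applied to the fixed point $vB/B$ of $(v\mu)(\GG_m)$ computes the \emph{point} costalk: $i_{vB/B}^{!}\Delta_w\cong R\Gamma_c(\Fl_w\cap vC,\Ql)[\ell(w)](\ell(w)/2)$. But the group detecting $\Hom_{\Fl_v}(\Ql[\ell(v)](?),\,i_v^{!}\Delta_w)$ is $\calH^{-\ell(v)}$ of the stalk at $vB/B$ of the \emph{constant} complex $i_v^{!}\Delta_w$ on the stratum $\Fl_v\cong\AA^{\ell(v)}$. Since $i_{vB/B}^{!}=j_x^{!}\circ i_v^{!}$ with $j_x\colon\{vB/B\}\hookrightarrow\Fl_v$, and $j_x^{!}$ of a constant complex on $\AA^{\ell(v)}$ is the stalk shifted by $[-2\ell(v)](-\ell(v))$, the stalk of $i_v^{!}\Delta_w$ equals $i_{vB/B}^{!}\Delta_w[2\ell(v)](\ell(v))$, and therefore
\[
\calH^{-\ell(v)}\bigl((i_v^{!}\Delta_w)_{vB/B}\bigr)\;\cong\;H^{\ell(w)+\ell(v)}_c(\Fl_w\cap vC)(\ell(w)/2+\ell(v)),
\]
not $H^{\ell(w)-\ell(v)}_c$. (Sanity check for $v=w$: this gives $H^{2\ell(w)}_c(\Fl_w\cap wC)\neq0$, correctly reflecting that $i_w^{!}\Delta_w=\Ql[\ell(w)](\ell(w)/2)$.) In degree $\ell(w)+\ell(v)$, Artin vanishing ($H^i_c=0$ for $i<\ell(w)$) says nothing; by Poincar\'e duality you would need $H^{\ell(w)-\ell(v)}(\Fl_w\cap vC)=0$ for all $0<\ell(v)<\ell(w)$, i.e.\ $H^j(\Fl_w\cap vC)=0$ for $0<j<\ell(w)$, which is not true in general for the complement of a hypersurface in $\AA^{\ell(w)}$. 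Moreover, $\Hom(\IC_v^{\leq v}(?),i_{\leq v}^{!}\Delta_w)\hookrightarrow\Hom_{\Fl_v}(\Ql[\ell(v)](?),i_v^{!}\Delta_w)$ is only an inclusion, so even exact control of the ambient group would overshoot. So the strategy of ``kill the ambient $H_c$ by Artin vanishing'' is not available here, and the gap is structural rather than cosmetic.

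You correctly flag the multiplicity-one statement as a remaining issue, but note that $H^{\ell(w)}_c(\Fl_w\cap C)$ being $1$-dimensional would only control the socle multiplicity $\dim\Hom(\omega\delta,\omega\Delta_w)$, not the composition multiplicity $[\Delta_w:\IC_e]$; the two are different invariants and ``equivalently'' is not justified. The paper's inductive argument via $\pi_s$ handles the socle claim and the cokernel-in-$\scQ_{+}$ claim simultaneously without needing any Deodhar-type cohomology computation, and it is the cleaner path; if you want a direct proof in the spirit of your proposal, you would need a substantially stronger input about the geometry of $\Fl_w\cap vC$ than Artin vanishing provides.
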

\begin{proof} The proof is essentially borrowed from the proof \cite[\S2.1]{BBM}, where the finite flag variety was treated. We prove (1) and (2) follows by Verdier duality.

We do induction on $\ell(w)$. For $w=e$ this is clear. Suppose $\ell(w)>0$ then $\ell(w)=\ell(ws)+1$ for some simple reflection $s$. Consider the $\PP^1$-fibration $\pi_s:\Fl\to G/P_s$. Then we have an exact sequence in $\scQ$
\begin{equation*}
0\to\Delta_{ws}(1/2)\to\Delta_w\to \pi_s^*\Delta_{\barw}[1](1/2)\to 0
\end{equation*}
where $\Delta_{\barw}$ is the standard sheaf on $G/P_s$ corresponding to the $B$-orbit $BwP_s/P_s$. By inductive hypothesis, we have an injection $\delta(\ell(ws)/2)\hookrightarrow\Delta_{ws}$ whose cokernel is in $\scQ_{+}$. Note that the simple constituents of $\pi_s^*\Delta_{\barw}[1](1/2)$ are twists of $\IC_v=\pi_s^*\IC_{\barv}[1](1/2)$ for some $v\in\{\coW\}$, hence $\pi_s^*\Delta_{\barw}[1](1/2)\in\scQ_{+}$. This proves the first statement of (1).

Let $\omega\IC_v\hookrightarrow\omega\Delta_w$ be a simple sub-object. Consider the image of $\omega\IC_v$ in $\omega \pi_s^*\Delta_{\barw}[1]$. If this image is nonzero, then $v\in\{\coW\}$ and $\IC_v=\pi_s^*\IC_{\barv}[1](1/2)$. We have
\begin{eqnarray*}
\Hom_{\scQ}(\IC_v,\Delta_w)&=&\Hom_{\scQ}(\pi_s^*\IC_{\barv}[1](1/2),\Delta_w)\\
&\cong&\Hom_{G/P_s}(\IC_{\barv}[1](1/2),\pi_{s,*}\Delta_w)\\
&\cong&\Hom_{G/P_s}(\IC_{\barv}[1](1/2),\Delta_{\barw}[-1](-1/2))=0
\end{eqnarray*}
Here we use the fact that $\pi_s$ is proper and $\Fl_{w}\to UwP_s/P_s$ is a trivial $\AA^1$-bundle to conclude $\pi_{s,*}\Delta_w=\pi_{s,!}\Delta_w\cong\Delta_{\barw}[-1](-1/2)$. The above vanishing means that $\omega\IC_v$ has zero image in $\omega\pi_s^*\Delta_{\barw}[1]$ and hence lies in $\omega\Delta_{ws}$. We then use inductive hypothesis for $\Delta_{ws}$ to conclude that $v$ must be $e$. Similarly, any semisimple sub-object of $\omega\Delta_w$ must also lie in $\omega\Delta_{ws}$. Hence such a semisimple sub-object can only be $\omega\delta$, by inductive hypothesis.
\end{proof}

\begin{lemma}\label{l:Avst}
\begin{enumerate}
\item []
\item For $u\in W_\Theta$, we have
\begin{eqnarray}\label{eq:avdelu}
\Av^\Theta_\chi(\tDel_u)\cong\tdel^\Theta_{\chi}(\ell(u)/2);\\
\label{eq:avnabu}
\Av^\Theta_\chi(\tnab_u)\cong\tdel^\Theta_{\chi}(-\ell(u)/2).
\end{eqnarray}
\item For $w\in W$, write $w=uv$ where $u\in W_\Theta$ and $v\in[\coW]$, then
\begin{eqnarray}\label{eq:avdel}
\Av^\Theta_\chi(\tDel_w)\cong\tDel_{\barv,\chi}(\ell(u)/2);\\
\label{eq:avnab}
\Av^\Theta_\chi(\tnab_w)\cong\tnab_{\barv,\chi}(-\ell(u)/2).
\end{eqnarray}
\end{enumerate}
\end{lemma}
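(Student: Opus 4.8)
The plan is to reduce all four isomorphisms, via the associativity of the convolution, to two "atomic" computations: one for a single simple reflection inside $\Theta$, and one for a minimal coset representative. Throughout I use the identification $\Av^\Theta_\chi(-)\cong\tdel^\Theta_\chi\conv{U}(-)$ of Lemma \ref{l:samedef}, which together with the monoidal structure on $(\hsM,\conv{U})$ and the $(\hsM,\conv{U})$-action on $\hsM_\Theta$ (constructed in \S\ref{ss:Mconv}) shows that $\Av^\Theta_\chi$ commutes with right convolution: $\Av^\Theta_\chi(\calF\conv{U}\calG)\cong\Av^\Theta_\chi(\calF)\conv{U}\calG$.

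\textbf{Step 1 (reductions).} For $w=uv$ with $u\in W_\Theta$, $v\in[\coW]$ we have $\ell(w)=\ell(u)+\ell(v)$, so Lemma \ref{l:convst} gives $\tDel_w\cong\tDel_u\conv{U}\tDel_v$ and $\tnab_w\cong\tnab_u\conv{U}\tnab_v$. Hence $\Av^\Theta_\chi(\tDel_w)\cong\Av^\Theta_\chi(\tDel_u)\conv{U}\tDel_v$, so assertion (2) follows once we know assertion (1) for $u$ and the special case $u=e$ of assertion (2), namely $\Av^\Theta_\chi(\tDel_v)\cong\tDel_{\barv,\chi}$ for $v\in[\coW]$. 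Similarly, writing $u=s_1\cdots s_k$ reduced with $s_i\in\Theta$ and using $\tDel_u\cong\tDel_{s_1}\conv{U}\cdots\conv{U}\tDel_{s_k}$, an induction on $\ell(u)$ reduces assertion (1) to the single case $u=s$ a simple reflection in $\Theta$, i.e.\ to $\Av^\Theta_\chi(\tDel_s)\cong\tdel^\Theta_\chi(1/2)$. The costandard versions are obtained the same way; moreover once $\Av^\Theta_\chi(\tDel_s)\cong\tdel^\Theta_\chi(1/2)$ is known, convolving the identity $\tDel_s\conv{U}\tnab_s\cong\tdel$ with $\tdel^\Theta_\chi$ forces $\Av^\Theta_\chi(\tnab_s)\cong\tdel^\Theta_\chi(-1/2)$, and the whole costandard statement follows.

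\textbf{Step 2 (minimal coset case).} For $v\in[\coW]$ I would compute directly using $\Av^\Theta_\chi\cong\Av^\Theta_{\chi,!}\cong a^-_!(\chi^*\AS_\psi[\lTh](\lTh/2)\boxtimes(-))$. The geometric input is the analysis in the proof of Lemma \ref{l:mw}: since $v\in[\coW]$, the group $U^-_\Theta$ acts freely on $\Fl^\Theta_v$ with the $B$-orbit $\Fl_v\cong\AA^{\ell(v)}$ as a transversal, so the action map restricts to an isomorphism $b\colon U^-_\Theta\times\tilFl_v\isom\tilFl^\Theta_v$ onto the open $U^\Theta U^-_\Theta$-orbit. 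Writing $\tDel_v=\tili_{v,!}\tcL_v$ and using base change, $a^-_!$ applied to the K\"unneth product $\chi^*\AS_\psi[\lTh](\lTh/2)\boxtimes\tcL_v$ becomes the $!$-extension from $\tilFl^\Theta_v$ of $b_!(\chi^*\AS_\psi[\lTh](\lTh/2)\boxtimes\tcL_v)$; comparing normalizations through $\piddag$ (both sides have $\piddag$ equal to $\calL_{\barv,\chi}$, using $\ell(\barv)=\ell(v)$ and $\piddag\tcL_v\cong\Ql[\ell(v)](\ell(v)/2)$) identifies the latter with $\tcL_{\barv,\chi}$, whence $\Av^\Theta_\chi(\tDel_v)\cong\tDel_{\barv,\chi}$. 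The costandard version is identical with $a^-_*$ in place of $a^-_!$.

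\textbf{Step 3 (simple reflection case) --- the crux.} Here $s\in\Theta$ and I must show $\Av^\Theta_\chi(\tDel_s)\cong\tdel^\Theta_\chi(1/2)$. By Corollary \ref{c:avexact} this object is \fm perverse. Since $s\in W_\Theta$, the support of $\tdel^\Theta_\chi\conv{U}\tDel_s$ lies over the closed substack $P_\Theta\backslash G\ni$ the base point of $\PFl$, so $\Av^\Theta_\chi(\tDel_s)\in\hsP_{\Theta,\overline{e}}$. On this minimal stratum every standard, costandard and simple \fm object is cleanly isomorphic to $\tdel^\Theta_\chi$ (Corollary \ref{c:clean}), and the category is equivalent to finitely generated modules over the relevant monodromy ring. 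I would argue that $\Av^\Theta_\chi(\tDel_s)$ is free of rank one over that ring --- e.g.\ because $\conv{U}\tnab_s$ is an autoequivalence of $\hsM_{\Theta,\overline{e}}$ carrying $\Av^\Theta_\chi(\tDel_s)$ to $\tdel^\Theta_\chi$, or by the monodromy bookkeeping of Remark \ref{r:midS} --- so that $\Av^\Theta_\chi(\tDel_s)\cong\tdel^\Theta_\chi(a)$ for some shift-twist $a$; t-exactness (Corollary \ref{c:avexact}) removes the shift. To pin $a=(1/2)$ I would reduce to the rank-one situation, where $\tdel^\Theta_\chi\conv{U}\tDel_s$ only involves the minimal parabolic $P_s$ (equivalently, average in stages through $U^-_s\subset U^-_\Theta$), and invoke the explicit $\SL(2)$-computation of Appendix \ref{a:SL2}; concretely the single relevant stalk reduces to $H^\bullet_c(\GG_m,\AS_\psi)\cong\Ql$ in degree one of weight $0$, which gives exactly the twist $(1/2)$. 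The main obstacle is making this last reduction to rank one precise inside the \fm formalism of Appendix \ref{a:compmono} and tracking all shifts and Tate twists; the transversality statements used in Step 2 are standard but also deserve a careful reference to the proof of Lemma \ref{l:mw}.
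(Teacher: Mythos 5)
Your Step 1 reductions and your Step 2 computation for $v\in[\coW]$ are both correct and essentially coincide with what the paper does. The divergence is in how you handle assertion (1): you try to reduce to a single simple reflection $s\in\Theta$ and then pin things down via an $\SL(2)$/Whittaker computation, whereas the paper proves (1) for arbitrary $u\in W_\Theta$ in one stroke. Specifically, the paper applies $\piddag$, observes via the last sentence of Remark \ref{r:projresmix} that it suffices to show $\piddag\Av^\Theta_\chi(\tDel_u)\cong\delta^\Theta_\chi(\ell(u)/2)$, then computes $\piddag\Av^\Theta_\chi(\tDel_u)=\tdel^\Theta_\chi\conv{U}\Delta_u$ and uses the socle structure from Lemma \ref{l:socle}(1) --- there is an injection $\delta(\ell(u)/2)\hookrightarrow\Delta_u$ whose cokernel lies in $\scQ_+$ --- together with the fact (extracted from the argument of Lemma \ref{l:killnonmin}) that $\tdel^\Theta_\chi\conv{U}(-)$ kills $\scQ_+$. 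This immediately collapses $\tdel^\Theta_\chi\conv{U}\Delta_u$ to $\tdel^\Theta_\chi\conv{U}\delta(\ell(u)/2)\cong\delta^\Theta_\chi(\ell(u)/2)$. No $\SL(2)$ computation, no induction on simple reflections, and the rank-one and twist fall out simultaneously.

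On your Step 3: the gap you flag is real. Your first justification that $\Av^\Theta_\chi(\tDel_s)$ is free of rank one --- that $\conv{U}\tnab_s$ is an autoequivalence of $\hsM_{\Theta,\overline{e}}$ carrying $\Av^\Theta_\chi(\tDel_s)$ to $\tdel^\Theta_\chi$ --- does not close the argument as stated: an exact autoequivalence of $D^b(\hatS,\Frob)$ taking some object to a free rank-one module does not \emph{a priori} imply the preimage was free rank one unless you first pin down what the autoequivalence actually is (e.g.\ identify it as a shift-twist on the minimal stratum, which requires its own argument and is essentially as hard as what you are trying to prove). Your proposed $\SL(2)$ reduction for the twist is plausible but the key reduction step (averaging in stages through $U^-_s\subset U^-_\Theta$ inside the free-monodromic formalism) is exactly the kind of thing that needs to be spelled out, and you yourself identify this as the obstacle. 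If you want to pursue your route, the cleanest way to pin down both rank and twist is to apply $\piddag$ and invoke Remark \ref{r:projresmix}, at which point you will have reinvented the paper's argument in the $u=s$ special case and the induction on simple reflections becomes unnecessary.
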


\begin{proof}
We prove the statements about $\tDel_w$; the argument for $\tnab_w$ is similar. We first show that (1) implies (2). In fact, by Lemma \ref{l:convst}, $\tDel_w\cong\tDel_u\conv{U}\tDel_v$, therefore
\begin{equation*}
\Av^\Theta_\chi(\tDel_w)\cong\tdel^\Theta_\chi\conv{U}\tDel_u\conv{U}\tDel_v\cong\Av^\Theta_\chi(\tDel_u)\conv{U}\tDel_v.
\end{equation*}

Assuming (\ref{eq:avdelu}), we get
\begin{equation}\label{eq:wuv}
\Av^\Theta_\chi(\tDel_w)\cong\Av^\Theta_\chi(\tDel_u)\conv{U}\tDel_v\cong\tdel^\Theta_\chi(\ell(u)/2)\conv{U}\tDel_v=\Av^\Theta_\chi(\tDel_v)(\ell(u)/2).
\end{equation}
Since $v\in[\coW]$, the action map $a^-$ gives an isomorphism
\begin{equation*}
a^-:U^-_\Theta\times BvB/U\cong\tilFl^\Theta_{\barv}.
\end{equation*}
Therefore $\Av^\Theta_\chi(\tDel_v)\cong\tDel_{\barv,\chi}$ follows from the definition of $\Av^\Theta_\chi$. This, combined with (\ref{eq:wuv}), proves the isomorphism (\ref{eq:avdel}).

It remains to prove (1). By the last sentence in Remark \ref{r:projresmix}, it suffices to show that $\piddag\Av^\Theta_\chi(\tDel_u)\cong\delta^\Theta_\chi(\ell(u)/2)$. We have
\begin{equation*}
\piddag\Av^\Theta_\chi(\tDel_u)=\tdel^\Theta_\chi\conv{U}\piddag\tDel_u=\tdel^\Theta_\chi\conv{U}\Delta_u.
\end{equation*}
By Lemma \ref{l:socle}(1), there is an injection $\delta(\ell(u)/2)\hookrightarrow\Delta_w$ whose cokernel is in $\scQ_+$. By the argument of Lemma \ref{l:killnonmin}, $\tdel^\Theta_\chi\conv{U}(-)$ is zero on $\scQ_+$, hence
\begin{equation*}
\tdel^\Theta_\chi\conv{U}\Delta_u\xleftarrow{\sim}\tdel^\Theta_\chi\conv{U}\delta(\ell(u)/2)\cong\piddag\tdel^\Theta_\chi(\ell(u)/2)\cong\delta^\Theta_\chi(\ell(u)/2).
\end{equation*}
This completes the proof of the lemma.
\end{proof}

The following is an immediate consequence of Proposition \ref{l:Avst}.
\begin{cor}
If $\tcF\in\hsP$ is a free-monodromic tilting sheaf, then $\Av^\Theta_\chi\tcT$ is also a free-monodromic tilting sheaf.
\end{cor}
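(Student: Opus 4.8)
The plan is to deduce this immediately from Lemma~\ref{l:Avst} together with the exactness of $\Av^\Theta_\chi$ established in Cor.~\ref{c:avexact}. Recall that a free-monodromic tilting sheaf $\tcT\in\hsP$ is, by definition (Def.~\ref{def:fmt}), a free-monodromic perverse sheaf admitting both a filtration whose associated graded pieces are twists of free-monodromic standard sheaves $\tDel_w$ and a filtration whose associated graded pieces are twists of free-monodromic costandard sheaves $\tnab_w$; since $\tcT$ is supported on a single Schubert variety, these filtrations are finite.

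First I would apply $\Av^\Theta_\chi$ to a free-monodromic standard flag of $\tcT$. Since $\Av^\Theta_\chi$ is t-exact, it is in particular an exact functor of triangulated categories, so it carries the distinguished triangles realizing the filtration of $\tcT$ to distinguished triangles in $\hsM_\Theta$; thus $\Av^\Theta_\chi\tcT$ is obtained by successive extensions from the objects $\Av^\Theta_\chi(\tDel_w)(?)$. For $w=uv$ with $u\in W_\Theta$ and $v\in[\coW]$, equation~\eqref{eq:avdel} of Lemma~\ref{l:Avst} identifies $\Av^\Theta_\chi(\tDel_w)$ with $\tDel_{\barv,\chi}(\ell(u)/2)$. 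Hence $\Av^\Theta_\chi\tcT$ carries a free-monodromic standard flag in $\hsM_\Theta$. Running the same argument with a free-monodromic costandard flag of $\tcT$ and using~\eqref{eq:avnab} instead shows that $\Av^\Theta_\chi\tcT$ also carries a free-monodromic costandard flag.

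Finally, t-exactness of $\Av^\Theta_\chi$ and $\tcT\in\hsP$ give $\Av^\Theta_\chi\tcT\in\hsP_\Theta$, so $\Av^\Theta_\chi\tcT$ is a free-monodromic perverse sheaf with both a standard and a costandard flag, i.e.\ a free-monodromic tilting sheaf. I do not expect a genuine obstacle: all the real content sits in Lemma~\ref{l:Avst} and Cor.~\ref{c:avexact}. The only step needing a word of care is the bookkeeping that an exact functor sends a finite standard (resp.\ costandard) flag to a finite standard (resp.\ costandard) flag in the target Whittaker-monodromic category, which is automatic once the images of the generators are known; one should also keep in mind that some subquotients may collapse (via~\eqref{eq:avdelu}, \eqref{eq:avnabu}) to twists of $\tdel^\Theta_\chi$, which by Cor.~\ref{c:clean} is simultaneously $\tDel_{\overline{e},\chi}$ and $\tnab_{\overline{e},\chi}$.
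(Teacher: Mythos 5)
Your proof is correct and is exactly the argument the paper has in mind: the paper states the corollary as ``an immediate consequence of'' Lemma~\ref{l:Avst}, and your spelled-out version — push the $\tDel$-flag and $\tnab$-flag of $\tcT$ through the t-exact functor $\Av^\Theta_\chi$, identify the graded pieces via \eqref{eq:avdel}, \eqref{eq:avnab}, and note the collapsed pieces land on $\tdel^\Theta_\chi$ which by Cor.~\ref{c:clean} is both standard and costandard — is precisely that deduction.
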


\subsubsection{The object $\tcP_\Theta$} Define the object
\begin{eqnarray}\label{eq:Ptheta}
\tcP_\Theta&:=&\Av^\Theta_!(\tdel^\Theta_\chi).
\end{eqnarray}
Since $\tdel^\Theta_\chi$ is supported on $\tilFl_{\leq w_\Theta}=L_\Theta B/B$, $\tcP_\Theta$ is also supported on $\tilFl_{\leq w_\Theta}$; i.e., $\tcP_\Theta\in\hsM_{\leq w_\Theta}$.

\begin{lemma}\label{l:PTheta}
\begin{enumerate}
\item []
\item The object $\omega\tcP_\Theta$ is a projective cover of $\omega\delta$ in $\omega\hsP_{\leq w_\Theta}$.
\item The object $\tcP_\Theta$ is a successive extension of $\tDel_u(\ell(u)/2)$ for $u\in W_\Theta$, each appearing exactly once.
\item There is a natural isomorphism of functors $\hsM\to\hsM$
\begin{equation*}
\Av^\Theta_!\Av^\Theta_\chi(-)\cong\tcP_\Theta\conv{U}(-).
\end{equation*}
\end{enumerate}
\end{lemma}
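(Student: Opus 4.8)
The plan is to prove (3) first, which is essentially formal, then (2), which carries the main content, and finally (1), using (2) to place $\tcP_\Theta$ in $\hsP_{\leq w_\Theta}$ and the adjunction $(\Av^\Theta_!,\Av^\Theta_\chi)$ for the rest. For (3): by Lemma \ref{l:samedef}, $\Av^\Theta_\chi(-)\cong\tdel^\Theta_\chi\conv{U}(-)$. Up to the shift-twist $[\lTh](\lTh/2)$ and a forgetful functor, $\Av^\Theta_!$ is pushforward along the left $U_\Theta$-action on $\tilFl$, which commutes with the right convolution defining $\conv{U}$; a chase of the convolution diagram \eqref{d:convIu} (proper base change, together with Prop.~\ref{compfun} and Cor.~\ref{c:functor} to pass to the completed categories) shows that $\Av^\Theta_!$ is a morphism of right $\hsM$-module categories, i.e.\ $\Av^\Theta_!(\calG\conv{U}\calF)\cong\Av^\Theta_!(\calG)\conv{U}\calF$ naturally. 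Setting $\calG=\tdel^\Theta_\chi$ yields $\Av^\Theta_!\Av^\Theta_\chi(\calF)\cong\Av^\Theta_!(\tdel^\Theta_\chi)\conv{U}\calF=\tcP_\Theta\conv{U}\calF$.

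For (2) I would compute $\RHom_{\hsM}(\tcP_\Theta,\tnab_w)$ for every $w\in W$. By the adjunction $(\Av^\Theta_!,\Av^\Theta_\chi)$ (Lemma \ref{l:adj}) and exactness of $\Av^\Theta_\chi$ (Cor.~\ref{c:avexact}), this is $\RHom_{\hsM_\Theta}(\tdel^\Theta_\chi,\Av^\Theta_\chi\tnab_w)$; writing $w=uv$ with $u\in W_\Theta$ and $v\in[\coW]$, Lemma \ref{l:Avst}(2) identifies $\Av^\Theta_\chi\tnab_w$ with $\tnab_{\barv,\chi}(-\ell(u)/2)$. Since $\overline{e}$ is minimal in $\coW$, its stratum $\tilFl^\Theta_{\overline{e}}$ is closed and disjoint from every $\tilFl^\Theta_{\barv}$, $\barv\neq\overline{e}$, and $\tdel^\Theta_\chi=\tDel_{\overline{e},\chi}=\tnab_{\overline{e},\chi}$ is clean (Cor.~\ref{c:clean}); hence the $\RHom$ vanishes unless $\barv=\overline{e}$, i.e.\ unless $w\in W_\Theta$, and for $w=u\in W_\Theta$ it is $\RHom_{\tilFl^\Theta_{\overline{e}}}(\tcL_{\overline{e},\chi},\tcL_{\overline{e},\chi})(-\ell(u)/2)\cong\hatS(-\ell(u)/2)$, concentrated in degree $0$ (the $\chi$-twists cancel, $\tilFl^\Theta_{\overline{e}}$ is an affine-space bundle over a point, and the only surviving monodromy is the right $H$-one). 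Thus $\Ext^i_{\hsM}(\tcP_\Theta,\tnab_w)=0$ for $i\neq0$, and $\Hom_{\hsM}(\tcP_\Theta,\tnab_u)$ is free of rank one over $\hatS$ for $u\in W_\Theta$ and zero otherwise --- the numerical signature of a $\tDel$-flag with subquotients $\tDel_u(\ell(u)/2)$, $u\in W_\Theta$, each once. To upgrade this to an actual filtration I would apply $\piddag$: it commutes with $\Av^\Theta_!$ and carries $\tdel^\Theta_\chi$ to $\delta^\Theta_\chi$, so $\piddag\tcP_\Theta\cong\av^\Theta_!(\delta^\Theta_\chi)$ lives on the finite-dimensional flag variety $\Fl_{\leq w_\Theta}$ of $L_\Theta$, where $\av_!$ of the clean Whittaker sheaf is the familiar ``big projective'' with its standard flag (cf.\ \cite[\S2.1]{BBM}); Remark \ref{r:projresmix} then transports the $\tDel$-flag back to $\tcP_\Theta$. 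In particular $\tcP_\Theta$, being an iterated extension of the perverse objects $\tDel_u(\ell(u)/2)$, lies in $\hsP_{\leq w_\Theta}$.

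For (1): given $\calG\in\hsP_{\leq w_\Theta}$, the object $\Av^\Theta_\chi\calG=\tdel^\Theta_\chi\conv{U}\calG$ is perverse (Cor.~\ref{c:avexact}), and since both $\tdel^\Theta_\chi$ and $\calG$ are supported on $\tilFl_{\leq w_\Theta}$ while $\tilFl^\Theta_{\overline{e}}$ is the only $\coW$-stratum contained in $\tilFl_{\leq w_\Theta}$, it is supported on the closed stratum $\tilFl^\Theta_{\overline{e}}$. By the adjunction and cleanness of $\tdel^\Theta_\chi$, $\RHom_{\hsM}(\tcP_\Theta,\calG)\cong\RHom_{\tilFl^\Theta_{\overline{e}}}(\tcL_{\overline{e},\chi},\,i_{\overline{e}}^!\Av^\Theta_\chi\calG)$, which is concentrated in degree $0$ because $\tcL_{\overline{e},\chi}$ is the free ($\cong\hatS$) \fm perverse local system on $\tilFl^\Theta_{\overline{e}}$ and $i_{\overline{e}}^!\Av^\Theta_\chi\calG$ is perverse; hence $\Ext^{>0}_{\hsM}(\tcP_\Theta,\calG)=0$ and $\omega\tcP_\Theta$ is projective in $\omega\hsP_{\leq w_\Theta}$. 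For the top, $\Hom_{\hsM}(\tcP_\Theta,\IC_u)\cong\Hom_{\hsM_\Theta}(\tdel^\Theta_\chi,\Av^\Theta_\chi\IC_u)$ vanishes for $u\in W_\Theta\setminus\{e\}$ (then $u\notin[\coW]$, so $\Av^\Theta_\chi\IC_u=0$ by Lemma \ref{l:killnonmin}) and is one-dimensional for $u=e$ (where $\Av^\Theta_\chi\IC_e\cong\pidag\delta^\Theta_\chi$ and the only maps arise from the augmentation $\tdel^\Theta_\chi\twoheadrightarrow\pidag\delta^\Theta_\chi$). Therefore $\omega\tcP_\Theta$ has simple top $\omega\delta=\omega\IC_e$, so it is the projective cover of $\omega\delta$.

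The main obstacle is the step in (2) that turns the $\Ext$-computation into a genuine $\tDel$-filtration: this requires a standard-filtration criterion in the mixed free-monodromic setting, which is why I descend via $\piddag$ and Remark \ref{r:projresmix} to the finite-dimensional flag variety of $L_\Theta$ and invoke the classical analysis of the big projective there. A lesser nuisance is the bookkeeping needed to make the module-category identity underlying (3) rigorous in the completed categories $\hsM$ and $\hsM_\Theta$.
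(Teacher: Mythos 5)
Your proof is correct. The one place where your route genuinely diverges from the paper's is part (2): after computing $\RHom_{\hsM}(\tcP_\Theta,\tnab_u)\cong\hatS(-\ell(u)/2)$ for $u\in W_\Theta$ (both proofs do this via the adjunction and $\Av^\Theta_\chi(\tnab_u)\cong\tdel^\Theta_\chi(-\ell(u)/2)$), the paper concludes ``this implies that in the $\tDel$-flag of $\tcP_\Theta$ each $\tDel_u(\ell(u)/2)$ appears exactly once,'' silently taking the existence of a $\tDel$-flag for granted. You actually supply it: applying $\piddag$ (which commutes with $\Av^\Theta_!$ by Prop.~\ref{compfun}) identifies $\piddag\tcP_\Theta$ with $\av^\Theta_!(\delta^\Theta_\chi)$, the classical big projective tilting on the flag variety of $L_\Theta$ with its well-known $\Delta$-flag, and Remark~\ref{r:projresmix} then lifts this. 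This extra care is welcome, since existence of $\tDel$-flags for projectives in the completed category is not automatic; to make the last step airtight you should apply the final paragraph of Remark~\ref{r:projresmix} stratum by stratum: $i^*_u\piddag\tcP_\Theta$ is a shifted twist of $\Ql$ concentrated in one degree and one weight, so $\tili^*_u\tcP_\Theta\cong\Free(i^*_u\piddag\tcP_\Theta)$ is a twist of $\tcL_u$, and the $\tDel$-flag follows by recollement. Your (1) and (3) are the same arguments as the paper's, just unwound: the paper packages (1) as the single identity $\RHom_{\hsM_{\leq w_\Theta}}(\tcP_\Theta,-)\cong\iota\Av^\Theta_\chi(-)$, and calls the module-category compatibility in (3) ``obvious.'' One small terminological slip: $\tilFl^\Theta_{\overline{e}}$ is \emph{open} in $\tilFl_{\leq w_\Theta}$ as a subvariety; it is ``closed'' only as the unique effective (i.e.\ nonzero) stratum for $(U^\Theta U^-_\Theta,\chi)$-equivariant sheaves, the complementary orbits $\tilFl^\Theta_u$, $u\in W_\Theta\setminus\{e\}$, carrying no such sheaves by Lemma~\ref{l:mw}. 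Your adjunction computation uses $\overline{e}$ as the minimal stratum of the effective stratification of all of $\tilFl$, which is what makes $i_{\overline{e}}^!$ the correct right adjoint, so the argument stands.
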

\begin{proof}
(1) Note that we have an equivalence $\iota:\hsM_{\Theta,\leq\overline{e}}\cong D^b(\hatS,\Frob)$ with $\tdel^\Theta_\chi$ corresponding to $\hatS$. For any $\calF\in\omega\hsM_{\leq w_\Theta}$, we have
\begin{eqnarray}\label{eq:prep}
\bR\Hom_{\hsM_{\leq w_\Theta}}(\tcP_\Theta,\calF)&\cong&\bR\Hom_{\hsM_{\Theta,\leq e}}(\tdel^\Theta_\chi,\Av^\Theta_\chi(\calF))\\
&\cong&\bR\Hom_{\hatS}(\hatS,\iota\Av^\Theta_\chi(\calF))=\iota\Av^\Theta_\chi(\calF).
\end{eqnarray}
Therefore $\omega\tcP_\Theta$ represents the exact functor $\iota\circ\Av^\Theta_\chi:\omega\hsM_{\leq w_\Theta}\to D^b(\hatS,\Frob)$. The exactness implies $\omega\tcP_\Theta$ is a projective object in $\omega\hsP_{\leq w_\Theta}$. By Lemma \ref{l:killnonmin} and Proposition \ref{l:Avst}, we have
\begin{equation*}
\Hom_{\hsP}(\tcP_\Theta,\IC_w)=\iota\Av^\Theta_\chi(\IC_w)=\begin{cases}\Ql, & w=e\\0, & w\in W_\Theta-\{e\}\end{cases}
\end{equation*}
Therefore $\omega\tcP_\Theta$ is a projective cover $\omega\delta$ in $\omega\hsP_{\leq w_\Theta}$.

(2) By (\ref{eq:prep}) and the isomorphism (\ref{eq:avnabu}), we have for any $u\in W_\Theta$
\begin{equation*}
\Hom_{\hsM}(\tcP_\Theta,\tnab_{u})=\iota\Av^\Theta_\chi(\tnab_u)=\hatS(-\ell(u)/2).
\end{equation*}
This implies that in the $\tDel$-flag of $\tcP_\Theta$, each $\tDel_u(\ell(u)/2)$ appears exactly once.

(3) For any object $\calF\in\hsM$, we have functorial isomorphisms
\begin{equation*}
\Av^\Theta_!\Av^\Theta_\chi(\calF)\cong\Av^\Theta_!(\tdel^\Theta_\chi\conv{U}\calF)\cong(\Av^\Theta_!\tdel^\Theta_{\chi})\conv{U}\calF=\tcP_\Theta\conv{U}\calF.
\end{equation*}
Here we used the obvious fact that $\Av^\Theta_!$ commutes with right convolution.
\end{proof}

\begin{remark}\label{rm:Pcoalg}
As in Remark \ref{rm:Ccoalg}, by Lemma \ref{l:PTheta}(3), the comonad structure on $\Av^\Theta_!\Av^\Theta_\chi$ gives a coalgebra structure on $\tcP_\Theta$ with respect to the convolution $\conv{U}$; we will see a similar phenomenon in Proposition \ref{p:vcomp}.
\end{remark}


\subsection{The functor $\VV$}\label{ss:vv}
In this section, we will define a functor $\VV:\hsM\to D^b(S\otimes S,\Frob)$. In the case $G$ is of finite type, this is essentially the averaging functor $\Av^\Sigma_\chi$. However, when $G$ is infinite-dimensional, the averaging procedure involves the infinite-dimensional big cell $C\subset\Fl$, which causes some technical complication.

\subsubsection{The functor $\av_\chi$}
Recall from Lemma \ref{l:maptoUs} that we have a regular function $\rho_s$ on $C$ for every simple reflection $s$. Let $\chi$ be the sum of these functions:
\begin{equation*}
\chi:C\xrightarrow{\prod_s\rho_s}\prod_{s\in\Sigma}\AA^1\xrightarrow{+}\AA^1.
\end{equation*}
We define $\calL_\chi$ to be the *-complex $\chi^*\AS_\psi$ on the ind-scheme $C$. Then $\calL_{\chi}$ is the projective limit of local systems on $C_{\leq w}$.

By Lemma \ref{l:Ctriv}, the $H$-torsor $\pi^C:\tilC\to C$ is trivializable. Let us fix a section $\sigma:C\to\tilC$ whose image is denoted by $C^\sigma$. Let $C^\sigma_G\subset G$ be the preimage of $C^\sigma\subset\tilFl$, which admits a right $U$-action, and the quotient $C^\sigma_G/U\cong C$. We will also view $\calL_\chi$ as a *-complex on $C^\sigma$ or $C^\sigma_G$ by pull-back.

For each $w\in W$, consider convolution:
\begin{equation*}
a^-_{\leq w}:C^\sigma_G\twtimes{U}\tilFl_{\leq w}\subset G\twtimes{U}\tilFl\xrightarrow{m}\tilFl.
\end{equation*}
Both the source and the target of the morphism $a^-_{\leq w}$ are ind-schemes (the ind-scheme structure of the source are given by $\bigcup_{w'\in W}C^\sigma_{G,\leq w'}\twtimes{U}\tilFl_{\leq w}$), and $a^-_{\leq w}$ is clearly of finite type, therefore we can define the functor
\begin{eqnarray*}
\av_{\chi,\leq w,!}:\scM_{\leq w}&\to&\underleftarrow{D}^b_{m}(\qw{\tilFl}{H})\\
\calF&\mapsto&a^-_{\leq w,!}(\calL_\chi\stackrel{U}{\boxtimes}\calF).
\end{eqnarray*}
By Proposition \ref{compfun}, this functor extends to
\begin{equation*}
\av_{\chi,\leq w,!}:\hsM_{\leq w}\to\underleftarrow{\hatD}^b_{m}(\qw{\tilFl}{H}).
\end{equation*}
Passing to the inductive 2-limit, we get
\begin{equation*}
\av_{\chi,!}:\hsM\to\underleftarrow{\hatD}^b_{m}(\qw{\tilFl}{H}).
\end{equation*}

Recall the projection $\pi^C_s:C\subset\Fl\to G/P_s$ for any simple reflection $s$.
\begin{lemma}\label{l:projsL}
The $*$-complex $\pi^C_{s,!}\calL_\chi$ is zero.
\end{lemma}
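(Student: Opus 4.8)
The plan is to prove $\pi^C_{s,!}\calL_\chi=0$ by computing stalks. First I would observe that $\pi^C_s\colon C\hookrightarrow G/B\to G/P_s$ is a bounded morphism of ind-schemes: the preimage under $\pi_s$ of any Schubert variety in $G/P_s$ is contained in a Schubert variety $\Fl_{\leq w}$ of $\Fl$, so the preimage under $\pi^C_s$ is contained in the finite-type subscheme $C_{\leq w}=C\cap\Fl_{\leq w}$. Hence $\pi^C_{s,!}$ is defined on $*$-complexes (as in \S\ref{sss:starsheaf}), and the $*$-complex $\pi^C_{s,!}\calL_\chi$ is zero if and only if each of its finite-type components is zero, equivalently if and only if all of its stalks vanish. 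So I would fix a geometric point $x\in G/P_s$, put $C_x=\pi_s^{C,-1}(x)$, and note that $C_x$ is $\PP^1$ with finitely many points removed by Lemma \ref{l:fiberA1}, hence quasi-compact, hence contained in some $C_{\leq w}$; restricting $\pi^C_s$ to such a $C_{\leq w}$ gives a finite-type morphism to a Schubert variety of $G/P_s$, so by proper base change for the $!$-pushforward the stalk of $\pi^C_{s,!}\calL_\chi$ at $x$ is $\bR\Gamma_c(C_x,\calL_\chi|_{C_x})$.

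The next step is to identify $\calL_\chi|_{C_x}$ using Lemma \ref{l:fiberA1}: there $\rho_{s'}$ restricts to a constant on $C_x$ for every simple reflection $s'\neq s$, while $\rho_s$ restricts to an isomorphism $\rho_s\colon C_x\isom\AA^1$. Therefore $\chi|_{C_x}=\rho_s|_{C_x}+c$ for the constant $c=\sum_{s'\neq s}\rho_{s'}|_{C_x}$, i.e. $\chi|_{C_x}$ is the composition of the isomorphism $\rho_s|_{C_x}$ with translation by $c$, and in particular is itself an isomorphism $C_x\isom\AA^1$. Since $\AS_\psi$ is a character sheaf on $\GG_a$, translation by $c$ carries it to $\AS_\psi$ tensored with its stalk at $c$, a one-dimensional $\Frob$-module; thus $\calL_\chi|_{C_x}$ is the pullback of $\AS_\psi$ along the isomorphism $\rho_s|_{C_x}$, twisted by a one-dimensional $\Frob$-module, and so $\bR\Gamma_c(C_x,\calL_\chi|_{C_x})\cong\bR\Gamma_c(\AA^1,\AS_\psi)$ up to such a twist. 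The final step invokes the classical vanishing $\bR\Gamma_c(\AA^1,\AS_\psi)=0$ (here $H^0_c$ and $H^2_c$ vanish because $\AS_\psi$ is a nontrivial lisse sheaf on the connected, non-proper curve $\AA^1$, and $\chi_c(\AA^1,\AS_\psi)=0$ by Grothendieck--Ogg--Shafarevich since the Swan conductor of $\AS_\psi$ at $\infty$ is $1$). This forces every stalk of $\pi^C_{s,!}\calL_\chi$ to vanish, whence $\pi^C_{s,!}\calL_\chi=0$.

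I do not expect a genuine obstacle: the geometric input — the fibre structure of $C$ over $G/P_s$ — is already packaged in Lemma \ref{l:fiberA1}, and the cohomological input is standard. The only points demanding care are bookkeeping in the $*$-complex/ind-scheme formalism (verifying that $\pi^C_s$ is bounded and that $C_x$ lies in a single finite-type piece, so that ordinary proper base change applies componentwise) and keeping track of the harmless one-dimensional Frobenius twist produced by translating the Artin--Schreier sheaf.
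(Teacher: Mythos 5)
Your argument is precisely the paper's: reduce to vanishing of stalks, identify $C_x$ with $\AA^1$ via $\rho_s$ using Lemma \ref{l:fiberA1} so that $\calL_\chi|_{C_x}$ becomes a translate of $\AS_\psi$, and conclude from $\bR\Gamma_c(\AA^1,\AS_\psi)=0$. You merely make explicit the ind-scheme bookkeeping and the translation-by-$c$ step that the paper's three-line proof elides.
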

\begin{proof}
It is enough to check that the stalk of $\pi^C_{s,!}\calL_\chi$ at any geometric point $x\in G/P_s$ is zero. By Lemma \ref{l:fiberA1}, the restriction of $\calL_\chi$ to the fiber $C_x=\pi^{C,-1}_s(x)$ can be identified with the Artin-Schreier sheaf $\AS_\psi$ on $\AA^1$ via $\rho_s:C_x\isom\AA^1$. Therefore the stalk of $\pi^\sigma_{s,!}\calL_\chi$ at $x$ is
\begin{equation*}
\upH^*_c(C_x,\calL_\chi|_{C_x})\cong \upH^*(\AA^1,\AS_\psi)=0.
\end{equation*}
\end{proof}

\begin{lemma}\label{l:Vkillnomin} For $w\neq e$, $\av_{\chi,!}(\IC_w)=0$.
\end{lemma}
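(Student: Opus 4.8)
The plan is to adapt the proof of Lemma~\ref{l:killnonmin}, with Lemma~\ref{l:projsL} playing the role that the vanishing of $\pi_{s,!}(\tdel^\Theta_\chi)$ played there. Since $w\neq e$, choose a simple reflection $s\in\Sigma$ with $sw<w$. Then $\Fl_{\leq w}$, hence $\tilFl_{\leq w}$, is stable under the left action of the minimal parabolic $P_s$, and $\IC_w$ is equivariant for that action; equivalently $\IC_w$ descends along $\tilFl_{\leq w}\to[P_s\backslash\tilFl_{\leq w}]$.

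First I would recall that $\av_{\chi,!}(\IC_w)=a^-_{\leq w,!}(\calL_\chi\stackrel{U}{\boxtimes}\IC_w)$, where $a^-_{\leq w}$ is the restriction of the multiplication $m\colon G\twtimes{U}\tilFl\to\tilFl$ to $C^\sigma_G\twtimes{U}\tilFl_{\leq w}$, which is a $\tilFl_{\leq w}$-bundle over $C^\sigma_G/U=C^\sigma\cong C$. Using the $P_s$-stability of $\tilFl_{\leq w}$, factor $m$ as $C^\sigma_G\twtimes{U}\tilFl_{\leq w}\xrightarrow{\gamma}Z\xrightarrow{m'}\tilFl$, where $Z$ is the analogous $\tilFl_{\leq w}$-bundle over the image $\pi^C_s(C)\subset G/P_s$ obtained by the twisted product over $P_s$, and $\gamma$ is the natural map induced by $U\hookrightarrow P_s$. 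Since $\IC_w$ is $P_s$-equivariant, proper base change along $\gamma$ gives a natural isomorphism $\gamma_!(\calL_\chi\stackrel{U}{\boxtimes}\IC_w)\cong(\pi^C_{s,!}\calL_\chi)\stackrel{P_s}{\boxtimes}\IC_w$, and therefore
\begin{equation*}
\av_{\chi,!}(\IC_w)\;\cong\;(\pi^C_{s,!}\calL_\chi)\conv{P_s}\IC_w,
\end{equation*}
with $\conv{P_s}$ the convolution over $P_s$ (defined exactly as $\conv{B}$, via $G\twtimes{P_s}\tilFl\to\tilFl$), taken in the appropriate (completed) $*$-complex categories. By Lemma~\ref{l:projsL} the left factor is zero, hence $\av_{\chi,!}(\IC_w)=0$.

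The point requiring care is precisely this collapse from a twisted product over $U$ to one over $P_s$: in contrast with Lemma~\ref{l:killnonmin}, the big cell $C$, and hence $C^\sigma_G$, is \emph{not} $P_s$-stable --- by Lemma~\ref{l:fiberA1} the fibers $C_x\cong\AA^1$ of $\pi^C_s$ are proper subvarieties of the $\PP^1$-fibers of $\Fl\to G/P_s$ --- so the factorization must be carried out on the bundle $C^\sigma_G\twtimes{U}\tilFl_{\leq w}$, not on $C^\sigma_G$ itself. Unwinding it, the fibers of $\gamma$ are isomorphic, via $\rho_s$, to the fibers $C_x\cong\AA^1$; on such a fiber $\IC_w$ is constant (by $P_s$-equivariance) and, by Lemma~\ref{l:fiberA1}, $\calL_\chi$ restricts to a twist of $\AS_\psi$ on $\AA^1$, so that pushing forward along $\gamma$ amounts fiberwise to computing $H^*_c(\AA^1,\AS_\psi)=0$ --- which is exactly the content of Lemma~\ref{l:projsL}. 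One can equally well phrase the whole argument as a single stalkwise computation over $\tilFl$ in this way, bypassing the convolution formalism; either way the essential inputs are Lemma~\ref{l:fiberA1} and the vanishing $H^*_c(\AA^1,\AS_\psi)=0$.
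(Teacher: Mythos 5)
Your argument is correct, but it takes a different route from the paper's. The paper first reduces to the case $w=s$ a simple reflection: since $\av_{\chi,!}$ commutes with right convolution ($\av_{\chi,!}(\calF_1\conv{B}\calF_2)\cong\av_{\chi,!}(\calF_1)\conv{B}\calF_2$) and $\IC_w$ is a direct summand of $\IC_{s_1}\conv{B}\cdots\conv{B}\IC_{s_m}$ for a reduced word, it suffices to kill $\IC_s$; and $\IC_s$ is (up to shift and twist) $\tilpi_s^*\delta_s$, so smooth base change gives $\av_{\chi,!}(\IC_s)=\tilpi_s^*\pi^C_{s,!}\calL_\chi=0$ directly from Lemma \ref{l:projsL}. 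You instead keep $w$ general and exploit the \emph{left} $P_s$-equivariance of $\IC_w$ for $s$ with $sw<w$, factoring the convolution through the $P_s$-twisted product exactly as in Lemma \ref{l:killnonmin}; you correctly identify the one delicate point, namely that $C$ is not $P_s$-stable so the collapse from $\twtimes{U}$ to $\twtimes{P_s}$ must be performed on the bundle $C^\sigma_G\twtimes{U}\tilFl_{\leq w}$, where the fibers of $\gamma$ are the affine lines $C_x$ and the fiberwise vanishing is $H^*_c(\AA^1,\AS_\psi)=0$. The trade-off: the paper's reduction makes the geometry trivial (a literal pullback from $G/P_s$) but invokes the decomposition theorem and the convolution compatibility; your version avoids the decomposition theorem entirely and proves the slightly stronger statement that $\av_{\chi,!}$ annihilates anything left $P_s$-equivariant for some $s$, at the cost of the more careful bookkeeping with the twisted product over $P_s$. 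Both proofs rest on the same key input, Lemma \ref{l:projsL} (equivalently Lemma \ref{l:fiberA1} plus the vanishing of $H^*_c(\AA^1,\AS_\psi)$).
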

\begin{proof}
For $\calF_1\in D^b_{m}(U\backslash\Fl),\calF_2\in D^b_{m}(B\backslash\Fl)$, $\av_!(\calF_1\conv{B}\calF_2)\cong\av_{\chi,!}(\calF_1)\conv{B}\calF_2$ because $\av_{\chi,!}$ is itself defines by convolution. Since each $\IC_w$ ($w\neq e$) is a direct summand of $\IC_{s_1}\conv{B}\cdots\conv{B}\IC_{s_m}$ for a reduced word $w=s_1\cdots s_m$, it suffices to show that $\av_{\chi,!}(\IC_s)=0$ for any simple reflection $s\in\Sigma$.

Let $\tilpi_s:\tilFl\to G/P_s$ be the projection. Let $\delta_s$ be the skyscraper sheaf at $P_s/P_s\in G/P_s$. Then $\IC_s$ can be identified with $\tilpi_s^*\delta_s$ up to shift and twist. We have
\begin{equation*}
\av_{\chi,!}(\tilpi_s^*\delta_s)=\tilpi_s^*a^-_{!}(\calL_\chi\stackrel{U}{\boxtimes}\delta_s)=\tilpi_s^*\pi^C_{s,!}\calL_\chi
\end{equation*}
which is zero by Lemma \ref{l:projsL}. Hence $\av_{\chi,!}(\IC_s)=0$ and the lemma is proved.
\end{proof}

If we further take stalks along the stratum $\tilFl_{e}$, we get
\begin{equation*}
\VV':=\tili_e^*\av_{\chi,!}:\hsM\to\underleftarrow{\hatD}^b_{m}(\qw{\tilFl}{H})\to \hatD^b_{m}(\qw{\tilFl_e}{H})\isom D^b(\hatS,\Frob)
\end{equation*}

\begin{cor}\label{c:Vexact}
The functor $\VV'$ is t-exact.
\end{cor}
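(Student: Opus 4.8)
The plan is to verify that $\VV'$ is both right t-exact and left t-exact. For the first, the key input is a base-change computation: for $\calF\in\scM_{\leq w}$ the preimage of the closed stratum $\tilFl_e$ under $a^-_{\leq w}\colon C^\sigma_G\twtimes{U}\tilFl_{\leq w}\to\tilFl$ is isomorphic to $C_{\leq w^{-1}}\times H$ (with $C_{\leq w^{-1}}=C\cap\Fl_{\leq w^{-1}}$), and on this closed subscheme $a^-_{\leq w}$ becomes the projection to $H$. Since $C_{\leq w^{-1}}$ is the complement in the projective Schubert variety $\Fl_{\leq w^{-1}}$ of a hyperplane section, hence of an ample divisor, it is affine, so this projection is an affine morphism. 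Therefore $\tili_e^*\av_{\chi,\leq w,!}$ is the composite of a $*$-restriction to a closed subscheme with an affine $_!$-pushforward, hence right t-exact by \cite[Th\'eor\`eme 4.1.1]{BBD}; passing to the completions (Prop.\ref{compfun}, Remark \ref{l:prot}) and then to the inductive $2$-limit, $\VV'$ is right t-exact.

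For left t-exactness I would show that $\VV'$ carries the heart $\hsP$ into the heart of $D^b(\hatS,\Frob)$. By Lemma \ref{l:Vkillnomin}, $\av_{\chi,!}(\IC_w)=0$, hence $\VV'(\IC_w)=0$, for every $w\neq e$. For $w=e$ the object $\IC_e$ is supported on the closed stratum $\tilFl_e$, the convolution variety $C^\sigma_G\twtimes{U}\tilFl_{\leq e}$ is canonically the preimage $\tilC$ of the big cell, and $a^-_{\leq e}$ is the open immersion $\tilC\hookrightarrow\tilFl$; since $\tilFl_e\subset\tilC$ lies over the base point of $C$, at which $\chi$ vanishes, the restriction $\tili_e^*\av_{\chi,!}(\IC_e)$ is simply $\tili_e^*\IC_e$, which is perverse. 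So $\VV'$ sends every irreducible object of $\hsP$ into the heart. Since $\VV'$ is exact and right t-exact, a long exact sequence argument upgrades this to: $\VV'$ sends every finite-length mixed perverse sheaf, and then --- using that $\VV'$ commutes with the pro-limits of the completion (Prop.\ref{compfun}) and that the heart of $D^b(\hatS,\Frob)$ is closed under those pro-limits --- every object of $\hsP$, into the heart. Finally, a right t-exact triangulated functor sending the heart into the heart is t-exact (induction on cohomological amplitude). Hence $\VV'$ is t-exact.

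The point requiring the most care is the interplay with the completed category $\hsM$: one must know that $\av_{\chi,!}$, constructed via Prop.\ref{compfun}, genuinely extends to a right t-exact functor on $\hsM$ (Remark \ref{l:prot}, on extending the perverse t-structure to pro-objects, being the relevant input), and that the completed heart $\hsP$ is built from the twists of $\pidag\IC_w$ by extensions together with the pro-limits of Appendix \ref{a:compmono}, compatibly with $\VV'$. An alternative route to left t-exactness, at the price of more geometry, is to identify $\VV'$ with $\tili_e^!\circ\av_{\chi,*}$ (the analogue of $\av_{\chi,!}$ built from $_*$-pushforwards) --- i.e.\ to prove that $\calL_\chi$ extends cleanly across the boundary of the big cell, which, fibrewise along the $\AA^1$-fibrations $\pi^C_s$, reduces to cleanness of $\AS_\psi$ on $\PP^1$ exactly as in Lemmas \ref{l:fiberA1} and \ref{l:projsL} and in \cite[\S2.1]{BBM}. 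Once this is known, left t-exactness is immediate since $\tili_e^!$ and the affine $_*$-pushforward $\av_{\chi,*}$ are both left t-exact (\cite[Corollaire 4.1.2]{BBD}); establishing this cleanness along the infinite-dimensional boundary is then the principal obstacle.
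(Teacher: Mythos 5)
Your argument is correct, and its second half is essentially the paper's proof: reduce to $\scM$ via Lemma \ref{l:prot}, then check the simple objects using Lemma \ref{l:Vkillnomin} ($\VV'(\IC_w)=0$ for $w\neq e$) and the explicit computation $\VV'(\pidag\delta)=\Ql[r](r)$. Where you diverge is in treating right t-exactness as a separate problem requiring the base-change computation over the preimage $C_{\leq w^{-1}}\times H$ of $\tilFl_e$. That computation is correct (the preimage is indeed $(C^\sigma\cap\tilFl_{\leq w^{-1}})\times H$ with $a^-$ becoming the projection to $H$, and $C_{\leq w^{-1}}$ is affine), but it is logically redundant: once you know that the triangulated functor $\VV'$ carries every simple object of $\scP$ into the heart of $D^b(\hatS,\Frob)$, you get \emph{both} t-exactness inequalities at once, because the t-structure on $\scM$ is bounded with finite-length heart, so every object of $\scM^{\leq0}$ (resp. $\scM^{\geq0}$) is a successive extension of $\IC_w[n]\langle?\rangle$ with $n\geq0$ (resp. $n\leq0$) — this is exactly how the paper argues, and it also removes the need for your ``right t-exact + heart-to-heart implies t-exact'' induction. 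Your geometric argument would earn its keep if one wanted right t-exactness of $\av_{\chi,!}$ itself as a functor to $\underleftarrow{\hatD}^b_m(\qw{\tilFl}{H})$ (before applying $\tili_e^*$), and your closing ``alternative route'' via cleanness of $\calL_\chi$ across the boundary of the big cell is likewise an interesting but unnecessary strengthening; neither is needed for the corollary as stated.
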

\begin{proof} By Lemma \ref{l:prot}, in order to show that $\VV'$ is t-exact, it suffices to show that it is t-exact when restricted to $\scM$.

By Lemma \ref{l:Vkillnomin}, we see that $\VV'(\IC_w)=0$ for $w\neq e$. For $w=e$, $\VV'(\pidag\delta)=\tili^*_e\pidag\calL_\chi=\Ql[r](r)=\pidag\delta\in D^b_m(\qw{\tilFl_e}{H})$ corresponds to the trivial module $\Ql\in D^b(\hatS,\Frob)$ placed at degree 0. Therefore, $\VV'$ sends simple objects $\IC_w\in\scP$ to the heart of $D^b(\hatS,\Frob)$, hence t-exact on $\scP$.
\end{proof}

\subsubsection{The functor $\VV$}
Let $(\VV')^f$ be the composition of $\VV'$ with the equivalence (cf. \eqref{Frobfin})
\begin{equation*}
(-)^f:D^b(\hatS,\Frob)\cong D^b(S,\Frob).
\end{equation*}
By Corollary \ref{c:Vexact}, $(\VV')^f$ restricts to an exact functor $\hsP\to\Mod(S,\Frob)$ with the $S$-action on $\VV'(\calF)^f$ coming from the right $H$-monodromy. We also have the left $H$-monodromy acting on each object $\calF\in\scP$ functorially, hence acting as natural transformations on the functor $(\VV')^f$. Therefore, we can lift $(\VV')^f$ uniquely into an exact functor
\begin{equation}\label{Vnonder}
\VV=(\VV')^f:\hsP\to\Mod(S\otimes S,\Frob).
\end{equation}
We also write
\begin{equation*}
\VV:\hsM\to D^b(S\otimes S,\Frob)
\end{equation*}
for the derived functor of \eqref{Vnonder}. It is easy to see that $\VV$ is a lifting of $(\VV')^f$ as functors on $\hsM$.


For $w\in W$, let the $\Gamma^*(w)=\{(w\cdot v^*,v^*)|v^*\in V_H^\vee\}\subset V_H^\vee\times V_H^\vee$ be the graph of the $w$-action on $V_H^\vee$. Let $\calO_{\Gamma^*(w)}$ be the coordinate ring of $\Gamma^*(w)\subset V_H^\vee\times V^\vee_H$.

\begin{lemma}\label{l:vst}
For each $w\in W$, we have
\begin{eqnarray*}
\VV(\tDel_w)&\cong&\calO_{\Gamma^*(w)}(\ell(w)/2),\\
\VV(\tnab_w)&\cong&\calO_{\Gamma^*(w)}(-\ell(w)/2).
\end{eqnarray*}
\end{lemma}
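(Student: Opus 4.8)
\emph{Proof plan.} The plan is to compute $\VV(\tnab_w)$ directly from the definition $\VV=(\tili_e^{*}\av_{\chi,!})^{f}$, enriched by the left $H$-monodromy, in close parallel with the computation of $\HH(\nabla_w)$ in Lemma \ref{l:hst}; the statement for $\tDel_w$ follows by the same argument with $a^-_!$ replaced by $a^-_*$, so I focus on $\tnab_w$.

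First I would separate the underlying module from the bimodule structure. Since $\tnab_w$ is perverse and $\VV'$ is t-exact (Cor.\ref{c:Vexact}), $\VV'(\tnab_w)=\tili_e^{*}\av_{\chi,!}(\tnab_w)$ lies in degree $0$ of $D^b(\hatS,\Frob)$, and the left $H$-monodromy of $\tnab_w$ acts on it by functorial automorphisms, upgrading it to an $(\hatS\otimes\hatS,\Frob)$-module and, after $(-)^{f}$, to a $(S\otimes S,\Frob)$-module. On the open stratum $\tilFl_w = BwB/U$ the left monodromy of $\tnab_w$ differs from the right one by conjugation by $w$ (the stabilizer of $wU$ under $(h_1,h_2)\cdot x = h_1xh_2$ is $\{(h_1,(w^{-1}h_1w)^{-1})\}$, exactly parallel to the computation $\{(whw^{-1},h)\}$ in Lemma \ref{l:hst}); transporting this through $\av_{\chi,!}$ and $\tili_e^{*}$ forces $\VV(\tnab_w)$ to be supported on the graph $\Gamma^{*}(w)$, with the two $\hatS$-actions tautologically related by $w$. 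It therefore remains to show that $\VV'(\tnab_w)$ is free of rank one over $\hatS$ and of the asserted weight, which identifies $\VV(\tnab_w)$ with $\calO_{\Gamma^{*}(w)}$ up to a Tate twist.

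Second, I would apply proper base change along $\tili_e\colon\tilFl_e\hookrightarrow\tilFl$, writing out the fibre of the convolution map $a^-$ over $\tilFl_e$ as in the proof of Lemma \ref{l:Vkillnomin}. Since $\tnab_w$ is the $*$-extension of a free-monodromic local system from $\tilFl_w$, a cleanness argument (as in Lemma \ref{l:Vkillnomin}, the Artin--Schreier twist killing the contributions of the lower strata) should reduce the fibre computation, after the free-monodromy along the right $H$-direction is peeled off (this is precisely what produces the $\hatS$-factor), to the intersection $C\cap\Fl_{w^{-1}}$ of the big cell with the Schubert cell $\Fl_{w^{-1}}$, carrying the restriction of $\calL_{\chi}=\chi^{*}\AS_{\psi}$. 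Everything then comes down to the geometric assertion
\begin{equation*}
H^{*}_{c}\bigl(C\cap\Fl_{w^{-1}},\ \calL_{\chi}|_{C\cap\Fl_{w^{-1}}}\bigr)\ \cong\ \Ql \quad\text{concentrated in degree }\ell(w)\text{ and of weight }0,
\end{equation*}
together with a routine (if slightly delicate) comparison of shifts and Tate twists coming from the normalizations of $\tnab_w$, of $\tcL_e$, and of $\piddag=\pi_![r]$, which accounts for the final twist $(-\ell(w)/2)$.

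Third — and this is the step I expect to be the main obstacle — I would prove the displayed cohomology statement by induction on $\ell(w)$ using the fibration $\pi^{C}_{s}\colon C\to G/P_{s}$ for a simple reflection $s$ with $\ell(sw^{-1})<\ell(w^{-1})$. By Lemma \ref{l:fiberA1} the fibres of $\pi^{C}_{s}$ are affine lines on which $\rho_{s}$ is a coordinate and all other $\rho_{s'}$ are locally constant, so along them $\calL_{\chi}$ is a translate of $\AS_{\psi}$ on $\AA^1$; a fibre contained in $C$ contributes nothing to $\pi^{C}_{s,!}\calL_{\chi}$ (Lemma \ref{l:projsL}), while a fibre meeting $\Fl_{w^{-1}}$ in a copy of $\GG_m\subset\AA^1$ contributes an $H^1_{c}(\GG_m,\AS_{\psi})=\Ql$ of weight $0$. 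Keeping track, along a reduced word for $w^{-1}$, that exactly $\ell(w)$ such $\GG_m$-factors occur and that they accumulate into a single top-degree class is the delicate part: $C\cap\Fl_{w^{-1}}$ need not be a literal product of $\GG_m$'s, so one must argue at the level of the successive pushforwards and retain control of the Artin--Schreier twist on each intermediate space (the monodromic counterpart of the contraction/very-purity arguments of \S\ref{s:eq}, whereas on the $\HH$-side the analogous step in Lemma \ref{l:hst} was trivial because $\Fl_w$ is an affine space). An alternative is to run the computation through the identity $\av_{\chi,!}(\calF_1\conv{B}\calF_2)\cong\av_{\chi,!}(\calF_1)\conv{B}\calF_2$ and the factorization $\tnab_w\cong\tnab_{s_1}\conv{U}\cdots\conv{U}\tnab_{s_m}$ of Lemma \ref{l:convst}, reducing to the single simple reflection treated in Appendix \ref{a:SL2}; the catch there is that $\tili_e^{*}$ does not commute with right convolution, so one must carry $\av_{\chi,!}(\tnab_w)$ on all of $\tilFl$ until the very end. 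Assembling this with the first two steps yields $\VV(\tnab_w)\cong\calO_{\Gamma^{*}(w)}(-\ell(w)/2)$ and, dually, $\VV(\tDel_w)\cong\calO_{\Gamma^{*}(w)}(\ell(w)/2)$.
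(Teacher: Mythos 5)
Your handling of the bimodule structure (the stabilizer $\{(whw^{-1},h^{-1})\}$ of $wU$ forcing the action to factor through $\calO_{\Gamma^*(w)}$) is exactly the paper's, but for the rank-one computation you take a genuinely different and much harder road. The paper never touches the geometry of the fiber of $a^-$ over $\tilFl_e$: it first uses Remark \ref{r:projresmix} to reduce the free-monodromic statement to showing $\piddag\tili_e^*\av_{\chi,!}(\tDel_w)=i_e^*\bav_{\chi,!}(\Delta_w)\cong\Ql(\ell(w)/2)$, and then invokes Lemma \ref{l:socle}(1): $\delta(\ell(w)/2)$ injects into $\Delta_w$ with cokernel in $\scQ_{+}$, and $\bav_{\chi,!}$ kills $\scQ_{+}$ by (the argument of) Lemma \ref{l:Vkillnomin}, so $\bav_{\chi,!}(\Delta_w)\cong\bav_{\chi,!}(\delta)(\ell(w)/2)$ and the stalk at $e$ is immediate; the costandard case uses the quotient $\nabla_w\twoheadrightarrow\delta(-\ell(w)/2)$ of Lemma \ref{l:socle}(2) dually. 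All Artin--Schreier cancellation is thus packaged once and for all in the single computation $\pi^C_{s,!}\calL_\chi=0$, and the Bruhat-cell combinatorics is absorbed into the purely homological Lemma \ref{l:socle}.

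By contrast, your Step 3 --- the assertion that $H^*_c(C\cap\Fl_{w^{-1}},\calL_\chi)$ is $\Ql$ in degree $\ell(w)$ and weight $0$ --- carries the entire content of the lemma in your approach, and it is not proved: the inductive scheme via $\pi^C_s$ is only sketched, and since $C\cap\Fl_{w^{-1}}$ is not a product of $\GG_m$'s you would have to control how the fibers $C_x\cap\Fl_{w^{-1}}\subset C_x\cong\AA^1$ vary over the base (they can be all of $\AA^1$ or $\AA^1$ minus a point, depending on $x$) and how the successive pushforwards interact with $\calL_\chi$; nothing already in the paper does this for you. Two further problems: (i) $\VV$ is defined using $\av_{\chi,!}$ only, so "replace $a^-_!$ by $a^-_*$" for the costandard case does not parse --- a $*$-pushforward along the infinite-type map $a^-$ is not even set up, and the correct dual move is the quotient structure of $\nabla_w$ just mentioned; (ii) even granting Step 3, your reduction of the stalk of the $!$-pushforward of the $*$-extension $\tnab_w$ to the open piece $C\cap\Fl_{w^{-1}}$ of the closed fiber $C\cap\Fl_{\leq w^{-1}}$ is an additional cleanness claim of comparable difficulty. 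The computation-free repair is to adopt the paper's reduction through Lemma \ref{l:socle} and Lemma \ref{l:Vkillnomin}.
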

\begin{proof}
We prove the first identity; the proof of the second one is similar. We first claim that $\VV(\tDel_w)$ as a right $S$ is isomorphic to $S(\ell(w)/2)$. For this, it suffices to show that $\VV'(\tDel_w)=\tili^*_e\av_{\chi,!}(\tDel_w)[r](r)\cong\tcL[r](r+\ell(w)/2)$. By the last sentence in Remark \ref{r:projresmix}, it suffices to show that $\piddag\tili^*_e\av_{\chi,!}(\tDel_w)\cong\Ql(\ell(w)/2)\in D^b_{c}(\Fl_e)$.

We can similarly define
\begin{equation*}
\bav_{\chi,!}:\scD\to\underleftarrow{D}^b_{m}(\Fl)
\end{equation*}
which kills all $\IC_w$ except $\delta$ (see Lemma \ref{l:Vkillnomin}). By the definition of $\VV'$, we have
\begin{equation*}
\piddag\tili^*_e\av_{\chi,!}(\tDel_w)=i^*_e\piddag\av_{\chi,!}(\tDel_w)=i^*_e\bav_{\chi,!}(\Delta_w).
\end{equation*}
By Lemma \ref{l:socle}(1), we have an injection $\delta(\ell(w)/2)\hookrightarrow\Delta_w$ whose cokernel is in $\scQ_+$ (hence killed by $\bav_{\chi,!}$), therefore
\begin{equation*}
i^*_e\bav_{\chi,!}(\Delta_w)\cong i^*_e\bav_{\chi,!}(\delta_w)(\ell(w)/2)\cong\Ql(\ell(w)/2).
\end{equation*}
This shows that $\VV(\tDel_w)\cong S(\ell(w)/2)$ as right $S$-modules.

Secondly, we show that the $S\otimes S$-action on $\VV(\tDel_w)$ factors through $\calO_{\Gamma^*(w)}$. Note that the $S\otimes S$-structure on $\VV(\tDel_w)$ comes from the action of $S\otimes S$ on $\tDel_w$. Since $\tilFl_{w}\cong\Fl_{w}\times H$ and $\Fl_{w}$ is isomorphic to an affine space, we have
\begin{equation}\label{eq:ssfactor}
\End(\tDel_w)=\End_{\Fl_{w}\times H}(\Ql\boxtimes\tdel)\cong\End_{wB/U}(\tdel)
\end{equation}
Note that the $H\times H$-action on $HwH\cong wB/U$ factors through $(H\times H)/H_w$ where $H_w=\{(whw^{-1},h^{-1})|h\in H\}$, therefore the $S\otimes S$-action on $\End_{wB/U}(\tdel)$ factors through $\Sym((V_H\oplus V_H)/V_{H_w})$, which is $\calO_{\Gamma^*(w)}$.

Combining the two steps, we see that $\VV(\tDel_w)\cong\calO_{\Gamma^*(w)}(\ell(w)/2)$.
\end{proof}


The following result is parallel to Proposition \ref{p:hff}. We postpone its proof to \S\ref{ss:proofV}.

\begin{prop}\label{p:Vff}
Suppose $\tcT_1,\tcT_2\in\hsP$ are free-monodromic tilting sheaves, then the natural map
\begin{equation}\label{eq:Vff}
\Hom_{\hsP}(\tcT_1,\tcT_2)^f\to\Hom_{S\otimes S}(\VV(\tcT_1),\VV(\tcT_2))
\end{equation}
is an isomorphism of $\Frob$-modules.
\end{prop}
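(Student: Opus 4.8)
The plan is to mirror the strategy used in the proof of Proposition~\ref{p:hff}, replacing the equivariant cohomology ring $A_{\leq w}$ by its monodromic analogue and the $\Ext$-computation by a $\Hom$-computation between free-monodromic tilting sheaves. Fix a Schubert closure $\tilFl_{\leq w}$ large enough to support both $\tcT_1$ and $\tcT_2$, and let $B_{\leq w}$ be the ``monodromic structure algebra'' $\Hom_{\hsP_{\leq w}}(\tcP_{\leq w},\tcP_{\leq w})^f$, where $\tcP_{\leq w}$ is a projective generator of $\omega\hsP_{\leq w}$ (existence guaranteed by the framework of Appendix~\ref{a:compmono}); alternatively one can use the direct sum of the $\VV(\tcT_u)$ for $u\le w$. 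The first step is to prove, by induction on $w$ in the Bruhat order, the intermediate statement that
\begin{equation*}
\Hom_{\hsP}(i^*_{\leq v}\tcT_1, i^!_{\leq v}\tcT_2)^f \isom \Hom_{B_{\leq v}}\bigl(\VV(i_{\leq v,\dagger}i^*_{\leq v}\tcT_1), \VV(i_{\leq v,\dagger}i^!_{\leq v}\tcT_2)\bigr).
\end{equation*}
For $v=e$ this is the equivalence $\hsM_{\leq e}\cong D^b(\hatS,\Frob)$ recorded in \S\ref{ss:monocat}, under which $\VV$ restricted to $\tilFl_e$ is the identity (Corollary~\ref{c:Vexact} and its proof). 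For the inductive step one uses the recollement triangle for $Z=\tilFl_{<v}\hookrightarrow X=\tilFl_{\leq v}\hookleftarrow U=\tilFl_v$ exactly as in Lemma~\ref{l:hff}: the key inputs are that $\tcT_1$ has a $\tDel$-flag and $\tcT_2$ a $\tnab$-flag, that $\VV(\tDel_v)\cong\calO_{\Gamma^*(v)}(\ell(v)/2)$ and $\VV(\tnab_v)\cong\calO_{\Gamma^*(v)}(-\ell(v)/2)$ by Lemma~\ref{l:vst}, and that the relevant $\Hom$'s between $\tDel$-flagged and $\tnab$-flagged free-monodromic sheaves vanish in negative ``weight-shift'' (a purity/weight argument analogous to Lemma~\ref{l:purity}, now for free-monodromic tilting sheaves). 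The role played by the Chern-class element $[U]\in A_{\leq v}$ in Lemma~\ref{l:hff} is now played by a free-monodromy generator, i.e.\ an element of the completed symmetric algebra $\hatS$ acting via the right monodromy, which realizes the isomorphism between the $!$- and $*$-extensions along $\tilFl_v$ up to the appropriate twist. Tracking this, the long exact $\Hom$-sequence splits, and the five-lemma-style diagram chase closes the induction.

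The second step is to pass from $B_{\leq w}$-linear to $(S\otimes S)$-linear homomorphisms, exactly as Lemmas~\ref{l:samehom} and~\ref{l:eqloc} do in the equivariant setting. Here I would take $S$ to be the right copy of $\Sym(V_H)$, and show that the map $S\otimes S\to B_{\leq w}\to\prod_{v\le w}\calO_{\Gamma^*(v)}$ becomes surjective after inverting $\Frac(S)$ on the right — this is the monodromic mirror of Lemma~\ref{l:eqloc}, proved by the same induction using that the cones of the ``forget-supports'' and restriction maps are built from $\VV(\tDel_v)$ and $\VV(\tnab_v)$, all supported on the graphs $\Gamma^*(v)$. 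Since $\VV(\tcT_2)$ is free over either copy of $S$ (it admits a $\VV(\tnab)$-flag, hence is an iterated extension of free modules; freeness as ungraded $S$-modules suffices, as in Lemma~\ref{l:purity}(1)), Lemma~\ref{l:samehom} applies with $B=S\otimes S$, $C=B_{\leq w}$, yielding $\Hom_{B_{\leq w}}(\VV\tcT_1,\VV\tcT_2)\cong\Hom_{S\otimes S}(\VV\tcT_1,\VV\tcT_2)$. Composing with the isomorphism from Step~1 (the case $v=w$) gives~\eqref{eq:Vff}, and all maps in sight are $\Frob$-equivariant, so the isomorphism is one of $\Frob$-modules.

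The main obstacle I anticipate is Step~1 in the completed setting: one must make sure that the recollement triangles, the $!$- and $*$-extension functors, and the action of the monodromy algebra $\hatS$ all behave correctly on pro-objects, and that taking $\Frob$-locally-finite parts $(-)^f$ commutes with the relevant limits — this is where the foundational material of Appendix~\ref{a:compmono} (in particular the exactness of $\VV$ from Corollary~\ref{c:Vexact} and the behavior of $\piddag$) must be invoked carefully. A secondary subtlety is the weight/purity bookkeeping: ``very purity'' is replaced by the tilting condition, and the vanishing $\Hom_{\hsP}(\tDel_v\langle n\rangle, \tnab_v)=0$ for $n<0$ that drives the splitting of the long exact sequence needs the correct normalization of $\tcL_w$ and the twist conventions fixed in \S\ref{ss:monocat}; once those are pinned down the argument is parallel to \cite{Ginz} and to Lemma~\ref{l:hff}.
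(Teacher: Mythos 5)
Your Step~2 (passing from the structure algebra $A_{\leq w}=\End_{\hsP_{\leq w}}(\tcP_{\leq w})^{\opp}$ to $S\otimes S$ via a localization lemma and Lemma~\ref{l:samehom}) coincides with what the paper does; the freeness of $\VV(\tcT_2)$ over $S$ is established exactly as you say, by a $\tnab$-flag. The real divergence is in Step~1, and there is a genuine gap.

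The paper does \emph{not} argue by induction over strata in the style of Lemma~\ref{l:hff}. Instead it uses the projective cover $\tcP_{\leq w}$ and the adjoint pair $(\beta,\alpha)$ with $\alpha=\Hom(\tcP_{\leq w},-)$, $\beta=\tcP_{\leq w}\otimes_{A_{\leq w}}(-)$: the adjunction map $c:\beta\alpha\tcT_1\to\tcT_1$ has kernel and cokernel in $\hsP_+=\ker(\alpha)$, and the key vanishing $\Hom(\calF,\tDel_u)=\Hom(\tnab_u,\calF)=0$ for $\calF\in\hsP_+$ (proved from the socle Lemma~\ref{l:socle}) forces $\coker(c)=0$ and $\Hom(\ker(c),\tcT_2)=0$, giving the isomorphism $\Hom(\tcT_1,\tcT_2)\isom\Hom_{A_{\leq w}}(\alpha\tcT_1,\alpha\tcT_2)$ in one stroke. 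This entirely sidesteps the recollement-by-recollement gluing that your Step~1 requires.

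The gap in your Step~1 is the claim that ``a free-monodromy generator, i.e.\ an element of the completed symmetric algebra $\hatS$ acting via the right monodromy,'' plays the role of $[U]$. It does not: $[U]$ must lie in the kernel of $A_{\leq v}\to A_{<v}$, i.e.\ its action on $\VV(\tcT)$ must annihilate everything supported on the boundary $\tilFl_{<v}$. A single right-monodromy element corresponds to $(0,y)\in V^\vee_H\times V^\vee_H$ and does \emph{not} vanish on the diagonal $\Gamma^*(e)$ (nor on any $\Gamma^*(u)$ for $u<v$). Already for $\SL(2)$ the correct element is (up to normalization) the \emph{difference} of left and right monodromy, generating the ideal of $\Gamma^*(e)$ inside $\calO(\Gamma^*(e)\cup\Gamma^*(s))$; for general $v$ the required element is a generator of the rank-one free module $\Hom(\tcP_{\leq v},\tDel_v(\ell(v)/2))=\ker(A_{\leq v}\to A_{<v})$, and it is not a priori in the image of $S\otimes S$ nor given by a simple monodromy formula. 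One would also need to check that this element realizes the isomorphism $\VV(j_*j^*\tcT)\isom\VV(j_!j^*\tcT)(?)$, the analogue of the invertible map $u$ in diagram~\eqref{eq:u}, and that the weight bookkeeping behaves in the abelian (ungraded $\Hom$) setting rather than the graded $\Ext^\bullet$ setting of Lemma~\ref{l:hff}. None of this is worked out, and the stated identification of $[U]$ with a monodromy generator is false. The paper's adjunction argument is both cleaner and avoids all of these verifications, which is worth internalizing: it replaces a stratum-by-stratum purity argument with a single use of the socle lemma for the projective cover.
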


\subsection{The pro-sheaf $\tcP$}\label{ss:tcP}
We first define a shifted version of $\Av_!$, averaging along $U$-orbits. For $w\in W$, pick a normal subgroup  $J_w\lhd U$ of finite codimension $d(J_w)$ which acts trivially on $\tilFl_{\leq w}$. Let $a^+_{\leq w}:J_w\backslash U\times\tilFl_{\leq w}\to\tilFl_{\leq w}$ be the action morphism. Define
\begin{eqnarray*}
\av_{\leq w,!}:D^b_{m}(\tilFl_{\leq w})&\to& D^b_{c}(\tilFl_{\leq w})\\
\calF&\mapsto& a^+_{\leq w,!}(\Ql[2d(J_w)](d(J_w))\boxtimes\calF).
\end{eqnarray*}
It is easy to see that $\av_{\leq w,!}$ is independent of the choice of $J_w$ and compatible with the restriction functors $\tili_{w,w'}^*$ for the inclusions $\tili_{w,w'}:\tilFl_{\leq w}\hookrightarrow\tilFl_{\leq w'}$, hence it defines a functor
\begin{equation*}
\av_!:\underleftarrow{\hatD}^b_{m}(\qw{\tilFl}{H})\to\underleftarrow{\hsM}
\end{equation*}
which is left adjoint to the forgetful functor $\underleftarrow{\hsM}\to\underleftarrow{\hatD}^b_{m}(\qw{\tilFl}{H})$.

Recall that we have a trivialization $\tilC=C^\sigma\times H$. Let $\tcL_\chi$ be the pro-object $\calL_\chi\boxtimes\tdel$ in $\underleftarrow{D}^b_{m}(\qw{\tilC}{H})$ (where $\tdel$ is the basic free-monodromic local system on $H$). Let $\tilj$ be the open embedding $\tilC\hookrightarrow\tilFl$. We define
\begin{equation*}
\tcP:=\av_!\tilj_!\tcL_\chi\in\underleftarrow{\hsM}.
\end{equation*}

\begin{lemma}\label{l:bigP}
There is a nonzero morphism $\tcP\to\delta$ making $\omega\tcP$ a projective cover of $\omega\delta$ in $\omega\scP$. In particular, we can view $\tcP$ as an object in $\underleftarrow{\hsP}=2-\varprojlim_{w\in W}\hsP_{\leq w}$.
\end{lemma}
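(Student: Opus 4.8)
The plan is to mimic the proof of Lemma \ref{l:PTheta}(1), with the functor $\VV'=\tili_e^*\av_{\chi,!}$ of \S\ref{ss:vv} (``averaging over the whole big cell $C$ and restricting to the base stratum'') playing the role that $\Av^\Theta_\chi$ played there; the object $\tcP=\av_!\tilj_!\tcL_\chi$ is the ``full'' analogue of $\tcP_\Theta=\Av^\Theta_!(\tdel^\Theta_\chi)$.

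\textbf{Step 1 (representability).} First I would prove that for every $w\in W$ and every $\calF\in\hsM_{\leq w}$ there is a natural isomorphism of $\Frob$-modules
\[
\bR\Hom_{\hsM_{\leq w}}(\tcP,\calF)\;\cong\;\VV'(\calF)^f .
\]
Unwinding $\tcP=\av_!\tilj_!\tcL_\chi$ and using the adjunction $(\av_!,\Forg)$ of \S\ref{ss:tcP} followed by the adjunction $(\tilj_!,\tilj^*)$ for $\tilj:\tilC\hookrightarrow\tilFl$, the left-hand side becomes $\bR\Hom_{\underleftarrow{\hatD}^b_m(\qw{\tilC}{H})}(\tcL_\chi,\tilj^*\calF)$. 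Since $\calF$ is a successive extension of $\pidag$ of $U$-equivariant complexes, the contraction of $vC\cap\Fl_{\leq w}$ to $vB/B$ (Lemma \ref{l:contract}) together with the computation of $\calL_\chi$ along the fibres of $\pi^C_s$ (Lemma \ref{l:fiberA1}, exactly as in Lemma \ref{l:projsL}) forces pairing against $\calL_\chi\boxtimes\tdel$ to compute precisely $\tili_e^*$ of the $\chi$-averaging $\av_{\chi,!}\calF$, i.e.\ $\VV'(\calF)$. Alternatively, and perhaps more robustly, I would establish the ``averaging $=$ convolution'' identity $\av_!\,\av_{\chi,!}(-)\cong\tcP\conv{U}(-)$ (the analogue of Lemma \ref{l:PTheta}(3), using that $\av_!$ commutes with right $\conv{U}$) and then invoke the adjunction between left-convolution by $\tcL_\chi$ and its right adjoint (which is essentially itself up to the inversion of $\chi$, thanks to the cleanness as in Cor.\ref{c:clean}); all these manipulations take place in the completed categories via Prop.\ref{compfun}.

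\textbf{Step 2 (projectivity, perversity).} Granting Step 1, t-exactness of $\VV'$ (Cor.\ref{c:Vexact}) shows that $\bR\Hom_{\hsM_{\leq w}}(\tcP,-)$ is concentrated in degree $0$ on the heart $\hsP_{\leq w}$; hence each truncation $\tcP_{\leq w}$ is a projective object of $\hsP_{\leq w}$, and in particular perverse. This gives the ``in particular'' clause: since the truncations are perverse and compatible with $\tili^*_{w,w'}$, the object $\tcP$ lies in $\underleftarrow{\hsP}$. (If one prefers to see perversity by hand, one can first exhibit an exhaustive pro-$\Delta$-flag on $\tcP$ with each $\tDel_w(\ell(w)/2)$ appearing once, exactly as in Lemma \ref{l:PTheta}(2); every $\tDel_w$ being perverse, so is each $\tcP_{\leq w}$.) Then, by Step 1, $\bR\Hom_{\hsM_{\leq w}}(\tcP,\IC_v)\cong\VV'(\IC_v)^f$, which vanishes for $v\neq e$ by Lemma \ref{l:Vkillnomin}, and for $v=e$ equals the trivial module $\Ql$ in degree $0$ and weight $0$ by the computation $\VV'(\pidag\delta)\cong\pidag\delta$ in the proof of Cor.\ref{c:Vexact}. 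Therefore $\omega\tcP_{\leq w}$ is an indecomposable projective of $\omega\hsP_{\leq w}$ with simple top $\omega\delta$, hence the projective cover of $\omega\delta$; the asserted nonzero morphism $\tcP\to\delta$ is the generator of $\bR\Hom(\tcP,\delta)\cong\Ql$. Letting $w$ vary, these covers assemble (since $\av_!$, $\tilj_!$ and truncation are all compatible with the transition maps), so $\omega\tcP$ is the (pro-)projective cover of $\omega\delta$ in $\omega\scP$.

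\textbf{Main obstacle.} The substantive difficulty is foundational rather than conceptual: one must check that $\tcP=\av_!\tilj_!\tcL_\chi$, although built from the infinite-dimensional big cell $C$, genuinely has bounded and perverse truncations $\tcP_{\leq w}\in\hsM_{\leq w}$; that the adjunctions used in Step 1 are valid in the completed monodromic categories (this is where Prop.\ref{compfun} and the *-complex formalism of \S\ref{sss:starsheaf} do the work); and that pairing against $\tcL_\chi$ on $\tilC$ really reproduces $\VV'$ and not some larger completion of it. A secondary bookkeeping point is keeping track of the shift $[2d(J_w)](d(J_w))$ built into $\av_!$ against the normalization $\tdel=\tcL[r](r)$ inside $\tcL_\chi$, so that the distinguished map $\tcP\to\delta$ sits in degree $0$ and weight $0$ as claimed.
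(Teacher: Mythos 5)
Your core computation is the paper's: the adjunctions for $\av_!$ and $\tilj_!$ reduce $\RHom(\tcP,\IC_w)$ to a statement on the big cell, and the vanishing $\pi^C_{s,!}\calL_\chi=0$ (Lemma \ref{l:projsL}, via $\IC_w\cong\tilpi_s^!\calF$ for $w\neq e$) kills everything except $w=e$, where one gets $i_e^*\calL_\chi=\Ql$ in degree $0$; exactness of $\RHom(\tcP,-)$ on the heart, hence projectivity and the identification of the unique simple quotient, then follows because every object of $\omega\scM^{\lessgtr 0}$ is a successive extension of appropriately shifted $\omega\IC_w$'s. Where you differ is in packaging this as a preliminary ``representability'' statement $\RHom_{\hsM}(\tcP,-)\cong\VV'(-)^f$, and here you should be careful: that functorial isomorphism is precisely Lemma \ref{l:bigprep}, whose proof in the paper constructs the comparison map $\beta$ using $\VV(\tcP)^{\Funi}=\Ql$, which rests on the $\tDel$-flag of $\tcP$ (Cor.\ref{c:tDelflagP}), which in turn is deduced from the present lemma — so taking full representability as Step 1 is circular unless you supply an independent construction of the natural transformation. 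Matching the two functors only on the generators $\IC_w$ (which is all your argument actually verifies, and all the lemma needs) avoids this, but then you are literally running the paper's proof. The same caveat applies to your parenthetical alternative via an ``exhaustive pro-$\Delta$-flag'': in the paper that flag is a corollary of this lemma, not an input to it, and the other alternative (cleanness of $\tilj_!\tcL_\chi$ on the infinite-dimensional big cell, by analogy with Cor.\ref{c:clean}) is not established anywhere and would be a substantial additional claim.
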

\begin{proof}
We first show that
\begin{equation}\label{homPIC}
\Hom_{\underleftarrow{\scM}}(\tcP,\IC_w)=\begin{cases}0 & w\neq e;\\ \Ql & w=e \end{cases}.
\end{equation}
Since $\av_!$ is adjoint to the forgetful functor, we have $\Hom(\tcP,\IC_w)=\Hom_{\Fl}(\tilj_!\tcL_\chi,\IC_w)$. If $w\neq e$, then $\IC_w$ has the form $\tilpi^!_s\calF$ for some simple reflection $s$ and some complex $\calF\in D^b_{m}(G/P_s)$ ($\tilpi_s:\tilFl\to G/P_s$ is the projection). Hence
\begin{eqnarray*}
\Hom_{\tilFl}(\tilj_!\tcL_\chi,\IC_w)&=&\Hom_{\tilFl}(\tilj_!\tcL_\chi,\tilpi_s^!\calF)\\
&=&\Hom_{G/P_s}(\tilpi_{s,!}\tilj_!\tcL_\chi,\tilpi_s^!\calF)\\
&=&\Hom_{G/P_s}(\pi^C_{s,!}\pi^C_!\tcL_\chi,\calF)=\Hom_{G/P_s}(\pi^C_{s,!}\calL_\chi,\calF).
\end{eqnarray*}
Here we used the fact that $\pi^C_!\tcL_\chi=\calL_\chi$. By Lemma \ref{l:projsL}, $\pi^C_{s,!}\calL_\chi=0$. Hence $\Hom(\tcP,\IC_w)=0$ for $w\neq e$.

For $w=e$,
\begin{equation*}
\Hom(\tcP,\IC_e)=\Hom_{\tilFl}(\tilj_!\tcL_\chi,\pi^!\delta)=i_e^*\calL_\chi=\Ql.
\end{equation*}
This proves \eqref{homPIC}.

We then prove $\RHom(\tcP,-):\hsM\to D^b(\Vect)$ is an exact functor: i.e., $\Ext^{<0}(\tcP,\hsM^{\geq0})=0$ and $\Ext^{>0}(\tcP,\hsM^{\leq0})=0$. By Lemma \ref{l:prot}, it suffices to show that $\Ext^{<0}(\tcP,\scM^{\geq0})=0$ and $\Ext^{>0}(\tcP,\scM^{\leq0})=0$. But this follows from \eqref{homPIC}, because every object in $\omega\scM^{\geq0}$ (resp. $\omega\scM^{\leq0}$) is a successive extension of $\omega\IC_w[\leq0]$ (resp. $\omega\IC_w[\geq0]$). This finishes the proof.
\end{proof}

\begin{cor}\label{c:tDelflagP}
The object $\tcP\in\underleftarrow{\hsM}$ is a successive extension of $\tDel_w(\ell(w)/2)$ for $w\in W$, each appearing exactly once.
\end{cor}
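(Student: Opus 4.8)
The plan is to first equip $\tcP$ with a $\tDel$-flag and then compute each multiplicity from a $\Hom$-space into a free-monodromic costandard sheaf. First I would invoke Lemma \ref{l:bigP}: it identifies $\omega\tcP$ with a projective cover of $\omega\delta$, and its proof already yields that $\RHom_{\hsM}(\tcP,-)$ is exact, hence $\Ext^{>0}_{\hsM}(\tcP,\tnab_w)=0$ for all $w\in W$. Each $\hsP_{\leq w}$ is a highest-weight category with standard objects $\{\tDel_v\}_{v\leq w}$ (the standard--costandard orthogonality and the perversity of the $\tnab_v$ are established in \S\ref{s:mon}), so an object with no higher $\Ext$ into the $\tnab_v$ carries a standard filtration; applying this to the truncation $\tcP_{\leq w}$ --- lifting a $\Delta$-flag of $\piddag\tcP_{\leq w}$ by the last sentence of Remark \ref{r:projresmix} --- and passing to the pro-limit $\tcP=\varprojlim_w\tcP_{\leq w}$, I obtain a (generally infinite) $\tDel$-flag with subquotients among the $\tDel_w(?)$, $w\in W$.

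Next I would pin down the multiplicities. For any $\tcF$ with a $\tDel$-flag, the orthogonality $\Hom_{\hsM}(\tDel_v,\tnab_w)\cong\hatS$ (for $v=w$, and $0$ otherwise) together with $\Ext^{>0}_{\hsM}(\tDel_v,\tnab_w)=0$ shows that the multiplicity of a twist $\tDel_w(j)$ in the flag equals the $\hatS$-rank of the corresponding graded component of $\Hom_{\hsM}(\tcF,\tnab_w)$. So the statement reduces to proving $\Hom_{\hsM}(\tcP,\tnab_w)\cong\hatS(-\ell(w)/2)$ for every $w$, which is consistent only with $\tDel_w(\ell(w)/2)$ occurring exactly once. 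I would get this identity by the free-monodromic version of BGG reciprocity: applying $\piddag$, $\Hom_{\hsM}(\tcP,\tnab_w)$ is free over $\hatS$ of rank $\dim\Hom_{\scD}(\piddag\tcP,\nabla_w)$, and since $\piddag\tcP$ is the projective cover of $\delta$ this rank equals the multiplicity of $\delta$-type subquotients in $\nabla_w$, which by Lemma \ref{l:socle}(2) is the single copy $\delta(-\ell(w)/2)$ --- yielding both the rank $1$ and the twist. As a geometric alternative, $\av_!\dashv\Forg$ and $\tilj_!\dashv\tilj^*$ give $\Hom_{\hsM}(\tcP,\tnab_w)\cong\Hom_{\tilFl_w\cap\tilC}(\tcL_\chi|_{\tilFl_w\cap\tilC},\tcL_w|_{\tilFl_w\cap\tilC})$, and a Künneth splitting along $\tilC=C^\sigma\times H$ combined with the fibrewise non-degeneracy of $\chi=\sum_s\rho_s$ over the $G/P_s$ (Lemma \ref{l:fiberA1}) makes the $C$-factor contribute one-dimensional Artin--Schreier cohomology and the $H$-factor contribute $\hatS$ with the twist $-\ell(w)/2$ coming from the normalization of $\tcL_w$; one can also phrase this by identifying $\Hom_{\hsM}(\tcP,-)^f$ with the functor $\VV$ of \S\ref{ss:vv} and quoting $\VV(\tnab_w)\cong\calO_{\Gamma^*(w)}(-\ell(w)/2)$ (Lemma \ref{l:vst}).

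The step I expect to be the main obstacle is not a single computation but the bookkeeping inherent to the completed pro-monodromic framework: verifying that the projectivity and $\RHom$-exactness recorded for $\tcP\in\underleftarrow{\hsP}$ descend to the finite truncations $\tcP_{\leq w}\in\hsP_{\leq w}$, that the standard filtrations on the $\tcP_{\leq w}$ are compatible with the transition functors so that they glue to a filtration of the pro-object, and --- most delicately --- that $\Hom_{\hsM}(\tcP,\tnab_w)$ is genuinely free of rank one over $\hatS$ with the Tate twist correctly matched against $\Hom_{\hsM}(\tDel_w,\tnab_w)\cong\hatS$, rather than only of the expected size over $\Ql$. In the geometric route the difficulty instead sits in controlling the Artin--Schreier-twisted cohomology of the (in general singular) locus $\Fl_w\cap C$, where one must use the genericity of $\chi$ fibre by fibre over the minimal partial flag varieties $G/P_s$.
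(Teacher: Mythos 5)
Your core computation coincides with the paper's: combine Lemma \ref{l:bigP} (so that $\omega\piddag\tcP$ is a projective cover of $\omega\delta$) with Lemma \ref{l:socle}(2) (so that $\delta(-\ell(w)/2)$ is the unique constituent of $\nabla_w$ with underlying complex $\omega\delta$) to obtain $\Hom(\piddag\tcP,\nabla_w)=\Hom(\tcP,\pidag\nabla_w)=\Ql(-\ell(w)/2)$. Where you diverge is in converting this into the flag statement. The paper does not first establish a $\tDel$-flag abstractly: it reads off from the Hom computation that $i^*_w\piddag\tcP\cong\Ql(\ell(w)/2)$, then invokes the last part of Remark \ref{r:projresmix} to lift this to $\tili^*_w\tcP\cong\tcL_w(\ell(w)/2)$, after which the flag with each $\tDel_w(\ell(w)/2)$ occurring exactly once is automatic from the recollement triangles $\tili_{w,!}\tili^*_w\to\id\to\tili_{<w,*}\tili^*_{<w}$ applied stratum by stratum. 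Your route instead asserts that $\hsP_{\leq w}$ is a highest-weight category over $\hatS$ in which vanishing of $\Ext^{>0}(-,\tnab_v)$ forces a standard filtration, and then counts multiplicities by a BGG-reciprocity argument. The multiplicity count is sound once a flag exists, but the highest-weight formalism for the completed pro-category over the base $\hatS$ (rather than a field) is not set up anywhere in the paper and would require its own verification --- e.g.\ that $\Ext$-vanishing against $\hatS$ in $\hsM_v\cong D^b(\hatS,\Frob)$ forces $\tili^*_v\tcP$ to be a free module concentrated in perverse degree $0$; this detour is avoidable, since the stalk identification gives the flag directly. One genuine caution: your proposed shortcut of identifying $\Hom_{\hsM}(\tcP,-)^f$ with $\VV$ and quoting $\VV(\tnab_w)\cong\calO_{\Gamma^*(w)}(-\ell(w)/2)$ from Lemma \ref{l:vst} is circular in the paper's logical order, because the proof of Lemma \ref{l:bigprep} (which establishes $\RHom(\tcP,-)^f\cong\VV$) itself cites the present corollary.
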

\begin{proof}
By Lemma \ref{l:socle}(2), $\delta(-\ell(w)/2)$ is the only simple constituent of $\nabla_w$ whose underlying complex is $\omega\delta$. By Lemma \ref{l:bigP}, we have
\begin{equation*}
\Hom(\piddag\tcP,\nabla_w)=\Hom(\tcP,\pidag\nabla_w)=\Ql(-\ell(w)/2).
\end{equation*}
This means $\piddag\tili^*_w\tcP=i^*_w\piddag\tcP\cong\Ql(\ell(w)/2)$. By Remark \ref{r:projresmix}, $\tili^*_w\tcP\cong\tcL_w(\ell(w)/2)$, which proves the corollary.
\end{proof}


Composing with the exact functor $(-)^f$, the functor $\Hom(\tcP,-)^f$ on $\hsP$ is still exact. Since $\Hom(\tcP,-)^f$ carries an action of $S\otimes S$ coming from the left and right $H$-monodromy, it can be lifted to an exact functor
\begin{equation}\label{hPnonder}
\Hom(\tcP,-)^f:\hsP\to \Mod(S\otimes S,\Frob).
\end{equation}
We define
\begin{equation*}
\RHom(\tcP,-)^f:\hsP\to D^b(S\otimes S,\Frob)
\end{equation*}
to be the derived functor of \eqref{hPnonder}. It is easy to see that, the $i$-th cohomology of $\RHom(\tcP,\calF)^f$ is nothing but the $\Frob$-locally finite part of Hom-space between $\omega\tcP$ and $\omega\calF[i]$ as pro-objects in $\scM$.

\begin{lemma}\label{l:bigprep}
There is a natural isomorphism of functors
\begin{equation*}
\RHom(\tcP,-)^f\cong\VV:\hsM\to D^b(S\otimes S,\Frob).
\end{equation*}
Moreover, such an isomorphism is unique up to a scalar.
\end{lemma}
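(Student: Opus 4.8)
The plan is to obtain the isomorphism first for the underived, exact functors $\hsP\to\Mod(S\otimes S,\Frob)$ of \eqref{Vnonder} and \eqref{hPnonder}, and then to pass to derived functors; this is legitimate because $\VV$ is t-exact (Cor.\ref{c:Vexact}) and $\RHom(\tcP,-)^f$ is exact (Lemma \ref{l:bigP}). The isomorphism is produced from the very adjunction used to define $\tcP$: since $\av_!$ is left adjoint to the forgetful functor and $\tcP=\av_!\tilj_!\tcL_\chi$, for $\calF\in\hsP$ there are functorial identifications
\begin{equation*}
\Hom_{\underleftarrow{\hsM}}(\tcP,\calF)^f\cong\Hom_{\underleftarrow{\hatD}^b_m(\qw{\tilFl}{H})}(\tilj_!\tcL_\chi,\calF)^f\cong\Hom_{\tilC}(\tcL_\chi,\tilj^*\calF)^f ,
\end{equation*}
the last step because $\tilj\colon\tilC\hookrightarrow\tilFl$ is an open embedding.

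The heart of the matter is to identify this $\Hom$ over $\tilC$ with $\VV(\calF)^f=(\tili_e^*\av_{\chi,!}\calF)^f$, where $\av_{\chi,!}\calF=m_!(\calL_\chi\stackrel{U}{\boxtimes}\calF)$ for $m$ the multiplication in the convolution diagram defining $\av_{\chi,!}$. Proper base change computes the restriction of this convolution to the stratum $\tilFl_e$ as a pushforward along the fibration $m^{-1}(\tilFl_e)\to\tilFl_e$; the inversion assignment $cU\mapsto(cU,c^{-1}U)$ identifies $m^{-1}(\tilFl_e)$ with $\tilC$ (with its projection $\tilC\to\tilFl_e$), over which $\calL_\chi\stackrel{U}{\boxtimes}\calF$ restricts to $\calL_\chi$ --- together with the $\tdel$ factor --- tensored with $\tilj^*\calF$. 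Matching the resulting relative cohomology along the big cell with the $\Hom_{\tilC}$ of the first step is then Verdier duality on the even-dimensional pro-affine space $C$ together with the cleanness of $\calL_\chi$, i.e.\ the isomorphism $\tilj_!\tcL_\chi\isom\tilj_*\tcL_\chi$ which follows from Lemma \ref{l:projsL} (and its analogue): under cleanness the $!$- and $*$-extensions agree, the inversion exchanges $\calL_\chi$ with its dual (as $\chi$ changes sign), and the two a priori different pairings become literally the same once the normalising shifts and twists packaged into $\tcL_\chi$, $\tdel$ and $\av_{\chi,!}$ are accounted for. Finally one verifies that the $S\otimes S$-structures are carried to one another: the right copy of $S$ acts via the right $H$-monodromy of $\calF$ on both sides, while the left copy acts via the residual left $H$-action on $\tilC$ on the $\tcP$-side and via the left $H$-monodromy in the definition of $\VV$; this is precisely where the $H\times H$-conventions of \S\ref{ss:monocat} and the mechanism of Remark \ref{r:midS} enter, to make the two identifications compatible.

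Deriving the resulting isomorphism and using exactness on both sides gives $\RHom(\tcP,-)^f\cong\VV$ on all of $\hsM$. For uniqueness, two such isomorphisms differ by a natural automorphism $\eta$ of $\VV$, equivalently (via the isomorphism just built) of $\RHom(\tcP,-)^f$ as a functor to $D^b(S\otimes S,\Frob)$; by a Yoneda-type argument $\eta$ is precomposition with a $\Frob$-equivariant endomorphism $\phi$ of the pro-object $\tcP$, i.e.\ with an element of $\bigl(\End(\omega\tcP)^f\bigr)^{\Frob}$. Combining the $\tDel$-flag of $\tcP$ (Cor.\ref{c:tDelflagP}) with \eqref{homPIC}, the projectivity of $\tcP$, and Lemma \ref{l:socle} (or its free-monodromic analogue), one sees that $\RHom(\tcP,\tDel_w(\ell(w)/2))^f$ is $\Ql$ in $\Frob$-weight $0$ when $w=e$ and has strictly negative $\Frob$-weights when $w\neq e$; hence $\bigl(\End(\omega\tcP)^f\bigr)^{\Frob}=\Ql\cdot\id$, so $\phi=\lambda\cdot\id$ for some $\lambda$, and evaluating on the unit $\tdel$ --- where Lemma \ref{l:vst} gives the rank-one free module $\VV(\tdel)=\calO_{\Gamma^*(e)}$ over the diagonal $S$ --- forces $\lambda\in\Ql^{\times}$.

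I expect the main obstacle to be the second paragraph: performing the base-change identification of the stalk of the genuinely infinite-dimensional averaging convolution with a $\Hom$-complex over the big cell entirely inside the $*$-complex and pro-object formalism of Appendix \ref{a:compmono}, and keeping track of all shifts, twists and the cleanness input so that $\VV$ and $\RHom(\tcP,-)^f$ come out equal on the nose --- which is what makes the naturality, and therefore the uniqueness, meaningful.
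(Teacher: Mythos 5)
Your proposal takes a genuinely different route for the existence part. The paper's own proof is short and abstract: it computes $\VV(\tcP)^{\Funi}=\Ql$ directly from the $\tDel$-flag of $\tcP$ (Cor.\ref{c:tDelflagP}) and Lemma \ref{l:vst}, uses the resulting canonical class (essentially $\id_{\tcP}\in\VV(\tcP)^{\Frob}$) to define a natural transformation $\beta\colon\RHom(\tcP,-)^f\to\VV$ via the Yoneda/composition pairing, and then checks that $\beta$ is an isomorphism on the generators $\IC_w$ -- trivially so for $w\neq e$, where both sides vanish by Lemma \ref{l:Vkillnomin} and \eqref{homPIC}, and by construction for $w=e$. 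There is never any need to unwind the two geometrically-defined functors and match them directly. Your uniqueness argument, on the other hand, is essentially identical to the paper's (compute the $\Frob$-invariant part of $\End(\tcP)^f$ via the $\tDel$-flag and Lemma \ref{l:vst}).

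Your construction of the isomorphism, by contrast, tries to unravel both sides to explicit complexes over the big cell $\tilC$ and match them by proper base change, inversion, cleanness, and ``Verdier duality on the even-dimensional pro-affine space $C$''. This is the concrete step you flag as the main obstacle, and it is a genuine gap rather than a deferred routine check. The big cell $C$ is an infinite-dimensional ind-scheme, so the usual duality statement with its shift by $2\dim C$ does not make sense as written; everything would have to be recast stratum by stratum inside the $*$-complex formalism of Appendix \ref{a:compmono}, keeping track of how $H^{*}_c$ and $\RHom$ interact with the projective and inductive limits, and verifying that all shifts and twists cancel to make $\VV$ and $\RHom(\tcP,-)^f$ agree on the nose rather than up to an undetermined twist. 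Without that, your existence argument has an unfilled hole precisely at its center; the paper's $\beta$-construction is designed to avoid exactly this.
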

\begin{proof}
First we claim that $\VV(\tcP)^{\Funi}=\Ql$. In fact, by Corollary \ref{c:tDelflagP}, $\tcP$ is a successive extension of $\tDel(\ell(w)/2)$. By Lemma \ref{l:vst}, $\VV(\tDel(\ell(w)/2))\cong\calO_{\Gamma^*(w)}(\ell(w))$ has negative $\Frob$-weights except when $w=e$, in which case $\VV(\tdel)^{\Funi}=\Ql$.

The identity $\VV(\tcP)^{\Funi}=\Ql$ gives a map, functorial in $\calF\in\hsM$:
\begin{equation*}
\beta(\calF):\RHom(\tcP,\calF)^f=\RHom(\tcP,\calF)^f\otimes\VV(\tcP)^{\Frob}\hookrightarrow\RHom(\tcP,\calF)^f\otimes\VV(\tcP)\to\VV'(\calF).
\end{equation*}
We claim $\beta(\calF)$ is a quasi-isomorphism for any $\calF\in\hsM$. By our remarks following the definitions of the derived $\VV(-)$ and $\RHom(\tcP,-)$, for a general object $\calF=\prolim\calF_n$, the $i$-th cohomology groups of $\VV(\calF)$ and $\RHom(\tcP,\calF)$ are computed as the projective limits of $i$-th cohomology groups of $\VV(\calF_n)$ and $\RHom(\tcP,\calF_n)$, hence it suffices to show that $\beta(\calF)$ is an isomorphism for any $\calF\in\scM$, or even for the generating objects $\{\IC_w\}$. Using Lemma \ref{l:Vkillnomin}, $\beta(\IC_w)$ is trivially an isomorphism for $w\neq e$; for $w=e$, $\beta(\delta):\Ql\to\Ql$ is also an isomorphism by construction. Hence $\beta(\calF)$ is an isomorphism for all $\calF\in\scM$, hence also for all $\calF\in\hsM$.

The uniqueness (up to scalar) of $\beta$ follows from the fact that the $\Frob$-equivariant endomorphisms of the functor $\RHom(\tcP,-)^f$ reduce to $\VV(\tcP)^{\Frob}=\Ql$.
\end{proof}


The following result is the counterpart of Proposition \ref{p:hcomp}. In the statement, we need to consider the convolution $\tcP\conv{U}\tcP$, which we understand as the pro-object $``\varprojlim_{v,w\in W}''\tili^{*}_{\leq v}\tcP\conv{U}\tili^{*}_{\leq w}\tcP$ in $\pro\hsM$. Note that this object does not have finite dimension stalks, and hence is not an object in $\underleftarrow{\hsM}$.

\begin{prop}\label{p:vcomp}
\begin{enumerate}
\item []
\item The pro-object $\tcP$ has a coalgebra structure with respect to the convolution $\conv{U}$; i.e., there is a comultiplication map $\mu:\tcP\to\tcP\conv{U}\tcP$ and a counit map $\epsilon:\tcP\to\tdel$ satisfying obvious co-associativity and compatibility conditions. Moreover, this coalgebra structure is unique once we fix the counit map $\epsilon$, which is unique up to a scalar.
\item The functor $\VV$ has a monoidal structure which intertwines the convolution $\conv{U}$ on $\hsM$ and the tensor product $(N_1,N_2)\mapsto N_1\Ltimes_{S}N_2$ (with respect to the right $S$-action on $N_1$ and the left $S$-action on $N_2$) on $D^b(S\otimes S,\Frob)$.
\end{enumerate}
\end{prop}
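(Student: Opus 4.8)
The plan is to derive both statements from the identification $\VV\cong\RHom(\tcP,-)^f$ of Lemma \ref{l:bigprep}, in the same spirit as the proof of Prop.\ref{p:hcomp} on the equivariant side. Granting (1), the bifunctoriality of convolution sends a pair of morphisms $f_i\colon\tcP\to\calF_i$ ($i=1,2$) to $f_1\conv{U}f_2\colon\tcP\conv{U}\tcP\to\calF_1\conv{U}\calF_2$; composing with the comultiplication $\mu\colon\tcP\to\tcP\conv{U}\tcP$ and taking $\Frob$-locally finite parts gives a natural pairing
\begin{equation*}
\Hom(\tcP,\calF_1)^f\otimes\Hom(\tcP,\calF_2)^f\longrightarrow\Hom(\tcP,\calF_1\conv{U}\calF_2)^f .
\end{equation*}
Deriving this transformation — using that convolution with free-monodromic standards and costandards is t-exact, or more safely passing through the dg-models of Appendix \ref{a:dgmodel} — yields a morphism $\VV(\calF_1)\otimes\VV(\calF_2)\to\VV(\calF_1\conv{U}\calF_2)$. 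By Remark \ref{r:midS} the right $\hatS$-action inherited from $\calF_1$ and the left $\hatS$-action inherited from $\calF_2$ agree on $\calF_1\conv{U}\calF_2$, so, provided $\mu$ is compatible with the $\hatS\otimes\hatS$-monodromy, this morphism descends to $\VV(\calF_1)\Ltimes_{\hatS}\VV(\calF_2)\to\VV(\calF_1\conv{U}\calF_2)$. The associativity and unit coherences for the resulting monoidal structure on $\VV$ then follow formally from co-associativity of $\mu$, the counit axiom for $\epsilon$, and the coherences of $\conv{U}$ on $\hsM$ and of $\Ltimes_{\hatS}$ on $D^b(\hatS\otimes\hatS,\Frob)$.

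For part (1), the counit $\epsilon\colon\tcP\to\tdel$ is the canonical surjection onto the monoidal unit; by Lemma \ref{l:bigprep} and Lemma \ref{l:vst} (case $w=e$) the space $\Hom(\tcP,\tdel)^f\cong\VV(\tdel)$ has one-dimensional weight-$0$ part, which fixes $\epsilon$ up to a scalar. To build the comultiplication I would follow the finite-type template of Lemma \ref{l:PTheta}(3) and Remark \ref{rm:Pcoalg}: starting from the description $\tcP=\av_!\tilj_!\tcL_\chi$ of \S\ref{ss:tcP} and the cleanness vanishings of Lemma \ref{l:projsL} and Lemma \ref{l:Vkillnomin}, one checks that the endofunctor $\tcP\conv{U}(-)$ of $\hsM$ underlies a comonad, with counit $\tcP\conv{U}(-)\Rightarrow(-)$ induced by $\epsilon$ together with the adjunction $\av_!\dashv\Forg$, and one sets $\mu$ equal to the value of the comonad comultiplication at the unit $\tdel$. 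Co-associativity of $\mu$ and the counit axioms are inherited from the comonad axioms, and compatibility with the $\hatS\otimes\hatS$-monodromy is built in. Uniqueness of the coalgebra structure once $\epsilon$ is fixed follows because, by Lemma \ref{l:bigprep}, Lemma \ref{l:vst} and Cor.\ref{c:tDelflagP}, the weight-$0$ vectors in $\Hom(\tcP,\tcP\conv{U}\tcP)^f$ that are split by $\epsilon$ on either tensor factor form a one-dimensional space.

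It then remains to see that $\VV(\calF_1)\Ltimes_{\hatS}\VV(\calF_2)\to\VV(\calF_1\conv{U}\calF_2)$ is an isomorphism. Since $\hsM$ is generated under shifts, twists, cones and direct summands by the free-monodromic standards $\tDel_w$ (equivalently by the costandards $\tnab_w$) and both sides are exact in each variable, it is enough to treat $\calF_i\in\{\tDel_{w_i},\tnab_{w_i}\}$. Writing each $w_i$ as a product of simple reflections and invoking Lemma \ref{l:convst} together with the $\SL(2)$-computations of Appendix \ref{a:SL2} to control such convolutions, the assertion reduces by Lemma \ref{l:vst} to the algebraic identity $\calO_{\Gamma^*(w_1)}\Ltimes_{\hatS}\calO_{\Gamma^*(w_2)}\cong\calO_{\Gamma^*(w_1w_2)}$. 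This holds for purely linear-algebra reasons: each $\Gamma^*(w_i)$ projects isomorphically onto the relevant copy of $V_H^\vee$, so $\calO_{\Gamma^*(w_i)}$ is free of rank one over the corresponding copy of $\hatS$, the derived tensor product carries no higher $\Tor$, and the underived product is the coordinate ring of the composed graph $\Gamma^*(w_1w_2)$. A five-lemma argument along the $\tDel$- (or $\tnab$-) flags then propagates the isomorphism to all of $\hsM\times\hsM$, and compatibility with the outer $\hatS\otimes\hatS$-structure is read off from Lemma \ref{l:vst} and Remark \ref{r:midS}.

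The main difficulty I anticipate is the construction of $\mu$ and the verification of co-associativity \emph{inside the pro-category} $\underleftarrow{\hsM}$: one must control convolutions of pro-objects and the order in which the nested projective limits are taken, exactly the bookkeeping already confronted in \S\ref{ss:Mconv}. By contrast the $\hatS$-linear input is elementary, the graphs involved being those of linear automorphisms, and only the $\SL(2)$ case is needed to pin down the convolutions that feed into it.
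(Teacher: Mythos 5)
Your plan for part (2) is a legitimate alternative to the paper's argument. The map $\VV(\calF_1)\Ltimes_{\hatS}\VV(\calF_2)\to\VV(\calF_1\conv{U}\calF_2)$ is constructed exactly as in the paper, via $f_1\otimes f_2\mapsto \mu^*(f_1\conv{U}f_2)$ and the $\hatS$-descent of Remark \ref{r:midS}. Where you diverge is in checking that this map is an isomorphism: you reduce to $\calF_i\in\{\tDel_{w_i},\tnab_{w_i}\}$ and then to simple reflections, which forces you into the $\SL(2)$ calculations of Appendix \ref{a:SL2} (in particular the case $s=s'$, where $\tDel_s\conv{U}\tDel_s$ is no longer a single standard and the graph identity $\calO_{\Gamma^*(w_1)}\otimes_{\hatS}\calO_{\Gamma^*(w_2)}\cong\calO_{\Gamma^*(w_1w_2)}$ does not apply as stated). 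The paper instead checks on the simpler generators $\pidag\IC_w$ of $\scM$, where by Lemma \ref{l:Vkillnomin} the functor $\VV$ kills everything except $\pidag\delta$: both sides vanish unless $w=w'=e$, and the computation is trivial. Your route works, but it needs the extra $\SL(2)$ base case and is substantially longer; the paper's choice of generators is doing real work here.

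For part (1) I see a genuine gap. You propose to build $\mu$ by exhibiting a comonad structure on $\tcP\conv{U}(-)$, on the model of Remark \ref{rm:Pcoalg} and Lemma \ref{l:PTheta}(3). But that mechanism runs on the adjoint pair $(\Av^\Theta_!,\Av^\Theta_\chi)$ of Lemma \ref{l:adj}, together with the identity $\Av^\Theta_!\Av^\Theta_\chi\cong\tcP_\Theta\conv{U}(-)$, both of which are established only for $\Theta$ of \emph{finite type}. For the full big-cell character, the averaging $\av_{\chi,!}$ of \S\ref{ss:vv} lands in the category of $*$-complexes $\underleftarrow{\hatD}^b_m(\qw{\tilFl}{H})$, and the paper deliberately avoids setting up an adjunction $\av_!\dashv\av_\chi$ or the analogous formula $\tcP\conv{U}(-)\cong\av_!\av_\chi(-)$; this is precisely the ``technical complication'' of infinite-dimensional orbits flagged in the introduction, and is why $\VV$ is defined indirectly via $\VV'$ and its derived functor rather than as an adjoint. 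So your comonad cannot be invoked without first proving a new lemma that the paper neither states nor uses. What the paper does instead is prove existence and uniqueness of $\mu$ directly by showing that $\hom_{\hsP}(\tcP,\tcP\conv{U}\tcP)\cong\hom_{\hsP}(\tcP,\delta)=\Ql$ (using the $\tDel$-flag of Cor.\ref{c:tDelflagP} and Lemma \ref{l:vst} to see that $\delta$ occurs in $\tcP\conv{U}\tcP$ only through $\tdel\conv{U}\tdel$), and likewise for the triple convolution to get co-associativity. You in fact gesture at exactly this one-dimensionality in your closing sentence on uniqueness; that observation, upgraded from a uniqueness remark to the construction itself, is the correct fix and is what the paper does.
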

\begin{proof}
(1) By Lemma \ref{l:vst}, we have
\begin{equation*}
\hom_{\underleftarrow{\hsP}}(\tcP,\tdel)=\Hom_{\underleftarrow{\hsP}}(\tcP,\tdel)^{\Frob}\cong \hatS^{\Frob}=\Ql,
\end{equation*}
Hence we have a map $\epsilon:\tcP\to\tdel$ in $\hsM$, unique up to a scalar. We fix such an $\epsilon$.

Using the argument in the proof of Lemma \ref{l:bigprep}, we see that the only simple constituent of $\tcP\conv{U}\tcP$ isomorphic to $\delta$ (and not just a twist of it) is the quotient $\tcP\conv{U}\tcP\to\tdel\conv{U}\tdel=\tdel\to\delta$. In other words,
\begin{equation}\label{hommu}
\hom_{\pro\hsM}(\tcP,\tcP\conv{U}\tcP)\cong\hom_{\underleftarrow{\hsM}}(\tcP,\delta)=\Ql,
\end{equation}
which gives a map $\mu:\tcP\to\tcP\conv{U}\tcP$ in $\pro\hsM$, unique up to a scalar. If we require that $\tcP\xrightarrow{\mu}\tcP\conv{U}\tcP\xrightarrow{\epsilon\conv{U}\epsilon}\tdel\conv{U}\tdel=\tdel$ be the same as $\epsilon$, then $\mu$ is uniquely determined. The co-associativity of $\mu$ follows essentially from the fact that
\begin{equation*}
\hom_{\pro\hsM}(\tcP,\tcP\conv{U}\tcP\conv{U}\tcP)\cong\hom_{\underleftarrow{\hsM}}(\tcP,\delta)=\Ql,
\end{equation*}
which is proved using the same argument as \eqref{hommu}.

(2) Using Lemma \ref{l:bigprep} and the coalgebra structure $\mu$ defined in (1), we have a map functorial in $\calF_1,\calF_2\in\hsM$:
\begin{eqnarray*}
\VV(\calF_1)\otimes\VV(\calF_2)&=&\Hom(\tcP,\calF_1)^f\otimes\Hom(\tcP,\calF_2)^f\\
&\xrightarrow{\alpha}&\Hom_{\pro\hsM}(\tcP\conv{U}\tcP,\calF_1\conv{U}\calF_2)^f\\
&\xrightarrow{\mu^*}&\Hom(\tcP,\calF_1\conv{U}\calF_2)^f=\VV(\calF_1\conv{U}\calF_2).
\end{eqnarray*}
By Remark \ref{r:midS}, the map $\alpha$ above factors through $\Hom(\tcP,\calF_1)^f\otimes_{S}\Hom(\tcP,\calF_2)^f$ because the right $S$-action on the first term and the left $S$-action on the second term coincide after applying $\alpha$. Therefore, we get a bifunctorial map
\begin{equation}\label{eq:betabifun}
\beta(\calF_1,\calF_2):\VV(\calF_1)\Ltimes_{S}\VV(\calF_2)\to\VV(\calF_1\conv{U}\calF_2).
\end{equation}
The compatibility of $\beta$ with the monoidal structures $\conv{U}$ and $\otimes_{S}$ follows from the coalgebra structure of $\tcP$ given in (1).

It remains to show that $\beta(\calF_1,\calF_2)$ is an isomorphism for any $\calF_1,\calF_2\in\hsM$. Clearly, it suffices to show that $\beta|_{\scM\times\scM}$ is an isomorphism. Since $\beta(-,-)$ is a natural transformation between bi-exact bifunctors, it suffices to show that $\beta(\calF_1,\calF_2)$ is an isomorphism for generating objects of $\scM$, say $\calF_1=\pidag\IC_{w}$ and $\calF_2=\pidag\IC_{w'}$ for $w,w'\in W$. If $w$ and $w'$ are not both equal $e$, the convolution $\calF_1\conv{U}\calF_2=\pidag(\IC_w\conv{B}\IC_{w'})$ does not have simple constituent isomorphic to $\delta$ (for example, if $w'\neq e$, then $\IC_{w'}$ is the pull-back of a complex on $G/P_s$ for some simple reflection $s$, hence so is $\IC_w\conv{B}\IC_{w'}$). Therefore, in this case, both sides of \eqref{eq:betabifun} are zero, hence $\beta$ is trivially an isomorphism.

In the case $w=w'=e$, both sides of \eqref{eq:betabifun} are isomorphic to $S$ viewed as an $S$-bimodule, and the map $\beta(\delta,\delta)$ is also easily seen to be an isomorphism. This completes the proof.
\end{proof}


\subsection{Proof of Proposition \ref{p:Vff}}\label{ss:proofV}
We partly follow the strategy of the proof of \cite[Proposition  in \S2.1]{BBM}. Fix $w\in W$ and let $\tcP_{\leq w}=\tili^*_{\leq w}\tcP\in\hsP_{\leq w}$, whose underlying complex is a projective cover of $\omega\delta$ in $\omega\hsP_{\leq w}$, by Lemma \ref{l:bigP}. By Lemma \ref{l:bigprep}, $\VV|\hsP_{\leq w}$, factors as:
\begin{equation*}
\VV:\hsP_{\leq w}\xrightarrow{\alpha=\Hom(\tcP_{\leq w},-)}\Mod(A_{\leq w},\Frob)\to\Mod(S\otimes S,\Frob)
\end{equation*}
Here $\alpha(-)=\Hom(\tcP_{\leq w},-)$ and $A^{\opp}_{\leq w}:=\End_{\hsP_{\leq w}}(\tcP_{\leq w})$ and the second functor above is the restriction of scalars via the central homomorphism $S\otimes S\to A_{\leq w}$ given by left and right logarithmic monodromy operators.

The functor $\alpha$ admits a left adjoint
\begin{eqnarray*}
\beta:\Mod(A_{\leq w},\Frob)&\to&\hsP_{\leq w}\\
M&\mapsto&\tcP_{\leq w}\otimes_{A_{\leq w}}M.
\end{eqnarray*}
Concretely, if we write $M$ as the the cokernel of a map of free $A_{\leq w}$-modules $V_1\otimes A_{\leq w}\to V_0\otimes A_{\leq w}$ (where $V_i$ are vector spaces), then $\beta(M)$ is the cokernel of the corresponding map $V_1\otimes\tcP_{\leq w}\to V_0\otimes\tcP_{\leq w}$. Note that $\beta$ is a right inverse of $\alpha$.

Let $\hsP_{+}=\ker(\alpha)$ and $\scP_{+}=\hsP_+\cap\scP$. By Lemma \ref{l:killnonmin}, $\scP_+\subset\scP$ is the full subcategory of objects $\calF$ whose simple subquotients in the Jordan-H\"older series are twists of $\IC_v$ for $v>e$.

\begin{lemma}
For any object $\calF\in\hsP_+$ and any $u\in W$, we have
\begin{eqnarray}\label{kertodel}
\Hom(\calF,\tDel_u)=0;\\
\label{nabtoker}
\Hom(\tnab_u,\calF)=0.
\end{eqnarray}
\end{lemma}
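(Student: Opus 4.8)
The plan is to reduce the two vanishing statements — which are mutually dual — to the corresponding statements for the \emph{ordinary} (non-\fm) standard and costandard sheaves, and then to deduce those from Lemma \ref{l:socle}. First, by the construction of the completed category (Appendix \ref{a:compmono}), every $\calF\in\hsP$ is a pro-object $(\calF_m)$ with $\calF_m\in\scP$ and surjective transition maps $\calF_{m+1}\twoheadrightarrow\calF_m$, so each $\calF_m$ is a quotient of $\calF$ in $\hsP$. Since $\alpha=\Hom_{\hsP_{\leq w}}(\tcP_{\leq w},-)$ is exact (its representing object $\omega\tcP_{\leq w}$ is projective, Lemma \ref{l:bigP}), $\hsP_+=\ker\alpha$ is a Serre subcategory; hence $\calF\in\hsP_+$ forces $\calF_m\in\hsP_+\cap\scP=\scP_+$ for all $m$. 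On the other hand, the basic \fm local system $\tcL$ on $H$ is by definition a pro-object whose terms are unipotently monodromic local systems, each an iterated self-extension of twists of $\Ql$ (as in the baby case $H=\Gm$, where the $n$-th term de-completes to the Jordan block $\Ql[t]/t^n$); consequently $\tDel_u=\tili_{u,!}\tcL_u$ is a pro-object whose terms are iterated extensions of twists of the ordinary standard sheaf $\Delta_u$, and similarly $\tnab_u$ is a pro-object whose terms are iterated extensions of twists of $\nabla_u$. Feeding these presentations into the formula $\Hom_{\hsM}((\calF_m),(Y_n))=\varprojlim_n\varinjlim_m\Hom_{\scM}(\calF_m,Y_n)$, together with the long exact sequences for $\Hom$ into/out of a short exact sequence, reduces \eqref{kertodel} and \eqref{nabtoker} to the assertions
\begin{equation*}
\Hom_{\scM}(\calG,\Delta_u(j))=0\quad\text{and}\quad\Hom_{\scM}(\nabla_u(j),\calG)=0\qquad(\calG\in\scP_+,\ j\in\ZZ).
\end{equation*}

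For these, recall from the description of $\scP_+$ following Lemma \ref{l:killnonmin} that $\calG\in\scP_+$ exactly when no twist of $\delta=\IC_e$ is a Jordan--H\"older constituent of $\calG$; as the $\omega\IC_v$ ($v\in W$) are pairwise non-isomorphic, this says precisely that $\omega\delta$ is not a subquotient of $\omega\calG$. Now suppose $\phi\colon\calG\to\Delta_u(j)$ is nonzero; since $\omega$ is exact and conservative, $\omega\phi\neq0$, so $N:=\mathrm{im}(\omega\phi)$ is a nonzero subobject of $\omega\Delta_u(j)$. By Lemma \ref{l:socle}(1) the socle of $\omega\Delta_u(j)$ is $\omega\delta$, whence $\mathrm{soc}(N)=\omega\delta$, so $\omega\delta$ is a constituent of $N$; but $N$ is a quotient of $\omega\calG$, so $\omega\delta$ is a subquotient of $\omega\calG$ — contradicting $\calG\in\scP_+$. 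Hence $\Hom_{\scM}(\calG,\Delta_u(j))=0$. The second vanishing follows symmetrically: a nonzero $\psi\colon\nabla_u(j)\to\calG$ yields a nonzero quotient $\mathrm{im}(\omega\psi)$ of $\omega\nabla_u(j)$, whose head is $\omega\delta$ by Lemma \ref{l:socle}(2), so again $\omega\delta$ is a subquotient of $\omega\calG$, a contradiction.

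The entire difficulty lies in the first paragraph: pinning down the pro-object presentations of $\tDel_u$ and $\tnab_u$ in terms of the ordinary standard/costandard sheaves, verifying that the levels of a pro-object lying in $\hsP_+$ genuinely land in $\scP_+$, and justifying the interchange of $\Hom$ with the relevant $\varprojlim$ and $\varinjlim$. Once this bookkeeping is in place, the geometric content is an immediate consequence of Lemma \ref{l:socle} and the conservativity of $\omega$.
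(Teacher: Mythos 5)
Your second paragraph is essentially the paper's use of Lemma \ref{l:socle}: a nonzero map out of an object with no $\delta$-constituent into $\Delta_u$ would force $\omega\delta$ into the image via the socle, and dually for $\nabla_u$. The problem is the reduction in your first paragraph. The inference ``$\calF\in\hsP_+$ forces $\calF_m\in\scP_+$ for all $m$'' is where the real content of the lemma sits, and as stated it does not hold: membership in $\hsP_+=\ker(\alpha)$ is a condition on the \emph{pro-object} --- namely that the transition maps $\alpha(\calF_m)\to\alpha(\calF_n)$ are eventually zero --- and the individual levels $\calF_m$ may perfectly well contain twists of $\delta$ as constituents that are only killed in the limit. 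Your way around this is to assert that $\calF$ admits a presentation with levels in $\scP$ and \emph{surjective} transition maps, each level a quotient of $\calF$ in $\hsP$. But the paper never establishes that objects of $\hsP$ are pro-objects of $\scP$ (it explicitly declines to prove even that $\pro\scP\cap\hsM=\hsP$, right after Lemma \ref{l:prot}), much less that such a presentation can be taken with surjective transitions and that the projections are epimorphisms for the abelian structure of $\hsP$; without these facts the Serre-subcategory argument collapses. This is a genuine gap, not just bookkeeping.

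The paper's own proof is arranged precisely to avoid it: by Lemma \ref{l:prot} one takes $\calF=\prolim\calF_n$ with $\calF_n$ merely in $\scM^{\leq0}$; any map $f_n:\omega\calF_n\to\omega\Delta_u$ factors through $\omega\pH^0\calF_n$ since $\Delta_u$ is perverse; and one shows directly that $f_n$ dies after composing with $\calF_m\to\calF_n$ for $m\gg n$, because otherwise the nested nonzero images $\calG_m\subset\calG_n\subset\omega\Delta_u$ would both contain $\omega\delta$ by Lemma \ref{l:socle}(1), making $\alpha(\calF_m)\to\alpha(\calF_n)$ nonzero and contradicting $\calF\in\ker(\alpha)$ (here one uses that each $\alpha(\calF_n)$ is finite-dimensional, so vanishing of $\varprojlim\alpha(\calF_n)$ does force the transition maps to be eventually zero). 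If you rework your first paragraph along these lines --- replacing the levelwise claim ``$\calF_m\in\scP_+$'' by the eventual vanishing of the transition maps on $\alpha$ --- the socle argument you give in the second paragraph finishes the proof.
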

\begin{proof}
We first prove \eqref{kertodel}. Since $\omega\tDel_u$ is a successive extension of $\omega\pidag\Delta_u$, it suffices to show that $\Hom(\calF,\pidag\Delta_u)=0$. Write $\calF=\prolim\calF_n$. Since $\calF\in\hsM^{\leq0}$, by Lemma \ref{l:prot}, we may assume each $\calF_n\in\scM^{\leq0}$. Suppose $f_n:\omega\calF_n\to\omega\Delta_u$ is any nonzero map, we will show that this map becomes zero when composed with $\calF_m\to\calF_n$ for large $m$. In fact, since $\calF\in\hsP_+=\ker(\alpha)$, we can choose $m$ large enough so that $\alpha(\calF_m)\to\alpha(\calF_n)$ is zero. Now $f_n$ and $f_m:\omega\calF_m\to\omega\calF_n\to\omega\Delta_u$ factor through $f^0_n:\omega\pH^0\calF_n\to\omega\Delta_u$ and $f^0_m:\omega\pH^0\calF_m\to\omega\pH^0\calF_n\to\omega\Delta_u$. Let $\calG_n$ and $\calG_m$ be the image of $f^0_n$ and $f^0_m$. Then we have $\calG_m\subset\calG_n\subset\omega\Delta_u$. If both $\calG_m$ is nonzero, then by Lemma \ref{l:socle}(1), we must have $\omega\delta\subset\calG_m\subset\calG_n$, which implies that $\alpha(\calG_m)\to\alpha(\calG_n)$ is nonzero, hence $\alpha(\calF_m)\to\alpha(\calF_n)$ is nonzero, contradiction! This proves that any map $f_n$ is zero in the direct limit $\varinjlim\Hom(\calF_n,\Delta_u)$.

The proof of \eqref{nabtoker} is similar.
\end{proof}

Suppose $\tcT_1,\tcT_2$ are free-monodromic tilting sheaves in $\hsP_{\leq w}$. We will first prove that the natural map
\begin{equation}\label{eq:Emod}
\Hom_{\hsP_{\leq w}}(\tcT_1,\tcT_2)\to\Hom_{A_{\leq w}}(\alpha\tcT_1,\alpha\tcT_2)
\end{equation}
is an isomorphism of $\Frob-$modules, and then deduce the isomorphism (\ref{eq:Vff}) from (\ref{eq:Emod}).

By adjunction, we have
\begin{equation}\label{eq:adt}
\Hom_{A_{\leq w}}(\alpha\tcT_1,\alpha\tcT_2)=\Hom_{\hsP_{\leq w}}(\beta\alpha\tcT_1,\tcT_2).
\end{equation}
Consider the adjunction map $c:\beta\alpha\tcT_1\to\tcT_1$. If we apply $\alpha$ to $c$, we get an isomorphism since $\alpha\beta\cong\id$, therefore the kernel and cokernel of $c$ lie in $\hsP_{+}$. Since $\omega\tcT_1$ admits a $\tnab$-flag, $\Hom(\tcT_1,\coker(c))=0$ by the above claim, hence $\coker(c)=0$, i.e., $c$ is surjective. Therefore we have an exact sequence
\begin{equation*}
0\to\Hom_{\hsP_{\leq w}}(\tcT_1,\tcT_2)\to\Hom_{\hsP_{\leq w}}(\beta\alpha\tcT_1,\tcT_2)\to\Hom_{\hsP_{\leq w}}(\ker(c),\tcT_2)
\end{equation*}
Again, since $\omega\tcT_2$ admits a $\tDel$-flag, $\Hom(\ker(c),\tcT_2)=0$ by the above claim, hence we get an isomorphism
\begin{equation*}
\Hom_{\hsP_{\leq w}}(\tcT_1,\tcT_2)\cong\Hom_{\hsP_{\leq w}}(\beta\alpha\tcT_1,\tcT_2)
\end{equation*}
which, combined with (\ref{eq:adt}), proves (\ref{eq:Emod}).

Now we show (\ref{eq:Emod}) implies (\ref{eq:Vff}). For this we need an analog of Lemma \ref{l:eqloc}. Recall from Corollary \ref{c:tDelflagP} that $\tili^*_v\tcP_{\leq w}\cong\tcL_v(\ell(v)/2)$, a free-monodromic local system on $\tilFl_v$.

\begin{lemma}\label{l:tensorFrac}
The algebra homomorphism given by $\prod_{v\leq w}\tili^*_v$:
\begin{equation*}
\End_{\hsP_{\leq w}}(\tcP_{\leq w})
^f\to\prod_{v\leq w}\End_{\hsP_v}(\tili_v^*\tcP_{\leq w})^f\cong\prod_{v\leq w}\End_{\hsP_v}(\tcL_v)^f
\end{equation*}
is an isomorphism after tensoring by $\Frac(S)$ over the right $S$-module structures.
\end{lemma}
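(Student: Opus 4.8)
The plan is to imitate the proof of Lemma~\ref{l:eqloc}, with the exact functor $\VV$ (and its values on standard and costandard sheaves, Lemma~\ref{l:vst}) in place of the equivariant cohomology functor $\HH$, and with the \fm big projective in place of $A_{\leq w}$. The first point is the identification $\End_{\hsP_{\leq w}}(\tcP_{\leq w})^f\cong\VV(\tcP_{\leq w})$ of right $S$-modules: by Lemma~\ref{l:bigprep} we have $\VV\cong\RHom(\tcP,-)^f$, which is concentrated in degree $0$ on $\hsP$ because $\omega\tcP$ is projective (Lemma~\ref{l:bigP}), and $\Hom_{\underleftarrow{\hsM}}(\tcP,\tcP_{\leq w})=\End_{\hsP_{\leq w}}(\tcP_{\leq w})$ since $\tcP_{\leq w}$ is supported on $\tilFl_{\leq w}$. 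Under this identification the map in the lemma becomes $\prod_{v\leq w}\tili^*_v$ applied to endomorphism rings, and $\End_{\hsP_v}(\tili^*_v\tcP_{\leq w})^f=\End_{\hsP_v}(\tcL_v)^f\cong\calO_{\Gamma^*(v)}$, an integral domain isomorphic to $\Sym(V_H)$ and free of rank one over the right copy of $S$ (as in the proof of Lemma~\ref{l:vst}).

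It is convenient to prove, more generally for any closed union of Bruhat cells $Y=\bigsqcup_{v\in I}\tilFl_v$ with $\tcP_Y:=\tili^*_Y\tcP$, that $\End_{\hsP_Y}(\tcP_Y)^f\to\prod_{v\in I}\End_{\hsP_v}(\tili^*_v\tcP_Y)^f$ is an isomorphism after $\otimes_S\Frac(S)$; by Cor.~\ref{c:tDelflagP} (applied to a Schubert variety containing $Y$, using that $\tili^*_v$ kills $\tDel_{v'}$ for distinct disjoint strata) the object $\tcP_Y$ is a perverse sheaf with a $\tDel$-flag having each $\tDel_v(\ell(v)/2)$, $v\in I$, exactly once. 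I would induct on $\#I$. Picking $w\in I$ maximal, so $\tilj\colon\tilFl_w\hookrightarrow Y$ is affine open with closed complement $Y'$, the unit and counit maps give a short exact sequence in $\hsP_Y$
\begin{equation*}
0\to\tDel_w(\ell(w)/2)\to\tcP_Y\to\tili_{Y',*}\tcP_{Y'}\to0
\end{equation*}
(using $\tili^*_w\tDel_v=0$ for $v\neq w$ and $\tili^*_{Y'}\tcP_Y=\tcP_{Y'}$). Applying the exact functor $\VV$ (Cor.~\ref{c:Vexact}) and Lemma~\ref{l:vst}, and comparing with the split exact sequence $0\to\End(\tcL_w)^f\to\prod_{v\in I}\End(\tcL_v)^f\to\prod_{v\in I\setminus\{w\}}\End(\tcL_v)^f\to0$ via the restriction maps, I get a commutative ladder with exact rows whose right vertical map is the restriction map for $Y'$ (an isomorphism after $\otimes_S\Frac(S)$ by induction) and whose left vertical map $a$ is, after the identifications above, $\VV$ of the canonical morphism $\tDel_w(\ell(w)/2)\to\tnab_w(\ell(w)/2)$ -- indeed $a$ sends $\phi$ to $\tili^*_w\phi$, and a morphism $\tDel_w\to\tnab_w$ is determined by its restriction to $\tilFl_w$, which here is the identity of $\tcL_w$.

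It remains to show $a\otimes_S\Frac(S)$ is an isomorphism, which is the monodromic analogue of the generic-point computation in Lemma~\ref{l:eqloc}. The cone of $\tDel_w\to\tnab_w$ is supported on $\tilFl_{<w}$; since $\VV$ annihilates $\IC_v$ for $v\neq e$ (Lemma~\ref{l:Vkillnomin}), $\VV$ of this cone is supported, as a complex of $S\otimes S$-modules, on the diagonal $\Gamma^*(e)$ (its cohomologies are built from Tate twists of $\VV(\tdel)=\calO_{\Gamma^*(e)}$), whereas $\VV(\tDel_w)$ and $\VV(\tnab_w)$ are supported on $\Gamma^*(w)$. Because the $W$-action on $V_H$ is faithful, $\Gamma^*(e)\cap\Gamma^*(w)$ is a proper closed subscheme of $\Gamma^*(w)$ for $w\neq e$ (after localizing the right copy of $S$ at its generic point, the $\Frac(S)$-points $\eta$ and $w\cdot\eta$ of $V_H^*$ are distinct), so localizing the triangle $\VV(\tDel_w)\to\VV(\tnab_w)\to\VV(\mathrm{cone})\to$ kills the last term and forces $a\otimes_S\Frac(S)$ to be an isomorphism; the five lemma then completes the inductive step (the case $\#I=1$ being trivial). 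The step I expect to require the most care is the first paragraph together with the verification that the left vertical of the ladder is exactly the canonical map $\tDel_w\to\tnab_w$ (the precise monodromic replacement for the cone-vanishing argument in Lemma~\ref{l:eqloc}); the remaining bookkeeping with pro-objects and the $(-)^f$ functor is handled as in Appendix~\ref{a:compmono}.
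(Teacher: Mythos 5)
Your overall strategy is sound and genuinely different from the paper's. Both proofs induct by strata using the short exact sequence $0\to\tDel_w(\ell(w)/2)\to\tcP_{\leq w}\to\tcP_{<w}\to0$ and reduce to showing the end vertical $a$ is an isomorphism after $\otimes_S\Frac(S)$. From there the paper applies $\RHom(-,\tDel_w(\ell(w)/2))$ to that sequence, uses the $\tDel$-flag of $\tcP_{<w}$ (Cor.~\ref{c:tDelflagP}) to reduce to $\Ext^*(\tDel_v,\tDel_w)^f$ for $v<w$, and observes that the $S\otimes S$-action on that group factors through $\calO(\Gamma^*(v)\cap\Gamma^*(w))$, a torsion module over either copy of $S$. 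You instead recognize $a$, via the adjunction $\Hom(\tcP,\tnab_w)^f\cong\End_{\hsP_w}(\tcL_w)^f$, as $\VV$ of the canonical map $\tDel_w(\ell(w)/2)\to\tnab_w(\ell(w)/2)$, so that the question reduces to $\VV(\mathrm{cone})\otimes_S\Frac(S)=0$. That identification is correct (a morphism $\tcP\to\tDel_w$ maps to its $\tili^*_w$-restriction under both descriptions), and it is a nice observation that the paper does not make explicit; your approach buys you a very direct reduction without unwinding the full $\tDel$-flag of $\tcP_{<w}$.

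The gap is in how you justify $\VV(\mathrm{cone})\otimes_S\Frac(S)=0$. You assert that $\VV(\mathrm{cone})$ is ``supported on $\Gamma^*(e)$'' because ``its cohomologies are built from Tate twists of $\VV(\tdel)=\calO_{\Gamma^*(e)}$''. This conflates two different objects: the Jordan--H\"older constituents of the cone (which is supported on $\tilFl_{<w}$) are twists of $\pidag\IC_v$, and $\VV(\pidag\delta)=\Ql$ (torsion, by Cor.~\ref{c:Vexact}), \emph{not} the free rank-one module $\VV(\tdel)\cong\calO_{\Gamma^*(e)}$; the \fm unit $\tdel$ is not a constituent of the cone. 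Moreover the cone is a pro-object with infinitely many constituents, so even the corrected statement ``built from twists of $\Ql$'' does not immediately bound the $S\otimes S$-support in the way you want (inverse limits of torsion modules need not be torsion). The fix is shorter than the faulty argument: $\VV(\tDel_w(\ell(w)/2))\to\VV(\tnab_w(\ell(w)/2))$ is a morphism of rank-one free $\calO(\Gamma^*(w))$-modules, hence multiplication by some $g\in\calO(\Gamma^*(w))$ of weight $-2\ell(w)$, and it is nonzero because $\tili^*_w$ of the composite $\tcP\to\tDel_w\to\tnab_w$ is the nonzero restriction $\tili^*_w(\tcP\to\tDel_w)$; since $\calO(\Gamma^*(w))\cong S$ is a domain, the cokernel $\calO(\Gamma^*(w))/(g)$ is torsion. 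Alternatively one can just fall back to the paper's $\calO(\Gamma^*(v)\cap\Gamma^*(w))$ argument. Either way, once the torsion claim is justified, the rest of your induction goes through.
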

\begin{proof}
We do induction on $w$ (for this we need to extend the partial ordering on $W$ to a total ordering). For $w=e$ this is obvious. Suppose this is true for $\tcP_{<w}$. The exact sequence
\begin{equation}\label{eq:exforP}
0\to\tDel_w(\ell(w)/2)=\tili_{w,!}\tili^*_w\tcP_{\leq w}\to\tcP_{\leq w}\to\tili_{<w,*}\tili^*_{<w}\tcP_{\leq w}=\tcP_{<w}\to0
\end{equation}
gives a commutative diagram with exact rows
\begin{equation*}
\xymatrix{\Hom(\tcP_{\leq w},\tDel_{w}(\ell(w)/2))^f\ar[r]\ar[d]^{a} & \End(\tcP_{\leq w})^f\ar[r]\ar[d] & \End(\tcP_{<w})^f\ar[d]^{b}\\
\End(\tcL_w(\ell(w)/2))^f\ar[r] & \prod_{v\leq w}\End(\tcL_v(\ell(v)/2))^f\ar[r] & \prod_{v<w}\End(\tcL_v(\ell(v)/2))^f}
\end{equation*}
The arrow $b\otimes_S\Frac(S)$ is an isomorphism by induction hypothesis, therefore to prove the lemma, it suffices to show that $a\otimes_S\Frac(S)$ is also an isomorphism. Applying $\bR\Hom(-,\tDel_w(\ell(w)/2))$ to the exact sequence (\ref{eq:exforP}), we see
\begin{eqnarray*}
\Hom(\tcP_{<w},\tDel_w(\ell(w)/2))^f\twoheadrightarrow\ker(a);\\
\coker(a)\hookrightarrow\Ext^1(\tcP_{<w},\tDel_w(\ell(w)/2))^f.
\end{eqnarray*}
To compute the complex $\bR\Hom(\tcP_{<w},\tDel_w(\ell(w)/2))$, we write $\tcP_{<w}$ as a successive extension of $\tDel_v(\ell(v)/2)$ for $v<w$, by Corollary \ref{c:tDelflagP}. We reduce to computing $\Ext^*(\tDel_{v},\tDel_w)^f$ for $v<w$. But notice that in the second part of the proof of Lemma \ref{l:vst}, we have shown that the $S\otimes S$-action on $\tDel_w$ factors through the quotient $\calO(\Gamma^*(w))$ (see formula (\ref{eq:ssfactor}) and the discussion afterwards), therefore the $S\otimes S$-action on $\Ext^*(\tDel_{v},\tDel_w)^f$ factors through the quotient $\calO(\Gamma^*(v)\cap\Gamma^*(w))$, which is a torsion module over either copy of $S$. Therefore, $\ker(a)\otimes_S\Frac(S)$ and $\coker(a)\otimes_S\Frac(S)$ are zero, i.e., $a\otimes_S\Frac(S)$ is an isomorphism.
\end{proof}

Consider the maps
\begin{equation}\label{eq:exex}
S\otimes S\to A^f_{\leq w}\to\prod_{v\leq w}\End_{\hsP_v}(\tcL_{v}(\ell(v)/2))^f\cong\prod_{v\leq w}\calO(\Gamma^*(v)).
\end{equation}
After tensoring the maps \eqref{eq:exex} by $\Frac(S)$ over the right copy of $S$, we get
\begin{equation*}
S\otimes\Frac(S)\to A^f_{\leq w}\otimes_S\Frac(S)\isom\prod_{v\leq w}\calO(\Gamma^*(v))\otimes_S\Frac(S)\isom\prod_{v\leq w}\Frac(S).
\end{equation*}
which is obviously surjective (on the level of spectra, this corresponds to the closed embedding of the generic points of the graphs $\Gamma^*(v)$ into $V_H^*\otimes_k\Frac(S)$). Also notice that $\VV(\tcT_2)$ is free (hence torsion-free) over either copy of $S$ (writing $\tcT_2$ as a successive extension of $\tDel$'s and applying Lemma \ref{l:vst}). Therefore we can apply Lemma \ref{l:samehom} to the situation $B=S\otimes S, C=A_{\leq w}^{f}$ and $S$ the second copy of $S$ in $S\otimes S$ and conclude
\begin{equation*}
\Hom_{\hsP_{\leq w}}(\tcT_1,\tcT_2)^f\cong\Hom_{A^f_{\leq w}}(\alpha(\tcT_1)^f,\alpha(\tcT_2)^f)\cong\Hom_{S\otimes S}(\VV(\tcT_1),\VV(\tcT_2))
\end{equation*}
as $\Frob$-modules.

To conclude this section, we describe the endomorphism algebra $\End_{\hsP}(\tcP)^{f}$ explicitly in the case $G$ is finite-dimensional following Soergel and Bernstein. This result is not used in the rest of the paper.
\begin{prop}\label{p:bigproj} Assume $W$ is of finite type. Then
\begin{enumerate}
\item The algebra homomorphism $S\otimes S\to\End_{\hsP}(\tcP)^{f}$ coming from the left and right logarithmic $H$-monodromy induces an isomorphism
\begin{equation*}
S\otimes_{S^{W}}S\isom\End_{\hsP}(\tcP)^{f}.
\end{equation*}
\item Let $\hsP_{0}\subset\hsP$ be the full subcategory consisting of $\calF$ such that $\omega\calF$ is a direct sum of copies of $\tcP$. Then $\hsP_{0}$ is stable under the convolution $\conv{U}$ and the functor $\VV$ induces an equivalence of monoidal categories
\begin{equation*}
\VV_{0}:\hsP_{0}\isom\Mod^{\free}(S\otimes_{S^{W}}S,\Frob).
\end{equation*}
Here $\Mod^{\free}(S\otimes_{S^{W}}S,\Frob)$ is the full subcategory of $(S\otimes_{S^{W}}S,\Frob)$-modules which are free of finite rank as $S\otimes_{S^{W}}S$-modules, and the monoidal structure is defined as in Proposition \ref{p:vcomp}(2). 
\end{enumerate}
\end{prop}
\begin{proof}
(1) The algebra $\End_{\hsP}(\tcP)^{f}=\VV(\tcP)$ is a free $S$-module over both the left and right $S$-actions(this follows by writing $\tcP$ as a successive extension of $\tDel$'s and applying Lemma \ref{l:vst}). The sequence of maps \eqref{eq:exex} for $w=w_{0}$ (the longest element of $W$) becomes
\begin{equation*}
S\otimes S\xrightarrow{\mu}\End_{\hsP}(\tcP)^{\opp,f}\xrightarrow{\nu} \prod_{v\in W}\calO(\Gamma^{*}(v)).
\end{equation*}
By Lemma \ref{l:tensorFrac}, $\nu$ becomes an isomorphism after tensoring with $\Frac(S)$ over the right copy of $S$. Since $\End_{\hsP}(\tcP)^{\opp,f}$ is free as a right $S$-module, $\nu$ is injective. The composition $\nu\circ\mu$ factors through the quotient $S\otimes_{S^{W}}S$ followed by an injection $S\otimes_{S^{W}}S\hookrightarrow \prod_{v\in W}\calO(\Gamma^{*}(v))$, hence $\mu$ also factors as an algebra homomorphism $\mu':S\otimes_{S^{W}}S\to\End_{\hsP}(\tcP)^{\opp,f}$, which is necessarily injective. To show $\mu'$ is also surjective, by graded Nakayama lemma, we only need to show that it is so after reduction modulo the augmentation ideal of the right copy of $S$. In other words, letting $\calP=\pi_{\dagger}\tcP\in P(\Fl)$, we need to show that $S\otimes_{S^{W}}\Ql\to\End_{\Fl}(\calP)^{\opp,f}$ is surjective, which follows from Soergel's result (\cite[Endomorphismensatz 3]{Soe}, see also the footnote in \cite[\S 2.6]{BBM}). 

(2) By (1), the functor $\VV=\Hom_{\hsP}(\tcP,-)^{f}$, when restricted to $\hsP_{0}$, takes values in $\Mod^{\free}(S\otimes_{S^{W}}S,\Frob)$. The functor $\VV_{0}=\VV|_{\hsP_{0}}$ is fully faithful and essentially surjective by construction. It remains to show that $\hsP_{0}$ is stable under the convolution $\conv{U}$, and then $\VV_{0}$ is monoidal by Proposition \ref{p:vcomp}(2). Applying Lemma \ref{l:PTheta}(3) to $\Theta=\Sigma$, we have
\begin{equation*}
\tcP\conv{U}\tcP\cong\Av^{\Sigma}_{!}\Av^{\Sigma}_{\chi}\tcP.
\end{equation*}
Since $\Av^{\Sigma}_{\chi}$ is t-exact, $\omega\Av^{\Sigma}_{\chi}\tcP\in\omega\hsP_{\Sigma}$, which consists of direct sums of $\omega\tdel^{\Sigma}_{\chi}$ by Lemma \ref{l:mw}. Therefore $\omega\Av^{\Sigma}_{!}\Av^{\Sigma}_{\chi}\tcP$ is a direct sum of $\omega\Av^{\Sigma}_{!}\tdel^{\Sigma}_{\chi}\cong\omega\tcP$. This proves that $\hsP_{0}$ is stable under $\conv{U}$, and hence finishes the proof of (2).
\end{proof}


\section{Equivalences}\label{s:dual}
In this section we prove the Main Theorem, i.e., the four equivalences mentioned in \S\ref{ss:main}. The proof will rely on the construction of DG models in Appendix \ref{a:dgmodel}. We suggest reading the statement of Theorem \ref{th:CtoMod} before getting into the proofs of the four equivalences.

\subsection{Langlands duality for Kac-Moody groups}\label{ss:LD}
Throughout this section, we fix a root datum $(\xch,\Phi,\xcoch,\Phi^\vee)$ with generalized Cartan matrix $A$; the dual root datum $(\xcoch,\Phi^\vee,\xch,\Phi)$ has generalized Cartan matrix $A^t$ (the transpose of $A$). Let $G$ and $G^\vee$ be the Kac-Moody groups over $k=\FF_q$ associated to these root data. We say that the  Kac-Moody groups $G$ and $G^\vee$ are {\em Langlands dual} to each other.

\begin{remark}
When $G$ is a Kac-Moody group associated to the {\em affine} root system of a split simple group $G_0$, the group $\dG$ may  {\em not} be isogenous to a Kac-Moody group associated to the affine root system of $\dG_0$; $\dG$ is sometimes a {\em twisted} loop group.
\end{remark}

In the rest of this section, we will need to distinguish notations for $G$ and $G^\vee$. In general, the equivariant categories $\scE=\scE_G$ and $\scE_{\Theta}=\scE_{G,\Theta}$ are for the group $G$, while the monodromic categories $\hsM=\hsM_{\dG}$ and $\hsM_{\Theta}=\hsM_{\dG,\Theta}$ are for $\dG$. In \S\ref{ss:self} and \S\ref{ss:finaldual}, the notations will be further explained.

Let $H$ and $\dH$ be the Cartan subgroups of $G$ and $\dG$ respectively. We identify the Weyl groups of $G$ and $\dG$ and call it $W$. Then there is a natural $W$-equivariant and $\Frob$-equivariant isomorphism
\begin{equation}\label{VHdual}
(V_H^\vee)^\Finv\cong\xch\otimes_{\ZZ}\Ql(1)\cong V_\dH.
\end{equation}

Let $\dS_H=\Sym(V_H^\vee)$ and $S_\dH=\Sym(V_\dH)$ be (graded) algebras with $\Frob$ actions. Then \eqref{VHdual} gives a natural $W$-equivariant and $\Frob$-equivariant isomorphism $\dS_H^\Finv\cong S_{\dH}$. This isomorphism gives an equivalence of triangulated categories
\begin{eqnarray*}
(-)^\Finv:D_{\perf}(\dS_H\otimes \dS_H,\Frob)&\isom& D^b(S_\dH\otimes S_\dH,\Frob)\\
L&\mapsto&L^\Finv.
\end{eqnarray*}

\begin{defn} The {\em regrading functor} is the self-functor of the category $C^f(\Frob)$ of complexes of locally finite $\Frob$-modules with integer weights:
\begin{equation*}
\phi:C^f(\Frob)\to C^f(\Frob)
\end{equation*}
sending a complex $L=(\cdots\to L^i\to L^{i+1}\to\cdots)$ to the complex $N=(\cdots\to N^i\to N^{i+1}\to\cdots)$, where
\begin{equation}\label{eq:defN}
N^{i}_j=(L^{i-j}_{-j})^\Finv,\forall i,j\in\ZZ.
\end{equation}
Here, subscripts stand for $\Frob$-weights. Forgetting the grading, we have
\begin{equation*}
\phi(N^\bullet)=N^{\bullet,\Finv}.
\end{equation*}
\end{defn}


\subsection{Equivariant-monodromic duality}\label{ss:em}

\begin{theorem}[Equivariant-monodromic duality]\label{th:emduality} There is an equivalence of triangulated categories
\begin{equation}\label{eq:Phi}
\Phi=\Phi_{G\to\dG}: \scE=\scE_G\cong\hsM_{\dG}=\hsM.
\end{equation}
satisfying the following properties:
\begin{enumerate}
\item\label{em:monoidal} $\Phi$ has a monoidal structure which intertwines the convolutions $\conv{B}$ and $\conv{U}$;
\item\label{em:hhvv} There is an isomorphism of functors $\theta:\HH^\Finv\Rightarrow\VV\circ\Phi$ compatible with the monoidal structures of these functors;
\item\label{em:st} $\Phi(\Delta_w)\cong\tDel_w$, $\Phi(\nabla_w)\cong\tnab_w$ for all $w\in W$;
\item\label{em:puretilt} $\Phi$ sends very pure complexes of weight 0 to free-monodromic tilting sheaves;
\item\label{em:slinear} There is a functorial isomorphism of $( S_\dH\otimes S_\dH,\Frob)$-modules
\begin{equation}\label{sameSbimod}
\phi\Ext^\bullet_{\scE}(\calF_1,\calF_2)\cong\Ext^\bullet_{\hsM}(\Phi\calF_1,\Phi\calF_2)^f
\end{equation}
for any $\calF_1,\calF_2\in\scE$;
\item\label{em:twist} For any $M\in D^b(\Frob)$ and $\calF\in\scE$, there is a functorial isomorphism
\begin{equation*}
\Phi(\calF\otimes M)\cong\Phi(\calF)\otimes\phi(M).
\end{equation*}
In particular, $\Phi\circ[1](1/2)=(-1/2)\circ\Phi$.
\end{enumerate}
\end{theorem}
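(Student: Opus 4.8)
The plan is to reduce the existence of $\Phi$ to an isomorphism of differential graded models, carrying out the three–step program sketched in the introduction.

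\textbf{Step 1 (DG models).} First I would invoke the machinery of Appendix \ref{a:dgmodel} (Theorem \ref{th:CtoMod}): the subcategory $\scV\subset\scE$ of very pure complexes of weight $0$ generates $\scE$ and satisfies the axioms there, so $\scE\cong D_{\perf}(E,\Frob)$ for a differential graded $\Frob$-algebra $E$ assembled from the $\Ext$-algebra of a generating family of objects of $\scV$; dually the free-monodromic tilting sheaves generate $\hsM$ and $\hsM\cong D_{\perf}(M,\Frob)$ for a DG $\Frob$-algebra $M$ assembled from the $\Frob$-finite $\Hom$-algebra of a generating family of such tilting sheaves. As generating families I would use the iterated convolutions $\IC_{\unw}:=\IC_{s_1}\conv{B}\cdots\conv{B}\IC_{s_m}$ and $\tcT_{\unw}:=\tcT_{s_1}\conv{U}\cdots\conv{U}\tcT_{s_m}$ over words $\unw=(s_1,\dots,s_m)$ in the simple reflections; by Prop.\ref{p:vpure} the former lie in $\scV$ and by Prop.\ref{p:tiltconv} the latter are free-monodromic tilting.

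\textbf{Step 2 (computing the models).} By Prop.\ref{p:hff} the functor $\HH$ is fully faithful on $\scV$ and by Prop.\ref{p:hcomp} it is monoidal, so $\Ext^\bullet_{\scE}(\IC_{\unw},\IC_{\unv})$ becomes the space of $\dS\otimes\dS$-linear maps between the tensor products $\HH_{s_1}\otimes_{\dS}\cdots\otimes_{\dS}\HH_{s_m}$, expressing $E$ entirely through the single bimodule $\HH_s:=\HH(\IC_s)$. Symmetrically, Prop.\ref{p:Vff} and Prop.\ref{p:vcomp} express $M$ through $\VV(\tcT_s)$. The case $G=\SL(2)$ (Appendix \ref{a:SL2}), together with Lemma \ref{l:hst} and Lemma \ref{l:vst}, gives the explicit formulas $\HH_s\cong\calO(\Gamma(e)\cup\Gamma(s))[1](1/2)$ and its counterpart for $\VV(\tcT_s)$ in terms of the graphs $\Gamma^*$. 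Now the Langlands-duality isomorphism \eqref{VHdual} identifies $(\dS_H)^\Finv$ with $S_\dH$ $W$-equivariantly and $\Frob$-equivariantly, hence carries each $\Gamma(w)$ to $\Gamma^*(w)$; applying the regrading functor $\phi$ --- which, as in the baby case, turns a graded $(\dS_H,\Frob)$-module into an ungraded module over $(\dS_H)^\Finv\cong S_\dH$ --- then yields an isomorphism of DG $\Frob$-algebras $\phi(E)\cong M$ compatible with the monoidal structures $\otimes_{\dS}$ and $\otimes_{\hatS}$.

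\textbf{Step 3 (assembling $\Phi$ and its properties).} Composing $\scE\cong D_{\perf}(E,\Frob)\xrightarrow{\ \phi\ }D_{\perf}(\phi E,\Frob)\cong D_{\perf}(M,\Frob)\cong\hsM$ defines $\Phi$; properties \eqref{em:monoidal}, \eqref{em:hhvv}, \eqref{em:slinear} and \eqref{em:twist} are then read off the construction, using that $\HH$ and $\VV$ are monoidal, that $\theta$ is the comparison between the two ways of computing the DG models, and that $\phi$ is compatible with the $D^b(\Frob)$-enrichments (sending the twist $[1](1/2)$ to $(-1/2)$). For \eqref{em:st}: under the DG-model equivalences $\Delta_w$ and $\tDel_w$ correspond to the modules $\HH(\Delta_w)\cong\calO_{\Gamma(w)}[-\ell(w)](-\ell(w)/2)$ (Lemma \ref{l:hst}) and $\VV(\tDel_w)\cong\calO_{\Gamma^*(w)}(\ell(w)/2)$ (Lemma \ref{l:vst}), which match after applying $\phi$ and \eqref{VHdual}, so $\Phi(\Delta_w)\cong\tDel_w$, and dually $\Phi(\nabla_w)\cong\tnab_w$. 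Then \eqref{em:puretilt} is immediate, since $\scV=\langle\Delta_w\langle0\rangle\rangle\cap\langle\nabla_w\langle0\rangle\rangle$ by definition while the free-monodromic tilting sheaves are exactly $\langle\tDel_w\rangle\cap\langle\tnab_w\rangle$, and $\Phi$ intertwines these classes by \eqref{em:st} and \eqref{em:twist}.

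\textbf{Main obstacle.} The difficulty is not any single computation but the coherent bookkeeping of \emph{three} simultaneous gradings --- cohomological degree, the internal (equivariant) grading, and the $\Frob$-weight --- and, above all, showing that the monoidal structures on the two DG models genuinely agree under the regrading functor rather than merely matching object by object. This is precisely why one must use the iterated-convolution generating families, so that associativity of $\otimes_{\dS}$ versus $\otimes_{\hatS}$ carries the weight, and why the explicit $\SL(2)$ base case is unavoidable; here the monoidality statements Prop.\ref{p:hcomp}, Prop.\ref{p:vcomp} and the DG-model formalism of Appendix \ref{a:dgmodel} do the essential work.
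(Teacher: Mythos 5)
Your proposal follows the paper's own three--step strategy almost exactly: reduce to DG models via Theorem \ref{th:CtoMod} of Appendix \ref{a:dgmodel}, generate by the iterated convolutions $\IC_{\unw}$ and $\tcT_{\unw}$ over reduced words, compute the resulting algebras using full faithfulness and monoidality of $\HH$ and $\VV$ (Prop.\ref{p:hff}, \ref{p:hcomp}, \ref{p:Vff}, \ref{p:vcomp}), match them via the Langlands identification $(\dS_H)^\Finv\cong S_\dH$ of \eqref{VHdual} and the $\SL(2)$ base case (Lemmas \ref{l:Hsimple}, \ref{l:Vsimple}), and then read off the properties. This is the paper's argument.

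There is, however, a gap in your justification of property (3). You assert that under the DG-model equivalences $\Delta_w$ corresponds to $\HH(\Delta_w)$ and $\tDel_w$ to $\VV(\tDel_w)$, and then invoke Lemmas \ref{l:hst} and \ref{l:vst}. That is not what Theorem \ref{th:CtoMod} produces: the equivalence $\scE_{\leq w}\isom D_{\perf}(\unE_{\leq w},\Frob)$ sends an object $\calF$ to $\bigoplus_{u\leq w}\Ext^\bullet_{\scE}(\IC_{\unu},\calF)$, and this is identified with $\bigoplus_u\Hom_{\dS\otimes\dS}(\HH(\IC_{\unu}),\HH(\calF))$ via Prop.\ref{p:hff} \emph{only} when $\calF$ is very pure --- which $\Delta_w$ is not. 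The paper obtains (3) by a recollement argument instead: once the compatible equivalences $\Phi_{<w}$ and $\Phi_{\leq w}$ are constructed so as to intertwine the closed embeddings $i_{<w,*}$ and $\tili_{<w,*}$, there is an induced equivalence $\Phi_w$ on the subquotient categories intertwining $i_w^*$ with $\tili_w^*$, hence (by uniqueness of adjoints) also $i_{w,!}$ with $\tili_{w,!}$ and $i_{w,*}$ with $\tili_{w,*}$; writing $\Delta_w=i_{w,!}i_w^*\IC_{\unw}$, $\nabla_w=i_{w,*}i_w^*\IC_{\unw}$ and similarly for $\tDel_w$, $\tnab_w$ with $\tcT_{\unw}$, the already-established $\Phi(\IC_{\unw})\cong\tcT_{\unw}$ then yields $\Phi(\Delta_w)\cong\tDel_w$ and $\Phi(\nabla_w)\cong\tnab_w$. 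Without this (or some substitute) your argument for (3) does not close, and your derivation of (4) depends on (3).
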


We have an immediate consequence of the Theorem:
\begin{cor}\label{c:t}
For each $w\in W$, let $\tcT_w:=\Phi(\IC_w)$. Then
\begin{enumerate}
\item $\tcT_w$ is a \fm tilting extension of $\tcL_w$ and $\omega\tcT_w$ is indecomposable.
\item Any free-monodromic tilting extension of $\tcL_w$ with indecomposable underlying complex is isomorphic to $\tcT_w$.
\end{enumerate}
\end{cor}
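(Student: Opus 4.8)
Since $\IC_w$ is very pure of weight $0$ (Example \ref{ex:vpure}), Theorem \ref{th:emduality}(\ref{em:puretilt}) shows at once that $\tcT_w=\Phi(\IC_w)$ is a free-monodromic tilting sheaf. For the support statement: $\IC_w$ lies in the full subcategory $\scE_{\le w}\subset\scE$ of complexes supported on $\Fl_{\le w}$, a triangulated subcategory generated by $\{\Delta_v\mid v\le w\}$; since $\Phi$ is a triangulated equivalence with $\Phi(\Delta_v)\cong\tDel_v$ (Theorem \ref{th:emduality}(\ref{em:st})), it carries this subcategory to the one generated by $\{\tDel_v\mid v\le w\}$, namely $\hsM_{\le w}$, so $\tcT_w\in\hsM_{\le w}$. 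Restricting to the open stratum, $\tili_w^*\tcT_w$ is a successive extension of copies of $\tili_w^*\tDel_v$, which vanish for $v<w$ and equal $\tcL_w$ for $v=w$; as $\Ext^{>0}(\tcL_w,\tcL_w)=0$ this splits, so $\tili_w^*\tcT_w\cong\tcL_w^{\oplus m}$. Comparing classes in $K_0$, where $\Phi$ takes the unitriangular expansion $[\IC_w]=[\Delta_w]+\sum_{v<w}(\ast)[\Delta_v]$ to $[\tcT_w]=[\tDel_w]+\sum_{v<w}(\ast)[\tDel_v]$ and $\tili_w^*$ kills the terms with $v<w$, forces $m=1$. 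Thus $\tcT_w$ is a free-monodromic tilting extension of $\tcL_w$.

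It remains to prove that $\omega\tcT_w$ is indecomposable, which is the crux. As $\IC_w$ is simple in $\scP$ and $\Ql$ is algebraically closed, $\End_\scE(\IC_w)=\Ql$ (using $\Ext^{-1}_\scE(\IC_w,\IC_w)=0$ by perversity), so $\tcT_w$ is indecomposable in the mixed category $\hsM$. To descend this to $\omega\tcT_w$ I would argue as follows. First, Theorem \ref{th:emduality}(\ref{em:slinear}) together with the explicit form of the regrading functor $\phi$ and Lemma \ref{l:purity}(2) (giving $\Ext^m_\scE(\IC_w,\IC_w)$ pure of weight $m$) shows that $\Ext^\bullet_\hsM(\tcT_w,\tcT_w)^f\cong\phi\,\Ext^\bullet_\scE(\IC_w,\IC_w)$ is concentrated in cohomological degree $0$, with the resulting graded ring $\Hom_\hsM(\tcT_w,\tcT_w)^f=\bigoplus_{m\ge0}\bigl(\Ext^m_\scE(\IC_w,\IC_w)\bigr)^\Finv$ having weight-$0$ part $\Ql$ and all of the rest in strictly negative weights. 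Second, because $\tcT_w$ is free-monodromic, $\End_{\omega\hsM}(\omega\tcT_w)$ is module-finite and $\mathfrak m_\hatS$-adically complete over the complete local ring $\hatS$ (acting by monodromy), and its reduction modulo $\mathfrak m_\hatS$ is a finite-dimensional algebra (the endomorphism ring of a tilting sheaf on the Schubert variety $\tilFl_{\le w}$); since $\mathfrak m_\hatS$ sits in weight $-2$ this reduction still has weight-$0$ part $\Ql$ and all else in negative weight, hence is a finite-dimensional local algebra. Lifting idempotents along $\End_{\omega\hsM}(\omega\tcT_w)\twoheadrightarrow\End_{\omega\scM}(\text{reduction})$ then gives that $\omega\tcT_w$ is indecomposable. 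The structural input on free-monodromic sheaves needed here (module-finiteness and completeness of this Hom-ring over $\hatS$, and the reduction/idempotent-lifting step) is supplied by Appendix \ref{a:compmono}.

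\textbf{Part (2).} Let $\tcT$ be a free-monodromic tilting sheaf with $\tili_w^*\tcT\cong\tcL_w$ and $\tili_v^*\tcT=0$ for $v\not\le w$, and with $\omega\tcT$ indecomposable. Then $\tcT\in\hsM_{\le w}$, its $\tDel$-flag has $\tDel_w$ as top term with multiplicity one and every other factor $\tDel_v$ with $v<w$, and likewise for $\tcT_w$; moreover $\{v\mid v\le w\}$ is finite. Using the orthogonality relations $\Hom_\hsM(\tDel_u,\tnab_v)=0$ for $u\ne v$ and $\Ext^{>0}_\hsM(\tDel_u,\tnab_v)=0$, one produces morphisms $f\colon\tcT\twoheadrightarrow\tDel_w\to\tnab_w\hookrightarrow\tcT_w$ and $g\colon\tcT_w\twoheadrightarrow\tDel_w\to\tnab_w\hookrightarrow\tcT$ with $\tili_w^*f=\tili_w^*g=\id_{\tcL_w}$. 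Then $\tili_w^*(gf)=\id$; since $\omega\tcT$ is indecomposable its endomorphism ring is local, and the monodromy-equivariant ring map $\tili_w^*\colon\End_{\omega\hsM}(\omega\tcT)\twoheadrightarrow\hatS$ sends the maximal ideal into $\mathfrak m_\hatS$, so $gf$ (which maps to a unit) must be a unit; likewise $fg$; hence $f$ is an isomorphism $\tcT\isom\tcT_w$.

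\textbf{Main obstacle.} Apart from the indecomposability assertion in (1), everything is a transcription of the listed properties of $\Phi$ together with standard tilting-theoretic manipulations. The genuine difficulty is the passage from indecomposability of $\tcT_w$ in the mixed category to indecomposability of the underlying pro-object $\omega\tcT_w$ — that is, controlling idempotents in a Hom-ring that is not $\Frob$-locally finite — which I expect to handle by reduction modulo the monodromy and idempotent lifting, using the weight computation above to certify that the reduced (finite-dimensional) endomorphism ring is local.
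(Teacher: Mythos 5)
Both parts of your proposal are correct in substance, but you take a genuinely different route from the paper, especially in part (2), and there is a small but real error in your construction of the comparison maps that you should fix.

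For part (1), the paper's proof of indecomposability of $\omega\tcT_w$ is shorter than yours: after observing via property~\eqref{em:slinear} that $\End_{\hsM}(\tcT_w)^f \cong \bigoplus_i \Ext^i_{\scE}(\IC_w,\IC_w)$ is a $\ZZ_{\geq0}$-graded algebra with degree-$0$ part $\Ql$, it simply notes that $\End_{\hsM}(\tcT_w)$ is the completion of this graded ring along the augmentation ideal, and a completed connected graded $\Ql$-algebra is local, hence has no nontrivial idempotents. Your argument — passing to the quotient by $\mathfrak{m}_{\hatS}$, invoking Lemma~\ref{l:HomfreeS} to identify the reduction with the (finite-dimensional) endomorphism ring of $\piddag\tcT_w$, using the weight grading to see this quotient is local, then lifting idempotents — reaches the same conclusion but via a longer detour through the $\hatS$-module structure; the weight computation you carry out (concentration in cohomological degree $0$, weight-$0$ part $\Ql$, the rest in negative weights) is exactly the same input, so this is more a repackaging than a new idea.

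For part (2), your approach and the paper's genuinely diverge. The paper transports $\tcT'$ back across $\Phi^{-1}$ to a very pure complex $\calC'\in\scE$, invokes Lemma~\ref{l:Cdecomp} to write $\omega\calC'$ as a direct sum of shifted IC-sheaves, transfers the indecomposability of $\omega\tcT'$ to indecomposability of $\omega\calC'$ via the same weight-graded isomorphism used in (1), and then concludes by support considerations. You instead work directly on the monodromic side, producing mutually inverse-up-to-units maps $f\colon\tcT\to\tcT_w$ and $g\colon\tcT_w\to\tcT$ and exploiting the localness of $\End_{\omega\hsM}(\omega\tcT)$. Your argument is viable, and arguably closer to the classical Krull--Schmidt flavor, but it silently relies on the fact that a ring module-finite over the complete Noetherian local ring $\hatS$ with no nontrivial idempotents is local (semi-perfectness plus idempotent lifting); this is standard but is not spelled out in Appendix~\ref{a:compmono}, which only supplies $\hatS$-finiteness via Lemma~\ref{l:HomfreeS}. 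The paper's route through Lemma~\ref{l:Cdecomp} avoids this extra ring-theoretic input entirely.

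One concrete error: in constructing $f$ and $g$ you write $f\colon\tcT\twoheadrightarrow\tDel_w\to\tnab_w\hookrightarrow\tcT_w$, but the arrows go the wrong way. For a \fm tilting sheaf $\tcT$ supported on $\tilFl_{\le w}$ with generic stalk $\tcL_w$, the $\tDel$-flag has $\tDel_w$ as a \emph{sub}-object and the $\tnab$-flag has $\tnab_w$ as a \emph{quotient} (cf.\ the exact sequence~\eqref{sl2fmt} in the $\SL(2)$ case). So there is no surjection $\tcT\twoheadrightarrow\tDel_w$ nor an injection $\tnab_w\hookrightarrow\tcT_w$. The map $f$ should instead be obtained by lifting the quotient map $\tcT\twoheadrightarrow\tnab_w$ along $\tcT_w\twoheadrightarrow\tnab_w$, the obstruction living in $\Ext^1$ of $\tcT$ against a $\tnab$-filtered complex, which vanishes since $\tcT$ is $\tDel$-filtered; dually for $g$. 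After this correction, $\tili_w^*f$ and $\tili_w^*g$ are indeed both the identity (after scaling), and the rest of your ring-theoretic argument goes through.
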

\begin{proof}
(1) The fact that $\tcT_w$ is a \fm tilting sheaf follows directly from Theorem \ref{th:emduality}(\ref{em:puretilt}). Since $\Delta_w\isom\IC_w$ (mod $\scE_{<w}$), we have $\tDel_w\isom\tcT_w$ (mod $\hsM_{<w}$). Therefore $\tcT_w$ is a \fm tilting extension of $\tcL_w$. Finally, by \eqref{sameSbimod}
\begin{equation*}
\End_{\hsM}(\tcT_w)^f\cong\oplus_{i\in\ZZ}\Ext^i_{\scE}(\IC_w,\IC_w)
\end{equation*}
is a $\ZZ_{\geq0}$-graded algebra whose degree 0 part reduces to $\Ql$, and $\End_{\hsM}(\tcT_w)$ is the completion of $\Ext^*_{\hsM}(\IC_w,\IC_w)$ with respect to the augmentation ideal. Therefore there is no nontrivial idempotent in $\End_{\hsM}(\tcT_w)$, i.e., $\omega\tcT_w$ is indecomposable.

(2) Suppose $\tcT'$ is a free-monodromic tilting extension of $\tcL_w$ with indecomposable underlying complex. Let $\calC'=\Phi^{-1}(\tcT')$. Then by Theorem \ref{th:emduality}(\ref{em:puretilt}), $\calC'$ is a very pure complex. By Lemma \ref{l:Cdecomp}, $\omega\calC'$ is a direct sum of shifted IC-sheaves. But since $\omega\tcT'$ is indecomposable, $\omega\calC'$ is also indecomposable by the same argument of (1). Therefore $\calC'$ is a (shifted and twisted) IC-sheaf. Since $\tDel_w\isom\tcT'$ (mod $\hsM_{<w}$), we have $\Delta_w\isom\calC'$ (mod $\scE_{<w}$), hence $\calC'\cong\IC_w$ and $\tcT'\cong\tcT_w$.
\end{proof}

Combining Corollary \ref{c:t}, Theorem \ref{th:emduality}(\ref{em:monoidal})(\ref{em:twist}), and Proposition \ref{p:icdecomp}, we get
\begin{cor}\label{c:tdecomp}
For $w_1,w_2\in W$, the convolution $\tcT_{w_1}\conv{U}\tcT_{w_2}$, as a {\em mixed} complex, is a direct sum of $\tcT_w(n/2)$ for $n\equiv\ell(w_1)+\ell(w_2)-\ell(w) (\textup{mod }2)$. In particular, if $\ell(w_1w_2)=\ell(w_1)+\ell(w_2)$, then $\tcT_{w_1w_2}$ is a direct summand of $\tcT_{w_1}\conv{U}\tcT_{w_2}$ with multiplicity one.
\end{cor}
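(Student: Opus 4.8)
The plan is to transport Proposition~\ref{p:icdecomp} through the equivariant-monodromic equivalence $\Phi$ of Theorem~\ref{th:emduality}. Recall from Corollary~\ref{c:t} that $\tcT_w=\Phi(\IC_w)$; since $\Phi$ is an equivalence of triangulated categories it carries any finite direct sum decomposition of an object of $\scE$ into the corresponding decomposition of its image in $\hsM$, so the whole argument reduces to chasing the decomposition of $\IC_{w_1}\conv{B}\IC_{w_2}$ that has already been established on the equivariant side.

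First I would invoke the monoidal structure of $\Phi$ (Theorem~\ref{th:emduality}(\ref{em:monoidal})): it intertwines $\conv{B}$ with $\conv{U}$, hence gives a canonical isomorphism
\[
\Phi(\IC_{w_1}\conv{B}\IC_{w_2})\cong\Phi(\IC_{w_1})\conv{U}\Phi(\IC_{w_2})=\tcT_{w_1}\conv{U}\tcT_{w_2}.
\]
Next, by Proposition~\ref{p:icdecomp} the left-hand input, \emph{as a mixed complex}, is a finite direct sum $\bigoplus\IC_w[n](n/2)$ over pairs $(w,n)$ with $n\equiv\ell(w_1)+\ell(w_2)-\ell(w)\ (\mathrm{mod}\ 2)$. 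Applying $\Phi$ termwise and using the twist property Theorem~\ref{th:emduality}(\ref{em:twist}), which gives $\Phi\circ[1](1/2)=(-1/2)\circ\Phi$ and hence $\Phi(\IC_w[n](n/2))\cong\tcT_w(-n/2)$, one obtains $\tcT_{w_1}\conv{U}\tcT_{w_2}\cong\bigoplus\tcT_w(-n/2)$. Since $-n$ has the same parity as $n$, re-indexing by $m=-n$ puts this in exactly the asserted form.

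For the final assertion, when $\ell(w_1w_2)=\ell(w_1)+\ell(w_2)$ Proposition~\ref{p:icdecomp} says $\IC_{w_1w_2}$ occurs in $\IC_{w_1}\conv{B}\IC_{w_2}$ with multiplicity one (and with trivial twist, the relevant $n$ being $0$); applying $\Phi$ gives that $\tcT_{w_1w_2}=\Phi(\IC_{w_1w_2})$ occurs in $\tcT_{w_1}\conv{U}\tcT_{w_2}$ with multiplicity one. Here ``multiplicity one'' is meaningful because $\omega\tcT_{w_1w_2}$ is indecomposable by Corollary~\ref{c:t}(1) and the ambient categories satisfy Krull--Schmidt, so the two statements of the Corollary together are literally the $\Phi$-image of the two statements of Proposition~\ref{p:icdecomp}.

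I do not expect a genuine obstacle: the entire content has already been proved on the equivariant side, and the argument is a formal transport. The only points that need care are (i) bookkeeping the Tate twist through the regrading functor $\phi$ built into Theorem~\ref{th:emduality}(\ref{em:twist}), so that $\IC_w[n](n/2)$ becomes $\tcT_w(-n/2)$ rather than $\tcT_w(n/2)$ — harmless, since only the parity of $n$ is recorded — and (ii) observing that the decomposition of Proposition~\ref{p:icdecomp} is one of \emph{mixed} complexes, so that the transported decomposition is again a decomposition of mixed (free-monodromic) complexes and not merely of their images under $\omega$.
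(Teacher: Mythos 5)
Your argument is exactly the one the paper has in mind: the paper's ``proof'' of Corollary~\ref{c:tdecomp} consists precisely of the pointer ``Combining Cor.~\ref{c:t}, Theorem~\ref{th:emduality}(\ref{em:monoidal})(\ref{em:twist}), and Prop.~\ref{p:icdecomp}'', i.e.\ transporting the equivariant decomposition through the monoidal equivalence $\Phi$ and regrading the twist. Your careful remark that $\Phi(\IC_w[n](n/2))\cong\tcT_w(-n/2)$, which is harmless because the parity condition on $n$ is invariant under $n\mapsto -n$, and your appeal to indecomposability of $\omega\tcT_{w_1w_2}$ to make ``multiplicity one'' meaningful, are both correct and fill in the details the paper leaves implicit.
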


The following observation will be used in establishing the parabolic-Whittaker duality.
\begin{cor}\label{c:tisp}
If $G$ is finite dimensional, let $w_0$ be the longest element in the Weyl group $W$ and recall the object $\tcP$ defined in the \S\ref{ss:tcP} (in this case it is an honest object of $\hsP$). Then
\begin{equation*}
\tcT_{w_0}\cong\tcP(-\ell(w_0)/2).
\end{equation*}
\end{cor}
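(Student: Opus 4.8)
The plan is to show that $\tcP(-\ell(w_0)/2)$ and $\tcT_{w_0}$ are both free-monodromic tilting extensions of $\tcL_{w_0}$ whose underlying complexes are indecomposable, and then invoke Corollary \ref{c:t}(2) to conclude they are isomorphic. First I would note that $\tcT_{w_0}$ is supported on all of $\tilFl$ (since $G$ is finite dimensional and $\Fl_{\leq w_0}=\Fl$), so it is a free-monodromic tilting sheaf on the whole flag variety, and its restriction to the open stratum $\tilFl_{w_0}=\tilFl_{\leq w_0}-\tilFl_{<w_0}$ is $\tcL_{w_0}$ (up to the normalization built into $\tcT_{w_0}=\Phi(\IC_{w_0})$, using $\Delta_{w_0}\isom\IC_{w_0}$ mod $\scE_{<w_0}$ from Theorem \ref{th:emduality}(\ref{em:st})).

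Next I would analyze $\tcP$. By Corollary \ref{c:tDelflagP}, $\tcP$ is a successive extension of $\tDel_w(\ell(w)/2)$, each appearing exactly once, as $w$ ranges over $W$. In the finite-dimensional case the order-reversing involution $w\mapsto w_0w$ shows that simultaneously (by Verdier duality, or by the dual argument producing a $\tnab$-flag) $\tcP$ admits a $\tnab$-flag; hence $\tcP$ is a free-monodromic tilting sheaf. The key point is to pin down its support and its restriction to the top stratum: since the $\tDel_{w}(\ell(w)/2)$ with highest $w=w_0$ enters exactly once, $\tili^*_{w_0}\tcP\cong\tcL_{w_0}(\ell(w_0)/2)$ (this is exactly the content of the proof of Corollary \ref{c:tDelflagP}, giving $\tili^*_v\tcP\cong\tcL_v(\ell(v)/2)$). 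Therefore $\tcP(-\ell(w_0)/2)$ restricts to $\tcL_{w_0}$ on $\tilFl_{w_0}$, i.e.\ it is a free-monodromic tilting extension of $\tcL_{w_0}$.

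It remains to check that $\omega\tcP$ is indecomposable; this is the step I expect to be the main obstacle, though it is short. By Lemma \ref{l:bigP}, $\omega\tcP$ is a projective cover of $\omega\delta$ in $\omega\scP$ (in the honest, non-pro sense since $G$ is finite dimensional), and a projective cover is by definition indecomposable with local endomorphism ring; alternatively, $\End(\tcP)^f$ is computed via $\RHom(\tcP,-)^f\cong\VV$ (Lemma \ref{l:bigprep}) together with $\VV(\tcP)^{\Funi}=\Ql$ (shown in the proof of Lemma \ref{l:bigprep}), so $\End(\tcP)$ has no nontrivial idempotents. Granting indecomposability, Corollary \ref{c:t}(2) applies to $\tcT':=\tcP(-\ell(w_0)/2)$: it is a free-monodromic tilting extension of $\tcL_{w_0}$ with indecomposable underlying complex, hence $\tcP(-\ell(w_0)/2)\cong\tcT_{w_0}$, which is the claimed isomorphism.
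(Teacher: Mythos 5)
Your strategy (identify both sides as indecomposable free\--monodromic tilting extensions of $\tcL_{w_0}$ and invoke Corollary \ref{c:t}(2)) is sound in outline, but it has a genuine gap at exactly the point you flag as "short": the claim that $\tcP$ is a free\--monodromic tilting sheaf. Corollary \ref{c:tDelflagP} gives only the $\tDel$-flag, i.e.\ controls $\tili^*_w\tcP$; to be tilting you also need every $\tili^!_w\tcP$ to be a free\--monodromic local system, equivalently a $\tnab$-flag. Neither of your two suggested justifications works as stated. Verdier duality does not act on the completed category $\hsM$ in the naive way (the dual of a pro-object is an ind-object), and in any case $\DD$ would carry the $!$-constructions defining $\tcP=\av_!\tilj_!\tcL_\chi$ to $*$-constructions, producing the \emph{injective hull} of $\delta$ rather than $\tcP$ itself; identifying that object with $\tcP$ is essentially equivalent to the statement you are trying to prove. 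The fact that the projective cover of $\delta$ is tilting is a genuine theorem (this is \cite[\S2.1]{BBM}, the prototype for Proposition \ref{p:Vff}), whose proof needs the cleanness input $\Av^\Sigma_{\chi,!}\cong\Av^\Sigma_{\chi,*}$ (Lemma \ref{l:samedef}, Corollary \ref{c:clean}) and an adjunction argument; projectivity plus a $\tDel$-flag does not formally imply a $\tnab$-flag. The rest of your argument (indecomposability of $\omega\tcP$ from Lemma \ref{l:bigP}, the identification $\tili^*_{w_0}\tcP\cong\tcL_{w_0}(\ell(w_0)/2)$, and the application of Corollary \ref{c:t}(2)) is fine.

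For comparison, the paper sidesteps the tilting question entirely: since $\HH$ is (co)represented by the constant sheaf $\Ql$ and $\VV\cong\RHom(\tcP,-)^f$ (Lemma \ref{l:bigprep}), the intertwining $\theta:\HH^\Finv\Rightarrow\VV\circ\Phi$ of Theorem \ref{th:emduality}(\ref{em:hhvv}) forces $\Phi(\Ql)\cong\tcP$ by Yoneda; then $\IC_{w_0}=\Ql[\ell(w_0)](\ell(w_0)/2)$ on the smooth variety $\Fl$ and Theorem \ref{th:emduality}(\ref{em:twist}) give $\tcT_{w_0}=\Phi(\IC_{w_0})\cong\Phi(\Ql)(-\ell(w_0)/2)\cong\tcP(-\ell(w_0)/2)$. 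In particular the tilting property of $\tcP$ comes out as a \emph{consequence} of the corollary rather than an input to it. If you want to keep your route, you must supply the BBM-style argument for the $\tnab$-flag of $\tcP$ rather than appeal to duality.
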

\begin{proof}
By Theorem \ref{th:emduality}(\ref{em:hhvv}), we have $\Phi(\Ql)\cong\tcP$ because the functors they represent ($\HH$ and $\VV$) are intertwined under $\Phi$. Therefore by Theorem \ref{th:emduality}(\ref{em:twist}),
\begin{equation*}
\tcT_{w_0}=\Phi(\IC_{w_0})\cong\Phi(\Ql[\ell(w_0)](\ell(w_0)/2))\cong\Phi(\Ql)(-\ell(w_0)/2)\cong\tcP(-\ell(w_0)/2).
\end{equation*}
\end{proof}

The rest of this subsection is devoted to the proof of Theorem \ref{th:emduality}. First we need to pick generating objects of the categories $\scE$ and $\hsM$.

For each simple reflection $s$, by the calculation in Appendix \ref{a:SL2}, Lemma \ref{l:Hsimple} and \ref{l:Vsimple}, we have a \fm tilting sheaf $\tcT_s\in\hsP_{\leq s}$, and we have an isomorphism
\begin{equation*}
\theta_s:(\HH_s)^{\Finv}:=\HH(\IC_s)^\Finv\isom\VV_s:=\VV(\tcT_s).
\end{equation*}
We fix such an isomorphism for each $s\in\Sigma$. For each $w\in W$, {\em fix} a reduced word expression $\unw=(s_1(w),\cdots,s_m(w))$ where $m=\ell(w)$. We define
\begin{eqnarray*}
\IC_{\unw}:=\IC_{s_1(w)}\conv{B}\cdots\conv{B}\IC_{s_m(w)};&&\HH_{\unw}:=\HH(\IC_{\unw});\\
\tcT_{\unw}:=\tcT_{s_1(w)}\conv{U}\cdots\conv{U}\tcT_{s_m(w)};&&\VV_{\unw}:=\VV(\tcT_{\unw}).
\end{eqnarray*}
By Proposition \ref{p:vpure} and Proposition \ref{p:tiltconv}, $\IC_{\unw}$ is very pure of weight 0 and $\tcT_{\unw}$ is a \fm tilting sheaf. The isomorphisms $\{\theta_s|s\in\Sigma\}$ (together with Proposition \ref{p:hcomp} and \ref{p:vcomp}) induce an isomorphism
\begin{equation}\label{thw}
\theta_{\unw}:(\HH_{\unw})^{\Finv}\isom\VV_{\unw}.
\end{equation}

\subsubsection{The DG models}
We are going to define algebras and bimodules which control the categories $\scE_{\leq w}$ and $\hsM_{\leq w}$ and their respective embeddings for $w\leq w'$. For $w\leq w'$, define:
\begin{equation*}
\unE^{\leq w'}_{\leq w}:=\bigoplus_{u\leq w',v\leq w}\Ext^\bullet_{\scE}(\IC_{\unu},\IC_{\unv}).
\end{equation*}
We write $\unE_{\leq w}$ for the {\em opposite algebra} of $\unE^{\leq w}_{\leq w}$. Then $\unE^{\leq w'}_{\leq w}$ is a $(\unE_{\leq w'}, \unE_{\leq w})$-bimodule. Each $\IC_{\unv}$ ($v\leq w$) gives an $(\unE_{\leq w},\Frob)$-module $C_{\leq w,\unv}=\oplus_{u\leq w}\Ext^\bullet_{\scE}(\IC_{\unu},\IC_{\unv})$. We emphasize that we view $\unE_{\leq w}$ as a plain algebra with $\Frob$ action (placed in degree 0), not as a dg-algebra with the natural grading. Applying Theorem \ref{th:CtoMod} to the triple $(\scV_{\leq w}\subset\scE_{\leq w},\{\IC_{\unu}|u\leq w\})$, we get an equivalence of triangulated categories
\begin{equation}\label{dgeq}
\scE_{\leq w}\isom D_{\perf}(\unE_{\leq w},\Frob)
\end{equation}
where the RHS is the full triangulated subcategory of $D^b(\unE_{\leq w},\Frob)$ generated by twists of $\{C_{\leq w,\unv}|v\leq w\}$.

Similarly, we define:
\begin{equation*}
\unM^{\leq w'}_{\leq w}:=\bigoplus_{u\leq w',v\leq w}\Hom_{\hsP}(\tcT_{\unu},\tcT_{\unv})^f
\end{equation*}
and $\unM_{\leq w}=(\unM^{\leq w}_{\leq w})^{\opp}$. For each $v\leq w$, $\tcT_{\unv}$ gives an $(\unM_{\leq w},\Frob)$-module $T_{\leq w,\unv}:=\oplus_{u\leq w}\Hom(\tcT_{\unu},\tcT_{\unv})^f$. Applying Theorem \ref{th:CtoMod} and Remark \ref{r:Ff}to the triple $(\scT_{\leq w}\subset\hsM_{\leq w},\{\tcT_{\unv}|v\leq w\})$, there is an equivalence of triangulated categories
\begin{equation}\label{dgmon}
\hsM_{\leq w}\isom D_{\perf}(\unM_{\leq w},\Frob)
\end{equation}
where the RHS is the full triangulated subcategory of $D^b(\unM_{\leq w},\Frob)$ generated by twists of $\{T_{\leq w,\unv}|v\leq w\}$.

\subsubsection{Construction of $\Phi$.} We first construct an equivalence $\Phi_{\leq w}:\scE_{\leq w}\isom\hsM_{\leq w}$ for each $w\in W$. According to the equivalences \eqref{dgeq} and \eqref{dgmon}, it suffices to give an equivalence
\begin{equation*}
\Phi'_{\leq w}: D_{\perf}(\unE_{\leq w},\Frob)\isom D_{\perf}(\unM_{\leq w},\Frob)
\end{equation*}
By Proposition \ref{p:hff} and Proposition \ref{p:Vff}, we have
\begin{eqnarray}\label{eq:ecom}
\unE^{\leq w'}_{\leq w}\cong\bigoplus_{u\leq w',v\leq w}\Hom_{\dS_H\otimes\dS_H}(\HH_{\unu},\HH_{\unv})\\
\label{eq:mcom}
\unM^{\leq w'}_{\leq w}\cong\bigoplus_{u\leq w',v\leq w}\Hom_{S^\vee\otimes S^\vee}(\VV_{\unu},\VV_{\unv}).
\end{eqnarray}
The isomorphisms $\{\theta_{\unw}|w\in W\}$ in \eqref{thw} give a $\Frob$-equivariant isomorphism of algebras:
\begin{equation}\label{eq:emalg}
\unE_{\leq w}^\Finv\isom\unM_{\leq w}.
\end{equation}
For a complex of $(\unE_{\leq w},\Frob)$-module $L=(\cdots\to L^i\to L^{i+1}\to\cdots)$, we define $\Phi'_{\leq w}(L)$ to be the complex $L^{\Finv}$, which is a complex of $(\unM_{\leq w},\Frob)$-modules via the isomorphism \eqref{eq:emalg}. This gives the desired equivalence $\Phi'_{\leq w}$.

By Proposition \ref{p:Cfun}, the embedding $i_{w,w',*}:\scE_{\leq w}\hookrightarrow\scE_{\leq w'}$ corresponds to the functor
\begin{eqnarray*}
D_{\perf}(\unE_{\leq w},\Frob)&\to& D_{\perf}(\unE_{\leq w'},\Frob)\\
L&\mapsto&\unE^{\leq w'}_{\leq w}\Ltimes_{\unE_{\leq w}}L.
\end{eqnarray*}
Similarly, the embedding $\tili_{w,w',*}:\hsM_{\leq w}\hookrightarrow\hsM_{\leq w'}$ corresponds to the functor
\begin{eqnarray*}
D_{\perf}(\unM_{\leq w},\Frob)&\to& D_{\perf}(\unM_{\leq w'},\Frob)\\
N&\mapsto&\unM^{\leq w'}_{\leq w}\Ltimes_{\unM_{\leq w}}N.
\end{eqnarray*}
The isomorphisms in \eqref{eq:ecom}, \eqref{eq:mcom} and \eqref{eq:emalg} give an isomorphism $\Phi'_{\leq w'}(\unE^{\leq w'}_{\leq w})\isom\unM^{\leq w'}_{\leq w}$ as $(\unM_{\leq w'},\unM_{\leq w})$-bimodules. Therefore the embeddings $i_{w,w',*}$ and $\tili_{w,w',*}$ are naturally intertwined under $\Phi_{\leq w}$ and $\Phi_{\leq w'}$. Passing to the inductive 2-limit, we get an equivalence of triangulated categories $\Phi:\scE\to\hsM$.

\subsubsection{Verification of the properties}

Property \eqref{em:twist}. Suppose $\calF\in\scE_{\leq w}$ corresponds to the $(\unE_{\leq w},\Frob)$-complex $N$ under the equivalence \eqref{dgeq}. Then for a $\Frob$-module $M$ of weight $i$, $\calF\otimes M$ corresponds to $N\otimes M[i]$ under the equivalence \eqref{dgeq} (this follows from the construction in Theorem \ref{th:CtoMod}). Then $\Phi'_{\leq w}(N\otimes M[i])=N^\Finv\otimes M^\Finv[i]$, which corresponds to $\Phi(\calF)\otimes M^\Finv[i]$ under the equivalence \eqref{dgmon}.

Property \eqref{em:slinear}. By Lemma \ref{l:morehomeq} and Lemma \ref{l:morehommon}, we have
\begin{equation*}
\Ext^\bullet(\calF,\calF'[i])^\Finv_{\pure}\cong\Hom(\Phi(\calF),\Phi(\calF')[i])^{f}.
\end{equation*}
On the other hand,
\begin{equation*}
\oplus_j(\phi\Ext^\bullet(\calF,\calF'))^i_j=\oplus_j\Ext^{i-j}(\calF,\calF')^\Finv_{-j}=\Hom(\calF,\calF'[i])^\Finv_{\pure}.
\end{equation*}
Combining these two identities, we get \eqref{sameSbimod}.

Property \eqref{em:st}. The isomorphisms $\{\theta_{\unv}\}$ give an isomorphism of $(\unM_{\leq w},\Frob)$-modules
\begin{equation*}
C_{\leq w,\unv}^\Finv\isom T_{\leq w,\unv}, \forall v\leq w.
\end{equation*}
Therefore $\Phi(\IC_{\unw})\cong\tcT_{\unw}$.

Consider the following diagram
\begin{equation}\label{d:recoll}
\xymatrix{\scE_{<w}\ar[r]^{i_{<w,*}}\ar[d]^{\Phi_{<w}} & \scE_{\leq w}\ar[r]^{i^*_w}\ar[d]^{\Phi_{\leq w}} & \scE_w\ar@{.>}[d]^{\Phi_w}\\
\hsM_{<w}\ar[r]^{\tili_{<w,*}} & \hsM_{\leq w}\ar[r]^{\tili^*_w} & \hsM_w}
\end{equation}
By construction, the functors $\Phi_{\leq w}$ and $\Phi_{<w}$ are equivalences and there is a natural transformation making the left square commutative (we did not construct $\Phi_{<w}$ explicitly, but it is from the same construction of $\Phi_{\leq w}$ by comparing the algebras $\unE_{<w}$ and $\unM_{<w}$). Since the two rows in the diagram \eqref{d:recoll} are short exact sequences of triangulated categories, there is (an essentially unique) equivalence $\Phi_w:\scE_w\isom\hsM_{w}$ (the dotted arrow in the diagram \eqref{d:recoll}) with a natural transformation making the right square commutative. This implies that there are natural transformations intertwining the adjoints of $i^*_w$ and $\tili^*_{w}$; i.e., there is a natural transformation intertwining $i_{w,!}$ and $\tili_{w,!}$; there is another natural transformation intertwining $i_{w,*}$ and $\tili_{w,*}$. Note that
\begin{eqnarray*}
\Delta_w=i_{w,!}i^*_w\IC_{\unw};&&\tDel_w=\tili_{w,!}\tili^*_w\tcT_{\unw}.\\
\nabla_w=i_{w,*}i^*_w\IC_{\unw};&&\tnab_w=\tili_{w,*}\tili^*_w\tcT_{\unw}.
\end{eqnarray*}
Therefore we have isomorphisms $\Phi(\Delta_w)\isom\tDel_w$ and $\Phi(\nabla_w)\isom\tnab_w$ coming from the isomorphisms $\Phi(\IC_{\unw})\isom\tcT_{\unw}$.

Property (\ref{em:puretilt}). Note that the class of very pure complexes are
\begin{equation*}
\langle\Delta_w\langle0\rangle|w\in W\rangle\cap\langle\nabla_w\langle0\rangle|w\in W\rangle
\end{equation*}
while the class of free-monodromic tilting sheaves are
\begin{equation*}
\langle\tDel_w(?)|w\in W\rangle\cap\langle\tnab_w(?)|w\in W\rangle.
\end{equation*}
These two classes of objects correspond to each other under $\Phi$ by Property (\ref{em:twist}) and (\ref{em:st}).

Property (\ref{em:monoidal}). This requires the construction of a monoidal structure of $\Phi$. Fix $w,w'\in W$ such that $\ell(w)+\ell(w')=\ell(ww')$. We define an $(\unE_{\leq ww'},\unE_{\leq w}\otimes \unE_{\leq w'})$-bimodule
\begin{eqnarray*}
Q_{\leq w,\leq w'}&:=&\bigoplus_{u\leq ww',v\leq w,v'\leq w'}\Ext^\bullet_{\scE}(\IC_{\unu},\IC_{\unv}\conv{B}\IC_{\unv'})\\
&\cong&\bigoplus_{u\leq ww',v\leq w,v'\leq w'}\Hom_{\dS_H\otimes\dS_H}(\HH_{\unu},\HH_{\unv}\otimes_{\dS_H}\HH_{\unv'}).
\end{eqnarray*}
Similarly we define an $(\unM_{\leq ww'},\unM_{\leq w}\otimes \unM_{\leq w'})$-bimodule
\begin{eqnarray*}
R_{\leq w,\leq w'}&:=&\bigoplus_{u\leq ww',v\leq w,v'\leq w'}\Hom_{\hsM}(\tcT_{\unu},\tcT_{\unv}\conv{U}\tcT_{\unv'})^f\\
&\cong&\bigoplus_{u\leq ww',v\leq w,v'\leq w'}\Hom_{S_\dH\otimes S_\dH}(\VV_{\unu},\VV_{\unv}\otimes_{S_\dH}\VV_{\unv'}).
\end{eqnarray*}
They carry natural $\Frob$-actions.

By Remark \ref{r:bifunctor}, the transport of the convolution $\conv{B}$ to $D_{\perf}(\unE_{\leq w},\Frob)$ is given by the functor
\begin{eqnarray*}
D_{\perf}(\unE_{\leq w},\Frob)\times D_{\perf}(\unE_{\leq w'},\Frob)&\to &D_{\perf}(\unE_{\leq ww'},\Frob)\\
(L,L')&\mapsto& Q_{\leq w,\leq w'}\Ltimes_{(\unE_{\leq w}\otimes \unE_{\leq w'})}(L\otimes L').
\end{eqnarray*}
Again by Remark \ref{r:bifunctor}, the transport of the convolution $\conv{U}$ to $D_{\perf}(\unM_{\leq w},\Frob)$ is given by the functor
\begin{eqnarray*}
D_{\perf}(\unM_{\leq w},\Frob)\times D_{\perf}(\unM_{\leq w'},\Frob)&\to& D_{\perf}(\unM_{\leq ww'},\Frob)\\
(N,N')&\mapsto& R_{\leq w,\leq w'}\Ltimes_{(\unM_{\leq w}\otimes \unM_{\leq w'})}(N\otimes N').
\end{eqnarray*}
The isomorphisms $\{\theta_{\unw}|w\in W\}$ give a $\Frob$-equivariant isomorphism
\begin{equation*}
Q_{\leq w,\leq w'}^\Finv\cong R_{\leq w,\leq w'}
\end{equation*}
which intertwines the $(\unE_{\leq ww'},\unE_{\leq w}\otimes \unE_{\leq w'})$-bimodule structure on $Q_{\leq w,\leq w'}$ and the $(\unM_{\leq ww'},\unM_{\leq w}\otimes \unM_{\leq w'})$-bimodule structure on $R_{\leq w,\leq w'}$. This isomorphism gives a natural isomorphism making the following diagram commutative:
\begin{equation*}
\xymatrix{\scE_{\leq w}\times\scE_{\leq w'}\ar[r]^{\conv{B}}\ar@<-2ex>[d]_{\Phi_{\leq w}}\ar@<2ex>[d]^{\Phi_{\leq w'}} & \scE_{\leq ww'}\ar[d]^{\Phi_{\leq ww'}}\\
\hsM_{\leq w}\times\hsM_{\leq w'}\ar[r]^{\conv{U}} & \hsM_{\leq ww'}}
\end{equation*}
To check that these natural transformations are compatible with the associativity constraints essentially reduces to the following identification (we omit the details here)
\begin{equation*}
\bigoplus\Ext^\bullet_{\scE}(\IC_{\unu},\IC_{\unv}\conv{B}\IC_{\unv'}\conv{B}\IC_{\unv''})^\Finv\cong\bigoplus\Hom_{\hsM}(\tcT_{\unu},\tcT_{\unv}\conv{U}\tcT_{\unv'}\conv{U}\tcT_{\unv''})^f.
\end{equation*}
Passing to the inductive 2-limit as $w,w'$ run over $W$, we get the required monoidal structure of $\Phi$.

Property (\ref{em:hhvv}). Define
\begin{equation*}
H_{\leq w}:=\bigoplus_{v\leq w}\HH_{\unv}
\end{equation*}
Using (\ref{eq:ecom}), $H_{\leq w}$ can be viewed as a right $\unE_{\leq w}$-module (compatible with the $(\dS_H\otimes\dS_H,\Frob)$-module structure). Similarly, using (\ref{eq:mcom}), we define a right $\unM_{\leq w}$-bimodule
\begin{equation*}
V_{\leq w}:=\bigoplus_{v\leq w}\VV_{\unv}.
\end{equation*}
The transport of the functor $\HH$ on $D_{\perf}(\unE_{\leq w},\Frob)$ is given by $L\mapsto H_{\leq w}\Ltimes_{\unE_{\leq w}}L$; the transport of the functor $\VV$ on $D_{\perf}(\unM_{\leq w},\Frob)$ is given by $N\mapsto V_{\leq w}\Ltimes_{\unM_{\leq w}}N$. Using $\{\theta_{\unw}|w\in W\}$, we get a $\Frob$-equivariant isomorphism $H_{\leq w}^\Finv\isom V_{\leq w}$ intertwining the right $\unE_{\leq w}$-structure and right $\unM_{\leq w}$-structure (hence also intertwining the $\dS_H\otimes\dS_H$-structure and $S_\dH\otimes S_\dH$-structure). Therefore we get an isomorphism $\theta:\HH^\Finv\Rightarrow\VV\circ\Phi$ by passing to the inductive 2-limit. It is easy to check that $\theta$ is compatible with the monoidal structures by using the explicit dg-models.

\begin{remark} In the sequel, it is convenient to use two more ``compact'' algebras as dg-models of $\scE$ and $\hsM$. Let
\begin{eqnarray}\label{eq:defE}
E_{\leq w}:=\left(\bigoplus_{u,v\leq w}\Ext^\bullet_{\scE}(\IC_u,\IC_v)\right)^{\opp};\\
\label{eq:defM}
M_{\leq w}:=\left(\bigoplus_{u,v\leq w}\Hom_{\hsM}(\tcT_u,\tcT_v)^f\right)^{\opp}.
\end{eqnarray}
Then Theorem \ref{th:CtoMod} again gives equivalences
\begin{eqnarray*}
\scE_{\leq w}\isom D_{\perf}(E_{\leq w},\Frob);\\
\hsM_{\leq w}\isom D_{\perf}(M_{\leq w},\Frob).
\end{eqnarray*}
\end{remark}


\subsection{Koszul ``self-duality''}\label{ss:self}
Consider the category $\Dright=D^b_{m}(U\backslash G/B)$, the derived category of left-$U$-equivariant mixed complexes on $\Fl=G/B$. Recall $\pi:\tilFl\to\Fl$ is the projection which induces $\piddag:\hsM_G\to\Dright$. By Lemma \ref{l:fmtexist}, for each $w\in W$, $\calT_w:=\piddag\tcT_w$ is a tilting extensions of $\Ql[\ell(w)](\ell(w)/2)$ on $\Fl_{w}$ whose underlying complex is indecomposable. On the other hand, we have the forgetful functor $\Forg:\scE_G\to\Dright$ by forgetting the left-$B$-equivariant structure on objects in $\scE_G$. For $w\in W$, we still write $\IC_w\in\Dright$ for $\Forg(\IC_w)$.

Now consider the category $\Dleft:=D^b_{m}(\quot{\dB}{\dG}{\dU})$, the derived category of right-$\dU$-equivariant mixed complexes on $\dB\backslash\dG$. Now the situation is identical with $\Dright$ after interchanging left and right, $G$ and $\dG$. To distinguish objects in $\Dleft$ with objects in $\Dright$, we usually add a $(-)^\vee$ to the objects in $\Dleft$, e.g., the indecomposable tilting sheaves $\calT^\vee_w\in\Dleft$ and IC-sheaves $\IC^\vee_w\in\Dleft$, etc.

The theorem below is not really a self-duality, because the category $\Dright$ is defined in terms of $G$ while $\Dleft$ is defined in terms of $\dG$. In Remark \ref{r:whysd},  we will explain in what sense it becomes an involutive self-duality.

\begin{theorem}[``Self-duality'']\label{th:selfduality} There is an equivalence of triangulated categories
\begin{equation*}
\Psi:\Dleft:=D^b_m(\quot{\dB}{\dG}{\dU})\isom D^b_m(\quot{U}{G}{B})=:\Dright
\end{equation*}
satisfying the following properties:
\begin{enumerate}
\item\label{sd:modcat} $\Psi$ can be given a structure to intertwine the ($\scE_G$,$\hsM_G$)-bimodule category structure on $\Dleft$ (given by convolutions) and the ($\hsM_{\dG}$,$\scE_\dG$)-bimodule category structure on $\Dright$ (given by convolutions) via the equivalences $\Phi_{G\to\dG}:\scE_G\isom\hsM_\dG$ and $\Phi_{\dG\to G}:\scE_\dG\isom\hsM_G$ in Theorem \ref{th:emduality};
\item\label{sd:st} $\Psi(\Delta^\vee_w)\cong\Delta_w,\Psi(\nabla^\vee_w)\cong\nabla_w$ for all $w\in W$;
\item\label{sd:puretilt} $\Psi(\IC^\vee_w)\cong\calT_w, \Psi(\calT^\vee_w)\cong\IC_w$ for all $w\in W$. More generally, $\Psi$ {\em interchanges} very pure complexes of weight 0 and tilting sheaves;
\item\label{sd:slinear} There is a functorial isomorphism of $(S_\dH,\Frob)$-modules for any $\calF_1,\calF_2\in\Dleft$;
\begin{equation*}
\phi\Ext^\bullet_{\Dleft}(\calF_1,\calF_2)\cong\Ext^\bullet_{\Dright}(\Psi\calF_1,\Psi\calF_2)
\end{equation*}
\item\label{sd:twist} The analog of Theorem \ref{th:emduality}(\ref{em:twist}) holds for $\Psi$.
\end{enumerate}
\end{theorem}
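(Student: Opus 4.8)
The plan is to construct $\Psi$ from an isomorphism of differential graded models, in the same spirit as the construction of $\Phi$ in \S\ref{ss:em}, taking as the crucial input the equivariant-monodromic duality $\Phi_{\dG\to G}\colon\scE_\dG\isom\hsM_G$ that is already available (Theorem \ref{th:emduality} applied to the Langlands dual pair). I would first work with the truncated categories $\Dleft_{\leq w}$, $\Dright_{\leq w}$ (complexes constructible along the Schubert strata indexed by $\{v\le w\}$), build compatible equivalences $\Psi_{\leq w}\colon\Dleft_{\leq w}\isom\Dright_{\leq w}$, and pass to the inductive $2$-limit. The decisive point is to model the two sides by the \emph{two different} distinguished families: I would model $\Dright_{\leq w}$ by the tilting sheaves $\{\calT_v:v\le w\}$ (recall $\calT_v=\piddag\tcT_v$ with $\tcT_v=\Phi_{\dG\to G}(\IC_v)$, an indecomposable tilting extension of $\Ql[\ell(v)](\ell(v)/2)$ by Lemma \ref{l:fmtexist} and Corollary \ref{c:t}), and $\Dleft_{\leq w}$ by the intersection cohomology sheaves $\{\IC^\vee_v:v\le w\}$ on $\dG/\dU$. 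Since tilting sheaves admit $\Delta$- and $\nabla$-flags one has $\Ext^{>0}_{\Dright}(\calT_u,\calT_v)=0$, so $N_{\leq w}:=\big(\bigoplus_{u,v\le w}\Hom_{\Dright}(\calT_u,\calT_v)^f\big)^{\opp}$ is concentrated in cohomological degree $0$ (with a $\Frob$-action) and Theorem \ref{th:CtoMod} gives $\Dright_{\leq w}\isom D_{\perf}(N_{\leq w},\Frob)$. For $\Dleft$ I must first verify the hypotheses of Appendix \ref{a:dgmodel}, namely that each $\IC^\vee_v$ is very pure of weight $0$ --- this follows from a Bott--Samelson resolution of the Schubert varieties in $\dG/\dU$ exactly as in Example \ref{ex:vpure} --- and then $\Dleft_{\leq w}\isom D_{\perf}(E^\vee_{\leq w},\Frob)$ with the graded algebra $E^\vee_{\leq w}:=\big(\bigoplus_{u,v\le w}\Ext^\bullet_{\Dleft}(\IC^\vee_u,\IC^\vee_v)\big)^{\opp}$, in which $\Ext^i$ has $\Frob$-weight $i$ by purity (the analog of Lemma \ref{l:purity}).

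The core of the argument is then a $\Frob$-equivariant algebra isomorphism $\phi(E^\vee_{\leq w})\cong N_{\leq w}$, where $\phi$ is the regrading functor; the weight normalization just recorded is precisely what makes $\phi$ send a graded algebra to a degree-$0$ algebra, consistently with the first paragraph. To produce it I would push both sides through $\Phi_{\dG\to G}$ together with the torsor projections relating $\Dleft$ and $\Dright$ to $\scE_\dG$ and $\hsM_G$. On the $\Dright$-side, the adjunction $(\piddag,\pidag)$ gives $\Hom_{\Dright}(\calT_u,\calT_v)=\Hom_{\hsM_G}(\tcT_u,\pidag\piddag\tcT_v)$; I would identify $\pidag\piddag$ with right convolution $\conv{U}$ by an explicit coalgebra object --- the $H$-torsor analogue of the object $\calC_\Theta$ of Lemma \ref{l:convic} --- and then evaluate the resulting $\Hom$-groups among free-monodromic tilting sheaves as $\Hom_{S_\dH\otimes S_\dH}$-modules via Propositions \ref{p:Vff} and \ref{p:vcomp}. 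On the $\Dleft$-side, the symmetric manipulation with $\pi^\vee$ (using $\IC^\vee_v=\pi^{\vee,*}\IC_v$ up to shift and twist) expresses $\Ext^\bullet_{\Dleft}(\IC^\vee_u,\IC^\vee_v)$ through $\Hom_{\dS_H\otimes\dS_H}$-modules among the $\HH$-modules of the IC-sheaves on $\dG/\dB$, via Propositions \ref{p:hff} and \ref{p:hcomp}. The $W$- and $\Frob$-equivariant identification \eqref{VHdual}, $\dS_H^\Finv\cong S_\dH$, matches the two descriptions, and applying $\phi$ finishes. Equivalently, one may organize this by observing that $\Dleft$ and $\Dright$ are cyclic module categories over $\scE_\dG$ and $\hsM_G$, generated by suitable objects; since the relevant $\Phi$'s are monoidal, it then suffices to match the internal endomorphism algebra objects of these generators (the algebra objects recording how much equivariance has been forgotten, resp.\ how much monodromy killed).

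Transporting $\phi(E^\vee_{\leq w})\cong N_{\leq w}$ through the two dg-equivalences and the regrading $L\mapsto L^\Finv$, exactly as in \S\ref{ss:em}, yields $\Psi_{\leq w}$; compatibility of the convolution bimodules (the analogues of $Q_{\leq w,\leq w'}$ and $R_{\leq w,\leq w'}$ from the proof of Theorem \ref{th:emduality}(\ref{em:monoidal}), via Remark \ref{r:bifunctor}) shows both that the stratum embeddings are intertwined, so the $\Psi_{\leq w}$ glue to $\Psi$, and that the bimodule structures correspond under $\Phi_{\dG\to G}$ and $\Phi_{G\to\dG}$ --- this is property (\ref{sd:modcat}). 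Properties (\ref{sd:slinear}) and (\ref{sd:twist}) are built into the algebra isomorphism and the regrading functor exactly as properties (\ref{em:slinear}) and (\ref{em:twist}) of $\Phi$; property (\ref{sd:st}) follows by chasing the recollement square for $(\leq w,<w,w)$ as in Theorem \ref{th:emduality}(\ref{em:st}), using $\IC^\vee_v\isom\Delta^\vee_v$ and $\calT_v\isom\tcT_v$ modulo lower strata. For property (\ref{sd:puretilt}): one has $\Psi(\IC^\vee_w)=\calT_w$ by construction, while property (\ref{sd:twist}) says $\Psi$ exchanges the operations $[1](1/2)$ and $(-1/2)$, hence interchanges the ``$\langle 0\rangle$'' and ``pure twist'' modifications; combined with property (\ref{sd:st}) this shows $\Psi$ interchanges the class $\langle\Delta_w\langle0\rangle|w\in W\rangle\cap\langle\nabla_w\langle0\rangle|w\in W\rangle$ of very pure complexes of weight $0$ with the class $\langle\Delta_w(?)|w\in W\rangle\cap\langle\nabla_w(?)|w\in W\rangle$ of tilting sheaves, in both categories. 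In particular $\Psi(\calT^\vee_w)$ is very pure of weight $0$, hence (being indecomposable) isomorphic to $\IC_w$, which gives the remaining assertions of (\ref{sd:puretilt}).

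I expect the main obstacle to be the computation in the second paragraph: identifying $\pidag\piddag$ (and its $\dG$-counterpart built from $\pi^\vee$) with convolution by an explicit (co)algebra object, so that the $\Hom$/$\Ext$ computation in the enhanced categories $\Dleft$, $\Dright$ genuinely reduces to Propositions \ref{p:hff} and \ref{p:Vff}; this forces one to work carefully inside the completed monodromic formalism of Appendix \ref{a:compmono}, since the torsor projection does not commute naively with the completion (and in the infinite-dimensional case $\tcP$ and the analogous pro-objects, rather than $\tcT_{w_0}$, control the relevant endomorphism algebras, cf.\ Corollary \ref{c:tisp}). A secondary point not to be skipped is the verification of very-purity of $\IC^\vee_w$ directly on the enhanced flag variety $\dG/\dU$, needed to apply Theorem \ref{th:CtoMod} there.
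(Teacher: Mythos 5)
Your overall plan---build dg-models for $\Dright$ by the tilting family $\{\calT_v\}$ and for $\Dleft$ by the IC family $\{\IC^\vee_v\}$, match the algebras via $\Phi_{\dG\to G}$ and the regrading functor, and pass to the $2$-limit---is exactly the paper's strategy, and your treatment of \eqref{sd:st}, \eqref{sd:slinear}, \eqref{sd:twist} is the same. Your route to \eqref{sd:puretilt} differs slightly (you exploit the involutivity of $\phi$ to argue directly that $\Psi$ interchanges the classes $\langle\Delta\langle0\rangle\rangle\cap\langle\nabla\langle0\rangle\rangle$ and $\langle\Delta(?)\rangle\cap\langle\nabla(?)\rangle$, hence sends $\calT^\vee_w$ to an indecomposable very pure complex), and that argument is correct. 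But there are two places where the comparison with the paper is worth recording.

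First, for the algebra isomorphism $\phi(\leftexp{\dagger}{E}_{\leq w})\cong M^\dagger_{\leq w}$ you propose to pass through $\Hom_{\hsM_G}(\tcT_u,\pidag\piddag\tcT_v)$ and identify $\pidag\piddag$ with convolution by an explicit coalgebra object, which you flag as the main obstacle. The paper sidesteps this entirely: on the $\Dright$-side, Lemma~\ref{l:HomfreeS} gives directly that $\Hom_{\hsM_G}(\tcT_u,\tcT_v)$ is $\hatS$-free with $\Hom_{\hsM_G}(\tcT_u,\tcT_v)\otimes_{\hatS}\Ql\cong\Hom_{\Dright}(\calT_u,\calT_v)$, i.e.\ $M_{G,\leq w}\otimes_{S_H}\Ql\cong M^\dagger_{\leq w}$; on the $\Dleft$-side, Cor.~\ref{c:eqkillS} together with the $\dS$-freeness in Lemma~\ref{l:purity} gives $E_{\dG,\leq w}\otimes_{\dS_\dH}\Ql\cong\leftexp{\dagger}{E}_{\leq w}$. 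Combining with $E_{\dG,\leq w}^\Finv\cong M_{G,\leq w}$ from Theorem~\ref{th:emduality} immediately yields $\leftexp{\dagger}{E}_{\leq w}^\Finv\cong M^\dagger_{\leq w}$. Your planned route via the comonad $\pidag\piddag$ can probably be made to work, but it is strictly harder, and the base-change lemmas are already available; you should use them.

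Second, and this is the genuine gap, your argument for \eqref{sd:modcat} does not go through as stated. To push a module category structure through a dg-model built on a class $\scC$ via Remark~\ref{r:bifunctor}, the action must restrict to that class. For the \emph{left} action---$\scE_\dG$ on $\Dleft$ by left convolution (preserving very pure, as in Prop.~\ref{p:vpure}) and $\hsM_G$ on $\Dright$ by left convolution (preserving tilting, since $\tcT\conv{U}\piddag\tcT'=\piddag(\tcT\conv{U}\tcT')$ and Prop.~\ref{p:tiltconv})---this works for your $\Psi$, modeled on tilting of $\Dright$ and IC of $\Dleft$. But the \emph{right} action does not restrict: $\calT_v\conv{B}\IC_{v'}$ lands only in $\langle\Delta_w\langle\le0\rangle(?)\rangle\cap\langle\nabla_w\langle\ge0\rangle(?)\rangle$, not in the tilting class of $\Dright$, and similarly $\IC^\vee_v\conv{U}\tcT^\vee_{v'}$ is not a priori very pure in $\Dleft$; so the $R_{\leq w,\leq w'}$-style bimodule does not exist inside your dg-model for $\Psi$. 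The paper's fix is to construct the inverse functor $\Xi\colon\Dright\to\Dleft$ \emph{as a second dg-model equivalence}, using the swapped families (IC-sheaves of $\Dright$ with algebra $E^\dagger_{\leq w}$, and tilting sheaves $\calT^\vee_w$ of $\Dleft$ with algebra $\leftexp{\dagger}{M}_{\leq w}$, matched by $E^{\dagger,\Finv}_{\leq w}\cong\leftexp{\dagger}{M}_{\leq w}$). It then proves $\Psi$ and $\Xi$ are mutually inverse (a t-exactness argument forces $\Xi\Psi$ to fix standards, hence ICs, hence be the identity by Prop.~\ref{p:Cfun}), and handles the right module structure via $\Xi$, for which the right convolution \emph{does} preserve the modeling classes. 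You need this second dg-model and the $\Psi=\Xi^{-1}$ step; your single-model argument cannot produce the right half of the bimodule structure. (Incidentally, once $\Xi$ is in hand the paper obtains $\Psi(\calT^\vee_w)=\Psi\Xi(\IC_w)=\IC_w$ directly, so the $\Xi$-construction also gives a quicker route to \eqref{sd:puretilt} than the one you chose.)
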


\begin{proof}
We first build DG models for $\Dright$. Let $\scTr\subset\Dright$ be the full subcategory of mixed tilting sheaves.  The twists of $\{\calT_w|w\in W\}\subset\scT^\dagger$ generate the triangulated category $\Dright$. Define
\begin{equation*}
M^\dagger_{\leq w}:=\left(\bigoplus_{u,v\leq w}\Hom_{\Dright}(\calT_u,\calT_v)\right)^{\opp}.
\end{equation*}
Applying Theorem \ref{th:CtoMod} to the triple $(\scTr_{\leq w}\subset\Dright_{\leq w},\{\calT_u|u\leq w\})$, we get an equivalence
\begin{equation}\label{eq:dgrightM}
\Dright_{\leq w}\isom D_{\perf}(M^{\dagger}_{\leq w},\Frob)
\end{equation}
where the RHS is the full subcategory of $D^b(M^\dagger_{\leq w},\Frob)$ generated by twists of the $(M^{\dagger}_{\leq w},\Frob)$-modules $\oplus_{u\leq w}\Hom_{\Dright}(\calT_u,\calT_v)^f$ for $v\leq w$. Recall the definition of the algebra $M_{\leq w}$ in (\ref{eq:defM}). Below we use $M_{G,\leq w}$ to emphasize its dependence on $G$ rather than $\dG$. Then Lemma \ref{l:HomfreeS} gives a $\Frob$-equivariant isomorphism of algebras
\begin{equation}\label{MkillS}
M_{G,\leq w}\otimes_{S_H}\Ql\cong M^\dagger_{\leq w}.
\end{equation}

On the other hand, let $\scV^\dagger\subset\Dright$ be the full subcategory of very pure complexes of weight 0.  The twists of $\{\IC_w|w\in W\}\subset\scV^\dagger$ generate $\Dright$ as triangulated category. Define
\begin{equation*}
E^\dagger_{\leq w}:=\left(\bigoplus_{u,v\leq w}\Ext^\bullet_{\Dright}(\IC_u,\IC_v)\right)^{\opp}.
\end{equation*}
Applying Theorem \ref{th:CtoMod} to the triple $(\scV^\dagger_{\leq w}\subset\Dright_{\leq w},\{\IC_u|u\leq w\})$, we get another equivalence
\begin{equation}\label{eq:dgrightE}
\Dright_{\leq w}\isom D_{\perf}(E^\dagger_{\leq w},\Frob)
\end{equation}
where the RHS is the full subcategory of $D^b(E^\dagger_{\leq w},\Frob)$ generated by the twists of the $(E^\dagger_{\leq w},\Frob)$-modules $\oplus_{u\leq w}\Ext^\bullet_{\Dright}(\IC_u,\IC_v)$ for $v\leq w$. Recall the definition of the algebra $E_{\leq w}$ in (\ref{eq:defE}). Again we write $E_{G,\leq w}$ to emphasize its dependence on $G$. By Corollary \ref{c:eqkillS} (the isomorphism \eqref{killeqhom}) and Lemma \ref{l:purity} (which implies $E_{G,\leq w}$ is a free left $\dS_H$-module), we have a $\Frob$-equivariant isomorphism of algebras
\begin{equation}\label{EkillS}
\Ql\otimes_{\dS_H}E_{G,\leq w}\cong E^\dagger_{\leq w}.
\end{equation}

Next we define the DG models for $\Dleft$. We define
\begin{eqnarray*}
\leftexp{\dagger}{M}_{\leq w}:=\left(\bigoplus_{u,v\leq w}\Hom_{\Dleft}(\calT^\vee_u,\calT^\vee_v)\right)^{\opp};\\
\leftexp{\dagger}{E}_{\leq w}:=\left(\bigoplus_{u,v\leq w}\Ext^\bullet_{\Dleft}(\IC^\vee_u,\IC^\vee_v)\right)^{\opp}.
\end{eqnarray*}
Similar to the case of $\Dright$, we have equivalences
\begin{equation}\label{eq:dgleft}
D_{\perf}(\leftexp{\dagger}{E}_{\leq w},\Frob)\isom\Dleft_{\leq w}\isom D_{\perf}(\leftexp{\dagger}{M}_{\leq w},\Frob).
\end{equation}
We also have $\Frob$-equivariant isomorphisms of algebras
\begin{eqnarray}\label{leftMkillS}
\Ql\otimes_{S_\dH}M_{\dG,\leq w}\cong\leftexp{\dagger}{M}_{\leq w};\\
\label{leftEkillS}
E_{\dG,\leq w}\otimes_{\dS_\dH}\Ql\cong\leftexp{\dagger}{E}_{\leq w}.
\end{eqnarray}

Note that
\begin{eqnarray*}
\dS_H^\Finv=\Sym^\bullet(V^\vee_H)^\Finv\cong\Sym(V_\dH)=S_{\dH}\\
\dS_\dH^\Finv=\Sym^\bullet(V^\vee_\dH)^\Finv\cong\Sym(V_H)=S_{H}.
\end{eqnarray*}
By Theorem \ref{th:emduality}, we have isomorphisms
\begin{equation*}
E_{G,\leq w}^\Finv\cong M_{\dG,\leq w}; \hspace{1cm} E_{\dG,\leq w}^\Finv\cong M_{G,\leq w}.
\end{equation*}
By \eqref{MkillS}, \eqref{EkillS}, \eqref{leftMkillS} and \eqref{leftEkillS}, we get $\Frob$-equivariant isomorphisms of algebras
\begin{eqnarray*}
\leftexp{\dagger}{E}^\Finv_{\leq w}\cong M^{\dagger}_{\leq w},
E^{\dagger,\Finv}_{\leq w}\cong \leftexp{\dagger}{M}_{\leq w},
\end{eqnarray*}
which, together with the DG models \eqref{eq:dgrightM}, \eqref{eq:dgrightE} and \eqref{eq:dgleft}, give equivalences
\begin{equation*}
\xymatrix{D_{\perf}(\leftexp{\dagger}{E}_{\leq w},\Frob)\ar[r]^{(-)^{\Finv}}_{\sim}\ar[d]^{\wr} & D_{\perf}(M^{\dagger}_{\leq w},\Frob)\\
\Dleft_{\leq w}\ar[r]^{\Psi_{\leq w}} & \Dright_{\leq w}\ar[r]^{\Xi_{\leq w}}\ar[u]^{\wr} & \Dleft_{\leq w}\ar[d]^{\wr}\\
& D_{\perf}(E^\dagger_{\leq w},\Frob)\ar[u]^{\wr}\ar[r]^{(-)^{\Finv}}_{\sim} & D_{\perf}(\leftexp{\dagger}{M}_{\leq w},\Frob)}
\end{equation*}
Passing to the inductive 2-limit, we get equivalences
\begin{equation*}
\Dleft\xrightarrow{\Psi}\Dright\xrightarrow{\Xi}\Dleft.
\end{equation*}

We check the properties.

Properties (\ref{sd:st}) and (\ref{sd:twist}) for both $\Psi$ and $\Xi$ are verified as in Theorem \ref{th:emduality}.

\begin{claim}
There are natural isomorphism $\Xi\circ\Psi\Rightarrow\id_{\Dleft}$ and $\Psi\circ\Xi\Rightarrow\id_{\Dright}$ making $(\Psi,\Xi)$ a pair of inverse functors.
\end{claim}
\begin{proof}
We prove the first isomorphism. The argument for the second is similar. Since the Properties (\ref{sd:st}) and (\ref{sd:twist}) are satisfied by both $\Psi$ and $\Xi$, the functor $\Xi\circ\Psi$ is a t-exact self-equivalence of $\Dleft$ under the perverse t-structure (because $\Dleft^{\leq0}$ and $\Dleft^{\geq0}$ are characterized by $\langle\Delta_v[\leq0](?)\rangle$ and $\langle\nabla_v[\leq0](?)\rangle$). Therefore $\Xi\circ\Psi$ sends IC-sheaves to IC-sheaves. By Property (\ref{sd:st}), we must have $\Xi\Psi(\IC'_w)\cong\IC'_w$. In view of the first equivalence in (\ref{eq:dgleft}), the transport of $\Xi\circ\Psi$ on $D_{\perf}(E^\dagger_{\leq w},\Frob)$ is given by the identity functor by Proposition \ref{p:Cfun}. Therefore we get an isomorphism $\Xi\circ\Psi\Rightarrow\id_{\Dleft}$.
\end{proof}

Property (\ref{sd:puretilt}). It is obvious from construction that $\Psi(\IC^\vee_w)\cong\calT_w$ and $\Xi(\IC_w)\cong\calT^\vee_w$. Since $\Psi$ is an inverse of $\Xi$, therefore $\Psi(\calT^\vee_w)\cong\Psi\Xi(\IC
_w)\cong\IC_w$. The argument for Theorem \ref{th:emduality}(\ref{em:puretilt}) shows that both $\Psi$ and $\Xi$ sends very pure complexes of weight 0 to tilting sheaves. Since $\Psi$ and $\Xi$ are inverse to each other, $\Psi$ must interchange very pure complexes of weight 0 and tilting sheaves.

Finally we verify Property (\ref{sd:modcat}). The argument for Theorem \ref{th:emduality}(\ref{em:monoidal}) shows that: $\Psi$ has a structure intertwining the left-$\scE_\dG$-module category structure on $\Dleft$ and the left-$\hsM_G$-module category structure on $\Dright$; and that $\Xi$ has a structure intertwining the right-$\scE_G$-module category structure on $\Dright$ and the right-$\hsM_\dG$-module category structure on $\Dleft$. Since $\Psi$ and $\Xi$ are inverse functors, Property \eqref{sd:modcat} is proved.
\end{proof}

\begin{remark}\label{r:whysd}
When $\Lie G$ is a symmetrizable Kac-Moody algebra, we can replace $\dG$ by $G$ in Theorem \ref{th:selfduality} and get an equivalence
\begin{equation*}
\Psi_G:D^b_m(\quot{B}{G}{U})\isom D^b_m(\quot{U}{G}{B})
\end{equation*}
Let $\inv:D^b_m(\quot{U}{G}{B})\isom D^b_m(\quot{B}{G}{U})$ be the equivalence induced by the inversion map of $G$, then $\inv\circ\Psi_G$ becomes a ``self-duality'' of $D^b_m(\quot{B}{G}{U})$. Further argument shows that $\inv\circ\Psi_G$ is involutive: $(\inv\circ\Psi_G)^2$ is isomorphic to the identity functor.
\end{remark}

\begin{remark}\label{r:condW}
By Theorem \ref{th:selfduality}, the perverse t-structure on $\Dleft$ is transported by $\Psi$ to the following t-structure $(\leftexp{\textup{wt}}{\Dright}^{\leq0},\leftexp{\textup{wt}}{\Dright}^{\geq0})$ on $\Dright$:
\begin{eqnarray*}
\leftexp{\textup{wt}}{\Dright}^{\leq0}=\{\calF\in\Dright|i_w^*\calF\textup{ is a complex of weight}\geq0\};\\
\leftexp{\textup{wt}}{\Dright}^{\geq0}=\{\calF\in\Dright|i_w^!\calF\textup{ is a complex of weight}\leq0\}.
\end{eqnarray*}
The irreducible objects in the heart of this t-structure are precisely weight-0-twists of $\calT_w$. If we transport the characterizing properties of IC-sheaves to any irreducible object $\calT$ in the heart of the new t-structure on $\Dright$, we see that $\calT$ satisfies

{\em For any $w\in W$, $i^*_w\calT$ has weight$>0$ and $i^!_w\calT$ has weight$<0$.}

This is precisely the ``Condition (W)'' observed in \cite[\S1.3]{Yun2} by the second author, which served as a guiding principle in the study of weights of mixed tilting sheaves. In particular, Theorem \ref{th:selfduality} implies that the condition (W) holds for indecomposable mixed tilting sheaves on the flag variety of any Kac-Moody group. Using Theorem \ref{th:selfduality}(\ref{sd:st}), we get a simple relation between the multiplicities of standard sheaves in $\IC^\vee_w$ and in $\calT_w$, and we conclude that the ``weight polynomials'' of $\calT_w$ (cf. \cite[\S3.1]{Yun2}) are essentially given by Kazhdan-Lusztig polynomials. This gives a generalization of \cite[Theorem 1.2.1]{Yun2}.
\end{remark}


\subsection{Parabolic-Whittaker duality}\label{ss:parWh}

\begin{theorem}[Parabolic-Whittaker duality]\label{th:pwduality}
For each $\Theta\subset\Sigma$ of finite type, there is an equivalence of triangulated categories
\begin{equation*}
\Phi_{\Theta}: \scE_\Theta=\scE_{G,\Theta}\isom\hsM_{\dG,\Theta}=:\hsM_\Theta.
\end{equation*}
satisfying the following properties
\begin{enumerate}
\item\label{pw:modcat} $\Phi_\Theta$ can be given a structure to intertwine the right convolution of $\scE$ on $\scE_\Theta$ and the right convolution of $\hsM$ on $\hsM_\Theta$ (via the equivalence $\Phi:\scE\isom\hsM$ in Theorem \ref{th:emduality});
\item\label{pw:adj} There are natural isomorphisms which intertwine the adjunctions (\ref{eq:paradj}) and (\ref{d:whadj}) via the equivalences $\Phi_\Theta$ and $\Phi$;
\item\label{pw:st} $\Phi_\Theta(\Delta_{\barw})\cong\tDel_{\barw,\chi}$ and $\Phi_\Theta(\nabla_{\barw})\cong\tnab_{\barw,\chi}$ for all $\barw\in\coW$;
\item\label{pw:rest} The analogs of Theorem \ref{th:emduality}\eqref{em:puretilt}\eqref{em:slinear}\eqref{em:twist} hold.
\end{enumerate}
\end{theorem}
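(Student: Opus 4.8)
The plan is to present both $\scE_\Theta$ and $\hsM_\Theta$ as categories of comodules over coalgebra objects and then transport one presentation to the other through the monoidal equivalence $\Phi$ of Theorem~\ref{th:emduality}; this is the ``Barr--Beck type argument'' alluded to in the introduction. Recall from Lemma~\ref{l:convic} and Remark~\ref{rm:Ccoalg} that $\calC_\Theta\in\scE$ is a coalgebra for $\conv{B}$ and that the comonad attached to the adjoint pair $(\pi^{\Theta,*},\pi^\Theta_*)$ of \eqref{eq:paradj} is $\calC_\Theta\conv{B}(-)$; dually (Lemma~\ref{l:PTheta}(3), Remark~\ref{rm:Pcoalg}) the object $\tcP_\Theta\in\hsM$ is a coalgebra for $\conv{U}$ and the comonad attached to the adjoint pair $(\Av^\Theta_!,\Av^\Theta_\chi)$ of \eqref{d:whadj} is $\tcP_\Theta\conv{U}(-)$. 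Working throughout with the DG enhancements provided by Theorem~\ref{th:CtoMod} (so that comonadic descent applies), the first step is to verify that the left adjoints $\pi^{\Theta,*}\colon\scE_\Theta\to\scE$ and $\Av^\Theta_!\colon\hsM_\Theta\to\hsM$ are comonadic, which produces equivalences
\begin{equation*}
\scE_\Theta\simeq\calC_\Theta\text{-comod}(\scE),\qquad \hsM_\Theta\simeq\tcP_\Theta\text{-comod}(\hsM).
\end{equation*}
Conservativity of $\pi^{\Theta,*}$ is immediate since $\pi^\Theta$ is a smooth surjection with non-empty fibres; conservativity of $\Av^\Theta_!$ follows from the cleanness of $\tdel^\Theta_\chi$ (Cor.~\ref{c:clean}) together with the stratification of $\hsM_\Theta$ by $\coW$ (Lemma~\ref{l:mw}), since an object vanishes as soon as its $*$-restrictions to all Whittaker strata do, and on each stratum $\Av^\Theta_!$ is an un-averaging equivalence.

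The second step is to identify $\Phi(\calC_\Theta)$ with $\tcP_\Theta$ as coalgebras in $(\hsM,\conv{U})$. Since $\calC_\Theta\cong\IC_{w_\Theta}[-\lTh](-\lTh/2)$, Theorem~\ref{th:emduality}(\ref{em:st})(\ref{em:twist}) gives $\Phi(\calC_\Theta)\cong\tcT_{w_\Theta}(\lTh/2)$ with $\tcT_{w_\Theta}=\Phi(\IC_{w_\Theta})$; but the support $\tilFl_{\leq w_\Theta}=L_\Theta B/B$ is the enhanced flag variety of the finite-dimensional Levi $L_\Theta$, so $\tcT_{w_\Theta}$ is, by the uniqueness in Cor.~\ref{c:t}, the same indecomposable free-monodromic tilting sheaf produced inside $L_\Theta$, and Cor.~\ref{c:tisp} applied to $L_\Theta$ yields $\tcT_{w_\Theta}\cong\tcP_\Theta(-\lTh/2)$, whence $\Phi(\calC_\Theta)\cong\tcP_\Theta$. (One must check that the equivariant and monodromic constructions for $L_\Theta$ embed compatibly into those for $G$ and $\dG$; this is routine because every sheaf and stratum in sight lives on $\tilFl_{\leq w_\Theta}$.) Because each coalgebra structure is determined by its counit (Remarks~\ref{rm:Ccoalg}, \ref{rm:Pcoalg}) and $\Phi$ is monoidal, this promotes to an isomorphism of coalgebras; consequently $\Phi$ restricts to an equivalence $\calC_\Theta\text{-comod}(\scE)\simeq\tcP_\Theta\text{-comod}(\hsM)$, and composing with the two comonadic equivalences defines
\begin{equation*}
\Phi_\Theta\colon\scE_\Theta\isom\hsM_\Theta.
\end{equation*}

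For the listed properties: under the comonadic identifications the functors $\pi^{\Theta,*},\pi^\Theta_*,\pi^{\Theta,!}$ become the forgetful functor, the cofree-comodule functor, and the other adjoint, and likewise $\Av^\Theta_!,\Av^\Theta_\chi,\Av^\Theta_*$; since $\Phi$ intertwines $\conv{B}$ with $\conv{U}$ and $\calC_\Theta$ with $\tcP_\Theta$, all of these correspond, which gives (\ref{pw:modcat}) --- the coalgebra occupies the left tensor slot, so the right $\scE$- resp.\ $\hsM$-module structures are untouched --- and (\ref{pw:adj}). For (\ref{pw:st}) one passes to the minimal-length representative $v\in[\coW]$ of $\barw$ and combines $\pi^\Theta_*\Delta_v\cong\Delta_{\barw}$, $\Av^\Theta_\chi\tDel_v\cong\tDel_{\barw,\chi}$ (Lemmas~\ref{l:pist}, \ref{l:Avst}) with $\Phi(\Delta_v)\cong\tDel_v$ and its $\nabla$-analogue. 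Finally (\ref{pw:rest}) follows by transporting Theorem~\ref{th:emduality}(\ref{em:puretilt})(\ref{em:slinear})(\ref{em:twist}) through the two comonadic equivalences: the very pure complexes of weight $0$ in $\scE_\Theta$ are $\langle\Delta_{\barw}\langle0\rangle\rangle\cap\langle\nabla_{\barw}\langle0\rangle\rangle$ and the free-monodromic tilting sheaves in $\hsM_\Theta$ are $\langle\tDel_{\barw,\chi}(?)\rangle\cap\langle\tnab_{\barw,\chi}(?)\rangle$, so (\ref{pw:st}) already forces the purity$\,\leftrightarrow\,$tilting dictionary, while the $\Ext$-module statement and the twist compatibility $\Phi_\Theta\circ[1](1/2)\cong(-1/2)\circ\Phi_\Theta$ are inherited from $\Phi$ via the cofree-resolution (cobar) computation of $\Ext$ in a comodule category.

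The main obstacle will be making the comonadic descent genuinely rigorous on the completed Whittaker side --- verifying that $\Av^\Theta_!$ is comonadic in the DG setting and that $\tcP_\Theta$-comodules recover $\hsM_\Theta$ on the nose rather than some larger completion --- together with the coalgebra identification $\Phi(\calC_\Theta)\cong\tcP_\Theta$, which rests on the Levi input $\tcT_{w_\Theta}\cong\tcP_\Theta(-\lTh/2)$ and on the compatibility of the $L_\Theta$-constructions with those for $G$ and $\dG$.
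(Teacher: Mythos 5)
Your proposal is built on the same key input as the paper's proof: the isomorphism $\Phi(\calC_\Theta)\cong\tcP_\Theta$ obtained from Cor.\ref{c:tisp} applied to the Levi $L_\Theta$, upgraded to an isomorphism of coalgebra objects. Be aware, though, that Remarks \ref{rm:Ccoalg} and \ref{rm:Pcoalg} only assert the \emph{existence} of the two coalgebra structures; the rigidity you invoke ("each coalgebra structure is determined by its counit") is exactly what Prop.\ref{p:vcomp}(1) and Lemma \ref{l:canon} prove, via the computation $\hom_{\scE}(\calC_\Theta,\calC_\Theta\conv{B}\calC_\Theta)=\Ql$, so you should cite or reprove that computation rather than the Remarks. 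Where you genuinely diverge from the paper is in how the descent is executed, and this is where there is a real gap.

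The paper never forms comodule categories. It applies Theorem \ref{th:CtoMod} to the generators $\pi^\Theta_*\IC_v$ of $\scE_\Theta$ and $\Av^\Theta_\chi\tcT_v$ of $\hsM_\Theta$ ($v\in[\coW]$), computes both endomorphism algebras by adjunction as $\Hom$'s out of $\calC_\Theta\conv{B}\IC_v$ resp.\ $\tcP_\Theta\conv{U}\tcT_v$, and matches them $\Frob$-equivariantly \emph{as algebras} using Theorem \ref{th:emduality}(\ref{em:slinear}) together with the coalgebra isomorphism (this is the content of Lemmas \ref{l:piav} and \ref{l:comcom}); the equivalence $\Phi_\Theta$ is then $L\mapsto L^{\Finv}$ on the resulting $D_{\perf}$'s. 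Your literal Barr--Beck route instead requires (i) a definition of $\calC_\Theta$-comodules carrying all higher coherences of the coaction, in a setting where the monoidal structure and the adjunctions $(\pi^{\Theta,*},\pi^\Theta_*)$, $(\Av^\Theta_!,\Av^\Theta_\chi)$ are themselves enhanced, and (ii) a comonadicity theorem in that setting. Neither is supplied by Theorem \ref{th:CtoMod}: it enhances the \emph{categories}, but the functors are transported to these models only as derived tensor with explicit bimodules (Prop.\ref{p:Cfun}, Remark \ref{r:bifunctor}), which is precisely the paper's substitute for descent. Conservativity of the left adjoints --- which you argue correctly, e.g.\ since $\Av^\Theta_\chi$ hits the generators $\tDel_{\barv,\chi}$ and $\tnab_{\barv,\chi}$ by Lemma \ref{l:Avst} --- is necessary but nowhere near sufficient: in a bare triangulated category the comparison functor to comodules is not even well defined. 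So the step you yourself flag as "the main obstacle" is the load-bearing one, and closing it honestly would lead you back to the bimodule computation the paper actually performs. Your derivations of properties (\ref{pw:st}) and (\ref{pw:rest}) from (\ref{pw:adj}) via Lemmas \ref{l:pist} and \ref{l:Avst} are fine and are, if anything, a slightly cleaner packaging than the paper's recollement argument.
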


As in the case of Theorem \ref{th:emduality}, we have some immediate consequences.
\begin{cor}
For each $w\in[\coW]$, let $\tcT_{\barw,\chi}:=\Phi_\Theta(\IC_{\barw})$. Then
\begin{enumerate}
\item $\tcT_{\barw,\chi}$ is a \fm tilting extension of $\tcL_{\barw,\chi}$ whose underlying complex is indecomposable.
\item Any \fm tilting extension of $\tcL_{\barw,\chi}$ with indecomposable underlying complex is isomorphic to $\tcT_{\barw,\chi}$.
\end{enumerate}
\end{cor}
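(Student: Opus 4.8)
The plan is to transcribe the proof of Corollary \ref{c:t} essentially verbatim, with Theorem \ref{th:pwduality} playing the role that Theorem \ref{th:emduality} plays there. Once the parabolic-Whittaker duality is established, the corollary is purely formal, so the whole point is to check that each ingredient used for Corollary \ref{c:t} has an available analogue in the $\Theta$-setting.

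For part (1), I would first invoke Example \ref{ex:vpure}, which places $\IC_{\barw}$ in $\scV_\Theta$ as a very pure complex of weight $0$; then the analogue of Theorem \ref{th:emduality}(\ref{em:puretilt}) recorded in Theorem \ref{th:pwduality}(\ref{pw:rest}) shows that $\tcT_{\barw,\chi}=\Phi_\Theta(\IC_{\barw})$ is a \fm tilting sheaf. To see that it extends $\tcL_{\barw,\chi}$, I would combine $\Delta_{\barw}\isom\IC_{\barw}$ modulo $\scE_{\Theta,<\barw}$ with $\Phi_\Theta(\Delta_{\barw})\cong\tDel_{\barw,\chi}$ from Theorem \ref{th:pwduality}(\ref{pw:st}) and the compatibility of $\Phi_\Theta$ with the Schubert stratification (built into its construction, exactly as for $\Phi$ via the diagram \eqref{d:recoll}), obtaining $\tDel_{\barw,\chi}\isom\tcT_{\barw,\chi}$ modulo $\hsM_{\Theta,<\barw}$. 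Finally, for indecomposability I would appeal to the $\Ext$-linearity statement in (\ref{pw:rest}) (the analogue of Theorem \ref{th:emduality}(\ref{em:slinear})) to identify $\End_{\hsM_\Theta}(\tcT_{\barw,\chi})^f$ with $\bigoplus_i\Ext^i_{\scE_\Theta}(\IC_{\barw},\IC_{\barw})$, a $\ZZ_{\geq0}$-graded algebra whose degree-$0$ part is $\Ql$ since $\IC_{\barw}$ is simple; as $\End_{\hsM_\Theta}(\tcT_{\barw,\chi})$ is the completion of the graded $\Ext$-algebra of $\tcT_{\barw,\chi}$ at its augmentation ideal, it is local, so $\omega\tcT_{\barw,\chi}$ is indecomposable.

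For part (2), given a \fm tilting extension $\tcT'$ of $\tcL_{\barw,\chi}$ with $\omega\tcT'$ indecomposable, I would set $\calC':=\Phi_\Theta^{-1}(\tcT')$ and argue that $\calC'$ is very pure of weight $0$: as in the verification of property (\ref{em:puretilt}) in Theorem \ref{th:emduality}, the class of very pure complexes of weight $0$ in $\scE_\Theta$ equals $\langle\Delta_{\barw}\langle0\rangle|\barw\in\coW\rangle\cap\langle\nabla_{\barw}\langle0\rangle|\barw\in\coW\rangle$ and the class of \fm tilting sheaves in $\hsM_\Theta$ equals $\langle\tDel_{\barw,\chi}(?)|\barw\in\coW\rangle\cap\langle\tnab_{\barw,\chi}(?)|\barw\in\coW\rangle$, and these two classes match under $\Phi_\Theta$ by Theorem \ref{th:pwduality}(\ref{pw:st}) together with the twist-compatibility in (\ref{pw:rest}). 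Then Lemma \ref{l:Cdecomp}, which applies because $\scV_\Theta\subset\scE_\Theta$ satisfies the hypotheses of Appendix \ref{a:dgmodel} (Example \ref{ex:vpure}), shows that $\omega\calC'$ is a direct sum of shifted IC-sheaves; the indecomposability of $\omega\tcT'$ and the idempotent argument of part (1) then force $\omega\calC'$ to be indecomposable, so $\calC'$ is a single shifted and twisted IC-sheaf. Finally $\tcT'\isom\tDel_{\barw,\chi}$ modulo $\hsM_{\Theta,<\barw}$ transports under $\Phi_\Theta^{-1}$ to $\calC'\isom\Delta_{\barw}$ modulo $\scE_{\Theta,<\barw}$, which pins down $\calC'\cong\IC_{\barw}$ and hence $\tcT'\cong\tcT_{\barw,\chi}$.

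Since Theorem \ref{th:pwduality} is assumed, there is no genuine obstacle; the only points deserving a sentence of care are (a) that $\Phi_\Theta$ is compatible with the Schubert stratification, which should be recorded explicitly during the construction of $\Phi_\Theta$ (mirroring the treatment of $\Phi$), and (b) that $\Phi_\Theta^{-1}$ carries \fm tilting sheaves back to very pure complexes, which — as for $\Phi$ — follows from the $\langle\Delta\rangle\cap\langle\nabla\rangle$-characterizations of both classes rather than from property (\ref{pw:rest}) directly. Both are routine once the duality is in hand.
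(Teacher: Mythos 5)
Your proposal is correct and takes exactly the approach the paper intends: the paper does not give an explicit proof but prefaces the corollary with ``As in the case of Theorem \ref{th:emduality}, we have some immediate consequences,'' and what you have written is a faithful transcription of the proof of Corollary \ref{c:t} with $\Phi_\Theta$, $\scE_\Theta$, $\hsM_\Theta$, and $\IC_{\barw}$ in place of $\Phi$, $\scE$, $\hsM$, and $\IC_w$. Your two cautionary remarks at the end — that the Schubert-stratum compatibility of $\Phi_\Theta$ should be recorded during its construction, and that the ``$\Phi_\Theta^{-1}$ sends \fm tilting to very pure'' direction comes from the $\langle\Delta\rangle\cap\langle\nabla\rangle$ characterizations of both classes rather than from property (\ref{pw:rest}) directly — are exactly right and match how the paper handles these same points in the proofs of Theorem \ref{th:emduality}(\ref{em:st})(\ref{em:puretilt}) and Corollary \ref{c:t}.
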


\begin{cor}\label{c:pullict}
For any $w\in\{\coW\}$, we have isomorphisms
\begin{eqnarray}\label{eq:pullic}
\pi^{\Theta,*}\IC_{\barw}[\lTh](\lTh/2)\cong&\IC_w&\cong\pi^{\Theta,!}\IC_{\barw}[-\lTh](-\lTh/2)\\
\label{eq:pullt}\Av^\Theta_!\tcT_{\barw,\chi}(-\lTh/2)\cong&\tcT_{w}&\cong\Av^\Theta_*\tcT_{\barw,\chi}(\lTh/2).
\end{eqnarray}
Here $\ell_{\Theta}=\ell(w_{\Theta})$ is the length of the longest element $w_{\Theta}$ in $W_{\Theta}$.
\end{cor}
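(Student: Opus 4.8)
The plan is to establish the two displayed lines of Corollary~\ref{c:pullict} in turn: the first, which lives entirely on the equivariant side, by a direct geometric argument, and the second by transporting the first through the equivalences $\Phi$ and $\Phi_\Theta$. For the first line, recall that $\pi^\Theta\colon\LFl\to\PFl$ is a proper smooth morphism whose fibers are all isomorphic to the finite-type flag variety $L_\Theta/(B\cap L_\Theta)$, a smooth connected projective variety of dimension $\lTh$. When $w\in\{\coW\}$ is the maximal-length representative of $\barw\in\coW$, a standard Bruhat-order fact identifies $\Fl_{\leq w}$ with $(\pi^\Theta)^{-1}(\PFl_{\leq\barw})$, so that $(\pi^\Theta)^{-1}(\PFl_{\barw})$ is a smooth open dense subvariety of $\Fl_{\leq w}$ of dimension $\ell(\barw)+\lTh=\ell(w)$. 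Since pullback along a smooth surjective morphism with connected fibers, shifted by the relative dimension, commutes with intermediate extension, $\pi^{\Theta,*}\IC_{\barw}[\lTh](\lTh/2)$ is the intermediate extension of a (shifted, twisted) constant sheaf on $(\pi^\Theta)^{-1}(\PFl_{\barw})$, hence equals $\IC_w$; this is precisely the identification already invoked in the proof of Lemma~\ref{l:pidecomp}. The $\pi^{\Theta,!}$-statement follows at once, since $\pi^\Theta$ smooth of relative dimension $\lTh$ gives $\pi^{\Theta,!}\cong\pi^{\Theta,*}[2\lTh](\lTh)$, whence $\pi^{\Theta,!}\IC_{\barw}[-\lTh](-\lTh/2)\cong\pi^{\Theta,*}\IC_{\barw}[\lTh](\lTh/2)\cong\IC_w$.

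For the second line I would apply $\Phi$ to these isomorphisms. By Theorem~\ref{th:pwduality}(\ref{pw:adj}) the equivalences $\Phi$ and $\Phi_\Theta$ intertwine the adjunctions (\ref{eq:paradj}) and (\ref{d:whadj}); in particular, taking left (resp.\ right) adjoints of the correspondence $\pi^\Theta_*\leftrightarrow\Av^\Theta_\chi$ yields natural isomorphisms $\Phi\circ\pi^{\Theta,*}\cong\Av^\Theta_!\circ\Phi_\Theta$ and $\Phi\circ\pi^{\Theta,!}\cong\Av^\Theta_*\circ\Phi_\Theta$. Feeding in $\Phi(\IC_w)=\tcT_w$, $\Phi_\Theta(\IC_{\barw})=\tcT_{\barw,\chi}$, and the twist-compatibility of $\Phi$ from Theorem~\ref{th:emduality}(\ref{em:twist}) (which, iterating $\Phi\circ[1](1/2)\cong(-1/2)\circ\Phi$, gives $\Phi(\calF[\lTh](\lTh/2))\cong\Phi(\calF)(-\lTh/2)$), the isomorphism $\pi^{\Theta,*}\IC_{\barw}[\lTh](\lTh/2)\cong\IC_w$ becomes $\Av^\Theta_!\tcT_{\barw,\chi}(-\lTh/2)\cong\tcT_w$, and $\pi^{\Theta,!}\IC_{\barw}[-\lTh](-\lTh/2)\cong\IC_w$ becomes $\Av^\Theta_*\tcT_{\barw,\chi}(\lTh/2)\cong\tcT_w$, which are exactly the asserted isomorphisms.

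The substantive work here is all upstream, in Theorems~\ref{th:emduality} and~\ref{th:pwduality}; granting those, the only points needing care are the combinatorial identity $\Fl_{\leq w}=(\pi^\Theta)^{-1}(\PFl_{\leq\barw})$ for the maximal-length coset representative (which I would either cite or dispatch with a short properness-and-dimension argument), and the routine bookkeeping of cohomological shifts and Tate twists as they pass through $\Phi$. The main thing to watch is that the adjunction-intertwining isomorphisms of Theorem~\ref{th:pwduality}(\ref{pw:adj}) are used in the correct direction (so that $\pi^{\Theta,*}$ really matches $\Av^\Theta_!$ and $\pi^{\Theta,!}$ matches $\Av^\Theta_*$, these being the left and right adjoints of $\pi^\Theta_*\leftrightarrow\Av^\Theta_\chi$), and that the normalizations of $\tcT_{\barw,\chi}$ and $\tcL_{\barw,\chi}$ are consistent with those fixed in \S\ref{ss:Whit}; neither is a genuine obstacle.
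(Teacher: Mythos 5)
Your proof is correct and follows the same route as the paper: the first display is the standard smooth-pullback computation for $\pi^\Theta$ of relative dimension $\lTh$ (as already noted in the proof of Lemma~\ref{l:pidecomp}), and the second is obtained by transporting the first through $\Phi$ and $\Phi_\Theta$ via the adjunction-intertwining of Theorem~\ref{th:pwduality}\eqref{pw:adj} together with the twist rule of Theorem~\ref{th:emduality}\eqref{em:twist}. The paper compresses all of this into a single sentence; your write-up spells out the same argument with the details made explicit.
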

\begin{proof}
The isomorphisms (\ref{eq:pullic}) follow from the fact that $\pi^\Theta$ is smooth of relative dimension $\lTh$; the isomorphisms (\ref{eq:pullt}) follow from (\ref{eq:pullic}) by Theorem \ref{th:pwduality} and Theorem \ref{th:emduality}.
\end{proof}

\begin{cor}\label{c:avdecomp}
For $w\in W$, the mixed perverse pro-sheaf $\Av^\Theta_\chi\tcT_w$ is a direct sum of $\tcT_{\barv,\chi}(n/2)$ for $n\equiv\ell(w)-\ell(\barv)(\textup{mod }2)$. In particular, for $w\in[\coW]$, $\tcT_{\barw,\chi}$ is a direct summand of $\Av^\Theta_\chi\tcT_w$ with multiplicity one.
\end{cor}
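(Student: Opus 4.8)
The plan is to deduce this corollary from Lemma \ref{l:pidecomp} by transporting the statement through the equivariant-monodromic duality $\Phi$ (Theorem \ref{th:emduality}) and the parabolic-Whittaker duality $\Phi_\Theta$ (Theorem \ref{th:pwduality}). First I would record the two ingredients that identify the objects involved: $\tcT_w=\Phi(\IC_w)$ by definition (Cor.\ref{c:t}), and $\tcT_{\barv,\chi}=\Phi_\Theta(\IC_{\barv})$ for $\barv\in\coW$ by the corollary following Theorem \ref{th:pwduality}. Note also that $\Av^\Theta_\chi\tcT_w$ is genuinely a free-monodromic tilting object of $\hsM_\Theta$ (by the corollary following Lemma \ref{l:Avst}), so the assertion is really about the $\Frob$-semisimplicity of the multiplicity spaces in its decomposition into the $\tcT_{\barv,\chi}$.

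The crucial input is Theorem \ref{th:pwduality}(\ref{pw:adj}): the adjunction $(\pi^{\Theta,*},\pi^\Theta_*,\pi^{\Theta,!})$ of \eqref{eq:paradj} is intertwined with the adjunction $(\Av^\Theta_!,\Av^\Theta_\chi,\Av^\Theta_*)$ of \eqref{d:whadj} via $\Phi$ and $\Phi_\Theta$. In particular the ``middle'' functors are intertwined, i.e. there is a natural isomorphism
\begin{equation*}
\Av^\Theta_\chi\circ\Phi\cong\Phi_\Theta\circ\pi^\Theta_*.
\end{equation*}
Evaluating on $\IC_w$ gives $\Av^\Theta_\chi(\tcT_w)\cong\Phi_\Theta(\pi^\Theta_*\IC_w)$.

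Now I would invoke Lemma \ref{l:pidecomp}: as a mixed complex, $\pi^\Theta_*\IC_w$ is a direct sum of terms $\IC_{\barv}\otimes N_{\barv}$ where each $N_{\barv}$ is a complex of semisimple $\Frob$-modules, in fact a direct sum of $\Ql[n](n/2)$ with $n\equiv\ell(w)-\ell(\barv)\ (\textup{mod }2)$. Applying $\Phi_\Theta$ and using the analogue of Theorem \ref{th:emduality}(\ref{em:twist}) for $\Phi_\Theta$ (part of Theorem \ref{th:pwduality}(\ref{pw:rest})), together with $\Phi_\Theta(\IC_{\barv})=\tcT_{\barv,\chi}$, we obtain
\begin{equation*}
\Av^\Theta_\chi(\tcT_w)\cong\bigoplus_{\barv}\tcT_{\barv,\chi}\otimes\phi(N_{\barv}),
\end{equation*}
where $\phi$ is the regrading functor. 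Since $\phi(\Ql[n](n/2))\cong\Ql(-n/2)$ and $n\equiv-n\ (\textup{mod }2)$, each $\phi(N_{\barv})$ is a direct sum of $\Ql(n/2)$ with $n\equiv\ell(w)-\ell(\barv)\ (\textup{mod }2)$, which is the first assertion. For the last statement, when $w\in[\coW]$ Lemma \ref{l:pidecomp} gives $\IC_{\barw}$ as a summand of $\pi^\Theta_*\IC_w$ with multiplicity one, hence $\tcT_{\barw,\chi}$ is a summand of $\Av^\Theta_\chi\tcT_w$ with multiplicity one.

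There is essentially no obstacle here: once the duality functors and Lemma \ref{l:pidecomp} are available, the argument is purely formal. The only points that need care are the compatibility of $\Phi$, $\Phi_\Theta$ with the parabolic/averaging adjunctions (which is Theorem \ref{th:pwduality}(\ref{pw:adj})) and the bookkeeping of the Tate twists under $\phi$; both are routine. If one prefers to avoid citing (\ref{pw:adj}) directly, an alternative is to use Lemma \ref{l:samedef} to write $\Av^\Theta_\chi=\tdel^\Theta_\chi\conv{U}(-)$ and combine Theorem \ref{th:emduality}(\ref{em:monoidal}) with Lemma \ref{l:convic}; this again reduces the claim to Prop.\ref{p:icdecomp}/Lemma \ref{l:pidecomp}.
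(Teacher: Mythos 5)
Your argument is correct and is exactly what the paper has in mind: Cor.\ref{c:avdecomp} is stated without proof as an ``immediate consequence,'' and the intended derivation is precisely to combine Lemma \ref{l:pidecomp} with the natural isomorphism $\Av^\Theta_\chi\circ\Phi\cong\Phi_\Theta\circ\pi^\Theta_*$ from Theorem \ref{th:pwduality}(\ref{pw:adj}) and the twist-regrading property $\phi(\Ql[n](n/2))\cong\Ql(-n/2)$, just as you do. Your bookkeeping of the twists (including the parity observation that $-n\equiv n\pmod 2$, so the congruence condition is preserved) and the multiplicity-one statement for $w\in[\coW]$ are both handled correctly.
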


The rest of this subsection is devoted to the proof of Theorem \ref{th:pwduality}.

Recall $w_\Theta$ is the longest element in $W_\Theta$. Sometimes in a complicated symbol we write $\Theta$ for $w_\Theta$, e.g., we abbreviate $\IC_{w_\Theta}$ by $\IC_\Theta$, $\VV(\tcT_{w_\Theta})$ by $\VV_\Theta$, etc. Recall the object $\tcP_\Theta$ from (\ref{eq:Ptheta}). Applying Corollary \ref{c:tisp} to $G=L_\Theta$ we get an isomorphism $\tcP_\Theta\cong\tcT_\Theta(\lTh/2)$. By Remark \ref{rm:Ccoalg} and \ref{rm:Pcoalg}, $\calC_\Theta=\IC_\Theta[-\lTh](-\lTh/2)$ is a coalgebra object with respect to $\conv{B}$ and $\tcP_\Theta=\tcT_\Theta(\lTh/2)$ is a coalgebra object with respect to $\conv{U}$.

\begin{lemma}\label{l:canon}
For each $\Theta\subset\Sigma$ of finite type, there is a unique isomorphism $\beta_\Theta:\Phi(\calC_\Theta)\isom\tcP_\Theta$ which is a coalgebra isomorphism (Here $\Phi$ is the equivalence in Theorem \ref{th:emduality}).
\end{lemma}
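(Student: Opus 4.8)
The plan is to build $\beta_\Theta$ in three stages: produce an isomorphism of underlying objects, normalise it by means of the counits, and then check that the normalised map automatically intertwines the comultiplications.

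Since $\Phi$ is monoidal (Theorem~\ref{th:emduality}(\ref{em:monoidal})) and $\calC_\Theta$ is a coalgebra object of $(\scE,\conv{B})$ (Remark~\ref{rm:Ccoalg}), the object $\Phi(\calC_\Theta)$ carries a coalgebra structure in $(\hsM,\conv{U})$ with comultiplication $\Phi(\mu)$ and with counit $\Phi(\epsilon)$ followed by the canonical identification $\Phi(\delta)\cong\tdel$. On underlying objects, $\calC_\Theta\cong\IC_{w_\Theta}[-\lTh](-\lTh/2)$, so Theorem~\ref{th:emduality}(\ref{em:twist}) gives $\Phi(\calC_\Theta)\cong\Phi(\IC_{w_\Theta})(\lTh/2)=\tcT_\Theta(\lTh/2)$, which is isomorphic to $\tcP_\Theta$ by the identification $\tcP_\Theta\cong\tcT_\Theta(\lTh/2)$ recalled just before the statement (Corollary~\ref{c:tisp} applied to $L_\Theta$). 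In particular $\Phi(\calC_\Theta)$ is a \fm tilting sheaf with indecomposable underlying complex.

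Next we pin down the scalar. By Corollary~\ref{c:t}(1) and Theorem~\ref{th:emduality}(\ref{em:slinear}), $\End_{\hsM}(\tcT_\Theta)^f$ is a $\ZZ_{\geq0}$-graded $\Frob$-module whose weight-$0$ part is $\End_{\scE}(\IC_{w_\Theta})=\Ql$ and whose other graded pieces sit in strictly negative weights; hence $\hom_{\hsM}(\Phi(\calC_\Theta),\tcP_\Theta)\cong\hom_{\hsM}(\tcT_\Theta,\tcT_\Theta)=\End_{\hsM}(\tcT_\Theta)^{\Frob}$ is one dimensional. Using the isomorphism $\bR\Hom_{\hsM_{\leq w_\Theta}}(\tcP_\Theta,-)\cong\iota\,\Av^\Theta_\chi(-)$ of \eqref{eq:prep} together with $\Av^\Theta_\chi(\tdel)=\tdel^\Theta_\chi\conv{U}\tdel=\tdel^\Theta_\chi$, we likewise get $\hom_{\hsM}(\Phi(\calC_\Theta),\tdel)\cong\hatS^{\Frob}=\Ql$. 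Post-composition with the counit $\epsilon^\Theta\colon\tcP_\Theta\to\tdel$ is then a linear map between two one-dimensional spaces; it carries any isomorphism to a nonzero element (if $\epsilon^\Theta\circ\gamma=0$ with $\gamma$ invertible then $\epsilon^\Theta=0$, which is false), hence it is itself an isomorphism, and we let $\beta_\Theta$ be the unique preimage of the nonzero element $\Phi(\epsilon)$. Being a nonzero element of a one-dimensional Hom-space that contains an isomorphism, $\beta_\Theta$ is an isomorphism, and $\epsilon^\Theta\circ\beta_\Theta=\Phi(\epsilon)$. Any coalgebra isomorphism $\Phi(\calC_\Theta)\isom\tcP_\Theta$ respects counits and thus satisfies exactly this normalisation, so it must coincide with $\beta_\Theta$; this yields uniqueness as soon as $\beta_\Theta$ is known to be a coalgebra map.

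It remains to verify $(\beta_\Theta\conv{U}\beta_\Theta)\circ\Phi(\mu)=\mu^\Theta\circ\beta_\Theta$. Transporting the coalgebra structure of $\Phi(\calC_\Theta)$ along $\beta_\Theta$ produces a coalgebra structure on $\tcP_\Theta$ with the same counit $\epsilon^\Theta$, so it suffices to show that the comultiplication of $\tcP_\Theta$ is the unique one compatible with $\epsilon^\Theta$ — the parabolic analogue of Proposition~\ref{p:vcomp}(1) and Remark~\ref{rm:Pcoalg}. For this I compute $\hom_{\hsM}(\tcP_\Theta,\tcP_\Theta\conv{U}\tcP_\Theta)$: the convolution is perverse (Proposition~\ref{p:tiltconv}) and supported on $\tilFl_{\leq w_\Theta}$, and by Lemma~\ref{l:samedef} one has $\Av^\Theta_\chi(\tcP_\Theta\conv{U}\tcP_\Theta)=\tdel^\Theta_\chi\conv{U}\tcP_\Theta\conv{U}\tcP_\Theta$; feeding in the $\tDel$-flag of $\tcP_\Theta$ (Lemma~\ref{l:PTheta}(2)), Lemma~\ref{l:Avst}(1) and associativity of $\conv{U}$ exhibits this as an iterated extension of the objects $\tdel^\Theta_\chi(\ell(u)+\ell(u'))$ with $u,u'\in W_\Theta$. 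Applying $\iota$ and passing to $\Frob$-invariants, only the term $u=u'=e$ can contribute (the other twists of $\hatS$ sit in strictly negative weights), so $\hom_{\hsM}(\tcP_\Theta,\tcP_\Theta\conv{U}\tcP_\Theta)=\Ql$ by \eqref{eq:prep}. Hence the two comultiplications on $\tcP_\Theta$ are proportional, and the shared normalisation $(\epsilon^\Theta\conv{U}\epsilon^\Theta)\circ\mu=\epsilon^\Theta$ (under $\tdel\conv{U}\tdel\cong\tdel$) forces them to agree, completing the proof. The one genuinely new input is this rigidity of the coalgebra $\tcP_\Theta$, i.e. the one-dimensionality of $\hom_{\hsM}(\tcP_\Theta,\tcP_\Theta\conv{U}\tcP_\Theta)$; the remaining steps are formal manipulation of a monoidal equivalence together with its unit and associativity constraints and of one-dimensional Hom-spaces, and I expect the bookkeeping of those constraints to be the only real nuisance.
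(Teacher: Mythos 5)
Your proof is correct, and both its overall structure and the facts it rests on are the same as the paper's: everything comes down to the one‑dimensionality of two Hom spaces, $\hom(\Phi(\calC_\Theta),\tdel)$ and $\hom(\Phi(\calC_\Theta),\tcP_\Theta\conv{U}\tcP_\Theta)$, plus the coalgebra compatibility $(\epsilon\conv{U}\id)\circ\mu=\id$. You differ from the paper in two ways. First, the order of normalisation: the paper first rescales $\beta'_\Theta$ so that the comultiplication square commutes (using that scaling $\beta'$ by $\lambda$ scales one side of the square by $\lambda$ and the other by $\lambda^2$), and then verifies the counit relation by forcing the residual scalar $\lambda$ to equal $1$; you first normalise by the counit (using that post-composition with $\epsilon^\Theta$ is an isomorphism between the two one-dimensional Hom spaces), and then deduce the comultiplication relation from the rigidity of the pair (comultiplication, counit) on $\tcP_\Theta$. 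These are mirror images of each other and neither is easier. Second, and more substantively, you compute the key one-dimensionality $\hom(\tcP_\Theta,\tcP_\Theta\conv{U}\tcP_\Theta)=\Ql$ entirely on the monodromic side, by applying $\Av^\Theta_\chi$, feeding in the $\tDel$-flag of $\tcP_\Theta$ from Lemma~\ref{l:PTheta}(2), using $\Av^\Theta_\chi(\tDel_u)\cong\tdel^\Theta_\chi(\ell(u)/2)$ from Lemma~\ref{l:Avst}, and then killing all the strictly negatively weighted $\hatS(\ell(u)+\ell(u'))$ with $\Frob$-invariants — exactly in the spirit of the proof of Proposition~\ref{p:vcomp}(1) for $\tcP$. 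The paper instead transports the question through $\Phi$ to the equivariant side and invokes the explicit decomposition $\calC_\Theta\conv{B}\calC_\Theta\cong H^*(L_\Theta/N_\Theta)\otimes\calC_\Theta$ together with purity of $H^*(L_\Theta/N_\Theta)$. Your route is slightly longer but more uniform with the rest of \S\ref{s:mon}; the paper's route is shorter because the formula for $\calC_\Theta\conv{B}\calC_\Theta$ is available in one line (it is the pull–push along $G/B\to G/P_\Theta$). One small bookkeeping point you should make explicit: when you cite ``only the term $u=u'=e$ contributes,'' this bounds $\dim\hom$ from above by $1$ via left-exactness of $\Frob$-invariants on the iterated extension; the lower bound $\geq 1$ needed to conclude equality comes from the nonvanishing of $\mu'$ (guaranteed by $(\epsilon^\Theta\conv{U}\id)\circ\mu'=\id\neq 0$).
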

\begin{proof}
Start from any isomorphism $\beta'_\Theta:\Phi(\calC_\Theta)\cong\tcP_\Theta$ (which exists because $\Phi(\IC_\Theta)=\tcT_\Theta$). Observe that
\begin{equation*}
\calC_{\Theta}\conv{B}\calC_{\Theta}=\upH^*(L_\Theta/N_\Theta)\otimes\calC_\Theta.
\end{equation*}
Therefore
\begin{equation*}
\hom_{\scE}(\calC_\Theta,\calC_\Theta\conv{B}\calC_\Theta)=\hom_{\scE}(\calC_\Theta,\upH^*(L_\Theta/N_\Theta)\otimes\calC_\Theta)=\hom_{\scE}(\calC_\Theta,\calC_\Theta)=\Ql.
\end{equation*}
Equivalently,
\begin{equation*}
\hom_{\hsP}(\Phi(\calC_\Theta),\tcP_\Theta\conv{U}\tcP_\Theta)=\Ql.
\end{equation*}
This means the diagram of co-multiplications
\begin{equation}\label{d:commadj}
\xymatrix{\Phi(\calC_\Theta)\ar[r]^(.4){\Phi(\mu)}\ar[d]^{\beta'_{\Theta}} & \Phi(\calC_\Theta\conv{B}\calC_\Theta)\ar[r]^{\sim} & \Phi(\calC_\Theta)\conv{U}\Phi(\calC_\Theta)\ar@<-3ex>[d]^{\beta'_{\Theta}}\ar@<3ex>[d]^{\beta'_{\Theta}}\\
\tcP_{\Theta}\ar[rr]^(.4){\mu'} && \tcP_{\Theta}\conv{U}\tcP_{\Theta}}
\end{equation}
is already commutative up to a nonzero scalar. Hence there is a unique nonzero scalar multiple $\beta_\Theta$ of $\beta'_\Theta$ making the diagram (\ref{d:commadj}) commutative, i.e., $\beta_\Theta$ commutes with the co-multiplications. In particular for $\Theta=\varnothing$, $\beta_\varnothing$ is the unique isomorphism $\Phi(\delta)\isom\tdel$ which preserves the structures of $\delta$ and $\tdel$ as the unit objects in the monoidal categories $\scE$ and $\hsM$.

We check that $\beta_\Theta$ also intertwines the co-unit maps $\epsilon:\calC_\Theta\to\delta$ and $\epsilon':\tcP_\Theta\to\tdel$. Again, we verify that
\begin{eqnarray*}
\hom_{\hsP}(\Phi(\calC_\Theta),\tdel)&=&\hom_{\scE}(\calC_\Theta,\delta)\\
&=&\hom_{\scE_{\leq e}}(i^*_{e}\calC_\Theta,\delta)=\hom_{\dS_H}(\Ql,\Ql)=\Ql.
\end{eqnarray*}
Therefore $\epsilon'\circ\beta_\Theta=\lambda\beta_\varnothing\circ\epsilon$ for some $\lambda\in\Ql^\times$. On the other hand, the compatibility between co-multiplication and co-unit maps give
\begin{eqnarray*}
(\epsilon\conv{B}\id)\circ\mu=\id:\calC_\Theta\to\calC_\Theta;\\
(\epsilon'\conv{U}\id)\circ\mu'=\id:\tcP_\Theta\to\tcP_\Theta.
\end{eqnarray*}
Since $\beta_\Theta$ already intertwines $\mu$ and $\mu'$, we see from the above identities that $\lambda=1$, i.e., $\beta_\Theta$ also intertwines $\epsilon$ and $\epsilon'$. This completes the proof.
\end{proof}

For $\calF_1,\calF_2\in\scV\subset\scE$ (very pure of weight 0), let $\tcT_i:=\Phi(\calF_i)\in\scT\subset\hsM$ be corresponding \fm tilting sheaves. By adjunction and Lemma \ref{l:convic}, we have
\begin{equation*}
\Ext^\bullet_{\scE_\Theta}(\pi^\Theta_*\calF_1,\pi^\Theta_*\calF_2)\cong\Ext^\bullet_{\scE}(\pi^{\Theta,*}\pi^\Theta_*\calF_1,\calF_2)\cong\Ext^\bullet_{\scE}(\calC_\Theta\conv{B}\calF_1,\calF_2).
\end{equation*}
On the other hand, by adjunction and Lemma \ref{l:PTheta}(3), we have
\begin{equation*}
\Hom_{\hsP_{\Theta}}(\Av^\Theta_\chi\tcT_1,\Av^\Theta_\chi\tcT_2)\cong\Hom_{\hsP}(\Av^\Theta_!\Av^\Theta_\chi\tcT_1,\tcT_2)\cong\Hom_{\hsP}(\tcP_\Theta\conv{U}\tcT_1,\tcT_2)
\end{equation*}
We have an isomorphism given by Theorem \ref{th:emduality}\eqref{em:slinear},
\begin{equation*}
\Ext^\bullet_{\scE}(\calC_\Theta\conv{B}\calF_1,\calF_2)^\Finv\cong\Hom_{\hsP}(\tcP_\Theta\conv{U}\tcT_1,\tcT_2)^f,
\end{equation*}
hence an isomorphism
\begin{equation*}
\psi(\calF_1,\calF_2):\Ext^\bullet_{\scE_\Theta}(\pi^\Theta_*\calF_1,\pi^\Theta_*\calF_2)^\Finv\isom\Hom_{\hsP_{\Theta}}(\Av^\Theta_\chi\tcT_1,\Av^\Theta_\chi\tcT_2)^f.
\end{equation*}

\begin{lemma}\label{l:piav}
The following diagram is commutative
\begin{equation*}
\xymatrix{\Ext^\bullet_{\scE}(\calF_1,\calF_2)^\Finv\ar[r]^(.4){\pi^\Theta_*}\ar[d]^{\Phi} & \Ext^\bullet_{\scE_\Theta}(\pi^\Theta_*\calF_1,\pi^\Theta_*\calF_2)^\Finv\ar[d]^{\psi(\calF_1,\calF_2)}\\
\Hom_{\hsP}(\tcT_1,\tcT_2)^f\ar[r]^(.4){\Av^\Theta_\chi} & \Hom_{\hsP_{\Theta}}(\Av^\Theta_\chi\tcT_1,\Av^\Theta_\chi\tcT_2)^f}
\end{equation*}
\end{lemma}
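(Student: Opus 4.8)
The plan is to unwind the definition of $\psi(\calF_1,\calF_2)$ into its three constituent isomorphisms and reduce the assertion to the functoriality of the isomorphism in Theorem~\ref{th:emduality}(\ref{em:slinear}) together with the compatibility of $\Phi$ with the coalgebra structures on $\calC_\Theta$ and $\tcP_\Theta$ furnished by Lemma~\ref{l:canon}.

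First I would identify the two horizontal maps with precompositions by counits. For the adjoint pair $(\pi^{\Theta,*},\pi^\Theta_*)$ the standard identity $c_B\circ\pi^{\Theta,*}(\pi^\Theta_* f)=f\circ c_{\calF_1}$ (naturality of the adjunction counit $c\colon\pi^{\Theta,*}\pi^\Theta_*\Rightarrow\id$) shows that the composite of $\pi^\Theta_*$ with the adjunction isomorphism $\Ext^\bullet_{\scE_\Theta}(\pi^\Theta_*\calF_1,\pi^\Theta_*\calF_2)\cong\Ext^\bullet_{\scE}(\pi^{\Theta,*}\pi^\Theta_*\calF_1,\calF_2)$ is precomposition with $c_{\calF_1}$; via Lemma~\ref{l:convic} and Remark~\ref{rm:Ccoalg}, under the identification $\pi^{\Theta,*}\pi^\Theta_*\cong\calC_\Theta\conv{B}(-)$ the map $c_{\calF_1}$ becomes $\epsilon\conv{B}\id_{\calF_1}$, with $\epsilon\colon\calC_\Theta\to\delta$ the counit of the coalgebra $\calC_\Theta$. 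Symmetrically, for the adjoint pair $(\Av^\Theta_!,\Av^\Theta_\chi)$ of Lemma~\ref{l:adj}, the composite of $\Av^\Theta_\chi$ with the adjunction isomorphism $\Hom_{\hsP_\Theta}(\Av^\Theta_\chi\tcT_1,\Av^\Theta_\chi\tcT_2)\cong\Hom_{\hsP}(\Av^\Theta_!\Av^\Theta_\chi\tcT_1,\tcT_2)$ is precomposition with the counit of the comonad $\Av^\Theta_!\Av^\Theta_\chi$, which by Lemma~\ref{l:PTheta}(3) and Remark~\ref{rm:Pcoalg} is $\tcP_\Theta\conv{U}(-)$ with counit $\epsilon'\conv{U}\id_{\tcT_1}$, $\epsilon'\colon\tcP_\Theta\to\tdel$. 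Since $\psi(\calF_1,\calF_2)$ is by construction the composite $\psi_3\circ\psi_2\circ\psi_1$, where $\psi_1$ (resp. the inverse of $\psi_3$) is exactly the adjunction isomorphism above followed by the identification of Lemma~\ref{l:convic} (resp. of Lemma~\ref{l:PTheta}(3)), and $\psi_2$ is the isomorphism of Theorem~\ref{th:emduality}(\ref{em:slinear}) for the pair $(\calC_\Theta\conv{B}\calF_1,\calF_2)$ (using Theorem~\ref{th:emduality}(\ref{em:monoidal}) and Lemma~\ref{l:canon} to identify $\Phi(\calC_\Theta\conv{B}\calF_1)$ with $\tcP_\Theta\conv{U}\tcT_1$; note $\calC_\Theta\conv{B}\calF_1\in\scV$ by Propositions~\ref{p:vpure}, so all this applies), the outer square of Lemma~\ref{l:piav} becomes the equality $\psi_3\psi_2(\epsilon\conv{B}\id_{\calF_1})^{*}=\psi_3(\epsilon'\conv{U}\id_{\tcT_1})^{*}\Phi$, and cancelling the isomorphism $\psi_3$ reduces the lemma to the commutativity of
\begin{equation*}
\xymatrix{\Ext^\bullet_{\scE}(\calF_1,\calF_2)^\Finv\ar[r]^(.36){(\epsilon\conv{B}\id_{\calF_1})^*}\ar[d]^{\Phi} & \Ext^\bullet_{\scE}(\calC_\Theta\conv{B}\calF_1,\calF_2)^\Finv\ar[d]^{\Phi}\\
\Hom_{\hsP}(\tcT_1,\tcT_2)^f\ar[r]^(.36){(\epsilon'\conv{U}\id_{\tcT_1})^*} & \Hom_{\hsP}(\tcP_\Theta\conv{U}\tcT_1,\tcT_2)^f}
\end{equation*}
in which both vertical arrows are the isomorphism of Theorem~\ref{th:emduality}(\ref{em:slinear}) (recall that after forgetting the internal grading $\phi$ becomes $(-)^\Finv$).

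Finally I would prove the commutativity of this reduced square. Functoriality of the isomorphism in Theorem~\ref{th:emduality}(\ref{em:slinear}) in its first argument says precomposition by $\epsilon\conv{B}\id_{\calF_1}\colon\calC_\Theta\conv{B}\calF_1\to\delta\conv{B}\calF_1=\calF_1$ corresponds on the monodromic side to precomposition by $\Phi(\epsilon\conv{B}\id_{\calF_1})$; so it suffices to check that under the identifications $\Phi(\calC_\Theta\conv{B}\calF_1)\cong\Phi(\calC_\Theta)\conv{U}\Phi(\calF_1)\xrightarrow{\ \beta_\Theta\ }\tcP_\Theta\conv{U}\tcT_1$ and $\Phi(\delta\conv{B}\calF_1)=\Phi(\calF_1)=\tcT_1=\tdel\conv{U}\tcT_1$, the morphism $\Phi(\epsilon\conv{B}\id_{\calF_1})$ goes over to $\epsilon'\conv{U}\id_{\tcT_1}$. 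By naturality of the monoidal structure of $\Phi$ this morphism is $\Phi(\epsilon)\conv{U}\id_{\tcT_1}$, and Lemma~\ref{l:canon} asserts exactly that $\beta_\Theta$ (together with $\beta_\varnothing\colon\Phi(\delta)\isom\tdel$) intertwines $\Phi(\epsilon)$ with $\epsilon'$; this yields the claim.

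The step I expect to be the main obstacle is the bookkeeping in the second paragraph: verifying that the adjunction isomorphisms built into $\psi(\calF_1,\calF_2)$ genuinely convert the functor maps $\pi^\Theta_*$ and $\Av^\Theta_\chi$ into precomposition with the coalgebra counits $\epsilon\conv{B}(-)$ and $\epsilon'\conv{U}(-)$. This uses not merely the isomorphisms of functors $\pi^{\Theta,*}\pi^\Theta_*\cong\calC_\Theta\conv{B}(-)$ (Lemma~\ref{l:convic}) and $\Av^\Theta_!\Av^\Theta_\chi\cong\tcP_\Theta\conv{U}(-)$ (Lemma~\ref{l:PTheta}(3)) but their compatibility with the comonad structures, which is precisely what Remarks~\ref{rm:Ccoalg} and \ref{rm:Pcoalg} record; once that is granted the remainder of the argument is formal.
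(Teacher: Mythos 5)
Your proposal is correct and follows essentially the same route as the paper: unwind $\psi(\calF_1,\calF_2)$ into the two adjunction isomorphisms and the Theorem~\ref{th:emduality}\eqref{em:slinear} isomorphism, use naturality of the counits to rewrite the horizontal maps as precomposition by $\epsilon\conv{B}\id$ and $\epsilon'\conv{U}\id$, and then invoke the fact from Lemma~\ref{l:canon} that $\beta_\Theta$ intertwines $\Phi(\epsilon)$ with $\epsilon'$. The paper phrases the final reduction as a commutative square of objects (its diagram~\eqref{d:23}) rather than of Hom-spaces, but the content is identical.
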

\begin{proof}
By the construction of $\psi(\calF_1,\calF_2)$, the commutativity of the above diagram is equivalent to the commutativity of
\begin{equation}\label{d:23}
\xymatrix{\Phi(\calC_\Theta)\conv{U}\Phi(\calF_1)\ar[r]^{\sim}\ar@<-2.5ex>[d]_{\beta_\Theta}\ar@<2ex>[d]^{\wr} & \Phi(\pi^{\Theta,*}\pi^\Theta_*\calF_1)\ar[r]^(.6){\textup{adj.}} & \Phi(\calF_1)\ar[d]^{\wr}\\
\tcP_\Theta\conv{U}\tcT_1\ar[r]^{\sim} & \Av^\Theta_!\Av^\Theta_\chi\tcT_1\ar[r]^(.6){\textup{adj.}} & \tcT_1}
\end{equation}
In this diagram, the composition of maps in the rows are given by $\Phi(\epsilon)\conv{U}\id$ and $\epsilon'\conv{U}\id$ (recall $\epsilon$ and $\epsilon'$ are co-unit maps for $\calC_\Theta$ and $\tcP_\Theta$). Since $\beta_\Theta$ intertwines the co-unit maps, the diagram (\ref{d:23}) is commutative.
\end{proof}

\begin{lemma}\label{l:comcom}
The isomorphisms $\psi(-,-)$ are compatible with compositions. More precisely, for $\calF_i\in\scV$, $i=1,2,3$, we have the following commutative diagram
\begin{equation}\label{d:commisom}
\xymatrix{\Ext^\bullet(\pi^\Theta_*\calF_2,\pi^\Theta_*\calF_3)^\Finv\otimes\Ext^\bullet(\pi^\Theta_*\calF_1,\pi^\Theta_*\calF_2)^\Finv\ar[r]\ar@<-8ex>[d]^{\psi(\calF_2,\calF_3)}\ar@<8ex>[d]^{\psi(\calF_1,\calF_2)} & \Ext^\bullet(\pi^\Theta_*\calF_1,\pi^\Theta_*\calF_3)^\Finv\ar[d]^{\psi(\calF_1,\calF_3)}\\
\Hom(\Av^\Theta_\chi\tcT_2,\Av^\Theta_\chi\tcT_3)^f\otimes\Hom(\Av^\Theta_\chi\tcT_1,\Av^\Theta_\chi\tcT_2)^f\ar[r] & \Hom(\Av^\Theta_\chi\tcT_1,\Av^\Theta_\chi\tcT_3)^f}
\end{equation}
\end{lemma}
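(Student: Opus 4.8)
I will treat Lemma~\ref{l:comcom} as a formal consequence of the comonad formalism underlying the definition of $\psi(-,-)$ together with the monoidality of $\Phi$. Recall that $\psi(\calF_1,\calF_2)$ was assembled from three isomorphisms: (i) the adjunction $(\pi^{\Theta,*},\pi^\Theta_*)$ from \eqref{eq:paradj}, which identifies $\Ext^\bullet_{\scE_\Theta}(\pi^\Theta_*\calF_1,\pi^\Theta_*\calF_2)$ with $\Ext^\bullet_{\scE}(\pi^{\Theta,*}\pi^\Theta_*\calF_1,\calF_2)$, rewritten by Lemma~\ref{l:convic} as $\Ext^\bullet_{\scE}(\calC_\Theta\conv{B}\calF_1,\calF_2)$; (ii) the adjunction $(\Av^\Theta_!,\Av^\Theta_\chi)$ from \eqref{d:whadj} (Lemma~\ref{l:adj}) together with Lemma~\ref{l:PTheta}(3), which identifies $\Hom_{\hsM_\Theta}(\Av^\Theta_\chi\tcT_1,\Av^\Theta_\chi\tcT_2)$ with $\Hom_{\hsM}(\tcP_\Theta\conv{U}\tcT_1,\tcT_2)$; and (iii) the isomorphism of Theorem~\ref{th:emduality}\eqref{em:slinear}, which, since $\Phi$ is monoidal (Theorem~\ref{th:emduality}\eqref{em:monoidal}) and is realized on morphism spaces by $\Phi$ itself (as the proof of \eqref{em:slinear} via Lemma~\ref{l:morehomeq} and Lemma~\ref{l:morehommon} shows), identifies $\Ext^\bullet_{\scE}(\calC_\Theta\conv{B}\calF_1,\calF_2)^\Finv$ with $\Hom_{\hsM}(\Phi(\calC_\Theta)\conv{U}\tcT_1,\tcT_2)^f$, and then with $\Hom_{\hsM}(\tcP_\Theta\conv{U}\tcT_1,\tcT_2)^f$ through the coalgebra isomorphism $\beta_\Theta$ of Lemma~\ref{l:canon}. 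The plan is to show that the composition maps along the bottom and right of \eqref{d:commisom} become, after these identifications, literally the same expression in the two monoidal categories, which $\Phi$ then intertwines.

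The first step is the comonad computation. The endofunctor $\bot:=\pi^{\Theta,*}\pi^\Theta_*$ of $\scE$ is a comonad (Remark~\ref{rm:Ccoalg}) which, under Lemma~\ref{l:convic}, is $\calC_\Theta\conv{B}(-)$ with comultiplication $\mu\conv{B}(-)$ and counit $\epsilon\conv{B}(-)$. Given classes $f\in\Ext^\bullet_{\scE_\Theta}(\pi^\Theta_*\calF_1,\pi^\Theta_*\calF_2)$ and $g\in\Ext^\bullet_{\scE_\Theta}(\pi^\Theta_*\calF_2,\pi^\Theta_*\calF_3)$ with adjuncts $\widetilde f,\widetilde g$, a standard manipulation (naturality of the unit of $(\pi^{\Theta,*},\pi^\Theta_*)$, plus the comonad identity relating the iterated unit to the comultiplication) shows that the adjunct of $g\circ f$ is the ``co-Kleisli'' composite $\widetilde g\circ(\id_{\calC_\Theta}\conv{B}\widetilde f)\circ(\mu\conv{B}\id_{\calF_1})$ inside $\Ext^\bullet_{\scE}(\calC_\Theta\conv{B}\calF_1,\calF_3)$; this argument is purely formal and applies verbatim to Yoneda composites of extension classes of arbitrary degree, since $\bot$ is an exact endofunctor. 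Running the identical argument for the adjunction $(\Av^\Theta_!,\Av^\Theta_\chi)$ and the identification $\Av^\Theta_!\Av^\Theta_\chi\cong\tcP_\Theta\conv{U}(-)$ of Lemma~\ref{l:PTheta}(3) (with comultiplication $\mu'\conv{U}(-)$, Remark~\ref{rm:Pcoalg}) shows that composition in $\Hom_{\hsM}(\tcP_\Theta\conv{U}\tcT_1,\tcT_\bullet)^f$ is likewise the co-Kleisli composite governed by $\mu'$.

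With these two descriptions in hand, the commutativity of \eqref{d:commisom} reduces to the assertion that $\Phi$ carries the diagram $\calC_\Theta\conv{B}\calF_1\xrightarrow{\mu\conv{B}\id}\calC_\Theta\conv{B}\calC_\Theta\conv{B}\calF_1\xrightarrow{\id\conv{B}\widetilde f}\calC_\Theta\conv{B}\calF_2\xrightarrow{\widetilde g}\calF_3$ to the corresponding diagram with $(\calC_\Theta,\mu,\conv{B})$ replaced by $(\tcP_\Theta,\mu',\conv{U})$. This is immediate from three facts: $\Phi$ is a functor, so it respects $\circ$; $\Phi$ is monoidal, so it turns the $\conv{B}$-factorizations into $\conv{U}$-factorizations and $\id_{\calC_\Theta}$ into $\id_{\Phi(\calC_\Theta)}$; and $\beta_\Theta$ is a coalgebra isomorphism (Lemma~\ref{l:canon}), so conjugating by the appropriate tensor power of $\beta_\Theta$ carries $\Phi(\mu)$ to $\mu'$. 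One then passes from $\Phi$ on morphism spaces to the isomorphism of Theorem~\ref{th:emduality}\eqref{em:slinear} and applies $(-)^f$; the bookkeeping with $(-)^\Finv$, the regrading functor $\phi$, and purity of weights is identical to that already carried out in verifying Theorem~\ref{th:emduality}\eqref{em:slinear} and \eqref{em:monoidal}, since that isomorphism is built from the algebra isomorphisms $\unE_{\leq w}^\Finv\cong\unM_{\leq w}$ and their bimodule analogues, all of which respect the relevant multiplications by construction. Note also that Lemma~\ref{l:piav} already records the base case where one of the $\pi^\Theta_*\calF_i$ is replaced by $\calF_i$ and the composition is precomposition by $\pi^\Theta_*$, which is consistent with the co-Kleisli formula via the counit $\epsilon$.

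I expect the only genuine (though still routine) obstacle to be carrying the comonad identification out at the level of $\bR\Hom$-complexes rather than merely on $\Hom^0$-groups, so that \eqref{d:commisom} commutes for all extension degrees simultaneously and compatibly with the triangulated structure; this is handled by replacing every $\Hom$ in the above with the corresponding $\bR\Hom$ and invoking the exactness of $\bot$, of $\tcP_\Theta\conv{U}(-)$, and of the averaging functors $\Av^\Theta_\chi$, $\Av^\Theta_!$, together with the fact that the unit and comultiplication are morphisms of exact functors.
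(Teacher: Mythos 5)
Your proposal takes essentially the same route as the paper: compute the adjuncts $(\alpha_2\circ\alpha_1)^\#$ and $(\gamma_2\circ\gamma_1)^\#$ as co-Kleisli composites involving the comultiplications $\mu$ and $\mu'$, and then reduce the commutativity of \eqref{d:commisom} to the fact that $\beta_\Theta$ is a coalgebra isomorphism (Lemma \ref{l:canon}). The paper likewise verifies the degree-zero case and asserts the general case is similar; your proposal just spells out the formal comonad manipulation and the higher-degree bookkeeping in a bit more detail.
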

\begin{proof}
We verify the case of degree 0 maps, i.e., maps in $\Hom(\pi^\Theta_*\calF_i,\pi^\Theta_*\calF_j)$. The argument for the general case is similar.

For a map $\alpha:\pi^\Theta_*\calF\to\pi^\Theta_*\calF'$, we write $\alpha^\#$ for the map $\calC_\Theta\conv{B}\calF\cong\pi^{\Theta,*}\pi^\Theta_*\calF\to\calF'$ induced by adjunction. Similarly, for a map $\gamma:\Av^\Theta_\chi\tcT\to\Av^\Theta_\chi\tcT,$, we write $\gamma^\#$ for the map $\tcP_\Theta\conv{U}\tcT\cong\Av^\Theta_!\Av^\Theta_\chi\tcT\to\tcT'$ induced by adjunction. Consider the following composition of maps
\begin{equation*}
\pi^\Theta_*\calF_1\xrightarrow{\alpha_1}\pi^\Theta_*\calF_2\xrightarrow{\alpha_2}\pi^\Theta_*\calF_3.
\end{equation*}
Then we can write $(\alpha_2\circ\alpha_1)^\#$ as the composition
\begin{equation*}
\calC_\Theta\conv{B}\calF_1\xrightarrow{\mu\conv{B}\id}\calC_\Theta\conv{B}\calC_\Theta\conv{B}\calF_1\xrightarrow{\id\conv{B}\alpha^\#_1}\calC_\Theta\conv{B}\calF_2\xrightarrow{\alpha^\#_2}\calF_3.
\end{equation*}
On the other hand, let
\begin{equation*}
\Av^\Theta_\chi\tcT_1\xrightarrow{\gamma_1}\Av^\Theta_\chi\tcT_2\xrightarrow{\gamma_2}\Av^\Theta_\chi\tcT_3
\end{equation*}
be the corresponding maps under $\psi(-,-)$. Then we can write $(\gamma_2\circ\gamma_1)^\#$ as the composition
\begin{equation*}
\tcP_\Theta\conv{U}\tcT_1\xrightarrow{\mu'\conv{U}\id}\tcP_\Theta\conv{U}\tcP_\Theta\conv{U}\tcT_1\xrightarrow{\id\conv{U}\gamma^\#_1}\tcP_\Theta\conv{U}\tcT_2\xrightarrow{\gamma^\#_2}\tcT_3.
\end{equation*}
In view of the definition of $\psi(-,-)$, the commutativity of the diagram (\ref{d:commisom}) follows from the fact that $\beta_\Theta:\Phi(\calC_\Theta)\cong\tcP_\Theta$ intertwines the co-multiplication structures $\mu$ and $\mu'$, which is proved in Lemma \ref{l:canon}.
\end{proof}

\begin{proof}[Proof of Theorem \ref{th:pwduality}]

For each $\barw\in\coW$, define an algebra with $\Frob$-action:
\begin{equation*}
E_{\Theta,\leq\barw}:=\left(\bigoplus_{u,v\in[\coW],\baru,\barv\leq\barw}\Ext^\bullet_{\scE_\Theta}(\pi^\Theta_*\IC_u,\pi^\Theta_*\IC_v)\right)^{\opp}.
\end{equation*}
Applying Theorem \ref{th:CtoMod} to the triple $(\scV_\Theta\subset\scE_\Theta,\{\pi^\Theta_*\IC_v|v\in[\coW]\})$, we get an equivalence
\begin{equation}\label{eq:dgpar}
\scE_{\Theta,\leq\barw}\isom D_{\perf}(E_{\Theta,\leq\barw},\Frob)
\end{equation}
where the RHS is by definition generated by twists of the $(E_{\Theta,\leq\barw},\Frob)$-modules
\begin{equation*}
\bigoplus_{u\in[\coW],\baru\leq\barw}\Ext^\bullet_{\scE_\Theta}(\pi^\Theta_*\IC_u,\pi^\Theta_*\IC_v)
\end{equation*}
for $v\in[\coW],\barv\leq\barw$.

Similarly, define another algebra with $\Frob$-action:
\begin{equation*}
M_{\Theta,\leq\barw}:=\left(\bigoplus_{u,v\in[\coW],\baru,\barv\leq\barw}\Hom_{\hsP_{\Theta}}(\Av^\Theta_\chi\tcT_{u},\Av^\Theta_\chi\tcT_{v})^f\right)^{\opp}.
\end{equation*}
Applying Theorem \ref{th:CtoMod} to the triple $(\scT_\Theta\subset\hsM_\Theta,\{\Av_\chi^\Theta\tcT_v|v\in[\coW]\})$, we get an equivalence
\begin{equation}\label{eq:dgwhit}
\hsM_{\Theta,\leq\barw}\isom D_{\perf}(M_{\Theta,\leq\barw},\Frob).
\end{equation}
where the RHS is by definition generated by twists of the $(M_{\Theta,\leq\barw},\Frob)$-modules
\begin{equation*}
\bigoplus_{u\in[\coW],\baru\leq\barw}\Hom_{\hsP_{\Theta}}(\Av^\Theta_\chi\tcT_{u},\Av^\Theta_\chi\tcT_{v})^f
\end{equation*} 
for $v\in[\coW],\barv\leq\barw$.

The isomorphisms $\psi(\IC_u,\IC_v)$ give an isomorphism
\begin{equation*}
\bigoplus_{u,v\in[\coW],\baru,\barv\leq\barw}\psi(\IC_u,\IC_v):E^\Finv_{\Theta,\leq\barw}\isom M_{\Theta,\leq\barw}
\end{equation*}
By Lemma \ref{l:comcom}, this is an algebra isomorphism, which induces an equivalence
\begin{equation*}
\Phi'_{\Theta,\leq\barw}:D_{\perf}(E_{\Theta,\leq\barw},\Frob)\isom D_{\perf}(M_{\Theta,\leq\barw},\Frob).
\end{equation*}
sending $L\mapsto L^\Finv$. This, together with the equivalences (\ref{eq:dgpar}) and (\ref{eq:dgwhit}), induces an equivalence
\begin{equation*}
\Phi_{\Theta,\leq\barw}:\scE_{\Theta,\leq\barw}\cong\hsM_{\Theta,\leq\barw}.
\end{equation*}
Passing to the inductive 2-limits, we get the desired equivalence $\Phi_\Theta$.

We verify the properties.

Properties \eqref{pw:st} and \eqref{pw:rest} are proved similarly as in Theorem \ref{th:emduality}.

Property (\ref{pw:adj}). We only need to construct a natural isomorphism $\Av^\Theta_\chi\circ\Phi\Rightarrow\Phi_\Theta\circ\pi^\Theta_*$, and the other natural isomorphisms $\Av^\Theta_!\circ\Phi_\Theta\Rightarrow\Phi\circ\pi^{\Theta,*}$ and $\Av^\Theta_*\circ\Phi_\Theta\Rightarrow\Phi\circ\pi^!_\Theta$ follow from the adjunctions. For each $w\in W$, we define the $(E_{\Theta,\leq\barw},E_{\leq w})$-bimodule
\begin{equation*}
\Pi_{\Theta,\leq w}:=\bigoplus_{u\in[\coW],\baru\leq\barw,v\leq w}\Ext^\bullet_{\scE_\Theta}(\pi^\Theta_*\IC_u,\pi^\Theta_*\IC_v).
\end{equation*}
Similarly, we define the $(M_{\Theta,\leq\barw},M_{\leq w})$-bimodule
\begin{equation*}
A_{\Theta,\leq w}:=\bigoplus_{u\in[\coW],\baru\leq\barw,v\leq w}\Hom_{\hsP_\Theta}(\Av^\Theta_\chi\tcT_u,\Av^\Theta_\chi\tcT_v)^f.
\end{equation*}
By Proposition \ref{p:Cfun}, the transport of $\pi^\Theta_*$ as a functor $D_{\perf}(E_{\leq w},\Frob)\to D_{\perf}(E_{\Theta,\leq\barw},\Frob)$ takes the form
\begin{equation*}
L\mapsto \Pi_{\Theta,\leq w}\Ltimes_{E_{\leq w}}L;
\end{equation*}
while the transport of $\Av^\Theta_\chi$ as a functor $D_{\perf}(M_{\leq w},\Frob)\to D_{\perf}(M_{\Theta,\leq\barw},\Frob)$ takes the form
\begin{equation*}
N\mapsto A_{\Theta,\leq w}\Ltimes_{E_{\leq w}}N.
\end{equation*}
By Lemma \ref{l:piav} and Lemma \ref{l:comcom}, the isomorphism
\begin{equation*}
\bigoplus_{u\in[\coW],\baru\leq\barw,v\leq w}\psi(\IC_u,\IC_v):\Pi_{\Theta,\leq w}^\Finv\isom A_{\Theta,\leq w}
\end{equation*}
intertwines the $(E_{\Theta,\leq\barw},E_{\leq w})$-bimodule structure on $\Pi_{\Theta,\leq w}$ and the $(M_{\Theta,\leq\barw},M_{\leq w})$-bimodule structure on $A_{\Theta,\leq w}$. Therefore this isomorphism induces a natural isomorphism $\Av^\Theta_\chi\circ\Phi\Rightarrow\Phi_\Theta\circ\pi^\Theta_*$.

Property (\ref{pw:modcat}). The verification is a combination of the argument for Theorem \ref{th:emduality}(\ref{em:monoidal}) and the Property (\ref{pw:adj}) above. The essential step is to verify that for $w\in[\coW], w'\in W$ such that $\ell(ww')=\ell(w)+\ell(w')$, the isomorphism $\bigoplus\psi(\IC_u,\IC_v\conv{B}\IC_{v'})$:
\begin{equation*}
\bigoplus\Ext^\bullet_{\scE_\Theta}(\pi^\Theta_*\IC_u,\pi^\Theta_*\IC_v\conv{B}\IC_{v'})^\Finv\isom\bigoplus\Hom_{\hsP_\Theta}(\Av^\Theta_\chi\tcT_u,\Av^\Theta_\chi\tcT_v\conv{U}\tcT_{v'})^f
\end{equation*}
(where the direct sum is over $\{u,v\in[\coW],v'\in W|\baru\leq\barw w',\barv\leq\barw,v'\leq w'\}$) intertwines the $(E_{\Theta,\leq\barw w'},E_{\Theta,\leq\barw}\otimes E_{\leq w'})$-module structure and the $(M_{\Theta,\leq\barw w'},M_{\Theta,\leq\barw}\otimes M_{\leq w'})$-module structure. Details are left to the reader.
\end{proof}


\subsection{``Paradromic-Whittavariant'' duality}\label{ss:finaldual}
Fix $\Theta\subset\Sigma$ of finite type. Let $\Dright_\Theta:=D^b_{m}((U^\Theta U^-_{\Theta},\chi)\backslash G/B)$ be the derived category of $(V_\Theta,\chi)$-equivariant mixed complexes on $\Fl_G=G/B$, which we call the ``Whittavariant'' category, taking a portmanteau of the words ``Whittaker'' and ``equivariant''.

On the other hand, let $\Dleft_\Theta:=D^b_{m}(P^\vee_\Theta\backslash \dG/\dU)$ be the derived category of right $\dU$-equivariant mixed complexes on the partial flag variety $\PFl_\dG=P^\vee_\Theta\backslash\dG$. Since objects in $\Dleft_\Theta$ are automatically monodromic under the right $H$-action, we call $\Dleft_\Theta$ the ``paradromic'' category, taking a portmanteau of the words ``parabolic'' and ``monodromic''.

Just as we deduced the self-duality from the equivariant-monodromic duality, we can also deduce the following theorem from the parabolic-Whittaker duality. We omit the proof.

\begin{theorem}[Paradromic-Whittavariant duality]\label{th:pwselfduality} Let $\Theta\subset\Sigma$ be of finite type. Then there is an equivalence of triangulated categories
\begin{equation*}
\Psi_\Theta:\Dleft_\Theta\isom\Dright_\Theta
\end{equation*}
satisfying the following properties:
\begin{enumerate}
\item\label{pwsd:modcat} $\Psi_\Theta$ can be given a structure to intertwine the right convolution of $\hsM_\dG$ on $\Dleft_\Theta$ and the right convolution of $\scE_G$ on $\Dright_\Theta$ (via the equivalence $\Phi:\scE_G\isom\hsM_\dG$ in Theorem \ref{th:emduality});
\item\label{pwsd:adj} There are natural isomorphisms which intertwine the following adjunctions (which are defined in a similar way as the adjunctions (\ref{eq:paradj}) and (\ref{d:whadj}))
\begin{equation*}
\xymatrix{\Dleft\ar[r]\ar[d]_{\Psi} & \Dleft_\Theta\ar@<1ex>[l]^{\pi^{\Theta,!}}\ar@<-2ex>[l]_{\pi^{\Theta,*}}^{\pi^\Theta_*}\ar[d]^{\Psi_\Theta}\\
\Dright\ar[r] & \Dright_\Theta\ar@<1ex>[l]^{\Av^\Theta_*}\ar@<-2ex>[l]_{\Av^\Theta_!}^{\Av^\Theta_\chi}}
\end{equation*}
\item\label{pwsd:st} $\Psi_\Theta(\Delta^\vee_{\barw})\cong\Delta_{\barw,\chi}$ and $\Psi_\Theta(\nabla^\vee_{\barw})\cong\nabla_{\barw,\chi}$ for all $\barw\in\coW$;
\item\label{pwsd:puretilt} $\Psi_\Theta(\IC^\vee_{\barw})\cong\calT_{\barw,\chi}:=\piddag\tcT_{\barw,\chi}$. More generally, $\Psi_\Theta$ interchanges very pure complexes of weight 0 and tilting sheaves;
\item\label{pwsd:rest} The analogs of parts \eqref{sd:slinear} and \eqref{sd:twist} of Theorem \ref{th:selfduality} hold.
\end{enumerate}
\end{theorem}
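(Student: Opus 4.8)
The plan is to derive Theorem~\ref{th:pwselfduality} from the parabolic-Whittaker duality (Theorem~\ref{th:pwduality}) by exactly the mechanism that produced the self-duality (Theorem~\ref{th:selfduality}) from the equivariant-monodromic duality (Theorem~\ref{th:emduality}); below I indicate the modifications. First I would exhibit each of the two categories as a one-sided degeneration of the parabolic-Whittaker pair for the \emph{opposite} group. On one side, $\Dright_\Theta=D^b_m((U^\Theta U^-_\Theta,\chi)\backslash G/B)$ is the essential image of the completed Whittaker-monodromic category $\hsM_{G,\Theta}$ under $\piddag=\pi_![r]$, where $\pi:\tilFl_G\to\Fl_G$ integrates out the right $H$-monodromy; its indecomposable mixed tilting sheaves are the $\calT_{\barw,\chi}=\piddag\tcT_{\barw,\chi}$ for $\barw\in\coW$, $\tcT_{\barw,\chi}$ being the indecomposable free-monodromic tilting sheaves of $\hsM_{G,\Theta}$ (that the $\calT_{\barw,\chi}$ generate and have indecomposable underlying complexes is checked as in Cor.~\ref{c:t}, using Lemma~\ref{l:fmtexist} and Theorem~\ref{th:pwduality}). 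On the other side, $\Dleft_\Theta=D^b_m(P^\vee_\Theta\backslash\dG/\dU)$ is the essential image of the equivariant parabolic category $\scE_{\dG,\Theta}=D^b_m(P^\vee_\Theta\backslash\dG/\dB)$ under forgetting the right $\dH$-equivariance, and the IC-sheaves $\IC^\vee_{\barw}$ are among its very pure complexes of weight $0$. Applying Theorem~\ref{th:CtoMod} to the family $\{\calT_{\barw,\chi}\}$ inside $\Dright_\Theta$ gives $\Dright_{\Theta,\leq\barw}\isom D_\perf(M^\dagger_{\Theta,\leq\barw},\Frob)$ with $M^\dagger_{\Theta,\leq\barw}=(\bigoplus_{\baru,\barv\leq\barw}\Hom_{\Dright_\Theta}(\calT_{\baru,\chi},\calT_{\barv,\chi}))^\opp$, and applying it to the family $\{\IC^\vee_{\barw}\}$ inside $\Dleft_\Theta$ gives $\Dleft_{\Theta,\leq\barw}\isom D_\perf(\leftexp{\dagger}{E}_{\Theta,\leq\barw},\Frob)$ with $\leftexp{\dagger}{E}_{\Theta,\leq\barw}=(\bigoplus_{\baru,\barv\leq\barw}\Ext^\bullet_{\Dleft_\Theta}(\IC^\vee_{\baru},\IC^\vee_{\barv}))^\opp$.

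The crux is to identify these two DG-algebras, up to Frobenius-regrading, with the algebras built in the proof of Theorem~\ref{th:pwduality} for the pair $(\dG,G)$ --- namely the IC-model algebra $E_{\dG,\Theta}$ of $\scE_{\dG,\Theta}$ and the free-monodromic tilting algebra $M_{G,\Theta}$ of $\hsM_{G,\Theta}$. Because $\piddag$ kills the right $H$-monodromy and, by the $\tDel$-flag argument of Lemma~\ref{l:vst}, $\Hom$-spaces between free-monodromic tilting sheaves in $\hsM_{G,\Theta}$ are free over $S_H$, the analog of Lemma~\ref{l:HomfreeS} gives $M^\dagger_{\Theta,\leq\barw}\cong M_{G,\Theta,\leq\barw}\otimes_{S_H}\Ql$; dually, since the generators $\pi^\Theta_*\IC_w$ of $\scE_{\dG,\Theta}$ are very pure (the Corollary following Lemma~\ref{l:pist}), so that $E_{\dG,\Theta,\leq\barw}$ is free over $\dS_\dH$ by Lemma~\ref{l:purity}(2), Cor.~\ref{c:eqkillS} gives $\leftexp{\dagger}{E}_{\Theta,\leq\barw}\cong E_{\dG,\Theta,\leq\barw}\otimes_{\dS_\dH}\Ql$. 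Feeding in the $\Frob$-equivariant algebra isomorphism $E_{\dG,\Theta,\leq\barw}^\Finv\cong M_{G,\Theta,\leq\barw}$ supplied by Theorem~\ref{th:pwduality}(\ref{pw:rest}) (the analog of \eqref{sameSbimod}) together with $\dS_\dH^\Finv\cong S_H$, one obtains a $\Frob$-equivariant algebra isomorphism $\leftexp{\dagger}{E}_{\Theta,\leq\barw}^\Finv\cong M^\dagger_{\Theta,\leq\barw}$, together with a compatible system of such isomorphisms for the transition bimodules as $\barw$ varies. Transporting the regrading $(-)^\Finv$ through the two DG presentations and passing to the inductive $2$-limit over $\barw\in\coW$ then yields the equivalence $\Psi_\Theta:\Dleft_\Theta\isom\Dright_\Theta$; being a composition of equivalences it is automatically an equivalence (one may also construct the quasi-inverse $\Xi_\Theta$ symmetrically and check $\Xi_\Theta\Psi_\Theta\cong\id$ through the perverse $t$-structure, exactly as in the Claim inside the proof of Theorem~\ref{th:selfduality}).

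The listed properties are read off as before. Property~(\ref{pwsd:st}) comes from the recollement squares, as in the argument for Theorem~\ref{th:emduality}(\ref{em:st}). Property~(\ref{pwsd:puretilt}), including the interchange of very pure complexes of weight $0$ and tilting sheaves, follows from (\ref{pwsd:st}) together with the descriptions of these classes as $\langle\Delta\langle0\rangle\rangle\cap\langle\nabla\langle0\rangle\rangle$ resp.\ $\langle\Delta(?)\rangle\cap\langle\nabla(?)\rangle$ and the Tate-twist compatibility in (\ref{pwsd:rest}). Property~(\ref{pwsd:rest}) (the $S_\dH$-linearity of $\Ext$'s and the effect of the Tate twist) is verified as for Theorem~\ref{th:emduality}(\ref{em:slinear})(\ref{em:twist}). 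Property~(\ref{pwsd:modcat}) is obtained by introducing the module-category bimodules (the $Q$- and $R$-type objects) as in the proofs of Theorem~\ref{th:emduality}(\ref{em:monoidal}) and Theorem~\ref{th:pwduality}(\ref{pw:modcat}), and matching them under $(-)^\Finv$. Property~(\ref{pwsd:adj}) is obtained by realizing $\pi^\Theta_*:\Dleft\to\Dleft_\Theta$ and $\Av^\Theta_\chi:\Dright\to\Dright_\Theta$ as derived tensor products against explicit bimodules on the DG side, whose identification under $(-)^\Finv$ reduces, after the ``kill $\Sym$'' steps above, to Theorem~\ref{th:pwduality}(\ref{pw:adj}). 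I expect the main difficulty to be, as in the self-duality, the bookkeeping of which Kac--Moody group ($G$ or $\dG$) and which side (left/right; equivariant cohomology vs.\ logarithmic monodromy) each factor of $\Sym$ lives on at each stage, together with the verification of the two ``kill $\Sym$'' isomorphisms --- these rest on the freeness of the relevant $\Hom$- and $\Ext$-modules over $S_H$ resp.\ $\dS_\dH$, which in the Whittaker-monodromic setting is supplied precisely by the $\tDel$-flag analysis of \S\ref{ss:vv} and the very-purity of $\pi^\Theta_*\IC_w$ from \S\ref{ss:Econv}.
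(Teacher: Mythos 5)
Your proposal is correct and follows exactly the route the paper intends: the paper explicitly omits the proof of Theorem~\ref{th:pwselfduality}, saying it is deduced from Theorem~\ref{th:pwduality} just as Theorem~\ref{th:selfduality} was deduced from Theorem~\ref{th:emduality}, and your argument carries out precisely that imitation — building DG models for $\Dleft_\Theta$ from IC-sheaves and for $\Dright_\Theta$ from tilting sheaves, relating them to the parabolic/Whittaker algebras $E_{\dG,\Theta}$ and $M_{G,\Theta}$ by the two ``kill $\Sym$'' isomorphisms (analogs of Lemma~\ref{l:HomfreeS} and Cor.~\ref{c:eqkillS}), and transporting through the $\Finv$-regrading supplied by Theorem~\ref{th:pwduality} applied to the pair $(\dG,G)$. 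Your handling of the group-swap, the placement of $S_H$ versus $\dS_\dH$, the construction of the quasi-inverse $\Xi_\Theta$, and the readoff of properties (\ref{pwsd:st})--(\ref{pwsd:modcat}) all match the paper's blueprint.
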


\begin{cor}
\begin{enumerate}
\item []
\item For $w\in\{\coW\}$, we have isomorphism
\begin{eqnarray*}
\pi^{\Theta,*}\IC_{\barw}[\lTh](\lTh/2)&\cong&\IC_w\cong\pi^{\Theta,!}\IC_{\barw}[-\lTh](-\lTh/2)\\
\Av^\Theta_!\calT_{\barw,\chi}(-\lTh/2)&\cong&\calT_{w}\cong\Av^\Theta_*\calT_{\barw,\chi}(\lTh/2).
\end{eqnarray*}
\item For $w\in[\coW]$, $\pi^{\Theta}_*\calT^\vee_w=:\calT^\vee_{\barw}$ is a tilting extension of the constant perverse sheaf $\Ql[\ell(\barw)](\ell(\barw)/2)$ on $(P^\vee_\Theta\backslash\dG)_{\barw}$ and $\omega\calT_{\barw}$ is indecomposable. For $w\notin[\coW]$, $\pi^\Theta_*\calT^\vee_w=0$.
\item For $w\in[\coW]$, $\Av^\Theta_\chi\IC_w=:\IC_{\barw,\chi}$ is the middle extension of the simple perverse $(U^-_\Theta,\chi)$-equivariant local system $\calL_{\barw,\chi}$ on $(G/B)_{w_\Theta w}$. For $w\notin[\coW]$, $\Av^\Theta_\chi\IC_w=0$.
\item $\Psi_\Theta(\calT^\vee_{\barw})\cong\IC_{\barw,\chi}$.
\end{enumerate}
\end{cor}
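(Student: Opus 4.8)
The plan is to deduce all four parts from the equivalences $\Psi$ and $\Psi_\Theta$ (Theorems~\ref{th:selfduality} and~\ref{th:pwselfduality}) together with the geometry of $\pi^\Theta$, in exactly the way Corollaries~\ref{c:pullict}, \ref{c:avdecomp} and~\ref{c:t} were deduced from Theorems~\ref{th:emduality} and~\ref{th:pwduality}; the only genuinely new ingredient is a direct analysis of $\Av^\Theta_\chi$ applied to intersection cohomology sheaves, which is the analogue in the Whittavariant world of Lemma~\ref{l:killnonmin} and Proposition~\ref{l:Avst}. For part~(1), the first chain of isomorphisms is the exact analogue of~\eqref{eq:pullic} in Corollary~\ref{c:pullict}, proved the same way: for $w\in\{\coW\}$ the projection $\Fl_{\dG,\leq w}\to\PFl_{\dG,\leq\barw}$ is a smooth fibration with fibres $\AA^{\lTh}$ (cf. Lemma~\ref{l:pist}), so $\pi^\Theta$ is smooth of relative dimension $\lTh$, $\pi^{\Theta,!}\cong\pi^{\Theta,*}[2\lTh](\lTh)$, and smooth pullback carries $\IC^\vee_{\barw}$ to $\IC^\vee_w$ up to the indicated shift and twist. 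The second chain then follows by transporting the first through $\Psi$ and $\Psi_\Theta$ and invoking the adjunction compatibilities of Theorem~\ref{th:pwselfduality}(\ref{pwsd:adj}) together with Theorem~\ref{th:pwselfduality}(\ref{pwsd:puretilt}) and Theorem~\ref{th:selfduality}(\ref{sd:puretilt}); equivalently, one applies $\piddag$ to Corollary~\ref{c:pullict}, using that $\piddag$ commutes with $\pi^{\Theta,*},\pi^{\Theta,!},\Av^\Theta_!,\Av^\Theta_*$ and sends $\tcT_w\mapsto\calT_w$ and $\tcT_{\barw,\chi}\mapsto\calT_{\barw,\chi}$.

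Part~(3) is the key step. The vanishing $\Av^\Theta_\chi\IC_w=0$ for $w\notin[\coW]$ is proved verbatim as Lemma~\ref{l:killnonmin}: choosing $s\in\Theta$ with $\ell(sw)<\ell(w)$, the sheaf $\IC_w$ is left $P_s$-equivariant, and averaging against $\chi$, which is nontrivial on $U^-_s\subset P_s\cap U^-_\Theta$, annihilates it. For $w\in[\coW]$, I would first record that $\Av^\Theta_\chi$ on $\Dright$ is t-exact (the analogue of Corollary~\ref{c:avexact}, using that the action maps $a^-$ are affine and that $\Av^\Theta_\chi$ is ``clean'', i.e. $a^-_!\cong a^-_*$) and preserves very-purity of weight $0$ (the analogue of the corollary following Proposition~\ref{l:Avst}, via the $\Delta$/$\nabla$-flag descriptions). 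Hence $\Av^\Theta_\chi\IC_w$ is a very pure perverse complex of weight $0$, so by Lemma~\ref{l:Cdecomp} a direct sum of shifted $\IC_{\barv,\chi}$ with semisimple $\Frob$-multiplicities; since it is perverse the shifts vanish, cleanness of $\Av^\Theta_\chi$ identifies its restriction to the open stratum $(G/B)_{w_\Theta w}$ with $\calL_{\barw,\chi}$ and thereby pins the support, and indecomposability (either through the adjunction $\Hom(\Av^\Theta_\chi\IC_w,-)\cong\Hom(\Av^\Theta_!\Av^\Theta_\chi\IC_w,-)$ and a local-ring argument as in Corollary~\ref{c:t}, or a posteriori because $\Av^\Theta_\chi\IC_w=\Psi_\Theta(\pi^\Theta_*\calT^\vee_w)$) forces $\Av^\Theta_\chi\IC_w\cong\IC_{\barw,\chi}$.

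Parts~(2) and~(4) then follow formally by transport. By Theorem~\ref{th:selfduality}(\ref{sd:puretilt}) and the adjunction compatibility Theorem~\ref{th:pwselfduality}(\ref{pwsd:adj}),
$$
\pi^\Theta_*\calT^\vee_w\;\cong\;\Psi_\Theta^{-1}\Av^\Theta_\chi\Psi(\calT^\vee_w)\;\cong\;\Psi_\Theta^{-1}\Av^\Theta_\chi\IC_w .
$$
This is $0$ for $w\notin[\coW]$, giving the last assertion of part~(2). For $w\in[\coW]$ it equals $\Psi_\Theta^{-1}\IC_{\barw,\chi}$, which is an indecomposable tilting sheaf by Theorem~\ref{th:pwselfduality}(\ref{pwsd:puretilt}) (since $\IC_{\barw,\chi}$ is very pure of weight $0$), indecomposable because $\IC_{\barw,\chi}$ is simple, and whose restriction to the open stratum $(P^\vee_\Theta\backslash\dG)_{\barw}$ is the constant perverse sheaf by Theorem~\ref{th:pwselfduality}(\ref{pwsd:st}); hence it is a tilting extension of $\Ql[\ell(\barw)](\ell(\barw)/2)$, and with $\calT^\vee_{\barw}:=\pi^\Theta_*\calT^\vee_w$ this gives part~(2). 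Part~(4) is then immediate: $\Psi_\Theta(\calT^\vee_{\barw})=\Psi_\Theta(\pi^\Theta_*\calT^\vee_w)\cong\Av^\Theta_\chi\Psi(\calT^\vee_w)\cong\Av^\Theta_\chi\IC_w\cong\IC_{\barw,\chi}$.

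I expect the main obstacle to be part~(3) for $w\in[\coW]$: showing, without circularity, that $\Av^\Theta_\chi\IC_w$ is exactly the middle extension $\IC_{\barw,\chi}$, i.e. that it is perverse, very pure of weight $0$, indecomposable, and free of extra composition factors on boundary strata. The cleanest way to secure this, avoiding any appeal to part~(2), is to run the argument one level up: the identities $\Av^\Theta_\chi\IC_w=0$ for $w\notin[\coW]$ and $\Av^\Theta_\chi\IC_w\cong\IC_{\barw,\chi}$ for $w\in[\coW]$ are established in the completed monodromic category $\hsM_{\dG}$ (using Lemma~\ref{l:killnonmin}, Corollary~\ref{c:avexact}, cleanness of $\Av^\Theta_\chi$, and Lemma~\ref{l:Cdecomp}), and one then applies $\piddag\colon\hsM_{\dG}\to\Dright$, which commutes with $\Av^\Theta_\chi$, sends $\IC_w$ to $\IC_w$, and takes an indecomposable \fm tilting sheaf to an indecomposable tilting sheaf (Lemma~\ref{l:fmtexist}); this transports everything to the Whittavariant side and, combined with $\Psi$ and $\Psi_\Theta$, yields parts~(1)--(4) as above.
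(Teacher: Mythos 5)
Your overall architecture (transport everything through $\Psi$ and $\Psi_\Theta$ via the adjunction compatibility of Theorem \ref{th:pwselfduality}(\ref{pwsd:adj})) is the right one, and your treatment of part (1), of the vanishing $\Av^\Theta_\chi\IC_w=0$ for $w\notin[\coW]$, and of the formal deduction of (4) all match the paper. But you have inverted the logical order of (2) and (3), and that is where the problem lies. The paper proves (2) \emph{first} and independently, by citing \cite[Prop.3.4.1]{Yun2} for the statement that $\pi^\Theta_*$ of an indecomposable tilting sheaf is again an indecomposable tilting sheaf (or zero for $w\notin[\coW]$); it then obtains (3) purely formally: $\Av^\Theta_\chi\IC_w\cong\Psi_\Theta(\pi^\Theta_*\calT^\vee_w)\cong\Psi_\Theta(\calT^\vee_{\barw})$ is indecomposable and very pure (since $\Psi_\Theta$ interchanges tilting and very pure), hence a shifted twisted IC-sheaf, and the normalization is pinned down by $\Delta^\vee_{\barw}\isom\calT^\vee_{\barw}$ modulo lower strata. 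You instead try to prove (3) directly and deduce (2) from it.

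The gap is in your direct proof that $\Av^\Theta_\chi\IC_w$ is very pure of weight $0$ for $w\in[\coW]$. The corollary following Proposition \ref{l:Avst} says $\Av^\Theta_\chi$ preserves \emph{free-monodromic tilting} sheaves, which are characterized by $\tDel$- and $\tnab$-flags with \emph{arbitrary} twists; very purity of weight $0$ requires flags with weight-$0$ twists, and Lemma \ref{l:Avst}(2) shows $\Av^\Theta_\chi\Delta_v\cong\Delta_{\barv',\chi}(\ell(u)/2)$ for $v=uv'$, which lowers the weight by $\ell(u)$ on every non-minimal $v$. So from the $\Delta$/$\nabla$-flag of $\IC_w$ you only get $\Av^\Theta_\chi\IC_w\in\langle\Delta_{\barv,\chi}\langle\leq0\rangle\rangle\cap\langle\nabla_{\barv,\chi}\langle\geq0\rangle\rangle$, and passing from this to very purity is exactly the nontrivial step of Proposition \ref{p:vpure}, which rests on the contraction Lemma \ref{l:contract} and Springer's purity theorem for the $B$-orbit stratification; no analogue is established for the $(U^\Theta U^-_\Theta,\chi)$-stratification. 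Your fallback of ``running the argument one level up in $\hsM_{\dG}$ and applying $\piddag$'' does not repair this: $\piddag$ on $\hsM_{\dG}$ lands in categories attached to $\dG$, not in $\Dright$ (which is attached to $G$), it sends $\tcT_w$ to the tilting sheaf $\calT_w$ rather than to $\IC_w$, and Corollary \ref{c:avdecomp} only exhibits $\tcT_{\barw,\chi}$ as a multiplicity-one \emph{summand} of $\Av^\Theta_\chi\tcT_w$, not as the whole object. The clean fix is to follow the paper: establish (2) on the paradromic side (either by the external reference or by redoing its argument, which is a weight/support analysis of $\pi^\Theta_*\calT^\vee_w$ using that the fibers of $\pi^\Theta$ over $\PFl_{\barw}$ are affine spaces), and then let $\Psi_\Theta$ do the work for (3).
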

\begin{proof}
(1) is proved in the same way as Corollary \ref{c:pullict}.

(2) follows from \cite[Proposition 3.4.1]{Yun2}. In particular, $\calT^\vee_{\barw}$ also satisfies the condition (W) mentioned in Remark \ref{r:condW}.

(3) Note that by Theorem \ref{th:pwselfduality}(\ref{pwsd:adj}) and Theorem \ref{th:selfduality}(\ref{sd:puretilt}), \begin{equation*}
\Av^\Theta_\chi\IC_w\cong\Av^\Theta_\chi\Psi(\calT^\vee_w)\cong\Psi_\Theta(\pi^\Theta_*\calT^\vee_w).
\end{equation*}
For $w\notin[\coW]$, $\Psi_\Theta(\pi^\Theta_*\calT^\vee_w)=0$ by part (2). For $w\in[\coW]$, we have $\Psi_\Theta(\pi^\Theta_*\calT^\vee_w)\cong\Psi_\Theta(\calT^\vee_{\barw})$ by part (2). Since $\omega\calT^\vee_{\barw}$ is an indecomposable tilting sheaf, $\omega\Psi_\Theta(\calT^\vee_{\barw})$ is an indecomposable very pure complex, i.e., $\Psi_\Theta(\calT^\vee_{\barw})$ is a shifted and twisted IC-sheaf. Since $\Delta^\vee_{\barw}\isom\calT^\vee_{\barw}$ (mod $\Dleft_{\Theta,<w}$), we have $\Delta_{\barw,\chi}\isom\Psi_\Theta(\calT^\vee_{\barw})$ (mod $\Dright_{\Theta,<w}$), therefore $\Psi_\Theta(\calT^\vee_{\barw})$ is the middle extension of $\calL_{\barw,\chi}$.

(4) follows from parts (2) and (3).
\end{proof}


\appendix
\vspace{1cm}
\begin{center}

\textbf{APPENDICES}\\ by Zhiwei Yun
\end{center}

\section{Completions of monodromic categories}\label{a:compmono}
The goal of this appendix is to make rigorous the procedure of ``adding \fm objects to the category of monodromic complexes''.

\subsection{Unipotently monodromic complexes}\label{as:mono} Let $k$ be an algebraically closed field. Let $A$ be an algebraic torus over $k$ and $X$ be a right $A$-torsor over a scheme $Y$ over $k$. Let $\pi:X\to Y$ be the projection.

\begin{defn}\label{def:mono}
The $A$-{\em unipotently monodromic category} of the torsor $\pi:X\to Y$ is the full subcategory of $D^b_c(X)$ generated by the image of $\pi^!:D^b_c(Y)\to D^b_c(X)$ (or equivalently $\pi^*$). We denote this full subcategory by $D^b_c(\qw{X}{A})$, and its objects are called {\em unipotently monodromic complexes}.
\end{defn}

Note that $D^b_c(\qw{X}{A})$ inherits the perverse t-structure from $D^b_c(X)$. We denote its heart by $P(\qw{X}{A})$.

Let $r=\dim A$. We use $(\piddag,\pidag)$ to denote the adjoint pair $(\pi_![r],\pi^![-r])$. Note that under the perverse t-structures, $\pidag$ is t-exact and $\piddag$ is right t-exact.

In \cite{V}, Verdier studied the monodromic complexes in the case $A=\GG_m$ and $X$ is a cone over $Y$. His argument extends to any split torus $A$. Verdier's notion of monodromic complexes allows arbitrary tame monodromy along the fibers of $\pi$ whereas our category $D^b_c(\qw{X}{A})$ only allows unipotent monodromy. Verdier's construction of the canonical monodromy operator in \cite[\S 5]{V} applies to our situation: for each object $\calF\in D^b_c(\qw{X}{A})$ there is an action $\mu(\calF)$ of the tame Tate module of $T^t(A)=\varprojlim_{(n,p)=1} A[n]$ on the underlying complex $\omega\calF$. For $\calF\in D^b_c(\qw{X}{A})$, this action necessarily factors through the $\ell$-adic quotient $T_\ell(A)$, and gives:
\begin{equation*}
\mu(\calF):T_\ell(A)\to\Aut_{X}(\calF).
\end{equation*}
It is shown in {\em loc.cit.} that these operators commute with all morphisms in $D^b_c(\qw{X}{A})$. Since $\calF$ has unipotent monodromy along the fibers of $\pi$, the operator $\mu(\calF)$ is unipotent. Therefore it makes sense to take the logarithm of $\mu(\calF)$ and get a morphism in $D^b_c(\qw{X}{A})$:
\begin{equation*}
m(\calF):=\log(\mu(\calF)):V_A\otimes\calF\to\calF.
\end{equation*}
where $V_A=T_\ell(A)\otimes_{\ZZ_\ell}\Ql$. These logarithmic monodromy operators also commute with all morphisms in $D^b_c(\qw{X}{A})$, and $D^b_c(\qw{X}{A})$ becomes a category enriched over $S=\Sym(V_A)$-modules.

Our goal is to enlarge the category $D^b_c(\qw{X}{A})$ to a category $\hatD^{b}_{c}(\qw{X}{A})\subset\pro D^{b}_{c}(\qw{X}{A})$, by adding certain pro-objects called ``\fm'' objects. The prototypical example of such a \fm objects is the following.
\begin{exam}\label{ex:fm}
Let $Y=\pt$ and $X=A$. We will construct a pro-object in the category of unipotently monodromic local systems on $A$, called the {\em \fm local system}. Recall that a $\Ql$-local system on $A$ is given by a finite dimensional continuous $\Ql$-representation $\rho$ of $\pi_1(A,e)$. Such a local system is unipotently monodromic (i.e., being a successive extension of sheaves pulled back from $D^b_c(\pt)$) if and only if it is unipotent and hence factors through the $\ell$-adic tame quotient
\begin{equation*}
\pi_1(A,e)\twoheadrightarrow\pi^\ell_1(A,e)\cong T_\ell(A).
\end{equation*}
Let $\calL_n$ be the local system on $A$ given by the representation $\rho_{n}=\Sym(V_A)/(V_A^{n+1})$, on which an element $t\in T_\ell(A)$, viewed as an element of $V_A$, acts as multiplication by $\exp(t)$. Let
\begin{equation*}
\tcL:=\prolim\calL_n\in\pro D^{b}_{c}(\qw{A}{A}).
\end{equation*}
This pro-object is a typical example of a \fm local system.

Let $\hatS=\varprojlim_{n}\Sym(V_{A})/(V_{A}^{n+1})$. It is easy to see that we have an equivalence
\begin{equation*}
D^{b}_{c}(\qw{A}{A})\cong D^{b}(\Mod^{\nil}(\hatS)).
\end{equation*}
Here $\Mod^{\nil}(\hatS)$ stands for the abelian category of finite dimensional $\hatS$-modules. The \fm completion would be
\begin{equation*}
\hatD^{b}_{c}(\qw{A}{A})\cong D^{b}(\hatS),
\end{equation*}
of the bounded derived category of all finitely generated $\hatS$-modules. If we normalize the equivalence so it is t-exact with respect to the perverse t-structure on the LHS and the natural t-structure on the RHS, then under this equivalence, the pro-object $\tcL[r]\in\hatD^{b}_{c}(\qw{A}{A})$ corresponds to the free module $\hatS\in D^{b}(\hatS)$.
\end{exam}

\begin{remark} To better understand the situation, we consider the case $Y$ is smooth and $k=\CC$.  Then there is a parallel story for holonomic $\calD_{X}$-modules instead of constructible complexes, linked to each other via the Riemann-Hilbert correspondence. 

We recall some basic construction of \cite[\S2.5]{BB}. A weakly $A$-equivariant $\calD_{X}$-module is a quasi-coherent sheaf $\calM$ on $X$ with a $\calD_{X}$-action together with an $A$-action such that the action map $\calD_{X}\otimes_{\calO_{X}} \calM\to\calM$ is $A$-equivariant. 

Let $\Theta_{Y}$ be the tangent bundle of $Y$. Let $\Theta_{\qw{X}{A}}$ is the vector bundle on $Y$ which is the descent of the tangent bundle of $X$. It is a Lie algebroid on $Y$ and fits into an exact sequence
\begin{equation*}
0\to\Lie A\otimes\calO_{Y}\to\Theta_{\qw{X}{A}}\to\Theta_{Y}\to0.
\end{equation*}
Let $\calD_{\qw{X}{A}}\subset\calD_{X}$ be the $A$-invariant part. This is a sheaf of $\calO_{Y}$-algebras generated by the Lie algebroid $\Theta_{\qw{X}{A}}$. The functor $\calM\mapsto\calM^{A}$ gives an equivalence between weakly $A$-equivariant $\calD_{X}$-modules and $\calD_{\qw{X}{A}}$-modules which are quasi-coherent on $Y$. 

Note that $\Lie A\subset\Theta_{\qw{X}{A}}\subset\calD_{\qw{X}{A}}$ is actually central, hence $S=\Sym(\Lie A)$ is a central subalgebra of $\calD_{\qw{X}{A}}$. Localizing the category of $\calD_{\qw{X}{A}}$ at various points of $(\Lie A)^{*}=\Spec S$ corresponds to specifying the monodromy along the fibers of $X\to Y$ under the Riemann-Hilbert correspondence. Thus an $A$-unipotently monodromic $\calD_{X}$-module is the same as a quasi-coherent $\calD_{\qw{X}{A}}$-module on which $\Lie A$ acts nilpotently. 

Let $\hatS$ be the completion of $S$ with respect to the ideal $(\Lie A)$, and we define the completion
\begin{equation*}
\widehat{\calD}_{\qw{X}{A}}:=\hatS\otimes_{S}\calD_{\qw{X}{A}},
\end{equation*}
equipped with the $(\Lie A)$-adic topology. The desired completion of the derived category of $\calD_{X}$-modules can then be defined as the derived category of certain $\widehat{\calD}_{\qw{X}{A}}$-modules.
\end{remark}

The above examples suggest that $\hatD^{b}_{c}(\qw{X}{A})$ should look like a tensor product $D^{b}_{c}(\qw{X}{A})\otimes_{S}\hatS$, i.e., the category of $\hatS$-modules in $D^{b}_{c}(\qw{X}{A})$. Turning this into a rigorous construction involves two technical difficulties.

First, we need to deal with such categorical issues as: how to extend the triangulated category structure and the t-structure to the completed category; how to extend the sheaf-theoretic functors to the completed categories? The general categorical formalism for dealing with pro-completions is contained in \S\ref{progen}, and is applied to our situation in \S\ref{ss:compmono}. 

Second, to make sense of $\hatS$-modules in $D^{b}_{c}(Y)$ we need to work on the level of abelian categories (perverse sheaves). Even in the situation where $X=Y\times A$ this is not obvious, see \S\ref{ss:triv}. Extra care has to be taken in the mixed setting, see \S\ref{ss:mix}. Finally, we deal with the case where $Y$ is nicely stratified and the category in consideration is glued from simple categories coming from each stratum, see \S\ref{ss:stra}.

The ultimate goal is to define and study \fm tilting sheaves, which will be done in \S\ref{ss:fmt}. The \fm tilting sheaves will play important roles in constructing DG models for the completed categories, as we will see in Appendix \ref{a:dgmodel}.

\subsection{Pro-objects in a filtered triangulated category}\label{progen}
Let $D$ be a category and let $\pro(D)$ be the category of pro-objects in $D$. By definition, objects in $\pro(D)$ are sequences of objects (indexed by non-negative integers) $\{X_n\}_{n\geq0}$ with transition maps $\cdots\to X_2\to X_1\to X_0$. We denote such a sequence by $\prolim X_n$. The morphism sets are defined by
\begin{equation}\label{dlimit}
\Hom_{\pro(D)}(\prolim X_m,\prolim Y_n)=\varprojlim_n\varinjlim_m\Hom_D(X_m,Y_n).
\end{equation}
The Yoneda embedding of $\eta_D:D\to\Fun(D,\Set)$ extends to pro-objects to give an embedding
\begin{eqnarray}\label{Yoneda}
\widehat{\eta}_D:\pro(D)&\to&\Fun(D,\Set)\\
\notag
\prolim X_n&\mapsto&\left(Y\mapsto\varinjlim_n\Hom_D(X_n,Y)\right).
\end{eqnarray}
It is easy to check that $\widehat\eta_D$ is a full embedding.

For any partially ordered set $I$, viewed as a category, we can consider $\Fun(I,D)$ as ``diagrams of shape $I$'' in $D$. In particular, for $n\geq0$, let $[0,n]$ be the ordered set $n>\cdots>0$. Then $\Fun([0,n],D)$ is the category of chains of morphisms $X_n\to\cdots\to X_0$. In particular,  $\Fun([0,1],D)$ is the category of morphisms in $D$.

\begin{lemma}\label{l:hompro}
Let $I$ be a countable partially ordered set in which every element $i$ has only finitely many successors (a successor is an element $j<i$). Then the natural functor
\begin{equation*}
\Pi_I:\pro\Fun(I,D)\to\Fun(I,\pro(D))
\end{equation*}
is an equivalence of categories.
\end{lemma}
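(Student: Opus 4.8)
The plan is to prove that $\Pi_I$ is fully faithful and essentially surjective. The mechanism underlying all three parts is a \emph{coherent reindexing} of $\mathbb N$-indexed pro-objects carried out by recursion over the poset $I$. The hypothesis that each $i\in I$ has only finitely many successors enters twice: it forces $I$ to be well-founded (an infinite descending chain $i_0>i_1>i_2>\cdots$ would give infinitely many successors of $i_0$), so recursion over $I$ is legitimate; and it guarantees that at each stage of such a recursion only finitely many inequalities must be imposed, so the required choices can be made. (Countability keeps everything inside the category of $\mathbb N$-indexed pro-objects used in the paper.)

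First I would treat essential surjectivity. Given $G\in\Fun(I,\pro(D))$, write $G(i)=\prolim_n G(i)_n$ and, for each pair $j<i$, fix a presentation of the pro-morphism $G(i)\to G(j)$ by maps $G(i)_{\mu_{ij}(l)}\to G(j)_l$ with $\mu_{ij}$ monotone. Then define monotone cofinal functions $N(i,-)\colon\mathbb N\to\mathbb N$ by recursion on $i\in I$, requiring $N(i,n)$ to dominate $N(i,n-1)+1$, each $\mu_{ij}(N(j,n))$ for $j<i$, the levels (for each chain $k<j<i$) beyond which the two composites $G(i)\to G(j)\to G(k)$ and $G(i)\to G(k)$ have the same image in $G(k)_{N(k,n)}$, and the finitely many further levels needed for naturality of the transition maps — all finite sets of conditions by hypothesis. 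Setting $F_n(i):=G(i)_{N(i,n)}$ with the induced maps then yields genuine functors $F_n\colon I\to D$ and natural transformations $F_{n+1}\to F_n$, i.e.\ an object $\prolim_n F_n$ of $\pro\Fun(I,D)$, and cofinality of $N(i,-)$ gives $\Pi_I(\prolim_n F_n)(i)\cong G(i)$ compatibly in $i$.

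For full faithfulness I would run the analogous argument on morphisms. Given $\mathbf F=\prolim_m F_m$, $\mathbf F'=\prolim_n F'_n$ and $\phi\colon\Pi_I\mathbf F\to\Pi_I\mathbf F'$, I would first replace $\mathbf F$ by an isomorphic presentation obtained from a fiberwise reindexing $G_M(i):=F_{\sigma_i(M)}(i)$, where the monotone cofinal $\sigma_i$ are built by recursion on $i$ so as to be large enough to (a) keep $G_M$ a functor and $G_{M+1}\to G_M$ natural, (b) make a chosen representative of $\phi_i$ with target level $M$ available out of $G_M(i)$, and (c) make the (finitely many) naturality squares of $\phi$ for $j<i$ commute at that level. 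This produces natural transformations $\psi_n\colon G_n\to F'_n$ assembling to a morphism $\mathbf F\cong\prolim_M G_M\to\mathbf F'$ that maps to $\phi$ under $\Pi_I$, giving fullness. Faithfulness is then obtained by passing two morphisms with the same image through a common such reindexing and then through one further reindexing absorbing the coincidence levels supplied by $\Pi_I\alpha=\Pi_I\beta$ (again finitely many constraints at the stage of each $i$).

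The main obstacle is organizing this bookkeeping so that the recursion over $I$ and the $\mathbb N$-filtration of the pro-objects interact coherently: a priori the level at which a pro-morphism becomes rigid grows without bound over $i\in I$, and the whole point of the argument is that, by constructing the reindexing functions $N(i,-)$ (resp.\ $\sigma_i$) recursively and absorbing at the stage of $i$ only the finitely many constraints coming from its successors, one never needs a bound uniform over all of $I$. Once the reindexing lemmas are stated cleanly, the remaining verifications — that the constructed $F_n$, $\psi_n$ are functors/natural transformations and that the transition maps are compatible — are routine diagram chases that I would leave to the reader.
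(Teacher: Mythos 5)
Your essential-surjectivity argument is close in spirit to the paper's (both diagonalize the given presentations; the paper by induction along an exhaustion of $I$ by finite down-closed subsets $I_N$, you by recursion along a linear extension of $I$), and this part is fine. The gap is in the full-faithfulness step, specifically in the claim that the fiberwise reindexing $G_M(i):=F_{\sigma_i(M)}(i)$ produces an object \emph{isomorphic} to $\prolim F_m$ in $\pro\Fun(I,D)$. An isomorphism would supply, for each $M$, an element of $\varinjlim_m\Hom_{\Fun(I,D)}(F_m,G_M)$, hence a genuine natural transformation $F_m\to G_M$ for a \emph{single} $m$; the only candidate is assembled from transitions $F_m(i)\to F_{\sigma_i(M)}(i)=G_M(i)$, which requires $m\geq\sigma_i(M)$ simultaneously for every $i$. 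In the regime you explicitly invoke --- no bound uniform over $I$, so $\sup_i\sigma_i(M)=\infty$ --- no such $m$ exists, the comparison map does not exist, and $\prolim G_M$ need not be isomorphic to $\prolim F_m$. So the reduction to a reindexed $\mathbf F$ fails, and with it both fullness and faithfulness.

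This is not a fixable bookkeeping issue: the lemma as stated is actually \emph{false} for infinite $I$. Take $I=\mathbb{N}$ with the discrete order (every element has zero successors) and $D$ the category of $\Ql$-vector spaces. Put $F_m(i)=\Ql$ for all $i,m$, with transition $F_{m+1}(i)\to F_m(i)$ the identity for $i>m$ and zero for $i\leq m$. For each fixed $i$ the transitions are eventually zero, so $\prolim F_m(i)\cong 0$ in $\pro(D)$ and $\Pi_I(\prolim F_m)$ is the zero diagram; but $\prolim F_m\not\cong 0$ in $\pro\Fun(I,D)$, because the composite transition $F_{m'}\to F_m$ is the identity at $i=m'$ and hence never the zero natural transformation. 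Thus $\Pi_I$ kills a nonzero object and is not faithful. In practice the paper only applies the lemma with $I$ finite ($[0,1]$, $[0,2]$, and products of these), and its proof is really a finite-$I$ proof: its faithfulness step uses the interchange $\varinjlim_m\prod_i\cong\prod_i\varinjlim_m$, valid only for finite products, and its fullness step is a different route altogether, reducing to essential surjectivity of $\Pi_{[0,1]\times I}$ rather than reindexing morphisms. For finite $I$, $\sup_i\sigma_i(M)$ is automatically finite and your reindexing argument also goes through; the error is precisely the belief that this uniform bound can be dispensed with.
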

\begin{proof}
We first prove $\Pi_I$ is essentially surjective. We write $I=\bigcup_{N}I_N$ where $I_1\subset I_2\subset\cdots\subset I_N\subset\cdots$, each $I_N$ has cardinality $N$ and is closed under successors. We use induction on $N$ to show that each $\Pi_{I_N}$ is essentially surjective, which suffices for our purpose.

Assume $\Pi_{I_{N-1}}$ is essentially surjective. For notational simplicity, we denote $I_N$ by $I$ and $I_{N-1}$ by $J$. Let $\{i_0\}=I\backslash J$. For any diagram $X:I\ni i\mapsto\prolim X(i)_n\in\pro(D)$, apply the inductive hypothesis to its restriction to $J$, we get a projective system $\{Y_n:J\ni j\mapsto Y_n(j)\}$ and an isomorphism $\alpha:\Pi_{}(\prolim Y_n)\isom X|_J$. Since each $\alpha(j):\prolim Y_n(j)\to\prolim X(j)_n$ is an isomorphism, the maps $\prolim X(i_0)_n\to\prolim X(j)_n$ for $i_0>j$ naturally lifts to $f(i_0,j):\prolim X(i_0)_n\to \prolim Y_n(j)$. By choosing a subsequence of $\{X(i_0)_{a(n)}\}$ of $\{X(i_0)_n\}$, we can manage so that $f(i_0,j)$ comes from a projective system of maps $f(i_0,j)_n:X(i_0)_{a(n)}\to Y_n(j)$. By possibly passing to another subsequence of $\{X(i_0)_{a(n)}\}$, we can make sure that for each fixed $n$, adding $Y_n(i_0):=X(i_0)_{a(n)}$ and $\{f(i_0,j)_n:Y_n(i_0)\to Y_n(j)\}_{i_0<j}$ extends the diagram $Y_n:J\ni j\mapsto Y_n(j)$ into a diagram $\tilY_n:I\in i\mapsto Y_n(i)$. As $n$ varies, these diagrams form a projective system $\{\tilY_n\}_{n}$ in $\Fun(I,D)$. It is clear that the natural isomorphism $\prolim Y_n(i_0)=\prolim X(i_0)_{a(n)}\isom\prolim X(i_0)_n$ together with $\alpha$ extends to an isomorphism $\tilalpha:\Pi_I(\prolim\tilY_n)\isom X$. This completes the induction step.

We next prove that $\Pi_I$ is injective on morphism sets. Let $\{Y_n:I\to D\},\{Z_n:I\to D\}$ be two objects in $\pro\Fun(I,D)$. Then their Hom-set in both $\pro\Fun(I,D)$ and $\Fun(I,\pro D)$ can be naturally identified with subsets of
$$\varprojlim_n\varinjlim_m\prod_{i\in I}\Hom_D(Y_m(i),Z_n(i))=\prod_{i\in I}\varprojlim_n\varinjlim_m\Hom_D(Y_m(i),Z_n(i)).$$
From this we conclude that $\Pi_I$ is injective on $\Hom$-sets.

To prove that $\Pi_I$ is surjective on morphism sets, it suffices to show that
\begin{equation*}
\Fun([0,1],\Pi_I):\Fun([0,1],\pro\Fun(I,D))\to\Fun([0,1],\Fun(I,\pro(D)))
\end{equation*}
is essentially surjective. Consider the commutative diagram of functors
\begin{equation*}
\xymatrix{\pro\Fun([0,1],\Fun(I,D))\ar[r]^{\Pi_{[0,1]}}\ar[dd]^{\adj}_{\wr} & \Fun([0,1],\pro\Fun(I,D))\ar[d]^{\Fun([0,1],\Pi_I)}\\
 & \Fun([0,1],\Fun(I,\pro(D)))\ar[d]^{\adj}_{\wr}\\
\pro\Fun([0,1]\times I,D])\ar[r]^{\Pi_{[0,1]\times I}} & \Fun([0,1]\times I,\pro(D))}
\end{equation*}
where ``$\adj$'' is the adjunction equivalence between the Cartesian product $\times$ and $\Fun$. The essential surjectivity of $\Fun([0,1],\Pi_I)$ then follows from that of $\Pi_{[0,1]\times I}$.
\end{proof}

For a category $D$ with a shift functor $[1]$, let $\tTri(D)$ denote the category of {\em all} triangles in $D$, i.e., chains of morphisms $X\xrightarrow{f}Y\xrightarrow{f'}Z\xrightarrow{f''} X[1]\xrightarrow{f[1]}Y[1]\cdots$ such that the composition of any two consecutive arrows is zero.

Now suppose $D$ is a triangulated category equipped with a shift functor $[1]$ and a category of distinguished triangles $\Tri(D)\subset\tTri(D)$. We clearly have a functor $\gamma:\pro(\Tri(D))\to\tTri(\pro(D))$. Let $\Tri(\pro(D))$ be the essential image of $\gamma$. However, in general there is no guarantee that the distinguished triangles defined in such a way should give a triangulated structure on $\pro(D)$. In fact, the octahedral axiom does not hold automatically for situations arising from pro-objects, because no condition was imposed on the {\em morphisms} between octahedra. We will resolve this difficulty with the help of a filtered structure of $D$. For basic definitions and notations of a filtered structure on a triangulated category, see \cite[Appendix A]{B}. We will use the decreasing version of filtered categories, and use $F^{\geq n}$ to mean ``the $n$-th filtration'' and use $F^{\leq n}$ to mean ``quotient by $F^{\geq n+1}$''. Let $DF$ be a filtered triangulated category over $D$ with the ``forgetting filtration'' functor $\Omega:DF\to D$. We have the ``taking the associated graded'' functors:
\begin{equation*}
\Gr_F^n:DF\to D.
\end{equation*}
For any interval of integers $[m,n]$, let $DF^{[m,n]}$ be the full subcategory of $DF$ consisting of objects $X$ such that $\Gr_F^i(X)=0$ unless $m\leq i\leq n$. In particular, we can identify $D$ with $DF^{[0,0]}$.

For each $[m,n]$, we have a functor
\begin{equation*}
\Omega^{[m,n]}:DF^{[m,n]}\to\Fun([m,n],D)
\end{equation*}
sending $X\in DF^{[m,n]}$ to the diagram $F^{\geq n}X\to\cdots\to F^{\geq m}X=X$.

For $n=1$, $\Omega^{[0,1]}$ can be lifted to a functor
\begin{equation*}
\Omega^{\Tri}:DF^{[0,1]}\to\Tri(D)
\end{equation*}
which sends $X\in DF^{[0,1]}$ to the distinguished triangle $F^{\geq1}X\to X\to F^{\leq0}X\to F^{\geq1}X[1]$. Therefore $\Omega^{[0,1]}$ is the composition of $\tau\circ\Omega^{\Tri}$ where $\tau:\Tri(D)\to\Fun([0,1],D)$ is ``forgetting the third vertex of a triangle''.

Let $\Oct(D)$ be the category of octahedra in $D$. We recall that an octahedron is a commutative diagram of the form
\begin{equation}\label{oct}
\xymatrix{X\ar[dr]^{f}\ar@/^2pc/[rr]^{h} & & Z\ar[dr]^{h'}\ar@/^2pc/[rr]^{g'} & & W\ar[dr]^{g''}\ar@/^2pc/[rr]^{j''} & & U[1]\\
& Y\ar[dr]^{f'}\ar[ur]^{g} & & V\ar[dr]^{h''}\ar[ur]^{j'} && Y[1]\ar[ur]^{f'[1]}\\
&& U\ar[ur]^{j}\ar@/_2pc/[rr]^{f''} && X[1]\ar[ur]^{f[1]}}
\end{equation}
where $(f,f',f''), (g,g',g''),(h,h',h'')$ and $(j,j',j'')$ are distinguished triangles. There is an obvious notion of morphisms between octahedra.

The functor $\Omega^{[0,2]}$ can be lifted to a functor
\begin{equation*}
\Omega^{\Oct}:DF^{[0,2]}\to\Oct(D)
\end{equation*}
which sends $X\in DF^{[0,2]}$ to the octahedron
\begin{equation*}
\xymatrix@C=0pt{\Gr^2_FX\ar[dr]\ar@/^1pc/[rr] & & X\ar[dr]\ar@/^1pc/[rr] & & \Gr^0_FX\ar[dr]\ar@/^1pc/[rr] & & \Gr^1_FX[1]\\
& F^{\geq1}X\ar[dr]\ar[ur] & & F^{\leq1}X\ar[dr]\ar[ur] && F^{\geq1}X[1]\ar[ur]\\
&& \Gr^1_FX\ar[ur]\ar@/_2pc/[rr] && \Gr^2_FX[1]\ar[ur]}
\end{equation*}
so that $\Omega^{[0,2]}$ is the composition of $\Omega^{\Oct}$ with the functor of ``remembering the top left commutative triangle only''.


Let $D$ be a category equipped with a shift functor $[1]$ and distinguished triangles $\Tri(D)$. A strictly full subcategory $D'\subset D$ is said to be {\em triangle-complete}, if it is stable under $[1]$, and for any triangle $X\to Y\to Z\to$ in $\Tri(D)$, if two of the vertices are in $D'$, then so is the third. When $D'$ is triangle-complete, we define its distinguished triangles $\Tri(D')$ to be those in $\Tri(D)$ with all vertices in $D'$.

\begin{theorem}\label{th:proD} Let $k$ be a field and $D$ be a $k$-linear triangulated category with a filtered lifting $DF$. Let $D'\subset D$ be a full triangulated subcategory. Equip $\pro(D')$ with the shift functor $[1]$ induced from that of $D'$ and the distinguished triangles $\Tri(\pro(D'))\subset\tTri(\pro(D'))$ (recall this is the essential image of $\gamma:\pro(\Tri(D'))\to\tTri(\pro(D'))$). Let $\hatD\subset\pro(D')$ be a triangle-complete full subcategory. Assume:
\begin{enumerate}
\item[(P-1)] $\pro(\Omega^{[0,2]}):\pro(DF^{[0,2]})\to\pro\Fun([0,2],D)$ is essentially surjective.
\item[(P-2)] For any two objects $\prolim X_m\in\hatD$ and $Y_n\in D'$, $\varinjlim_m\Hom_{D'}(X_m,Y_n)$ is a finite dimensional vector space over $k$.
\end{enumerate}
Then $\hatD$ with the induced shift functor $[1]$ and distinguished triangles $\Tri(\hatD)$ is a triangulated category.
\end{theorem}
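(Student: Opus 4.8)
The plan is to verify the axioms (TR1)--(TR4) of a triangulated category for $\hatD$ equipped with the shift $[1]$ and the triangles $\Tri(\hatD)$. Before the axioms proper I would dispose of bookkeeping. The category $\hatD$ is additive: $\pro(D')$ is additive, $\hatD$ is strictly full and $[1]$-stable, it contains a zero object (apply triangle-completeness to $X\xrightarrow{\id}X\to 0\to X[1]$, which is the image under $\gamma$ of the pro-system of such triangles in $D'$), and it is closed under finite biproducts (apply triangle-completeness to $A\to A\oplus B\to B\xrightarrow{0}A[1]$). Axioms (TR1)(a) and (TR1)(c) are then immediate, the latter because $\Tri(\pro(D'))$, being the essential image of $\gamma$, is closed under isomorphism, so $\Tri(\hatD)$ is closed under isomorphism inside $\tTri(\hatD)$. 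I would also record the one place (P-2) enters: every projective system $\{\varinjlim_m\Hom_{D'}(X_m,Y_n)\}_n$ attached to $\prolim X_m\in\hatD$ and $Y_n\in D'$ consists of finite-dimensional $k$-vector spaces, hence is Mittag--Leffler and has vanishing $\varprojlim^1$; this is what allows families of levelwise morphisms to be assembled into honest morphisms of pro-objects. The common mechanism for the remaining axioms is: use Lemma \ref{l:hompro} to replace a pro-object-valued diagram of the relevant finite shape by a pro-system of $D'$-valued diagrams of that shape; apply the \emph{functorial} cone-, triangle- and octahedron-forming constructions $\Omega^{\Tri}$, $\Omega^{\Oct}$ coming from the filtered lifting (their existence levelwise is given, and at the pro-level is fed by (P-1)) to get a pro-system of distinguished triangles or octahedra; and pass back to pro-objects. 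Triangle-completeness of $\hatD$ is invoked at each step to see that the newly created vertices stay in $\hatD$.

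For (TR1)(b): given $f\colon\prolim X_m\to\prolim Y_n$ in $\hatD$, Lemma \ref{l:hompro} with $I=[0,1]$ presents $f$ as $\Pi_{[0,1]}$ of a pro-system $\{f_n\colon A_n\to B_n\}$ in $\Fun([0,1],D')$. Feeding the pro-system of $[0,2]$-diagrams $0\to A_n\to B_n$ to (P-1) gives a filtered lift in $\pro(DF^{[0,2]})$ whose $\Gr^2_F$ pro-vanishes; replacing each term by its $F^{\le 1}$-truncation (legitimate because the pro-system of fibres is pro-isomorphic to zero and $DF$ is itself triangulated) yields a lift $\{W_n\}\in\pro(DF^{[0,1]})$ of $\{f_n\}$. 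Applying $\Omega^{\Tri}$ levelwise produces a pro-system of distinguished triangles $A_n\xrightarrow{f_n}B_n\to C_n\to A_n[1]$ in $\Tri(D')$ (with $C_n=\Gr^0_F W_n\in D'$ since $D'$ is a full triangulated subcategory); its image under $\gamma$ is a distinguished triangle on $f$ whose third vertex $\prolim C_n$ lies in $\hatD$ by triangle-completeness. Axiom (TR2) is essentially formal: $\gamma$ commutes with rotation and $\Tri(D')$ is rotation-stable by (TR2) for $D'$, while the vertices of a rotated distinguished triangle in $\hatD$ are just shifts of its original vertices and so remain in $\hatD$.

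For (TR3), I would start from distinguished triangles $T\cong\gamma\{T_n\}$, $T'\cong\gamma\{T'_n\}$ in $\hatD$ with first maps $f,f'$ and a commutative square $bf=f'a$. Regarding this square as an object of $\Fun([0,1]^2,\pro(D'))$ and applying Lemma \ref{l:hompro} with $I=[0,1]^2$ realizes it as $\Pi_{[0,1]^2}$ of a pro-system of commutative squares in $D'$ whose two horizontal edges reconstruct $f$ and $f'$ (after replacing $\{T_n\},\{T'_n\}$ by the isomorphic pro-systems produced in (TR1)(b) — legitimate since any two distinguished triangles on a fixed morphism are isomorphic). Lifting these squares to the filtered level via (P-1) and applying $\Omega^{\Tri}$ functorially produces a pro-system of morphisms of distinguished triangles in $\Tri(D')$ extending the squares; passing to pro-objects supplies the filling morphism $c\colon\prolim Z_n\to\prolim Z'_n$, automatically in $\hatD$. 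Here the Mittag--Leffler property from (P-2) is precisely what guarantees the levelwise lifts (and the homotopies witnessing their compatibility) can be chosen coherently enough that no $\varprojlim^1$-obstruction blocks the passage to pro-objects.

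The hard part will be (TR4): as the discussion preceding the theorem stresses, the octahedral axiom is the one that genuinely fails for pro-categories of triangulated categories, because nothing constrains morphisms between octahedra, and (P-1), phrased via $\Omega^{[0,2]}$, is tailored to repair exactly this. Given composable $X\xrightarrow{f}Y\xrightarrow{g}Z$ in $\hatD$, Lemma \ref{l:hompro} with $I=[0,2]$ presents the chain as $\Pi_{[0,2]}$ of a pro-system of $2$-chains $X_n\to Y_n\to Z_n$ in $D'$; (P-1) lifts these to $\{\hat X_n\}\in\pro(DF^{[0,2]})$ with $\Omega^{[0,2]}(\hat X_n)\cong(X_n\to Y_n\to Z_n)$; and $\Omega^{\Oct}$ applied levelwise yields a pro-system of octahedra in $\Oct(D')$, all of whose vertices ($X_n,Y_n,Z_n$, the cones of $X_n\to Y_n$, $Y_n\to Z_n$, $X_n\to Z_n$, and their shifts) lie in $D'$. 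Passing to pro-objects gives an octahedron in $\pro(D')$ whose vertices lie in $\hatD$ by repeated use of triangle-completeness and whose four faces lie in $\Tri(\hatD)$; its three faces through $X$, $Y$, $Z$ are distinguished triangles on $f$, $g$, $gf$. For arbitrary given distinguished triangles on $f$, $g$, $gf$ as in (TR4), one uses (TR3), now available, to identify them with these faces up to isomorphism, and transports. The genuinely delicate point I expect to be the heart of the matter is making precise that $\Omega^{\Oct}$ descends to the pro-level with all the stated compatibilities — i.e.\ that (P-1) really does feed into a pro-version of the functorial octahedron; once that is in place the rest is routine diagram chasing and repeated appeals to triangle-completeness.
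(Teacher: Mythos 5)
Your overall strategy -- verify (TR1)--(TR4), with (P-1) and the filtered lifting carrying the weight in (TR4), and triangle-completeness ensuring everything stays in $\hatD$ -- is the same as the paper's, and your preliminary remarks about additivity and zero object are a welcome (and correct) addition. Your (TR1)(b) is correct but unnecessarily heavy: the paper does not invoke (P-1) or the filtered lifting here at all. It simply observes, via the commutative square \eqref{funcsq}, that the ``forget the third vertex'' functor $\tau\colon\Tri(D)\to\Fun([0,1],D)$ is essentially surjective and surjective on morphism sets (by (TR1) and (TR3) for $D$), so $\pro(\tau)$ is essentially surjective, and composing with the equivalence $\Pi_1$ gives (TR1) directly. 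Your detour through $\Fun([0,2],D)$, the filtered lift, pro-vanishing of $\Gr^2_F$, and truncation lands in the same place but is a genuinely longer route. Your (TR2) agrees with the paper, and your (TR4) is in the right spirit: the paper is a bit more systematic (it introduces the category $\preOct$ of pre-octahedra and shows the forgetful functor $\hatalpha\colon\Oct(\hatD)\to\preOct(\hatD)$ is essentially surjective, using (TR3) to make $\hatbeta$ conservative), but your ``build one octahedron from (P-1), then transport an arbitrary triple of distinguished triangles to it using (TR3)'' reaches the same conclusion.

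The genuine gap is in (TR3). You write ``Lifting these squares to the filtered level via (P-1) and applying $\Omega^{\Tri}$ functorially produces a pro-system of morphisms of distinguished triangles.'' This step does not work: (P-1) is an essential-surjectivity statement for $\pro(\Omega^{[0,2]})\colon\pro(DF^{[0,2]})\to\pro\Fun([0,2],D)$, i.e., it is about objects whose shape is the \emph{linear} poset $[0,2]$, and there is no analogous lifting statement for commutative squares ($I=[0,1]^2$), nor any reason to expect the restriction of $\Omega^{[0,1]}$ to be full so that a morphism of chains in $D$ lifts to a morphism in $DF^{[0,1]}$. You do gesture at the correct mechanism -- that (P-2) kills the $\varprojlim^1$ obstruction -- but the argument that would need to replace the flawed (P-1) step is precisely the paper's: at each level $n$ and for $m\gg 0$, the maps making the diagram commute are well defined and form a torsor $E_{m,n}$ under a subgroup $H_{m,n}\subset\Hom_{D'}(Z_m,Z'_n)$; by (P-2) the colimit $H_{\infty,n}=\varinjlim_m H_{m,n}$ is finite-dimensional, so the projective system $\{E_{\infty,n}\}_n$ of nonempty torsors is Mittag--Leffler and $E=\varprojlim_n E_{\infty,n}$ is nonempty. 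Without this concrete torsor argument, (TR3) -- and hence your (TR4), which uses it to get that $\hattau$ is surjective on morphisms and conservative -- is not established.
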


\begin{proof}
(1) For the axioms (TR1)-(TR4) of a triangulated category, we refer to Verdier's original article \cite[Chapitre I, \S 1, 1-1]{VCD}. We first check (TR1). The only thing we need to show is that any morphism $\prolim X_n\to\prolim Y_n$ in $\hatD$ extends to a distinguished triangle. We will prove this for any morphism in $\pro(D)$. Since $\hatD\subset\pro(D')\subset\pro(D)$ is triangle-complete, (TR1) then holds for $\hatD$. Consider the commutative diagram
\begin{equation}\label{funcsq}
\xymatrix{\pro\Tri(D)\ar[r]^{\pro(\tau)}\ar[d] & \pro\Fun([0,1],D)\ar[d]^(.4){\Pi_1}\\
\Tri(\pro(D))\ar[r]^(.4){T} & \Fun([0,1],\pro(D))}
\end{equation}
where $\tau,T$ are ``forgetting the third vertex'' functors. We would like to show that $T$ is essentially surjective. Axioms (TR1) for $D$ implies that $\tau$ is essentially surjective; axiom (TR2) for $D$ implies that $\tau$ is surjective on morphism sets. These two facts together imply that $\pro(\tau)$ is essentially surjective. By Lemma \ref{l:hompro}, $\Pi_1$ is an equivalence. Therefore $\Pi_1\circ\pro(\tau)$, hence $T$, is essentially surjective.

The axiom (TR2) is obvious because $\pro\Tri(D)$ is stable under rotation of triangles.

We next check (TR3). Note that by \cite[Lemma 2.2]{May}, this axiom is implied by the other axioms. We still verify it here because we will need it to check (TR4). Note that (TR3) is equivalent to saying that
\begin{equation}\label{hattau}
\hattau=T|_{\Tri(\hatD)}:\Tri(\hatD)\to\Fun([0,1],\hatD)
\end{equation}
is surjective on morphism sets.

Consider a diagram in $\hatD$
\begin{equation}\label{mortri}
\xymatrix{\prolim X_n\ar[d]^{\xi}\ar[r]^{f} & \prolim Y_n\ar[d]^{\eta}\ar[r]^{g} & \prolim Z_n\ar[r]^{h} & \prolim X_n[1]\ar[d]^{\xi[1]}\\
\prolim X'_n\ar[r]^{f'} & \prolim Y'_n\ar[r]^{g'} & \prolim Z'_n\ar[r]^{h'} & \prolim X'_n[1]}
\end{equation}
where the rows are distinguished triangles. We would like to find a morphism $\zeta:\prolim Z_n\to\prolim Z_n'$ making all the squares commutative. By the definition of $\Tri(\pro D')$, the two rows in \eqref{mortri} are pro-objects in $\Tri(D')$, i.e., $f=\prolim f_n$, $g=\prolim g_n$, etc. However, the morphisms $\xi$ and $\eta$ are morphisms in $\pro(D')$ as in \eqref{dlimit}: for fixed $n$, we have an inductive system $\xi_{m,n}:X_m\to X'_n$ compatible with transition maps $X_{m+1}\to X_m$ for large $m$. These inductive systems form a projective system as $n$ varies ,i.e., $\xi=\varprojlim_n\varinjlim_m\xi_{m,n}$. Similarly we have $\eta=\varprojlim_n\varinjlim_m\eta_{m,n}$.

Fix $n\geq0$. Then for $m$ large enough, $\xi_{m,n}$ and $\eta_{m,n}$ are defined and $f'_n\xi_{m,n}=\eta_{m,n}f_m$. By (TR3) for $D'$, the set of dotted arrows making the following diagram commutative
\begin{equation*}
\xymatrix{X_m\ar[d]^{\xi_{m,n}}\ar[r]^{f_m} & Y_m\ar[d]^{\eta_{m,n}}\ar[r]^{g_m} & Z_m\ar[r]^{h_m}\ar@{-->}[d] & X_m[1]\ar[d]^{\xi_{m,n}[1]}\\
X'_n\ar[r]^{f'_n} & Y'_n\ar[r]^{g'_n} & Z'_n\ar[r]^{h'_n} & X'_n[1]}
\end{equation*}
form a torsor $E_{m,n}$ under a subspace $H_{m,n}\subset\Hom_{D'}(Z_m,Z_n')$. The set $E$ of morphisms $\zeta:\prolim Z_n\to\prolim Z'_n$ making \eqref{mortri} commutative can thus be expressed as
\begin{equation*}
E=\varprojlim_n\varinjlim_m E_{m,n}.
\end{equation*}
Each $E_{\infty,n}=\varinjlim_m E_{m,n}$ is a torsor under $H_{\infty,n}=\varinjlim_m H_{m,n}$. By assumption (P-2), $H_{\infty,n}\subset\varinjlim_m\Hom_{D'}(Z_m,Z_n')$ is finite dimensional over $k$. Hence the projective system $\{E_{\infty,n}\}_{n\geq0}$ is Mittag-Leffler. Since each $E_{\infty,n}$ is non-empty, so is $E=\varprojlim_n E_{\infty,n}$. This proves the existence of $\zeta\in E$.

An easy consequence of (TR3) is that $\hattau$ in \eqref{hattau} is conservative.

Finally we check the octahedral axiom (TR4). For any category $C$ equipped with $[1]$ and distinguished triangles $\Tri(C)$, we can define the category $\Oct(C)$ as in \eqref{oct} and its relative $\preOct(C)$ called the category of pre-octahedra. An object in $\preOct(C)$ is a commutative diagram:
\begin{equation*}
\xymatrix{X\ar[dr]^{f}\ar@/^2pc/[rr]^{h} & & Z\ar[dr]^{h'}\ar@/^2pc/[rr]^{g'} & & W\ar[dr]^{g''} & & U[1]\\
& Y\ar[dr]^{f'}\ar[ur]^{g} & & V\ar[dr]^{h''} && Y[1]\ar[ur]^{f'[1]}\\
&& U\ar@/_2pc/[rr]^{f''} && X[1]\ar[ur]^{f[1]}}
\end{equation*}
such that $(f,f',f''),(g,g',g'')$ and $(h,h',h'')$ are in $\Tri(C)$. We have forgetful functors $\Oct(C)\xrightarrow{\alpha}\preOct(C)\xrightarrow{\beta}\Fun([0,2],C)$ whose composition only remembers the top left commutative triangle of the octahedron. In particular, we can define $\Oct(\pro(D)),\preOct(\pro(D)),\Oct(\hatD)$ and $\preOct(\hatD)$.

Consider the following diagram (which is commutative with obvious choices of natural transformations)
\begin{equation}\label{funoct}
\xymatrix{\pro(DF^{[0,2]})\ar[d]^{\pro(\Omega^{\Oct})}\ar[drr]^{\pro(\Omega^{[0,2]})}\\
\pro\Oct(D)\ar[d]\ar[r]_{\pro(\alpha)} & \pro\preOct(D)\ar[d]\ar[r]_{\pro(\beta)} & \pro\Fun([0,2],D)\ar[d]^{\Pi_2}_{\wr}\\
\Oct(\pro(D))\ar[r]^{A} & \preOct(\pro(D))\ar[r]^{B} & \Fun([0,2],\pro(D))}
\end{equation}
Axiom (TR4) for $\hatD$ is the same as saying that
\begin{equation*}
\hatalpha=A|_{\Oct(\hatD)}:\Oct(\hatD)\to\preOct(\hatD)
\end{equation*}
is essentially surjective.

By Lemma \ref{l:hompro}, $\Pi_2$ is an equivalence. By (P-1), $\pro(\Omega^{[0,2]})$ is essentially surjective. Hence the composition $B\circ A$ is essentially surjective. Let $\hatbeta$ be the restriction of $B$ to $\preOct(\hatD)$. Then the composition
\begin{equation*}
\hatbeta\circ\hatalpha:\Oct(\hatD)\xrightarrow{\hatalpha}\preOct(\hatD)\xrightarrow{\hatbeta}\Fun([0,2],\hatD)
\end{equation*}
is also essentially surjective, because if a commutative triangle in $\hatD$ can be completed into a octahedron, the vertices of the octahedron must all belong to $\hatD$ by triangle-completeness.

To recover an object in $\preOct(\pro(D))$ from its image in $\Fun([0,2],\pro(D))$, one only needs to construct distinguished triangles from the three arrows, i.e., the following is a pullback diagram:
\begin{equation*}
\xymatrix{\preOct(\hatD)\ar[d]\ar[r]^{\hatbeta} & \Fun([0,2],\hatD)\ar[d]\\
\Tri(\hatD)^3\ar[r]^{{\hattau}^3} & \Fun([0,1],\hatD)^3}
\end{equation*}
Axiom (TR3) for $\hatD$ implies that $\hattau$ is surjective on morphism sets and conservative, hence $\hatbeta$ is also surjective on morphism sets and conservative. We already proved that $\hatbeta\circ\hatalpha$ is essentially surjective, therefore $\hatalpha$ is also essentially surjective. This verifies (TR4). The proof is now complete.
\end{proof}

\begin{remark}\label{r:filD}
We also have a filtered version of Theorem \ref{th:proD}. Under the same assumption as Theorem \ref{th:proD}, let $\hatD F\subset\pro DF$ be the full subcategory consisting of objects $\prolim X_n$ such that $\prolim\Gr^i_FX_n\in\hatD$ and the filtrations of $X_n$ are uniformly bounded: i.e., there exists $N\in\ZZ_{\geq0}$ such that $\Gr^i_FX_n=0$ for all $n$ and any $i\neq[-N,N]$. Then it is easy to see that $\hatD F$ is a filtered triangulated category.
\end{remark}

\begin{exam}\label{sch}
Let $X$ be a scheme over $k$ ($k$ is a finite field or an algebraically closed field). Let $\Lambda$ be a coefficient ring, for example, $\Lambda=\FF_\ell,\ZZ/\ell^n\ZZ,\ZZ_\ell,\QQ_\ell$ or $\Ql$, with $\ell\neq\textup{char}(k)$. The derived category $D^b_c(X,\Lambda)$ is equipped with a filtered structure $D^b_cF(X,\Lambda)$ (see \cite[\S 1.1.2]{Del}).

We claim that $\pro\Omega^{[m,n]}(\Lambda):\pro(DF^{[m,n]}(X,\Lambda))\to\pro\Fun([m,n],D^b_c(X, \Lambda))$ is essentially surjective; i.e., the assumption (P-1) in Theorem \ref{th:proD} is satisfied for $D=D^b_c(X,\Lambda)$.

We first assume that $\Lambda$ is a finite ring. Let $K^b_c(X,\Lambda)$ and $K^b_cF(X,\Lambda)$ be the homotopy categories of $C^b_c(X,\Lambda)$ (constructible $\Lambda$-complexes) and $C^b_cF(X,\Lambda)$ (filtered constructible $\Lambda$-complexes). Then the forgetful functor $\Omega^{[m,n]}(\Lambda)$ admits a section, the ``telescoping functor'':
\begin{equation*}
\Tel=\Tel^{[m,n]}:\Fun([m,n],K^b_c(X,\Lambda))\to K^b_cF^{[m,n]}(X,\Lambda).
\end{equation*}
For a chain of complexes $K=[(K(n),d_n)\xrightarrow{f_n} (K(n-1), d_{n-1})\to\cdots\xrightarrow{f_{m+1}} (K(m),d_m)]$ and $m\leq i\leq n$, define
$$F^{\geq i}\Tel(K)=K(n)\oplus K(n)[1]\oplus K(n-1)\oplus K(n-1)[1]\oplus\cdots\oplus K(i+1)[1]\oplus K(i),$$ with differentials a signed sum of $d_j,d_j[1]$ and $f_j$. When $m=n-1$, $\Tel^{[m,n]}(K)$ is the mapping cylinder of $K(n)\xrightarrow{f_n}K(n-1)$.

Consider the following commutative diagram
\begin{equation}\label{KFD}
\xymatrix{\Fun([m,n],K^b_c(X,\Lambda))\ar[rr]^{\Tel^{[m,n]}}\ar[d]^{\Fun([m,n],Q(\Lambda))} && K^b_cF^{[m,n]}(X,\Lambda)\ar[d]_{QF(\Lambda)}\\
\Fun([m,n],D^b_c(X,\Lambda)) && D^b_cF^{[m,n]}(X,\Lambda)\ar[ll]_{\Omega^{[m,n]}(\Lambda)}}
\end{equation}
where $Q(\Lambda)$ and $QF(\Lambda)$ are natural quotient functors. Now take ``pro'' of the diagram \eqref{KFD}. It is easy to see that $\pro\Fun([m,n],Q(\Lambda))$ is essentially surjective, hence $\pro\Omega^{[m,n]}(\Lambda)$ is also  essentially surjective.

Now consider the case $\Lambda=R_\lambda$, a complete DVR with uniformizing parameter $\lambda$ and residue field $\FF_\lambda$, a finite field of characteristic $\ell\neq\textup{char}(k)$. Consider the projective system of diagrams \eqref{KFD} for $\Lambda=R_\lambda/(\lambda^n)$, $n=\ZZ_{\geq0}$ (by \cite{Del}, we should replace $D^b_c$ by constructible complexes with finite Tor-dimension, but we ignore this notational change). It is also easy to check that $\pro\Fun([m,n],\varprojlim_nQ(R_\lambda/(\lambda^n)))$ is essentially surjective (the argument is similar to that of Lemma \ref{l:hompro}). Therefore, taking $\pro\varprojlim_n$ of the diagram \eqref{KFD} for $R_\lambda/(\lambda^n)$, we conclude that $\pro\Omega^{[m,n]}(R_\lambda)$ is essentially surjective.

Finally we consider the case $\Lambda=E_\lambda=\Frac(R_\lambda)$. We claim that for any finite partially ordered set $I$, the natural functor
\begin{equation}\label{RtoE}
\pro\Fun(I,D^b_c(X,R_\lambda))\to\pro\Fun(I,D^b_c(X,E_\lambda))
\end{equation}
is essentially surjective. In fact, any projective system $\{K_n:I\to D^b_c(X,E_\lambda)\}_{n\geq0}$ can be viewed as a functor $K:[0,\infty)\times I\to D^b_c(X,E_\lambda)$, where $[0,\infty)\times I$ is equipped with the product partial order. For each index $\alpha\in[0,\infty)\times I$, $K(\alpha)$ is an object of $D^b_c(X,R_\lambda)$ by definition. For $\alpha>\beta$, the transition map $f^\alpha_\beta:K(\alpha)\to K(\beta)$ is a morphism in $D^b_c(X,E_\lambda)$. It is easy to see that there exists a sequence of integers $N_\alpha\in\ZZ_{\geq0}$, such that the assignment $\tilK=(K(\alpha),\tilf^\alpha_\beta=\lambda^{N_\alpha-N_\beta}f^\alpha_\beta)$ defines a functor $\tilK: [0,\infty)\times I\to D^b_c(X,R_\lambda))$, hence an object in $\pro\Fun(I,D^b_c(X,R_\lambda))$. Moreover the morphism $\tilK\to K$ defined by $\lambda^{N_\alpha}\id:K(\alpha)\to K(\alpha)$ gives an isomorphism in $\pro\Fun(I,D^b_c(X,E_\lambda))$.

The surjectivity of \eqref{RtoE} for $I=[m,n]$, together with the surjectivity of $\pro\Omega^{[m,n]}(R_\lambda)$, implies the surjectivity of $\pro\Omega^{[m,n]}(E_\lambda)$. The case $\Lambda=\Ql$ follows from the case $\Lambda=E_\lambda$ for various finite extensions $E_\lambda$ of $\QQ_\ell$.

\end{exam}

\subsection{The completion}\label{ss:compmono}
We recap the notation from \S\ref{as:mono}. We fix a full triangulated subcategory $D'(Y)\subset D^b_c(Y)$ with the induced perverse t-structure with heart $P'(Y)$. Let $\mon{X}\subset D^b_c(\qw{X}{A})$ be the full subcategory generated by $\pidag D'(Y)$, with the induced perverse t-structure with heart $\per{X}$.

\begin{defn}
\begin{enumerate}
\item []
\item An object $\prolim\calF_n\in\pro D^b_c(X)$ is called {\em $\pi$-constant} if $\prolim\piddag\calF_n\in\pro D^b_c(Y)$ is in the essential image of $D^b_c(Y)$.
\item An object $\prolim\calF_n\in\pro D^b_c(X)$ is called {\em uniformly bounded in degrees}, if it is isomorphic to $\prolim\calF'_n$ for which there exists $N\in\ZZ$ such that all $\calF'_n\in\pD^{[-N,N]}_c(X)$.
\item Let $\hmon{X}\subset\pro\mon{X}$ be the full subcategory of objects which are both $\pi$-constant and uniformly bounded in degrees.
\end{enumerate}
\end{defn}

\begin{theorem}\label{th:montri}
Let $\Tri(\hmon{X})\subset\Tri(\pro\mon{X})$ consist of those triangles whose vertices are in $\hmon{X}$. Then under the shift functor $[1]$ induced from $\pro\mon{X}$ and the distinguished triangles $\Tri(\hmon{X})$, $\hmon{X}$ becomes a triangulated category.
\end{theorem}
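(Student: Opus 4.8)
The plan is to apply Theorem \ref{th:proD} to the situation $D = D^b_c(X)$, $D' = \mon{X} \subset D$, and $\hatD = \hmon{X} \subset \pro(D')$. Once I verify the hypotheses of that theorem, the triangulated structure on $\hmon{X}$ follows immediately. So the proof reduces to three checks: (i) that $\mon{X}$ is a full triangulated subcategory of $D^b_c(X)$ (clear, since it is generated by $\pidag D'(Y)$ under extensions and shifts, and $\pidag$ is exact); (ii) that $\hmon{X}$ is a triangle-complete full subcategory of $\pro(\mon{X})$; and (iii) that assumptions (P-1) and (P-2) of Theorem \ref{th:proD} hold.

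For (P-1), I would invoke Example \ref{sch}: the derived category $D^b_c(X)$ (with $\Ql$-coefficients) carries a filtered lifting $D^b_cF(X)$, and the essential surjectivity of $\pro(\Omega^{[0,2]}): \pro(DF^{[0,2]}) \to \pro\Fun([0,2], D^b_c(X))$ is exactly what was established there (for arbitrary intervals $[m,n]$, hence in particular $[0,2]$). This is pure input. For (P-2), given $\prolim\calF_m \in \hmon{X}$ and $\calG_n \in \mon{X}$, I must show $\varinjlim_m \Hom_{\mon{X}}(\calF_m, \calG_n)$ is finite-dimensional over $\Ql$. Here I would use the $\pi$-constancy: $\prolim \piddag\calF_m$ is essentially constant, say isomorphic to some $\calH \in D^b_c(Y)$. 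Since every object of $\mon{X}$ is a successive extension of objects of the form $\pidag\calK$ with $\calK \in D'(Y)$, adjunction $(\piddag, \pidag)$ gives $\Hom(\calF_m, \pidag\calK) = \Hom(\piddag\calF_m, \calK)$, and passing to the colimit over $m$ turns $\varinjlim_m \Hom(\piddag\calF_m, \calK)$ into $\Hom(\calH, \calK)$ (using essential constancy of $\prolim\piddag\calF_m$ together with uniform boundedness in degrees so the colimit stabilizes), which is finite-dimensional because $\Hom$-spaces between bounded constructible complexes on a finite-type scheme are finite-dimensional. A dévissage up the extension filtration of $\calG_n$ then gives the general case.

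The triangle-completeness (ii) is where I expect the main work. One must check that $\hmon{X}$, sitting inside $\pro(\mon{X})$, is stable under $[1]$ (immediate) and that whenever two vertices of a distinguished triangle in $\pro(\mon{X})$ lie in $\hmon{X}$, so does the third. Stability under shifts and the three-out-of-two property both have to be checked against the two defining conditions: $\pi$-constancy and uniform boundedness in degrees. Uniform boundedness is handled by the long exact sequence of perverse cohomology: if $\prolim\calF_n \to \prolim\calG_n \to \prolim\calH_n \to$ is distinguished and two terms are uniformly bounded, the third is squeezed into a bounded range of perverse degrees. For $\pi$-constancy, one applies the exact functor $\piddag = \pi_![r]$ to the triangle, obtaining a distinguished triangle in $\pro D^b_c(Y)$; two of its vertices are essentially constant (in the image of $D^b_c(Y)$), and one needs that the third is too. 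This last point is the delicate step: "essentially constant" objects of $\pro D^b_c(Y)$ need not obviously form a triangulated subcategory, so I would argue using the uniform bounded-degree hypothesis, which forces the relevant pro-systems of $\Hom$-groups to be Mittag–Leffler (indeed eventually constant), so that $\prolim\piddag(\text{third vertex})$ can be realized as an honest object of $D^b_c(Y)$ — this is essentially the same Mittag–Leffler argument used in the proof of (TR3) inside Theorem \ref{th:proD}. Once (i), (ii), (P-1), (P-2) are in place, Theorem \ref{th:proD} directly yields that $(\hmon{X}, [1], \Tri(\hmon{X}))$ is triangulated, completing the proof.
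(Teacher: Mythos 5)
Your proposal follows exactly the paper's strategy: apply Theorem \ref{th:proD} with $D=D^b_c(X)$, $D'=\mon{X}$, $\hatD=\hmon{X}$, verify triangle-completeness of $\hmon{X}$, cite Example \ref{sch} for (P-1), and reduce (P-2) via adjunction and d\'evissage to finite-dimensionality of $\Hom$-sets in $D'(Y)$, using $\pi$-constancy. One small inaccuracy: in the triangle-completeness step, the Mittag--Leffler property of the torsors of candidate maps $\calC\to\piddag\calH_n$ is not forced by uniform boundedness in degrees but by the fact that these torsors sit inside $\Hom_{D^b_c(Y)}(\calC,\piddag\calH_n)$, which is a finite-dimensional $\Ql$-vector space because $Y$ is of finite type over a field; uniform boundedness plays no direct role in that particular lemma.
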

\begin{proof}
We would like to apply Theorem \ref{th:proD} to $D=D^b_c(X,\Ql)$, $D'=\mon{X}$ and $\hatD=\hmon{X}$. We first check that $\hmon{X}$ is a triangle-complete subcategory of $\pro\mon{X}$. For any triangle $\prolim(\calF_n\xrightarrow{f_n}\calG_n\xrightarrow{g_n}\calH_n\xrightarrow{h_n})$ in $\Tri(\pro\mon{X})$, suppose $\prolim\calF_n$ and $\prolim\calG_n$ are in $\hmon{X}$, we need to show that $\prolim\calH_n$ is also in $\hmon{X}$. Boundedness is clear, we only need to check that $\prolim\calH_n$ is also $\pi$-constant. Let $\calA,\calB\in D'(Y)$ with isomorphisms $\alpha:\calA\isom\prolim\piddag\calF_n$ and $\beta:\calB\isom\prolim\piddag\calG_n$, then we have a morphism $a:\calA\to\calB$ (which is the transport of $\prolim f_n$). Let $\calC$ be a cone of the map $a$. Consider the following diagram
\begin{equation*}
\xymatrix{\calA\ar[d]^{\alpha_n}\ar[r]^{a} & \calB\ar[d]^{\beta_n}\ar[r]^{b} & \calC\ar[r]^{c}\ar@{-->}[d] & \calA[1]\ar[d]^{\alpha_n[1]}\\
\piddag\calF_n\ar[r]^{\piddag(f_n)} & \piddag\calG_n\ar[r]^{\piddag(g_n)} & \piddag\calH_n\ar[r]^{\piddag(h_n)} & \calF_n[1]}
\end{equation*}
The choices of the dotted arrow form a torsor $E_n$ under a subgroup $H_n\subset\Hom_Y(\calC,\piddag\calH_n)$, which is a finite dimensional $\Ql$-vector space. Hence the projective system $\{E_n\}$ is Mittag-Leffler. Since each $E_n$ is nonempty, $\varprojlim E_n$ is also non-empty, i.e., we have a morphism $\gamma:\calC\to\prolim\piddag\calH_n$ making $(\alpha,\beta,\gamma)$ into a morphism of triangles. We claim that $\gamma$ is an isomorphism. In fact, we can check this by applying $\Hom_Y(-,T)$ to this morphism of triangles, for any test object $T\in D'(Y)$, using the long exact sequence of $\Hom$'s. This shows that $\prolim\calH_n$ is also $\pi$-constant, and completes the first step.

The assumption (P-1) is verified in Example \ref{sch}.

Finally, we check the assumption (P-2) for morphisms in $\hmon{X}$. Let $\prolim\calF_n,\prolim\calG_n\in\hmon{X}$, we now show that for fixed $n$, $\varinjlim_m\Hom_{X}(\calF_m,\calG_n)$ is a finite dimensional $\Ql$-vector space. This would then imply (P-2).

Since the functor $\varinjlim_m\bR\Hom_X(\calF_m,-)$ is an exact functor from $\mon{X}$ to the derived category of $\Ql$-vector spaces, it suffices to check that $\varinjlim_m\Hom_{X}(\calF_m,\calG)$ is finite dimensional for a set of generators $\calG$ of $\mon{X}$. So we may assume $\calG=\pidag\calH$ for some $\calH\in D'(Y)$. Then
\begin{eqnarray*}
\varinjlim_m\Hom_X(\calF_m,\pidag\calH)&=&\varinjlim_m\Hom_Y(\piddag\calF_m,\calH)\\
&=&\Hom_{\pro D'(Y)}(\prolim\piddag\calF_m,\calH)
\end{eqnarray*}
The $\pi$-constancy of $\prolim\calF_m$ means $\prolim\piddag\calF_m$ is isomorphic to an object in $D'(Y)$, therefore the above $\Hom$-set is a $\Hom$-set in $D'(Y)$, hence finite dimensional. This completes the proof.
\end{proof}

\begin{prop}\label{compfun}
Let $\pi_i:X_i\to Y_i$ be $A$-torsors ($i=1,2$). Let $D'(Y_i)\subset D^b_c(Y_i)$ be full triangulated subcategories. Suppose we have a commutative diagram of exact functors (i.e., we have a natural isomorphism $\alpha:\barPhi\circ\pi_{1,\dagger}\stackrel{\sim}{\Rightarrow}\pi_{2,\dagger}\circ\Phi$)
\begin{equation}\label{commpi}
\xymatrix{\mon{X_1}\ar[r]^{\Phi}\ar[d]^{\pi_{1,\dagger}} & \mon{X_2}\ar[d]^{\pi_{2,\dagger}}\\
D'(Y_1)\ar[r]^{\barPhi} & D'(Y_2)}
\end{equation}
Then $\Phi$ naturally extends to an exact functor $\hatPhi:\hmon{X_1}\to\hmon{X_2}$. Moreover, this extension is compatible with compositions, adjunctions and natural transformations.
\end{prop}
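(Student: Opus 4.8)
The natural candidate is $\hatPhi := \pro(\Phi)$, the canonical extension of $\Phi$ to pro-objects, which sends $\prolim\calF_n$ to $\prolim\Phi(\calF_n)$ and restricts to $\Phi$ on the constant pro-objects $\mon{X_1}\subset\hmon{X_1}$. The plan is to show that $\pro(\Phi)$ carries $\hmon{X_1}$ into $\hmon{X_2}$, that it is exact there, and that the assignment $\Phi\mapsto\hatPhi$ is functorial in the evident $2$-categorical sense. Before anything else I would record the one genuine finiteness input: that $\Phi$ (and likewise each functor to which the proposition is actually applied, since they are assembled from the standard sheaf operations on finite-type schemes and stacks) has finite cohomological amplitude for the perverse $t$-structures, so that $\Phi(\pD^{[-N,N]}_c(X_1))\subset\pD^{[-N-c,N+c]}_c(X_2)$ for a constant $c=c(\Phi)$.

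\textbf{Preservation of $\hmon{}$.} Fix $\prolim\calF_n\in\hmon{X_1}$ and replace it by an isomorphic representative with all $\calF_n'\in\pD^{[-N,N]}_c(X_1)$ for a single $N$; then $\{\Phi(\calF_n')\}$ lies uniformly in $\pD^{[-N-c,N+c]}_c(X_2)$, so $\pro(\Phi)(\prolim\calF_n)$ is uniformly bounded in degrees. For $\pi$-constancy I would apply the structure isomorphism $\alpha\colon\barPhi\circ\pi_{1,\dagger}\isom\pi_{2,\dagger}\circ\Phi$ levelwise, obtaining $\prolim\pi_{2,\dagger}\Phi(\calF_n)\cong\pro(\barPhi)\bigl(\prolim\pi_{1,\dagger}\calF_n\bigr)$; by hypothesis $\prolim\pi_{1,\dagger}\calF_n$ is isomorphic in $\pro D'(Y_1)$ to a single object $\calG\in D'(Y_1)$, and $\pro(\barPhi)$ takes the constant pro-object on $\calG$ to the constant pro-object on $\barPhi(\calG)\in D'(Y_2)$, so the left-hand side is essentially constant. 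Hence $\pro(\Phi)(\prolim\calF_n)\in\hmon{X_2}$, and note that the isomorphism $\alpha$ enters precisely, and only, here, to transport $\pi$-constancy.

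\textbf{Exactness and compatibilities.} By the definition of $\Tri(\hmon{X_1})$ (as in Theorem \ref{th:montri}), a distinguished triangle of $\hmon{X_1}$ is the image, under the canonical functor $\pro\Tri(\mon{X_1})\to\tTri(\pro\mon{X_1})$, of a pro-system of distinguished triangles of $\mon{X_1}$; since $\Phi$ is exact it carries this to a pro-system of distinguished triangles of $\mon{X_2}$, whose image is a triangle in $\Tri(\pro\mon{X_2})$ with all three vertices in $\hmon{X_2}$ by the previous step, hence an element of $\Tri(\hmon{X_2})$. As $\pro(\Phi)$ strictly commutes with $[1]$, $\hatPhi$ is exact. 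Compatibility with composition is immediate from $\pro(\Phi'\circ\Phi)=\pro(\Phi')\circ\pro(\Phi)$ together with the gluing of the two commutative squares into the one for $\Phi'\circ\Phi$; a natural transformation $\Phi\Rightarrow\Phi'$ intertwining the squares induces $\pro(\Phi)\Rightarrow\pro(\Phi')$ levelwise, hence $\hatPhi\Rightarrow\hatPhi'$. For adjunctions, if $(\Phi,\Psi)$ is an adjoint pair with \emph{both} $\Phi$ and $\Psi$ satisfying the hypotheses of the proposition, then $\pro(\Phi)\dashv\pro(\Psi)$ follows formally from the double-limit formula \eqref{dlimit}, and since both functors preserve the full subcategories $\hmon{}$ by the first step, this adjunction restricts to $\hatPhi\dashv\hatPsi$, with unit and counit the pro-extensions of those of $(\Phi,\Psi)$.

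\textbf{Main obstacle.} The substantive point is the preservation step. Uniform boundedness is exactly where the bounded-amplitude property of $\Phi$ is used, so in the abstract formulation one must either include it among the hypotheses or verify it in each application. Preservation of $\pi$-constancy, while conceptually clean, requires checking that $\alpha$ is genuinely compatible with the double-limit formula \eqref{dlimit}, so that it descends to an honest isomorphism of pro-objects, and that ``essentially constant'' in $\pro D'(Y_1)$ is carried to ``essentially constant'' in $\pro D'(Y_2)$ by $\pro(\barPhi)$ — routine once organized, but it is the only place where the hypotheses of the proposition do real work.
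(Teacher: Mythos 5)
Your proposal is correct and follows essentially the same route as the paper's (very terse) proof: take $\hatPhi=\pro(\Phi)$, observe that exactness and the compatibilities are formal, and use the commutative square to transport $\pi$-constancy. You are in fact more careful than the paper on one point — the paper's proof says nothing about uniform boundedness in degrees, which, as you note, silently relies on $\Phi$ having bounded perverse amplitude (true for all the sheaf operations to which the proposition is applied).
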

\begin{proof}
It is clear that $\pro(\Phi)$ sends distinguished triangles to distinguished triangles and commutes with $[1]$. The only thing we need to check is that $\pro(\Phi)$ sends $\pi_1$-constant objects to $\pi_2$-constant objects. But this follows from the diagram \eqref{commpi}.
\end{proof}

\begin{cor}\label{c:functor}
Let $f:X_1\to X_2$ be an $A$-equivariant morphism between $A$-torsors and $D'(Y_i)\subset D^b_c(Y_i)$ be full triangulated subcategories. Let $\barf:Y_1\to Y_2$ be the induced morphism. Let $\Phi$ be any of the exact functors $f^*,f_*,f_!$ and $f^!$, and let $\barPhi$ be the corresponding functor for $\barf$.

Suppose $\barPhi$ restricts to a functor between the $D'(Y_i)$'s, then $\Phi$ naturally extends to exact functors between the $\hmon{X_i}$'s. Moreover, these extensions are compatible with compositions, adjunctions and proper base change.
\end{cor}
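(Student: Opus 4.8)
The plan is to deduce Corollary \ref{c:functor} from Proposition \ref{compfun}. The geometric input is that, since $f$ is $A$-equivariant and the $\pi_i:X_i\to Y_i$ are $A$-torsors, the canonical map $X_1\to Y_1\times_{Y_2}X_2$ is an $A$-equivariant morphism of $A$-torsors over $Y_1$, hence an isomorphism; thus the square with rows $f,\barf$ and columns $\pi_1,\pi_2$ is Cartesian, and its columns are smooth of relative dimension $r=\dim A$. For each $\Phi\in\{f_!,f^*,f^!,f_*\}$, with $\barPhi$ the corresponding functor for $\barf$, it then suffices to verify: (i) $\Phi$ carries the relevant monodromic subcategory ($\mon{X_1}$ for $f_!,f_*$; $\mon{X_2}$ for $f^*,f^!$) into the monodromic subcategory of the target; and (ii) there is a natural isomorphism filling the square \eqref{commpi} relating $\Phi$ to $\piddag$. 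Granting (i) and (ii), Proposition \ref{compfun} produces the extension $\hatPhi$ together with its compatibility with compositions, adjunctions, and natural transformations.

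For (i) one uses that the monodromic subcategory is generated by $\pi_i^{\dagger}D'(Y_i)$, equivalently --- since $\pi_i$ is smooth --- by $\pi_i^{!}D'(Y_i)$ or by $\pi_i^{*}D'(Y_i)$, up to a shift. Then the identities $f^*\pi_2^*=\pi_1^*\barf^*$ and $f^!\pi_2^!=\pi_1^!\barf^!$, the base-change isomorphism $f_!\pi_1^!\cong\pi_2^!\barf_!$ for the Cartesian square, and the smooth-base-change isomorphism $f_*\pi_1^*\cong\pi_2^*\barf_*$ (valid because $\pi_2$ is smooth) show that $\Phi$ sends a generating set of one monodromic category into the other, using that $\barPhi$ preserves $D'(Y_i)$. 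For (ii), recall $\piddag=\pi_![r]$, so after cancelling the shift the claim reduces to an identity of the form $\pi_!\Phi\cong\barPhi\,\pi_!$: when $\Phi=f_!$ this is functoriality of $(-)_!$ applied to $\pi_2f=\barf\pi_1$, and when $\Phi=f^*$ it is proper base change $\pi_{1,!}f^*\cong\barf^*\pi_{2,!}$.

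The remaining pair $f^!,f_*$ is where the argument is not purely formal --- the naive compatibility is a mixed $(*,!)$-exchange that fails in general --- and I expect this to be the main obstacle. It is, however, enough to check (ii) after restriction to the monodromic subcategory, which is all the proof of Proposition \ref{compfun} uses (through preservation of $\pi$-constancy). For $\Phi=f_*$: writing an object of $\mon{X_1}$ as an iterated cone of objects $\pi_1^*\calG$ with $\calG\in D'(Y_1)$, smooth base change gives $f_*\pi_1^*\calG\cong\pi_2^*\barf_*\calG$, whence
\begin{equation*}
\pi_{2,!}f_*\pi_1^*\calG\;\cong\;\pi_{2,!}\pi_2^*\barf_*\calG\;\cong\;\barf_*\calG\otimes\bR\Gamma_c(A)\;\cong\;\barf_*\bigl(\calG\otimes\bR\Gamma_c(A)\bigr)\;\cong\;\barf_*\pi_{1,!}\pi_1^*\calG,
\end{equation*}
where the outer isomorphisms are the projection formula for the constant complex $\bR\Gamma_c(A)$; taking cones propagates this over all of $\mon{X_1}$. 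The case $\Phi=f^!$ is handled in the same spirit, resolving objects of $\mon{X_2}$ into $\pi_2^!D'(Y_2)$ and using $\pi_{i,!}\pi_i^!\calK\cong\calK\otimes\bR\Gamma_c(A)[2r]$ together with $f^!\pi_2^!=\pi_1^!\barf^!$. Finally, compatibility of the extended functors with proper base change follows from the construction: a Cartesian square of $A$-equivariant morphisms descends to a Cartesian square of base spaces, proper base change holds there for the $\barPhi$'s on the $D'$'s, and it lifts through the completion because $\hat{(-)}$ respects compositions and natural transformations.
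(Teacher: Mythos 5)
Your proof takes essentially the same route as the paper: reduce to Proposition \ref{compfun} by producing the commuting square $\barPhi\circ\pi_{1,\dagger}\cong\pi_{2,\dagger}\circ\Phi$ for each of the four functors, with $f_!$ and $f^*$ handled formally (via the Cartesian square, which you usefully make explicit) and $f_*$, $f^!$ handled by verifying the comparison on the generators $\pi_1^\dagger\calF$. The one point to tighten: for $f_*$ and $f^!$ you should first construct the natural transformation globally by adjunction, as the paper does via $f_*\to f_*\pi_1^\dagger\pi_{1,\dagger}\cong\pi_2^\dagger\barf_*\pi_{1,\dagger}$, and only then check it is an isomorphism on $\pi_1^\dagger D'(Y_1)$; a family of pointwise isomorphisms on generators does not by itself ``propagate by taking cones'' into a natural transformation on all of $\mon{X_1}$.
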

\begin{proof}
(1) For $\Phi=f_!$, we have $\pi_{2,\dagger}f_!=\barf_!\pi_{1,\dagger}$, then apply Proposition \ref{compfun}.

(2) For $\Phi=f^*$, we have proper base change isomorphism $\pi_{1,\dagger}f^*\cong\barf^*\pi_{2,\dagger}$, then apply Proposition \ref{compfun}.

(3) For $\Phi=f_*$, we have a natural transformation $\pi_{2,\dagger}f_*\to\barf_*\pi_{1,\dagger}$ (apply adjunction to $f_*\to f_*\pi_1^{\dagger}\pi_{1,\dagger}\cong\pi_2^{\dagger}\barf_*\pi_{1,\dagger}$). This natural transformation is in fact an isomorphism when restricted to $\mon{X_1}$. In fact, we only need to check it on objects of the form $\pi_1^\dagger\calF$. The problem being \'etale local, we may assume that $X_{2}$ is a trivial $A$-torsor over $Y_{2}$, and fix a trivialization $X_{2}\cong Y_{2}\times A$. This induces a trivialization $X_{1}\cong Y_{1}\times A$ and $f=\barf\times\id_{A}$. By proper base change and the projection formula, we have
\begin{equation*}
\pi_{2,\dagger}f_{*}\pi_{1}^{\dagger}\calF\cong\pi_{2,!}(\barf_{*}\calF\boxtimes\Ql)\cong\barf_{*}\calF\otimes\upH_{c}^{*}(A).
\end{equation*}
On the other hand, we have
\begin{equation*}
\barf_{*}\pi_{1,\dagger}\pi_{1}^{\dagger}\calF\cong\barf_{*}(\calF\otimes\upH_{c}^{*}(A))=\barf_{*}\calF\otimes\upH_{c}^{*}(A).
\end{equation*}
Therefore $\pi_{2,\dagger}f_*\pi_{1}^{\dagger}\calF\isom\barf_*\pi_{1,\dagger}\pi_{1}^{\dagger}\calF$. Knowing $\pi_{2,\dagger}f_*\to\barf_*\pi_{1,\dagger}$ is an isomorphism, we then apply Proposition \ref{compfun} to finish the proof.

(4) For $\Phi=f^!$, we have a natural transformation $\pi_{1,\dagger}f^!\to\barf^!\pi_{2,\dagger}$, which is an isomorphism when restricted to $\mon{X_2}$ for the same reason as in (3). We then apply Proposition \ref{compfun}.
\end{proof}


The adjunction $(\piddag,\pidag)$ extends to the adjunction (the extended functors are denoted by the same letter):
\begin{equation*}
\xymatrix{\hmon{X}\ar@<.7ex>[r]^{\piddag} & D'(Y)\ar@<.7ex>[l]^{\pidag}}
\end{equation*}

\begin{lemma}\label{l:picons}
The functor $\piddag:\hmon{X}\to D'(Y)$ is conservative.
\end{lemma}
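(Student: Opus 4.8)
The plan is to reduce conservativity to the assertion that an object $\calG\in\hmon{X}$ with $\piddag\calG\cong0$ is itself zero. Since $\piddag$ is exact and, by Theorem~\ref{th:montri}, $\hmon{X}$ is a triangle-complete subcategory of $\pro\mon{X}$, for any morphism $f$ of $\hmon{X}$ the cone $\mathrm{Cone}(f)$ again lies in $\hmon{X}$ and $\piddag\,\mathrm{Cone}(f)\cong\mathrm{Cone}(\piddag f)$; hence if $\piddag f$ is an isomorphism then $\piddag\,\mathrm{Cone}(f)\cong0$, and it will be enough to conclude $\mathrm{Cone}(f)\cong0$.

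So let $\calG=\prolim\calG_n\in\hmon{X}$ with $\piddag\calG\cong0$; by definition of the extended functor this means the ($\pi$-constant) pro-object $\prolim\piddag\calG_n$ is isomorphic to $0$ in $\pro D^b_c(Y)$. I claim $\prolim\calG_n\cong0$ as a pro-object, and for this it suffices to show $\varinjlim_m\Hom_X(\calG_m,\calG_n)=0$ for every fixed $n$: then every component of $\id_\calG\in\varprojlim_n\varinjlim_m\Hom_X(\calG_m,\calG_n)$ vanishes, so $\id_\calG=0$. Fix $n$. By construction of $\mon{X}$ the object $\calG_n$ is obtained from objects of the form $\pidag\calH$ with $\calH\in D'(Y)$ by finitely many shifts, cones and retracts; since the filtered colimit $\varinjlim_m\Hom_X(\calG_m,-)$ is exact and commutes with retracts, it suffices to treat $\calG_n=\pidag\calH[j]$. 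There, using the adjunction $(\piddag,\pidag)$ (which passes to the pro-categories) and the fact that a filtered colimit of Hom-sets into a fixed object computes the pro-Hom into the corresponding constant pro-object,
\[
\varinjlim_m\Hom_X(\calG_m,\pidag\calH[j])=\varinjlim_m\Hom_Y(\piddag\calG_m,\calH[j])=\Hom_{\pro D^b_c(Y)}\!\big(\prolim\piddag\calG_m,\,\calH[j]\big)=\Hom(0,\calH[j])=0.
\]
Now a dévissage on $\calG_n$ along the long exact sequences of $\varinjlim_m\Hom_X(\calG_m,-)$ (which remain exact, filtered colimits being exact) yields $\varinjlim_m\Hom_X(\calG_m,\calG_n[j])=0$ for all $j$, in particular for $j=0$, as desired.

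The argument is short, and there is no genuine analytic obstacle; the only points demanding care are of a bookkeeping nature. First, one must keep straight that $\piddag\calG\cong0$ is the statement that the \emph{pro-object} $\prolim\piddag\calG_n$, not merely each term $\piddag\calG_n$, is the zero pro-object — this is precisely where membership in $\hmon{X}$ (i.e.\ $\pi$-constancy), rather than just in $\pro\mon{X}$, is used. Second, one must check that the identification $\varinjlim_m\Hom_X(\calG_m,\pidag\calH)=\Hom_{\pro\mon{X}}(\prolim\calG_m,\pidag\calH)$ is legitimate, which follows directly from the defining formula~\eqref{dlimit} for Hom-sets in a category of pro-objects together with the adjunction $(\piddag,\pidag)$.
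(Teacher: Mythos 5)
Your proof is correct and follows essentially the same route as the paper's: reduce conservativity to showing that $\piddag\calG\cong 0$ forces $\calG\cong 0$, then use the adjunction $(\piddag,\pidag)$ together with the fact that objects $\pidag\calH$ generate $\mon{X}$ to conclude that all pro-Hom's out of $\calG$ vanish, hence $\id_\calG=0$. The paper phrases the last step as full faithfulness of the Yoneda embedding \eqref{Yoneda}; your computation of $\varinjlim_m\Hom_X(\calG_m,-)$ is just an unpacking of that.
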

\begin{proof}
Since $\piddag$ is an exact functor, to show it is conservative we only need to show that it sends a nonzero object to a nonzero object.

Suppose $\piddag(\prolim\calF_n)=0$, then any map $\prolim\calF_n\to\pidag\calG$ is zero for any $\calG\in D'(Y)$. Since objects of the form $\pidag\calG$ generate $\mon{X}$, this means $\Hom(\prolim\calF_n,\calF)$ for any $\calF\in\mon{X}$. By the full faithfulness of the Yoneda embedding \eqref{Yoneda}, this implies that $\prolim\calF_n=0$.
\end{proof}

The following lemma is used only in the proof of Lemma \ref{l:convIst} and Lemma \ref{l:samedef}.
\begin{lemma}\label{l:actionF}
Let $a:A\times X\to X$ be the action map. Recall the \fm local system $\tcL$ on $A$ defined in Example \ref{ex:fm}. Then there is a functorial isomorphism for $\calF\in\hmon{X}$:
\begin{equation*}
a_!(\tcL\boxtimes\calF)\cong\calF[-2r](-r).
\end{equation*}
\end{lemma}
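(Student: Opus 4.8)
The plan is to reduce the statement, by the projection formula, to the single identity $\bR\Gamma_c(A,\tcL)\cong\Ql[-2r](-r)$, and then to prove that identity by an explicit pro-object computation. First I would produce the morphism. Let $\epsilon\colon\tcL\to\calL_0=\Ql_A$ be the canonical map coming from $\tcL=\plim{n}\calL_n$. Write $p_1,p_2\colon A\times X\to A,X$ for the two projections; the action map $a\colon A\times X\to X$ is smooth of relative dimension $r$, so $a^!=a^*[2r](r)$ and $a_!$ is left adjoint to $a^*[2r](r)$. Combining $\epsilon$, the projection formula, and the counit $a_!a^!\Rightarrow\id$ gives a natural transformation
\begin{equation*}
\theta_\calF\colon a_!(\tcL\boxtimes\calF)\longrightarrow\calF[-2r](-r),\qquad\calF\in\mon X,
\end{equation*}
which by Proposition~\ref{compfun} and Corollary~\ref{c:functor} extends to the completed categories. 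To see $\theta$ is well defined on all of $\mon X$ one uses the isomorphism $p_1^*\tcL\otimes p_2^*\calF\cong p_1^*\tcL\otimes a^*\calF$ on $A\times X$, which is natural in $\calF$ because it is implemented by $\exp(\xi)$ for the operator $\xi$ built from the logarithmic monodromy of $\calF$ (which commutes with all morphisms) and the canonical derivations of $\tcL$; the sum defining $\exp(\xi)$ terminates because $\calF$ has unipotent, hence orbitwise nilpotent, monodromy along the fibres of $\pi$.

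Next, I would verify that $\theta_{\pidag\calG}$ is an isomorphism for $\calG\in D'(Y)$. Since $\pidag\calG=\pi^*\calG[r](r)$ and $\pi\circ p_2=\pi\circ a$, one has $\tcL\boxtimes\pidag\calG=p_1^*\tcL\otimes a^*\pidag\calG$, so the projection formula gives $a_!(\tcL\boxtimes\pidag\calG)\cong a_!(p_1^*\tcL)\otimes\pidag\calG$. The automorphism $\phi\colon(g,x)\mapsto(g,gx)$ of $A\times X$ satisfies $a\circ\phi^{-1}=p_2$ and $p_1\circ\phi^{-1}=p_1$, hence $a_!(p_1^*\tcL)\cong p_{2,!}p_1^*\tcL$, which by proper base change along $A\times X\to X$ over $A\to\pt$ is the constant complex on $X$ with value $\bR\Gamma_c(A,\tcL)$. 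Granting $\bR\Gamma_c(A,\tcL)\cong\Ql[-2r](-r)$, this identifies $a_!(\tcL\boxtimes\pidag\calG)$ with $\pidag\calG[-2r](-r)$, and unwinding $\theta$ shows the map is induced by $\epsilon$ followed by projection onto top compactly-supported cohomology, hence is exactly this isomorphism. Finally, since $\theta$ is a natural transformation of exact functors and $\mon X$ is generated as a triangulated category by the objects $\pidag\calG$, the full subcategory of $\calF$ with $\theta_\calF$ invertible is triangulated and contains the generators, so it is all of $\mon X$; passing to the pro-completion gives the assertion on $\hmon X$.

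It remains to compute $\bR\Gamma_c(A,\tcL)$. Writing $A\cong\GG_m^{\,r}$ and $\tcL$ as the external product of the one-parameter free-monodromic local systems, Künneth reduces to $A=\GG_m$, where $\bR\Gamma_c(\GG_m,\tcL)=\plim{n}\bR\Gamma_c(\GG_m,\calL_n)$. Here $H^0_c(\GG_m,\calL_n)=0$; a coinvariants computation shows $H^2_c(\GG_m,\calL_n)\cong\Ql(-1)$ with all transition maps isomorphisms, so $\plim{n}H^2_c(\GG_m,\calL_n)=\Ql(-1)$; and $H^1_c(\GG_m,\calL_n)$ is one-dimensional and, by Poincaré duality, pure of weight $-2n$ (up to a fixed twist). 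Therefore every $\Frob$-equivariant transition map $H^1_c(\GG_m,\calL_{n+1})\to H^1_c(\GG_m,\calL_n)$ is a map between pure modules of distinct weights, hence zero, whence $\plim{n}H^1_c(\GG_m,\calL_n)=0$ as a pro-object. Thus $\bR\Gamma_c(\GG_m,\tcL)$ is concentrated in degree $2$ with value $\Ql(-1)$, i.e. $\Ql[-2](-1)$, as needed.

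The genuine obstacle is not any of these computations individually but the bookkeeping in the pro-object/completed setting: one must check that $a_!$, the projection formula, proper base change and Künneth behave as expected on the pro-object $\tcL\boxtimes\calF$ and commute with the relevant limits, that $\theta$ really is a morphism of functors $\hmon X\to\hmon X$, and that the auxiliary isomorphism $p_1^*\tcL\otimes p_2^*\calF\cong p_1^*\tcL\otimes a^*\calF$ holds globally and naturally. All of the purely categorical points are supplied by the formalism of Appendix~\ref{a:compmono} (Theorem~\ref{th:montri}, Proposition~\ref{compfun}, Corollary~\ref{c:functor}); the one essentially new geometric input is the weight argument showing that $H^{<2r}_c(A,\tcL)$ is pro-zero, which is what forces $\bR\Gamma_c(A,\tcL)$ into a single degree and makes the whole lemma go through.
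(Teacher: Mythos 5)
Your reduction of the statement to the identity $\bR\Gamma_c(A,\tcL)\cong\Ql[-2r](-r)$ is the right one, and your computation of that identity is correct: after K\"unneth and the reduction to $\GG_m$, $H^1_c(\GG_m,\calL_n)\cong (S/V_A^{n+1})^{\pi_1}(-1)\cong V_A^n/V_A^{n+1}(-1)$, which has weight $2-2n$, so the transition maps between distinct pure weights vanish. (One can even dispense with Frobenius: the transition map $V_A^{n+1}/V_A^{n+2}\to V_A^n/V_A^{n+1}$ induced by $S/V_A^{n+2}\twoheadrightarrow S/V_A^{n+1}$ is literally zero, which gives the same conclusion over $\overline{k}$. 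The paper asserts $\bR\Gamma_c(A,\tcL)=\Ql[-2r](-r)$ without proof, so you have actually filled a small gap there.) The verification on the generators $\pidag\calG$ via proper base change also matches the paper exactly.

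The genuine gap is in the construction of the comparison map. You want to feed $p_1^*\tcL\otimes p_2^*\calF\cong p_1^*\tcL\otimes a^*\calF$ into the projection formula, but this isomorphism is only asserted, ``implemented by $\exp(\xi)$.'' That will not do as stated: the logarithmic monodromy operators are endomorphisms of a fixed object, so $\exp(\xi)$ at best produces an automorphism of $p_1^*\tcL\otimes p_2^*\calF$, not a morphism to the \emph{different} complex $p_1^*\tcL\otimes a^*\calF$; producing the comparison (even a non-invertible one) between $p_2^*\calF$ and $a^*\calF$ over all of $A\times X$ is precisely the hard part and is not covered by the Appendix~\ref{a:compmono} formalism you invoke. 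The paper avoids your isomorphism altogether: it instead builds a (non-invertible) morphism $\tcL\boxtimes\calF\to a^*\calF$ by reducing to finite $\ZZ/\ell^n$-coefficients, invoking Verdier's specialization result \cite{V} to get $p^*\calF_n\cong a_m^*\calF_n$ over the multiplication-by-$m$ covering for $m$ sufficiently divisible, pushing down along $[m]\times\id$, and then identifying the projective limit $\varprojlim_{b,n}[\ell^b]_!\ZZ/\ell^n$ with $\tcL$ via $\ZZ_\ell[T_\ell(A)]^\wedge_{\aug}$ and the logarithm $\hatS\to\Ql[T_\ell(A)]^\wedge_{\aug}$. The adjoint of this map is $\beta(\calF)$, and the rest of the proof is the computation on $\pidag\calG$ that you already have. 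Your $\exp$-heuristic is pointing at exactly this mechanism, but it is a sketch of the answer, not a construction of the morphism; some version of the Verdier-style finite-level argument is unavoidable here.
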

\begin{proof}
It suffices to give the isomorphism for $\calF\in\mon{X}$. We first need to construct a natural map $\tcL\boxtimes\calF\to a^*\calF=a^!\calF[-2r](-r)$. We may assume $\calF$ is a $\ZZ_\ell$-complex, and is given by a projective system $\calF_n\in D^b_{c}(X,\ZZ/\ell^n)$. For each $m\in\ZZ_{\geq1}$, write
\begin{equation*}
a_m:A\times X\xrightarrow{[m]\times\id} A\times X\xrightarrow{a}X.
\end{equation*}
Let $p:A\times X\to X$ be the projection. By \cite[Proposition 5.1]{V}, for fixed $n$, if $m$ is sufficiently divisible, we have an isomorphism
\begin{equation*}
\theta_m:p^*\calF_n\cong a_m^*\calF_n=([m]\times\id)^!(a^*\calF)
\end{equation*}
In our case $\calF$ is a successive extension of sheaves pulled back from $Y$, it is easy to see that such an isomorphism exists if $m=\ell^b$ for large $b$. By adjunction, $\theta_m$ gives
\begin{equation*}
([m]_!\ZZ/\ell^n)\boxtimes\calF_n=([m]\times\id)_!p^*\calF_n\to a^*\calF_n.
\end{equation*}
As $m=\ell^b$ and $n$ varies, $[\ell^b]_!\ZZ/\ell^n$ form a projective system indexed by two integers $b,n$. Taking projective limit, we get a map
\begin{equation}\label{prolimaction}
(\varprojlim_{b,n}[\ell^b]_!\ZZ/\ell^n)\boxtimes\calF\to a^*\calF.
\end{equation}

As a representation of $\pi_1(A,e)$, the local system $[\ell^b]_!\ZZ/\ell^n$ is $\ZZ/\ell^n[A[\ell^b]]$, the regular representation of the quotient $\pi_1(A,e)\to A[\ell^b]$.  Let $\ZZ_\ell[T_\ell(A)]^\wedge_{\aug}$ be the completion along the augmentation ideal of $\ZZ_\ell[T_\ell(A)]$. There is a natural map in $\Rep(\pi^{\ell}_1(A,e))$ (note that $\pi^\ell_1(A,e)\cong T_\ell(A)$):
\begin{equation}\label{augtodouble}
\ZZ_\ell[T_\ell(A)]^\wedge_\aug\to\varprojlim_{b,n}\ZZ/\ell^n[T_\ell(A)/\ell^b].
\end{equation}
In fact, for any $n,b$, elements of the form $(t-1)^{\ell^N}$ ($t\in T_\ell(A)$) lies in the ideal generated by $\ell^n$ and $\ell^bT_\ell(A)\subset T_\ell(A)$ in $\ZZ_\ell[T_\ell(A)]$ for large $N=N(n,b)$ (by binomial expansion).

On the other hand, we have a map in $\pro\Rep(\pi^\ell_1(A,e),\Ql)$
\begin{equation}\label{Stoaug}
\hatS=\varprojlim_n\Sym(V_A)/(V_A^n)\to\Ql[T_\ell(A)]^\wedge_\aug
\end{equation}
which sends $t\in T_\ell(A)\subset V_A\subset\hatS_A$ to $\log(t)=-\sum_{i\geq1}(1-t)^i/i\in\Ql[T_\ell(A)]^\wedge_\aug$. Combining \eqref{augtodouble} and \eqref{Stoaug}, we get a continuous map between $\pi^\ell_1(A,e)$-representations
\begin{equation*}
\hatS\to\left(\varprojlim_{b,n}\ZZ/\ell^n[T_\ell(A)/\ell^b]\right)\otimes\Ql.
\end{equation*}
Composing with the map \eqref{prolimaction}, we get the desired map $\tcL\boxtimes\calF_{\Ql}\to a^*\calF_{\Ql}$. By adjunction, we get a functorial map
\begin{equation*}
\beta(\calF):a_!(\tcL\boxtimes\calF)\to\calF[-2r](-r)
\end{equation*}
Finally we check this is an isomorphism for $\calF\in\mon{X}$. Since $\mon{X}$ is generated by $\pidag\calG$, it suffices to check $\beta(\pidag\calG)$ is an isomorphism. Applying proper base change to the Cartesian diagram
\begin{equation*}
\xymatrix{A\times X\ar[d]^{a}\ar[r]^{\id_A\times\pi} & A\times Y\ar[d]^{p_Y}\\ X\ar[r]^{\pi} & Y}
\end{equation*}
we get
\begin{equation*}
a_!(\tcL\boxtimes\pidag\calG)=\pidag p_{Y,!}(\tcL\boxtimes\calG)=\pidag(\bR\Gamma_c(A,\tcL)\otimes\calG)\cong\pidag\calG[-2r](-r).
\end{equation*}
In the last equality we used $\bR\Gamma_c(A,\tcL)=\Ql[-2r](-r)$. The above isomorphism is in fact the same as $\beta(\pidag\calG)$. This proves $\beta(-)$ is an isomorphism.
\end{proof}

\subsection{The case of a trivial $A$-torsor}\label{ss:triv}
In this section, we study the special case where $\pi:X\to Y$ is a trivial $A$-torsor. Fix a section $\epsilon:Y\to X$. Consider the t-exact functor
\begin{equation*}
\epdag=\epsilon^![r]:\mon{X}\to D'(Y).
\end{equation*}
There is a natural transformation
\begin{equation}\label{eptopi}
\epdag=\pi_!\epsilon_*\epsilon^![r]\xrightarrow{\adj}\pi_![r]=\piddag.
\end{equation}

We also consider the functor
\begin{equation*}
\Free:D'(Y)\ni\calF\mapsto\calF\boxtimes\tcL[r](r)\in\hmon{X}
\end{equation*}
where $\tcL\in\hmon{A}$ is as defined in Example \ref{ex:fm}.

\begin{defn}\label{def:fm}
Objects of the form $\Free(\calF)\in\hmon{X}$ for $\calF\in P'(Y)$ are called {\em \fm perverse local systems}.
\end{defn}

\begin{lemma}\label{l:weakadj} The functors $(\Free,\epdag)$ satisfy the following adjunction
\begin{equation}\label{wadj}
\Hom_{\hmon{X}}(\Free(\calF),\calG)\isom\Hom_{\pro(D'(Y))}(\calF,\epdag\calG)
\end{equation}
for $\calF\in D'(Y),\calG\in\mon{X}$.
\end{lemma}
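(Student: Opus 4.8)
The plan is to unwind both sides and reduce to a cohomology computation on the torus $A$. First I would fix the trivialisation $X\cong Y\times A$ determined by the section $\epsilon$, so that $\pi=p_Y$ and $\epsilon=\id_Y\times i_e$ with $i_e:\{e\}\hookrightarrow A$. Recalling from Example \ref{ex:fm} that $\tcL=\prolim\calL_n$, where $\calL_n$ is the unipotent local system attached to $\rho_n=\Sym(V_A)/(V_A^{n+1})$, I note that $\calF\boxtimes\calL_n[r](r)$ is a successive extension of copies of $\pidag\calF=\calF\boxtimes\Ql_A[r](r)$ and hence lies in $\mon X$. Since $\calG$ is a constant pro-object, the defining formula \eqref{dlimit} for morphisms in a pro-category gives $\Hom_{\hmon X}(\Free\calF,\calG)=\varinjlim_n\Hom_{\mon X}(\calF\boxtimes\calL_n[r](r),\calG)$, so the whole assertion reduces to a statement about this colimit.

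Next I would construct the comparison map. Using that $\calL_n$ is lisse, $\epsilon^!(\calF\boxtimes\calL_n)=\epsilon^!p_Y^*\calF\otimes\epsilon^*p_A^*\calL_n=(\calF\otimes\rho_n)[-2r](-r)$, where $\epsilon^!p_Y^*=[-2r](-r)$ because $p_Y\epsilon=\id_Y$ and $p_Y$ is smooth of relative dimension $r$; hence $\epdag(\calF\boxtimes\calL_n[r](r))=\calF\otimes\rho_n$. The unit $\Ql\hookrightarrow\rho_n$, $1\mapsto1$, then yields $u_n:\calF\to\epdag(\calF\boxtimes\calL_n[r](r))$, and I would define $\eta_n(f)=(\epdag f)\circ u_n$. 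Because the transition maps $\calL_{n+1}\to\calL_n$ correspond to $\rho_{n+1}\twoheadrightarrow\rho_n$, which is compatible with the units, the $\eta_n$ pass to the colimit and assemble to a map $\eta:\Hom_{\hmon X}(\Free\calF,\calG)\to\Hom_{D'(Y)}(\calF,\epdag\calG)$, natural in $\calG$.

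To show $\eta$ is an isomorphism I would observe that both sides are cohomological functors of $\calG\in\mon X$ and $\eta$ is a morphism of such, so the class of $\calG$ for which $\eta$ is an isomorphism on all shifts is triangulated; since $\mon X$ is generated by the objects $\pidag\calH$ ($\calH\in D'(Y)$) it suffices to treat $\calG=\pidag\calH$. There $\epdag\pidag\calH=(\pi\epsilon)^!\calH=\calH$, so the target is $\Hom_{D'(Y)}(\calF,\calH)$; and using the adjunctions $(\pi_!,\pi^!)$ and $(p_Y^*,Rp_{Y,*})$, the projection formula and proper base change along $Y\to\pt$, followed by Poincaré duality on $A$ (which turns $R\Gamma_c(A,\calL_n)^\vee[-2r](-r)$ into $R\Gamma(A,\calL_n^\vee)$, the Tate twists cancelling), the source becomes $\varinjlim_n\Hom_{D'(Y)}(\calF,\calH\otimes R\Gamma(A,\calL_n^\vee))=\Hom_{D'(Y)}(\calF,\calH\otimes\varinjlim_n R\Gamma(A,\calL_n^\vee))$, the colimit commuting with $\Hom_{D'(Y)}(\calF,-)$ since $\calF$ is bounded constructible. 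Thus everything comes down to the identity $\varinjlim_n R\Gamma(A,\calL_n^\vee)\cong\Ql$ in degree $0$: this follows from the equivalence $D^{b}_{c}(\qw{A}{A})\cong D^{b}(\Mod^{\nil}(\hatS))$ of Example \ref{ex:fm}, under which $R\Gamma(A,-)=R\pi_*$ becomes $\RHom_{\hatS}(\Ql,-)$ and $\calL_n^\vee=\iota^*\calL_n$ corresponds, up to the automorphism $t\mapsto-t$ of $\hatS$, to $\rho_n$, so that the colimit of the relevant modules is the injective hull $E$ of $\Ql$ and $\RHom_{\hatS}(\Ql,E)=\Ql$; equivalently and more elementarily, $H^0(A,\calL_n^\vee)=(\calL_n^\vee)^{\pi_1(A)}=\Ql$ with isomorphisms as transition maps, while $H^{>0}(A,\calL_n^\vee)$ is controlled by the socle $\Sym^nV_A\subset\rho_n$, which is annihilated by $\rho_{n+1}\twoheadrightarrow\rho_n$, so its transition maps vanish and it dies in the colimit. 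Tracing $u_n$ through the identifications, it corresponds to the canonical generator of $H^0(A,\calL_n^\vee)$, so $\eta$ realises exactly this isomorphism.

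The hard part will be the colimit computation $\varinjlim_n R\Gamma(A,\calL_n^\vee)\cong\Ql$ together with the attendant bookkeeping of shifts and Tate twists (notably the cancellation of the $[2r](r)$ produced by Poincaré duality), and the verification that the geometrically defined map $\eta$ agrees with the algebraically computed isomorphism; everything else is formal manipulation of adjunctions and of morphisms in pro-categories.
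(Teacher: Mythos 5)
Your proposal is correct and follows the paper's own route: build the comparison map from the unit $\Ql\to\hatS\cong\epdag\Free(\Ql)$ and then reduce to the generators $\calG=\pidag\calH$. Your verification of that generating case via Poincar\'e duality and the colimit $\varinjlim_n R\Gamma(A,\calL_n^\vee)\cong\Ql$ is just the dual form of the computation $R\Gamma_c(A,\tcL)\cong\Ql[-2r](-r)$ underlying the paper's one-line ``it is obvious'' (equivalently, $\piddag\Free(\calF)\cong\calF$ together with $\epdag\pidag\calH\cong\calH$), so the two arguments coincide in substance.
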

\begin{proof} Note that $\epsilon^!\tcL[2r](r)\cong\epsilon^*\tcL=\hatS=\prolim\Sym(V_A)/(V_A^n)\in D^b_c(\pt)$. Let $s:\Ql\to\hatS$ be the unit map. For any map $\phi:\Free(\calF)\to\calG$, we have a map
\begin{eqnarray*}
\calF\xrightarrow{\id\otimes s}\calF\otimes\hatS\cong\calF\boxtimes\epsilon^!\tcL[2r](r)=\epdag\Free(\calF)\xrightarrow{\epdag(\phi)}\epdag\calG.
\end{eqnarray*}
This established the required map between the Hom-spaces in \eqref{wadj}. To check it is an isomorphism, it suffices to check for generating objects $\calG=\pidag\calH$, where it is obvious.
\end{proof}

For any object $\calF\in\per{X}$, $\epdag\calF\in P'(Y)$ naturally carries the nilpotent logarithmic monodromy operator
\begin{equation*}
\epdag\log(\mu_{\calF}):V_A\otimes\epdag\calF\to\epdag\calF
\end{equation*}
so that $\epdag\calF$ becomes an $\hatS$-module in $P'(Y)$, on which $V_A$ acts nilpotently.

\begin{lemma}\label{l:Smod}
The functor $\epdag$ lifts to an equivalence of abelian categories
\begin{equation*}
\sigma:\per{X}\isom\Mod^{\nil}(\hatS;P'(Y))
\end{equation*}
where $\Mod^{\nil}(\hatS;P'(Y))$ denotes the abelian category of $\hatS$-module objects in $P'(Y)$ on which $V_A$ acts nilpotently.
\end{lemma}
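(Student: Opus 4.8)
The plan is to exhibit $\sigma$ as the functor $\calF\mapsto\epdag\calF$ equipped with its canonical logarithmic monodromy, and to construct a quasi-inverse directly using the trivialization $X\cong Y\times A$. Since $\pi$ is a trivial $A$-torsor, fixing the section $\epsilon$ gives an identification $X\cong Y\times A$ under which $\pi$ becomes the first projection and $\epsilon$ the inclusion $Y\times\{e\}$. For an object $M$ of $\Mod^{\nil}(\hatS;P'(Y))$, say $M$ is killed by $V_A^{n+1}$, I would set $\tau(M):=M\boxtimes\calL_n$ where $\calL_n$ is the unipotent local system on $A$ from Example \ref{ex:fm}; more precisely, form the external tensor of $M$ with the representation $\rho_n$ of $\pi_1(A,e)$ but twist the diagonal $V_A$-action so that the $V_A$ coming from $M$ and the $V_A$ coming from $\mu$ on $\calL_n$ agree. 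Because $V_A$ acts nilpotently on $M$, this is a well-defined object of $\per{X}$, independent of the chosen $n$, and it visibly lies in the unipotently monodromic heart. One then checks $\epdag\tau(M)\cong M$ as $\hatS$-modules (the stalk of $\calL_n$ at $e$ is $\rho_n=\Sym(V_A)/(V_A^{n+1})$ with the monodromy acting by the regular-type action, and $\epsilon^![r]$ recovers $M$ with its given module structure), and conversely $\tau(\epdag\calF)\cong\calF$ for $\calF\in\per{X}$ using that any unipotently monodromic perverse sheaf on $Y\times A$ is determined by its restriction to $Y\times\{e\}$ together with the monodromy action — this last point is the local-system analogue of Verdier's structural results on monodromic complexes recalled in \S\ref{as:mono}.

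Concretely the steps are: (1) observe $\epdag$ is exact on $\per{X}$ (it is t-exact by definition, being $\epsilon^![r]$, and its target restricted to hearts is $P'(Y)$), and that the canonical monodromy $m(\calF)$ endows $\epdag\calF$ with a nilpotent $V_A$-action, hence with an $\hatS$-module structure; this defines $\sigma$ on objects and morphisms. (2) Construct $\tau$ as above and verify it lands in $\per{X}$. (3) Produce natural isomorphisms $\sigma\circ\tau\cong\id$ and $\tau\circ\sigma\cong\id$. For $\sigma\circ\tau$ this is a direct stalk computation at the unit section. For $\tau\circ\sigma$, I would use the weak adjunction of Lemma \ref{l:weakadj}: the unit $\calF\to\pidag\piddag$-type maps, or rather the map $\tau(\epdag\calF)\to\calF$ adjoint under \eqref{wadj} to $\id_{\epdag\calF}$, and then show it is an isomorphism by checking on the generators $\pidag\calG$ of $\mon{X}$ intersected with the heart, where both sides are computed explicitly. (4) Check $\sigma$ is additive and exact — exactness in both directions follows once $\tau$ is seen to be exact, which holds because $(-)\boxtimes\calL_n$ is exact and the colimit over $n$ is filtered.

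The main obstacle I anticipate is step (3), specifically showing that $\tau$ is essentially surjective, i.e. that every $\calF\in\per{X}$ is of the form $M\boxtimes(\text{unipotent local system on }A)$ with the twisted diagonal action. This is where one genuinely needs the structure theory of unipotently monodromic perverse sheaves on a trivial torsor: an object of $\per{X}$ need not be a literal external product, but it becomes one after the base change by a sufficiently divisible isogeny $[m]:A\to A$ (cf. the use of \cite[Prop.~5.1]{V} in the proof of Lemma \ref{l:actionF}), and on the unipotent part this descends. I would phrase this via the monodromy filtration: $\calF$ is a successive extension of objects pulled back from $Y$, and the full data of these extensions is encoded $\hatS$-linearly in $\epdag\calF$; reconstructing $\calF$ from $\epdag\calF$ amounts to observing that $\calF$ is canonically $(\pi\times\id)_*$ of its pullback along $Y\times A\times A\to Y\times A$ weighted by $\tcL$, which is exactly the content packaged by $\tau$. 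The remaining verifications — compatibility with morphisms, that the module structure on $\epdag\calF$ is the correct one, nilpotence — are routine given the formalism already set up in \S\ref{as:mono} and \S\ref{ss:compmono}.
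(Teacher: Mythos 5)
Your overall plan — define $\sigma$ via $\epdag$ together with the canonical logarithmic monodromy, then exhibit a quasi-inverse $\tau$ built from the \fm\ local system, and verify the two composites via the weak adjunction — is the same strategy the paper uses (it phrases this as a variant of Barr–Beck for the triple $\pro\per{X}\hookleftarrow\per{X}\xrightarrow{\epdag}P'(Y)$ with $\Free$ as the weak left adjoint). But your construction of $\tau$ has a genuine gap.

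You set $\tau(M):=M\boxtimes\calL_n$ and then say you ``twist the diagonal $V_A$-action so that the $V_A$ coming from $M$ and the $V_A$ coming from $\mu$ on $\calL_n$ agree.'' This is not a well-defined operation. The $V_A$-action on any object of $\per{X}$ is the canonical monodromy $\mu$ and cannot be re-declared without changing the underlying sheaf; and on $M\boxtimes\calL_n$ the monodromy comes entirely from the $\calL_n$ factor. So if you take the external tensor at face value, $\epdag(M\boxtimes\calL_n)\cong M\otimes\rho_n$ with the $\hatS$-action coming from the $\rho_n$-factor — that is a free $\hatS/(V_A^{n+1})$-module of ``rank'' equal to that of $M$, not $M$ itself. (Already in the case $Y=\pt$, $M=\hatS/(V_A^2)$, $n=1$, the external tensor is $4$-dimensional where $M$ is $2$-dimensional.) What you need is to pass to the quotient of $M\boxtimes\calL_n$ by the image of the difference of the two $V_A$-operators, i.e.\ the coequalizer of $m(M)\boxtimes\id$ and $\id\boxtimes m(\calL_n)$. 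Concretely, the paper defines the inverse functor by
\begin{equation*}
M\mapsto\coker\Bigl(V_A\otimes(M\boxtimes\tcL)[r](r)\xrightarrow{m(M)\boxtimes\id-\id\boxtimes m(\tcL)}M\boxtimes\tcL[r](r)\Bigr),
\end{equation*}
and checks (using nilpotence of $V_A$ on $M$) that this stabilizes to a quotient of $M\boxtimes\calL_n$ for $n\gg 0$, hence lands in $\per{X}$. That quotient, not $M\boxtimes\calL_n$ itself, is the correct $\tau(M)$; once you replace your construction by this one, the rest of your outline — exactness of $\tau$, stalk computation at the unit section for $\sigma\tau\cong\id$, and the map adjoint to $\id_{\epdag\calF}$ under Lemma \ref{l:weakadj} for $\tau\sigma\cong\id$ — goes through and reproduces the paper's argument.

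Two smaller remarks. First, for the $\tau\sigma\cong\id$ direction, the weak adjunction of Lemma \ref{l:weakadj} gives a map $\Free(\epdag\calF)\to\calF$; you should then observe this factors through the cokernel $\tau(\epdag\calF)$ (since the two monodromies act the same on the target $\calF$) and only then check the induced map is an isomorphism on the generators $\pidag\calG$. Your draft elides the factoring step. Second, your digression about essential surjectivity, base change along isogenies $[m]:A\to A$, and \cite[Prop.~5.1]{V} is not needed here and is not what the paper does: by construction $\per{X}$ is the heart of the subcategory generated by $\pidag$-pullbacks, so essential surjectivity is automatic once you have a quasi-inverse, and unipotence means no isogeny base change enters.
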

\begin{proof}
This is a variant of the usual Barr-Beck theorem in the following situation
\begin{equation*}
\xymatrix{\pro\per{X}\ar@{<-^{)}}[r] & \per{X}\ar[r]_{\epdag} & P'(Y)\ar@/_1pc/[ll]_{\Free}}
\end{equation*}
Here $\epdag$ is exact, faithful and conservative; the only issue is that the functors $\epdag$ and $\Free$ are only adjoint in the sense of Lemma \ref{l:weakadj}. But the argument for Barr-Beck theorem still works. We have a functor in the other direction:
\begin{equation}\label{invsig}
\Mod^{\nil}(\hatS;P'(Y))\to\per{X}
\end{equation}
which sends a nilpotent $\hatS$-module $\calF$ in $P(Y)$ to
\begin{equation}\label{delcoker}
\coker\left(V_A\otimes(\calF\boxtimes\tcL)[r](r)\xrightarrow{m(\calF)\boxtimes\id-\id\boxtimes m(\tcL)}\calF\boxtimes\tcL[r](r)\right).
\end{equation}
where $m(\calF):V_A\otimes\calF\to\calF$ and $m(\tcL):V_A\otimes\tcL\to\tcL$ are action maps given by the $\hatS$-module structures. Since $V_A^{n+1}$ acts as zero on $\calF$ for large $n$, so that \eqref{delcoker} is actually a quotient of $\calF\boxtimes\calL_n$ (for $\calL_n$, see Example \ref{ex:fm}), hence lands in $\per{X}$. It is easy to check that the functor \eqref{invsig} and $\sigma$ are inverse to each other.
\end{proof}

\begin{cor}\label{c:coinv}
Under the equivalence $\sigma$, the functors
\begin{equation*}
\xymatrix{\per{X}\ar@<.7ex>[rr]^{\pH^0\piddag} && P'(Y)\ar@<.7ex>[ll]^{\pidag}}
\end{equation*}
become
\begin{equation*}
\xymatrix{\Mod^{\nil}(\hatS;P'(Y))\ar@<.5ex>[rr]^{\otimes_{\hatS}\Ql} && P'(Y)\ar@<.5ex>[ll]^{\triv}}.
\end{equation*}
where ``$\triv$'' sends an object $\calF\in P'(Y)$ to the $\hatS$-module $\calF$ on which $V_A$ acts as 0.
\end{cor}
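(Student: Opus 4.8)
The plan is to deduce the corollary from Lemma~\ref{l:Smod} together with the triviality of the $A$-torsor $\pi\colon X\to Y$: I would first compute $\sigma\circ\pidag$ by hand, and then read off the statement for $\pH^0\piddag$ by a uniqueness-of-adjoints argument.

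\textbf{Step 1: identify $\sigma\circ\pidag$ with $\triv$.} Since $\pi\circ\epsilon=\id_Y$, one has $\epdag\pidag=\epsilon^!\pi^!=\id_{D'(Y)}$, so for $\calF\in P'(Y)$ the underlying object of $\sigma(\pidag\calF)=\epdag\pidag\calF$ is $\calF$ itself, naturally in $\calF$. The point to check is that the resulting $\hatS$-module structure on it is the trivial one, i.e.\ that the logarithmic monodromy operator $m(\pidag\calF)$ vanishes. I would deduce this from the fact that $\pi^!\calF$ is canonically $A$-equivariant: writing $a,p\colon A\times X\to X$ for the action and the projection, the identity $\pi\circ a=\pi\circ p$ yields a canonical isomorphism $a^*\pi^!\calF\cong p^*\pi^!\calF$, and by the naturality of Verdier's canonical-monodromy construction (\cite[\S5]{V}, applied to torsors under split tori as in \S\ref{as:mono}) the automorphism $\mu(\pi^!\calF)$ is then trivial; hence $m(\pidag\calF)=\log\mu(\pidag\calF)=0$, so that $\sigma(\pidag\calF)=\triv(\calF)$.

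\textbf{Step 2: pass to adjoints.} Recall from \S\ref{as:mono} that $(\piddag,\pidag)$ is an adjoint pair, with $\pidag$ t-exact and $\piddag$ right t-exact. A routine t-structure argument shows that on hearts $\pH^0\piddag\colon\per{X}\to P'(Y)$ is left adjoint to $\pidag\colon P'(Y)\to\per{X}$: for $\calF\in\per{X}$ and $\calG\in P'(Y)$, applying $\Hom(-,\calG)$ to the truncation triangle of $\piddag\calF\in D'(Y)^{\leq0}$ gives $\Hom_{P'(Y)}(\pH^0\piddag\calF,\calG)\cong\Hom_{D'(Y)}(\piddag\calF,\calG)\cong\Hom_{\hmon{X}}(\calF,\pidag\calG)$. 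On the algebraic side, $\otimes_{\hatS}\Ql\colon\Mod^{\nil}(\hatS;P'(Y))\to P'(Y)$ is left adjoint to $\triv\colon P'(Y)\to\Mod^{\nil}(\hatS;P'(Y))$, by the base-change adjunction along the quotient $\hatS\twoheadrightarrow\hatS/(V_A)\cong\Ql$ (in the relative setting over $P'(Y)$). Since Step~1 says $\sigma$ carries $\pidag$ to $\triv$, uniqueness of left adjoints forces $\pH^0\piddag\cong(\otimes_{\hatS}\Ql)\circ\sigma$ as functors $\per{X}\to P'(Y)$, which is exactly the claim. As a check (and an alternative route), one can compute directly: applying $\piddag=\pi_![r]$ to the explicit presentation \eqref{delcoker} of $\sigma^{-1}(M)$ and using the K\"unneth formula with $\bR\Gamma_c(A,\tcL)\cong\Ql[-2r](-r)$ carrying trivial monodromy (cf.\ the proof of Lemma~\ref{l:actionF} and Example~\ref{ex:fm}), one obtains $\pH^0\piddag\,\sigma^{-1}(M)\cong\coker\!\bigl(V_A\otimes M\to M\bigr)=M\otimes_{\hatS}\Ql$.

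The only step that requires real care --- and hence the main obstacle --- is the vanishing of the canonical monodromy on $\pi^!\calF$ in Step~1: one must make sure that Verdier's construction, phrased in \cite{V} for cones, is correctly transported to the torsor $\pi\colon X\to Y$ (the extension already invoked in \S\ref{as:mono}) and that its naturality genuinely yields triviality on the $A$-equivariant object $\pi^!\calF$. The rest --- the t-structure bookkeeping and the uniqueness of adjoints --- is formal.
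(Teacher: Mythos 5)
Your proposal is correct and follows the same route as the paper's (very terse) proof: identify $\sigma\pidag$ with $\triv$ by observing that the monodromy on $\pidag\calF$ vanishes, then transfer the statement for $\pH^0\piddag$ by uniqueness of left adjoints. The paper dispatches the monodromy vanishing with ``it is clear''; your justification via the canonical $A$-equivariant structure on $\pi^!\calF$ (so $a^*\pi^!\calF\cong p^*\pi^!\calF$ trivializes Verdier's $\mu$) is the right way to make that precise, and the adjunction bookkeeping in Step~2 is the standard truncation argument.
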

\begin{proof}
It is clear that $\epdag\pidag\calF=\calF$ with the trivial monodromy action, hence $\sigma\pidag$ is the same as the functor ``$\triv$''. The equality $\pH^0\piddag=(-)\otimes_{\hatS}\Ql$ follows by adjunction.
\end{proof}

\textbf{Assumption F.} Every object in $P'(Y)$ has a finite resolution by projective objects, and that the realization functor
\begin{equation*}
\rho_Y:D^b(P'(Y))\to D'(Y)
\end{equation*}
is an equivalence.

\begin{prop}\label{p:freegen}
Under Assumption F, the \fm objects $\Free(\calF),\calF\in P'(Y)$ generate $\hatD'(\qw{X}{A})$.
\end{prop}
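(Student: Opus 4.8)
The plan is to show that the strictly full triangulated subcategory $\hatD_0\subseteq\hmon{X}$ generated by the \fm perverse local systems $\Free(\calF)$, $\calF\in\per{Y}$, exhausts $\hmon{X}$. First I would make a reduction: it suffices to prove that $\pidag\calF\in\hatD_0$ for every $\calF\in\per{Y}$. Indeed, $\pidag$ is t-exact, and by Assumption F the realization functor $\rho_Y$ is an equivalence, so $\mon{Y}$ is generated as a triangulated category by $\per{Y}$; hence $\pidag$ carries $\mon{Y}$ into $\hatD_0$. Since $\mon{X}$ is by definition generated by $\pidag\mon{Y}$, this gives $\mon{X}\subseteq\hatD_0$, and the remaining task is to climb from $\mon{X}$ to all of $\hmon{X}$.

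For the first point I would use a Koszul resolution of the residue module of $\hatS$. When $r=\dim A=1$ there is a tautological distinguished triangle $\Free(\calF)(1)\to\Free(\calF)\to\pidag\calF\to\Free(\calF)(1)[1]$, whose first map is built from the logarithmic monodromy operator (the cokernel of multiplication by a generator of $V_A$ on $\tcL$ is the constant sheaf, and $\pi$ is smooth of relative dimension $1$). For general $r$, choosing coordinates identifying $\hatS$ with a completed polynomial ring, the Koszul complex on the coordinates is a finite free resolution of $\Ql$ of length $r$ with terms $\hatS\otimes\Lambda^\bullet V_A$; tensoring with $\calF$ over $\Ql$ and transporting through the equivalence $\sigma$ of Lemma \ref{l:Smod} together with the identification $\sigma\pidag\cong\triv$ of Cor \ref{c:coinv} (applied first to the truncations $\calL_n$ of $\tcL$ and then passed to the pro-limit), one gets a chain of $r$ distinguished triangles in $\hmon{X}$ exhibiting $\pidag\calF$ as an iterated cone of maps between twists and shifts of $\Free(\calF)$. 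As $\hatD_0$ is triangulated and closed under shifts and twists, $\pidag\calF\in\hatD_0$, and hence $\mon{X}\subseteq\hatD_0$.

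The hard part will be the climb from $\mon{X}$ to $\hmon{X}$, and I expect this to be the real obstacle. I would argue by dévissage with respect to the perverse t-structure on $\hmon{X}$ (Remark \ref{l:prot}): an object $\calF\in\hmon{X}$ is uniformly bounded in degrees, so it has only finitely many nonzero perverse cohomology objects $\pH^i\calF\in\hper{X}$, and the truncation triangles reduce us to the case $\calF\in\hper{X}$. For such $\calF$ I would extend the Barr--Beck picture of Lemma \ref{l:Smod} along the section $\epsilon$ (using that $\epdag$ extends to pro-objects, cf. Cor \ref{c:functor}) so as to interpret $\calF$ as a finitely generated $\hatS$-module object in $\per{Y}$, with $\Free(\calG)$ corresponding to the free module $\calG\otimes\hatS$ and $\pidag\calG$ to the trivial module by Cor \ref{c:coinv}. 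A Nakayama-type argument then produces a surjection $\Free(\pH^0\piddag\calF)\twoheadrightarrow\calF$ in $\hper{X}$ with finitely generated kernel, and iterating, using that $\hatS$ is a complete regular local ring of dimension $r$ and that each object of $\per{Y}$ has finite projective dimension (Assumption F), terminates the process after finitely many steps, yielding a finite resolution of $\calF$ by \fm perverse local systems; thus $\calF\in\hatD_0$.

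The delicate points, which are where I expect to have to work, are: making rigorous the extension of the equivalence of Lemma \ref{l:Smod} to the pro-completion, so that "finitely generated $\hatS$-module object in $\per{Y}$" is a well-defined notion and the relevant short exact sequences of such objects really give distinguished triangles in $\hmon{X}$; and checking Nakayama's lemma together with the syzygy bound in this sheaf-enriched setting. Throughout one should keep track of the weight grading (recall $V_A$ has weight $-2$), both to control the $\Frob$-action on the Koszul differentials in the second step and to see that the $\Free$-resolutions constructed in the last step are compatible with weights.
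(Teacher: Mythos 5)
Your first step (expressing $\pidag\calF$ as an iterated cone of twists of $\Free(\calF)$ via a Koszul resolution of $\Ql$ over $\hatS$) is correct, but it is the easy half: it only shows the subcategory generated by the \fm objects contains $\mon{X}$, and the paper gets this for free from its general induction. The real content of the proposition is the climb from $\mon{X}$ to $\hmon{X}$, and there your argument is circular. You propose to (a) do d\'evissage with respect to the perverse t-structure on $\hmon{X}$ and then (b) identify objects of the heart $\hper{X}$ with finitely generated $\hatS$-module objects in $P'(Y)$. But in this paper's logical order neither of these is available yet: the t-structure on $\hmon{X}$ is only constructed in Remark \ref{r:simplet} as a consequence of Cor.~\ref{c:modES}(2) (the remark explicitly says that \emph{a priori} it is not clear such a t-structure exists), and Cor.~\ref{c:modES}(2) --- which is exactly your ``extension of the Barr--Beck picture to the pro-completion'' --- is itself deduced from Prop.~\ref{p:freegen}. (Lemma \ref{l:prot}, which you cite, lives in the stratified setting of \S\ref{ss:stra} and its proof also feeds off Prop.~\ref{p:freegenmix} via Remark \ref{r:simplet}.) You flag the extension of Lemma \ref{l:Smod} to $\hmon{X}$ as a ``delicate point,'' but it is not merely delicate: translating the defining condition of $\pi$-constancy for a pro-object of nilpotent modules into ``finitely generated $\hatS$-module with a finite free resolution'' is essentially equivalent to the statement being proved, so your plan assumes what it must establish.

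The paper's proof avoids all of this by never invoking a t-structure on $\hmon{X}$. It inducts on the \emph{amplitude} of a projective resolution $[\cdots\to\calK^{-1}\to\calK^0]$ of $\piddag\calF$ in $D'(Y)$ (available by Assumption F), where the base case uses only the conservativity of $\piddag$ (Lemma \ref{l:picons}). In the inductive step one first shows by hand that the component-wise truncations $\prolim\ptau_{<0}\calF_n\to\prolim\calF_n$ are isomorphisms (using right t-exactness of $\piddag$ and Cor.~\ref{c:coinv} stratum by stratum on the $\calF_n$, not a t-structure on the completion), then builds a map $\Free(\calK^0)\to\prolim\calF_n$ from the weak adjunction of Lemma \ref{l:weakadj}, using projectivity of $\calK^0$ and a Mittag--Leffler argument to make the liftings compatible along the pro-system; the cone has strictly smaller amplitude. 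If you want to keep your module-theoretic picture, you would have to first prove, independently of the proposition, that every $\pi$-constant pro-object of $\Mod^{\nil}(\hatS;P'(Y))$ is pro-isomorphic to a bounded complex of free objects $\calK\otimes\hatS$ --- and the only mechanism in sight for doing that is precisely the amplitude induction above.
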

\begin{proof}
Given an object $\prolim\calF_n\in\hmon{X}$, by Assumption F, we may resolve $\prolim\piddag\calF_n\in D'(Y)$ by projective objects in $P'(Y)$:
\begin{equation}\label{Kcomp}
[\cdots\to\calK^{-1}\to\calK^0\to\cdots]\in D^b(P'(Y))\cong D'(Y).
\end{equation}
The {\em amplitude} of the complex $\prolim\piddag\calF_n$ is the least number of nonzero terms among all such projective resolutions. If the amplitude of $\prolim\piddag\calF_n$ is $0$, i.e., $\prolim\piddag\calF_n=0$, then by Lemma \ref{l:picons}, $\prolim\calF_n=0$.

Now suppose that any $\prolim\calF_n$ such that $\prolim\piddag\calF_n$ has amplitude $<N$ can be expressed as a successive extension of \fm objects. Let $\prolim\calF_n\in\hmon{X}$ such that the amplitude of $\prolim\piddag\calF_n$ is $N$. We may assume \eqref{Kcomp} is a minimal resolution which terminals on the right at $\calK^0$ (i.e., $\calK^0\neq0$ and $\calK^i=0$ for $i>0$).

\begin{claim} The component-wise truncation $\prolim\ptau_{<0}\calF_n\to\prolim\calF_n$ is an isomorphism in $\pro\mon{X}$.
\end{claim}
\begin{proof}
By uniform boundedness, we may assume every $\calF_n\in\pD^{\leq d}_{c}(X)$ for some $d\geq0$. If $d=0$, there is nothing to argue. Suppose $d>0$, we only need to prove that $\alpha:\prolim\ptau_{<d}\calF_n\to\prolim\calF_n$ is an isomorphism, and repeat the argument.

For this, it suffices to show that $\textup{Cone}(\alpha)\cong\prolim\pH^d\calF_n$ is zero (although we do not have $\prolim\ptau_{<d}\calF_n\in\hmon{X}$ {\em a priori}, the vanishing of the cone still implies $\alpha$ is an isomorphism: we only need to apply $\Hom(-,T)$ to $\alpha$ for any test object $T\in\mon{X}$, and note that $\varinjlim$ is exact) . Let $\calP_n=\pH^d\calF_n$. Since $\piddag$ is right t-exact, $\prolim\piddag\ptau_{<d}\calF'_n\in\pD^{<d}_{c}(Y)$, the projective system of perverse sheaves $\pH^d\piddag\calP_n$ is zero. This means for fixed $n$, the transition map $\pH^d\piddag\calP_m\to\pH^d\piddag\calP_n$ is zero for large $m$. By Corollary \ref{c:coinv}, this means that the image of $\calP_m$ in $\calP_n$ falls into $V_A\calP_n$. Since each $\calP_n$ is killed by a power of $V_A$, this means the transition map $\calP_m\to\calP_n$ is zero for large $m$. This proves the claim.
\end{proof}

By this claim, we may assume that each $\calF_n\in\pD^{\leq0}_{c}(X)$. We will construct a map $\Free(\calK^0)\to\prolim\calF_n$ in $\hmon{X}$. By Lemma \ref{l:weakadj}, it suffices to give a compatible system of maps $\{\calK^0\to\epdag\calF_n\}_n$. By the assumption that $\rho_Y$ is an equivalence and $\calK^0$ is projective, such a map is equivalent to a map $\calK^0\to\pH^0\epdag\calF_n=\epdag\pH^0\calF_n$.

On one hand, we have a natural map
\begin{equation*}
\alpha_n:\calK^0\to\pH^0(\prolim\piddag\calF_n)\to\pH^0\piddag\calF_n.
\end{equation*}
On the other hand, we have a surjection by \eqref{eptopi}
\begin{equation*}
\beta_n:\epdag\pH^0\calF_n\twoheadrightarrow\pH^0\piddag\pH^0\calF_n=\pH^0\piddag\calF_n.
\end{equation*}
Since $\calK^0$ is projective, it is possible to lift $\alpha_n$ to $\tilalpha_n:\calK^0\to\epdag\pH^0\calF_n$. The set of such liftings is a torsor under $\Hom_{P'(Y)}(\calK^0,\ker(\beta_n))$, which is a Mittag-Leffler projective system. Therefore, the compatible system of maps $\{\alpha_n\}$ can be lifted to a compatible system of maps $\{\tilalpha_n\}$. According to the argument above, it gives a map $s^0:\Free(\calK^0)\to\prolim\calF_n$ such that $\piddag(s^0)$ coincides with the natural map $\calK^0\to\prolim\piddag\calF_n$.

Let $\prolim\calF'_n$ be a cone of $s^0$ in $\hmon{X}$. Then $\piddag(\prolim\calF'_n)$ is represented by the complex
\begin{equation*}
[\cdots\to\calK^{-2}\to\calK^{-1}\to0\to\cdots]\in D^b(P'(Y))\cong D'(Y).
\end{equation*}
which has amplitude $<N$. This completes the induction step.
\end{proof}

\begin{exam}
The Assumption F is essential. We give an example where $\hmon{X}$ is not generated by \fm objects. Let $X=\GG_m\times\GG_m$ be the trivial $\GG_m$-torsor over $Y=\GG_m$ via the first projection. Consider the following diagram
\begin{equation*}
\xymatrix{X=\GG_m\times\GG_m\ar[r]^(.6){\mult}\ar[d]^{\pi} & \GG_m\ar[d]\\
Y=\GG_m\ar[r] & \pt}
\end{equation*}
where ``$\mult$'' is the multiplication map. Take $D'(Y)\subset D^b_c(Y)$ to be the full triangulated subcategory generated by the constant sheaf. Let $\tcL$ denote the free-monodromic local system on $\GG_m$. Then the object $\mult^*\tcL\in\hmon{X}$ does {\em not} lie in the triangulated category generated by free-monodromic perverse local systems.

In fact we have $D'(Y)\cong D^b(\Mod^{\nil}(\Ql[[t]]))$ and $\mon{X}\cong D^b(\Mod^{\nil}(\Ql[[s,t]]))$. The object $\mult^*\tcL$ corresponds to the module $\Ql[[s,t]]/(s-t)\in\pro\mon{X}$, which lies in $\hmon{X}$ because $\pi_!\mult^*\tcL=\Ql[-2](-1)$. However, the subcategory of $\hmon{X}$ generated by free-monodromic perverse local systems can be identified with $D^b(\Mod^{t-\nil}(\Ql[[s,t]]))$ (where the superscript ``$t$-nil'' means only the action of $t$ on the module is nilpotent, and $t$ denotes the logarithmic monodromy in the $Y$-direction). Obviously $\Ql[[s,t]]/(s-t)$ does not lie in this subcategory.
\end{exam}

\begin{cor}\label{c:modES}
Under Assumption F,
\begin{enumerate}
\item The realization functor $\rho_X:D^b(\per{X})\to\mon{X}$ is an equivalence. We have a t-exact equivalence
\begin{eqnarray*}
\rho_X\circ\sigma^{-1}:D^b(\Mod^{\nil}(\hatS;P'(Y)))\xrightarrow{\sigma^{-1}}D^b(\per{X})\xrightarrow{\rho_X}\mon{X}.
\end{eqnarray*}

\item Suppose we are given a t-exact equivalence of triangulated categories
\begin{equation*}
\nu:D^b(E)\cong D'(Y)
\end{equation*}
for some finite dimensional algebra $E$ with finite cohomological dimension. Then the equivalence $\rho_X\circ\sigma^{-1}$ extends to an equivalence of triangulated categories
\begin{equation*}
\hatnu:D^b(E\otimes\hatS)\cong\hmon{X}.
\end{equation*}

\item Under $\hatnu$, the adjunctions $(\piddag,\pidag)$ becomes
\begin{equation*}
\xymatrix{D^b(E\otimes\hatS)\ar@<.5ex>[rr]^{\Ltimes_{\hatS}\Ql} && D^b(E)\ar@<.5ex>[ll]^{\triv}}.
\end{equation*}
\end{enumerate}
\end{cor}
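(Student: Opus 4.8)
\emph{Proof plan.} All three parts are deduced from the identification of the heart of $\mon{X}$ in Lemma~\ref{l:Smod}, an Ext-computation of Koszul type, and a projective-generator argument; Assumption~F is in force throughout. For (1), Lemma~\ref{l:Smod} already identifies $\per{X}$ with $\Mod^{\nil}(\hatS;P'(Y))$ via $\sigma$, so it remains to show the realization functor $\rho_X\colon D^b(\per{X})\to\mon{X}$ is an equivalence (t-exactness of $\rho_X\circ\sigma^{-1}$ being part of the construction of $\rho_X$). By the standard criterion it suffices that $\mon{X}$ be generated by $\per{X}$ and that $\Ext^i_{\per{X}}(\calF,\calG)\to\Hom_{\mon{X}}(\calF,\calG[i])$ be an isomorphism for all $\calF,\calG\in\per{X}$ and all $i$. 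The first point is immediate: by definition $\mon{X}$ is generated by $\pidag D'(Y)$, hence by $\pidag P'(Y)\subset\per{X}$ using Assumption~F and the t-exactness of $\pidag$. For the second, the class of pairs for which the map is an isomorphism is closed under extensions in each variable (five lemma on the two long exact sequences), and every object of $\Mod^{\nil}(\hatS;P'(Y))$ is a finite iterated extension of objects $\triv\calA=\sigma\pidag\calA$ (filter along the powers of the ideal $(V_A)$), so we reduce to $\calF=\pidag\calA$, $\calG=\pidag\calB$. For these, $X=Y\times A$ and smooth base change along $\pi$ (so $\pi_*\pi^*\Ql\cong\bigoplus_j\bigwedge^j V_A^\vee[-j]$, since $H^\bullet(A)\cong\bigwedge^\bullet V_A^\vee$) give the right-hand side as $\bigoplus_j\Ext^{i-j}_{D'(Y)}(\calA,\calB)\otimes\bigwedge^j V_A^\vee$; the left-hand side gives the same via the Koszul resolution of $\triv\Ql$ over the regular complete local ring $\hatS$ (so $\Ext^\bullet_{\hatS}(\Ql,\Ql)\cong\bigwedge^\bullet V_A^\vee$), together with $\Ext^\bullet_{P'(Y)}=\Ext^\bullet_{D'(Y)}$ from Assumption~F. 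Then $\rho_X$ is fully faithful, and, being a triangulated functor whose essential image contains the generators $\pidag P'(Y)$ of $\mon{X}$, it is an equivalence.

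For (2): by Prop.~\ref{p:freegen}, $\hmon{X}$ is generated by the free-monodromic perverse local systems $\Free(\calF)$, $\calF\in P'(Y)$. Since $\nu$ is t-exact it restricts to an exact equivalence $\Mod(E)\isom P'(Y)$ of hearts; let $\calF_0:=\nu(E)$ be the image of the free rank-one module. Then $\calF_0$ is a projective generator of $P'(Y)$ with $\End_{P'(Y)}(\calF_0)=E^{\opp}$ and $\Ext^{>0}_{D'(Y)}(\calF_0,\calF_0)=0$, so (as $\Free$ is exact) $\Free(\calF_0)$ generates $\hmon{X}$ and every $\Free(\calF)$ is an iterated extension of copies of it. Writing $\Free(\calF_0)=\prolim(\calF_0\boxtimes\calL_n[r](r))$ in the notation of Ex.~\ref{ex:fm}, the pro-$\Hom$ formula, Lemma~\ref{l:weakadj}, and the identity $\epdag(\calF_0\boxtimes\calL_n[r](r))\cong\calF_0\otimes_\Ql(\Sym(V_A)/(V_A^{n+1}))$ (relative purity for the section $\epsilon$) give
\begin{equation*}
\Hom_{\hmon{X}}(\Free(\calF_0),\Free(\calF_0)[i])\cong\varprojlim_n\bigl(\Ext^i_{D'(Y)}(\calF_0,\calF_0)\otimes\Sym(V_A)/(V_A^{n+1})\bigr),
\end{equation*}
which vanishes for $i\neq0$ and equals $E^{\opp}\otimes\hatS$ for $i=0$. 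Hence $\Free(\calF_0)$ is a projective generator of the heart $\hper{X}$ of the perverse t-structure on $\hmon{X}$ (a triangulated category by Theorem~\ref{th:montri}), with endomorphism algebra $E^{\opp}\otimes\hatS$ of finite cohomological dimension. By the Morita/realization argument (as in Appendix~\ref{a:dgmodel}, Theorem~\ref{th:CtoMod}) one obtains a t-exact equivalence $\hatnu\colon D^b(E\otimes\hatS)\isom\hmon{X}$ carrying the free module to $\Free(\calF_0)$ and $\Mod(E\otimes\hatS)$ onto $\hper{X}$: $\hmon{X}$ is generated by $\Free(\calF_0)$ and its graded self-$\Hom$ is concentrated in degree $0$, matching $\Ext^\bullet_{E\otimes\hatS}(E\otimes\hatS,E\otimes\hatS)$, so the realization functor is an equivalence.

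For (3): by Cor.~\ref{c:coinv}, extended to the completed category via Prop.~\ref{compfun}, under the identifications of (1)--(2) the t-exact functor $\pidag$ becomes $\triv$, i.e.\ restriction of scalars along $E\otimes\hatS\twoheadrightarrow E$; its left adjoint $\piddag=\pi_![r]$ is therefore $(-)\Ltimes_{E\otimes\hatS}E$, and since $E\cong(E\otimes\hatS)\Ltimes_{\hatS}\Ql$ with $E\otimes\hatS$ free (hence flat) over $\hatS$, this equals $(-)\Ltimes_{\hatS}\Ql$; compatibility with $\pH^0\piddag=(-)\otimes_{\hatS}\Ql$ from Cor.~\ref{c:coinv} pins the identification down uniquely. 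The main obstacle is Part~(2): one must check carefully that $\hmon{X}$ carries the perverse t-structure extending that of $\mon{X}$ and that $\Free(\calF_0)$ is genuinely a projective generator of its heart---since the objects in play are pro-objects, ``projective'' must be verified through the weak adjunction of Lemma~\ref{l:weakadj} and the conservativity of $\piddag$ (Lemma~\ref{l:picons}) rather than naively---and, more routinely, one must track the Tate twists in the Koszul computation of Part~(1).
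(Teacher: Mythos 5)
Parts (1) and (3) of your argument match the paper's own proof: part (1) via the same Koszul computation $\bR\Hom_{\hatS}(\Ql,\Ql)\cong\wedge^\bullet(V_A^\vee[-1])\cong H^*(A)$ and comparison of $\bR\Hom$-complexes on the generating objects $\pidag\calF$ (and the reduction by iterated extensions you add is a sound way to flesh that out), and part (3) via Cor.~\ref{c:coinv} extended along Prop.~\ref{compfun}.

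For part (2), however, your route is genuinely different and it has a circularity that you flag but do not resolve. You want to exhibit $\Free(\calF_0)$ as a projective generator of ``the heart $\hper{X}$ of the perverse t-structure on $\hmon{X}$'' and then run a Morita argument. But in the paper, that t-structure on $\hmon{X}$ is \emph{defined} by transporting the natural t-structure across the equivalence $\hatnu$ of Cor.~\ref{c:modES}(2) (see Remark~\ref{r:simplet}); it is a consequence, not a precondition. Your plan therefore presupposes exactly what it is trying to establish. Even granting the t-structure, your computation only shows $\Hom_{\hmon{X}}(\Free(\calF_0),\Free(\calF_0)[i])=0$ for $i\neq 0$; projectivity in the heart requires $\Hom(\Free(\calF_0),M[1])=0$ for \emph{every} $M\in\hper{X}$, which does not follow from that single calculation without further argument about the structure of the heart (which is again what you are trying to determine). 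The appeal to Theorem~\ref{th:CtoMod} does not repair this either: that theorem lives in the mixed setting with extra hypotheses (Assumptions C$_1$--C$_3$ and a $\Frob$-action) which are not the ones in force here.

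The paper's proof sidesteps all of this by never invoking a t-structure on $\hmon{X}$. It embeds $D^b(E\otimes\hatS)$ fully faithfully into $\pro\mon{X}$ via $\pro(\rho_X\circ\sigma^{-1})$ (after identifying $D^b(E\otimes\hatS)$ with a full subcategory of $\pro D^b(\Mod^{\nil}(E\otimes\hatS))$), notes that complexes of free $E\otimes\hatS$-modules land in $\hmon{X}$ because they correspond to complexes of \fm objects, and then uses Prop.~\ref{p:freegen} to see the image contains the generators $\Free(\calF)$ of $\hmon{X}$. This is a direct comparison of two triangulated subcategories of $\pro\mon{X}$, requiring neither a t-structure on $\hmon{X}$ nor a projective-generator argument in a heart that has not yet been defined. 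To salvage your version you would need to first establish the t-structure on $\hmon{X}$ independently — Remark~\ref{r:simplet} sketches an intrinsic description, but proving it actually gives a t-structure is extra work that your plan omits — and then verify projectivity of $\Free(\calF_0)$ against arbitrary objects of the heart.
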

\begin{proof}
(1) Firstly, $\rho_X$ is essentially surjective. This is because both sides are generated by objects of the form $\pidag\calF$ for $\calF\in P'(Y)$.

Next we check that $\rho_X$ induces an isomorphism between the Ext-groups for these generating objects. On one hand,
\begin{equation*}
\bR\Hom_{\mon{X}}(\pidag\calF,\pidag\calF')=\bR\Hom_{D'(Y)}(\calF,\calF')\otimes \upH^*(A).
\end{equation*}
On the other hand, by Lemma \ref{l:Smod}, we have
\begin{eqnarray*}
\bR\Hom_{\per{X}}(\pidag\calF,\pidag\calF')&=&\bR\Hom_{\Mod^{\nil}(\hatS;P'(Y))}(\calF,\calF')\\
&=&\bR\Hom_{P'(Y)}(\calF,\calF')\otimes\bR\Hom_{\hatS}(\Ql,\Ql).
\end{eqnarray*}
To see this, we need to pick a projective resolution $\calK^*$ for $\calF$ in $P'(Y)$, and use Koszul resolution of $\calK^*$ by free $\hatS$-modules.

By Assumption F, $\bR\Hom_{P'(Y)}(\calF,\calF')\isom\bR\Hom_{D'(Y)}(\calF,\calF')$ . Moreover, $\bR\Hom_{\hatS}(\Ql,\Ql)\cong\wedge^*(V_A^{\vee}[-1])\cong \upH^*(A)$, hence the two $\bR\Hom$-complexes are naturally isomorphic.

(2) The equivalence $\rho_X\circ\sigma^{-1}$ extends to pro-objects. We identify $D^b(E\otimes\hatS)$ with a full subcategory of $\pro D^b(\Mod^{\nil}(E\otimes\hatS))\cong\pro D^b(\Mod^{\nil}(\hatS;P'(Y)))$, hence getting a full embedding
\begin{equation*}
D^b(E\otimes\hatS)\hookrightarrow\pro\mon{X}.
\end{equation*}
Any complex in $D^b(E\otimes\hatS)$ is quasi-isomorphic to a complex of free objects $M\otimes\hatS$ ($M\in\Mod(E)$), hence its image lies in $\hmon{X}$. On the other hand, by Proposition \ref{p:freegen}, $\hmon{X}$ is generated by free objects $\Free(\calF)$ ($\calF\in P'(Y)$), which correspond to $\nu^{-1}(\calF)\otimes\hatS\in\Mod(E\otimes\hatS)$. Hence $\pro(\rho_X\circ\sigma^{-1})$ restricts to the desired equivalence $\hatnu$.

(3) follows from Corollary \ref{c:coinv}.
\end{proof}

\begin{remark}\label{r:simplet}
Corollary \ref{c:modES}(2) gives a t-structure on $\hmon{X}$ extending the perverse t-structure on $\mon{X}$, whose heart we denote by $\hper{X}$. {\em A priori}, it is not clear that such a t-structure exists. However, {\em a posteriori}, this t-structure can be intrinsically defined as follows: $\calF\in\leftexp{p}{\hatD}'^{[a,b]}(\qw{X}{A})$ if and only it is isomorphic to $\prolim\calF'_n$ where each $\calF'_n\in\pD'^{[a,b]}(\qw{X}{A})$. In fact, any complex $M=[0\to M^a\to\cdots\to M^b\to0]\in D^{[a,b]}(E\otimes\hatS)$, can be written as the projective limit of $M_n=[0\to M^a/V_A^{n+1}\to\cdots\to M^b/V_A^{n+1}\to0]\in D^{[a,b]}\Mod^{\nil}(E\otimes\hatS)$.
\end{remark}

\begin{remark}\label{r:projres}
The proof of Proposition \ref{p:freegen} implies a stronger result: if $\prolim\calF_n\in\hmon{X}$ and $\piddag(\prolim\calF_n)$ has a projective resolution as in \eqref{Kcomp}, then $\prolim\calF_n$ can be represented by filtered complex $\tcK\in\hatD'F(\qw{X}{A}$ (the filtered counterpart of $\hmon{X}$, see Remark \ref{r:filD}) such that $\Gr_F^i\tcK=\Free(\calK^i)[-i]$. We can identify $C^b(\hper{X})$ with a full subcategory of $\hatD'F(\qw{X}{A})$ as in \cite[\S 3.1.8]{BBD}, hence the object $\prolim\calF_n$ itself can be represented by a complex
\begin{equation*}
[\cdots\to\Free(\calK^{-1})\to\Free(\calK^0)\to\cdots]\in C^b(\hper{X}).
\end{equation*}
\end{remark}

\subsection{The mixed case}\label{ss:mix}
From now on till the end of this appendix, let $k$ be a finite field. We still consider an $A$-torsor $\pi:X\to Y$, where everything is now defined over $k$ and $A$ is a split torus over $k$. Let $D'_m(Y)\subset D^b_m(Y)$ be a full triangulated subcategory of mixed $\Ql$-complexes on $Y$. Let $\monm{X}\subset D^b_m(X)$ be the full triangulated category generated by $\pidag\calF$ for $\calF\in D'_m(Y)$, whose heart of the t-structure is denoted by $\perm{X}$.

\begin{defn}\label{def:weightbound}
\begin{enumerate}
\item []
\item A pro-object $\prolim\calF_n\in\pro D^b_m(X)$ is {\em uniformly bounded above in weights}, if it is isomorphic to a pro-object $\prolim\calF'_n$ for which there exists $N\in\ZZ$ such that each $\calF'_n$ is of weight $\leq N$.
\item Let $\hmonm{X}$ be the full subcategory of $\pro\monm{X}$ whose objects are $\pi$-constant and uniformly bounded in degrees and uniformly bounded above in weights.
\end{enumerate}
\end{defn}

The new requirement of uniform boundedness on weights does not change the arguments in the previous sections. In particular, Theorem \ref{th:montri} implies that $\hmonm{X}$ is a triangulated category, and Corollary \ref{c:functor} continues to hold in the in the mixed situation. Let $\omega:\hmonm{X}\to\hatD'(\qw{\geom{X}}{\geom{A}})$ be the functor of pull-back to $\geom{X}$ (taking the underlying complex).

\begin{lemma}\label{l:mixhom}
For objects $\calF=\prolim\calF_n,\calG=\prolim\calG_n\in\hmonm{X}$, their $\Ext$-groups fit naturally into short exact sequences:
\begin{eqnarray}\label{mixhom}
0\to\Ext^{i-1}_{\hatD'(\qw{\geom{X}}{\geom{A}})}(\calF,\calG)_{\Frob}\to\ext^i_{\hmonm{X}}(\calF,\calG)\\
\notag
\to\Ext^i_{\hatD'(\qw{\geom{X}}{\geom{A}})}(\calF,\calG)^{\Frob}\to0.
\end{eqnarray}
\end{lemma}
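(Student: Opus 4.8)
The plan is to extend the ``$D^b(\Frob)$-enrichment'' of $\monm{X}$ (recorded in \S\ref{not:geom}) to the completed category $\hmonm{X}$, and then to read off \eqref{mixhom} from the standard two-term computation of the continuous cohomology of $\ZZ\Frob$, exactly as \eqref{intromix} is deduced from \cite[5.1.2.5]{BBD}. Concretely, for $\calF=\prolim\calF_m$ and $\calG=\prolim\calG_n$ in $\hmonm{X}$ I first want a natural complex of $\Frob$-modules $\RHom_{\hatD'(\qw{\geom X}{\geom A})}(\omega\calF,\omega\calG)$ (with the Frobenius action coming from the Weil descent data on $\calF,\calG$), together with a natural quasi-isomorphism
\[
\bR\hom_{\hmonm{X}}(\calF,\calG)\;\cong\;\bR\Gamma\!\left(\ZZ\Frob,\ \RHom_{\hatD'(\qw{\geom X}{\geom A})}(\omega\calF,\omega\calG)\right).
\]
Granting this, for any bounded-below complex $M$ of $\Frob$-modules the functor $\bR\Gamma(\ZZ\Frob,M)$ is computed by the two-term complex $[M\xrightarrow{1-\Frob}M]$ (since $\ZZ$ has cohomological dimension $1$), yielding for each $i$ the short exact sequence $0\to H^{i-1}(M)_{\Frob}\to H^i(\bR\Gamma(\ZZ\Frob,M))\to H^i(M)^{\Frob}\to 0$; specializing $M=\RHom_{\hatD'(\qw{\geom X}{\geom A})}(\omega\calF,\omega\calG)$ gives \eqref{mixhom}.

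\textbf{Reduction to the non-completed case.} For fixed $m,n$ the objects $\calF_m,\calG_n$ lie in $\monm{X}\subset D^b_m(X)$, so the classical statement \eqref{intromix} already provides $\bR\hom_{\monm{X}}(\calF_m,\calG_n)\cong\bR\Gamma(\ZZ\Frob,\RHom_{\geom X}(\omega\calF_m,\omega\calG_n))$, functorially in $m$ and $n$. By the definition of morphisms in pro-categories (and its derived version, computed via a filtered lifting as in the proof of Theorem \ref{th:montri} together with Remark \ref{r:filD}), $\bR\hom_{\hmonm{X}}(\calF,\calG)$ is obtained by applying $\varprojlim_n\varinjlim_m$ to $\bR\hom_{\monm{X}}(\calF_m,\calG_n)$, while $\RHom_{\hatD'(\qw{\geom X}{\geom A})}(\omega\calF,\omega\calG)$ is by definition $\varprojlim_n\varinjlim_m\RHom_{\geom X}(\omega\calF_m,\omega\calG_n)$. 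Since the colimit over $m$ is filtered (hence exact) and commutes with $\bR\Gamma(\ZZ\Frob,-)$ (the fibre of $1-\Frob$ commutes with filtered colimits), it only remains to move the inverse limit $\varprojlim_n$ past cohomology and past $\bR\Gamma(\ZZ\Frob,-)$. This is where the defining hypotheses of $\hmonm{X}$ enter: uniform boundedness in cohomological degree and uniform boundedness above in weights, together with the finiteness input (P-2) of Theorem \ref{th:proD}, force the relevant projective systems to be Mittag-Leffler in each cohomological degree, so that $\varprojlim_n$ is exact on them and the interchange is harmless.

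\textbf{Main obstacle.} The crux is verifying that the projective systems $\{\varinjlim_m\Ext^i_{\geom X}(\omega\calF_m,\omega\calG_n)\}_n$ are Mittag-Leffler for \emph{all} $i$, not only $i=0$ (which is essentially (P-2)). I expect this to follow by the same mechanism as in the proof of Theorem \ref{th:montri}: one replaces $\calG=\prolim\calG_n$ by the system of generators $\pidag\calH$ with $\calH\in D'_m(Y)$, for which $\pi$-constancy of $\calF$ gives $\varinjlim_m\Ext^\bullet_{\geom X}(\omega\calF_m,\pidag\omega\calH)\cong\Ext^\bullet(\prolim\piddag\omega\calF_m,\omega\calH)$, literally an $\Ext$-group in the non-mixed category on $\geom Y$, hence finite-dimensional and stationary in $n$; the general case follows by dévissage along distinguished triangles, using triangle-completeness of $\hmonm{X}$ (Theorem \ref{th:montri}) and the standard Mittag-Leffler bookkeeping. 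Everything else is formal, and the final assembly of \eqref{mixhom} is then immediate from the two-term resolution of $\ZZ\Frob$.
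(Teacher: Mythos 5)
Your proposal is correct and, at its core, runs the same circuit as the paper's proof: both hinge on the fact that the relevant Hom/Ext systems are of finite-dimensional $\Ql$-vector spaces (via $\pi$-constancy of $\calF$, i.e.\ the mechanism behind (P-2) in the proof of Theorem~\ref{th:montri}), so that both $\varinjlim_m$ and $\varprojlim_n$ are exact and commute with taking $\Frob$-invariants and $\Frob$-coinvariants, and then the short exact sequence \eqref{mixhom} is inherited from \eqref{intromix} at each finite level.

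The main difference is one of framing. You aim to lift the $D^b(\Frob)$-enrichment to the pro-completion first — constructing a derived complex $\RHom_{\hatD'(\qw{\geom X}{\geom A})}(\omega\calF,\omega\calG)$ as $\varprojlim_n\varinjlim_m$ of $\RHom$-complexes and then applying $\bR\Gamma(\ZZ\Frob,-)$. This is an unnecessary layer: defining $\varprojlim_n$ of objects in a derived category requires care (choosing filtered liftings, derived limits, etc.), and your sketch leaves this to the reader. The paper sidesteps the issue entirely by never forming a derived $\RHom$-complex in the completed category: it works degree-by-degree with the four-term exact sequence $0\to H^{\Frob}\to H\xrightarrow{\Frob-\id}H\to H_{\Frob}\to0$ of finite-dimensional vector spaces, so that exactness of $\varinjlim_m$ and $\varprojlim_n$ is an elementary fact, and the final SES falls out immediately. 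In practice both arguments reduce to the same Mittag-Leffler bookkeeping; the paper's version is just cleaner because it avoids the derived-level limit that your strategy paragraph promises but never fully constructs.
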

\begin{proof}
For fixed $m,n$, we have the exact sequence \eqref{intromix}
\begin{equation}\label{mixmn}
0\to\Ext^{i-1}_{X}(\calF_m,\calG_n)_{\Frob}\to\ext^i_{X}(\calF_m,\calG_n)\to\Ext^i_{X}(\calF_m,\calG_n)^{\Frob}\to0.
\end{equation}

For any inductive or projective system of finite dimensional vector spaces $\{H_n\}$, $\varinjlim$ and $\varprojlim$ commutes with taking $\Frob$-invariants and coinvariants. More precisely, consider the system of exact sequences
\begin{equation*}
0\to H_n^{\Frob}\to H_n\xrightarrow{\Frob-\id}H_n\to (H_n)_{\Frob}\to0.
\end{equation*}
Taking $\varinjlim$ or $\varprojlim$ preserves the exactness, hence
\begin{equation*}
(\lim H_n)^{\Frob}=\lim H_n^{\Frob};(\lim H_n)_{\Frob}=\lim (H_n)_{\Frob},
\end{equation*}
where $\lim$ means either $\varinjlim$ or $\varprojlim$.

Applying this remark to \eqref{mixmn}, taking inductive limit over $m$, we get
\begin{eqnarray}\label{mixonlyn}
0\to\Ext^{i-1}_{\hatD'(\qw{\geom{X}}{\geom{A}})}(\calF,\calG_n)_{\Frob}\to\ext^i_{\hmonm{X}}(\calF,\calG_n)\\
\notag
\to\Ext^i_{\hatD'(\qw{\geom{X}}{\geom{A}})}(\calF,\calG_n)^{\Frob}\to0.
\end{eqnarray}
Note that each $\Ext^i_{\hmon{\geom{X}}}(\calF,\calG_n)$ is still finite dimensional (because of $\pi$-constancy of $\calF$, see the proof of the Mittag-Leffler condition in Theorem \ref{th:montri}), hence we can apply the above remark to \eqref{mixonlyn}. Taking projective limit over $n$, we get the desired exact sequence \eqref{mixhom}.
\end{proof}

Now we concentrate on the case $X=Y\times A$. We have the obvious mixed analogs of Lemma \ref{l:Smod} and Corollary \ref{c:coinv}, where $\hatS$ is viewed as a $\Frob$-module ($\Frob$ acts on $V_A$ by $q^{-1}$).

We make a mixed analog of the Assumption F.

\textbf{Assumption F$_m$.} Every object in $P'_m(Y)$ has a finite resolution by objects whose images in $P'(\geom{Y})$ are projective, and the realization functor
\begin{equation*}
\rho_{Y,m}:D^b(P'_m(Y))\to D'_m(Y)
\end{equation*}
is an equivalence.

We have a mixed analog of Proposition \ref{p:freegen}:
\begin{prop}\label{p:freegenmix}
Under Assumption F$_m$, the \fm objects $\Free(\calF)$ for $\calF\in P'_m(Y)$ generate $\hmonm{X}$.
\end{prop}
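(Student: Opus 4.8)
The plan is to repeat the proof of Proposition \ref{p:freegen} verbatim, carrying along the extra weight bookkeeping demanded by Definition \ref{def:weightbound}. Given $\prolim\calF_n\in\hmonm{X}$, Assumption F$_m$ lets us resolve $\prolim\piddag\calF_n\in D'_m(Y)$ by a bounded complex $[\cdots\to\calK^{-1}\to\calK^0\to0]$ of objects of $P'_m(Y)$ whose underlying perverse sheaves on $\geom{Y}$ are projective, and I would induct on the amplitude $N$ of this complex. The base case $N=0$ means $\prolim\piddag\calF_n=0$, whence $\prolim\calF_n=0$ by the mixed analog of Lemma \ref{l:picons} (same proof). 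For the inductive step, I first reduce, exactly as in the claim inside the proof of Proposition \ref{p:freegen}, to the case where each $\calF_n\in\pD^{\leq0}_m(X)$: here the mixed analogs of Lemma \ref{l:Smod} and Corollary \ref{c:coinv} (with $\hatS$ carrying its natural $\Frob$-action, on which $V_A$ has negative weight) show that a projective system of $\hatS$-module perverse sheaves vanishing in $\pro\perm{X}$ has vanishing transition maps for large indices, so that component-wise perverse truncation does not change $\prolim\calF_n$.

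Next I would build a morphism $s^0:\Free(\calK^0)\to\prolim\calF_n$ in $\hmonm{X}$ lifting the natural map $\calK^0\to\prolim\piddag\calF_n$. By the mixed version of the weak adjunction of Lemma \ref{l:weakadj} this is the same as a compatible system of maps $\calK^0\to\pH^0\epdag\calF_n=\epdag\pH^0\calF_n$; projectivity of the underlying perverse sheaf of $\calK^0$, the surjection $\epdag\pH^0\calF_n\twoheadrightarrow\pH^0\piddag\calF_n$ from \eqref{eptopi}, and the Mittag-Leffler property of the systems $\{\Hom_{P'_m(Y)}(\calK^0,\ker\beta_n)\}$ — finite-dimensional because of $\pi$-constancy, just as in the proof of Theorem \ref{th:montri} — produce the lift. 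Letting $\prolim\calF'_n$ be a cone of $s^0$, one checks that $\piddag(\prolim\calF'_n)$ is represented by $[\cdots\to\calK^{-1}\to0]$, of amplitude $<N$, so the induction closes once we know $\prolim\calF'_n\in\hmonm{X}$. It is $\pi$-constant and uniformly bounded in degrees because $\hmonm{X}$ is triangle-complete in $\pro\monm{X}$ (the mixed analog of the first step of Theorem \ref{th:montri}); and it is uniformly bounded above in weights because both $\Free(\calK^0)=\calK^0\boxtimes\tcL[r](r)$ (with $\calK^0$ of bounded weights and $\tcL$ uniformly bounded above in weights) and $\prolim\calF_n$ are. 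Hence $\prolim\calF_n$ is a successive extension of free-monodromic objects $\Free(\calF)$, $\calF\in P'_m(Y)$.

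The main obstacle is not conceptual but bookkeeping: making sure the uniform weight bound is preserved under the cone construction and that all the Mittag-Leffler/lifting arguments from the non-mixed case survive the presence of $\Frob$-actions. The right tool for the latter is Lemma \ref{l:mixhom}, which expresses the $\ext$-groups in $\hmonm{X}$ in terms of $\Frob$-invariants and coinvariants of the (finite-dimensional) geometric $\Ext$-groups, so that exactness of $\varinjlim$, $\varprojlim$ and of taking $\Frob$-(co)invariants can be combined freely. No genuinely new idea beyond Proposition \ref{p:freegen} is required. As in Remark \ref{r:projres}, the same inductive construction in fact shows that $\prolim\calF_n$ can be represented by a bounded complex of free-monodromic perverse sheaves $[\cdots\to\Free(\calK^{-1})\to\Free(\calK^0)\to\cdots]$ in $C^b(\hperm{X})$.
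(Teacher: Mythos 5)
Your lifting step has a genuine gap, and it is exactly the difficulty that forces the paper to change the induction scheme. In the unmixed proof of Prop.~\ref{p:freegen}, the lift $\tilalpha_n:\calK^0\to\epdag\pH^0\calF_n$ exists because $\calK^0$ is projective in $P'(Y)$, so $\Ext^1(\calK^0,\ker\beta_n)=0$. In the mixed setting, Assumption F$_m$ only gives projectivity of $\omega\calK^0$ in $P'(\geom{Y})$; the object $\calK^0$ itself need not be projective in $P'_m(Y)$. The obstruction to lifting in $P'_m(Y)$ lives in
\[
\ext^1_{P'_m(Y)}(\calK^0,\ker\beta_n),
\]
which by \eqref{intromix} has a contribution from $\Hom_{P'(\geom{Y})}(\omega\calK^0,\omega\ker\beta_n)_{\Frob}$. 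Projectivity of $\omega\calK^0$ kills the $\Ext^1$-term but says nothing about this $\Frob$-coinvariants term, which is nonzero in general. Your appeal to Mittag-Leffler does not help: that handles compatibility of a system of existing lifts, not the existence of a lift at a single level $n$.

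The paper's fix is precisely to avoid lifting all of $\calK^0$ at once. Each $\calK^i$ is given its canonical decreasing weight filtration $W^{\geq\bullet}$ (using that $\ext^1(\calP,\calP')=0$ when the simple quotient of $\calP$ has strictly larger weight than all weights of $\calP'$), and the induction is on the \emph{width} (the number of pairs $(v,i)$ with $\Gr^v_W\calK^i\neq 0$), not the amplitude. At each step one peels off only the top-weight graded piece $W^{\geq 0}\calK^0$: after reducing to $\calF_n$ with $\pH^0\calF_n$ of weight $\leq 0$, the obstruction term becomes $\ext^1(W^{\geq 0}\calK^0,\ker\beta_n)$, and $\ker\beta_n$ has weight $<0$ (since $V_A$ has weight $-2$), which combined with the weight-$0$ purity of the top of $W^{\geq 0}\calK^0$ forces the obstruction to vanish. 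Your amplitude induction cannot be repaired without this refinement, since after peeling off only a graded piece of $\calK^0$, the amplitude of the remaining resolution does not drop. The rest of your bookkeeping (triangle-completeness, uniform weight bounds, the reduction to $\calF_n\in\pD^{\leq 0}_m$) is fine and matches the paper; the missing idea is the weight filtration on the $\calK^i$'s and the resulting change of inductive invariant.
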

\begin{proof}
We only indicate the modification in the argument comparing with the proof of Proposition \ref{p:freegen}. Instead of doing induction on the amplitude of $\prolim\piddag\calF_n$, we take into account both cohomological degrees and weights. Let $\prolim\piddag\calF_n$ be represented by a complex
\begin{equation}\label{mixK}
[\cdots\to\calK^{-1}\to\calK^0\to\cdots]\in D^b(P'_m(Y))\cong D'_m(Y).
\end{equation}
where each $\omega\calK^i\in P'(\geom{Y})$ is a projective object. For each $\calK^i$, we have a canonical finite {\em decreasing} filtration \footnote{This filtration is not be confused with the weight filtration in \cite{BBD}.}
\begin{equation*}
0\subset\cdots\subset W^{\geq v}\calK^i\subset W^{\geq v-1}\calK^i\subset\cdots\subset\calK^i
\end{equation*}
such that each $\Gr_W^v\calK^i$ is a successive extension of perverse sheaves $\calP$ whose $\omega\calP$ is an indecomposable projective object in $P'(Y)$, and the unique simple quotient of $\calP$ has weight $v$. (This canonical filtration follows from the fact that $\ext^1(\calP,\calP')=\Hom(\calP,\calP')_{\Frob}=0$ if the simple quotient of $\calP$ has larger weight than the weights of $\calP'$.)

We define the {\em width} of the $\prolim\piddag\calF_n$ to be the least number of $(v,i)$ such that $\Gr_W^v\calK^i\neq0$, among all representing complexes $\calK^*$ as in \eqref{mixK}. We do induction on the width of $\prolim\piddag\calF_n$. If its width is $0$, then $\prolim\piddag\calF_n=0$ and hence $\prolim\calF_n=0$.

Suppose for $\prolim\piddag\calF_n$ of width $<N$, $\prolim\calF_n$ is a successive extension of \fm objects. Now let $\prolim\piddag\calF_n$ be of width $N$. Let us assume that \eqref{mixK} is a representing complex which terminate at $\calK^0$, and that $W^{\geq1}\calK^0=0$ but $W^{\geq0}\calK^0\neq0$. Then $\calK^0$ has weight $\leq0$. The argument of the Claim in Proposition \ref{p:freegen} proves that we can first replace $\prolim\calF_n$ by $\prolim\ptau_{<0}\calF_n$, and then assume each $\pH^0\calF_n$ has weight $\leq0$.

We can then try to construct a map $W^{\geq 0}\calK^0\to\epdag\pH^0\calF_n$. For a mixed perverse sheaf $\calP$, let $\calP_{\geq w}$ be its quotient of weight $\geq w$ in the weight filtration of \cite[Theorem 5.3.5]{BBD}. By Corollary \ref{c:coinv}, we have an isomorphism $(\epdag\pH^0\calF_n)_{\geq0}\isom(\pH^0\piddag\calF_n)_{\geq 0}$ (because $V_A$ has weight $-2$). The projective system of maps $\alpha_n:W^{\geq0}\calK^0\to\pH^0\piddag\calF_n\to(\pH^0\piddag\calF_n)_{\geq 0}$ thus gives a projective system of maps $\overline{\alpha}_n:W^{\geq0}\calK^0\to(\epdag\pH^0\calF_n)_{\geq 0}$. Note that $\hom(W^{\geq0}\calK^0,\epdag\pH^0\calF_n)\to\hom(W^{\geq0}\calK^0,(\epdag\pH^0\calF_n)_{\geq0})$ is surjective because $\ext^1(W^{\geq0}\calK^0,\calP)=0$ for any perverse sheaf $\calP$ of weight $<0$. Hence the projective system $\overline{\alpha}_n$ can be lifted to a projective system $\tilalpha_n:W^{\geq0}\calK^0\to\epdag\pH^0\calF_n$. Note further that the canonical map $\hom(W^{\geq0}\calK^0,\calF_n)\to\hom(W^{\geq0}\calK^0,\pH^0\calF_n)$ is surjective since the next term in the long exact sequence is $\ext^1(W^{\geq0}\calK^0,\ptau_{<0}\calF_n)$, which is zero because $\ext^{\geq2}(\calK^0,\perm{X})=0$. Hence the projective system of maps $\{\tilalpha_n\}$ lifts to a map $W^{\geq w}\calK^0\to\epdag\prolim\calF_n$. Let $\prolim\calF'_n$ be the cone of this map, then $\prolim\piddag\calF'_n$ is represented by the complex.
\begin{equation*}
[\cdots\to\calK^{-1}\to\calK^0/W^{\geq0}\calK^0\to0]\in D^b(P'_m(Y))
\end{equation*}
which has width $<N$. This completes the induction step.
\end{proof}

We also have an analog of Corollary \ref{c:modES}.
\begin{cor}\label{c:modESmix}
Under Assumption F$_m$,
\begin{enumerate}
\item The realization functor $\rho_{X,m}:D^b(\perm{X})\to\monm{X}$ is an equivalence. We have a t-exact equivalence
\begin{eqnarray*}
\rho_{X,m}\circ\sigma_m^{-1}:D^b(\Mod^{\nil}(\hatS;P'_m(Y)))\xrightarrow{\sigma^{-1}_m}D^b(\perm{X})\xrightarrow{\rho_{X,m}}\monm{X}.
\end{eqnarray*}

\item Suppose we are given a t-exact equivalence of triangulated categories
\begin{equation*}
\nu_m:D^b(E,\Frob)\cong D'_m(Y)
\end{equation*}
for some finite dimensional algebra $E$ of finite cohomological dimension with a $\Frob$-action. Then $\rho_{X,m}\circ\sigma_m^{-1}$ extends to an equivalence of triangulated categories
\begin{equation*}
\hatnu_m:D^b(E\otimes\hatS,\Frob)\cong\hmonm{X}.
\end{equation*}
We define $\hperm{X}$ to be the image of $\Mod(E\otimes\hatS,\Frob)$ under $\hatnu_m$.
\item Under $\hatnu_m$, the adjunctions $(\piddag,\pidag)$ become
\begin{equation*}
\xymatrix{D^b(E\otimes\hatS,\Frob)\ar@<.5ex>[rr]^{\Ltimes_{\hatS}\Ql} && D^b(E,\Frob)\ar@<.5ex>[ll]^{\triv}}.
\end{equation*}
\end{enumerate}
\end{cor}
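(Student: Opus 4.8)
The plan is to transcribe the proof of Corollary \ref{c:modES}, carrying the $\Frob$-actions along at every stage, and to substitute the mixed inputs for their non-mixed counterparts: the mixed analogs of Lemma \ref{l:Smod} and Corollary \ref{c:coinv} stated above, Proposition \ref{p:freegenmix} in place of Proposition \ref{p:freegen}, and Lemma \ref{l:mixhom} whenever the honest $\ext^i$-groups of $\monm{X}$ or $\hmonm{X}$ are needed. Throughout, $\hatS$ is a $\Frob$-module with $\Frob$ acting on $V_A$ by $q^{-1}$; it is a regular complete local ring of Krull dimension $r$, so $E\otimes\hatS$ has finite global dimension whenever $E$ does, and $D^b(E\otimes\hatS,\Frob)$ together with the Koszul and projective resolutions used below is well-behaved.

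For part (1), first observe that $\rho_{X,m}$ is essentially surjective because $D^b(\perm{X})$ and $\monm{X}$ are both generated by the objects $\pidag\calF$, $\calF\in P'_m(Y)$. For full faithfulness it is enough to show $\rho_{X,m}$ induces a $\Frob$-equivariant isomorphism on the $\RHom$-complexes $\RHom_{\monm{X}}(\pidag\calF,\pidag\calF')$ between these generators; by Lemma \ref{l:mixhom} the actual $\ext^i$-groups in $\monm{X}$ are then forced to agree, being built from the geometric $\Ext$-groups by $\Frob$-invariants and coinvariants. On the $\monm{X}$-side one has $\RHom_{\monm{X}}(\pidag\calF,\pidag\calF')\cong\RHom_Y(\calF,\calF')\otimes H^*(A)$ with $H^*(A)=\wedge^*(V_A^\vee[-1])$, $\Frob$-equivariantly. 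On the $D^b(\perm{X})$-side, the mixed analog of Lemma \ref{l:Smod} identifies $\perm{X}$ with $\Mod^{\nil}(\hatS;P'_m(Y))$; choosing (Assumption F$_m$) a finite resolution of $\calF$ by objects projective over $\geom{Y}$ and then Koszul-resolving over $\hatS$ gives $\RHom_{\perm{X}}(\pidag\calF,\pidag\calF')\cong\RHom_{P'_m(Y)}(\calF,\calF')\otimes\RHom_{\hatS}(\Ql,\Ql)$ as $\Frob$-modules, and Assumption F$_m$ supplies $\RHom_{P'_m(Y)}(\calF,\calF')\isom\RHom_{D'_m(Y)}(\calF,\calF')$ while $\RHom_{\hatS}(\Ql,\Ql)\cong\wedge^*(V_A^\vee[-1])\cong H^*(A)$ compatibly with $\Frob$. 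Matching the two computations gives full faithfulness, and t-exactness of $\rho_{X,m}\circ\sigma_m^{-1}$ is immediate from the mixed analog of Lemma \ref{l:Smod}.

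For part (2), I would extend $\rho_{X,m}\circ\sigma_m^{-1}$ to pro-objects. Via $\nu_m$ and the mixed analog of Lemma \ref{l:Smod}, one identifies $\Mod^{\nil}(E\otimes\hatS,\Frob)$ with $\Mod^{\nil}(\hatS;P'_m(Y))$; passing to bounded derived categories and then to pro-objects yields a full embedding $D^b(E\otimes\hatS,\Frob)\hookrightarrow\pro\monm{X}$. Every object of $D^b(E\otimes\hatS,\Frob)$ is quasi-isomorphic to a bounded complex of free modules $M\otimes\hatS$, $M\in\Mod(E,\Frob)$, and the image of such a complex is $\pi$-constant, uniformly bounded in cohomological degree, and uniformly bounded above in weights (the weights of $\hatS$ being bounded above by $0$), hence lies in $\hmonm{X}$. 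Conversely, by Proposition \ref{p:freegenmix}, $\hmonm{X}$ is generated as a triangulated category by the \fm objects $\Free(\calF)$, $\calF\in P'_m(Y)$, which under the embedding correspond to $\nu_m^{-1}(\calF)\otimes\hatS$; hence the embedding restricts to an equivalence $\hatnu_m:D^b(E\otimes\hatS,\Frob)\isom\hmonm{X}$, and $\hperm{X}$ is by definition the essential image of $\Mod(E\otimes\hatS,\Frob)$. Part (3) then follows by transporting the adjunction $(\piddag,\pidag)$ through $\hatnu_m$ using the mixed analog of Corollary \ref{c:coinv}: $\pidag$ becomes $\triv$, and $\piddag$ becomes $(-)\Ltimes_{\hatS}\Ql$ by adjunction.

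The step I expect to be the main obstacle is the $\Frob$-equivariant, weight-bookkept form of the $\Ext$-computation in part (1): one must check that the Koszul resolution of $\Ql$ over $\hatS$ yields $\wedge^*(V_A^\vee[-1])$ with precisely the Tate twists needed to match $H^*(A)$ as a $\Frob$-module, and that Assumption F$_m$ really gives $\RHom_{P'_m(Y)}\isom\RHom_{D'_m(Y)}$ $\Frob$-equivariantly and not merely after forgetting $\Frob$. Granted this, parts (2) and (3) are formal, driven entirely by Proposition \ref{p:freegenmix} and the mixed analog of Corollary \ref{c:coinv}, exactly as in the non-mixed case.
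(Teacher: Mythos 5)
Your proposal is correct and follows essentially the same route as the paper: the paper's proof of this corollary simply says that most of the argument is the same as for Cor.\ref{c:modES}, with the one additional observation that the $\ext$-groups in the mixed settings ($\perm{X}$ and $\monm{X}$ alike) are obtained by applying $H^*(\ZZ\Frob,-)$ to the geometric $\RHom$-complexes, which is precisely what your use of Lemma \ref{l:mixhom} and the $\Frob$-equivariant $\RHom$ computation supplies. Your extra explicit check that images of complexes of free $E\otimes\hatS$-modules are uniformly bounded above in weights, and your flagging of the Tate-twist bookkeeping in the Koszul resolution, are appropriate refinements of what the paper leaves implicit.
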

\begin{proof}
Most of the arguments are the same as the proof of Corollary \ref{c:modES}. We only have to notice that the ext-groups in the mixed settings (both the Yoneda ext's in $\perm{X}$ and the ext's in $\monm{X}$) are obtained by taking $\upH^*(\ZZ\Frob,-)$ on the $\bR\Hom$-complexes for the underlying complexes on $\geom{X}$. Hence the full faithfulness of $\rho_{X,m}$ follows from the calculations in the proof of Corollary \ref{c:modES}(1).
\end{proof}

\begin{remark}\label{r:projresmix} A mixed analog of Remark \ref{r:projres} holds: if $\calF\in\hmon{X}$ and $\piddag\calF$ has a resolution as in \eqref{mixK}, then $\calF$ can be represented by a filtered complex $\tcK\in\hatD'_mF(\qw{X}{A})$ such that $\tcK^i:=\Gr_F^i\tcK[i]$ satisfies $\piddag\tcK^i\cong\calK^i$, and $\omega\tcK^i\cong\Free(\omega\calK^i)$ (However, there is no guarantee that $\tcK^i$ is isomorphic to $\Free(\calK^i)$). After identifying $C^b(\hperm{X})$ with a full subcategory of $\hatD'_mF(\qw{X}{A})$, the object $\calF$ itself can be represented by a complex
\begin{equation*}
[\cdots\to\tcK^{-1}\to\tcK^0\to\cdots]\in C^b(\hsP).
\end{equation*}
which has the same length as \eqref{mixK}.

In particular, if $\piddag\calF\in P'_m(Y)$ and $\omega(\piddag\calF)$ is a projective object in $P'(Y)$, then $\omega\calF$ is itself a \fm perverse local system. If, moreover, $\Gr_W^i(\piddag\calF)$ is nonzero for at most one $i$, then $\calF\cong\Free(\piddag\calF)$.
\end{remark}

\subsection{The stratified case}\label{ss:stra}
We continue with the situation in \S\ref{ss:mix}. We further suppose that $Y$ has a finite stratification:
\begin{equation*}
Y=\bigsqcup_{\alpha\in I}Y_\alpha
\end{equation*}
such that each embedding $i_\alpha:Y_\alpha\hookrightarrow Y$ is affine and each $Y_\alpha$ is smooth of equidimension $d_\alpha$.  Let $X_\alpha:=\pi^{-1}(Y_\alpha)$. Let $i_\alpha:Y_\alpha\hookrightarrow Y$ and $\tili_\alpha:X_\alpha\hookrightarrow X$ be the inclusions. For each $\alpha\in I$, let $Y_{\leq\alpha}$ be the closure of $Y_\alpha$ and let $Y_{<\alpha}=Y_{\leq\alpha}-Y_{\alpha}$. Similarly define $X_{\leq\alpha}$ and $X_{<\alpha}$.

Let $\scD\subset D^b_m(Y)$ be a full triangulated subcategory stable under twists from sheaves on $\Spec(k)$, whose objects are constructible with respect to the given stratification. Let $\scD_{\leq\alpha}=\scD\cap D^b_m(Y_{\leq\alpha})$ and define $\scD_{<\alpha}$ similarly. Let  $\scD_\alpha:=\scD_{\leq\alpha}/\scD_{<\alpha}$, which is naturally a full subcategory of $D^b_{m}(Y_\alpha)$.

Now we take $D'_m(Y)=\scD$ and apply the constructions in Definition \ref{def:weightbound}. We denote $\monm{X}$ by $\scM$ and $\hmonm{X}$ by $\hsM$. We can also restrict the situation to any locally closed union of strata. In particular, we can define $\scM_{\leq\alpha},\hsM_{\leq\alpha},\scM_\alpha,\hsM_{\alpha}$ etc. The natural functors $\tili_{\alpha}^?,\tili_{\alpha,?},\tili_{\leq\alpha}^?,\tili_{\leq\alpha,?}$, etc. (for $?=!$ or $*$) and their adjunctions, natural transformations all extend to the completions.

We denote the non-mixed versions of the above categories by $\omega\scD,\omega\scM,\omega\hsM$, etc. These are categories of complexes on $\geom{Y},\geom{X}$, etc.

The category $\scD$ (resp. $\scM$) inherits a perverse t-structure whose heart we denote by $\scQ$ (resp. $\scP$). Similarly, let $\scQ_\alpha$ (resp. $\scP_\alpha$) be the heart of $\scD_\alpha$ (resp. $\scM_\alpha$).

We assume that each category $\scD_\alpha$ has the simplest possible type:

\textbf{Assumption S.} Each $X_\alpha$ is a trivial $A$-torsor over $Y_\alpha$, and $\upH^{*}(\geom{Y_\alpha})=\Ql$. Moreover, there is a rank one perverse local system $\calL_\alpha\in\scQ_\alpha$ such that $\omega\calL_\alpha$ is the unique irreducible object in $\omega\scQ_\alpha$.

Assumption S implies a t-exact equivalence of triangulated categories
\begin{equation*}
\nu_\alpha:D^b(\Frob)\cong\scD_\alpha.
\end{equation*}
sending the trivial $\Frob$-module $\Ql$ to $\calL_\alpha$. For each $\alpha$, Corollary \ref{c:modESmix} gives a natural equivalence
\begin{equation}\label{stratumeq}
\hatnu_\alpha:D^b(\hatS,\Frob)\cong\hsM_\alpha
\end{equation}
under which the free module $\hatS$ goes to $\tcL_\alpha=\Free(\calL_\alpha)\in\hsM_\alpha$.

Let
\begin{equation*}
\Delta_\alpha:=i_{\alpha,!}\calL_\alpha, \nabla_\alpha:=i_{\alpha,*}\calL_\alpha.
\end{equation*}
Then $\scD$ is generated as a triangulated category by either the twists of $\{\Delta_\alpha|\alpha\in I\}$ or $\{\nabla_\alpha|\alpha\in I\}$. Let
\begin{equation*}
\tDel_\alpha:=\tili_{\alpha,!}\tcL_\alpha, \tnab_\alpha:=\tili_{\alpha,*}\tcL_\alpha.
\end{equation*}

\begin{lemma}\label{l:delgen}
The triangulated category $\hsM$ is generated by either the twists of $\{\tDel_\alpha|\alpha\in I\}$ or $\{\tnab_\alpha|\alpha\in I\}$.
\end{lemma}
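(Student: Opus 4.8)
The statement to prove is Lemma \ref{l:delgen}: the completed category $\hsM$ is generated as a triangulated category by the twists of $\{\tDel_\alpha\mid\alpha\in I\}$ (and likewise by the $\tnab_\alpha$). My plan is to induct on the stratification. Order the index set $I$ compatibly with the closure partial order, and for a closed union of strata $Z$ (say $Z = Y_{\leq\alpha}$, or more generally any closed union) let $\hsM_Z$ be the corresponding completed monodromic category. The recollement for the open--closed decomposition $Z' = Z \smallsetminus Y_\alpha \hookrightarrow Z \hookleftarrow Y_\alpha$ passes to the completions: indeed by Corollary \ref{c:functor} the functors $\tili_{\alpha}^*, \tili_{\alpha,!}, \tili_{Z',*}, \tili_{Z'}^*$ and the associated adjunction triangles all extend to $\hsM$. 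So for any $\calF \in \hsM_Z$ there is a distinguished triangle $\tili_{Z',!}\tili_{Z'}^*\calF \to \calF \to \tili_{\alpha,*}\tili_\alpha^*\calF \to$ expressing $\calF$ as an extension of an object pushed forward from the open part $Z'$ and an object pushed forward from the stratum $Y_\alpha$.

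First I would handle the base case: a single stratum $Y_\alpha$, where by Assumption S the torsor $X_\alpha \to Y_\alpha$ is trivial and $H^*(\geom{Y_\alpha}) = \Ql$, so Corollary \ref{c:modESmix} gives the equivalence $\hatnu_\alpha: D^b(\hatS,\Frob) \isom \hsM_\alpha$ sending the free module $\hatS$ to $\tcL_\alpha$. Since $D^b(\hatS,\Frob)$ is generated as a triangulated category by the twists of the free rank-one module $\hatS$ (every bounded complex of finitely generated $\hatS$-modules is quasi-isomorphic to a bounded complex of free modules; this is where finite cohomological dimension of $\hatS$ — equivalently finiteness of $\dim A$ — is used), $\hsM_\alpha$ is generated by the twists of $\tcL_\alpha$. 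Then $\tili_{\alpha,!}\tcL_\alpha = \tDel_\alpha$, and more generally $\tili_{\alpha,!}$ carries this generating set for $\hsM_\alpha$ to a set of objects of $\hsM_Z$; combined with the recollement triangle above and the inductive hypothesis applied to $\hsM_{Z'}$ (whose generators $\tDel_\beta$, $\beta$ indexing strata in $Z'$, push forward under $\tili_{Z',!}$ to the $\tDel_\beta$ in $\hsM_Z$ since $\tili_{Z',!}$ composed with the open inclusion into $Z$ is again a $!$-extension), every object of $\hsM_Z$ is built by finitely many extensions and shifts from twists of the $\tDel_\beta$. Taking $Z = Y$ (or rather the colimit over the $Y_{\leq\alpha}$, since $\hsM$ is the inductive $2$-limit of the $\hsM_{\leq\alpha}$) gives the claim for the $\tDel$'s; the argument for the $\tnab$'s is identical, using $\tili_{\alpha,*}$ and $\tili_{Z',*}$ in place of the $!$-functors, together with the dual form of the recollement triangle $\tili_{\alpha,*}\tili_\alpha^!\calF \to \calF \to \tili_{Z',*}\tili_{Z'}^*\calF \to$.

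The main subtlety — not really an obstacle, but the point requiring care — is that $\hsM$ consists of pro-objects, so I must make sure each reduction step stays inside the subcategories of $\pi$-constant, uniformly-bounded (in degree and weight) objects. This is guaranteed because the recollement functors were already shown in Corollary \ref{c:functor} and Theorem \ref{th:montri} to preserve these finiteness conditions, and the base-case generation statement is really a statement about $D^b(\hatS,\Frob)$ via the equivalence $\hatnu_\alpha$, which by construction matches $\hsM_\alpha$. One should also note that "generated as a triangulated category" here means closure under shifts, cones, and isomorphism (and in particular finite extensions), and that finitely many strata suffice at each stage since $X_{\leq\alpha}$ meets only finitely many strata; passing to the $2$-colimit is then harmless. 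I do not expect any genuinely hard input beyond what is already set up: the real content is Corollary \ref{c:modESmix}, which was the delicate part, and here it is simply invoked stratum by stratum.
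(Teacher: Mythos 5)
Your approach matches the paper's: reduce to each stratum via recollement, and then use the stratum-level generation result (Prop.~\ref{p:freegenmix}, which is the engine behind the equivalence in Cor.~\ref{c:modESmix}) together with Assumption~S to conclude that twists of $\tcL_\alpha$ generate $\hsM_\alpha$, so that $\tDel_\alpha$ (resp.\ $\tnab_\alpha$) generate $\hsM$. One small slip: since $Y_\alpha$ is the open stratum in $Z = Y_{\leq\alpha}$ and $Z'$ is its closed complement, the recollement triangles are $\tili_{\alpha,!}\tili_\alpha^*\calF \to \calF \to \tili_{Z',*}\tili_{Z'}^*\calF$ (giving the $\tDel$'s) and $\tili_{Z',*}\tili_{Z'}^!\calF \to \calF \to \tili_{\alpha,*}\tili_\alpha^*\calF$ (giving the $\tnab$'s); the ones you wrote mix the $!$/$*$ functors incorrectly, but this does not affect the argument since you need both versions anyway.
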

\begin{proof}
Any $\calF\in\hsM$ is expressed as a successive extension of $\tili_{\alpha,*}\tili^!_\alpha\calF$ (resp. $\tili_{\alpha,!}\tili^*_\alpha\calF$) for $\alpha\in I$. By Proposition \ref{p:freegenmix}, each $\tili^!_\alpha\calF\in\hsM_\alpha$ (resp. $\tili^*_\alpha\calF$) is a successive extension of shifts of \fm objects, hence a successive extension of shifts and twists of $\tcL_\alpha$ by Assumption S. Therefore $\calF$ is a successive extension of shifts and twists of $\tnab_\alpha$ (resp. $\tDel_\alpha$).
\end{proof}

\begin{lemma}\label{l:prot} The perverse t-structure on $\scM$ extends to a t-structure $(\hsM^{\leq0},\hsM^{\geq0})$ on $\hsM$, such that the natural inclusions
\begin{eqnarray}\label{Mleq}
\pro\scM^{\leq0}\cap\hsM\hookrightarrow\hsM^{\leq0};\\
\label{Mgeq}
\pro\scM^{\geq0}\cap\hsM\hookrightarrow\hsM^{\geq0}
\end{eqnarray}
are equivalences of categories.
\end{lemma}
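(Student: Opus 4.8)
The plan is to construct the t-structure on $\hsM$ by gluing the stratum t-structures along the (finite) stratification of $Y$, and then to identify $\hsM^{\leq 0}$ and $\hsM^{\geq 0}$ with the asserted pro-subcategories. On each stratum category $\hsM_\alpha$ one takes the t-structure transported from the natural t-structure on $D^b(\hatS,\Frob)$ via the equivalence \eqref{stratumeq}; by Cor.\ref{c:modESmix}(2) and Remark \ref{r:simplet} this t-structure restricts to the perverse t-structure on $\scM_\alpha\subset\hsM_\alpha$, and moreover a pro-object $\prolim\calG_n$ with $\calG_n\in\scM_\alpha^{[a,b]}$ automatically lies in $\hsM_\alpha^{[a,b]}$. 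For a maximal (hence open) stratum $\alpha$, with closed complement $X_{<\alpha}=X\setminus X_\alpha$, I would invoke Cor.\ref{c:functor} and Prop.\ref{compfun} to see that the full recollement package --- the functors $\tili^*_\alpha,\tili^!_\alpha,\tili_{\alpha,!},\tili_{\alpha,*}$ for the open embedding and $\tili_{<\alpha,*}=\tili_{<\alpha,!},\tili^*_{<\alpha},\tili^!_{<\alpha}$ for the closed embedding, together with their adjunctions, the standard vanishings, and the localization triangles --- extends from $\scM$ to $\hsM$. Then \cite[Thm.~1.4.10]{BBD} glues the (inductively constructed) t-structure on $\hsM_{<\alpha}$ with that on $\hsM_\alpha$ to a t-structure on $\hsM$; iterating over the strata produces $(\hsM^{\leq 0},\hsM^{\geq 0})$, characterized by $\tili_\alpha^*\calF\in\hsM_\alpha^{\leq 0}$ for all $\alpha$ (resp. $\tili_\alpha^!\calF\in\hsM_\alpha^{\geq 0}$ for all $\alpha$), with heart $\hsP$. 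Since each stratum t-structure restricts to the perverse one, the glued t-structure restricts to the perverse t-structure on $\scM$.

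Next I would show the two inclusions \eqref{Mleq} and \eqref{Mgeq} are equivalences; as both sides are full subcategories of $\hsM$, this amounts to checking that the two conditions defining them on an object $\calF\in\hsM$ are equivalent. The inclusion $\pro\scM^{\leq 0}\cap\hsM\subseteq\hsM^{\leq 0}$ (and its $\geq 0$ analogue) is the easy direction: writing $\calF\cong\prolim\calF_n$ with $\calF_n\in\scM^{\leq 0}$, the functor $\tili_\alpha^*$ commutes with $\prolim$ and is right t-exact for the perverse t-structures, so $\tili_\alpha^*\calF$ is a pro-object of $\scM_\alpha^{\leq 0}$, hence lies in $\hsM_\alpha^{\leq 0}$ by Remark \ref{r:simplet}; the characterization above then gives $\calF\in\hsM^{\leq 0}$. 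The $\geq 0$ case is identical with $\tili_\alpha^!$ (which is left t-exact) in place of $\tili_\alpha^*$.

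For the reverse inclusion I would induct on $|I|$, the base case being precisely the single-stratum statement of Remark \ref{r:simplet}. Given $\calF\in\hsM^{\leq 0}$ and a maximal stratum $\alpha$, I would use the localization triangle $\tili_{\alpha,!}\tili_\alpha^*\calF\to\calF\to\tili_{<\alpha,*}\tili_{<\alpha}^*\calF\to$: the base case gives $\tili_\alpha^*\calF\cong\prolim\calG_n$ with $\calG_n\in\scM_\alpha^{\leq 0}$, the inductive hypothesis applied to $\hsM_{<\alpha}$ gives $\tili_{<\alpha}^*\calF\cong\prolim\calH_n$ with $\calH_n\in\scM_{<\alpha}^{\leq 0}$, and pushing these pro-objects through the right t-exact functors $\tili_{\alpha,!}$ and $\tili_{<\alpha,*}=\tili_{<\alpha,!}$ shows that both outer terms of the triangle are pro-objects of $\scM^{\leq 0}$. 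To deduce that $\calF$ itself is a pro-object of $\scM^{\leq 0}$, I would present $\calF\cong\prolim\calF_n$ with $\calF_n\in\scM$ uniformly bounded in degrees and show $\prolim\,\ptau_{\geq 1}\calF_n$ is pro-zero; replacing $\calF_n$ by $\ptau_{\leq 0}\calF_n$ and iterating then finishes. This vanishing follows the argument of the Claim inside the proof of Prop.\ref{p:freegen} and its mixed refinement in Prop.\ref{p:freegenmix}: restricted to the open stratum $X_\alpha$ the functor $\tili_\alpha^*$ is t-exact, so $\tili_\alpha^*\calF\in\hsM_\alpha^{\leq 0}$ together with Remark \ref{r:simplet} forces the transition maps on $\pH^{\geq 1}$ to become zero pro-eventually over $X_\alpha$; one then descends to the closed complement $X_{<\alpha}$ and invokes the inductive hypothesis there to kill what remains. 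Uniform boundedness (in degree and weight) of all the pro-presentations produced, needed to stay inside $\hsM$ rather than merely $\pro\scM$, is tracked exactly as in Definition \ref{def:weightbound} and the proof of Prop.\ref{p:freegenmix}.

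The step I expect to be the main obstacle is exactly this last pro-truncation argument in the stratified setting: the restriction functors $\tili_\beta^*$ to non-maximal strata are only right t-exact, so the perverse cohomology of the pro-object $\calF$ cannot be read off naively stratum by stratum. The remedy is to peel strata off one open piece at a time, reducing at each stage the vanishing of the top perverse cohomology of the relevant pro-object to the single-stratum statement via the concrete model $\hsM_\alpha\cong D^b(\hatS,\Frob)$ of Cor.\ref{c:modESmix}(2); there $\hatS$ is Noetherian and inverse limits of nilpotent-module complexes compute ordinary cohomology, which makes the ``eventually zero transition maps'' behaviour manifest.
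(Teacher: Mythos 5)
Your construction of the glued t-structure, the easy inclusion, and the reduction of the hard inclusion to the localization triangle $\tili_{\alpha,!}\tili_\alpha^*\calF\to\calF\to\tili_{<\alpha,*}\tili_{<\alpha}^*\calF\to{}$ with both outer terms exhibited as pro-objects of $\scM^{\leq0}$ (base case on the open stratum, induction on the closed complement) all match the paper's proof. The divergence, and the gap, is in the very last step: how to pass from ``both outer terms of the triangle lie in $\pro\scM^{\leq0}$'' to ``$\calF$ lies in $\pro\scM^{\leq0}$.'' Your proposed route --- present $\calF=\prolim\calF_n$, show $\prolim\ptau_{\geq1}\calF_n$ is pro-zero by forcing the transition maps on $\pH^{\geq1}$ to die over the open stratum and then ``descending to the closed complement and invoking the inductive hypothesis'' --- does not go through as stated. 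After killing the transition maps over $X_\alpha$, what remains is a pro-system of perverse sheaves supported on $X_{<\alpha}$ that is \emph{not} presented as (a truncation of) an object of $\hsM_{<\alpha}^{\leq0}$, so the inductive hypothesis has nothing to say about it; and since $\tili_{<\alpha}^*$ is only right t-exact, $\pH^{\geq1}$ of $\calF_n$ cannot be read off from $\tili_{<\alpha}^*\calF$, which is the only thing the induction controls. You correctly flag this as the main obstacle, but the remedy you offer is an assertion, not an argument.

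The paper closes this step with a much simpler device that avoids perverse truncation of pro-objects entirely: a general Claim that if $X\to Y\to Z\to X[1]$ is a distinguished triangle in $\hatD\subset\pro(D')$ with $X=\prolim X_n$ and $Z=\prolim Z_n$ both in $\pro(D'^{\leq0})$, then $Y\in\pro(D'^{\leq0})$. The proof is immediate from the way distinguished triangles in $\pro(D')$ are defined (Theorem \ref{th:proD} and (TR3)): the connecting map $Z\to X[1]$ is realized by a projective system of maps $f_n:Z_{z(n)}\to X_n[1]$, one takes $Y_n[1]=\mathrm{Cone}(f_n)$ levelwise, each $Y_n$ is an extension of $Z_{z(n)}$ by $X_n$ and hence lies in $D'^{\leq0}$, and $\prolim Y_n$ is a cone of $f$, so $Y\cong\prolim Y_n$. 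Applied to your localization triangle this finishes the proof with no pro-vanishing argument. I recommend replacing your final step by this Claim; alternatively, if you want to salvage the truncation route, the clean way to show $\prolim\ptau_{\geq1}\calF_n=0$ is to check that it corepresents the zero functor on $\scM$ (using $\Hom_{\hsM}(\hsM^{\leq0},\scM^{\geq1})=0$ from the glued t-structure together with $\Hom(\ptau_{\geq1}\calF_n,T)=0$ for $T\in\scM^{\leq0}$), rather than by a stratum-by-stratum descent.
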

\begin{proof}
According to Remark \ref{r:simplet}, for each $\alpha$, the equivalence $\hatnu_\alpha$ in \eqref{stratumeq} gives a t-structure $(\hsM^{\leq0}_\alpha$,$\hsM^{\geq0}_\alpha)$ on $\hsM_\alpha$. We can apply the gluing procedure in \cite[\S 1.4]{BBD} to obtain the desired t-structure on $\hsM$.

Next we prove that \eqref{Mleq} is an equivalence  (and proof for \eqref{Mgeq} is similar and will be omitted). We first prove a general result.
\begin{claim}
Let $D'$ be a triangulated category with a t-structure $(D'^{\leq0},D'^{\geq0})$. Let $\hatD\subset\pro(D')$ be a triangle-complete full subcategory satisfying the assumptions of Theorem \ref{th:proD}. Then $\hatD$ is naturally a triangulated category. Suppose $X\to Y\to Z\to X[1]$ is a distinguished triangle in $\hatD$ such that $X,Z\in\pro(D'^{\leq0})$, then $Y$ is isomorphic to an object in $\pro(D'^{\leq0})$.
\end{claim}
\begin{proof}
Let $X=\prolim X_n,Z=\prolim Z_n$ with $X_n,Z_n\in D^{\leq0}$. Then the map $f:Z\to X[1]$ is given by a projective system of maps $f_n:Z_{z(n)}\to X_n[1]$ where $\{Z_{z(n)}\}$ is a cofinal subsequence of $\{Z_n\}$. Let $Y_n[1]$ be the cone of $f_n$. It is clear then $Y_n\in D^{\leq0}$. The axiom (TR3) of triangulated categories makes $Y_n$ into a projective system $\prolim Y_n\in\pro(D^{\leq0})$. Since $\prolim Y_n[1]$ is also a cone of $f$, we have $Y\cong\prolim Y_n$.
\end{proof}
Now we prove that \eqref{Mleq} is an equivalence. If $\calF\in\hsM^{\leq0}$, we need to find a projective system $\calF'_n\in\scM^{\leq0}$ such that $\calF\cong\prolim\calF'_n$. We do this by induction on the support of $\calF$. Suppose $\calF\in\hsM^{\leq0}_{\leq\alpha}$, and by induction hypothesis we can find $\calG_n\in\scM^{\leq0}_{<\alpha}$ such that $\tili^*_{<\alpha}\calF\cong\prolim\calG_n$. Using Remark \ref{r:simplet}, we can also find $\calH_n\in\hsM^{\leq0}_\alpha$ such that $\tili^*_\alpha\calF\cong\prolim\calH_n$. Therefore $\calF$ fits into a distinguished triangle
\begin{equation*}
\tili_{\alpha,!}\tili^*_{\alpha}\prolim\calH_n\to\calF\to\tili_{<\alpha,*}\tili^*_{<\alpha}\prolim\calG_n\to\tili_{\alpha,!}\tili^*_{\alpha}\prolim\calH_n[1]
\end{equation*}
Now we apply the above claim to finish the proof.
\end{proof}

We denote the heart of the extended perverse t-structure on $\hsM$ by $\hsP$. It is clear that $\pro\scP\cap\hsM\subset\hsP$. This inclusion is in fact also an equivalence of categories, but we shall not need this fact. The objects $\tDel_\alpha,\tnab_\alpha$ belong to $\hsP$.

\subsection{Free-monodromic tilting sheaves}\label{ss:fmt}
\begin{defn}\label{def:fmt}
\begin{enumerate}
\item []
\item An object $\calT\in\omega\hsM$ is called a {\em \fm tilting sheaf}, if for each $\alpha\in I$, both complexes $\tili^*_\alpha\calT$ and $\tili^!_\alpha\calT$ (as objects in $\omega\hsM_\alpha$) are \fm perverse local systems (see Def. \ref{def:fm}; in our situation, this simply means a direct sum of $\omega\tcL_\alpha$'s).
\item An object $\calT\in\hsM$ is called a {\em (mixed) \fm tilting sheaf}, if $\omega\calT\in\hsM$ is a \fm tilting sheaf.
\end{enumerate}
\end{defn}

It is clear that $\calT\in\hsM$ is a \fm tilting sheaf if and only if it is both a successive extension of twists of $\tDel_\alpha$ (we call such an expression a {\em $\tDel$-flag}) and a successive extension of twists of $\tnab_\alpha$ (we call such an expression a {\em $\tnab$-flag}).

Let $\scT\subset\hsP$ be the additive full subcategory consisting of \fm tilting sheaves.

\begin{lemma}\label{l:pitilt}
An object $\calT\in\hsM$ is a \fm tilting sheaf if and only if $\piddag\calT\in\scD$ is a tilting sheaf.
\end{lemma}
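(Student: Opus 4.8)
The plan is to prove both implications of Lemma~\ref{l:pitilt} by reducing the ``tilting'' property, on each side, to a statement about $*$- and $!$-restrictions to the strata. Recall that by Remark~\ref{r:projresmix} (applied stratum-wise via Assumption~S), an object $\calF\in\hsM_\alpha$ is a \fm perverse local system exactly when $\piddag\calF$ is a (shifted, up to the normalization) perverse local system on $Y_\alpha$, i.e.\ when $\omega\piddag\calF$ is a projective object of $P'(\geom{Y_\alpha})$; but by Assumption~S the only such objects are sums of $\omega\calL_\alpha$, so \emph{every} object of $\omega P'_m(Y_\alpha)$ is automatically projective and the condition ``$\piddag\tili^*_\alpha\calT$ is a perverse local system on $Y_\alpha$'' is equivalent to ``$\tili^*_\alpha\calT$ is a \fm perverse local system''. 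The same holds with $!$ in place of $*$. So the content of the lemma is the commutation of $\piddag$ with the restriction functors $i^*_\alpha$, $i^!_\alpha$ in the appropriate sense.

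First I would record the base change identities. For the $A$-equivariant map $\pi:X\to Y$ and a stratum inclusion, the Cartesian square relating $X_\alpha\hookrightarrow X$, $Y_\alpha\hookrightarrow Y$ and the projections gives, by proper base change, a canonical isomorphism $i^*_\alpha\piddag\cong\piddag_\alpha\tili^*_\alpha$ (here $\piddag=\pi_![r]$, so $\pi_!$ commutes with $*$-pullback on the base by proper base change along the torus fibers, which are of finite type), and dually $i^!_\alpha\piddag\cong\piddag_\alpha\tili^!_\alpha$ using $\pi^![-r]$ and the fact that $\pi$ is smooth of relative dimension $r$ so that $\pi^!=\pi^*[2r](r)$ and $\pi_!$ commutes with $i^!$ up to the same shift. (All of this is the stratum-wise incarnation of the extension of the adjunction $(\piddag,\pidag)$ and the functors $\tili^?_\alpha$ to the completed categories, as stated at the end of \S\ref{ss:stra}.) One has to be slightly careful that these isomorphisms, established before completion, extend to $\hsM$; this follows from Cor.~\ref{c:functor} and Prop.~\ref{compfun} exactly as in the rest of the appendix.

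Next, the forward direction. Suppose $\calT\in\hsM$ is a \fm tilting sheaf. Then for each $\alpha$, $\tili^*_\alpha\calT$ and $\tili^!_\alpha\calT$ are \fm perverse local systems on $X_\alpha$, hence (by the first paragraph) $\piddag_\alpha\tili^*_\alpha\calT$ and $\piddag_\alpha\tili^!_\alpha\calT$ are perverse local systems on $Y_\alpha$ concentrated in the correct perverse degree. By the base change isomorphisms of the second paragraph, $i^*_\alpha\piddag\calT$ and $i^!_\alpha\piddag\calT$ are perverse local systems on $Y_\alpha$ for every $\alpha$; by definition this says $\piddag\calT$ is a tilting sheaf in $\scD$. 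Conversely, if $\piddag\calT$ is a tilting sheaf, then $i^*_\alpha\piddag\calT$ and $i^!_\alpha\piddag\calT$ are perverse local systems, so by base change $\piddag_\alpha\tili^*_\alpha\calT$ and $\piddag_\alpha\tili^!_\alpha\calT$ are perverse local systems on $Y_\alpha$; since $\piddag_\alpha$ is conservative (Lemma~\ref{l:picons}) and, by Remark~\ref{r:projresmix} together with Assumption~S, the only objects of $\hsM_\alpha$ with $\piddag_\alpha$-image a perverse local system are the \fm perverse local systems, we conclude $\tili^*_\alpha\calT,\tili^!_\alpha\calT$ are \fm perverse local systems for all $\alpha$, i.e.\ $\calT$ is a \fm tilting sheaf.

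The main obstacle I anticipate is not conceptual but bookkeeping: making sure the base-change isomorphisms $i^*_\alpha\piddag\cong\piddag_\alpha\tili^*_\alpha$ and $i^!_\alpha\piddag\cong\piddag_\alpha\tili^!_\alpha$ genuinely hold at the level of the \emph{completed} categories $\hsM$ (so that one may feed in a pro-object $\calT$), rather than merely on $\scM$, and that they are compatible with the extensions of $\tili^?_\alpha$ constructed in Cor.~\ref{c:functor}. This is handled by the general nonsense of Prop.~\ref{compfun}: each of $i^*_\alpha$, $i^!_\alpha$, $\piddag$ is already known to extend, and the natural transformation between the two composites is an isomorphism on the generating objects $\pidag(-)$ before completion, hence remains one after passing to pro-objects. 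Once this is in place, the rest of the argument is the short formal deduction above.
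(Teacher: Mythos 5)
Your proof is correct and follows essentially the same route as the paper's: both reduce the lemma to the stratum-wise equivalence ``$\calF_\alpha\in\hsM_\alpha$ is a \fm perverse local system iff $\piddag\calF_\alpha$ is a perverse local system,'' which the paper deduces from the equivalence $\hatnu_\alpha:\hsM_\alpha\cong D^b(\hatS,\Frob)$ and standard facts about $\hatS$-modules, while you deduce it from Remark~\ref{r:projresmix} together with the projectivity of everything in $\omega\scQ_\alpha$ under Assumption~S --- two phrasings of the same structure theory. Your explicit treatment of the base-change isomorphisms $i^?_\alpha\piddag\cong\piddag_\alpha\tili^?_\alpha$ (which on monodromic complexes hold for $?=!$ as well as $?=*$, since $\pi_*$ and $\pi_!$ differ there only by a shift) just makes precise the step the paper compresses into ``this immediately implies the lemma.''
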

\begin{proof}
For fixed $\alpha$ and an object $\calF_\alpha\in\hsM_\alpha$, $\omega\calF_\alpha$ is a \fm perverse local system if and only if $\piddag\calF_\alpha\in\scQ_\alpha$. In fact, this follows from the equivalence $\hatnu_\alpha$ in \eqref{stratumeq} and the well-known facts about $\hatS$-modules. This immediately implies the lemma.
\end{proof}

By \cite[\S 1.1-1.4]{BBM} and \cite[Lemma 2.2.3]{Yun2}, for each stratum $\alpha$, there is a mixed tilting sheaf $\calT_\alpha\in\scQ_{\leq\alpha}$ whose restriction to $Y_\alpha$ is $\calL_{\alpha}$ and whose underlying complex $\omega\calT_\alpha$ is indecomposable (note that {\em loc.cit} only dealt with the case when $\calL_\alpha$ is constant, however, for the argument there to work one only needs the vanishing of $\upH^{i}(\geom{X_\alpha})$ for $i=1,2$, which is ensured by Assumption S). The following lemma is an analogous existence result for mixed \fm tilting sheaves. By \cite[\S 1.4]{BBM}, $\{\omega\calT_\alpha|\alpha\in I\}$ are the only indecomposable tilting sheaves (up to isomorphism), and any tilting sheaf $T\in\omega\scQ$ is a direct sum of the $\omega\calT_\alpha$'s. A  \fm analog of this structure result will be proved in Remark \ref{rm:unique}(2).

\begin{lemma}\label{l:fmtexist}
For each $\alpha\in I$, there exists a mixed \fm tilting sheaf $\tcT_\alpha\in\hsM_{\leq\alpha}$ such that $\piddag\tcT_\alpha\cong\calT_\alpha$.
\end{lemma}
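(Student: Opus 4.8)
The plan is to construct $\tcT_\alpha$ as a free-monodromic lift of $\calT_\alpha$, imitating in the completed setting the usual inductive construction of tilting sheaves by successive extensions along strata. The starting point is the observation (Remark \ref{r:projresmix}) that since $\omega\calT_\alpha$ is a tilting sheaf, in particular $\omega(\tili^*_\beta\calT_\alpha)$ is a projective object (a sum of copies of $\calL_\beta$) in $\omega\scQ_\beta$ for each stratum $\beta\le\alpha$; this is exactly the kind of pointwise-projectivity hypothesis that guarantees a free-monodromic lift stratum by stratum. So first I would set up the induction on the stratification poset restricted to $\{\beta:\beta\le\alpha\}$, extended to a total order, and suppose inductively that a mixed free-monodromic tilting sheaf $\tcT^{<\alpha}$ has been constructed on $X_{<\alpha}$ with $\piddag\tcT^{<\alpha}\cong\calT_\alpha|_{Y_{<\alpha}}=\tili^*_{<\alpha}\calT_\alpha$, compatibly with the $\tDel$- and $\tnab$-flags.

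Next I would produce the lift over $X_{\le\alpha}$. On the open stratum $X_\alpha$ we have the free-monodromic local system $\tcL_\alpha\in\hsM_\alpha$ with $\piddag\tcL_\alpha\cong\calL_\alpha$, so the "open piece" is forced. The task is to glue $\tcL_\alpha$ and $\tcT^{<\alpha}$ into an object of $\hsM_{\le\alpha}$ that is simultaneously a $\tDel$-flag and a $\tnab$-flag. For the $\tDel$-flag version one wants an object $\tcT_\alpha$ fitting into a distinguished triangle $\tDel_\alpha\to\tcT_\alpha\to\tcT^{<\alpha}\to\tDel_\alpha[1]$; for the $\tnab$-flag version one wants $\tcT^{<\alpha}\to\tcT_\alpha'\to\tnab_\alpha\to$; the content of being tilting is that these two can be arranged to coincide. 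Concretely I would follow the Beilinson–Bezrukavnikov–Mirković argument (\cite{BBM}, \S1.1–1.4) as adapted to the mixed setting in \cite[\S2.2]{Yun2}: the obstruction to extending a $\tDel$-flag across the stratum $X_\alpha$ lives in an $\Ext^2$-group computed on $X_{<\alpha}$, and the relevant $\Ext^2$ vanishes because $\omega\calT_\alpha$ is $\ast$- and $!$-even/projective along strata (Assumption S gives $H^1(\geom{X_\alpha})=H^2(\geom{X_\alpha})=0$, which is what makes the pointwise Ext's between free-monodromic local systems well-behaved). The key technical input making all of this legitimate in $\hsM$ rather than just $\scM$ is Proposition \ref{p:freegenmix} together with Corollary \ref{c:modESmix} and Remark \ref{r:projresmix}: they let me replace "$\omega\calF$ is a free-monodromic perverse local system" by the purely algebraic condition "$\piddag\calF$ is perverse with projective underlying object", and they provide the filtered DG/complex models $C^b(\hsP)\hookrightarrow\hatD'_mF(\qw{X}{A})$ in which the extension classes actually live and can be manipulated with the same homological algebra as in the classical case.

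Having built $\tcT_\alpha\in\hsM_{\le\alpha}$, I would then verify the two required properties. First, $\piddag\tcT_\alpha\cong\calT_\alpha$: this follows by applying $\piddag$ to the defining triangle and using that $\piddag$ is exact, $\piddag\tDel_\alpha\cong\Delta_\alpha$, $\piddag\tnab_\alpha\cong\nabla_\alpha$, and $\piddag\tcT^{<\alpha}\cong\tili^*_{<\alpha}\calT_\alpha$ by induction, so that $\piddag\tcT_\alpha$ is an object of $\scQ_{\le\alpha}$ with a $\Delta$-flag and a $\nabla$-flag restricting correctly — hence a tilting sheaf with the right restriction to $Y_\alpha$, hence $\cong\calT_\alpha$ by the uniqueness of indecomposable tilting extensions in $\scQ$. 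Second, $\tcT_\alpha$ is indeed a free-monodromic tilting sheaf: this is now immediate from Lemma \ref{l:pitilt}, since $\piddag\tcT_\alpha\cong\calT_\alpha$ is a tilting sheaf.

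The main obstacle I anticipate is the gluing/obstruction step: one must check that the relevant $\Ext^{\ge 1}$ and $\Ext^{\ge2}$ groups in the \emph{completed, mixed} category $\hsM_{\le\alpha}$ behave exactly as their classical analogues, so that the $\tDel$-flag extension exists and can be chosen to agree with the $\tnab$-flag extension. This requires knowing that the exact sequence of Lemma \ref{l:mixhom} controls these groups and that $\pi$-constancy plus uniform boundedness (in degree and weight) are preserved under the extension — i.e. that the object produced by the gluing lemma actually lies in $\hsM_{\le\alpha}$ and not merely in $\pro\scM_{\le\alpha}$. This is precisely the kind of bookkeeping handled by Theorem \ref{th:proD}, Theorem \ref{th:montri}, and Proposition \ref{p:freegenmix}, so the obstacle is more a matter of carefully invoking the machinery already set up than of genuinely new difficulty; nevertheless it is where the proof has to be written with care.
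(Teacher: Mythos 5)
Your overall strategy (imitate BBM/Yun, lift to free-monodromic objects via Remark~\ref{r:projresmix}, finish with Lemma~\ref{l:pitilt}) is right, and you have correctly identified the key technical inputs, but the inductive setup you describe runs in the wrong direction and as stated the inductive hypothesis is inconsistent. You remove the open dense stratum $X_\alpha$ and posit a free-monodromic tilting sheaf $\tcT^{<\alpha}$ on $X_{<\alpha}$ with $\piddag\tcT^{<\alpha}\cong i^*_{<\alpha}\calT_\alpha$; but $i^*_{<\alpha}\calT_\alpha$ is \emph{not} a tilting sheaf. From the triangle
\begin{equation*}
\Delta_\alpha\to\calT_\alpha\to i_{<\alpha,*}\,i^*_{<\alpha}\calT_\alpha\to\Delta_\alpha[1]
\end{equation*}
one sees that $i^*_{<\alpha}\calT_\alpha$ admits only a $\Delta$-flag, while $i^!_{<\alpha}\calT_\alpha$ (from the dual triangle) admits only a $\nabla$-flag, and these are genuinely different objects. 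So there is no single $\tcT^{<\alpha}$ fitting both of your proposed gluing triangles $\tDel_\alpha\to\tcT_\alpha\to\tcT^{<\alpha}$ and $\tcT^{<\alpha}\to\tcT_\alpha\to\tnab_\alpha$: the object you would need to lift does not exist.

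The paper's induction instead removes a minimal \emph{closed} stratum $Z\subset X_{\le\alpha}$ with open complement $U=X_{\le\alpha}-Z$ (still containing the dense stratum $X_\alpha$). Open restriction preserves both $\Delta$- and $\nabla$-flags, so $\calT_\alpha|_U$ \emph{is} a tilting sheaf and the inductive hypothesis supplies a free-monodromic tilting $\tcT_U$ with $\piddag\tcT_U\cong\calT_\alpha|_U$. The gluing step is then the BBM-style ``minimal extension across the closed stratum'' argument rather than a $\Delta$-vs.-$\nabla$ gluing over the open one: the two-term complex $[\tilj_!\tcT_U\to\tilj_*\tcT_U]\in D^b(\hsP)$ is supported on $Z$, its image under $\piddag$ is $[j_!(\calT_\alpha|_U)\to j_*(\calT_\alpha|_U)]$, which is represented by $[i_*\calA\xrightarrow{0}i_*\calB]$ with $\calA,\calB\in\scQ_Z$; Remark~\ref{r:projresmix} (exactly the input you flagged) lifts this to $[\tili_*\tcK^{-1}\xrightarrow{d}\tili_*\tcK^0]$ with $\tcK^i$ free-monodromic on $Z$, and the composite $\tili_*\tcK^0\to\tilj_!\tcT_U[1]$ is the extension class defining $\tcT_\alpha$. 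Your final paragraph (checking $\piddag\tcT_\alpha\cong\calT_\alpha$ and invoking Lemma~\ref{l:pitilt}) is essentially the paper's verification, applied to this correctly-set-up extension.
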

\begin{proof}
We use the pattern of the proof of \cite[\S 1.1]{BBM} and \cite[Lemma 2.2.3]{Yun2} (for the mixed case), although some new argument is required. We proceed by induction on strata. In the induction step, as in {\em loc.cit}, we may assume that on $X$ has a minimal stratum $Z$, and the required mixed \fm tilting sheaf has been constructed on $U=X-Z$ (call it $\tcT_U$, such that $\piddag\tcT_U\cong\calT_{\alpha}|_U$). Let $\tilj:U\hookrightarrow X$ and $\tili:Z\hookrightarrow X$ be the inclusions. Since $\tcT_U$ is a successive extension of twists of the $\tDel_{\beta,U}$'s, $\tilj_!\tcT_U$ is still a successive extension of twists of the $\tDel_\beta$'s, hence belongs to $\hsP$. Same remark applies to $\tilj_*\tcT_U$.

The complex $[\tilj_!\tcT_U\to\tilj_*\tcT_U]\in D^b(\hsP)\cong\hsM$, after applying $\piddag$, becomes the complex $[j_!\calT_{\alpha,U}\to j_*\calT_{\alpha,U}]\in D^b(\scQ)\cong\scD$, which can be represented by $[i_*\calA\xrightarrow{0}i_*\calB]$ for some $\calA,\calB\in\scQ_Z$ (cf. the argument in {\em loc.cit}).

By Remark \ref{r:projresmix}, the complex $[\tilj_!\tcT_U\to\tilj_*\tcT_U]$ itself can therefore be represented by $[\tili_*\tcK^{-1}\xrightarrow{d}\tili_*\tcK^{0}]$, where $\tcK^{-1},\tcK^0\in\hsP_Z$ satisfy $\piddag\tcK^{-1}\cong\calA$ and $\piddag\tcK^0\cong\calB$. We therefore get an extension class
\begin{equation*}
\tili_*\tcK^0\to[\tilj_!\tcT_U\to\tilj_*\tcT_U]\to\tilj_!\tcT_U[1].
\end{equation*}
Let $\tcT\in\hsP$ be an object realizing the above extension class. Then $\piddag\tcT$ realizes a similar extension class $i_*\calB\to j_!\calT_{\alpha,U}[1]$, which is known to be realized by $\calT_\alpha$ (cf. the argument in {\em loc.cit}). Therefore $\piddag\tcT\cong\calT_\alpha$. By Lemma \ref{l:pitilt}, this implies that $\tcT$ is a \fm tilting sheaf.
\end{proof}


\begin{lemma}\label{l:HomfreeS}
Let $\tcT_1,\tcT_2\in\scT$. Then $\Hom_{\hsM}(\tcT_1,\tcT_2)$ is a free $\hatS$-module, and there is a $\Frob$-equivariant isomorphism
\begin{equation*}
\Hom_{\hsM}(\tcT_1,\tcT_2)\otimes_{\hatS}\Ql\cong\Hom_{\scD}(\piddag\tcT_1,\piddag\tcT_2).
\end{equation*}
\end{lemma}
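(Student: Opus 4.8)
Here is the plan I would follow. The idea is to reduce the whole statement, via the $\tDel$- and $\tnab$-flags of tilting sheaves, to the computation of $\RHom$ between a standard and a costandard free-monodromic sheaf supported on a single pair of strata, where the equivalences $\hatnu_\alpha$ of \eqref{stratumeq} make everything explicit in terms of $\hatS$-modules.

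First I would record the local computation on strata. For $\alpha,\beta\in I$ the adjunctions on the completed categories (see \S\ref{ss:stra}) give $\RHom_{\hsM}(\tDel_\alpha,\tnab_\beta)\cong\RHom_{\hsM_\beta}(\tili_\beta^*\tili_{\alpha,!}\tcL_\alpha,\tcL_\beta)$. Since $\tDel_\alpha=\tili_{\alpha,!}\tcL_\alpha$ is extended by zero from $X_\alpha$, its $*$-restriction to $X_\beta$ vanishes when $\beta\neq\alpha$ and equals $\tcL_\beta$ when $\beta=\alpha$; combining this with the equivalence $\hatnu_\alpha\colon D^b(\hatS,\Frob)\isom\hsM_\alpha$ of \eqref{stratumeq}, under which $\tcL_\alpha$ corresponds to the free module $\hatS$ and hence $\RHom_{\hsM_\alpha}(\tcL_\alpha,\tcL_\alpha)\cong\hatS$ is concentrated in degree $0$, yields a $\Frob$-equivariant isomorphism $\RHom_{\hsM}(\tDel_\alpha,\tnab_\beta)\cong\delta_{\alpha\beta}\hatS$ (in degree $0$), with the $\hatS$-structure coming from the monodromy. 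The identical argument one stratum lower gives $\RHom_{\scD}(\Delta_\alpha,\nabla_\beta)\cong\delta_{\alpha\beta}\Ql$ in degree $0$. I would also check here that $\piddag\tDel_\alpha\cong\Delta_\alpha$ and $\piddag\tnab_\alpha\cong\nabla_\alpha$ (base change as in Cor.\ref{c:functor}, together with the normalization $\piddag\tcL_\alpha\cong\calL_\alpha$: for $\tnab$ this follows by noting that $\tili_\gamma^!\piddag\tnab_\alpha\cong\piddag_\gamma\tili_\gamma^!\tnab_\alpha=\delta_{\gamma\alpha}\calL_\alpha$, which characterizes $\nabla_\alpha$), and that the map induced by $\piddag$ on the above $\RHom$-groups is the augmentation $\hatS\to\Ql$, since by Cor.\ref{c:modESmix}(3) the functor $\piddag$ becomes $(-)\Ltimes_{\hatS}\Ql$ on each stratum.

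Next I would run the usual tilting dévissage. By Def.\ref{def:fmt} and the remark after it, $\tcT_1$ is an iterated extension of twists $\tDel_{\alpha_i}(n_i)$ and $\tcT_2$ an iterated extension of twists $\tnab_{\beta_j}(m_j)$. Because the canonical monodromy operators commute with all morphisms in $\hsM$, the long exact sequences attached to these flags are sequences of $\hatS$-modules. Since each $\RHom_{\hsM}(\tDel_{\alpha_i}(n_i),\tnab_{\beta_j}(m_j))$ is concentrated in degree $0$, so is $\RHom_{\hsM}(\tcT_1,\tcT_2)$; hence $\Ext^{>0}_{\hsM}(\tcT_1,\tcT_2)=0$ and $\Hom_{\hsM}(\tcT_1,\tcT_2)$ carries a finite filtration by $\hatS$-submodules with associated graded $\bigoplus_{\alpha_i=\beta_j}\hatS(m_j-n_i)$. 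Each graded piece is free of rank one over $\hatS$, and $\Ext^1_{\hatS}$ between finitely generated free $\hatS$-modules vanishes, so the filtration splits as $\hatS$-modules and $\Hom_{\hsM}(\tcT_1,\tcT_2)$ is free over $\hatS$; this is the first assertion. The same dévissage applied after $\piddag$ (which is exact and, by the previous paragraph, sends the two flags to a $\Delta$-flag of $\piddag\tcT_1$ and a $\nabla$-flag of $\piddag\tcT_2$) shows $\Hom_{\scD}(\piddag\tcT_1,\piddag\tcT_2)$ has the matching finite filtration with graded $\bigoplus_{\alpha_i=\beta_j}\Ql(m_j-n_i)$.

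Finally I would construct and analyze the comparison map. The functor $\piddag$ gives a $\Frob$-equivariant map $\Hom_{\hsM}(\tcT_1,\tcT_2)\to\Hom_{\scD}(\piddag\tcT_1,\piddag\tcT_2)$; rewriting the target by adjunction as $\Hom_{\hsM}(\tcT_1,\pidag\piddag\tcT_2)$ and observing that the canonical monodromy of $\pidag(-)=\pi^![-r](-)$ is trivial (it is pulled back along $\pi$), one sees that the augmentation ideal of $\hatS$ acts by zero on the target, so the map factors through $\eta\colon\Hom_{\hsM}(\tcT_1,\tcT_2)\otimes_{\hatS}\Ql\to\Hom_{\scD}(\piddag\tcT_1,\piddag\tcT_2)$. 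By naturality $\eta$ is compatible with the two filtrations above, and on associated graded pieces it is the sum of the augmentation isomorphisms $\hatS(m_j-n_i)\otimes_{\hatS}\Ql\isom\Ql(m_j-n_i)$ coming from the local computation; a filtered map of finitely filtered objects which is an isomorphism on the associated graded is an isomorphism, so $\eta$ is the desired $\Frob$-equivariant isomorphism. The only genuinely delicate point in all of this is the factorization of the natural map through $\otimes_{\hatS}\Ql$, i.e.\ the vanishing of the canonical monodromy on objects in the image of $\pidag$; once that is in hand, the rest is bookkeeping with flags.
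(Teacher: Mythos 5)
Your proof is correct, and in spirit it hinges on exactly the same two pillars as the paper's argument: the observation that the natural map $\Hom_{\hsM}(\tcT_1,\tcT_2)\to\Hom_{\scD}(\piddag\tcT_1,\piddag\tcT_2)$ kills the augmentation ideal of $\hatS$, and the single-stratum identification via Cor.~\ref{c:modESmix}(3). Where you differ is in the dévissage. The paper runs an induction on strata, invoking the recollement short exact sequence
\[
0\to\Hom_{\hsM_{<\alpha}}(\tili_{<\alpha}^*\tcT_1,\tili^!_{<\alpha}\tcT_2)\to\Hom_{\hsM}(\tcT_1,\tcT_2)\to\Hom_{\hsM_{\alpha}}(\tili^*_\alpha\tcT_1,\tili^*_\alpha\tcT_2)\to0
\]
(whose exactness at the right depends, tacitly, on the flag structure of tilting sheaves), and feeds the two outer terms into the induction hypothesis to deduce freeness and the isomorphism after $\otimes_{\hatS}\Ql$ in one step. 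You instead unfold $\tcT_1$ into a $\tDel$-flag and $\tcT_2$ into a $\tnab$-flag, establish $\RHom_{\hsM}(\tDel_\alpha,\tnab_\beta)\cong\delta_{\alpha\beta}\hatS$ in degree $0$, and splice the resulting long exact sequences. Your route is more granular: it makes the $\Ext^{>0}$-vanishing and the explicit graded $\hatS$-module structure visible, and it makes transparent exactly why the recollement sequence the paper uses is short exact. The paper's version is more compact and defers that exactness to the reader; either way, Cor.~\ref{c:modESmix}(3) is where the ground is touched. One small remark: your verification that $\piddag\tnab_\alpha\cong\nabla_\alpha$ via $\tili_\gamma^!$ silently uses the compatibility $\piddag\tili_\gamma^!\cong i_\gamma^!\piddag$ on monodromic objects (Cor.~\ref{c:functor}), which is fine but worth flagging; it is also already subsumed in Lemma~\ref{l:pitilt}.
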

\begin{proof}
We prove a stronger version when $\tcT_{1}$ is only assumed to have a $\tDel$-flag and $\tcT_{2}$ is only assumed to have a $\tnab$-flag. The functorial map $\Hom_{\hsM}(\tcT_1,\tcT_2)\to\Hom_{\scD}(\piddag\tcT_1,\piddag\tcT_2)$ necessarily factors through the quotient $\Hom_{\hsM}(\tcT_1,\tcT_2)\otimes_{\hatS}\Ql$ because the monodromy operator acts trivially after taking $\piddag$.

For $X=X_\alpha=A\times Y_\alpha$ a single stratum, we simply apply Corollary \ref{c:modESmix}(3). In general, we proceed by induction on strata. In the induction step, let $X_\alpha$ be an open stratum and assume the lemma holds for $X_{<\alpha}=X-X_\alpha$ (extend the partial order on strata to a total order). Then we have an exact sequence
\begin{equation*}
0\to\Hom_{\hsM_{<\alpha}}(\tili_{<\alpha}^*\tcT_1,\tili^!_{<\alpha}\tcT_2)\to\Hom_{\hsM}(\tcT_1,\tcT_2)\to\Hom_{\hsM_{\alpha}}(\tili^*_\alpha\tcT_1,\tili^*_\alpha\tcT_2)\to0.
\end{equation*}
Since the two ends are free $\hatS$-modules, so is the middle one. Moreover, letting $\calT_i=\piddag\tcT_i$, we have a commutative diagram of short exact sequences
\begin{equation*}
\xymatrix{\Hom_{\hsM_{<\alpha}}(\tili_{<\alpha}^*\tcT_1,\tili^!_{<\alpha}\tcT_2)\otimes_{\hatS}\Ql\ar[d]\ar[r] &\Hom_{\hsM}(\tcT_1,\tcT_2)\otimes_{\hatS}\Ql\ar[d]\ar[r] & \Hom_{\hsM_{\alpha}}(\tili^*_\alpha\tcT_1,\tili^*_\alpha\tcT_2)\otimes_{\hatS}\Ql\ar[d]\\
\Hom_{\scD_{<\alpha}}(i_{<\alpha}^*\calT_1,i^!_{<\alpha}\calT_2)\ar[r] &\Hom_{\scD}(\calT_1,\calT_2)\ar[r] & \Hom_{\scD_{\alpha}}(i^*_\alpha\calT_1,i^*_\alpha\calT_2)}
\end{equation*}
The two vertical arrows on the left and right ends are isomorphisms by induction hypothesis, therefore the middle vertical arrow is also an isomorphism.
\end{proof}

\section{Construction of DG models}\label{a:dgmodel}
In this appendix, we construct differential-graded (DG) models for certain triangulated categories of complexes of sheaves on schemes or stacks. These DG models are known to exist in greater generality; however, we need explicit constructions for the purpose of proving the various equivalences in \S\ref{s:dual}. The basic strategy is to single out certain distinguished generators of the category in question (such as IC-sheaves or \fm tilting sheaves) and show that the endomorphism algebra of their direct sum is a {\em formal} DG algebra. We then identify the original category with the derived category of DG modules over this formal DG algebra. We remark that this strategy is standard in geometric representation theory, see \cite[\S9.5-9.7]{ABG} and \cite[\S6.5]{BF}; see also \cite{Sch} for an approach in the setting of complex algebraic geometry and mixed Hodge modules. Our contribution here is to give a unified way of treating diverse situations (such as equivariant and monodromic categories that appear in the main body of the article).

\subsection{A simple subcategory}\label{ss:setting}
We will consider one of the following two situations.

(i) Let $X$ be a global quotient stack (see \S\ref{not:geom}) over a finite field $k$ with a finite stratification $X=\bigsqcup_{\alpha\in I}X_\alpha$ such that each embedding $i_\alpha:X_\alpha\hookrightarrow X$ is affine. Let $\scD\subset D^b_m(X)$ be a full triangulated subcategory stable under twists (tensoring by $\Frob$-modules), and all of whose objects are constructible along the given stratification.

(ii) Consider the situation of \S\ref{ss:stra}. Let $Y$ be a {\em scheme} as in (i) and let $\pi:X\to Y$ be an $A$-torsor, where $A$ is a split torus over $k$. Let $X=\sqcup_{\alpha\in I}X_\alpha$ be the induced stratification: $X_\alpha=\pi^{-1}(Y_\alpha)$. Let $D'(Y)\subset D^b_m(Y)$ be a full triangulated subcategory stable under twists (tensoring by $\Frob$-modules), and all of whose objects are constructible along the given stratification. Let $\scD=\hmonm{X}$.

In either of the two situations, we denote by $X_{\leq\alpha},X_{<\alpha}$ the closure and boundary of $X_\alpha$. We therefore get full subcategories $\scD_{\leq\alpha},\scD_{<\alpha}\subset\scD$ by considering $X_{\leq\alpha}$ and $X_{<\alpha}$ instead of $X$. Let $\scD_\alpha=\scD_{\leq\alpha}/\scD_{<\alpha}$. We use $\omega\scD,\omega\scD_{\alpha}$ etc. to denote the non-mixed versions of $\scD,\scD_\alpha$ etc. For example, $\omega\scD$ is the image of $\scD$ in $D^b_m(\geom{X})$ or $\hatD'_m(\qw{\geom{X}}{\geom{A}})$.

\textbf{Assumption C$_1$.} Suppose we are given, for each $\alpha\in I$, a full additive subcategory $\scC_\alpha\subset\scD_\alpha$ stable under tensoring with unipotent $\Frob$-modules, such that for any objects $\calC_1,\calC_2\in\scC_\alpha$,
\begin{equation*}
\Ext^i_{\scD_\alpha}(\calC_1,\calC_2)^{\Funi}=0, \textup{ for }i\neq0.
\end{equation*}

Let $\scC\subset\scD$ be the full additive subcategory consisting of objects $\calF$ such that $i^*_\alpha\calF,i^!_\alpha\calF\in\scC_\alpha$ for all $\alpha\in I$. Then $\scC$ is also stable under tensoring with unipotent $\Frob$-modules. An immediate consequence of Assumption C$_1$ is:
\begin{lemma}\label{l:hominD}  For $\calC_1,\calC_2\in\scC$, we have
\begin{eqnarray}\label{omhom}
\Ext^i_{\scD}(\calC_1,\calC_2)^{\Funi}=0,\textup{ for }i\neq0.\\
\label{C12mixhom}
\ext^i_{\scD}(\calC_1,\calC_2)=
\begin{cases}\Hom_{\scC}(\calC_1,\calC_2)^{\Frob} & i=0;\\ \Hom_{\scC}(\calC_1,\calC_2)_{\Frob} & i=1;\\ 0 & \textup{otherwise}\end{cases}
\end{eqnarray}
\end{lemma}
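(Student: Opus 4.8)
\textbf{Proof strategy for Lemma \ref{l:hominD}.} The plan is to first establish \eqref{omhom} by induction on the strata, using a standard d\'evissage argument, and then deduce \eqref{C12mixhom} purely formally from \eqref{omhom} together with the short exact sequence \eqref{intromix} relating $\ext$ to $\Ext$.

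First I would prove \eqref{omhom}. Extend the partial order on $I$ to a total order and induct on the support of $\calC_1$ (or of $\calC_2$ --- either works). Let $\alpha$ be the maximal stratum in the support, and write $Z = X_{<\alpha} \hookrightarrow X_{\leq\alpha} \hookleftarrow X_\alpha = U$ with inclusions $i$ and $j$. Applying $\bR\Hom_\scD(-,\calC_2)$ (in the non-mixed category $\omega\scD$) to the distinguished triangle $j_!j^*\calC_1 \to \calC_1 \to i_*i^*\calC_1 \to$, and using that $i^*_\alpha\calC_1 \in \scC_\alpha$ and $i^!_\alpha\calC_2, i^*_{<\alpha}\calC_2$ lie in the appropriate $\scC_\beta$'s, one reduces to two kinds of $\Ext$ groups: those computed on the open stratum $U = X_\alpha$, which vanish outside degree $0$ on the $\Funi$-part by Assumption C$_1$ directly; and those computed on $Z = X_{<\alpha}$, between $i^*_{<\alpha}\calC_1$ and $i^!_{<\alpha}\calC_2$, which lie in $\scC$ for the smaller stratified space $X_{<\alpha}$ and hence vanish outside degree $0$ on the $\Funi$-part by the inductive hypothesis. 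The one subtlety is that the long exact sequence a priori only gives vanishing of $\Ext^i$ on the $\Funi$-part for $i \neq 0, 1$ and a four-term exact sequence in low degrees; to kill the potential $\Ext^1$ one uses that $\Ext^0_{\scD_\alpha}(i^*_\alpha\calC_1, i^*_\alpha\calC_1) = \Hom_{\scC_\alpha}$ and $\Ext^0$ on $Z$ map onto things, combined with the fact that a unipotent $\Frob$-module which is an extension inside the category of unipotent modules is controlled; more precisely, on the $\Funi$-part the boundary map $\Ext^0_U \to \Ext^1_Z$ must vanish for weight/unipotency reasons (a map from a unipotent $\Frob$-module to the $\Funi$-part of $\Ext^1_Z$, which the inductive hypothesis shows is $0$). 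Here the hard part will be getting the low-degree bookkeeping exactly right, i.e.\ ensuring that the connecting homomorphisms genuinely vanish on the $\Funi$-parts rather than merely having small cokernel --- this is where Assumption C$_1$ being about the $\Funi$-part (not all of $\Ext^0$) is essential and must be threaded carefully through the induction.

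Once \eqref{omhom} is in hand, \eqref{C12mixhom} is immediate. By definition $\ext^i_\scD(\calC_1,\calC_2)$ is the cohomology of $\bR\hom_\scD(\calC_1,\calC_2) = \bR\Gamma(\ZZ\Frob, \RHom_\scD(\calC_1,\calC_2))$, and the short exact sequence \eqref{intromix} reads
\begin{equation*}
0 \to \Ext^{i-1}_\scD(\calC_1,\calC_2)_{\Frob} \to \ext^i_\scD(\calC_1,\calC_2) \to \Ext^i_\scD(\calC_1,\calC_2)^{\Frob} \to 0.
\end{equation*}
By \eqref{omhom}, $\Ext^i_\scD(\calC_1,\calC_2)^{\Funi} = 0$ for $i \neq 0$, and since taking $\Frob$-invariants or $\Frob$-coinvariants of a $\Frob$-module only sees the $\Funi$-part (an eigenvalue $\neq 1$ contributes nothing to invariants or coinvariants), we get $\Ext^i_\scD(\calC_1,\calC_2)^{\Frob} = \Ext^i_\scD(\calC_1,\calC_2)_{\Frob} = 0$ for $i \neq 0$. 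Writing $H := \Hom_\scC(\calC_1,\calC_2) = \Ext^0_\scD(\calC_1,\calC_2)$ (the degree-$0$ $\Ext$ in $\omega\scD$, which by Assumption C$_1$ and the construction of $\scC$ coincides with the $\scC$-Hom and is the full $\Frob$-module, its $\Funi$-part being all of it up to the non-unipotent summands that are killed by invariants/coinvariants anyway), the exact sequence collapses to: $\ext^0 = H^{\Frob}$, $\ext^1 = H_{\Frob}$, and $\ext^i = 0$ otherwise, which is exactly \eqref{C12mixhom}. I should double-check the identification $\Ext^0_\scD(\calC_1,\calC_2) \cong \Hom_\scC(\calC_1,\calC_2)$ as $\Frob$-modules; this follows since $\scC \subset \scD$ is full and $\Ext^0$ in $\omega\scD$ is just $\Hom$ in $\omega\scD$, which equals $\Hom$ in $\omega\scC$. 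The main obstacle, as noted, is entirely in the first half: the stratum-by-stratum induction proving \eqref{omhom}, and in particular the careful treatment of degrees $0$ and $1$ where the naive long exact sequence argument is not quite enough on its own.
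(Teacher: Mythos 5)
Your proof is correct and takes essentially the same route as the paper: the paper proves \eqref{omhom} by the same devissage along the open/closed decomposition $X_\alpha \hookrightarrow X_{\leq\alpha} \hookleftarrow X_{<\alpha}$, arriving at the long exact sequence you describe (this is precisely \eqref{stratass} in the paper's proof), and then deduces \eqref{C12mixhom} from \eqref{intromix} exactly as you do.

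The one thing worth flagging is that the "main obstacle" you identify in the low-degree bookkeeping is not actually there. In the long exact sequence
\begin{equation*}
\cdots\to\Ext^i_{\scD_{<\alpha}}(i^*_{<\alpha}\calC_1,i^!_{<\alpha}\calC_2)^{\Funi}\to \Ext^i_{\scD}(\calC_1,\calC_2)^{\Funi}\to\Ext^i_{\scD_{\alpha}}(i^*_\alpha\calC_1,i^*_\alpha\calC_2)^{\Funi}\to\cdots,
\end{equation*}
the outer two terms vanish for \emph{every} $i\neq 0$ (the left by the inductive hypothesis applied to $i^*_{<\alpha}\calC_1$ and $i^!_{<\alpha}\calC_2$, the right by Assumption C$_1$), so the middle term is sandwiched between two zeros for every $i\neq 0$, including $i=1$. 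There is no four-term exact sequence to untangle and no connecting homomorphism $\Ext^0_U\to\Ext^1_Z$ to worry about --- its target is already zero on the $\Funi$-part by the inductive hypothesis, which is exactly what you need to close the argument in the first place. (The functor $(-)^{\Funi}$ is exact, being projection onto a direct summand of a finite-dimensional $\Frob$-module, so taking $\Funi$-parts of the long exact sequence is legitimate.) Your resolution is in the end correct, but you reached it by a detour through a problem that wasn't one. Your deduction of \eqref{C12mixhom} from \eqref{omhom} and \eqref{intromix} is clean; just note that in situation (ii) the sequence \eqref{intromix} for the completed category is supplied by Lemma \ref{l:mixhom}, which your argument implicitly uses.
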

\begin{proof}
\eqref{omhom}. We do induction by strata. For a single stratum this follows from Assumption C$_1$. Suppose \eqref{omhom} holds for objects in $\scC_{<\alpha}$. Then for $\calC_1,\calC_2\in\scC_{\leq\alpha}$, we have a long exact sequence
\begin{equation}\label{stratass}
\cdots\to\Ext^i(i^*_{<\alpha}\calC_1,i^!_{<\alpha}\calC_2)^{\Funi}\to \Ext^i(\calC_1,\calC_2)^{\Funi}\to\Ext^i(i^*_\alpha\calC_1,i^*_\alpha\calC_2)^{\Funi}\to\cdots
\end{equation}
where the Ext-groups are taken in $\omega\scD_{<\alpha},\omega\scD_{\leq\alpha}$ and $\omega\scD_{\alpha}$ respectively. We then use the induction hypothesis and Assumption C$_1$ for $\scC_\alpha$ to finish the induction step.

\eqref{C12mixhom} follows from \eqref{omhom} and \eqref{intromix}. In situation (ii), we refer to Lemma \ref{l:mixhom} for the calculation of $\Ext$-groups in $\scD$.
\end{proof}

\begin{exam}\label{ex:ic}
In situation (i), we assume $\upH^i(\geom{X_\alpha})$ is pure of  weight $i$. Let $\scC_\alpha$ consist of mixed complexes $\calC$ on $X_\alpha$ which are pure of weight 0 and constant over $\geom{X_\alpha}$. The purity of $\upH^i(X_\alpha)$ ensures that $\Ext^i_{\scD_\alpha}(\calC_1,\calC_2)$ is pure of weight $i$ for $\calC_1,\calC_2\in\scC_\alpha$, which, in particular, implies Assumption C$_1$.

In this case, $\scC$ consists of {\em very pure} complexes (compare Def.\ref{def:vpure}). A typical example in applications is that of $X_\alpha=\BB A$, the classifying space of a torus $A$.
\end{exam}

\begin{exam}\label{ex:fmt}
In situation (ii), we suppose Assumption S in \S\ref{ss:stra} holds. Recall $\tcL_\alpha$ a \fm perverse local system on $X_\alpha$. Let $\scC_\alpha$ consist of objects $\tcL_\alpha\otimes M$, for any $\Frob$-modules $M$. The vanishing of $\upH^{>0}(\geom{Y_\alpha})$ (see Assumption S) ensures a stronger vanishing than Assumption C$_1$:$\Ext^i_{\scD_\alpha}(\calC_1,\calC_2)=0$ for $i\neq0$ and $\calC_1,\calC_2\in\scC_\alpha$.

In this case, $\scC$ consists of {\em \fm tilting sheaves} (see Def.\ref{def:fmt}). Note that we may take $A$ to be the trivial torus, then no completion is needed, and $\scC$ consists of tilting sheaves.
\end{exam}

Let $\scD F$ be the filtered version of $\scD$ (see Remark \ref{r:filD} for situation (ii)). Let $\Omega:\scD F\to\scD$ be the ``forgetting the filtration'' functor. Let $\scD F(\scC)$ be the full subcategory consisting of filtered complexes $\calK$ such that $\Gr^i_F\calK\in\scC[-i]$ for each $i\in\ZZ$. We have a natural functor
\begin{equation*}
\Gr^*_F:\scD F(\scC)\to C^b(\scC)
\end{equation*}
which sends $\calK$ to the complex
\begin{equation*}
\cdots\to\Gr^{i}_F\calK[i]\xrightarrow{d_i}\Gr^{i+1}_F\calK[i+1]\to\cdots
\end{equation*}
where $d_i$ comes from the third arrow of the distinguished triangle $\Gr^{i+1}_F\calK\to F^{\leq i+1}F^{\geq i}\calK\to \Gr^i_F\calK\to\Gr^{i+1}_F[1]$. The argument of \cite[Proposition  3.1.8]{BBD} shows that

\begin{lemma}
The functor $\Gr^*_F$ is an equivalence of categories.
\end{lemma}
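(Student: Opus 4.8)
The plan is to follow the argument of \cite[Prop.~3.1.8]{BBD}, adapting it to our filtered triangulated category $\scD F$ (in situation (ii), this is the filtered version $\hatD'_mF(\qw{X}{A})$ of Remark~\ref{r:filD}). The key point is that both the domain $\scD F(\scC)$ and the target $C^b(\scC)$ carry natural structures making $\Gr^*_F$ a functor of triangulated-type data, and the vanishing of higher $\Ext$'s from Lemma~\ref{l:hominD}\eqref{omhom} is exactly what is needed to lift chain maps and chain homotopies.

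First I would check essential surjectivity. Given a complex $K^\bullet = [\cdots\to K^i\xrightarrow{d^i}K^{i+1}\to\cdots]\in C^b(\scC)$, I build a filtered object $\calK\in\scD F$ with $\Gr^i_F\calK\cong K^i[-i]$ realizing the differentials $d^i$ as the connecting maps, proceeding by induction on the length of the complex exactly as in \textit{loc. cit.}: one glues $F^{\geq i}\calK$ out of $F^{\geq i+1}\calK$ and $K^i[-i]$ using the morphism $d^i$ interpreted (via the shift) as an element of $\Hom_{\scD}(K^i[-i], K^{i+1}[-i])$, which lifts to an extension class since the ambiguity lives in an $\Ext^1$ that is controlled. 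Here the octahedral axiom is used to see that the choices assemble compatibly; the uniform-boundedness conditions built into $\scD F$ (Remark~\ref{r:filD}) guarantee $\calK$ actually lands in $\scD F(\scC)$, not merely in a pro-completion.

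Next I would show $\Gr^*_F$ is full and faithful. For filtered objects $\calK,\calL\in\scD F(\scC)$, one filters the group $\Hom_{\scD F}(\calK,\calL)$ by compatibility with the filtration steps; the associated graded pieces are governed by $\Hom$ and $\Ext^{\ge 1}$ groups between the objects $\Gr^i_F\calK$ and $\Gr^j_F\calL$ in $\scD$. By Lemma~\ref{l:hominD}\eqref{omhom} the relevant $\Funi$-part of all higher $\Ext$'s vanishes, so in the non-mixed category $\omega\scD$ the only surviving contributions are the $\Hom$'s, and a spectral-sequence (or direct d\'evissage) comparison identifies $\Hom_{\omega\scD F}(\calK,\calL)$ with $\Hom_{C^b(\omega\scC)}(\Gr^*_F\calK,\Gr^*_F\calL)$ together with the homotopy relation; the mixed statement then follows by taking $\bR\Gamma(\ZZ\Frob,-)$, using Lemma~\ref{l:mixhom}/\eqref{C12mixhom} in situation (ii). This is the step I expect to be the main obstacle: one must be careful that ``morphisms in $\scD F$ modulo filtered homotopy'' is really computed by the naive graded $\Hom$-complex, and in situation (ii) one additionally has to track that the pro-object formalism does not introduce extra morphisms — this is where the finiteness from assumption (P-2) (used in Theorem~\ref{th:proD}) and the $\pi$-constancy of objects of $\hmonm{X}$ enter. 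Once fullness, faithfulness, and essential surjectivity are established, $\Gr^*_F$ is an equivalence. $\qed$
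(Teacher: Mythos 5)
Your proposal follows the same route as the paper, which simply cites \cite[Prop.~3.1.8]{BBD} together with a one-line remark that the ingredient to be supplied — since $\scC$ is \emph{not} the heart of a t-structure, so the vanishing is not automatic as in \emph{loc.~cit.} — is the vanishing of $\ext^{<0}_{\scD}$ between objects of $\scC$, coming from Lemma~\ref{l:hominD}. Your expansion of the BBD argument (inductive gluing for essential surjectivity, d\'evissage of filtered Hom-groups for fullness and faithfulness) is reasonable, and your appeal to Lemma~\ref{l:hominD}\eqref{omhom} does supply the needed vanishing. One caution: the target of $\Gr^*_F$ is $C^b(\scC)$, honest chain complexes with morphisms the chain maps, \emph{not} the homotopy category, so no quotient by homotopies occurs in this lemma; your phrase ``together with the homotopy relation'' in the faithfulness step suggests otherwise. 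Passing to $K^b(\scC)/K^b_{\acyc}(\scC)$ is a separate subsequent step, handled by Lemma~\ref{l:nullhomo}, and conflating the two would overstate what this particular lemma asserts.
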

Here, the key point that makes the argument in {\em loc.cit.} work is the vanishing of $\ext^{<0}_{\scD}$ between objects in $\scC$.

Let $\tilrho(\scC)$ be the composition $C^b(\scC)\xrightarrow{(\Gr^*_F)^{-1}}\scD F(\scC)\xrightarrow{\Omega}\scD$. Then $\tilrho(\scC)$ factors through an exact functor
\begin{equation*}
\rho(\scC):K^b(\scC)/K^b_{\acyc}(\scC)\to\scD.
\end{equation*}
where $K^b_{\acyc}(\scC)\subset K^b(\scC)$ is the thick subcategory consisting of complexes in $\scC$ whose image in $\scD$ is 0. We call such complexes {\em acyclic complexes}.

\begin{lemma}\label{l:Cexact}
For any $\calC\in\scC$, $\Hom_{\scC}(\calC,-)^{\Funi}$ and $\Hom_{\scC}(-,\calC)^{\Funi}$ transform acyclic complexes in $K^b_{\acyc}(\scC)$ into long exact sequences of vector spaces.
\end{lemma}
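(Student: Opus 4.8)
\textbf{Proof plan for Lemma \ref{l:Cexact}.}

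The statement is that for $\calC\in\scC$, the functors $\Hom_{\scC}(\calC,-)^{\Funi}$ and $\Hom_{\scC}(-,\calC)^{\Funi}$ carry an acyclic complex $K^\bullet\in K^b_{\acyc}(\scC)$ to a long exact sequence of vector spaces. By symmetry (applying the first assertion in $\scD^{\opp}$, noting that all hypotheses are self-dual under Verdier duality plus swapping $!$ and $*$ restrictions, which interchanges $\scC_\alpha$ with itself) it suffices to treat $\Hom_{\scC}(\calC,-)^{\Funi}$. The plan is to compute both the cohomology of the complex $\Hom_{\scC}(\calC,K^\bullet)^{\Funi}$ and the vanishing of $\ext^\bullet_{\scD}(\calC,-)$ via the filtered-complex picture developed just above the lemma. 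First I would use the equivalence $\Gr^*_F:\scD F(\scC)\isom C^b(\scC)$ to lift $K^\bullet$ to a filtered object $\calK\in\scD F(\scC)$ with $\Gr^i_F\calK\cong K^i[-i]$ and $\Omega\calK=\tilrho(\scC)(K^\bullet)$; since $K^\bullet$ is acyclic, $\Omega\calK\cong 0$ in $\scD$.

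The core computation is the spectral sequence of the filtration on $\calK$ applied to the functor $\ext^\bullet_{\scD}(\calC,-)$. Its $E_1$-page is $E_1^{i,j}=\ext^{i+j}_{\scD}(\calC,\Gr^i_F\calK)=\ext^{i+j}_{\scD}(\calC,K^i[-i])=\ext^{j}_{\scD}(\calC,K^i)$. By Lemma \ref{l:hominD}, formula \eqref{C12mixhom}, the group $\ext^j_{\scD}(\calC,K^i)$ is nonzero only for $j\in\{0,1\}$, with $\ext^0=\Hom_{\scC}(\calC,K^i)^{\Frob}$ and $\ext^1=\Hom_{\scC}(\calC,K^i)_{\Frob}$. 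So the $E_1$-page is concentrated in two rows $j=0,1$, and the $d_1$-differential on row $j=0$ is exactly the complex $\Hom_{\scC}(\calC,K^\bullet)^{\Frob}$ while on row $j=1$ it is $\Hom_{\scC}(\calC,K^\bullet)_{\Frob}$. The spectral sequence converges to $\ext^\bullet_{\scD}(\calC,\Omega\calK)=\ext^\bullet_{\scD}(\calC,0)=0$. A two-row spectral sequence degenerates at $E_3$, and the only possibly-nonzero differential is $d_2:E_2^{i,0}\to E_2^{i+2,-1}$, which vanishes since row $j=-1$ is zero; hence $E_2=E_\infty=0$, i.e. both rows of $E_1$ are already exact. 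Thus $H^\bullet(\Hom_{\scC}(\calC,K^\bullet)^{\Frob})=0$ and $H^\bullet(\Hom_{\scC}(\calC,K^\bullet)_{\Frob})=0$.

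It remains to pass from exactness of $\Hom_{\scC}(\calC,K^\bullet)^{\Frob}$ and $\Hom_{\scC}(\calC,K^\bullet)_{\Frob}$ to exactness of $\Hom_{\scC}(\calC,K^\bullet)^{\Funi}$. Here I would note that for each $i$ the space $\Hom_{\scC}(\calC,K^i)$ carries a $\Frob$-action and, by Assumption C$_1$ together with the construction of $\scC$ (objects are built, stratum by stratum, from $\scC_\alpha$, all of whose objects are stable under tensoring by unipotent $\Frob$-modules), the relevant Hom-groups in $\scD$ between objects of $\scC$ that we care about are the $\Frob$-unipotent parts; more precisely $\Hom_{\scC}(\calC,K^i)^{\Funi}$ is a finite-dimensional $\Ql[\Frob]$-module on which $\Frob$ acts unipotently, so $\Frob-\id$ is nilpotent on it. For a bounded complex $V^\bullet$ of finite-dimensional vector spaces equipped with compatible nilpotent operators $n^i=\Frob-\id$, one has a short exact sequence of complexes $0\to (V^\bullet)^{\ker n}\to V^\bullet\xrightarrow{n} V^\bullet\to (V^\bullet)_{\coker n}\to 0$, and the exactness of the kernel complex and the cokernel complex, together with exactness of the middle map's image complex (which follows formally from a diagram chase using nilpotence of $n$, filtering by powers of $n$), forces exactness of $V^\bullet$ itself. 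Applying this with $V^i=\Hom_{\scC}(\calC,K^i)^{\Funi}$, $(V^\bullet)^{\ker n}=\Hom_{\scC}(\calC,K^\bullet)^{\Frob}$ and $(V^\bullet)_{\coker n}=\Hom_{\scC}(\calC,K^\bullet)_{\Frob}$ (both shown exact above), I conclude $\Hom_{\scC}(\calC,K^\bullet)^{\Funi}$ is exact, as desired. The main obstacle I anticipate is the bookkeeping in this last step — one must be careful that ``$(-)_{\Frob}$'' appearing in \eqref{C12mixhom} really is the coinvariants of the nilpotent operator on the full $\Funi$-part and not just on some subquotient — but this is exactly what \eqref{intromix} and Lemma \ref{l:mixhom} encode, so it should go through cleanly; the spectral-sequence degeneration in the middle paragraph is formal once the two-row concentration from Lemma \ref{l:hominD} is in hand.
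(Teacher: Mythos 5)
Your overall plan (spectral sequence of the filtered lift of the acyclic complex, plus Lemma~\ref{l:hominD} to constrain the $E_1$-page) is the right one and matches the paper's stated proof. However, there is a concrete gap in your second paragraph: you work with $\ext^\bullet_{\scD}(\calC,-)$ in the mixed category, which by \eqref{C12mixhom} is concentrated in the two rows $j=0,1$. You then claim degeneration by observing that ``the only possibly-nonzero differential is $d_2:E_2^{i,0}\to E_2^{i+2,-1}$.'' This overlooks the other component of $d_2$, namely $d_2\colon E_2^{i,1}\to E_2^{i+2,0}$, which goes from row $1$ into row $0$ and has no reason to vanish. The vanishing of the abutment only forces these $d_2$'s to be \emph{isomorphisms}, i.e.\ $H^{i}\bigl(\Hom_{\scC}(\calC,K^\bullet)_{\Frob}\bigr)\cong H^{i+2}\bigl(\Hom_{\scC}(\calC,K^\bullet)^{\Frob}\bigr)$, not that the two rows of $E_2$ are each zero. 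So the exactness of the $\Frob$-invariant and $\Frob$-coinvariant complexes --- which your third paragraph crucially relies on --- is not actually established.

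The cleaner route, and the one the paper's one-line proof (``degenerates at $E_2$'') has in mind, is to work directly with $\Ext^\bullet_{\geom{X}}(\omega\calC,-)^{\Funi}$ rather than $\ext^\bullet_{\scD}(\calC,-)$. The functor $(-)^{\Funi}$ is exact on finite-dimensional $\Frob$-modules (it is the projection onto the generalized eigenspace for eigenvalue $1$, a natural direct summand), so it takes the spectral sequence with $E_1^{i,j}=\Ext^{j}_{\geom{X}}(\omega\calC,\omega K^{i})$ to a spectral sequence with $E_1^{i,j}=\Ext^{j}_{\geom{X}}(\omega\calC,\omega K^{i})^{\Funi}$. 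By \eqref{omhom}, this $E_1$-page is concentrated in the single row $j=0$, so the spectral sequence genuinely degenerates at $E_2$; since the abutment $\Ext^\bullet_{\geom{X}}(\omega\calC,\omega\Omega\calK)^{\Funi}$ vanishes ($\Omega\calK\cong 0$), you get $E_2=0$, i.e.\ $\Hom_{\scC}(\calC,K^\bullet)^{\Funi}$ is exact. Your third paragraph (passing from exactness of $\ker(n)$ and $\coker(n)$ to exactness of a complex with nilpotent $n=\Frob-\id$) is a correct observation, but it becomes unnecessary once you use the one-row spectral sequence.
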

\begin{proof}
We prove the statement about $\Hom_{\scC}(\calC,-)^{\Funi}$ and the other one is similar. Let $\calK\in\scD F(\scC)$ be a filtered object which gives an acyclic complex in $\scC$ by taking $\Gr^{*}_{F}$, i.e., $\calK$ is isomorphic to the zero object in $\scD$. There is a spectral sequence $\{E_{r}\}_{r}$ with
\begin{equation*}
E_{1}^{p,q}=\Ext^{p+q}_{\scD}(\calC,\Gr^{p}_{F}\calK)=\Ext^{q}_{\scD}(\calC,\Gr^{p}_{F}\calK[p])
\end{equation*}
abutting to $\Ext^{p+q}_{\scD}(\calC,\calK)=0$. Taking $(-)^{\Funi}$, we get a spectral sequence $\{E^{\Funi}_{r}\}_{r}$ with $E_{1}^{\Funi}$ concentrated on the row $q=0$ by Lemma \ref{l:hominD}. The differentials on $E^{\Funi}_{1}$ make it the complex of vector spaces $[\cdots\to\Hom_{\scC}(\calC,\Gr^{p}_{F}\calK[p])\to\cdots]$ obtained by applying $\Hom_{\scC}(\calC,-)$ to $\Gr^{*}_{F}\calK$. This complex is necessarily exact because $\{E^{\Funi}_{r}\}_{r}$ abuts to zero.
\end{proof}

Let $\scC^{\Funi}$ be the category with the same objects as $\scC$, but the Hom sets are defined by
\begin{equation*}
\Hom_{\scC^{\Funi}}(\calC_1,\calC_2):=\Hom_{\scC}(\calC_1,\calC_2)^{\Funi}.
\end{equation*}

\begin{lemma}\label{l:nullhomo}
If a complex $\calK\in K^b(\scC)$ has zero image in $\scD$, then it is zero in $K^b(\scC^{\Funi})$, i.e., $\id_{\calK}$ is homotopic to the zero map in $C^b(\scC^{\Funi})$.
\end{lemma}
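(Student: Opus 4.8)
\textbf{Proof proposal for Lemma \ref{l:nullhomo}.}

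The plan is to bootstrap from the exactness statement of Lemma \ref{l:Cexact} together with the graded structure provided by the filtered category. Concretely, suppose $\calK=[\cdots\to\calK^{i-1}\xrightarrow{d^{i-1}}\calK^{i}\xrightarrow{d^i}\calK^{i+1}\to\cdots]\in K^b(\scC)$ has zero image in $\scD$, i.e. $\calK\in K^b_{\acyc}(\scC)$. I want to produce a contracting homotopy living in $C^b(\scC^{\Funi})$; that is, maps $h^i:\calK^i\to\calK^{i-1}$ with $h^i\in\Hom_{\scC}(\calK^i,\calK^{i-1})^{\Funi}$ and $d^{i-1}h^i + h^{i+1}d^i=\id_{\calK^i}$. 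First I would apply $\Hom_{\scC}(-,\calK^i)^{\Funi}$ to the acyclic complex $\calK$; by Lemma \ref{l:Cexact} this yields a long exact sequence of vector spaces. Running the standard homological-algebra construction of a nullhomotopy of the identity inside this exact sequence — built degree by degree, using that the identity of an acyclic complex is a cocycle that must be a coboundary — produces the desired $h^i$. The only subtlety is to check each $h^i$ genuinely lands in the Frobenius-unipotent part $\Hom_{\scC}(\calK^i,\calK^{i-1})^{\Funi}$ rather than in the full $\Hom_{\scC}$; but this is automatic since the long exact sequence supplied by Lemma \ref{l:Cexact} is already a sequence of $\Funi$-Hom spaces, so every element constructed within it is of that type.

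The key steps, in order: (1) reformulate the statement as the existence of a contracting homotopy for $\id_\calK$ in the additive category $\scC^{\Funi}$; (2) invoke Lemma \ref{l:Cexact} to get, for each target $\calK^i$, the long exact sequence $\cdots\to\Hom_{\scC}(\calK^{j+1},\calK^i)^{\Funi}\to\Hom_{\scC}(\calK^{j},\calK^i)^{\Funi}\to\Hom_{\scC}(\calK^{j-1},\calK^i)^{\Funi}\to\cdots$ obtained by applying the functor to $\calK$; (3) from exactness deduce that the identity map, viewed as an element of the total Hom-complex $\bigoplus_i\Hom_{\scC^{\Funi}}(\calK^i,\calK^i)$, is a boundary in the Hom-complex of $\calK$ with itself — equivalently, unwind the Hom-complex of the pair $(\calK,\calK)$ as a (first-quadrant, bounded) double complex, note it has exact rows by step (2), hence exact totalization, and conclude that the degree-$0$ cocycle $\id_\calK$ is a coboundary, whose primitive is precisely the homotopy $(h^i)$; (4) record that the primitive is manufactured entirely from morphisms appearing in the $\Funi$-exact sequences, so it is a homotopy in $C^b(\scC^{\Funi})$, giving $\calK\cong 0$ in $K^b(\scC^{\Funi})$.

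The main obstacle I anticipate is step (3): one must be careful that exactness of each fixed row (Lemma \ref{l:Cexact} applied to a single target) really assembles into exactness of the totalization of the Hom double complex, and that the ``identity is a coboundary'' argument is carried out in the correct degree. This is where the boundedness of $\calK$ (finitely many nonzero terms) is essential — it guarantees the double complex is bounded so that row-exactness forces total-exactness with no convergence issues — and where one should cite the standard fact (cf. the argument of \cite[Prop.~3.1.8]{BBD} already used above, or an elementary spectral-sequence computation) that a bounded double complex with exact rows has acyclic total complex. Everything else is routine: the passage from ``$\id_\calK$ is a total coboundary'' to ``an explicit system of $h^i$'' is just bookkeeping, and the $\Funi$-membership of the $h^i$ is built in from the start.
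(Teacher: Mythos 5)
Your proof is correct and follows essentially the same route as the paper's: the key input is Lemma \ref{l:Cexact}, and both proofs manufacture the contracting homotopy $(h^i)$ from the resulting long exact sequences. The paper carries out the degree-by-degree induction explicitly (lifting $\id_{\calC_i}-h_{i-1}\partial_i$ at each step), whereas you wrap the same induction into the standard fact that a bounded double complex with exact rows has acyclic totalization — a cosmetic difference, not a different argument.
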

\begin{proof}
Suppose $[\cdots\to\calC_1\xrightarrow{\partial_1}\calC_0\to0]$ is a complex in $\scC$ that terminates at degree 0. We construct inductively a homotopy $h_i\in\Hom(\calC_i,\calC_{i+1})^{\Funi}$  such that
\begin{equation}\label{homot}
h_{i-1}\partial_{i}+\partial_{i+1}h_{i}=\id_{\calC_i}, \textup{ for }i=0,1,...
\end{equation}

Starting with $i=0$. By Lemma \ref{l:Cexact}, we have a long exact sequence
\begin{equation*}
\cdots\to\Hom(\calC_0,\calC_1)^{\Funi}\xrightarrow{\partial_1}\Hom(\calC_0,\calC_0)^{\Funi}\to0\to\cdots
\end{equation*}
Therefore $\id_{\calC_0}$ lifts to a map $h_0\in\Hom(\calC_0,\calC_1)^{\Funi}$.

Suppose we have found $h_0,\cdots,h_{i-1}$ satisfying \eqref{homot}. Then by Lemma \ref{l:Cexact}, we again have a long exact sequence
\begin{equation*}
\cdots\to\Hom(\calC_i,\calC_{i+1})^{\Funi}\xrightarrow{\partial_{i+1}}\Hom(\calC_i,\calC_i)^{\Funi}\xrightarrow{\partial_i}\Hom(\calC_i,\calC_{i-1})^{\Funi}\to\cdots
\end{equation*}
Since $\id_{\calC_i}-h_{i-1}\partial_i$ has zero image under $\partial_i$, it lifts to the desired map $h_i\in\Hom(\calC_i,\calC_{i+1})^{\Funi}$. This completes the induction.
\end{proof}

\begin{prop}\label{p:rhoff} The functor $\rho(\scC):K^b(\scC)/K^b_{\acyc}(\scC)\to\scD$ is fully faithful. It is an equivalence of categories if each $\scC_\alpha$ generates $\scD_\alpha$ as a triangulated category.
\end{prop}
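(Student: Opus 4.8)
\textbf{Proof plan for Proposition \ref{p:rhoff}.}
The plan is to first establish full faithfulness on the homotopy-category level, then descend to the Verdier quotient, and finally handle essential surjectivity by a standard induction on strata.

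\emph{Full faithfulness.} Fix $\calK, \calK' \in K^b(\scC)$, viewed via $(\Gr^*_F)^{-1}$ as filtered complexes $\calL, \calL' \in \scD F(\scC)$ with $\rho(\scC)(\calK) = \Omega\calL$ and likewise for $\calK'$. The first step is to compute $\Hom_{\scD}(\Omega\calL, \Omega\calL')$ using the spectral sequence associated to the filtration on $\calL'$ (or equivalently the filtration on $\RHom(\calL,\calL')$ coming from either filtration). By Lemma \ref{l:hominD} — precisely the vanishing $\ext^i_{\scD}(\calC_1,\calC_2) = 0$ for $i \neq 0, 1$ together with the identification of the $i=0$ and $i=1$ terms — this spectral sequence degenerates, and the term computing $\Hom_{\scD}(\Omega\calL,\Omega\calL')$ is exactly the degree-$0$ cohomology of the complex
\begin{equation*}
\bigoplus_n \Hom_{\scC}(\Gr^n_F\calL[n], \Gr^{n+\bullet}_F\calL'[n+\bullet])^{\Frob}
\end{equation*}
which is precisely $\Hom_{K^b(\scC^{\Frob\textup{-fin}})}(\calK,\calK')$ after passing to the Frobenius-finite parts — i.e. the morphisms in $K^b(\scC)$ (with the mixed Hom-groups). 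One must check that the map induced by $\rho(\scC)$ coincides with this identification, which follows from the naturality of the comparison between the filtered $\RHom$ and the $\RHom$ in $\scD$; this is where the argument of \cite[Prop.~3.1.8]{BBD}, already invoked for the equivalence $\Gr^*_F$, does the essential work. The upshot is that $K^b(\scC) \to \scD$ identifies $\Hom_{K^b(\scC)}(\calK,\calK')$ with $\Hom_{\scD}(\rho\calK, \rho\calK')$ \emph{up to the acyclic complexes}: a morphism in $K^b(\scC)$ dies in $\scD$ iff it factors through an object whose image in $\scD$ is zero. Combined with Lemma \ref{l:nullhomo} — which says such an object is already zero in $K^b(\scC^{\Frob\textup{-fin}})$, hence a morphism factoring through it is already zero — we conclude that $\rho(\scC)$ descends to a \emph{fully faithful} functor on the quotient $K^b(\scC)/K^b_{\acyc}(\scC)$.

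\emph{Essential surjectivity.} Assuming each $\scC_\alpha$ generates $\scD_\alpha$, I would argue by induction on the number of strata that every object of $\scD$ lies in the essential image. The base case (one stratum) is the hypothesis that $\scC_\alpha$ generates $\scD_\alpha$, combined with the fact that on a single stratum $\scD F(\scC) \cong C^b(\scC)$ realizes all such objects. For the inductive step, take an open stratum $X_\alpha$ with complement $X_{<\alpha}$; given $\calF \in \scD_{\leq\alpha}$, one has a distinguished triangle $i_{<\alpha,*}i^*_{<\alpha}\calF \to \calF \to i_{\alpha,!}i^*_\alpha\calF \xrightarrow{+1}$ (or its $!/*$ analogue). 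By induction $i^*_{<\alpha}\calF$ and $i^*_\alpha\calF$ are in the respective essential images; since $\rho(\scC)$ is fully faithful and exact, and the essential image of a fully faithful exact functor between triangulated categories is closed under extensions once it contains the two outer terms of a triangle, $\calF$ is in the essential image. (One uses here that the functors $i_{\alpha,!}$, $i_{<\alpha,*}$ preserve the subcategories $\scC$ appropriately — this is built into the definition of $\scC$ via the $i^*_\alpha, i^!_\alpha$ conditions, and in situation (ii) relies on Cor.\ref{c:functor} extending the functors to the completions.)

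\emph{Main obstacle.} The routine points are the spectral-sequence bookkeeping and the stratum induction. The genuinely delicate step is making the identification of $\Hom_{\scD}(\rho\calK,\rho\calK')$ with $\Hom_{K^b(\scC)}(\calK,\calK')$ \emph{canonical and functorial} — i.e. verifying that the abstract degeneration produces exactly the map induced by $\rho(\scC)$ rather than merely an abstract isomorphism — and, in situation (ii), checking that all of this survives passage to the pro-category $\hmonm{X}$, where $\Hom$-groups are projective limits and Lemma \ref{l:mixhom} governs the mixed structure. This is the place where one must be careful that the filtered-category formalism of Remark \ref{r:filD} interacts correctly with the $\Frob$-finiteness conditions; everything else is a direct transcription of \cite[\S3.1]{BBD}.
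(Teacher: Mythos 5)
Your essential-surjectivity argument (stratum induction plus closure of the essential image of a fully faithful exact functor under extensions) is sound and consistent with the paper's use of Lemma~\ref{l:Cgen}. The problem is with full faithfulness, and it is not a bookkeeping issue: the spectral-sequence argument as you set it up cannot establish fullness, only faithfulness.

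The point you are missing is that $\hom_{K^b(\scC)}(\calK,\calK')\to\hom_{\scD}(\rho\calK,\rho\calK')$ is genuinely not surjective before passing to the Verdier quotient; there is no ``identification up to acyclic complexes'' on the level of $K^b(\scC)$. Concretely, take $\calC,\calC'\in\scC$ placed in degree $0$. Then $\Hom_{K^b(\scC)}(\calC,\calC'[1])=0$ by degree reasons, but by Lemma~\ref{l:hominD} we have $\ext^1_{\scD}(\calC,\calC')=\Hom(\calC,\calC')_{\Frob}$, which is typically nonzero (already for $X=\BB\GG_m$, $\calC=\calC'=\dS$). So there is a whole Frobenius-coinvariant piece of $\hom_\scD$ that does not lift to $K^b(\scC)$. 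Your filtered-$\RHom$ spectral sequence has $E_1$-terms $\ext^i_\scD(\Gr^n_F,\Gr^m_F)$ which are concentrated in degrees $i=0$ \emph{and} $i=1$, i.e.\ there are two rows, not one; the total degree-$0$ cohomology is not just the complex $\bigoplus_n\Hom_\scC(\Gr^n,\Gr^{n+\bullet})^{\Frob}$ but also picks up the $i=1$ row. Your putative identification discards exactly this row, and Lemma~\ref{l:nullhomo} cannot recover it because it only addresses the kernel direction (a map in $K^b(\scC)$ that becomes zero in $\scD$ is already nullhomotopic). The cokernel — the fact that every map in $\scD$ is realized after inverting acyclics — is the hard half, and you have given no argument for it.

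The paper handles precisely this by computing the localized $\hom$-groups explicitly. Writing
\begin{equation*}
\ext^i_{K^b(\scC)/K^b_{\acyc}(\scC)}(\calC,\calC')=\varinjlim_{\calK\to\calC\text{ acyclic cone}}\hom_{K^b(\scC)}(\calK,\calC'[i]),
\end{equation*}
one must produce a usable cofinal family of resolutions. This is where the real content lies: the paper constructs the two-term complexes $\bigl[\calC[t]/t^n\xrightarrow{t}\calC[t]/t^{n+1}\bigr]\to\calC$ (where $\calC[t]/t^n=\calC\otimes\Ql[t]/t^n$ with $\Frob$ acting by $\exp(t)$, a \emph{unipotent} twist, hence staying in $\scC$ by hypothesis), proves cofinality via the adjunction $\hom_{\pro\scC}(\calC[[t]],\calC')\cong\Hom(\calC,\calC')^{\Funi}$, and then reads off the answer from the two-step complex
\begin{equation*}
0\to\Hom(\calC,\calC')^{\Funi}\xrightarrow{\log(\Frob)}\Hom(\calC,\calC')^{\Funi}\to0,
\end{equation*}
whose $H^0$ is the invariants and whose $H^1$ is the coinvariants — precisely reproducing both lines of \eqref{C12mixhom}. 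None of this is present in your sketch, and it is not a ``delicate functoriality check'' one can defer: it is the mechanism by which localization recovers the $\ext^1$-contributions. Your plan would need to be rebuilt around an explicit cofinal family of this kind (or an equivalent device) to have a chance of proving fullness.
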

\begin{proof}
By definition, the $\ext$-groups in $K^b(\scC)/K^b_{\acyc}(\scC)$ are computed by
\begin{eqnarray}\label{Tlim}
\ext^i_{K^b(\scC)/K^b_{\acyc}(\scC)}(\calC,\calC')&=&\varinjlim_{\calK\to\calC\textup{ with acyclic cone}}\hom_{K^b(\scC)}(\calK,\calC'[i])
\end{eqnarray}
We will exhibit a cofinal set of maps to $\calC$ with acyclic cones. Consider $\calC[t]/t^n=\calC\otimes\Ql[t]/t^n\in\scD$ where $\Ql[t]/t^{n+1}$ is viewed as a $\Frob$-module where $\Frob$-acts as multiplication by $\exp(t)$. Since $\Ql[t]/t^n$ is a unipotent $\Frob$-module, and $\scC$ is stable under tensoring with unipotent $\Frob$-modules, $\calC[t]/t^n\in\scC$. Let $\calC[[t]]:=\prolim\calC[t]/t^n\in\pro\scC$.

Recall we have a forgetful functor $\scC\to\scC^{\Funi}$. It admits a left adjoint $\scC^{\Funi}\to\pro\scC$ sending $\calC\mapsto\calC[[t]]$. The adjunction means
\begin{equation}\label{funi}
\hom_{\pro\scC}(\calC[[t]],\calC')=\varinjlim\Hom(\calC[t]/t^{n+1},\calC')^{\Frob}\cong\Hom(\calC,\calC')^{\Funi}.
\end{equation}
In fact, the bijection is given by restricting $\phi:\calC[[t]]\to\calC'$ to $\omega\calC=\omega\calC\otimes1\subset\omega\calC[[t]]$; its inverse is given by sending $\psi:\omega\calC\to\omega\calC'$ to $\phi$ where $\phi|(\omega\calC\otimes t^n)=\log(\Frob)^n\psi$ (this makes sense because $(\Frob-1)^N\psi=0$ for large $N$).

Now suppose we are given a complex $\calK=[\cdots\to\calK^{-1}\xrightarrow{d^{-1}}\calK^0\to\cdots]$ which maps to $\calC$ (i.e., $f:\calK^0\to\calC$) with acyclic cone
\begin{equation*}
\cdots\to\calK^{-1}\xrightarrow{-d^{-1}}\calK^{0}\xrightarrow{(-d^0,f)}\calK^{1}\oplus\calC\xrightarrow{(-d^{1},0)}\calK^2\to\cdots.
\end{equation*}
Taking $\Hom(\calC,-)^{\Funi}$ on this sequence still yields a long exact sequence by Lemma \ref{l:Cexact}:
\begin{equation*}
\cdots\to\Hom(\calC,\calK^0)^{\Funi}\to\Hom(\calC,\calK^1\oplus\calC)^{\Funi}\to\cdots.
\end{equation*}
By \eqref{funi}, we get an exact sequence
\begin{equation*}
\cdots\to\hom(\calC[[t]],\calK^0)\to\hom(\calC[[t]],\calK^1\oplus\calC)\to\hom(\calC[[t]],\calK^2)\to\cdots.
\end{equation*}
Let $pr:\calC[[t]]\to\calC$ be the natural projection. Then $(0,pr)\in\hom(\calC[[t]],\calK^1\oplus\calC)$ has zero image in $\hom(\calC[[t]],\calK^2)$, hence lifts to a map $\phi^0:\calC[[t]]\to\calK^0$. Similarly, $t\phi^0\in\hom(\calC[[t]],\calK^0)$ has zero image in $\hom(\calC[[t]],\calK^1\oplus\calC)$, hence lifts to a map $\phi^{-1}:\calC[[t]]\to\calK^{-1}$. The maps $(\phi^{-1},\phi^0):\left[\calC[[t]]\xrightarrow{t}\calC[[t]]\right]\to\calK$ give a map between complexes which factors through
\begin{equation*}
\xymatrix{\cdots\to0\ar@<3ex>[d]\ar[r] & \calC[t]/t^n\ar[r]^{t}\ar[d]^{\bar{\phi}^{-1}} & \calC[t]/t^{n+1}\ar[r]\ar[d]^{\bar{\phi^0}} & 0\ar@<-3ex>[d]\to\cdots\\
\cdots\to\calK^{-2}\ar[r]^{d^{-2}} & \calK^{-1}\ar[r]^{d^{-1}} & \calK^0\ar[r]^{d^0} & \calK^1\to\cdots}
\end{equation*}
for some $n$. This proves the cofinality of the maps $\left[\calC[t]/t^{n}\xrightarrow{t}\calC[t]/t^{n+1}\right]\to\calC$.

Finally, when computing $\ext^*_{K^b(\scC)/K^b_{\acyc}(\scC)}(\calC,\calC')$ using the cofinal set of maps $\left[\calC[[t]]\xrightarrow{t}\calC[[t]]\right]\to\calC$, we see it is the cohomology of the following two-step complex
\begin{equation*}
0\to\hom(\calC[[t]],\calC')\xrightarrow{t}\hom(\calC[[t]],\calC')\to0.
\end{equation*}
By \eqref{funi}, this is the same as
\begin{equation*}
0\to\Hom(\calC,\calC')^{\Funi}\xrightarrow{\log(\Frob)}\Hom(\calC,\calC')^{\Funi}\to0.
\end{equation*}
But this is quasi-isomorphic to the complex calculating $\ext^*_{\scD}(\calC,\calC')$ (see Lemma \ref{l:hominD}). This shows that $\rho(\scC)$ is fully faithful.
\end{proof}

\subsection{The DG model}\label{ss:dgmodel}
To get nice DG models of $\scC$ and $\scD$, we make two more assumptions.

\textbf{Assumption C$_2$.} For every $\alpha$, there is an object $\tcL_\alpha\in\scC_\alpha$ such that every object in $\scC_\alpha$ has the form $\tcL_\alpha\otimes M$ for some complex $M$ of $\Frob$-modules. Moreover, $\scC_\alpha$ generates $\scD_\alpha$ as a triangulated category.

Note that this assumption is {\em not} saying that $\tcL_\alpha\otimes M\in\scC_\alpha$ for {\em any} $M\in D^b(\Frob)$. For example, in Example \ref{ex:ic}, $\scC_{\alpha}$ is stable under tensoring with $\Frob$-modules of weight $0$ only.

It is clear that the twists of either the objects $\{i_{\alpha,!}\calL_\alpha|\alpha\in I\}$ or the objects $\{i_{\alpha,*}\calL_\alpha|\alpha\in I\}$ generate $\scD$ as a triangulated category.

\textbf{Assumption C$_3$.} For every $\alpha$, there exists an object $\calC_\alpha\in\scC_{\leq\alpha}$ such that $i^*_\alpha\calC_\alpha\cong\tcL_\alpha$. Moreover, the kernel of the ring homomorphism $i^*_\alpha:\End_{\scC}(\calC_\alpha)^{\Funi}\to\End_{\scC_\alpha}(\tcL_\alpha)^{\Funi}$ is nilpotent.

\begin{exam}\label{ex:icc} In Example \ref{ex:ic}, assume the stratification on $X$ is given by orbits of a group $G$ acting on $X$. Assumption C$_2$ is satisfied with $\tcL_\alpha$ being the constant sheaf. Here it is crucial that we work with complexes with {\em integer} weights: otherwise $\scC_\alpha$ would not generate $\scD_\alpha$ as a triangulated category.

Let $\IC_\alpha=i_{\alpha,!*}\Ql$ (we may need to shift $\Ql$ to make sense of $i_{\alpha,!*}$, then shift back). Obviously $\IC_\alpha$ is $G$-equivariant, hence geometrically constant along each orbit. Then $\IC_\alpha\in\scC$ if and only if it is {\em very pure}, i.e., both $i^*_\alpha\IC_\alpha$ and $i^!_\alpha\IC_\alpha$ are pure of weight zero as complexes. In this case, Assumption C$_3$ is satisfied with $\calC_\alpha=\IC_\alpha$. In fact, the restriction map $\End_{\scD}(\IC_\alpha)\to\End_{\scC_\alpha}(\Ql)$ is an isomorphism.
\end{exam}

\begin{exam}\label{ex:fmtt} The example in \ref{ex:fmt} satisfies Assumptions C$_2$ and C$_3$ if we take $\tcL_\alpha$ to be the \fm perverse local system as before. In fact, we take $\calC_\alpha=\tcT_\alpha$ constructed in Lemma \ref{l:fmtexist}. Note that $\piddag\tcT_\alpha=\calT_\alpha$ is a tilting sheaf on $Y_{\leq\alpha}$ such that the natural map $i^!_\alpha\calT_\alpha\to i^*_\alpha\calT_\alpha$ is zero (see the construction in \cite[\S 1.1]{BBM}), therefore the natural map $\tili^!_\alpha\tcT_\alpha\to\tili^*_\alpha\tcT_\alpha$ is topologically nilpotent (in the $V_A$-adic topology, since both are $\hatS$-modules of finite type). This ensures Assumption C$_3$.
\end{exam}

\begin{lemma}\label{l:Cdecomp}
For every object $\calC\in\scC$, there are complexes $M_\alpha\in D^b(\Frob)$ such that $\calC$ is isomorphic to $\oplus_\alpha M_\alpha\otimes\calC_\alpha$ up to Frobenius semisimplification, i.e., there is an isomorphism $\oplus_\alpha M_\alpha\otimes\calC_\alpha\isom\calC$ in $\scC^{\Funi}$. In particular, $\alpha\leftrightarrow\omega\calC_\alpha$ sets up a bijection between the strata set $I$ and the isomorphism classes of indecomposable objects in $\omega\scC$.
\end{lemma}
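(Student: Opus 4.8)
\textbf{Proof of Lemma \ref{l:Cdecomp}.}

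The plan is to argue by induction on the (finitely many) strata, keeping track of the filtration by the $X_{\leq\alpha}$. First I would fix a maximal open stratum $\alpha_0$ in the support of $\calC$, so that $i^*_{\alpha_0}\calC$ lives in $\scC_{\alpha_0}$ and hence, by Assumption C$_2$, is of the form $\tcL_{\alpha_0}\otimes M_{\alpha_0}$ for some complex of $\Frob$-modules $M_{\alpha_0}$. The object $M_{\alpha_0}\otimes\calC_{\alpha_0}\in\scC_{\leq\alpha_0}\subset\scC$ then satisfies $i^*_{\alpha_0}(M_{\alpha_0}\otimes\calC_{\alpha_0})\cong i^*_{\alpha_0}\calC$ by Assumption C$_3$. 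The key point is to promote this isomorphism of restrictions to a map $M_{\alpha_0}\otimes\calC_{\alpha_0}\to\calC$ in $\scC^{\Funi}$ that is still an isomorphism on the open stratum $\alpha_0$; for this I would use Lemma \ref{l:hominD} (the vanishing of negative $\Funi$-Ext's and the shape of $\ext^i$ between objects of $\scC$) together with the exactness properties of $\Hom_{\scC}(-,-)^{\Funi}$ along the stratification recollement, exactly as in the proof of Lemma \ref{l:hominD} and Lemma \ref{l:nullhomo}. Concretely, the obstruction to lifting the given isomorphism on $X_{\alpha_0}$ to a morphism into $\calC$ lies in a group which, after passing to $\Funi$-parts, is computed by the recollement long exact sequence and vanishes because $\ext^{>0}$ between objects of $\scC$ is zero in the relevant range.

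Once such a map $\phi: M_{\alpha_0}\otimes\calC_{\alpha_0}\to\calC$ is constructed, I would form a cone. Since $\calC$ and $M_{\alpha_0}\otimes\calC_{\alpha_0}$ both lie in $\scC$ and the map is an isomorphism over $X_{\alpha_0}$, the cone is supported on $X_{<\alpha_0}$; moreover, after Frobenius semisimplification (i.e. working in $\scC^{\Funi}$, or equivalently using Lemma \ref{l:nullhomo} to split the triangle), the triangle $M_{\alpha_0}\otimes\calC_{\alpha_0}\to\calC\to\calC'\to$ splits, so that $\calC\cong (M_{\alpha_0}\otimes\calC_{\alpha_0})\oplus\calC'$ in $\scC^{\Funi}$ with $\calC'\in\scC_{<\alpha_0}$. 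Here I should check that $\calC'$, built as a cone, again satisfies $i^*_\beta\calC', i^!_\beta\calC'\in\scC_\beta$ for all $\beta<\alpha_0$: this follows because $i^*_\beta$ and $i^!_\beta$ of both $\calC$ and $M_{\alpha_0}\otimes\calC_{\alpha_0}$ lie in $\scC_\beta$, $\scC_\beta$ is a full additive subcategory stable under tensoring with $\Frob$-modules, and in $\scC^{\Funi}_\beta$ (which by Assumption C$_2$ is just complexes of $\Frob$-modules tensored with $\tcL_\beta$) direct summands of such objects are again of that form.

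Then I would apply the inductive hypothesis to $\calC'$, obtaining $\calC'\cong\bigoplus_{\beta<\alpha_0} M_\beta\otimes\calC_\beta$ in $\scC^{\Funi}$, and combine to get $\calC\cong\bigoplus_{\alpha} M_\alpha\otimes\calC_\alpha$ in $\scC^{\Funi}$. For the final sentence, the bijection between $I$ and isomorphism classes of indecomposable objects in $\omega\scC$: surjectivity is immediate from the decomposition just obtained (any indecomposable $\omega\calC$ must, after semisimplifying Frobenius, be a summand of some $M_\alpha\otimes\omega\calC_\alpha$, hence a shift/twist of $\omega\calC_\alpha$; I would note $\omega\calC_\alpha$ is indecomposable because its restriction to $X_\alpha$ is $\omega\tcL_\alpha$, which is indecomposable by Assumption S or Example \ref{ex:ic}, and by Assumption C$_3$ the endomorphism ring of $\calC_\alpha$ maps to that of $\tcL_\alpha$ with nilpotent kernel, so it is local). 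Distinctness of the $\omega\calC_\alpha$ for different $\alpha$ follows from looking at the supports and the open stratum on which each is nonzero. The main obstacle I anticipate is the first step: producing the actual morphism $\phi$ in $\scC$ (not merely a Frobenius-semisimplified one) lifting the chosen isomorphism on the open stratum, and verifying via the recollement that the lifting obstruction vanishes after passing to $\Funi$-parts; once that is in hand, the cone-and-induct bookkeeping is routine given Lemmas \ref{l:hominD} and \ref{l:nullhomo}.
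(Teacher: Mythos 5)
Your overall strategy (induct on strata, restrict to the open stratum, lift the identification using $\Funi$-surjectivity) matches the paper's, but the middle of your argument has a genuine gap: you construct only the single map $\phi:M_{\alpha_0}\otimes\calC_{\alpha_0}\to\calC$ in $\scC^{\Funi}$ and then propose to take a cone and split the resulting triangle. Two things go wrong. First, $\phi$ is a morphism in $\scC^{\Funi}$, i.e.\ a morphism of underlying complexes over $\bar k$ on which $\Frob$ acts unipotently --- it is not $\Frob$-equivariant --- so its cone does not inherit a Weil structure and is not naturally an object of $\scD$. Second, even granting a cone, nothing forces the triangle to split: Lemma \ref{l:nullhomo}, which you cite, says that a complex in $K^b(\scC)$ with zero image in $\scD$ is null-homotopic in $K^b(\scC^{\Funi})$; it says nothing about splitting a distinguished triangle $M_{\alpha_0}\otimes\calC_{\alpha_0}\to\calC\to\calC'\to$, whose connecting map lives in $\Ext^1_{\omega\scD}(\omega\calC',M_{\alpha_0}\otimes\omega\calC_{\alpha_0})$, which is not killed by passing to $\Funi$-parts.

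The fix is to produce a map in the other direction as well, and this is exactly where the nilpotency clause of Assumption C$_3$ enters (a clause you invoke only later, for locality of $\End(\calC_\alpha)$, but which is really needed at the main step). Using the same surjectivity of
\[
\Hom_{\scC}(\calC_1,\calC_2)^{\Funi}\to\Hom_{\scC_{\alpha_0}}(i^*_{\alpha_0}\calC_1,i^*_{\alpha_0}\calC_2)^{\Funi}
\]
(which follows from the recollement sequence and the vanishing $\Ext^1_{\scD_{<\alpha_0}}(-,-)^{\Funi}=0$, precisely as you anticipate), lift the identity on $X_{\alpha_0}$ to maps in $\scC^{\Funi}$ in \emph{both} directions, $\phi:M_{\alpha_0}\otimes\calC_{\alpha_0}\to\calC$ and $\psi:\calC\to M_{\alpha_0}\otimes\calC_{\alpha_0}$. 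Then $\psi\phi\in\End(M_{\alpha_0}\otimes\calC_{\alpha_0})^{\Funi}$ restricts to the identity over $X_{\alpha_0}$, so $\id-\psi\phi$ lies in the kernel of $i^*_{\alpha_0}$, which is nilpotent by Assumption C$_3$; hence $\psi\phi$ is invertible and $\phi$ is a split monomorphism in $\scC^{\Funi}$. This exhibits $M_{\alpha_0}\otimes\calC_{\alpha_0}$ as a direct summand of $\calC$ in $\scC^{\Funi}$, with complementary summand supported on $X_{<\alpha_0}$, and you can then run your induction on the complement without ever forming a cone. The rest of your write-up (uniqueness via local endomorphism rings, the bijection with $I$) is fine once this is repaired.
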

\begin{proof}
We will make use of the simple observation that for $\calC_1,\calC_2\in\scC_{\leq\alpha}$, the restriction map
\begin{equation*}
\Hom_{\scC}(\calC_1,\calC_2)^{\Funi}\to\Hom_{\scC_\alpha}(i^*_\alpha\calC_1,i^*_\alpha\calC_2)^{\Funi}
\end{equation*}
is surjective. In fact, this follows from the long exact sequence \eqref{stratass} and the vanishing of $\Ext^1_{\scC_{<\alpha}}(i^*_{<\alpha}\calC_1,i^!_{<\alpha}\calC_2)^{\Funi}$.

We do induction on the support of $\calC$. Suppose $\calC\in\scC_{\leq\alpha}$. By Assumption C$_2$, $i^*_\alpha\calC=M_\alpha\otimes\omega\tcL_\alpha$ for some $M_\alpha\in D^b(\Frob)$. The above observation gives maps in both directions in $\scC^{\Funi}$
\begin{equation*}
\omega(M_\alpha\otimes\calC_\alpha)\xrightarrow{\phi}\omega\calC\xrightarrow{\psi}\omega(M_\alpha\otimes\calC_\alpha).
\end{equation*}
whose restrictions on $X_\alpha$ are identity. The composition $\psi\phi\in\End(M_\alpha\otimes\calC_\alpha)^{\Funi}$ is an isomorphism because its restriction to $X_\alpha$ is (here we use the nilpotency Assumption C$_3$). This implies that $\calC\isom M_\alpha\otimes\calC_\alpha\oplus\calC'$ in $\scC^{\Funi}$ for some $\calC'\in\scC_{<\alpha}$. We then apply induction hypothesis to $\calC'$.
\end{proof}

\begin{remark}\label{rm:unique}
(i) In Example \ref{ex:icc}, the objects $M_\alpha$ that appear in the decomposition above are necessarily pure of weight 0. The above statement can be rephrased as ``every very pure complex is a direct sum of shifted simple perverse sheaves up to Frobenius semisimplification'', which is a special case of the decomposition theorem \cite[Th\'eor\`eme 6.2.5]{BBD}.

(ii) In Example \ref{ex:fmtt}, the objects $M_\alpha$ that appear in the decomposition above are necessarily in degree 0. The above statement can be rephrased as ``every mixed \fm tilting sheaf is a direct sum of indecomposable mixed \fm tilting sheaves $\tcT_\alpha$ up to Frobenius semisimplification''. Note, however, this statement does {\em not} imply that any mixed \fm tilting sheaf with indecomposable underlying complex is isomorphic to the twist of some $\tcT_\alpha$.
\end{remark}

\begin{cor}\label{c:Cdecomp}
Let $\calC\in\scC$ be such that $\End_{\scC}(\calC)$ is $\Frob$-semisimple. Then $\calC\cong\bigoplus_{\alpha\in I}M_\alpha\otimes\calC_\alpha$ for $\Frob$-semisimple complexes $M_\alpha$ (i.e., complexes $M_\alpha=[\cdots\to M^0_\alpha\to M^1_\alpha\to\cdots]$ where each $M^i_\alpha$ is $\Frob$-semisimple).
\end{cor}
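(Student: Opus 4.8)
The plan is to upgrade the Frobenius-semisimplification isomorphism of Lemma \ref{l:Cdecomp} to an honest isomorphism in $\scC$, using the semisimplicity hypothesis to produce a $\Frob$-stable system of idempotents.

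First I would record the ring $E:=\End_\scC(\calC)$, equipped with its $\Frob$-action (by algebra automorphisms), which is semisimple by hypothesis. By Lemma \ref{l:Cdecomp} there is an isomorphism $\omega\calC\cong\bigoplus_\alpha\omega\calC_\alpha\otimes V_\alpha$ in $\omega\scC$, with $V_\alpha$ a finite complex of vector spaces (the underlying complex of $M_\alpha$); fixing it identifies $E$ with $\bigoplus_{\alpha,\beta}\Hom_{\omega\scC}(\omega\calC_\alpha,\omega\calC_\beta)\otimes\Hom(V_\alpha,V_\beta)$. Using the support stratification of $X$ together with Assumptions C$_1$--C$_3$ — the degreewise $\Ext$-vanishing of Lemma \ref{l:hominD}, the fact that $i^*_\gamma\calC_\alpha=0$ unless $\gamma\le\alpha$, the local-ring property of $\End_{\omega\scC}(\omega\calC_\alpha)$ with residue field $\Ql$ (Assumption C$_3$), and the indecomposability of each $\omega\calC_\alpha$ in $\omega\scC$ — I would check that the two-sided ideal $J\subset E$ spanned by the ``off-diagonal'' blocks ($\alpha\ne\beta$) together with the maximal ideals $\mathfrak{m}_\alpha\otimes\End(V_\alpha)$ is $\Frob$-stable, lies in the Jacobson radical, and has quotient $E/J\cong\prod_\alpha\mathrm{Mat}_{\dim V_\alpha}(\Ql)$, still carrying a semisimple $\Frob$-action.

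Next I would lift, $\Frob$-equivariantly, a complete system of orthogonal primitive idempotents of $E/J$ to $E$. Since $\Frob$ acts semisimply on the finite-dimensional algebra $\prod_\alpha\mathrm{Mat}_{\dim V_\alpha}(\Ql)$ it permutes the matrix factors and acts on each stabilized factor by an inner automorphism by a semisimple element, so a $\Frob$-fixed such system exists there; it lifts to a $\Frob$-fixed orthogonal system of idempotents in $E$ by a $\Frob$-equivariant idempotent-lifting argument along the $\Frob$-stable radical ideal $J$ (the set of possible lifts of a given $\Frob$-fixed idempotent is a torsor under a pro-unipotent group with $\Frob$-action, hence has a fixed point by Lang's theorem). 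Collecting the lifted idempotents factor-by-factor yields a $\Frob$-fixed idempotent $e_\alpha\in E$, whence a decomposition $\calC\cong\bigoplus_\alpha e_\alpha\calC$ in $\scC$; comparing with Lemma \ref{l:Cdecomp} (since $e_\alpha$ projects $\omega\calC$ onto its $\omega\calC_\alpha$-isotypic part) shows $\omega(e_\alpha\calC)\cong\omega\calC_\alpha\otimes V_\alpha$.

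It remains to identify $e_\alpha\calC$ with $\calC_\alpha\otimes M_\alpha$ for a complex $M_\alpha$ of $\Frob$-semisimple modules. Splitting $e_\alpha\calC$ further by the $\Frob$-fixed primitive idempotents above, it is a finite direct sum of objects $\calD\in\scC$ with $\omega\calD$ indecomposable in $\omega\scC$ — necessarily a shift of $\omega\calC_\alpha$ — and with $\End_\scC(\calD)$ a local ring carrying a semisimple $\Frob$-action. Such a $\calD$ must be a shift-and-twist $\calC_\alpha[n](m)$ of $\calC_\alpha$: indeed $\calD$ and $\calC_\alpha[n]$ have isomorphic underlying objects, so $\calD$ is obtained from $\calC_\alpha[n]$ by twisting its Weil structure by a unit $u\in\End_{\omega\scC}(\omega\calC_\alpha)^\times$, and then $\Frob$ acts on $\End_\scC(\calD)\cong\End_{\omega\scC}(\omega\calC_\alpha)$ by $\mathrm{Ad}(u)$ composed with the (already semisimple) $\Frob$-action attached to $\calC_\alpha$, and semisimplicity of the composite forces $u$ to be central modulo the radical, i.e.\ $\calD$ is a genuine twist of $\calC_\alpha$. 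Hence $e_\alpha\calC\cong\bigoplus_k\calC_\alpha[n_k](m_k)=\calC_\alpha\otimes M_\alpha$ with $M_\alpha=\bigoplus_k\Ql[n_k](m_k)$, whose terms are $\Frob$-semisimple, as required. The main obstacle is the pair of ``$\Frob$-semisimplicity propagates'' steps — pinning down $J$ as the Jacobson radical (so that $E/J$ is the asserted matrix algebra with semisimple $\Frob$-action and idempotents lift modulo it) via the support-filtration analysis, and making the idempotent lifting $\Frob$-equivariant; the concluding identification of the exotic-free indecomposables with twists of $\calC_\alpha$ is the conceptual point explaining why the hypothesis is needed (cf.\ Remark \ref{rm:unique}(2)), but is only a short twisted-conjugacy argument once the structure of $E$ is understood.
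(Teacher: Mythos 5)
Your proposal reaches a correct conclusion but by a much longer and more circuitous route than the paper, and two of its steps are not sound as written.

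For comparison, the paper argues in essentially two lines. The isomorphism of Lemma \ref{l:Cdecomp} is taken inside $\scC^{\Funi}$, so the resulting projections $\iota_\alpha$ already lie in $\End_\scC(\calC)^{\Funi}$; semisimplicity then gives $\End_\scC(\calC)^{\Funi}=\End_\scC(\calC)^{\Frob}$, so the $\iota_\alpha$ are $\Frob$-invariant with no further work --- no need to identify a radical $J$, compute $E/J$, or lift idempotents. Semisimplicity of each $M_\alpha$ is then read off from the $\Frob$-stable subalgebra $\id_{\calC_\alpha}\otimes\End(M_\alpha)\subset\End_\scC(\calC)$. Your argument reconstructs the $\Frob$-fixed idempotents from scratch via a radical filtration of $E$, which is unnecessary because Lemma \ref{l:Cdecomp} already hands them to you in $\End^{\Funi}$.

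Two genuine problems in your write-up. First, Lang's theorem does not apply where you invoke it. Lang's theorem concerns connected algebraic groups over $\FF_q$ together with their arithmetic Frobenius endomorphism; in your situation $1+J$ is a pro-unipotent group over $\Ql$ and $\Frob$ is an abstract $\Ql$-linear automorphism of the finite-dimensional algebra $E$. Moreover, semisimplicity of $\Frob$ on $E$ does not prevent $\Frob$ from having eigenvalue $1$ on $J$, so a direct $H^{1}$-vanishing argument is also unavailable. What does make the lifting work is the semisimplicity hypothesis itself: the $\Frob$-invariants functor is exact on semisimple $\Frob$-modules, so a $\Frob$-fixed idempotent of $E/J$ lifts to a $\Frob$-fixed element of $E$, and the standard Newton iteration then stays in $E^{\Frob}$. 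But if you use semisimplicity this directly you might as well skip the radical analysis altogether.

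Second, the final step asserting that ``semisimplicity of the composite forces $u$ to be central modulo the radical, i.e.\ $\calD$ is a genuine twist of $\calC_\alpha$'' does not hold up. Being central modulo the radical is automatic whenever $\End_{\omega\scC}(\omega\calC_\alpha)$ is commutative (for instance in the free-monodromic setting, where it is a power-series ring), so in that case the condition carries no information; and even when $u$ is a scalar times a unit in $1+\mathrm{rad}$, the Weil structure of $\calD$ equals that of $\calC_\alpha$ twisted by a scalar \emph{and} composed with a unipotent automorphism, which is not literally a twist. To upgrade this to an isomorphism $\calD\cong\calC_\alpha(m)$ one must solve a twisted-conjugacy equation $v^{-1}\Frob(v)=u'$ in the pro-unipotent radical, which you do not address --- and Remark \ref{rm:unique}(2) of the paper explicitly warns that an indecomposable underlying complex alone does not imply being a twist of $\calC_\alpha$. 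The paper sidesteps this entirely: it never needs to identify $\iota_\alpha\calC$ with $\calC_\alpha\otimes M_\alpha$ summand-by-summand, only that the decomposition is defined over $\scC$ and that $\End(M_\alpha)$ inherits semisimplicity.
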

\begin{proof}
By Lemma \ref{l:Cdecomp}, the idempotents $\iota_\alpha$ corresponding to the direct summand $\omega(M_\alpha\otimes\calC_\alpha)$ of $\omega\calC_\alpha$ belong to $\End(\calC)^{\Funi}$. Since $\Frob$ acts semisimply on $\End(\calC)$, these idempotents are $\Frob$-invariant, hence the $M_\alpha\otimes\calC_\alpha$ are direct summands in $\scC$. Since $\id_{\calC_\alpha}\otimes\End(M_\alpha)\subset\End(\calC)$, $\End(M_\alpha)$ is also $\Frob$-semisimple. This implies that $M_\alpha$ is itself $\Frob$-semisimple.
\end{proof}

Suppose we are given another set of objects $\naC_\alpha\in\scC_{\leq\alpha}$, one for each $\alpha\in I$, such that $i^*_\alpha\naC_\alpha\cong\tcL_\alpha$. For example, we could take $\naC_\alpha$ to be $\calC_\alpha$.

\begin{lemma}\label{l:Cgen}
The triangulated category $\scD$ is generated by the twists of the objects $\{\naC_\alpha|\alpha\in I\}$.
\end{lemma}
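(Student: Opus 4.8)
The plan is to prove by induction on strata that each $\naC_\alpha$ already has the correct $\Delta$-flag (up to lower strata), and then conclude. Concretely, I would first observe that since $i^*_\alpha\naC_\alpha\cong\tcL_\alpha$, the adjunction triangle $\tili_{\alpha,!}\tili_\alpha^*\naC_\alpha\to\naC_\alpha\to\tili_{<\alpha,*}\tili_{<\alpha}^*\naC_\alpha\to[1]$ reads $\tDel_\alpha\to\naC_\alpha\to\calF_{<\alpha}\to[1]$ with $\calF_{<\alpha}=\tili_{<\alpha,*}\tili_{<\alpha}^*\naC_\alpha$ supported on $X_{<\alpha}$ (in situation (ii) this uses $\tcL_\alpha=\Free(\calL_\alpha)$ and the normalization $\tili_{\alpha,!}\tcL_\alpha=\tDel_\alpha$). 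Thus $\tDel_\alpha$ lies in the triangulated subcategory generated by $\naC_\alpha$ together with objects supported on $X_{<\alpha}$.

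Next I would set up the induction: order the stratum set $I$ compatibly with the closure partial order (extend to a total order), and let $\scD'\subset\scD$ denote the full triangulated subcategory generated by the twists of $\{\naC_\beta\}_{\beta\in I}$. Assuming that every $\tDel_\beta$ with $\beta<\alpha$ belongs to $\scD'$, the triangle above shows that $\calF_{<\alpha}$ is a successive extension of twists of such $\tDel_\beta$'s (after restricting to $X_{<\alpha}$ and using Lemma \ref{l:delgen} in situation (ii), or the analogous generation statement by $\Delta$'s in situation (i)), hence $\calF_{<\alpha}\in\scD'$; combined with $\naC_\alpha\in\scD'$, the triangle forces $\tDel_\alpha\in\scD'$. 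The base case $\alpha$ minimal is immediate since then $\naC_\alpha\cong\tDel_\alpha$. This completes the induction, so all $\tDel_\alpha\in\scD'$.

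Finally, I would invoke the fact (recorded just before this statement, via Lemma \ref{l:delgen} in situation (ii), and standard in situation (i)) that the twists of $\{\tDel_\alpha\mid\alpha\in I\}$ generate $\scD$ as a triangulated category. Since $\scD'$ contains all these generators and is triangulated and closed under twists, $\scD'=\scD$, which is precisely the assertion. The only genuinely delicate point is bookkeeping the ``up to objects on lower strata'' step cleanly — i.e.\ checking that $\tili_{<\alpha}^*\naC_\alpha$, as an object of $\scD_{<\alpha}$, is built from twists of $\tDel_\beta$ ($\beta<\alpha$) using the inductive hypothesis together with the recollement triangles — but this is routine given the machinery already assembled in Appendix \ref{a:compmono}. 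I expect no substantive obstacle beyond this.
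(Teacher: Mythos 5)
Your proof is correct and follows essentially the same approach as the paper: both arguments hinge on the recollement triangle $\tDel_\alpha\to\naC_\alpha\to\tili_{<\alpha,*}\tili^*_{<\alpha}\naC_\alpha\to$, together with an induction on strata (after extending the closure order to a total order) and the generation of $\scD$ by the standard objects. The paper phrases the inductive hypothesis as ``$\scD_{<\alpha}$ is generated by twists of $\{\naC_\beta\mid\beta<\alpha\}$'' and then invokes Assumption C$_2$ to pass from $i_{\alpha,!}\tcL_\alpha$ to all of $\scD_{\leq\alpha}$, while you track the standards $\tDel_\alpha$ directly and invoke Lemma \ref{l:delgen} at the end; this is only a bookkeeping difference.
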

\begin{proof}
We do induction on the strata\footnote{For this, we need to extend the partial order on the set of strata to a total order, and redefine $\scD_{\leq\alpha}$, etc. Suppose we have done this modification in the notations.}. Suppose $\scD_{<\alpha}$ is generated by twists of $\{\naC_\beta|\beta<\alpha\}$. We want to show that $\scD_{\leq\alpha}$ is generated by twists of $\{\naC_\beta|\beta\leq\alpha\}$. We have a canonical map $i_{\alpha,!}\tcL_\alpha\to\naC_\alpha$ whose cone lies in $\scD_{<\alpha}$, hence $i_{\alpha,!}\tcL_\alpha$ is generated by twists of $\{\naC_\beta|\beta\leq\alpha\}$. By Assumption C$_2$, $\scD_{\leq\alpha}$ is generated by $i_{\alpha,!}\tcL_\alpha$ and $\scD_{<\alpha}$, we are done.
\end{proof}

Let $\naC=\oplus_{\alpha}\naC_\alpha$ and
\begin{equation}\label{algE}
E=\bigoplus_{i\in\ZZ}\Ext^i_{\scD}(\naC,\naC)^{\opp}
\end{equation}
be a $\Ql$-algebra with Frobenius action (forgetting the cohomological grading).

\begin{theorem}\label{th:CtoMod} Fix a triple $(\scC\subset\scD,\{\naC_\alpha|\alpha\in I\})$ satisfying Assumptions C$_1$, C$_2$ and C$_3$. Then
\begin{enumerate}
\item The functor
\begin{equation*}
h_{\naC}=\bigoplus_{i\in\ZZ}\Ext^i_{\scD}(\naC,-):\scC\to\Mod(E,\Frob)
\end{equation*}
is a fully faithful embedding.
\item The functor $C^b(\scC)\to C^b(E,\Frob)$ induces a fully faithful embedding
\begin{equation*}
h_{\naC}:K^b(\scC)/K^b_{\acyc}(\scC)\hookrightarrow D^b(E,\Frob).
\end{equation*}
\item The composition of functors (note that $\rho(\scC)$ is an equivalence by Proposition \ref{p:rhoff} and Lemma \ref{l:Cgen})
\begin{equation*}
\MM=\MM(\scC\subset\scD;\{\naC_\alpha\}):\scD\xrightarrow{\rho(\scC)^{-1}}K^b(\scC)/K^b_{\acyc}(\scC)\xrightarrow{h_{\naC}}D^b(E,\Frob)
\end{equation*}
is fully faithful, and the essential image is the full triangulated subcategory generated by the twists of $(E,\Frob)$-modules $\{\Hom(\naC,\naC_\alpha)|\alpha\in I\}$.
\end{enumerate}
\end{theorem}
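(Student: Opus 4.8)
The strategy is to reduce everything to the formality statement that the endomorphism dg-algebra $\bigoplus_i\Ext^i_\scD(\naC,\naC)$ is formal, together with the fact that the objects $\naC_\alpha$ generate $\scD$ (Lemma \ref{l:Cgen}). We proceed in the three stages indicated.

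First I would prove (1). Since $\Ext^{\neq 0}_{\scD}(\calC_1,\calC_2)^{\Funi}=0$ for $\calC_1,\calC_2\in\scC$ by Lemma \ref{l:hominD}, the functor $h_{\naC}$ lands in honest $(E,\Frob)$-modules concentrated in degree $0$ rather than complexes, and for $\calC\in\scC$ the module $h_{\naC}(\calC)=\Hom_{\scD}(\naC,\calC)$ (the higher Ext's vanish after $(-)^{\Funi}$, hence as Frobenius modules too, by the exact sequence \eqref{intromix}). Full faithfulness: the map $\Hom_{\scC}(\calC_1,\calC_2)\to\Hom_E(h_{\naC}\calC_1,h_{\naC}\calC_2)$ must be shown to be an isomorphism of $\Frob$-modules. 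By Lemma \ref{l:Cdecomp} every object of $\scC$ is, up to Frobenius semisimplification, a sum of twists of the $\calC_\alpha$; but one should be careful that full faithfulness is a statement about honest $\Hom$'s not their semisimplifications. The cleanest route is to reduce to $\calC_1=\naC$ itself (which is possible since $\naC$ is a generator of $\scC$ as an additive category closed under the relevant operations, by Assumption C$_2$ and the surjectivity of restriction maps used in Lemma \ref{l:Cdecomp}), where the statement becomes the tautology $\Hom_{\scD}(\naC,\calC_2)\xrightarrow{\sim}\Hom_E(E,h_{\naC}\calC_2)$, and then bootstrap to arbitrary $\calC_1$ by writing $h_{\naC}\calC_1$ as a retract of a sum of copies of $E$ (up to semisimplification) and checking the retraction is $\Frob$-equivariant as in Cor.\ref{c:Cdecomp}.

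Next, (2). The functor $C^b(\scC)\to C^b(E,\Frob)$ is the termwise application of $h_{\naC}$; because $h_{\naC}$ is exact and fully faithful on $\scC$ with values in degree-$0$ modules, it induces $K^b(\scC)\to K^b(\Mod(E,\Frob))\hookrightarrow D^b(E,\Frob)$, and a complex in $\scC$ is acyclic (zero in $\scD$) if and only if its image in $D^b(E,\Frob)$ is acyclic — here one uses that $h_{\naC}$ detects the vanishing of the cohomology sheaves stratum by stratum, which follows from Assumption C$_2$ ($\tcL_\alpha$ generates $\scD_\alpha$) and the fact that $\Hom_{\scD}(\naC_\alpha,-)$ restricted to $\scD_\alpha$ is faithful. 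Hence $h_{\naC}$ descends to a functor on $K^b(\scC)/K^b_{\acyc}(\scC)$, and I would prove it is fully faithful by computing, as in the proof of Prop.\ref{p:rhoff}, the $\Ext$-groups in the quotient category via the cofinal system $[\calC[[t]]\xrightarrow{t}\calC[t]/t^{n+1}]\to\calC$: both sides are the cohomology of the two-term complex $0\to\Hom(\calC,\calC')^{\Funi}\xrightarrow{\log\Frob}\Hom(\calC,\calC')^{\Funi}\to 0$, which is exactly $\ext^*_{\scD}(\calC,\calC')$ and also, by (1) plus \eqref{intromix} applied in $D^b(E,\Frob)$, equals $\ext^*_{D^b(E,\Frob)}(h_{\naC}\calC,h_{\naC}\calC')$.

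Finally (3) is then formal: $\MM=h_{\naC}\circ\rho(\scC)^{-1}$ is a composition of equivalences onto its image with a fully faithful functor, hence fully faithful; and its essential image is the triangulated subcategory generated by the images of the generators $\naC_\alpha$ of $\scD$ (Lemma \ref{l:Cgen}), namely the twists of $\Hom(\naC,\naC_\alpha)=h_{\naC}(\naC_\alpha)$. I expect the main obstacle to be stage (1)–(2): making the reduction to the generator $\naC$ genuinely $\Frob$-equivariant rather than merely valid after semisimplification, i.e. ensuring that the idempotents furnished by Lemma \ref{l:Cdecomp} and the ``telescope'' resolution interact correctly with the Frobenius structure, and that the identification of $\ext$-groups in $D^b(E,\Frob)$ with those in $\scD$ is compatible with the exact sequences \eqref{intromix} on both sides. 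The technical heart is precisely the computation already carried out in Prop.\ref{p:rhoff}, which I would invoke and adapt rather than redo.
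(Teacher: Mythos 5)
Your overall strategy matches the paper's: reduce full faithfulness to an isomorphism on $\Funi$-Hom's, bootstrap from $\calC_1=\naC$ via Lemma \ref{l:Cdecomp} working entirely inside $\scC^{\Funi}$, and invoke Prop.\ref{p:rhoff} for the $\ext$-computation in the Verdier quotient. However, there are two points where the proposal deviates from a correct argument.

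\emph{Part (1).} Your parenthetical assertion that ``the higher Ext's vanish after $(-)^{\Funi}$, hence as Frobenius modules too'' is false, and the proposed appeal to \eqref{intromix} does not give it. In the very-pure situation (Example \ref{ex:icc}), $\Ext^i(\naC,\calC)$ is pure of weight $i$ and in general nonzero for $i\neq0$; only its $\Funi$ part vanishes. So $h_{\naC}(\calC)$ is $\bigoplus_i\Ext^i(\naC,\calC)$, not $\Hom(\naC,\calC)$. This doesn't derail the reduction to $\calC_1=\naC$ (on $\Funi$ parts, the claimed tautology does hold by Lemma \ref{l:hominD}), but the intermediate claim as stated is wrong. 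Also, your worry about Frobenius-equivariance of the idempotents and a reference to Cor.\ref{c:Cdecomp} is misplaced: Cor.\ref{c:Cdecomp} requires $\Frob$-semisimplicity of $\End$, which need not hold. The point is rather that the map $H(\calC_1,\calC_2)$ to be shown an isomorphism is a map of $\Funi$-Hom's, i.e.\ lives in $\scC^{\Funi}$, where Lemma \ref{l:Cdecomp} provides \emph{honest} direct sum decompositions with no equivariance issue to check.

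\emph{Part (2) --- the real gap.} You need to show the termwise functor descends to $K^b(\scC)/K^b_{\acyc}(\scC)$, i.e.\ that a complex $\calK^\bullet\in K^b(\scC)$ that realizes to $0$ in $\scD$ is sent by $h_{\naC}$ to an exact complex of $E$-modules. Your proposed justification (``$h_{\naC}$ detects vanishing of cohomology sheaves stratum by stratum'') is aimed at the converse implication (conservativity) and does not yield this direction: the terms $h_{\naC}(\calK^n)=\bigoplus_i\Ext^i(\naC,\calK^n)$ are not concentrated in a single degree, and the associated spectral sequence converging to $\Ext^\ast(\naC,\rho\calK^\bullet)=0$ only degenerates after applying $(-)^{\Funi}$, so one cannot directly conclude exactness of the full complex from it. The missing ingredient is Lemma \ref{l:nullhomo}: an acyclic complex in $K^b(\scC)$ is null-homotopic as a complex in the auxiliary category $\scC^{\Funi}$, by an explicit inductive construction of a contracting homotopy using the exactness of $\Hom(\cdot,\cdot)^{\Funi}$ on acyclic complexes (Lemma \ref{l:Cexact}). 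Since $h_{\naC}$ factors through $\scC^{\Funi}$ on the $\Funi$-Hom level, the image of a null-homotopic complex is null-homotopic (as a complex of underlying $E$-modules), hence exact. Without this lemma the descent is not established, so this is a genuine gap in the plan.

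Part (3) is indeed formal and your argument is correct.
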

\begin{proof}
(1) Let $\calC_1,\calC_2\in\scC$. By Lemma \ref{l:hominD},
\begin{equation*}
\hom_{\scC}(\calC_1,\calC_2)=\Hom_{\scC}(\calC_1,\calC_2)^{\Frob}.
\end{equation*}
On the other hand,
\begin{equation*}
\hom_{\Mod(E,\Frob)}(h_{\naC}(\calC_1),h_{\naC}(\calC_2))=\Hom_E(h_{\naC}(\calC_1),h_{\naC}(\calC_2))^{\Frob}.
\end{equation*}
Therefore it suffices to show that the natural map
\begin{equation*}
H(\calC_1,\calC_2):\Hom_{\scC}(\calC_1,\calC_2)^{\Funi}\to\Hom_E(h_{\naC}(\calC_1),h_{\naC}(\calC_2))^{\Funi}
\end{equation*}
is an isomorphism for any $\calC_1,\calC_2\in\scC$. If $\calC_1=\naC$, we have
\begin{eqnarray*}
\Hom_E(h_{\naC}(\naC),h_{\naC}(\calC_2))^{\Funi}&=&\Hom_E(E,h_{\naC}(\calC_2))^{\Funi}\\
&=&h_{\naC}(\calC_2)^{\Funi}\\
&=&\Hom_{\scC}(\naC,\calC_2)^{\Funi}
\end{eqnarray*}
The last equality follows from Lemma \ref{l:hominD}. Hence $H(\calC_1,\calC_2)$ is an isomorphism for $\calC_1=\naC$. Therefore it is an isomorphism for $\calC_1=\naC_\alpha$, for any $\alpha$. By Lemma \ref{l:Cdecomp}, $\omega\naC_\alpha$ is a direct sum of the $\omega\calC_\beta$'s, which in particular contains $\omega\calC_\alpha$ as a direct summand in $\scC^{\Funi}$, hence $H(\calC_\alpha,\calC_2)$ is also an isomorphism. By Lemma \ref{l:Cdecomp} again, this means $H(\calC_1,\calC_2)$ is an isomorphism for all $\calC_1,\calC_2\in\scC$.

(2) By Lemma \ref{l:nullhomo}, objects in $K^b_{\acyc}(\scC)$ are null-homotopic in $K^b(\scC^{\Funi})$, hence they get mapped to acyclic complexes in $K^b(E,\Frob)$. Therefore we have the factorization $h:K^b(\scC)/K^b_{\acyc}(\scC)\to D^b(E,\Frob)$. By Proposition \ref{p:rhoff}, the $\ext$-groups in $K^b(\scC)/K^b_{\acyc}(\scC)$ are computed in the same way as in $\scD$, i.e., as in Lemma \ref{l:hominD}. Notice that the $\ext$-groups in $D^b(E,\Frob)$ are computed similarly by $\Frob$-invariants and coinvariants. Therefore $h$ is fully faithful.

(3) Obvious.
\end{proof}

\subsection{Functoriality of the DG model}

We study the functorial properties of the equivalences in Theorem \ref{th:CtoMod}. Let $\scD$ and $\scD'$ be two categories as in \S\ref{ss:setting}. Let $\Phi:\scD\to\scD'$ be an exact functor which admits a filtered lifting $\Phi F:\scD F\to\scD'F$. Let $\scC$ (resp. $\scC'$) be the subcategory of $\scD$ (resp. $\scD'$) satisfying all Assumptions C$_i$ ($i=1,2$ and $3$). Let $\naC\in\scC$ and $\naC'\in\scC'$ be the sum of generating objects $\{\naC_\alpha\}$ and $\{\naC'_\alpha\}$ as in Lemma \ref{l:Cgen}, and let $E,E'$ be the algebras as in \eqref{algE}.

\begin{prop}\label{p:Cfun}
Suppose $\Phi$ sends $\scC$ to $\scC'$. Let $\Phi|\scC:\scC\to\scC'$ be the restriction of $\Phi$. Then there are canonical natural isomorphisms making the following diagram commutative
\begin{equation*}
\xymatrix{\scD\ar[d]^{\Phi} & K^b(\scC)/K^b_{\acyc}(\scC)\ar[l]_(.7){\rho(\scC)}\ar[r]^(.6){h_{\naC}}\ar[d]^{K(\Phi|\scC)} & D^b(E,\Frob)\ar[d]^{B_{\Phi}\Ltimes_E(-)}\\
\scD' & K^b(\scC')/K^b_{\acyc}(\scC')\ar[l]_(.7){\rho(\scC')}\ar[r]^(.6){h_{\naC'}} & D^b(E',\Frob)}
\end{equation*}
where $B_\Phi$ is the $(E',E)$-bimodule (with $\Frob$-action)
\begin{equation*}
B_\Phi=\Hom_{\scC'}(\naC',\Phi(\naC)).
\end{equation*}
\end{prop}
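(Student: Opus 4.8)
The plan is to unwind both horizontal composites in the diagram to their common description in terms of filtered complexes, and to check that $\Phi$ intertwines them up to the bimodule $B_\Phi$. First I would observe that, by construction, the equivalence $\rho(\scC)$ is $\Omega\circ(\Gr^*_F)^{-1}$, so the commutativity of the left square amounts to the statement that the filtered lifting $\Phi F$ is compatible with the functors $\Gr^*_F$ on $\scD F(\scC)$ and $\scD'F(\scC')$; since $\Phi$ is exact and $\Phi F$ lifts it, $\Phi F$ carries $\scD F(\scC)$ into $\scD'F(\scC')$ (using the hypothesis $\Phi(\scC)\subset\scC'$) and commutes with taking associated gradeds and with the connecting maps $d_i$ in $\Gr^*_F$, so $\Gr^*_F\circ\Phi F \cong C^b(\Phi|\scC)\circ\Gr^*_F$. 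This gives the natural isomorphism $\rho(\scC')\circ K(\Phi|\scC)\cong \Phi\circ\rho(\scC)$ after passing to the Verdier quotients by acyclics (note $\Phi|\scC$ sends acyclic complexes to acyclic complexes since $\Phi$ is exact, so $K(\Phi|\scC)$ descends).

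Next I would treat the right square. The functor $h_{\naC}$ on $\scC$ is $\calC\mapsto \bigoplus_i\Ext^i_{\scD}(\naC,\calC) = \Hom^\bullet$, realized as an $(E,\Frob)$-module, and by Theorem \ref{th:CtoMod}(2) it extends termwise to $K^b(\scC)/K^b_{\acyc}(\scC)\hookrightarrow D^b(E,\Frob)$. For $\calC\in\scC$ we have a natural $\Frob$-equivariant map
\begin{equation*}
B_\Phi\otimes_E h_{\naC}(\calC) = \Hom_{\scC'}(\naC',\Phi\naC)\otimes_E\Hom_{\scC}(\naC,\calC)\longrightarrow \Hom_{\scC'}(\naC',\Phi\calC) = h_{\naC'}(\Phi\calC),
\end{equation*}
given by $\psi\otimes\varphi\mapsto\psi\circ\Phi(\varphi)$; here I use that $\Ext$-groups between objects of $\scC$ (resp. $\scC'$) are concentrated in degree $0$ by Lemma \ref{l:hominD}, so that "$\Ext^\bullet$" is just "$\Hom$" and the composition is defined on the nose. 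I would check this map is an isomorphism when $\calC=\naC$ — where both sides are $B_\Phi$ — and hence when $\calC=\naC_\alpha$ for each $\alpha$; then by Lemma \ref{l:Cdecomp} (every object of $\scC$ is, up to Frobenius semisimplification, a sum $\bigoplus M_\alpha\otimes\naC_\alpha$, and the $M_\alpha$ can be split off after passing to $\Funi$) the map is an isomorphism on all of $\scC$. Taking the derived tensor product and passing to complexes, $B_\Phi\Ltimes_E(-)$ then agrees with $h_{\naC'}\circ K(\Phi|\scC)$ on $K^b(\scC)/K^b_{\acyc}(\scC)$; one must note $B_\Phi$ is flat over $E$ in the relevant sense (it is a summand of a free module, again by Lemma \ref{l:Cdecomp}) so that $\Ltimes_E = \otimes_E$ on the image of $h_{\naC}$, which is generated by the summand-of-free modules $\Hom(\naC,\naC_\alpha)$.

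The main obstacle I anticipate is bookkeeping rather than conceptual: making the two natural isomorphisms genuinely natural (not just pointwise) and mutually compatible, i.e.\ verifying that the composite natural isomorphism $h_{\naC'}\circ K(\Phi|\scC)\circ\rho(\scC)^{-1}\cong (B_\Phi\Ltimes_E-)\circ h_{\naC}\circ\rho(\scC)^{-1}$ respects triangles and twists. Naturality in $\calC$ follows from functoriality of composition; the triangulated compatibility follows because all functors in sight are either exact by hypothesis ($\Phi$, $\Phi F$) or manifestly triangulated ($K(\Phi|\scC)$, $B_\Phi\Ltimes_E(-)$, the equivalences $\rho$ and $h_{\naC}$), and a natural transformation between exact functors that is an isomorphism on a generating set is an isomorphism everywhere. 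The only subtlety worth spelling out is that the identification of $h_{\naC'}\circ K(\Phi|\scC)$ with $B_\Phi\Ltimes_E h_{\naC}$ must be checked at the level of complexes in $C^b(\scC)$ before descending to the quotient by acyclics — but since $K(\Phi|\scC)$ and $B_\Phi\otimes_E(-)$ are both applied termwise and both preserve acyclicity, this descent is automatic. I would close by remarking that the same argument, applied to $\scC$ the tilting (resp. very pure) subcategory, yields the compatibility statements invoked in the proofs of Theorems \ref{th:emduality}, \ref{th:selfduality} and \ref{th:pwduality}.
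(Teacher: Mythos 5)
Your proposal is correct and takes essentially the same route as the paper: the left square via compatibility of the filtered lifting with $\Gr^*_F$, and the right square via the natural transformation $B_\Phi\otimes_E\Hom_{\scC}(\naC,-)\to\Hom_{\scC'}(\naC',\Phi(-))$, $f\otimes g\mapsto\Phi(g)\circ f$, checked first on $\naC$ and then propagated to all of $\scC$ by two applications of Lemma \ref{l:Cdecomp}. (One typo: your formula $\psi\circ\Phi(\varphi)$ should be $\Phi(\varphi)\circ\psi$ for the composite to typecheck; your remarks on projectivity of the modules $h_{\naC}(\calC)$, which let $\Ltimes_E$ be computed underived, are a harmless elaboration the paper leaves implicit.)
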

\begin{proof}
The commutativity of the left side square is obvious. To give the natural transformation for the right side square, we only need to give a natural isomorphism making the following diagram commutative
\begin{equation*}
\xymatrix{\scC\ar[r]^(.4){h_{\naC}}\ar[d]^{\Phi|\scC} & \Mod(E,\Frob)\ar[d]^{B_\Phi\otimes_E(-)}\\
\scC'\ar[r]^(.4){h_{\naC'}} & \Mod(E',\Frob)}
\end{equation*}
There is a natural transformation
\begin{eqnarray*}
\beta(-)&:&B_{\Phi}\otimes_{E}\Hom_{\scC}(\naC,-)\\
&=&\Hom_{\scC'}(\naC',\Phi(\naC))\otimes_{E}\Hom_{\scC}(\naC,-)\\
&\to&\Hom_{\scC'}(\naC',\Phi(-))
\end{eqnarray*}
sending $f\otimes g$ to $\Phi(g)\circ f$. Since $\beta(-)$ is $\Frob$-equivariant, it suffices to show that $\beta(\calC)$ is an isomorphism for any $\calC\in\scC$.  This is obvious if $\calC_1=\naC$, hence also for $\calC=\naC_\alpha$ for any $\alpha$. By Lemma \ref{l:Cdecomp}, $\omega\naC_\alpha$ contains $\omega\calC_\alpha$ as a direct summand in $\scC^{\Funi}$, hence $\beta(\calC_\alpha)$ is also an isomorphism. By Lemma \ref{l:Cdecomp} again, this means $\beta(\calC)$ is an isomorphism for all $\calC\in\scC$.
\end{proof}

\begin{remark}\label{r:bifunctor}
The above Proposition has obvious versions for functors of the form $\Phi:\scD_1\times\cdots\times\scD_r\to\scD$ where $\scD_i$ and $\scD$ fit into the setting of Theorem \ref{th:CtoMod}. In particular, suppose $\scD$ carries a monoidal structure $\ast:\scD\times\scD\to\scD$ which restricts to a monoidal structure on $\scC$. Let $C$ be the $(E,E\otimes E)$-bimodule $\Hom_{\scC}(\naC,\naC\ast\naC)$. Then we have a natural commutative diagram of monoidal structures (i.e., together with compatibility among the associativity constraints):
\begin{equation*}
\xymatrix{\scD^2\ar[d]^{\ast} & (K^b(\scC)/K^b_{\acyc}(\scC))^2\ar[l]_(.7){\rho(\scC)}\ar[r]^(.6){h_{\naC}}\ar[d]^{K(\ast|\scC)} & D^b(E,\Frob)^2\ar[d]^{C\Ltimes_{E\otimes E}(-)}\\
\scD & K^b(\scC)/K^b_{\acyc}(\scC)\ar[l]_(.7){\rho(\scC)}\ar[r]^(.6){h_{\naC}} & D^b(E,\Frob)}
\end{equation*}
The compatibility among the associativity constraints follows from the canonicity of the natural isomorphisms in Proposition \ref{p:Cfun}.
\end{remark}

\subsection{Application to equivariant categories}
We first apply Theorem \ref{th:CtoMod} to the special case $X=\BB A$ of Example \ref{ex:icc}. This yields
\begin{cor}\label{c:ba}
\begin{enumerate}
\item []
\item There is an equivalence of triangulated categories
\begin{equation*}
D^b_m(\BB A)\cong D_{\perf}(\dS_A,\Frob)
\end{equation*}
sending the constant sheaf $\Ql$ to $\dS_A$. Here $\dS_A=\Sym(V_A^\vee)$ is viewed as a $\Ql$-algebra with $\Frob$-action (placed in degree 0) and $D_{\perf}(\dS_A,\Frob)\subset D^b(\dS_A,\Frob)$ is the full triangulated subcategory generated by twists of $\dS_A$.
\item The pull-back functor $D^b_m(\BB A)\to D^b_m(\pt)\cong D^b(\Frob)$ corresponds to the functor
\begin{equation*}
(-)\Ltimes_{\dS_A}\Ql:D_{\perf}(\dS_A,\Frob)\to D^b(\Frob).
\end{equation*}
\end{enumerate}
\end{cor}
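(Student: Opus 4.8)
The proof is an application of Theorem~\ref{th:CtoMod}, in the guise of Example~\ref{ex:ic} (more precisely Example~\ref{ex:icc}), to the stack $X=\BB A=[\pt/A]$ equipped with its (unique) stratification, by a single stratum. This is a global quotient stack over $k$ in the sense of~\S\ref{not:geom}, the identity embedding of the single stratum is affine, and the geometric space $\geom{\BB A}$ carries no nonconstant lisse $\Ql$-sheaf (its \'etale fundamental group vanishes, $A$ being connected), so every object of $D^b_m(\BB A)$ that is constant over $\geom{\BB A}$ is built from shifts and twists of the constant sheaf $\Ql$. Accordingly I would take $\scD=D^b_m(\BB A)$, let $\scC\subset\scD$ be the full subcategory of very pure complexes of weight $0$ (equivalently: pure of weight $0$ and constant over $\geom{\BB A}$), and take $\tcL_\alpha=\calC_\alpha=\naC_\alpha=\Ql$.

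First I would verify Assumptions C$_1$, C$_2$, C$_3$ of~\S\ref{ss:setting}. Assumption C$_1$ is exactly Example~\ref{ex:ic}: since $H^i(\geom{\BB A})=\Sym^{i/2}(V_A^\vee)$ is pure of weight $i$ (note $V_A^\vee$ has weight $2$), $\Ext^i_\scD(\calC_1,\calC_2)$ is pure of weight $i$ for $\calC_1,\calC_2\in\scC$, which forces $\Ext^i_\scD(\calC_1,\calC_2)^{\Funi}=0$ for $i\neq0$. Assumption C$_2$ holds with $\tcL_\alpha=\Ql$ by the absence of nonconstant sheaves on $\geom{\BB A}$ just noted; here it is crucial that the objects of $\scD$ carry \emph{integer} weights, cf.\ Example~\ref{ex:icc}. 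Assumption C$_3$ holds with $\calC_\alpha=\Ql$, since the restriction map $\End_\scD(\Ql)^{\Funi}\to\End_{\scC_\alpha}(\Ql)^{\Funi}$ is the identity on $\Ql$, hence has zero (in particular nilpotent) kernel.

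Next I would compute the algebra $E$ of~\eqref{algE}: $E=\bigoplus_i\Ext^i_\scD(\Ql,\Ql)^{\opp}=\bigoplus_i H^i(\BB A)=\Sym(V_A^\vee[-2])$, a commutative graded $\Ql$-algebra with $\Frob$-action, which upon forgetting the cohomological grading (as prescribed in Theorem~\ref{th:CtoMod}) becomes $\Sym(V_A^\vee)=\dS_A$ with its $\Frob$-action placed in degree $0$, and $E^{\opp}=E$. Theorem~\ref{th:CtoMod}(3) then produces a fully faithful $\MM\colon D^b_m(\BB A)\to D^b(\dS_A,\Frob)$ whose essential image is the triangulated subcategory generated by the twists of $h_{\naC}(\Ql)=\bigoplus_i\Ext^i_\scD(\Ql,\Ql)$, which is the free rank-one module $\dS_A$; this is precisely $D_{\perf}(\dS_A,\Frob)$ as defined in the statement, and $\MM(\Ql)=\dS_A$. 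That is part (1). For part (2) I would apply Proposition~\ref{p:Cfun} to $\Phi=a^*$, the pullback along the universal torsor $a\colon\pt\to\BB A$, which admits an evident filtered lifting and sends $\scC$ into the subcategory $\scC'\subset D^b_m(\pt)\cong D^b(\Frob)$ of weight-$0$ complexes. Running the same recipe on the point gives $\tcL'=\naC'=\Ql$ and $E'=\bigoplus_i\Ext^i_{D^b(\Vect)}(\Ql,\Ql)=\Ql$ in degree $0$, so that the lower equivalence is the canonical identification $D^b_m(\pt)\cong D^b(\Frob)$. By Proposition~\ref{p:Cfun}, $\Phi$ corresponds to $B_\Phi\Ltimes_{\dS_A}(-)$ with $B_\Phi=\Hom_{\scC'}(\Ql,a^*\Ql)=\Ql$, and its right $\dS_A$-module structure factors through the augmentation $\dS_A\twoheadrightarrow\Ql$ because $a^*$ annihilates $\Ext^{>0}_{\BB A}(\Ql,\Ql)$ (positive-degree equivariant cohomology classes restrict to $0$ on a point). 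Hence $B_\Phi\Ltimes_{\dS_A}(-)=\Ql\Ltimes_{\dS_A}(-)$, which is part (2).

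Almost all of the content is already packaged in Theorem~\ref{th:CtoMod} and Proposition~\ref{p:Cfun}, so there is no serious obstacle; the two places that require a little care are the description of $\scC$ on $\BB A$ together with Assumption C$_2$ (where the integer-weight hypothesis is doing real work), and, in part (2), the bookkeeping that identifies the right $\dS_A$-module $B_\Phi$ with the augmentation module $\Ql$.
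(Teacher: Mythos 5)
Your proposal is correct and follows exactly the route the paper takes: apply Theorem~\ref{th:CtoMod} in the setting of Example~\ref{ex:icc} with $X=\BB A$ viewed as a single stratum (giving part (1)), and then invoke Proposition~\ref{p:Cfun} for the pullback along $\pt\to\BB A$ (giving part (2)). You have merely spelled out the verification of Assumptions C$_1$--C$_3$ and the identification of the bimodule $B_\Phi$ with the augmentation module $\Ql$, which the paper leaves to the reader.
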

In fact, Corollary \ref{c:ba}(2) above follows from the functoriality of the DG model in Proposition \ref{p:Cfun}.

\begin{cor}\label{c:eqkillS}
Let $X$ be a scheme with a left action of a torus $A$. Let $\pi:X\to[A\backslash X]$ be the projection. For any $\calF_1,\calF_2\in D^b_m([A\backslash X])$, we view $\RHom_{[A\backslash X]}(\calF_1,\calF_2)$ as an object in $D^b_m(\BB A)\cong D^b_m(\dS_A,\Frob)$ via Corollary \ref{c:ba}. Then we have a functorial isomorphism for $\calF_1,\calF_2\in D^b_m([A\backslash X])$:
\begin{equation}\label{killeqhom}
\RHom_{[A\backslash X]}(\calF_1,\calF_2)\Ltimes_{\dS_A}\Ql\cong\RHom_X(\pi^*\calF_1,\pi^*\calF_2)
\end{equation}
In particular, taking $\calF_1$ to be the constant sheaf, we get
\begin{equation}\label{killeqcoho}
\bR\Gamma([A\backslash X],\calF)\Ltimes_{\dS_A}\Ql\cong\bR\Gamma(X,\pi^*\calF)
\end{equation}
\end{cor}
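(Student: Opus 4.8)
The plan is to deduce \eqref{killeqhom} from Cor.\ref{c:ba} by base change along the atlas of $\BB A$. Let $a\colon[A\backslash X]\to\BB A$ be the morphism of stacks induced by the ($A$-equivariant) structure morphism $X\to\pt$, and let $b\colon\pt\to\BB A$ be the universal $A$-torsor. Then the square
\begin{equation*}
\xymatrix{X \ar[r]^{\pi'} \ar[d]_{\pi} & \pt \ar[d]^{b}\\ {[A\backslash X]} \ar[r]^{a} & \BB A}
\end{equation*}
is Cartesian, since $\pi\colon X\to[A\backslash X]$ is by construction the pullback of the $A$-torsor $b$ along $a$. The first point to settle is that, under the equivalence $D^b_m(\BB A)\cong D_{\perf}(\dS_A,\Frob)$ of Cor.\ref{c:ba}, the object $a_*\bR\mathcal{H}om_{[A\backslash X]}(\calF_1,\calF_2)\in D^b_m(\BB A)$ corresponds to $\RHom_{[A\backslash X]}(\calF_1,\calF_2)$ equipped with its $\dS_A=H^*_A(\pt)$-module structure: this is because the equivalence of Cor.\ref{c:ba} is realized by the functor $\calG\mapsto\bigoplus_i\Ext^i_{\BB A}(\Ql,\calG)$, and $\Ext^i_{\BB A}(\Ql,a_*\bR\mathcal{H}om(\calF_1,\calF_2))=\Ext^i_{[A\backslash X]}(\calF_1,\calF_2)$ compatibly with the $\dS_A$- and $\Frob$-actions. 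I expect this bookkeeping step---matching the $\dS_A$-module structure appearing in the statement with the geometric one coming from $a$, while carrying the mixed (Frobenius) structure through---to be the main, if not conceptually deep, obstacle; the remainder is formal.

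Next I would apply smooth base change to the Cartesian square (legitimate since $b$ is smooth, being an $A$-torsor, and the six operations are available on these global quotient stacks by \cite{LO},\cite{Olsson}): $b^*a_*\cong\pi'_*\pi^*$. Applying this to $\bR\mathcal{H}om_{[A\backslash X]}(\calF_1,\calF_2)$, and using that $\pi$ is smooth of relative dimension $\dim A$, so that $\pi^*\bR\mathcal{H}om_{[A\backslash X]}(\calF_1,\calF_2)\cong\bR\mathcal{H}om_X(\pi^*\calF_1,\pi^*\calF_2)$ (from $\pi^!\cong\pi^*[2\dim A](\dim A)$ together with the identity $\pi^!\bR\mathcal{H}om(-,-)\cong\bR\mathcal{H}om(\pi^*-,\pi^!-)$, the twists cancelling), one obtains
\begin{equation*}
b^*a_*\bR\mathcal{H}om_{[A\backslash X]}(\calF_1,\calF_2)\cong\pi'_*\bR\mathcal{H}om_X(\pi^*\calF_1,\pi^*\calF_2)=\RHom_X(\pi^*\calF_1,\pi^*\calF_2)
\end{equation*}
in $D^b_m(\pt)\cong D^b(\Frob)$.

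Finally I would invoke Cor.\ref{c:ba}(2): the pullback $b^*\colon D^b_m(\BB A)\to D^b_m(\pt)$ corresponds under the equivalences to $(-)\Ltimes_{\dS_A}\Ql\colon D_{\perf}(\dS_A,\Frob)\to D^b(\Frob)$. Combining the three ingredients---the identification of the first paragraph, the base-change computation of the second, and this translation of $b^*$---yields the functorial isomorphism \eqref{killeqhom}, which is $\Frob$-equivariant because the whole diagram is defined over $k$. The special case \eqref{killeqcoho} is then obtained by specializing $\calF_1$ to the constant sheaf $\Ql$, so that $\RHom_{[A\backslash X]}(\Ql,\calF)=\bR\Gamma([A\backslash X],\calF)$ and $\RHom_X(\Ql,\pi^*\calF)=\bR\Gamma(X,\pi^*\calF)$; functoriality in $(\calF_1,\calF_2)$ is automatic, every functor used above being functorial.
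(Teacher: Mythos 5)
Your proposal is correct and follows essentially the same route as the paper: smooth base change applied to the Cartesian square formed by $X\to\pt$ and $[A\backslash X]\to\BB A$ gives $p^*\RHom_{[A\backslash X]}(\calF_1,\calF_2)\cong\RHom_X(\pi^*\calF_1,\pi^*\calF_2)$, and Cor.\ref{c:ba}(2) translates $p^*$ into $(-)\Ltimes_{\dS_A}\Ql$. The extra bookkeeping you spell out (matching the $\dS_A$-module structures and the compatibility $\pi^*\bR\mathcal{H}om\cong\bR\mathcal{H}om(\pi^*-,\pi^*-)$ for the smooth map $\pi$) is exactly what the paper leaves implicit.
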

\begin{proof}
Applying smooth base change to the Cartesian diagram
\begin{equation*}
\xymatrix{X\ar[r]^{\pi}\ar[d] & [A\backslash X]\ar[d]\\\textup{pt}\ar[r]^{p} & \BB A}
\end{equation*}
and the complex $\bR\underline{\Hom}(\calF_1,\calF_2)\in D^b_m([A\backslash X])$, we get
\begin{equation*}
p^*\RHom_{[A\backslash X]}(\calF_1,\calF_2)\cong\RHom_X(\pi^*\calF_1,\pi^*\calF_2).
\end{equation*}
It remains to apply Corollary \ref{c:ba}(2).
\end{proof}

More generally, in the situation of Example \ref{ex:icc}, we have a fully faithful embedding:
\begin{equation*}
\MM=\MM(\scC\subset\scD;\{\naC_\alpha\}):\scD\to D^b(E,\Frob).
\end{equation*}
where $\scC\subset\scD$ is the category of very pure complexes.

We can say more about the Hom-sets under $\MM$. For any locally finite $\Frob$-module $M$, let $M_i$ be the submodule of weight $i$ (i.e., the sum of generalized eigenspaces with eigenvalues of weight $i$). For any graded $\Frob$-module $N^\bullet=\oplus N^i$, we define
\begin{equation*}
N^\bullet_{\pure}:=\oplus_{i\in\ZZ} N^i_i.
\end{equation*}

\begin{lemma}\label{l:morehomeq}
There is a functorial isomorphism
\begin{equation}\label{purehom}
\Ext^\bullet_{\scD}(\calF_1,\calF_2)_{\pure}\cong\Hom_{E}(\MM\calF_1,\MM\calF_2)
\end{equation}
for $\calF_1,\calF_2\in\scD$.
\end{lemma}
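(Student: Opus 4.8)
The isomorphism \eqref{purehom} should be obtained by combining the fully faithful embedding $\MM$ from Theorem \ref{th:CtoMod} with the explicit description of $\ext$-groups in $\scD$ and $D^b(E,\Frob)$ in terms of $\Frob$-invariants and coinvariants. The key observation is that the algebra $E=\oplus_i\Ext^i_{\scD}(\naC,\naC)^{\opp}$ carries a $\ZZ$-grading by cohomological degree, and by the purity hypothesis (Example \ref{ex:ic}, where $H^i(\geom{X_\alpha})$ is pure of weight $i$) each graded piece $\Ext^i_{\scD}(\naC,\naC)$ is pure of weight $i$; the same holds for $\Ext^\bullet_{\scD}(\naC,\calC)$ for any $\calC\in\scC$ by the inductive argument of Lemma \ref{l:purity}(2) (or rather its analog in the present abstract setting, using the long exact sequence \eqref{stratass}). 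Thus the weight-$i$ part of $\Ext^i_{\scD}(\naC,\calC)$ is the whole thing, i.e.\ $\Ext^\bullet_{\scD}(\naC,\calC) = \Ext^\bullet_{\scD}(\naC,\calC)_{\pure}$ for $\calC\in\scC$.

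First I would make precise the assertion that for $\calF_1,\calF_2\in\scD$ the $\Ext$-module $\Ext^\bullet_{\scD}(\calF_1,\calF_2)$ is computed by a complex built out of the $\Ext$-modules between the objects of $\scC$ appearing in their $\rho(\scC)$-resolutions: by Prop.\ref{p:rhoff} and Lemma \ref{l:Cgen}, $\rho(\scC)$ is an equivalence, so $\calF_j$ is represented by a bounded complex $K_j^\bullet$ of objects of $\scC$, and $\Ext^\bullet_{\scD}(\calF_1,\calF_2)$ is the cohomology of the total complex of $\bigoplus_{p,q}\Ext^\bullet_{\scD}(K_1^p, K_2^q)$. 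On the $D^b(E,\Frob)$ side, $\MM\calF_j = h_{\naC}(K_j^\bullet)$ is the complex with terms $\Hom_{\scC}(\naC,K_j^\bullet)$ viewed as $E$-modules in degree $0$ of the internal grading, so $\Hom_E(\MM\calF_1,\MM\calF_2)$ (the $\Ext$-module in the $\ell$-adic sense, i.e.\ with the $\Frob$-action) is computed by the analogous total complex of $\bigoplus_{p,q}\Hom_E(\Hom_{\scC}(\naC,K_1^p),\Hom_{\scC}(\naC,K_2^q))$. By part (1) of Theorem \ref{th:CtoMod} (the full faithfulness of $h_{\naC}$ on $\scC$) these two bigraded complexes are $\Frob$-equivariantly identified after passing to the pure parts: the point is that $\Hom_E(\Hom_{\scC}(\naC,\calC_1),\Hom_{\scC}(\naC,\calC_2))$ recovers $\Hom_{\scC}(\calC_1,\calC_2)$, but $\Hom_{\scC}(\calC_1,\calC_2) = \oplus_i \Ext^i_{\scD}(\calC_1,\calC_2)$ as a $\Frob$-module loses the grading, and it is exactly the weight grading on the left that reconstructs the cohomological grading on the right — this is where $(-)_{\pure}$ enters. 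So I would spell out the identification $\Hom_E(\Hom_{\scC}(\naC,\calC_1),\Hom_{\scC}(\naC,\calC_2)) \cong \Ext^\bullet_{\scD}(\calC_1,\calC_2)_{\pure}$ for $\calC_1,\calC_2\in\scC$, where on the left $\Hom_E$ is the internal $\Hom$ of $E$-modules (no shift), and then propagate it through the two total complexes; the compatibility of the differentials is formal since both come from the differentials of $K_1^\bullet, K_2^\bullet$ via the same functors.

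The main obstacle I anticipate is bookkeeping the two gradings cleanly: $E$ is regarded as a plain algebra with $\Frob$-action placed in cohomological degree $0$ (as emphasized after \eqref{dgeq}), so the cohomological grading of $\Ext^\bullet_{\scD}$ is not literally visible in $D^b(E,\Frob)$ as a grading of modules — it is recovered only through the weight filtration, via the purity $\Ext^i_{\scD}(\calC_1,\calC_2)\subset \Ext^\bullet_{\scD}(\calC_1,\calC_2)$ being exactly the weight-$i$ part. I would therefore need to check carefully that the $\Frob$-action on $\Hom_E(\MM\calF_1,\MM\calF_2)$, together with its decomposition into weight pieces, matches termwise the $\Frob$-action and cohomological grading on $\Ext^\bullet_{\scD}(\calF_1,\calF_2)$, and that taking $(-)_{\pure}$ commutes with the total-complex formation and with passing to cohomology (the latter because in each bidegree everything is pure of a single weight determined by the bidegree, so there is no interference between weights in the spectral sequence). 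Once these compatibilities are in hand, \eqref{purehom} follows by comparing cohomology of the two identified complexes. The functoriality in $\calF_1,\calF_2$ is automatic from the functoriality of $\MM$ and of the resolutions.
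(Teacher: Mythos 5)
Your proposal follows essentially the same route as the paper's (very terse) proof: establish the identification for objects of $\scC$ by combining the argument of Theorem \ref{th:CtoMod}(1) with the fact that $\Ext^i_{\scD}(\calC_1,\calC_2)$ is pure of weight $i$ (so that $(-)_{\pure}$ is the identity on such $\Ext$-groups), then extend to all of $\scD$ by representing objects as complexes in $\scC$ and running a spectral-sequence/total-complex comparison. The one caveat is a notational slip in the middle of your argument where you write $\Hom_{\scC}(\calC_1,\calC_2)=\oplus_i\Ext^i_{\scD}(\calC_1,\calC_2)$ — $\Hom_{\scC}$ is by definition only the degree-zero part, so you mean $\Ext^\bullet_{\scD}$ there — but this does not affect the substance of the argument.
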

\begin{proof}
The argument of Theorem \ref{th:CtoMod}(1) shows that
\begin{equation*}
\Ext^\bullet_{\scD}(\calC_1,\calC_2)\cong\Hom_{E}(h_{\naC}(\calC_1),h_{\naC}(\calC_2))
\end{equation*}
for $\calC_1,\calC_2\in\scC$. Note that for $\calC_1,\calC_2\in\scC$, $\Ext^i(\calC_1,\calC_2)$ is pure of weight $i$, hence \eqref{purehom} holds for $\calF_1,\calF_2\in\scC$. In general, we represent objects $\calF_i\in\scD$ by complexes of objects in $\scC$ and use a spectral sequence argument to deduce \eqref{purehom}.
\end{proof}

\subsection{Application to monodromic categories}
Applying Theorem \ref{th:CtoMod} to Example \ref{ex:fmtt}, we get a fully faithful embedding:
\begin{equation*}
\MM=\MM(\scT\subset\hsM;\{\naC_\alpha\}):\hsM\to D^b(E,\Frob)
\end{equation*}
where $\scT\subset\hsM$ is the category of \fm tilting sheaves. Again, we can say more about the Hom-sets under $\MM$.

\begin{lemma}\label{l:morehommon}
There is a functorial isomorphism
\begin{equation*}
\Hom_{\hsM}(\calF_1,\calF_2)\cong\Hom_{E}(\MM\calF_1,\MM\calF_2)
\end{equation*}
for $\calF_1,\calF_2\in\hsM$.
\end{lemma}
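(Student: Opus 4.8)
The plan is to follow the proof of Lemma~\ref{l:morehomeq} almost verbatim, taking advantage of the fact that in the free-monodromic tilting setting the relevant $\Ext$-groups are concentrated in a single cohomological degree, so that the $(-)_{\pure}$ truncation appearing there becomes unnecessary. Recall that we are in the situation of Example~\ref{ex:fmtt}, where $\scC=\scT$ is the category of free-monodromic tilting sheaves and $\MM=\MM(\scT\subset\hsM;\{\naC_\alpha\})$. The first step is to prove the lemma for $\calF_1,\calF_2\in\scT$. By Example~\ref{ex:fmt} the stronger vanishing $\Ext^i_{\scD_\alpha}(\calC_1,\calC_2)=0$ for $i\neq 0$ holds on each stratum, for $\calC_1,\calC_2\in\scC_\alpha$. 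Feeding this into the recollement induction in the proof of Lemma~\ref{l:hominD} --- now using the ordinary long exact sequence attached to \eqref{stratass} (without the $(-)^{\Funi}$), together with Lemma~\ref{l:mixhom} to relate mixed and non-mixed $\Ext$'s in the completed category --- upgrades the conclusion to $\Ext^i_{\hsM}(\calC_1,\calC_2)=0$ for all $i\neq 0$ and all $\calC_1,\calC_2\in\scT$. In particular $E=\bigoplus_i\Ext^i_{\hsM}(\naC,\naC)^{\opp}=\Hom_{\hsM}(\naC,\naC)^{\opp}$ is concentrated in cohomological degree $0$, and likewise $h_{\naC}(\calC)=\Hom_{\hsM}(\naC,\calC)$ for $\calC\in\scT$. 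The argument of Theorem~\ref{th:CtoMod}(1) then already gives a $\Frob$-equivariant isomorphism $\Hom_{\hsM}(\calC_1,\calC_2)\cong\Hom_E(h_{\naC}\calC_1,h_{\naC}\calC_2)$ for $\calC_1,\calC_2\in\scT$, which is the lemma for tilting sheaves.

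For general $\calF_1,\calF_2\in\hsM$, I would invoke the equivalence $\rho(\scT)\colon K^b(\scT)/K^b_{\acyc}(\scT)\isom\hsM$ of Proposition~\ref{p:rhoff} (applicable here by Lemma~\ref{l:Cgen} and Example~\ref{ex:fmtt}) to represent $\calF_1,\calF_2$ by bounded complexes $\calK_1^\bullet,\calK_2^\bullet$ of free-monodromic tilting sheaves, so that $\MM\calF_1,\MM\calF_2$ are represented by the complexes of $(E,\Frob)$-modules obtained by applying $h_{\naC}$ termwise. Both $\Hom_{\hsM}(\calF_1,\calF_2)$ and $\Hom_E(\MM\calF_1,\MM\calF_2)$ are then computed by the $\Hom$ double complex assembled from the termwise groups $\Hom_{\hsM}(\calK_1^p,\calK_2^q)$, respectively $\Hom_E(h_{\naC}\calK_1^p,h_{\naC}\calK_2^q)$: on the left this is the spectral sequence already used in the proofs of Proposition~\ref{p:rhoff} and Lemma~\ref{l:morehomeq}, which collapses because the termwise $\Ext$'s live in degree $0$; on the right it is the standard computation of morphisms between complexes. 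The termwise isomorphisms from the first step are functorial in $\calK_1^p$ and $\calK_2^q$, hence assemble into an isomorphism of double complexes, and passing to $H^0$ yields the asserted functorial (and $\Frob$-equivariant) isomorphism.

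The only delicate point --- and it is exactly the one already resolved for Lemma~\ref{l:morehomeq} --- is that the description of $\Hom$ between complexes inside the Verdier quotient $K^b(\scT)/K^b_{\acyc}(\scT)$ agrees termwise with the naive one. This rests on Lemma~\ref{l:nullhomo} (an acyclic complex of tilting sheaves becomes null-homotopic after passing to Frobenius-unipotent coefficients) and the cofinality of the $t$-divisible resolutions constructed in the proof of Proposition~\ref{p:rhoff}, together with the $\ext$-versus-$\Ext$ bookkeeping in the completed monodromic category supplied by Lemma~\ref{l:mixhom}. Since all of these ingredients are already in place, the proof is essentially a transcription of that of Lemma~\ref{l:morehomeq}, with the simplification that no $(-)_{\pure}$ truncation is needed because, for free-monodromic tilting sheaves, $\Ext$ is concentrated in cohomological degree $0$; I expect this last compatibility to be the main (though routine) obstacle.
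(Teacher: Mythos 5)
Your proposal is correct and follows essentially the same route as the paper: the paper's (one-line) proof likewise reduces to the functorial isomorphism $\Hom_{\scT}(\tcT_1,\tcT_2)\cong\Hom_{E}(h_{\naC}\tcT_1,h_{\naC}\tcT_2)$ for free-monodromic tilting sheaves, with the extension to general objects of $\hsM$ via complexes of tilting sheaves left implicit (it is the same spectral-sequence argument as in Lemma \ref{l:morehomeq}, simplified exactly as you note because $\Ext^{\neq 0}$ vanishes between objects of $\scT$). Your write-up just makes these steps explicit.
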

\begin{proof}
We only need to note that there is a functorial isomorphism
\begin{equation*}
\Hom_{\scT}(\tcT_1,\tcT_2)\cong\Hom_{E}(h_{\naC}(\tcT_1),h_{\naC}(\tcT_2))
\end{equation*}
for any $\tcT_1,\tcT_2\in\scT$.
\end{proof}

\begin{remark}\label{r:Ff}
In Example \ref{ex:fmtt}, the algebra $E$ will be an $\hatS_{A}=\varprojlim\Sym(V_A)/V_A^n$-module of finite type. Recall the functor $(-)^f$ in \eqref{Frobfin}. For any $(\hatS_{A},\Frob)$-module $M$ of finite type, $M^f$ is an $(S_{A},\Frob)$-module of finite type, where $S_{A}=\Sym(V_A)=\hatS_{A}^f$. It is easy to see that the adjoint functors
\begin{equation*}
\xymatrix{D^b(E,\Frob)\ar@<-.7ex>[r]_{(-)^f} & D^b(E^f,\Frob)\ar@<-.7ex>[l]_{\hatS_{A}\otimes_{S_{A}}(-)}}
\end{equation*}
are actually equivalences of categories. Therefore, in Theorem \ref{th:CtoMod}, we may also use $D^b(E^f,\Frob)$ as a DG model for the completed monodromic category $\scD=\hsM$.
\end{remark}

\section{Calculations for $SL(2)$}\label{a:SL2}
In this section, we specialize to the case $G=\SL(2)$ and the other notations (e.g., $\scE$, $\hsM$) are understood to be associated to $\SL(2)$. Let $B=UH$ be a Borel subgroup with unipotent radical $U$. The flag variety $\Fl=\PP^1$ and the enhanced flag variety is $\tilFl=\AA^2-\{0\}$ with the projection $\pi:\tilFl\to\Fl$ identified with the usual $\GG_m$-quotient $\AA^2-\{0\}\to\PP^1$. We denote the inclusion of the open and closed $B$-orbit into $\Fl$ (resp. $\tilFl$) by $j$ and $i$ (resp. $\tilj$ and $\tili$). Let $s$ be the nontrivial element in the Weyl group $W$. Let $\IC$ be the IC-sheaf of $\PP^1$.

A well-known computation of $\upH^*_B(\PP^1)$ gives the following
\begin{alemma}\label{l:Hsimple} There is a $\Frob$-equivariant isomorphism of $\dS$-bimodules:
\begin{equation*}
\HH(\IC)\cong\calO(\Gamma(e)\cup\Gamma(s))[1](1/2).
\end{equation*}
\end{alemma}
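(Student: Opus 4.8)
The plan is to compute the $B$-equivariant cohomology of $\PP^1$ directly and then match it with the coordinate ring of $\Gamma(e)\cup\Gamma(s)$ as an $\dS$-bimodule with $\Frob$-action. First I would recall that $\IC=\IC_s$ here is just $\Ql[1](1/2)$ on $\PP^1=\Fl_{\leq s}$, so $\HH(\IC)=\bR\Gamma_{H\times H}(\quot{U}{\SL(2)}{U},\IC)$ is, up to the shift and twist $[1](1/2)$, the $H\times H$-equivariant cohomology of $\quot{U}{\SL(2)}{U}$, which by descent along the pro-affine-space fibrations (as in \eqref{eq:con2}) is the same as $H^*_{H\times H}(\quot{U}{\SL(2)}{U})=H^*_{B\times B}(\SL(2))$. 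This equivariant cohomology ring is classically known: using the Bruhat decomposition $\SL(2)=BwB\sqcup B$ into the open cell and the closed point, one gets a long exact (in fact short exact, by the purity/parity established in Lemma \ref{l:purity} or directly) sequence relating $H^*_{H\times H}(\quot{H}{HwH}{H})=\calO_{\Gamma(s)}$ and $H^*_{H\times H}(\quot{H}{H}{H})=\calO_{\Gamma(e)}$.

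The key steps, in order: (1) identify $\HH(\IC)[-1](-1/2)$ with $H^*_{B\times B}(\SL(2))$ as a graded $(\dS\otimes\dS,\Frob)$-module, using the fibration arguments already in \S\ref{ss:Econv}; (2) set up the localization/restriction sequence coming from $j:BwB/U\hookrightarrow\tilFl$ and $i:B/U\hookrightarrow\tilFl$ (equivalently the open big cell versus the closed $B$-orbit on $\Fl$), observing that $\IC$ restricts to the constant sheaf on each stratum, so the relevant pieces are $\HH(\nabla_s)=\calO_{\Gamma(s)}$ and $\HH$ of the skyscraper on the closed orbit, which is $\calO_{\Gamma(e)}$ after the appropriate shift — here I invoke Lemma \ref{l:hst}; (3) use purity (Lemma \ref{l:purity}(1): $\IC$ is $*$- and $!$-pure of weight $0$) to split the long exact sequence into the short exact sequence $0\to\calO_{\Gamma(e)}\to\HH(\IC)[-1](-1/2)\to\calO_{\Gamma(s)}\to 0$ of $(\dS\otimes\dS,\Frob)$-modules, or dually $0\to\calO_{\Gamma(s)}\to\cdots\to\calO_{\Gamma(e)}\to 0$; (4) identify the extension: since $\HH(\IC)$ is free over each copy of $\dS$ by Lemma \ref{l:purity}(1), and $\calO(\Gamma(e)\cup\Gamma(s))$ is the fibre product $\calO_{\Gamma(e)}\times_{\calO_{\Gamma(e)\cap\Gamma(s)}}\calO_{\Gamma(s)}$, one checks the natural map $\HH(\IC)\to\calO_{\Gamma(e)}\oplus\calO_{\Gamma(s)}$ (coming from restriction to the closed point and to the open cell) lands in and surjects onto this fibre product, using a dimension count in each degree. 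Because $\Gamma(e)\cap\Gamma(s)$ is the diagonal $\{v:sv=v\}$, which for $\SL(2)$ is the zero locus of the coroot $\alpha^\vee$, the ring $\calO(\Gamma(e)\cup\Gamma(s))$ is exactly the pushout/gluing, and comparing Poincaré series (or Frobenius-weight-graded dimensions) of the two short exact sequences forces the isomorphism.

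The main obstacle I expect is step (4): pinning down that the extension class of the short exact sequence is the non-split one realizing the scheme-theoretic union $\Gamma(e)\cup\Gamma(s)$ rather than the split module $\calO_{\Gamma(e)}\oplus\calO_{\Gamma(s)}$. This is where one genuinely uses that $\IC$ is the \emph{intersection} cohomology sheaf (equivalently the constant sheaf on the smooth $\PP^1$), not $\Delta_s$ or $\nabla_s$: the gluing reflects the fact that the stalk and costalk of $\IC$ at the closed point agree (both $\Ql\oplus\Ql[-2](-1)$), forcing the cohomology classes from the two charts to be compatible along $\Gamma(e)\cap\Gamma(s)$. Concretely I would verify this by exhibiting generators: $H^*_{B\times B}(\PP^1)$ is generated over $\dS\otimes\dS$ by $1$ and the equivariant class of the closed point (or a hyperplane class), and checking the single relation these satisfy is precisely the defining ideal of $\Gamma(e)\cup\Gamma(s)$ in $V_H\times V_H$ — a direct computation in the rank-one case, hence manageable. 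Everything else is bookkeeping with shifts, twists, and the already-established fibration and purity lemmas.
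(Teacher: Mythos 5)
Your proposal is correct and is exactly the ``well-known computation of $H^*_B(\PP^1)$'' that the paper cites without further detail: reduce to $H^*_{B\times B}(\SL(2))$, split the stratification sequence by purity, and identify the image of the injective restriction-to-fixed-points map with the fibre product $\calO_{\Gamma(e)}\times_{\calO(\Gamma(e)\cap\Gamma(s))}\calO_{\Gamma(s)}$ by a graded dimension count. The only nitpick is a harmless slip in step (4): the functions cutting out $\Gamma(e)\cap\Gamma(s)$ inside $V_H\times V_H$ come from the root $\alpha\in V_H^\vee$ rather than the coroot, though in the rank-one case this makes no difference.
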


\noindent{\bf The \fm tilting sheaf.} We will construct a free-monodromic tilting object $\tcT\in\hsM$ whose underlying complex is indecomposable. For each $n\geq 1$, we have a local system $\calL_n$ on the open stratum of $\tilFl$ corresponding to the representation $S/V_H^{n+1}S$ of $\pi_1(H,e)$. Let $\Delta_n=\tilj_!\calL_n[2](3/2), \nabla=\tilj_*\calL_n[2](3/2)$. We have an exact sequence in $\scP$:
\begin{equation*}
0\to\pidag\delta(1/2+n)\to\Delta_n\to\nabla_n\to\pidag\delta(-1/2)\to0.
\end{equation*}
Passing to the projective limit, we get an exact sequence in $\hsP$:
\begin{equation}\label{eq:tiltst}
0\to\tDel\to\tnab\to\pidag\delta(-1/2)\to0.
\end{equation}
Now we define $\tcT$ by the fibered product of $\tnab$ and $\tdel(-1/2)$ over $\pidag\delta(-1/2)$. Therefore it fits into two exact sequences
\begin{equation}\label{sl2fmt}
0\to\tDel\to\tcT\to\tdel(-1/2)\to0
\end{equation}
\begin{equation*}
0\to\tdel(1/2)\to\tcT\to\tnab\to0.
\end{equation*}
where $\tdel(1/2)$ is identified with the kernel of $\tdel(-1/2)\twoheadrightarrow\pidag\delta(-1/2)$. This shows that $\tcT$ is a \fm tilting sheaf.

\begin{alemma}\label{l:Vsimple}
\begin{enumerate}
\item []
\item There is an isomorphism of $(S\times S,\Frob)$-algebras:
\begin{equation*}
\End_{\hsP}(\tcT)\cong\calO(\Gamma^*(e)\cup\Gamma^*(s)).
\end{equation*}
\item There is a $\Frob$-equivariant isomorphism of $S$-bimodules:
\begin{equation*}
\VV(\tcT)\cong\calO(\Gamma^*(e)\cup\Gamma^*(s))(-1/2).
\end{equation*}
\end{enumerate}
\end{alemma}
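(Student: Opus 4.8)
The plan is to compute the endomorphism algebra of $\tcT$ directly from the two defining short exact sequences \eqref{sl2fmt} and its companion, mirroring the computation of $\HH(\IC)$ in Lemma \ref{l:Hsimple} but on the monodromic side. First I would establish part (1), from which part (2) follows by applying the averaging functor $\VV$. Recall that $\tcT$ is a \fm tilting sheaf supported on all of $\tilFl$, with a $\tDel$-flag $0\to\tDel_s\to\tcT\to\tdel(-1/2)\to0$ and a $\tnab$-flag $0\to\tdel(1/2)\to\tcT\to\tnab_s\to0$ (here I am writing $\tDel=\tDel_s$, $\tnab=\tnab_s$ for the standard/costandard sheaves on the open cell). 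By Lemma \ref{l:HomfreeS}, $\End_{\hsM}(\tcT)$ is a free $\hatS$-module, and in fact by the structure of $\tcT$ it is a free $\hatS$-module of rank $2$ (corresponding to the two strata appearing in the flags, each with multiplicity one). The two natural $\hatS$-actions coming from the left and right $H$-monodromy make it an $\hatS\otimes\hatS$-module, hence an $S\otimes S$-module after taking the $\Frob$-locally finite part.

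The key step is to identify this rank-two $S\otimes S$-module with the coordinate ring $\calO(\Gamma^*(e)\cup\Gamma^*(s))$, where $\Gamma^*(w)\subset V_H^*\times V_H^*$ is the graph of the $w$-action. The graph $\Gamma^*(e)$ is the diagonal; $\Gamma^*(s)$ is the anti-diagonal (since for $\SL(2)$ the nontrivial element $s$ acts on $V_H^*$ by $-1$). So $\Gamma^*(e)\cup\Gamma^*(s)$ is the union of two lines in $\AA^1\times\AA^1$ meeting at the origin, with coordinate ring $\{(f,g)\in S\times S : f(0)=g(0)\}$. To match this I would argue as follows. Applying $\Hom_{\hsP}(\tcT,-)$ to the costandard flag exhibits $\End(\tcT)$ as an extension of $\Hom(\tcT,\tnab_s)$ by $\Hom(\tcT,\tdel(1/2))$; applying $\Hom_{\hsP}(-,\tcT)$ to the standard flag exhibits it as an extension of $\Hom(\tdel(1/2)[\text{shift}],\tcT)$-type terms. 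Concretely, $\tili^*_s\tcT\cong\tcL_s$ and $\tili^!_s\tcT\cong\tcL_s$ by \fm tiltingness, and $\Hom_{\hsM}(\tcT,\tnab_s)=\Hom_{\hsM_s}(\tili^*_s\tcT,\tcL_s)^f=S$ as a module over the "diagonal" copy of $S$ via the $s$-twisted identification of left and right monodromy on the open cell (this twist is the origin of the $\Gamma^*(s)$ factor, by the computation in the proof of Lemma \ref{l:vst}, formula \eqref{eq:ssfactor}); similarly the skyscraper part $\Hom_{\hsM}(\tcT,\tdel(\pm1/2))$ contributes the $\Gamma^*(e)$ factor. The two factors are glued along their fibers at the origin because $\tcT$ is indecomposable: the endomorphism ring is local, so there is no idempotent separating the two strata, which forces the fiber-product (rather than direct-product) structure, i.e.\ exactly the condition $f(0)=g(0)$. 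Keeping track of Tate twists through \eqref{eq:tiltst} and \eqref{sl2fmt} gives the normalization; the algebra structure (as opposed to merely the module structure) comes from composition of endomorphisms, which on each graph is just multiplication in $S$.

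For part (2), I would apply $\VV$ to $\tcT$. By Lemma \ref{l:vst}, $\VV(\tDel_w)\cong\calO_{\Gamma^*(w)}(\ell(w)/2)$ and $\VV(\tnab_w)\cong\calO_{\Gamma^*(w)}(-\ell(w)/2)$, and $\VV$ is exact; applying it to the short exact sequence \eqref{sl2fmt} gives $0\to\calO_{\Gamma^*(s)}(1/2)\to\VV(\tcT)\to\calO_{\Gamma^*(e)}(-1/2)\to0$ (using $\VV(\tdel)=\calO_{\Gamma^*(e)}$), so $\VV(\tcT)$ is an extension of the two structure sheaves with the stated twists. That $\VV(\tcT)$ is precisely $\calO(\Gamma^*(e)\cup\Gamma^*(s))(-1/2)$ — i.e.\ the non-split extension realized inside $S\times S$ — follows from Prop.\ref{p:Vff}: $\VV$ is fully faithful on \fm tilting sheaves, so $\End_{S\otimes S}(\VV(\tcT))\cong\End_{\hsP}(\tcT)^f$, and the rank-two $S\otimes S$-algebra with that endomorphism ring computed in part (1) is forced to be $\calO(\Gamma^*(e)\cup\Gamma^*(s))$ up to the twist, which is pinned down by the extension above. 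Alternatively one computes $\VV(\tcT)$ by hand from the definition $\VV'=\tili^*_e\av_{\chi,!}$ using the explicit $\AA^2\setminus\{0\}$ geometry, which is elementary for $\SL(2)$.

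The main obstacle I expect is bookkeeping: getting all the Tate twists and shifts consistent across the exact sequences \eqref{eq:tiltst}, \eqref{sl2fmt}, and the normalization conventions for $\tcL$ versus $\tdel$ ($\tdel=\tcL[r](r)$ with $r=1$ here), and correctly identifying which copy of $S$ acts via which monodromy after the $s$-twist on the open cell. The conceptual content — that the endomorphism ring is the fiber product of two copies of $S$ glued at the origin — is forced by indecomposability plus the flags, so the real work is making the isomorphism $\Frob$-equivariant and correctly normalized, which is a direct but careful calculation of the type already carried out in the proofs of Lemmas \ref{l:hst} and \ref{l:vst}.
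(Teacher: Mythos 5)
Your overall line of attack — extracting the rank and the restriction maps from the two flags, identifying $\Gamma^*(e)$ with the diagonal and $\Gamma^*(s)$ with the anti-diagonal, and invoking Lemmas \ref{l:HomfreeS} and \ref{l:vst} — is on the right track and overlaps substantially with the paper's. But you miss the paper's key shortcut: $\omega\tcT$ is not merely a \fm tilting sheaf but a projective cover of $\omega\pidag\delta$ in $\omega\hsP$ (because $\calT=\piddag\tcT$ is simultaneously the indecomposable tilting \emph{and} the indecomposable projective on $\PP^1$). Combined with the normalization of $\tcP$ from Cor.~\ref{c:tDelflagP}, this gives $\tcP\cong\tcT(1/2)$, which does all the heavy lifting. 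In part (1) it computes $\Hom(\tcT,\tDel)\cong\Hom(\tcP,\tDel)(1/2)=\VV(\tDel)(1/2)\cong\calO(\Gamma^*(s))(1)$, identifying the kernel of $\tili^*$ explicitly (as the augmentation ideal of $\calO(\Gamma^*(s))$) and allowing a 5-lemma comparison of the two short exact sequences; and in part (2) it gives $\VV(\tcT)=\Hom(\tcP,\tcT)^f=\End(\tcT)^f(-1/2)$ in one line. Your alternative "locality forces the fiber product" argument for (1) can be made to work, but it needs more than you state: you must know $\End(\tcT)^f$ is $S$-free of rank exactly $2$ (Lemma \ref{l:HomfreeS} plus $\End_{\scQ}(\calT)\cong\Ql[\epsilon]/\epsilon^2$) and that the image of $S\otimes S$ is precisely $\calO(\Gamma^*(e)\cup\Gamma^*(s))$; only then does locality rule out the sole larger option $S\times S$.

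Part (2) as written has a genuine gap. You claim "$\End_{S\otimes S}(\VV(\tcT))\cong\End_{\hsP}(\tcT)^f$, and the rank-two $S\otimes S$-algebra with that endomorphism ring is forced to be $\calO(\Gamma^*(e)\cup\Gamma^*(s))$", but $\VV(\tcT)$ is a module, not an algebra, and a module over $S\otimes S$ is not determined by its endomorphism ring. To rescue your route you would additionally need to compute $\Ext^1_{S\otimes S}(\calO_{\Gamma^*(e)},\calO_{\Gamma^*(s)})$ and show it is one-dimensional (a short Koszul computation: on $\calO_{\Gamma^*(s)}=\Ql[a]$, the generator $a-b$ of the ideal cutting out $\Gamma^*(e)$ acts as $2a$, so the cokernel is $\Ql$), after which the locality of $\End(\VV(\tcT))$ rules out the split extension and pins $\VV(\tcT)$ down as the unique non-split one. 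This works, but is strictly more work than the paper's $\tcP\cong\tcT(1/2)$, which collapses (2) to (1) immediately and simultaneously feeds the precise identification needed in (1). The conceptual advantage of the paper's route is exactly this double use of the projective-cover observation; your route avoids it but then must pay for it twice.
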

\begin{proof}
(2) Recall the object $\tcP\in\hsP$ which represents $\VV$ (see Lemma \ref{l:PTheta}). Since $\calT=\piddag\tcT$ is an indecomposable tilting sheaf on $\PP^1$, it is easy to see that $\omega\calT$ is also a projective cover of $\omega\delta$. Therefore $\omega\tcT$ is a projective cover of $\omega\pidag\delta$ in $\hsP$. Since $\tcT\twoheadrightarrow\pidag\delta(-1/2)$ is the highest weight quotient, $\Hom(\tcP,\tcT(1/2))^{\Funi}=\Ql$, hence $\hom(\tcP,\tcT(1/2))=\Ql$. Any nonzero homomorphism $\tcP\to\tcT(1/2)$ is in fact an isomorphism because after taking $\piddag$ it is. Therefore $\VV(\tcT)=\Hom(\tcP,\tcT)=\End(\tcT)(-1/2)$, and the statement follows from (1).

(1) We have maps
\begin{equation}\label{eq:ssrho}
S\otimes S\to\End(\tcT)\xrightarrow{(\tilj^{*},\tili^{*})}\End_{\hsP_s}(\tcL)\times\End_{\hsP_e}(\tdel)=\calO(\Gamma^*(s))\times\calO(\Gamma^*(e))
\end{equation}
where the first arrow is given by the left and right logarithmic $H$-monodromy and the second given by restrictions to two strata. The exact sequence (\ref{eq:tiltst}) gives an exact sequence
\begin{equation*}
0\to\Hom(\tcT,\tDel)\to\End(\tcT)\xrightarrow{\tili^{*}}\End(\tdel(-1/2))=\Hom(\tcT,\tdel(-1/2))\to0
\end{equation*}
On the other hand by Lemma \ref{l:vst} and the isomorphism $\tcP\cong\tcT(1/2)$, we have
\begin{equation*}
\Hom(\tcT,\tDel)\cong\Hom(\tcP,\tDel)(1/2)\cong\VV(\tDel)(1/2)\cong\calO(\Gamma^*(s))(1)
\end{equation*}
and the natural homomorphism $\tili^*:\Hom(\tcT,\tDel)\to\End(\tDel)$ is the inclusion $\calO(\Gamma^*(s))(1)\hookrightarrow\calO(\Gamma^*(s))$. Therefore $(\tilj^{*},\tili^{*})$ in (\ref{eq:ssrho}) is injective. The composition of the maps in (\ref{eq:ssrho}) has image $\calO(\Gamma^*(s)\cup\Gamma^*(e))$, hence the map $S\otimes S\to\End(\tcT)$ factors through
\begin{equation*}
S\otimes S\twoheadrightarrow\calO(\Gamma^*(s)\cup\Gamma^*(e))\to\End(\tcT).
\end{equation*}
Therefore we have a commutative diagram of exact sequences
\begin{equation*}
\xymatrix{\calO(\Gamma^*(s))(1)\ar[r]\ar[d]^{\wr} & \calO(\Gamma^*(s)\cup\Gamma^*(e))\ar[r]\ar[d] & \calO(\Gamma^*(e))\ar[d]^{\wr}\\
\Hom(\tcT,\tDel)\ar[r] & \End(\tcT)\ar[r] & \End(\tdel(-1/2))}
\end{equation*}
Since the first and third vertical maps are already shown to be isomorphisms, the middle one must also be an isomorphism.
\end{proof}

Finally we compute the convolutions in $\scE$. Observe that for any $\calF\in\scE$, we have
\begin{eqnarray*}
\calF\conv{B}\IC&\cong& \upH^*(\PP^1,\calF)\otimes\IC;\\
\calF\conv{B}\delta&\cong&\calF.
\end{eqnarray*}

\begin{alemma}\label{l:convSL2} We have
\begin{eqnarray*}
\Delta\conv{B}\Delta&\in&\langle\Delta(1/2),\Delta[-1](-1/2),\delta\rangle\\
\nabla\conv{B}\nabla&\in&\langle\nabla(-1/2),\nabla[1](1/2),\delta\rangle
\end{eqnarray*}
\end{alemma}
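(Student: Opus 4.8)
The plan is to deduce the statement from the two convolution identities recalled just above it,
\begin{equation*}
\calF\conv{B}\IC\cong H^*(\PP^1,\calF)\otimes\IC,\qquad \calF\conv{B}\delta\cong\calF,
\end{equation*}
together with the two recollement triangles expressing $\IC=\IC_s$ in terms of $\Delta_s$ (resp. $\nabla_s$) and $\delta$. Recall that on $\Fl=\PP^1$ we have $\IC_s=\Ql[1](1/2)$, that $j\colon\AA^1\hookrightarrow\PP^1$ is the open $B$-orbit with $\Delta_s=j_!\Ql[1](1/2)$ and $\nabla_s=j_*\Ql[1](1/2)$, and that $i$ is the inclusion of the closed point, with $i^*\IC_s\cong\delta(1/2)[1]$ and $i^!\IC_s\cong\delta[-1](-1/2)$ (the point has codimension $1$ in the smooth surface... curve $\PP^1$). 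The standard triangles $j_!j^*\IC_s\to\IC_s\to i_*i^*\IC_s\to$ and $i_*i^!\IC_s\to\IC_s\to j_*j^*\IC_s\to$ therefore read
\begin{equation*}
\delta(1/2)\to\Delta_s\to\IC_s\to\delta(1/2)[1],\qquad \delta[-1](-1/2)\to\IC_s\to\nabla_s\to\delta(-1/2).
\end{equation*}

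First I would treat the $\Delta$-case. Apply the exact functor $\Delta_s\conv{B}(-)$ to the first triangle. The leftmost term becomes $\Delta_s\conv{B}\delta(1/2)\cong\Delta_s(1/2)$, and the rightmost becomes $\Delta_s\conv{B}\IC_s\cong H^*(\PP^1,\Delta_s)\otimes\IC_s$. Since $H^*(\PP^1,\Delta_s)=H^*_c(\AA^1,\Ql[1](1/2))\cong\Ql[-1](-1/2)$, this term is $\IC_s[-1](-1/2)$, so we obtain a triangle
\begin{equation*}
\Delta_s(1/2)\to\Delta_s\conv{B}\Delta_s\to\IC_s[-1](-1/2)\to\Delta_s(1/2)[1],
\end{equation*}
that is, $\Delta_s\conv{B}\Delta_s\in\langle\Delta_s(1/2),\IC_s[-1](-1/2)\rangle$. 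Twisting and shifting the first recollement triangle by $[-1](-1/2)$ gives $\IC_s[-1](-1/2)\in\langle\Delta_s[-1](-1/2),\delta\rangle$ (note $\delta(1/2)[1][-1](-1/2)=\delta$). Combining the two, $\Delta_s\conv{B}\Delta_s\in\langle\Delta(1/2),\Delta[-1](-1/2),\delta\rangle$, which is the first assertion.

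For the $\nabla$-case I would run the parallel argument with the second recollement triangle: applying $\nabla_s\conv{B}(-)$ to $\IC_s\to\nabla_s\to\delta(-1/2)\to\IC_s[1]$ and using $H^*(\PP^1,\nabla_s)=H^*(\AA^1,\Ql[1](1/2))\cong\Ql[1](1/2)$, so that $\nabla_s\conv{B}\IC_s\cong\IC_s[1](1/2)$, yields $\nabla_s\conv{B}\nabla_s\in\langle\IC_s[1](1/2),\nabla_s(-1/2)\rangle$; then $\IC_s[1](1/2)\in\langle\delta,\nabla_s[1](1/2)\rangle$ by the same triangle twisted by $[1](1/2)$, whence $\nabla_s\conv{B}\nabla_s\in\langle\nabla(-1/2),\nabla[1](1/2),\delta\rangle$ (here $\langle\cdots\rangle$ records only which iterated extensions occur). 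Alternatively this case follows from the first by applying Verdier duality $\DD$, which exchanges $\Delta_s$ and $\nabla_s$, fixes $\IC_s$ and $\delta$, negates Tate twists and cohomological shifts, and commutes with $\conv{B}$ since the multiplication maps in play are proper on the bounded supports involved.

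There is no genuine obstacle here; the only thing needing care is the bookkeeping of cohomological shifts and Tate twists — in particular keeping $H^*_c(\AA^1)\cong\Ql[-2](-1)$ distinct from $H^*(\AA^1)\cong\Ql$ when evaluating $\calF\conv{B}\IC$, and tracking the normalizations $i^*\IC_s\cong\delta(1/2)[1]$ and $i^!\IC_s\cong\delta[-1](-1/2)$ — so that the half-twists in the final answer come out exactly as stated.
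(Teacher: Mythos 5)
Your argument is correct and takes essentially the same route as the paper: apply $\Delta\conv{B}(-)$ to the recollement triangle $\delta(1/2)\to\Delta\to\IC\to$, use the two convolution formulas recalled just above the lemma to identify the outer terms, and then reuse the same triangle (shifted and twisted) to resolve $\IC[-1](-1/2)$ into $\Delta[-1](-1/2)$ and $\delta$. The paper abbreviates the costandard case to "the second can be proved similarly"; you have simply spelled it out (and noted the Verdier-duality shortcut), and your shift/twist bookkeeping — in particular $H^*(\PP^1,\Delta_s)\cong\Ql[-1](-1/2)$ and $H^*(\PP^1,\nabla_s)\cong\Ql[1](1/2)$ — is accurate.
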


\begin{proof}
We prove the first relation; the second can be proved similarly. Applying $\Delta\conv{B}$ to the distinguished triangle
\begin{equation}\label{eq:exfordel}
\delta(1/2)\to\Delta\to\IC\to,
\end{equation}
we get another distinguished triangle
\begin{equation*}
\Delta(1/2)\to\Delta\conv{B}\Delta\to\IC[-1](-1/2)\to.
\end{equation*}
In other words, $\Delta\conv{B}\Delta\in\langle\Delta(1/2),\IC[-1](-1/2)\rangle$. The triangle (\ref{eq:exfordel}) also implies that $\IC[-1](-1/2)\in\langle\Delta[-1](-1/2),\delta\rangle$. Therefore $\Delta\conv{B}\Delta\in\langle\Delta(1/2),\Delta[-1](-1/2),\delta\rangle$.
\end{proof}


\section*{List of symbols}

\noindent
\begin{footnotesize}
\begin{tabular}{ll}
$G$ & A Kac-Moody group\\
$G^\vee$ & A Kac-Moody group whose root system is dual to that of $G$\\
$H,H^\vee$ & A fixed Cartan subgroup of $G$ and its dual in $G^\vee$\\
$V_H$ & $\Ql$-Tate module of $H$\\
$B,U$ & The standard Borel in $G$ and its unipotent radical\\
$B^\vee, U^\vee$ & The standard Borel in $G^\vee$ and its unipotent radical\\
$W$ & The Weyl group of $(G,H)$\\
$W_\Theta$ & The Weyl group of $(L_\Theta,H)$\\
$[\coW]$ & The shortest representatives in the coset $\coW$\\
$\{\coW\}$ & The longest representatives in the coset $\coW$\\
$w_\Theta, \lTh$ & The longest element in $W_\Theta$ and its length\\
$P_\Theta=U^\Theta L_\Theta$ & The standard parabolic subgroup and its Levi decomposition\\
$U_\Theta,U^-_\Theta$ & $L_\Theta\cap U$ and its opposite maximal unipotent subgroup of $L_\Theta$\\
$\Fl,\tilFl$ & The flag variety $G/B$ and its enhancement $G/U$\\
$\pi$ & The projection $G/U\to G/B$\\
$\PFl_G$ & The partial flag variety $P_\Theta\backslash G$\\
$\pi^\Theta$ & The projection $B\backslash G\to P_\Theta\backslash G$\\
$\chi$ & A nondegenerate additive character of $U^-_\Theta$\\
$\scE_G$ & The equivariant category $D^b_{m}(\quot{B}{G}{B})$\\
$\scE_{G,\Theta}$ & The parabolic category $D^b_{m}(\quot{P_\Theta}{G}{B})$\\
$\scM_G,\hsM_G$ & The monodromic category $D^b_{m}(\wqw{B}{G}{B})$ and its completion\\
$\scM_{G,\Theta},\hsM_{G,\Theta}$ & The Whittaker category $D^b_{m}((U^\Theta U^-_\Theta,\chi)\backslash\qw{G}{B})$ and its completion \\
$\Dright,\Dleft$ & $D^b_{m}(\quot{U}{G}{B})$ and $D^b_{m}(\quot{\dB}{\dG}{\dU})$\\
$\Dleft_\Theta$ & The paradromic category $D^b_{m}(\quot{P^\vee_\Theta}{\dG}{\dU})$\\
$\Dright_\Theta$ & The ``Whittavariant'' category $D^b_{m}(\quot{(U^\Theta U^-_\Theta,\chi)}{G}{B})$\\
$\Av^\Theta_\chi$ & The averaging functor $\hsM\to\hsM_\Theta$\\
$\IC_w$ & The intersection cohomology complex of $\Fl_{\leq w}$ in various categories.\\
$\Delta_w,\Delta_{\barw}$ & The standard sheaves in $\scE$ and $\scE_\Theta$\\
$\nabla_w,\nabla_{\barw}$ & The costandard sheaves in $\scE$ and $\scE_\Theta$\\
$\tcL$ & The \fm local system on a torus $H$\\
$\tDel_w,\tDel_{\barw,\chi}$ & The \fm standard sheaves in $\hsM$ and $\hsM_\Theta$\\
$\tnab_w,\tnab_{\barw,\chi}$ & The \fm costandard sheaves in $\hsM$ and$\hsM_\Theta$\\
$\tcT_w,\tcT_{\barw,\chi}$ & The indecomposable \fm tilting sheaves in $\hsM$ and $\hsM_\Theta$\\
$\calT_w,\calT^\vee_w$ & The indecomposable tilting sheaves in $\Dright$ and $\Dleft$\\
$\calC_\Theta$ & The constant sheaf on $\Fl_{\leq w_\Theta}$\\
$\tcP_\Theta$ & Its underlying complex is a projective cover of $\omega\delta$ in $\omega\hsP_{\leq w_\Theta}$\\
$\dS_H$ & $\Sym(V_H^\vee)$\\
$S_H$ & $\Sym(V_H)$; logarithmic monodromy operators by $H$\\
$\HH$ & The global section functor of $\scE$ and its cohomology\\
$\VV$ & The averaging functor: $\hsM\to D^b(S\otimes S,\Frob)$\\
$\omega$ & Forgetting the mixed structure\\
\end{tabular}
\end{footnotesize}


\end{document}